\documentclass[11pt]{article}
\usepackage[margin=0.9in]{geometry}
\usepackage{amsmath,amsthm,amssymb,xcolor}
\usepackage{fontspec}
\usepackage{fancyvrb}
\usepackage[colorlinks=true,linkcolor=blue!50!black,citecolor=blue!50!black,urlcolor=blue!50!black]{hyperref}
\usepackage{alltt}
\newcounter{datablock}
\newcommand{\datablock}[2]{\par\medskip\refstepcounter{datablock}\noindent\textbf{Data \thedatablock\ (#1).}\label{#2}\par\nopagebreak\smallskip}
\newcommand{\leanI}[1]{\path{#1}}% Lean names; used only in the sections on computation and formal verification
\newcommand{\NA}{N_A}

\newcommand{\cD}{\mathcal{D}}

\DefineVerbatimEnvironment{formalquote}{Verbatim}%
  {fontsize=\footnotesize,xleftmargin=0.8em,frame=leftline,framerule=0.4pt,framesep=0.5em}
\newcommand{\muM}{{\mu_M}}% the mean free count
\newcommand{\alphaV}{{\alpha_{\mathrm V}}}% the multipliers of the variance bound

\newcommand{\gammaV}{{\gamma_{\mathrm V}}}
\newcommand{\ppar}{{p_{\mathrm{par}}}}% the parent's message in a comparison of the variance bound
\newcommand{\phipar}{{\varphi_{\mathrm{par}}}}% the parent's log-mass in a comparison of the variance bound
\newcommand{\Ypar}{{Y_{\mathrm{par}}}}% the parent's children's log-mass in a comparison of the variance bound
\newcommand{\EKL}{{E_{\mathrm{KL}}}}% the first coefficient of the comparisons
\newcommand{\NF}{{N_F}}% neighbourhoods in F
\newcommand{\zresp}{{\tilde z}}% the response of the variance-ratio bound
\newcommand{\pmsg}{{p'}}% a message value in the unit inequality
\newcommand{\qval}{{\tilde q}}% a value of q in the box test
\newcommand{\binp}{{\mathrm{bin}}}% the binomial probability bin_M(j;q)
\newcommand{\tg}{{\tilde g}}% a test function of (M,K_C)
\newcommand{\Iv}{{\mathcal X}}% the statement in an Iverson bracket
\newcommand{\Ihat}{{\hat I}}% the 55 subintervals of [1/3,7/3]
\newcommand{\lamhat}{{\hat\lambda}}% their endpoints
\newcommand{\ebar}{{\bar e}}% the bound of a generic Laplace constraint
\newcommand{\xpq}{{\bar x}}% the coordinate p/q of the variance-ratio data

\newcommand{\llo}{{\lambda_{\mathrm{lo}}}}
\newcommand{\lhi}{{\lambda_{\mathrm{hi}}}}

\newcommand{\sigb}{\bar\sigma}% sqrt(v mu)
\newcommand{\aG}{a_{\mathrm G}}% e^{-1/2}/sqrt(2 pi)
\newcommand{\artanh}{\operatorname{artanh}}
\newcommand{\Def}{\operatorname{Def}}% the deficit of a spike fiber
\newcommand{\Var}{\operatorname{Var}}
\newcommand{\E}{\mathbb E}
\newcommand{\wL}{w_{\mathrm L}}
\newcommand{\wR}{w_{\mathrm R}}
\newcommand{\fL}{f_{\mathrm L}}
\newcommand{\fR}{f_{\mathrm R}}
\newcommand{\oT}{\omega^{\mathrm T}}% the log-ratios of the activity-one bound
\newcommand{\Ghat}{\hat G}
\newcommand{\Ff}{\mathsf F}% the fiber function
\newcommand{\Sl}{\mathsf S}% the Gaussian slack

\newtheorem{theorem}{Theorem}[section]
\newtheorem{conjecture}{Conjecture}[section]
\newtheorem{lemma}[theorem]{Lemma}
\newtheorem{proposition}[theorem]{Proposition}
\newtheorem{corollary}[theorem]{Corollary}
\theoremstyle{definition}
\newtheorem{definition}[theorem]{Definition}
\newtheorem{remark}[theorem]{Remark}
\newtheorem{example}[theorem]{Example}

\title{Unimodality of Forest Independence Polynomials}
\hypersetup{pdftitle={Unimodality of Forest Independence Polynomials}}
\author{Wei Li$^{1}$\qquad Kevin Vallier$^{2}$\qquad Tong Zhang$^{1}$\\[2pt]
\parbox[t]{0.55\textwidth}{\centering\normalsize
$^{1}$School of Mathematics and Statistics\\
Northwestern Polytechnical University\\
Xi'an, Shaanxi, 710129, P.~R.~China}
\hspace{0.02\textwidth}
\parbox[t]{0.36\textwidth}{\centering\normalsize
$^{2}$University of Toledo\\
Toledo, Ohio, USA}}
\date{Draft, version 2.2, 6 October 2026}

\begin{document}
\maketitle

\begin{abstract}
For a finite forest $F$ let $i_k(F)$ be the number of independent sets of $F$ with $k$ vertices. Zhang and Li proved that the
sequence $i_0(F),i_1(F),\dots,i_{\alpha(F)}(F)$ is unimodal for every finite forest $F$, which answers Erd\H{o}s
Problem~993. We give a second proof. It starts from their decomposition relative to a fixed independent set and from the
bounds of Zhang and Li and of Fang, Lu, Nevo, Yao and Zheng that confine a valley of the sequence to an explicit window
of ranks. For a forest with at least $25$ vertices, one moment argument excludes a valley at every rank of the window:
at the activity where the hard-core mean equals the rank, the size of a random independent set is a mixture of binomial
laws over an independent set of maximum weight, a valley is a moment inequality for this mixture, and it is excluded by
duality given three bounds that hold for every forest, on the variance of the number of free vertices and on its
Laplace transforms, and on the variance ratio. The variance bound is proved by hand up to finitely many interval checks and
the other two bounds are verified by computer on finite interval-arithmetic coverings; on the resulting parameter domain a
valley is excluded by exact tests on finitely many rational boxes while the mean number of free vertices is below an
explicit starting mean between $19$ and $50$, and above it by one inequality, with explicit constants, for the fibers of a
weighted valley kernel, proved by hand up to a finite list of explicit checks and averaged over the mixture. Forests with at most $24$ vertices are treated by exact
counting, by hand except for exact rational evaluations of two explicit formulas at $43$ parameter triples. No forest is
enumerated. A formal proof of the theorem in Lean~4 accompanies the paper.
\end{abstract}

\begin{quote}\small
\textbf{Provisional manuscript.} We are rewriting this manuscript before journal submission and share the current
result, its supporting computations, and its formalization for examination by the community. AI systems produced
the new mathematical arguments and formal proofs reported here and wrote most of the text, under the direction
described in Section~\ref{sec:ack}. The paper has not undergone external peer review; that section records the
human and AI contributions and the reported scope of review.
\end{quote}

\section{Introduction}\label{sec:intro}

For a graph \(G\), let \(V(G)\) and \(E(G)\) denote its vertex set and edge set, respectively. Throughout this paper all graphs considered are finite and simple. The degree of a vertex \(v\) in a graph \(G\), denoted by \(d_G(v)\), is the number of edges of \(G\) incident with \(v\). An acyclic graph is one that contains no cycles. The graph with no vertices is the null graph. A nonnull connected acyclic graph is called a tree. Acyclic graphs are called forests. For \(X\subseteq V(G)\), the subgraph of \(G\) induced by \(X\), denoted by \(G[X]\), has vertex set \(X\) and edge set consisting of all edges of \(G\) with both end vertices in \(X\). A claw is the graph with four vertices \(u,v_1,v_2,v_3\) and three edges \(uv_1,uv_2,uv_3\). A graph \(G\) is claw-free if no induced subgraph of it is a claw.

A stable set in a graph is a set of vertices no two of which are adjacent. Stable sets are also commonly known as independent sets. A stable set is maximum if the graph contains no larger stable set. The stability number of \(G\), denoted by \(\alpha (G)\), is the size of a maximum stable set in \(G\). A stable set is maximal if the set cannot be extended to a larger stable set. A tree having at most one vertex of degree \(\ge 3\) is called a spider. A graph is well-covered if all maximal independent sets are of the same size.

Let \(i_k(G)\) be the number of independent sets of size \(k\) in \(G\), where \(0\le k\le \alpha (G)\), and let \(i_0(G)=1\). The sequence \((i_0(G),i_1(G),\ldots ,i_{\alpha (G)}(G))\) is the independence sequence, or vertex independence sequence, of \(G\). The independence polynomial is \(I(G;x)=\sum_{k=0}^{\alpha (G)}i_k(G)x^k\). A sequence \((a_0,\ldots,a_m)\) is weakly increasing if \(a_0\le\cdots\le a_m\) and weakly decreasing if \(a_0\ge\cdots\ge a_m\). A sequence \((a_0,a_1,\ldots ,a_m)\) is unimodal if there is an index \(r\) such that \(a_0\le \cdots \le a_r\ge \cdots \ge a_m\). We say that a polynomial is unimodal if its sequence of coefficients is unimodal. A sequence \((a_0,\ldots ,a_m)\) of positive numbers is log-concave if \(a_k^2\ge a_{k-1}a_{k+1}\) for \(1\le k\le m-1\). A polynomial with positive coefficients is called log-concave if the sequence of its coefficients is log-concave. Any log-concave sequence of positive numbers is also unimodal. The independence polynomial was first defined by Gutman and Harary in 1983~\cite{GH}. Unimodality problems arise naturally in many branches of mathematics and have been extensively investigated.

A matching in a graph \(G\) is a set \(M\subseteq E(G)\) such that no vertex of \(G\) is incident with more than one edge of \(M\). The matching sequence of a graph \(G\) is the sequence whose \(k\)th term is the number of matchings of size \(k\) in \(G\). In 1981, Schwenk proved that the matching sequence of a graph is unimodal~\cite{Schwenk}. H. Wilf asked whether the vertex independence sequence was likewise unimodal. In 1987, Alavi, Malde, Schwenk and Erd\H{o}s showed that the independent set sequence of a graph is not in general unimodal~\cite{AMSE}. This led to the question of which classes of graphs have unimodal independent set sequences. In the same paper, Alavi, Malde, Schwenk and Erd\H{o}s posed the following question~\cite{AMSE}.

\begin{conjecture}\label{conj:993}
For trees (or perhaps forests), is the vertex independence sequence unimodal? 
\end{conjecture}

There have been numerous partial results, mostly exhibiting families of trees with unimodal independent set sequences. In 1990, Hamidoune proved that the independence polynomial of a claw-free graph is log-concave~\cite{Hamidoune}. Chudnovsky and Seymour proved that the independence polynomial of any claw-free graph has only real zeros and therefore is unimodal~\cite{CS}. Levit and Mandrescu proved that the independence polynomials of well-covered spiders and some other well-covered trees are unimodal~\cite{LMSpiders}. Li, Li, Yang and Zhang proved that all spiders with a vertex of degree at least 3 have log-concave independence polynomials~\cite{LLYZ}.

There are also results that establish monotonicity for the two ends of the independence sequence. For every forest \(F\), Levit and Mandrescu proved \(i_{\lceil (2\alpha (F)-1)/3\rceil }(F)\ge \cdots \ge i_{\alpha (F)}(F)\)~\cite{LMEnds}. For a graph \(G\) on \(n\ge1\) vertices, Basit and Galvin proved that \(i_k(G)\ge i_{k+1}(G)\) whenever \(\lceil \alpha (G)(n-1)/(\alpha (G)+n)\rceil \le k<\alpha (G)\)~\cite{BG}. For a tree \(T\) chosen uniformly from all trees with vertex set \(\{1,\ldots ,n\}\), Basit and Galvin proved that \((i_0(T),\ldots ,i_{\lfloor 0.280n\rfloor}(T))\) is weakly increasing and \((i_{\lceil 0.347n\rceil}(T),\ldots ,i_{\alpha (T)}(T))\) is weakly decreasing, with probability tending to 1 as \(n\) tends to infinity~\cite{BG}.

In 2004, Levit and Mandrescu conjectured that the independence polynomials of all forests are log-concave~\cite{LM2004}. Computations run by Radcliffe showed that all trees on 25 or fewer vertices have log-concave independent set sequences~\cite{BGHW}. Kadrawi, Levit, Yosef and Mizrachi found two trees on 26 vertices whose independence sequences are not log-concave~\cite{KLYM}. Kadrawi and Levit subsequently constructed infinite families of such trees~\cite{KL}. Galvin showed that log-concavity can fail arbitrarily far from the end of the independent set sequence~\cite{Galvin}. The failure of log-concavity does not imply a failure of unimodality. In 2026, Fang, Lu, Nevo, Yao and Zheng proved that there is a positive integer \(N_0\) such that the independence sequence of every forest with at least \(N_0\) vertices is unimodal~\cite{FLNYZ}. However, this result does not settle Conjecture~\ref{conj:993} for trees and forests of all sizes. In this paper, we prove Conjecture~\ref{conj:993}. Our main result is the following theorem.

\begin{theorem}\label{thm:main}
For every finite forest \(F\) the independence sequence \(i_0(F),i_1(F),\ldots ,i_{\alpha (F)}(F)\) is unimodal.
\end{theorem}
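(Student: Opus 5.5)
The plan follows the route sketched in the abstract. First reduce unimodality to the absence of a \emph{valley}, meaning an index \(k\) with \(i_{k-1}(F)>i_k(F)<i_{k+1}(F)\). Then use the known monotonicity at the two ends to confine any valley to an explicit window of ranks. On the upper end, the result of Levit and Mandrescu gives \(i_{\lceil(2\alpha-1)/3\rceil}\ge\cdots\ge i_\alpha\), and Basit and Galvin give a similar statement. On the lower end, I would invoke the bounds of Zhang--Li and of Fang, Lu, Nevo, Yao and Zheng, which show the sequence is increasing up to a rank comparable to \(\alpha/2\) or \(n/3\). The theorem then reduces to excluding a valley at each rank \(k\) in this window.

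For a fixed forest \(F\) with at least \(25\) vertices and a rank \(k\) in the window, I would choose the activity \(\lambda\) at which the hard-core mean \(\lambda I'(F;\lambda)/I(F;\lambda)\) equals \(k\). Write \(i_k=[x^k]I(F;x)\) and consider the size \(X\) of a \(\lambda\)-random independent set, so that \(\Pr(X=k)\propto i_k\lambda^k\). A valley at \(k\) is then the statement that the weighted point masses satisfy \(\lambda^{-1}\Pr(X=k-1)>\Pr(X=k)<\lambda\Pr(X=k+1)\).

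To analyse \(X\), I would fix an independent set \(A\) of maximum weight and use the Zhang--Li decomposition relative to \(A\). Conditional on the configuration outside the free part, the number of occupied vertices among the \(M\) free vertices is \(\mathrm{Bin}(M,q)\) with \(q=\lambda/(1+\lambda)\). Hence \(X\) is a mixture of shifted binomials, and a valley becomes a linear moment inequality for the mixing law of \((M,\text{shift})\). The binomial point-mass ratio near the mean is log-concave, so each component individually contributes no valley; the danger comes only from mixing. I would exclude the valley by LP duality. This requires three structural inputs valid for every forest:
\begin{itemize}
\item a bound on \(\Var(M)\) relative to its mean \(\mu_M\);
\item bounds on the Laplace transforms \(\E e^{-tM}\);
\item a bound on the ratio between the variance of the shift and the binomial variance.
\end{itemize}
The dual certificate is a test function \(g(M,\text{shift})\) that dominates the valley kernel pointwise, with expectation controlled by these three bounds. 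For small \(\mu_M\) I would verify this domination numerically on a finite box covering of parameter space, using interval arithmetic. For large \(\mu_M\) I would argue analytically, using a local Gaussian approximation of each binomial fiber with explicit error constants, so that every fiber contributes a positive ``no-valley'' margin and the margins average over the mixture.

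Forests with at most \(24\) vertices would be handled separately by exact counting. Radcliffe's computations, which are known, cover trees on at most \(25\) vertices for log-concavity. For forests I would use the fact that a product of log-concave polynomials with positive coefficients is log-concave, so every forest on at most \(24\) vertices inherits log-concavity, and hence unimodality, from its components.

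The main obstacle is the variance bound on the number of free vertices together with the fiber inequality for large \(\mu_M\), which must hold uniformly over all forests. I would attempt the variance bound by tree recursion on the hard-core messages \(p_v=\Pr(v\text{ occupied}\mid\text{subtree})\): each vertex contributes to \(\Var(M)\) a term bounded by a function of its parent's message, and a one-variable inequality is then closed by finitely many interval checks.
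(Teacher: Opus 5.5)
Your plan for $n\ge25$ follows the paper's architecture: the window, the activity with $\mu_F(\lambda)=k$, the shifted-binomial mixture over an independent set $A$ of maximum weight, duality against the variance, variance-ratio and Laplace bounds, boxes for small $\mu_M$, and a fiber inequality for large $\mu_M$. However, the steps that connect a forest to the parameter domain are misstated or missing.

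\emph{The reduction.} Excluding strict valleys $i_{k-1}>i_k<i_{k+1}$ does not give unimodality: the sequence $2,1,1,2$ has no strict valley and is not unimodal. You must exclude $i_{k-1}\ge i_k\le i_{k+1}$. The duality does this, because its margin inequality is strict. Your translation to $P_k=\Pr(X=k)$ also swaps $\lambda$ and $\lambda^{-1}$: $i_{k-1}\ge i_k$ iff $P_k\le\lambda P_{k-1}$, and $i_{k+1}\ge i_k$ iff $\lambda P_k\le P_{k+1}$.

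\emph{The window.} The known prefix is only $i_0\le\dots\le i_{\lceil n/4\rceil}$, from $(n-3k)i_k\le(k+1)i_{k+1}$. A prefix up to $n/3$ is false: the log-concave sequence of $mP_5$ peaks near its mean $4n/13$ at $\lambda=1$.

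The Levit--Mandrescu tail pushes the activity out of range. For $F=mP_5$ it starts at $2m$. The rank $2m-1$ then needs $\mu_{P_5}(\lambda)=2-1/m$, which exceeds $\mu_{P_5}(7/3)=3108/1567$ once $m\ge61$; the required activity tends to $1+\sqrt2$.

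The paper's window ends at the first mode $h_B$ of $(1+2x)^{\nu}(1+x)^{n-2\nu}$, obtained by ratio domination, with $h_B\le\lceil(3n-2\nu)/6\rceil$. This is matched by two occupation bounds: $\mu_F(1/3)\le n/4$ and the separately proved $\mu_F(7/3)\ge n/2-\nu/3+\operatorname{cc}(F)/7$. Together they place every window rank at an activity in $(1/3,7/3)$, the only range on which the three inputs are established.

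\emph{The lower edge for $\mu_M$.} Your outline never says where $n\ge25$ enters, so the box covering for small $\mu_M$ has no lower edge. In the paper this is the only use of $n\ge25$:
\begin{itemize}
\item a window rank has $k>\lceil n/4\rceil\ge7$, so $k\ge8$;
\item the two colour classes of a proper $2$-colouring are independent and their weights sum to $\mu_F(\lambda)$, so for $A$ of maximum weight $\mu_F(\lambda)\le2\pi(A)=2q\mu_M$;
\item hence $\mu_M\ge4/q\ge40/7$, and the coverings start there.
\end{itemize}
Maximum weight is used for exactly three things: this inequality, the leaf properties behind the variance and Laplace bounds, and the rounding step $\sum_v\pi_vs'_v\le\pi(A)$ of the variance-ratio bound. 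Your plan chooses $A$ of maximum weight but uses none of them.

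\emph{The fiber step.} Above the starting means it is not true that every fiber has a positive margin. The paper proves $\Ff(M,j)\ge\psi(t)=t-\frac92(t-1)^2$, for a specifically weighted kernel, only for $t=M/\mu_M$ above a cut; the curvature of $\psi$ is paid for by $\Var M\le D\mu_M$. The spike fibers below the cut have real deficits, and these are priced by the Laplace constraint at tilt $0$.

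\emph{Small forests.} For $n\le24$ you take a genuinely different route: Radcliffe's computation for trees on at most $25$ vertices, plus the fact that a product of log-concave sequences without internal zeros is log-concave (a tree's coefficients are positive up to $\alpha$). Given that citation this is valid, and it even yields log-concavity.

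The paper enumerates no forest. Relative to a maximum independent set, it bounds a weighted valley functional using counting inequalities for the fiber numbers (including the new matching-excess inequality) together with Catalan and anchored bounds. This settles all $249$ triples $(n,a,k)$ by hand, apart from exact rational evaluations at $43$ of them. Your route is far shorter, but it rests entirely on an external exhaustive enumeration of tens of millions of trees that is not reproduced anywhere in the argument. The paper's route is self-contained and checkable.
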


% BEGIN INTEGRATED CONTEXT
Zhang and Li previously gave a proof of this result~\cite{Z,ZL}. The present argument retains their decomposition of
independent sets relative to a fixed independent set, the conditional binomial mixture, and the bounds that confine a
possible valley to a window of ranks; the initial increasing segment is credited to Fang, Lu, Nevo, Yao and
Zheng~\cite{FLNYZ}. It also uses several of their counting inequalities and the forms of their analytic bounds. The
new route treats forests with at most $24$ vertices by exact counting and forests with at least $25$ vertices by a
single moment argument. The finite-order certificates of the earlier proof are not used in the counting argument,
while the analytic part still depends on explicitly recorded finite computations. The sources of the individual
inputs are credited where they are used.

The computer-assisted steps have different roles. For forests with at most $24$ vertices, exact rational evaluations
supplement the counting argument. For larger forests, finite integer and interval checks establish explicit analytic
bounds, rational box certificates cover the parameter region below the starting means, and finite checks support the
fiber inequality above them. Section~\ref{sec:computer} records the scope and arithmetic of these computations. The
accompanying Lean~4 development~\cite{Lean4}, using Mathlib~\cite{Mathlib}, is described in
Section~\ref{sec:verification}, including the distinction between the written proof and the formal proof for small
forests and the use of compiled evaluations.
% END INTEGRATED CONTEXT

Section~\ref{p1:sec:inventory} fixes the notation, recalls the decomposition of independent sets, and reduces unimodality
to the exclusion of valleys in an explicit window of ranks. Section~\ref{p2:sec:overview} develops the moment argument
for forests with at least $25$ vertices and derives Theorem~\ref{p2:thm:main} from its analytic inputs.
Section~\ref{p2:sec:inputs} establishes the variance, variance-ratio, and Laplace-transform bounds required by this
argument. Section~\ref{p2:sec:atlas} proves the no-valley inequality throughout the parameter domain, using rational
box certificates below explicit starting means and a fiber inequality above them. Section~\ref{p1:sec:overview}
treats forests with at most $24$ vertices by exact counting, completing the proof of Theorem~\ref{thm:main}.
Section~\ref{sec:computer} documents the computer-assisted steps and the supplementary data.
Section~\ref{sec:verification} describes the formal verification and its scope, and Section~\ref{sec:ack} records the
author contributions and the use of AI. Appendices~\ref{apx:p1tables} and~\ref{apx:fiber} give, respectively, the exact
evaluations for small forests and the finite checks and starting means used in the fiber inequality.

\section{Preliminaries}\label{p1:sec:inventory}

This section states the decomposition of Zhang and Li (Lemma~\ref{p1:lem:fiber}) and reduces unimodality to the
absence of a valley in the window (Corollary~\ref{p1:cor:window}); both parts of the proof start from them. Every symbol
has one meaning, except some letters to which Section~\ref{fib:sec} gives local meanings, listed at its start; a symbol
used in one section only, or inside one proof, is defined there. Subscripts, superscripts,
accents and typefaces are part of a symbol's name ($m_{25}$, $z^{\mathrm T}$, $\tilde Y$, $\mathbb E$, $\mathcal J$ differ
from the bare letters), a letter followed by an argument ($m(J)$, $I(F;x)$, $\pi(A)$, $e(F[X])$, $V(F)$) names a function,
distinct from the bare letter, the letter $k$ always denotes a rank, and the bare letters $e$ and $\pi$ denote Euler's
number and the circle constant.

\subsection{Notation}\label{sec:prelim:notation}

We use the terminology of Bondy and Murty~\cite{BM}. Graphs are finite and simple; the empty forest is allowed. For a
forest $F$, $V(F)$ and $E(F)$ are its vertex and edge sets, $n=|V(F)|$, $\NF(v)$ is the neighbourhood of a vertex $v$
and $\NF[v]=\NF(v)\cup\{v\}$ its closed neighbourhood; for $X\subseteq V(F)$, $F[X]$ is the induced subgraph, $e(F[X])$
its number of edges, and $F-X$ the forest obtained by deleting $X$. We write $\alpha(F)$ for the independence number,
$\nu(F)$ for the matching number, $\operatorname{cc}(F)$ for the number of components, $i_k=i_k(F)$ for the number of independent
sets of size $k$, and $I(F;x)=\sum_{k\ge0}i_k(F)x^k$ for the independence polynomial. We put $\binom{r}{s}=0$ unless
$r\ge0$ and $0\le s\le r$, and $[\Iv]=1$ if $\Iv$ holds and $[\Iv]=0$ otherwise.

\begin{definition}[valley]\label{p1:def:valley}
A sequence $(a_r)_{r\ge0}$ of reals has a \emph{valley} at $k\ge1$ if $a_{k-1}\ge a_k\le a_{k+1}$, that is, if
$a_k\le a_{k-1}$ and $a_k\le a_{k+1}$; the valley is \emph{strict} if both inequalities are strict. The sequence is
\emph{unimodal} if $a_0\le\dots\le a_j\ge a_{j+1}\ge\cdots$ for some $j$.
\end{definition}

Excluding a valley at $k$ means that the sequence rises strictly into $k$ or falls strictly out of $k$.

\paragraph{The hard-core measure.}
For $\lambda>0$ the hard-core measure $P_\lambda$ on the independent sets of $F$ (the repulsive lattice gas
of~\cite{SS} on $F$) gives an independent set $S$ the probability $\lambda^{|S|}/I(F;\lambda)$; $\mathbb{E}_\lambda$ and
$\operatorname{Var}_\lambda$ are the expectation and the variance under $P_\lambda$, and $S$ denotes the random
independent set. Thus $P_\lambda(|S|=k)=\lambda^ki_k/I(F;\lambda)$. We write $\mu_F(\lambda)=\mathbb{E}_\lambda|S|$
for the hard-core mean and $q=q(\lambda)=\lambda/(1+\lambda)$.

\subsection{The decomposition of Zhang and Li}\label{p1:subsec:fiber}

Let $A$ be an independent set of $F$ and $C=V(F)\setminus A$. For $J\subseteq C$ let $\NA(J)$ be the set of vertices
of $A$ with a neighbour in $J$ (the \emph{blockers} of $J$), and $m(J)=|A\setminus\NA(J)|$, the number of vertices of
$A$ with no neighbour in $J$. For integers $y,m\ge0$ the \emph{fiber numbers} of $A$ are
\begin{equation}\label{p1:eq:inventory}
N(y,m)=\#\{J\subseteq C:\ J\text{ independent},\ |J|=y,\ m(J)=m\}.
\end{equation}
Under $P_\lambda$, the \emph{free count} is $M=m(S\cap C)$: the number of vertices of $A$ with no neighbour in the
random set $S$ (a vertex of $A$ has all its neighbours in $C$).

\begin{lemma}[the decomposition; Zhang and Li {\cite[Section~2.1, (2) to (3)]{ZL}}; also {\cite[Section~2.1, (2) to (3)]{Z}}]\label{p1:lem:fiber}
For every independent set $A$ of $F$, with $J$ ranging over the independent subsets of $C$,
\begin{equation}\label{p1:eq:fiber}
\begin{gathered}
I(F;x)=\sum_{J}x^{|J|}(1+x)^{m(J)}=\sum_{y,m}N(y,m)\,x^y(1+x)^m,\\
i_k=\sum_{y,m}N(y,m)\binom{m}{k-y}\qquad(k\ge0).
\end{gathered}
\end{equation}
\end{lemma}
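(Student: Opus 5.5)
The plan is to prove the decomposition by partitioning the independent sets of $F$ according to their trace on $C$. Every independent set $S$ of $F$ splits uniquely as $S=J\cup B$ with $J=S\cap C$ and $B=S\cap A$. Here $J$ is an independent subset of $C$, and $B\subseteq A$.

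The first step characterizes, for a fixed independent $J\subseteq C$, the admissible sets $B$.

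\begin{itemize}
\item Since $A$ is independent, there are no edges inside $B$.
\item Since $J$ is independent, there are no edges inside $J$.
\item So $J\cup B$ is independent exactly when no vertex of $B$ has a neighbour in $J$, that is, when $B\subseteq A\setminus\NA(J)$.
\end{itemize}

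Hence the independent sets with trace $J$ on $C$ are in bijection with the subsets of $A\setminus\NA(J)$, a set of size $m(J)$. Note that all neighbours of a vertex of $A$ lie in $C$, because $A$ is independent. The forest structure is not needed here; the statement holds for any graph.

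The second step is the generating function. Summing $x^{|S|}=x^{|J|}x^{|B|}$ over $B\subseteq A\setminus\NA(J)$ gives
\[
\sum_{B\subseteq A\setminus\NA(J)} x^{|J|+|B|}=x^{|J|}(1+x)^{m(J)}.
\]
Summing over all independent $J\subseteq C$ gives the first equality in \eqref{p1:eq:fiber}.

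Grouping the $J$ by the pair $(|J|,m(J))=(y,m)$ and applying the definition \eqref{p1:eq:inventory} of $N(y,m)$ gives the second equality. This is a finite sum, since $N(y,m)=0$ unless $y\le|C|$ and $m\le|A|$.

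The third step extracts the coefficient of $x^k$. Expanding by the binomial theorem,
\[
x^y(1+x)^m=\sum_j\binom{m}{j}x^{y+j}.
\]
The coefficient of $x^k$ is therefore $\binom{m}{k-y}$. This holds with the convention $\binom{r}{s}=0$ outside $0\le s\le r$, which covers the cases $k<y$ and $k-y>m$. Summing over $(y,m)$ gives $i_k=\sum_{y,m}N(y,m)\binom{m}{k-y}$ for all $k\ge0$. For $k=0$ only $J=\emptyset$, $B=\emptyset$ contributes, consistent with $i_0=1$.

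There is no real obstacle. The only point needing care is the claim that the constraint on $B$ is exactly $B\cap\NA(J)=\emptyset$. This uses both facts: there are no edges within $A$, and $J$ is independent in $F$, not merely in some subgraph.
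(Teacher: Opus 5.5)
Your proof is correct and follows the paper's argument. Both write an independent set $S$ as $J\cup(S\cap A)$ with $J=S\cap C$, note that the admissible sets $S\cap A$ are exactly the subsets of $A\setminus\NA(J)$, and sum $x^{|J|}(1+x)^{m(J)}$ over $J$; you also spell out the grouping by $(|J|,m(J))$ and the binomial-theorem coefficient extraction that the paper leaves implicit.
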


\begin{proof}
If $S$ is independent, $J=S\cap C$ is independent and $A'=S\cap A$ avoids $\NA(J)$. Conversely, for independent
$J\subseteq C$ and $A'\subseteq A\setminus\NA(J)$, the set $J\cup A'$ is independent. So the independent $S$ with
$S\cap C=J$ are the sets $J\cup A'$, $A'\subseteq A\setminus\NA(J)$, and they contribute $x^{|J|}(1+x)^{m(J)}$.
\end{proof}

The array $N(y,m)$ is denoted $t_{j,m}$ in \cite[Section~2.1]{Z} and \cite[Section~2.1]{ZL} ($j=|J|$ playing the role of
$y$), where the decomposition is taken relative to a maximum independent set, as in Section~\ref{p1:sec:overview}; the
moment argument takes it relative to an independent set of maximum weight (Proposition~\ref{p2:prop:mixture}).

\subsection{The window, the prefix and the tail}\label{p1:subsec:window}

Let $\nu=\nu(F)$ and
\begin{equation}\label{p1:eq:brow}
B_F(x)=(1+2x)^{\nu}(1+x)^{n-2\nu}=\sum_{s\ge0}b_sx^s,\qquad
b_s=\sum_{j}\binom{\nu}{j}2^j\binom{n-2\nu}{s-j}.
\end{equation}
Let $h_B=h_B(F)$ be the first mode of $(b_s)$, the least index of a maximal entry. The \emph{window} of $F$ is the
set of integers
\[
W(F)=\{k:\ \lceil n/4\rceil<k<h_B(F)\}.
\]
The index $h_B(F)$ is the first mode of $B_F$ and does not depend on the independent set $A$.

\begin{proposition}[prefix, tail and mode bound]\label{p1:prop:inputs}
For every forest $F$ with $n$ vertices:

\textup{(a)} $(n-3k)\,i_k\le(k+1)\,i_{k+1}$ for all $k\ge0$; hence $i_k\le i_{k+1}$ whenever $4k+1\le n$, and
$i_0\le\dots\le i_{\lceil n/4\rceil}$.
\textup{(b)} $i_s\,b_{s-1}\le i_{s-1}\,b_s$ for all $s\ge1$; hence $i_{k+1}\le i_k$ for every $k\ge h_B$.
\textup{(c)} $h_B\le\lceil(3n-2\nu)/6\rceil$.

\end{proposition}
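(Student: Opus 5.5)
We prove the three parts separately; (b) is the main work and (a) is the step whose argument we are least sure of.

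\emph{Part (a).} This is the increasing prefix of Fang, Lu, Nevo, Yao and Zheng~\cite{FLNYZ}, and we would follow their double counting. Count the pairs $(S,v)$ with $S$ independent, $|S|=k$, and $S\cup\{v\}$ independent of size $k+1$. Counted from the top, there are exactly $(k+1)i_{k+1}$ such pairs. Counted from the bottom, their number is $\sum_{|S|=k}(n-|\NF[S]|)$. So it suffices to show
\[
\sum_{|S|=k}|\NF[S]|\le 3k\,i_k .
\]
We cannot bound $|\NF[S]|\le 3k$ pointwise, because of stars, so the bound has to hold on average.

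To get it, root every component of $F$. Each vertex of $\NF(S)$ is either the parent of a vertex of $S$, and there are at most $k$ of those, or a child of a vertex of $S$. For the children we would use a switching argument: replacing $s\in S$ by a child $u$ of $s$ charges the excess neighbourhood of one set to sets of the same size with smaller neighbourhood, which bounds the total number of children by $k\,i_k$ on average. The inequality $i_k\le i_{k+1}$ for $4k+1\le n$ is then immediate, since $n-3k\ge k+1$ there. We expect making this charging injective in a general forest to be the delicate point of (a). If it resists, we simply cite~\cite{FLNYZ}.

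\emph{Part (b).} Fix a maximum matching of $F$ and let $H$ be the spanning subgraph consisting of its $\nu$ edges and $n-2\nu$ isolated vertices. Then $B_F(x)=I(H;x)$, so $b_s=i_s(H)$. Since $F\supseteq H$, the independent sets of $F$ form a down-closed subfamily $\mathcal D$ of the independent sets of $H$.

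The poset of independent sets of $H$ is a product of $\nu$ copies of the three-element poset $\{\emptyset,\{a\},\{b\}\}$ and $n-2\nu$ two-element chains. Each factor has log-concave Whitney numbers and the LYM property. By the product theorem of Harper and Hsieh--Kleitman, the product therefore has the normalized matching property. Applied to the level $\mathcal D_s$, this gives
\[
\frac{|\partial\mathcal D_s|}{b_{s-1}}\ge\frac{|\mathcal D_s|}{b_s}.
\]
The shadow $\partial\mathcal D_s$ lies in $\mathcal D_{s-1}$ because $\mathcal D$ is down-closed, so $i_s b_{s-1}\le i_{s-1}b_s$.

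For the consequence, take $k\ge h_B$. The sequence $(b_s)$ is log-concave, so $b_{k+1}\le b_k$. Part (b) then gives $i_{k+1}\le i_k b_{k+1}/b_k\le i_k$. If $i_k=0$, then $i_{k+1}=0$ as well, since $\mathcal D$ is down-closed.

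\emph{Part (c).} $B_F$ has only real zeros, namely $-1/2$ and $-1$. Its coefficient mean is
\[
\frac{B_F'(1)}{B_F(1)}=\frac{2\nu}{3}+\frac{n-2\nu}{2}=\frac{3n-2\nu}{6}.
\]
By Darroch's theorem the mode of a real-rooted polynomial is $\lfloor\mu\rfloor$ or $\lceil\mu\rceil$, where $\mu$ is this mean. Hence the first mode satisfies $h_B\le\lceil(3n-2\nu)/6\rceil$.
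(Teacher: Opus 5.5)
Parts (b) and (c) follow the paper's argument. In (b) you apply the normalized matching property of the product of matching blocks to the down-closed family of independent sets. In (c) you use Darroch's theorem with mean $n/2-\nu/3$; the paper proves that half directly from the Euler identity of Lemma~\ref{apC:lem:B7}. One small correction in (b): the degenerate case is not $i_k=0$ but $b_k=0$, that is $k>n-\nu$, where you cannot divide by $b_k$. There $i_{k+1}\le b_{k+1}=0$ directly.

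The gap is in part (a). Your reduction is exact, since $(k+1)i_{k+1}=\sum_{|S|=k}(n-|N_F[S]|)$. After the parent/child split, it suffices to show the following. The triples $(S,s,u)$, with $S$ an independent $k$-set, $s\in S$ and $u$ a child of $s$, number at most $k\,i_k$, which is the number of pairs $(S',v)$ with $v\in S'$. This is the step you leave open, and the switch you sketch does not give it. The set $S-s+u$ is not independent when some child of $u$ lies in $S$. Comparing with ``sets of the same size with smaller neighbourhood'' is also not what is needed.

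The missing idea is a second case.
\begin{itemize}
\item If no child of $u$ lies in $S$, send $(S,s,u)$ to $(S-s+u,\,u)$.
\item Otherwise send it to $(S,w)$, where $w\in S$ is a fixed choice of child of $u$.
\end{itemize}
Each case is injective. From $(S',u)$ you recover $s$ as the parent of $u$ and $S=S'-u+s$. From $(S,w)$ you recover $u$ as the parent of $w$ and $s$ as the parent of $u$. The two images are disjoint. For a first-case image $(S',u)$, the set $S'$ with $u$ replaced by its parent is the original $S$, hence independent. For a second-case image $(S,w)$, the set $S-w+u$ contains the edge $us$. Hence $\sum_S\sum_{s\in S}|\mathrm{ch}(s)|\le k\,i_k$, where $\mathrm{ch}(s)$ is the set of children of $s$. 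So $\sum_S|N_F(S)|\le 2k\,i_k$ and $\sum_S|N_F[S]|\le 3k\,i_k$, which is (a).

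With this completion your route is a valid double-counting proof. It shows the constant $3$ as one for $S$ itself, one for parents, and one on average for children. The paper takes a different route: it shows that $(1+3x)I'(F;x)-n\,I(F;x)$ has nonnegative coefficients by induction, deleting an isolated vertex or a leaf. That needs no rooting or charging. Citing \cite{FLNYZ} as a fallback is legitimate, since the paper credits the initial segment there. But then (a) is cited, not proved, in your proposal.
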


\emph{Credit.} Parts (a) and (b) are, in this form, Lemma~3.1 of Zhang and Li~\cite[Section~3]{ZL}. There the initial
segment $i_0\le\dots\le i_{\lceil n/4\rceil}$ is proved from the same bound $(k+1)i_{k+1}\ge(n-3k)i_k$ and is credited
to Fang, Lu, Nevo, Yao and Zheng~\cite[Prop.~8.2]{FLNYZ}; the terminal segment is proved by ratio domination against
the polynomial $(1+2x)^{\nu}(1+x)^{\alpha(F)-\nu}$ of a maximum matching, which for a forest ($\alpha(F)+\nu=n$) is the
generating function of our $(b_s)$, with the mode bound $\lfloor(n+2\alpha(F))/6\rfloor+1$ in place of our (c). The formal proofs of
(a) to (c) (Section~\ref{sec:verification}) were written for this paper.

\begin{proof}
(a) The polynomial $\Delta_F(x)=(1+3x)I'(F;x)-n\,I(F;x)$ has coefficient $(k+1)i_{k+1}+3ki_k-ni_k$ at $x^k$, and
nonnegative coefficients by induction: $\Delta_\emptyset=0$; $\Delta_F=(1+x)\Delta_{F-v}+2x\,I(F-v;x)$ for an
isolated vertex $v$; $\Delta_F=\Delta_{F-u}+x\,\Delta_{F-u-v}+x\,\bigl(I(F-u-v;x)-I(F-\NF[v];x)\bigr)$ for a leaf $u$
with neighbour $v$, the bracket being nonnegative since $F-\NF[v]\subseteq F-u-v$.
(b) The independent sets form a down-closed family in the product of the blocks of a maximum matching (its edges and
unmatched vertices), whose rank numbers are the $b_s$; the normalized matching property of such products (Hsieh and
Kleitman~\cite{HK}, Harper~\cite{Harper}) gives $i_s/b_s$ nonincreasing (the formal proof is an induction on the
blocks). $B_F$ is log-concave with positive coefficients exactly on $[0,n-\nu]$, so $b$ is nonincreasing from $h_B$,
and $i_s=0$ for $s>n-\nu$.
(c) Part (c) is one half of Darroch's theorem \cite{Darroch}, the mean of $(b_s)$ being $n/2-\nu/3$; it is proved from
an Euler-operator identity in Lemma~\ref{apC:lem:B7}.
\end{proof}

\begin{lemma}[rank logic]\label{p1:lem:ranklogic}
Let $(a_r)_{r\ge0}$ be real and $k_{\mathrm{lo}},k_{\mathrm{hi}}$ integers with $a_r\le a_{r+1}$ for $r<k_{\mathrm{lo}}$,
$a_{r+1}\le a_r$ for $r\ge k_{\mathrm{hi}}$, and no valley at any $k$ with $k_{\mathrm{lo}}<k<k_{\mathrm{hi}}$. Then $(a_r)$
is unimodal.
\end{lemma}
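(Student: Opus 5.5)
The plan is to take as peak the end of the longest weakly increasing run that starts at the left end of the window, and to show that everything after it weakly decreases. The non-strict form of a valley in Definition~\ref{p1:def:valley} is what makes this work: a valley at $k$ requires only $a_{k-1}\ge a_k\le a_{k+1}$, so a single rise after a weak fall already produces one. Throughout I will work with $k_0=\max(k_{\mathrm{lo}},0)$ in place of $k_{\mathrm{lo}}$, since the hypothesis $a_r\le a_{r+1}$ for $r<k_{\mathrm{lo}}$ only says something for $r\ge0$.

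\emph{Degenerate case $k_{\mathrm{hi}}\le k_0$.} Put $j=k_0$. The first hypothesis gives $a_0\le\dots\le a_j$. Every $r\ge j$ satisfies $r\ge k_{\mathrm{hi}}$, so the second hypothesis gives $a_j\ge a_{j+1}\ge\cdots$. Hence the sequence is unimodal.

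\emph{Main case $k_0<k_{\mathrm{hi}}$.} Let $j$ be the largest index in $[k_0,k_{\mathrm{hi}}]$ with $a_{k_0}\le a_{k_0+1}\le\dots\le a_j$. Such an index exists because $j=k_0$ qualifies. Together with the first hypothesis this gives $a_0\le\dots\le a_j$.

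\begin{itemize}
\item If $j=k_{\mathrm{hi}}$, the second hypothesis gives $a_r\ge a_{r+1}$ for all $r\ge j$, and we are done.
\item Otherwise $j<k_{\mathrm{hi}}$, and maximality of $j$ gives $a_{j+1}<a_j$. I will prove $a_{r+1}\le a_r$ for $r=j,j+1,\dots,k_{\mathrm{hi}}-1$ by induction on $r$.
\begin{itemize}
\item The base case $r=j$ is the strict inequality just obtained.
\item For the step, take $j<r<k_{\mathrm{hi}}$ with $a_r\le a_{r-1}$. If we had $a_{r+1}>a_r$, then $a_{r-1}\ge a_r\le a_{r+1}$ would be a valley at $r$.
\item Since $k_{\mathrm{lo}}\le k_0\le j<r<k_{\mathrm{hi}}$, this $r$ lies strictly inside the window, contradicting the no-valley hypothesis. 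Hence $a_{r+1}\le a_r$.
\end{itemize}
\item From $k_{\mathrm{hi}}$ onward, the second hypothesis supplies $a_{r+1}\le a_r$ directly. So $a_j\ge a_{j+1}\ge\cdots$, and the sequence is unimodal with peak $j$.
\end{itemize}

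There is no real obstacle here; the only care needed is bookkeeping at the boundaries. First, the case $k_{\mathrm{lo}}<0$ is handled by replacing $k_{\mathrm{lo}}$ with $k_0$. Second, every rank $r$ at which the valley hypothesis is invoked must satisfy $k_{\mathrm{lo}}<r<k_{\mathrm{hi}}$, which holds because $r>j\ge k_0\ge k_{\mathrm{lo}}$. Third, the argument relies on the non-strict valley notion, as noted at the start.
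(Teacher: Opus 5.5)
Your proof is correct and rests on the same observation as the paper's, namely that a weak fall followed by a strict rise is a valley, which the prefix and tail hypotheses force into the window. The paper argues by contradiction, taking a strict fall followed later by a strict rise and finding a valley at the first rise after the fall, while you unroll that minimality argument as a forward induction from the end of the initial increasing run; the difference is only one of organization.
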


\begin{proof}
If no strict fall $a_{i+1}<a_i$ is followed by a strict rise $a_{j+1}>a_j$, $j>i$, the sequence is unimodal (peak at
the first strict fall, or at $\max(k_{\mathrm{hi}},0)$ if there is none). Otherwise take such $i<j$ and let $k$ be the least index
greater than $i$ with $a_{k+1}>a_k$. If $k=i+1$ then $a_k<a_{k-1}$; otherwise minimality gives $a_k\le a_{k-1}$. So
there is a valley at $k$. The fall at $i$ and the prefix give $i\ge k_{\mathrm{lo}}$, so $k>k_{\mathrm{lo}}$; the rise at $k$
and the tail give $k<k_{\mathrm{hi}}$. Contradiction.
\end{proof}

\begin{corollary}[the window reduction]\label{p1:cor:window}
If a forest $F$ has no valley at any $k\in W(F)$, its independence sequence is unimodal.
\end{corollary}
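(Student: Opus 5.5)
The corollary should follow by applying Lemma~\ref{p1:lem:ranklogic} to the independence sequence, extended by zeros, $a_r=i_r(F)$ for all $r\ge0$. We take $k_{\mathrm{lo}}=\lceil n/4\rceil$ and $k_{\mathrm{hi}}=h_B(F)$. With these choices the open range $k_{\mathrm{lo}}<k<k_{\mathrm{hi}}$ is exactly the window $W(F)$, so the no-valley hypothesis of the lemma is precisely the hypothesis of the corollary.

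It remains to check the two monotonicity hypotheses.

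\emph{The prefix.} For $r<\lceil n/4\rceil$ we have $r\le\lceil n/4\rceil-1<n/4$, hence $4r<n$ and so $4r+1\le n$ for integers. Proposition~\ref{p1:prop:inputs}(a) then gives $i_r\le i_{r+1}$.

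\emph{The tail.} For $r\ge h_B$, Proposition~\ref{p1:prop:inputs}(b) gives $i_{r+1}\le i_r$. This uses the zero extension: part (b) is stated for all $s\ge1$, and $i_s=0$ beyond $\alpha(F)=n-\nu$, so the inequality also holds past the top index.

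The lemma then shows that the infinite sequence $(i_r)_{r\ge0}$ is unimodal. Truncating to $i_0,\dots,i_{\alpha(F)}$ keeps it unimodal. If the peak index $j$ exceeds $\alpha(F)$, then the terms up to $\alpha(F)$ are weakly increasing, which is still unimodal.

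The only point needing care is the edge cases. If $h_B\le\lceil n/4\rceil+1$, the window is empty and the lemma applies vacuously. The empty forest has $n=0$, and the statement is trivial there. There is no real obstacle; the substance lies entirely in Proposition~\ref{p1:prop:inputs}.
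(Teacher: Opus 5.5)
Your proof is correct and is the paper's argument: apply Lemma~\ref{p1:lem:ranklogic} with $k_{\mathrm{lo}}=\lceil n/4\rceil$ and $k_{\mathrm{hi}}=h_B(F)$, with the prefix and tail hypotheses supplied by Proposition~\ref{p1:prop:inputs}(a) and (b). Your remarks on the zero extension and on truncating to $i_0,\dots,i_{\alpha(F)}$ are valid details that the paper leaves implicit.
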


\begin{proof}
Lemma~\ref{p1:lem:ranklogic} with $k_{\mathrm{lo}}=\lceil n/4\rceil$ and $k_{\mathrm{hi}}=h_B(F)$: the hypotheses on the prefix and the tail are
Proposition~\ref{p1:prop:inputs}(a) and~(b).
\end{proof}

% ---------------------------------------------------------------------------------------------------------------
\section{The moment argument}\label{p2:sec:overview}

This section proves Theorem~\ref{p2:thm:main}, the case $n\ge25$ of Theorem~\ref{thm:main}, from the inputs of
Section~\ref{p2:sec:inputs} and the no-valley inequality on the parameter domain of Section~\ref{p2:sec:atlas}.

\begin{theorem}\label{p2:thm:main}
Every forest with at least $25$ vertices has a unimodal independence sequence.
\end{theorem}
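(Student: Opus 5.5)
By Corollary~\ref{p1:cor:window} it suffices to fix a forest $F$ with $n\ge25$ and a rank $k\in W(F)$, and to show that $F$ has no valley at $k$. That is, we must rule out $i_{k-1}\ge i_k\le i_{k+1}$. The plan is to turn this into a moment inequality for a binomial mixture.

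\textbf{Step 1: choose the activity.} Since $\mu_F$ is continuous and increasing from $0$ to $\alpha(F)$, and $\lceil n/4\rceil<k<h_B\le\alpha(F)$ by Proposition~\ref{p1:prop:inputs}(c), there is a unique $\lambda_k$ with $\mu_F(\lambda_k)=k$. We set $q=\lambda_k/(1+\lambda_k)$. Under $P_{\lambda_k}$ the probabilities $P(|S|=r)=\lambda_k^r i_r/I(F;\lambda_k)$ are the $i_r$ multiplied by a geometric factor. A valley at $k$ is therefore equivalent to
\[
P(|S|=k)\le \lambda_k^{-1}P(|S|=k+1)\cdot\lambda_k \text{-adjusted conditions},
\]
which is a pair of linear inequalities in the law of $|S|$ with weights $\lambda_k^{\pm1}$.

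\textbf{Step 2: decompose.} Take $A$ to be an independent set of maximum weight for the relevant weighting (Proposition~\ref{p2:prop:mixture}). Lemma~\ref{p1:lem:fiber} then gives $|S|=|J|+\mathrm{Bin}(M,q)$ conditionally on $J=S\cap C$, where $M=m(J)$ is the free count. So the law of $|S|$ is a mixture of shifted binomials indexed by $(|J|,M)$. The valley inequality becomes an inequality of the form $\E[g_k(|J|,M)]\ge0$, where $g_k$ is a fixed kernel built from the binomial probabilities $\binom{M}{k-y+\epsilon}q^{\cdot}(1-q)^{\cdot}$ with $\epsilon\in\{-1,0,1\}$. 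The mean condition $\E|S|=k$ supplies one linear constraint.

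\textbf{Step 3: exclude by duality.} To show the mixture cannot satisfy both valley inequalities, exhibit multipliers $a,b,c$ and a function $\phi$ in a small class (quadratic in $M$, plus exponential terms $e^{-tM}$) such that $g_k\ge a+b(\text{mean constraint})+\phi$ pointwise. The expectation of $\phi$ is then controlled by three forest-wide inputs:
\begin{itemize}
\item[] an upper bound on $\Var M$ in terms of $\mu_M=\E M$ and $q$;
\item[] bounds on the Laplace transforms $\E e^{-tM}$;
\item[] a bound on the ratio of the variance of $M$ to the variance of $|S|$.
\end{itemize}
These are the inputs proved in Section~\ref{p2:sec:inputs}. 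Combining them leaves a no-valley inequality in finitely many parameters, such as $(q,\mu_M,\text{variance data})$, over an explicit domain.

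\textbf{Step 4: verify on the parameter domain.} This is the main obstacle and is the content of Section~\ref{p2:sec:atlas}. For bounded $\mu_M$ (below an explicit starting mean), I would cover the compact domain with rational boxes and certify the inequality on each box by exact arithmetic. For large $\mu_M$, binomial fibers concentrate and are nearly Gaussian. There I would prove a single uniform fiber inequality $g_k(y,m)\ge \text{(quadratic lower bound)}$ with explicit constants, using local-CLT type estimates with explicit error terms, and then average it over the mixture.

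The delicate points are two. First, making the variance and Laplace bounds strong enough to close the duality gap near the window endpoints. Second, matching the large-$\mu_M$ asymptotic regime with the box-certified regime at the starting mean. The hypothesis $n\ge25$ would be used to keep the domain away from degenerate small-$\mu_M$ corners, since $k>n/4$ forces the means to be moderately large.
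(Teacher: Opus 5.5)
Your architecture matches the paper's: window reduction, an activity with $\mu_F(\lambda)=k$, a maximum-weight $A$, the binomial mixture, and a dual certificate checked on boxes below a starting mean and by a fiber inequality above it. The certificate of Step~3, however, cannot work as you set it up. Your minorant is $a+b\,\delta+\phi(M)$, where $\delta=k-|J|-qM$ and $\phi$ depends on $M$ only. The inputs you list control only the law of $M$; the third one, a bound on $\operatorname{Var}M/\operatorname{Var}|S|$, is not what is needed.

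These constraints do not exclude a valley. Take a mixture with constant $M=m$ and two values of $|J|$, one with $k-|J|\ge m+2$ and one with $k-|J|\le-2$, weighted so that $\mathbb{E}\delta=0$. It has $\operatorname{Var}M=0$, and $\mathbb{E}(1-q+q\tau)^M=(1-q+q\tau)^m\le e^{-\ell m}$ for every admissible rate $\ell\le\log\frac{1+\lambda}{1+\lambda\tau}$. Yet $P_{k-1}=P_k=P_{k+1}=0$, which is a valley. For $q>\frac12$ and $m$ large this is even compatible with $k\le 2q\muM$.

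The missing idea is to penalize the spread of the offset. The pointwise inequality must carry $\xi\delta+\eta\delta^2$ with $\eta>0$ on the kernel side; this is condition (P) of Definition~\ref{p2:def:mgf2}. The input that pays for it is $\mathbb{E}\delta^2=\operatorname{Var}_\lambda|S|-q(1-q)\muM\le\theta(\iota)\,q(1-q)\muM$. This is the variance-ratio bound $\operatorname{Var}_\lambda|S|\le(1+\theta)(1-q)\pi(A)$ of Theorem~\ref{p2:thm:O2}, combined with $q\muM=\pi(A)$. The same penalty, recentred at $u=\delta-\frac r2$, is the term $u^2/\bar\sigma^2$ of the fiber function $\Ff$. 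Without it, the weighted kernel has a negative minimum on every fiber, so no lower bound of the kind you want in Step~4 can hold.

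Second, Step~1 only gives some $\lambda_k>0$. The variance, variance-ratio and Laplace-transform bounds, and the whole parameter domain, are available only for $\lambda\in[\frac13,\frac73]$. This localization is also what makes your domain compact in $q$. It is Proposition~\ref{p2:prop:activity}, which has two halves:
\begin{itemize}
\item $\mu_F(\frac13)\le\frac n4<k$, by the local LYM inequality (Lemma~\ref{apC:lem:B2});
\item $k<\frac n2-\frac\nu3\le\mu_F(\frac73)$, by the mode bound (Lemma~\ref{apC:lem:B7}) and the occupation bound at $\frac73$ (Theorem~\ref{apC:thm:B5}). The latter is a nontrivial induction with finitely many integer checks.
\end{itemize}

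Finally, make the use of $n\ge25$ explicit. It gives $k\ge8$. The two colour classes of a proper $2$-colouring are independent and $A$ has maximum weight, so $\mu_F(\lambda)\le2\pi(A)=2q\muM$. This turns $k\ge8$ into $\muM\ge 8/(2q(\hat\lambda_{\iota+1}))=m_{25}(\iota)$, the lower edge of the parameter domain. Nothing else in the argument depends on $n$.
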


% ---------------------------------------------------------------------------------------------------------------
\subsection{The proof from its inputs}\label{p2:sec:assembly}

This subsection proves Theorem~\ref{p2:thm:main} from the results of Sections~\ref{p2:sec:mixture} to~\ref{p2:sec:atlas}.
The activities $[1/3,7/3]$ are divided into $55$ subintervals $\Ihat_\iota$, and $\iota(\lambda)$ is the index of the one
containing $\lambda$ (Section~\ref{p2:sec:inputs}); $q$, $M$, $\delta$ and the forest mixture are defined in
Section~\ref{p2:sec:mixture}.

\begin{proof}[Proof of Theorem~\ref{p2:thm:main}]
Let $n\ge25$ and $k\in W(F)$.
First, choose $\lambda\in(1/3,7/3)$ with $\mu_F(\lambda)=k$ (Proposition~\ref{p2:prop:activity}) and an independent set
  $A$ of maximum weight at $\lambda$, and consider the forest mixture of $A$ at $\lambda$, with $\muM=\mathbb{E}M$.

The forest mixture is a probability mixture with $\mathbb{E}\delta=0$ (Proposition~\ref{p2:prop:mixture}), and it
  satisfies the hypotheses of Theorem~\ref{p2:thm:O4}: the variance-ratio bound (Theorem~\ref{p2:thm:O2}), the variance
  bound (Theorem~\ref{p2:thm:O1}), the Laplace-transform bounds (Theorem~\ref{p2:thm:O3}) and the lower bound
  $\muM\ge m_{25}(\iota(\lambda))$ for the mean free count (Theorem~\ref{p2:thm:O5}).

Finally, by Theorem~\ref{p2:thm:O4} the forest mixture has no valley at $k$. By Proposition~\ref{p2:prop:mixture} its law is
  $P_r=i_r\lambda^r/I(F;\lambda)$, so $F$ has no valley at $k$, and Corollary~\ref{p1:cor:window} concludes.

The hypothesis $n\ge25$ is used only through Theorem~\ref{p2:thm:O5}.
\end{proof}

\subsection{The mixture over an independent set of maximum weight}\label{p2:sec:mixture}

This subsection writes the law of $|S|$ as a mixture of shifted binomial laws (Proposition~\ref{p2:prop:mixture}), the
input of the criterion of Section~\ref{p2:sec:criterion}; the mixture over an independent set of maximum weight is due to
Zhang and Li~\cite{ZL}. With the hard-core measure $P_\lambda$ of Section~\ref{sec:prelim:notation} write $K=|S|$,
$\pi_v=\pi_v(\lambda)=P_\lambda(v\in S)$ and $q=q(\lambda)=\lambda/(1+\lambda)$, so $q/(1-q)=\lambda$; for $\lambda\in[1/3,7/3]$,
$q\in[1/4,7/10]$.

\paragraph{Independent sets of maximum weight.}
The \emph{weight} of an independent set $A$ at $\lambda$ is $\pi(A)=\sum_{v\in A}\pi_v$. In
Sections~\ref{p2:sec:overview} to~\ref{p2:sec:atlas}, $A$ is an independent set of maximum weight at $\lambda$ (one exists;
ties are allowed, and every statement holds for every maximizer); it is not the maximum independent set of
Section~\ref{p1:sec:overview}. Two facts are used. (a) $\mu_F(\lambda)\le 2\pi(A)$: the colour classes of a proper
$2$-colouring are independent and their weights add up to $\mu_F$ \cite[Section 3]{Z}. (b) (L1) If $u$ is a leaf whose
neighbour $v$ has another neighbour, then $u\in A$ and $v\notin A$ (since $\NF[u]\subsetneq\NF[v]$ forces
$\pi_u>\pi_v$); (L2) each single-edge component has exactly one endpoint in $A$; (L3) isolated vertices lie in $A$
(\cite[proof of Lemma 4.1]{Z}). The variance bound uses (L1), the Laplace-transform bounds use (L1) and (L2).

\paragraph{The fibers.}
Put $C=V(F)\setminus A$. Every independent set is uniquely $S=J\cup(S\cap A)$ with $J=S\setminus A$ independent in $C$
and $S\cap A$ any subset of the $m(J)$ vertices of $A$ with no neighbour in $J$ (Lemma~\ref{p1:lem:fiber}). Hence
$I(F;\lambda)=\sum_J\lambda^{|J|}(1+\lambda)^{m(J)}$, and given $J$ the set $S\cap A$ is a $q$-random subset of these
$m(J)$ free vertices. For $J=S\setminus A$ put $K_C=|J|$, $M=m(J)$ and $\muM=\mathbb{E}_\lambda M$. Then
$K=K_C+\operatorname{Bin}(M,q)$.

\paragraph{Mixtures.}
A \emph{mixture} is a finite family of states $J$ with weights $w(J)\in\mathbb{R}$, free counts
$M(J)\in\mathbb{N}$ and sizes $K_C(J)\in\mathbb{N}$. Its law is $P_k=\sum_J w(J)\,\binp_{M(J)}(k-K_C(J);q)$ ($k\in\mathbb{Z}$), where
$\binp_M(j;q)=\binom Mj q^j(1-q)^{M-j}$ (zero outside $[0,M]$); we write $\mathbb{E}\tg(M,K_C)=\sum_Jw(J)\,\tg(M(J),K_C(J))$,
$\muM=\mathbb{E}M$, $\operatorname{Var}M=\mathbb{E}(M-\muM)^2$ and $P(M\le M')=\sum_{J:\,M(J)\le M'}w(J)$. It is a
\emph{probability mixture} if $w\ge0$ and $\sum_Jw(J)=1$. The \emph{forest mixture} of $A$ at $\lambda$ has the
independent sets $J\subseteq C$ as states, with $w(J)=\lambda^{|J|}(1+\lambda)^{m(J)}/I(F;\lambda)$, $M(J)=m(J)$ and
$K_C(J)=|J|$. A \emph{valley} of a mixture at the integer $k$ is the pair of inequalities
$(1-q)P_k\le qP_{k-1}$ and $qP_k\le(1-q)P_{k+1}$; for $P_k=i_k\lambda^k/I(F;\lambda)$ it says exactly $i_k\le i_{k-1}$ and
$i_k\le i_{k+1}$, because $q/(1-q)=\lambda$.

\begin{proposition}[Zhang and Li {\cite[Section 3, (41) to (42)]{ZL}}; also {\cite[(45) to (46)]{Z}}]\label{p2:prop:mixture}
For $\lambda>0$ and every independent $A$: the weights of the forest mixture form a probability; its law is
$P_r=i_r\lambda^r/I(F;\lambda)$ for all $r$; if $\mu_F(\lambda)=k$ then $\mathbb{E}\delta=0$ and
$\mathbb{E}\delta^2=\operatorname{Var}_\lambda K-q(1-q)\muM$, where $\delta=k-K_C-qM$; and $q\muM=\pi(A)$.
Consequently $\operatorname{Var}_\lambda|S\cap A|=(1-q)\,\pi(A)+q^2\operatorname{Var}M$, and if $A$ has maximum weight,
\begin{equation}\label{p2:eq:consequences}
\muM\ \ge\ \mu_F(\lambda)/(2q).
\end{equation}
\end{proposition}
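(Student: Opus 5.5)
The plan is to read every claim off the decomposition of Lemma~\ref{p1:lem:fiber} at $x=\lambda$, together with the fact that, conditionally on $J=S\setminus A$, the set $S\cap A$ is a $q$-random subset of the $m(J)$ free vertices. The moment identities will then follow from the laws of total expectation and total variance.

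\emph{Probability and law.} Summing the weights $w(J)=\lambda^{|J|}(1+\lambda)^{m(J)}/I(F;\lambda)$ over independent $J\subseteq C$ gives $1$, by the first identity of Lemma~\ref{p1:lem:fiber} at $x=\lambda$. For the law, use $q=\lambda/(1+\lambda)$ and $1-q=1/(1+\lambda)$. Then
\[
\lambda^{|J|}(1+\lambda)^{m}\,\binp_m(r-|J|;q)=\binom{m}{r-|J|}\lambda^{r},
\]
so summing over $J$ gives $P_r=\lambda^r\sum_{y,m}N(y,m)\binom{m}{r-y}/I(F;\lambda)=i_r\lambda^r/I(F;\lambda)$, by the second identity of the lemma. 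Equivalently, the forest mixture is the distribution of $(K_C,M)$ under $P_\lambda$, and $K=K_C+\operatorname{Bin}(M,q)$ conditionally on $J$. I will use this identification throughout.

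\emph{Weight identity.} For $v\in A$, the event $v\in S$ requires $v$ to be free, i.e.\ $v\notin\NA(J)$. Given that $v$ is free, $v\in S$ with probability $q$. Hence $\pi_v=q\,P_\lambda(v\text{ free})$, and summing over $v\in A$ gives $\pi(A)=q\,\mathbb{E}M=q\muM$.

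\emph{Moments of $\delta$.} Since $\mathbb{E}[K\mid J]=K_C+qM$, we get $\mathbb{E}(K_C+qM)=\mathbb{E}_\lambda K=\mu_F(\lambda)=k$, so $\mathbb{E}\delta=0$. Consequently $\mathbb{E}\delta^2=\operatorname{Var}(K_C+qM)$. The law of total variance gives
\[
\operatorname{Var}_\lambda K=\operatorname{Var}(\mathbb{E}[K\mid J])+\mathbb{E}\operatorname{Var}(K\mid J)=\mathbb{E}\delta^2+q(1-q)\muM,
\]
because $\operatorname{Var}(K\mid J)=q(1-q)M$. This is the stated formula for $\mathbb{E}\delta^2$.

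\emph{Variance of $|S\cap A|$.} Conditionally on $J$, $|S\cap A|$ is $\operatorname{Bin}(M,q)$. Total variance gives
\[
\operatorname{Var}_\lambda|S\cap A|=q(1-q)\muM+\operatorname{Var}(qM)=(1-q)\pi(A)+q^2\operatorname{Var}M,
\]
using $q\muM=\pi(A)$.

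\emph{The bound \eqref{p2:eq:consequences}.} If $A$ has maximum weight, fact~(a) of the preceding paragraph gives $\mu_F(\lambda)\le2\pi(A)$. Combined with $\pi(A)=q\muM$, this yields $\muM\ge\mu_F(\lambda)/(2q)$.

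None of these steps is a real obstacle. The only point needing care is the bookkeeping that identifies the abstract mixture with the joint law of $(K_C,M)$ under $P_\lambda$. That identification is exactly what licenses the use of the conditional binomial structure in the total-expectation and total-variance steps.
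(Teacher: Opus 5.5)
Your proposal is correct and follows essentially the paper's route. The paper's ``master formula'' is your identification of the forest mixture with the $P_\lambda$-law of $J=S\setminus A$, together with the conditional $\operatorname{Bin}(M,q)$ structure; it is applied to the test functions $1$, $[K_C+j=r]$ and $j$, followed by the law of total variance and fact~(a) exactly as you use them. Your vertex-by-vertex derivation of $q\muM=\pi(A)$ is simply the test function $j$ written out.
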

\begin{proof}
The master formula
$\mathbb{E}_\lambda[\tg(|S\setminus A|,|S\cap A|)]=\mathbb{E}_{J}\big[\sum_{j\le M}\tg(K_C,j)\,\binp_M(j;q)\big]$ follows
from the decomposition and $(1+\lambda)^M\binp_M(j;q)=\binom Mj\lambda^j$; apply it to the functions $1$, $[K_C+j=r]$ and
$j$ of $(K_C,j)$. The law of total variance, given $J$, gives $\operatorname{Var}_\lambda K=\mathbb{E}(K_C+qM-\mu_F(\lambda))^2
+q(1-q)\muM$ and $\operatorname{Var}_\lambda|S\cap A|=q^2\operatorname{Var}M+q(1-q)\muM$. The last claim is (a) with
$q\muM=\pi(A)$.
\end{proof}

% ---------------------------------------------------------------------------------------------------------------
\subsection{The criterion}\label{p2:sec:criterion}

This subsection proves the MGF criterion (Theorem~\ref{p2:thm:mgf2}), which excludes a valley of a probability mixture
by duality from moment information; the no-valley inequality on the parameter domain (Theorem~\ref{p2:thm:O4}) applies it,
with certificates read off from rational boxes for small means and built in closed form from a fiber inequality for the
others (Section~\ref{p2:sec:atlas}).
With $f_{\mathrm L}(M,j)=(1-q)\binp_M(j)-q\,\binp_M(j-1)$ and $f_{\mathrm R}(M,j)=q\,\binp_M(j)-(1-q)\binp_M(j+1)$, the
valley kernel with weights $w_{\mathrm L},w_{\mathrm R}\ge0$ is
\[
\kappa_2(M,j)=w_{\mathrm L}f_{\mathrm L}(M,j)/q+w_{\mathrm R}f_{\mathrm R}(M,j)/(1-q)\qquad(M\in\mathbb{N},\ j\in\mathbb{Z}),
\]
and $\kappa=f_{\mathrm L}/q+f_{\mathrm R}/(1-q)$ is the case $w_{\mathrm L}=w_{\mathrm R}=1$. In this subsection mixtures are
arbitrary mixtures in the sense of Section~\ref{p2:sec:mixture}, $0<q<1$, and $\delta=k-K_C-qM$ at the rank $k$
considered.

\begin{lemma}[the valley as a moment inequality]\label{apC:lem:valley}
For every mixture, every real $q$ and every integer $k$,
\[
\mathbb{E}f_{\mathrm L}(M,k-K_C)=(1-q)P_k-qP_{k-1},\qquad \mathbb{E}f_{\mathrm R}(M,k-K_C)=qP_k-(1-q)P_{k+1}.
\]
Consequently, if $0<q<1$, $w_{\mathrm L},w_{\mathrm R}\ge0$ and the mixture has a valley at $k$, then
$\mathbb{E}\kappa_2(M,k-K_C)\le0$.
\end{lemma}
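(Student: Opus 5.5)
The plan is to prove the two identities state by state and then sum them against the weights $w(J)$. The definition of the law, $P_k=\sum_J w(J)\,\binp_{M(J)}(k-K_C(J);q)$, is linear in the weights, and the expectation $\mathbb{E}\tg(M,K_C)=\sum_J w(J)\,\tg(M(J),K_C(J))$ is linear as well. So it suffices to check, for a single state with free count $M$ and size $K_C$, that
\[
f_{\mathrm L}(M,k-K_C)=(1-q)\,\binp_M(k-K_C)-q\,\binp_M(k-1-K_C).
\]
This is literally the definition of $f_{\mathrm L}$ at $j=k-K_C$, because $\binp_M(k-1-K_C)$ is the $j-1$ term. The state contributes $\binp_M(k-K_C)$ to $P_k$ and $\binp_M(k-1-K_C)$ to $P_{k-1}$. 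Multiplying by $w(J)$ and summing over $J$ gives $\mathbb{E}f_{\mathrm L}(M,k-K_C)=(1-q)P_k-qP_{k-1}$. The identity for $f_{\mathrm R}$ follows in the same way, using the $j+1$ term and $P_{k+1}$. No hypothesis on $q$, on the signs of the weights, or on $\sum_J w(J)$ is needed here. One only has to note that $\binp_M(j;q)$ is defined to be zero outside $[0,M]$, so the identities hold for every integer $k$, including ranks where a shifted argument leaves the support.

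For the consequence, use linearity once more:
\[
\mathbb{E}\kappa_2(M,k-K_C)=\frac{w_{\mathrm L}}{q}\bigl((1-q)P_k-qP_{k-1}\bigr)+\frac{w_{\mathrm R}}{1-q}\bigl(qP_k-(1-q)P_{k+1}\bigr).
\]
A valley at $k$ means by definition $(1-q)P_k\le qP_{k-1}$ and $qP_k\le(1-q)P_{k+1}$, so both brackets are $\le0$. Since $0<q<1$, the coefficients $w_{\mathrm L}/q$ and $w_{\mathrm R}/(1-q)$ are nonnegative, and therefore the sum is $\le0$.

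There is no real obstacle in this argument. The only point needing care is bookkeeping: the index shift $j\mapsto j\pm1$ must match $P_{k\mp1}$ correctly, and the convention that $\binp_M$ vanishes off $[0,M]$ must be used so that no boundary cases are lost.
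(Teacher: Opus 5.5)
Your proposal is correct and takes essentially the same approach as the paper's proof. Both write the state-by-state identity $f_{\mathrm L}(M(J),k-K_C(J))=(1-q)\binp_{M(J)}(k-K_C(J))-q\,\binp_{M(J)}(k-1-K_C(J))$, sum it against $w(J)$, use the shift $(k-K_C)+1=(k+1)-K_C$ for $f_{\mathrm R}$, and then apply linearity with the nonnegative coefficients $w_{\mathrm L}/q$ and $w_{\mathrm R}/(1-q)$ to get the consequence.
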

\begin{proof}
For a state $J$, $f_{\mathrm L}(M(J),k-K_C(J))=(1-q)\binp_{M(J)}(k-K_C(J))-q\,\binp_{M(J)}((k-1)-K_C(J))$; multiply by $w(J)$ and
sum over the states: the two sums are $P_k$ and $P_{k-1}$ by definition. The second identity is the same computation
with $(k-K_C)+1=(k+1)-K_C$. By linearity
$\mathbb{E}\kappa_2=\frac{w_{\mathrm L}}q\big((1-q)P_k-qP_{k-1}\big)+\frac{w_{\mathrm R}}{1-q}\big(qP_k-(1-q)P_{k+1}\big)$,
and a valley makes both brackets nonpositive.
\end{proof}

A \emph{Laplace constraint} $(s,\ebar)$ stands for the moment constraint $\mathbb{E}s^M\le\ebar$; constraints are indexed
by $r$, and for tilts $\tau_r$ with rates $\ell_r$ the Laplace constraints at $(q,\muM)$ are
$(1-q+q\tau_r,\,e^{-\ell_r\muM})$. Prices $z_r\ge0$ are attached to the constraints by position.

\begin{definition}[MGF certificate; valley-free parameters]\label{p2:def:mgf2}
Let $q,\muM,\theta,D$ be reals and $\mathcal E=((s_r,e_r))_r$ a finite list of Laplace constraints. An \emph{MGF
certificate} for $(q,\muM,\theta,D,\mathcal E)$ consists of a real $\eta>0$, reals $\xi$, $\alpha$ and $\beta$, nonnegative
reals $\gamma$, $w_{\mathrm L}$ and $w_{\mathrm R}$, and prices $z_r\ge0$ such that, with $\delta=j-qM$,
\begin{align*}
\text{(P)}\quad&\alpha+\beta(M-\muM)-\gamma(M-\muM)^2-\textstyle\sum_rz_rs_r^M\ \le\ \kappa_2(M,j)+\xi\,\delta+\eta\,\delta^2
\qquad(M\in\mathbb{N},\ j\in\mathbb{Z}),\\
\text{(Mg)}\quad&\eta\,\theta\,q(1-q)\,\muM\ <\ \alpha-\gamma D\muM-\textstyle\sum_rz_re_r .
\end{align*}
The parameters $(q,\muM,\theta,D,\mathcal E)$ are \emph{valley-free} if no probability mixture with $\mathbb{E}M=\muM$,
$\mathbb{E}\delta=0$ and $\mathbb{E}\delta^2\le\theta\,q(1-q)\,\muM$ for $\delta=k-K_C-qM$ at an integer $k$,
$\operatorname{Var}M\le D\muM$, and $\mathbb{E}s_r^M\le e_r$ for every constraint of $\mathcal E$ has a valley at $k$.
\end{definition}

For $0<q<1$ and $\muM\ge0$, an MGF certificate for $(q,\muM,\theta,D,\mathcal E)$ is one for every smaller $\theta$, $D$ and $e_r$, and valley-freeness
is antitone in them: smaller values admit fewer mixtures.

\begin{theorem}[the MGF criterion]\label{p2:thm:mgf2}
Let $0<q<1$. If an MGF certificate exists for $(q,\muM,\theta,D,\mathcal E)$, then $(q,\muM,\theta,D,\mathcal E)$ is
valley-free.
\end{theorem}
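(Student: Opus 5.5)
The proof is a weak-duality argument. I will assume a mixture that meets every constraint of the definition and has a valley at $k$, and derive a contradiction by averaging the pointwise inequality (P) over its states.

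Fix an MGF certificate $(\eta,\xi,\alpha,\beta,\gamma,w_{\mathrm L},w_{\mathrm R},(z_r))$ for $(q,\muM,\theta,D,\mathcal E)$. Suppose, for contradiction, that a probability mixture has all of the following:
\begin{itemize}
\item $\mathbb{E}M=\muM$;
\item $\mathbb{E}\delta=0$ and $\mathbb{E}\delta^2\le\theta q(1-q)\muM$, where $\delta=k-K_C-qM$;
\item $\operatorname{Var}M\le D\muM$;
\item $\mathbb{E}s_r^M\le e_r$ for every constraint of $\mathcal E$;
\item a valley at $k$.
\end{itemize}

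For each state $J$, apply (P) at $M=M(J)$ and $j=k-K_C(J)$. Then $\delta$ in (P) is exactly $k-K_C(J)-qM(J)$. Multiply the inequality by $w(J)\ge0$ and sum over the states. Because the weights sum to $1$ and the mixture has finitely many states, linearity gives
\[
\alpha+\beta(\mathbb{E}M-\muM)-\gamma\,\mathbb{E}(M-\muM)^2-\textstyle\sum_rz_r\mathbb{E}s_r^M\ \le\ \mathbb{E}\kappa_2(M,k-K_C)+\xi\,\mathbb{E}\delta+\eta\,\mathbb{E}\delta^2 .
\]

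Now I bound the two sides separately.
\begin{itemize}
\item On the left, $\mathbb{E}M=\muM$ removes the $\beta$ term. Since $\gamma\ge0$, $\operatorname{Var}M\le D\muM$ gives $-\gamma\operatorname{Var}M\ge-\gamma D\muM$. Since $z_r\ge0$, each $-z_r\mathbb{E}s_r^M\ge-z_re_r$. So the left side is at least $\alpha-\gamma D\muM-\sum_rz_re_r$.
\item On the right, Lemma~\ref{apC:lem:valley} applies because $w_{\mathrm L},w_{\mathrm R}\ge0$ and $0<q<1$; the valley gives $\mathbb{E}\kappa_2(M,k-K_C)\le0$. The $\xi$ term vanishes since $\mathbb{E}\delta=0$. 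Since $\eta>0$, the last term is at most $\eta\theta q(1-q)\muM$.
\end{itemize}

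Combining the two bounds yields $\alpha-\gamma D\muM-\sum_rz_re_r\le\eta\theta q(1-q)\muM$, which contradicts (Mg). Hence no such mixture exists, and the parameters are valley-free.

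The only points needing care are sign and bookkeeping checks, not a genuine obstacle:
\begin{itemize}
\item The signs of $\gamma$, $z_r$ and $\eta$ must each line up with the direction of the corresponding hypothesis; each multiplier's sign is fixed by the certificate so that they do.
\item The variable $\delta=j-qM$ in (P) must be matched with the mixture's $\delta$ through the substitution $j=k-K_C$.
\item $\kappa_2(M,j)$ must be well defined for every integer $j$, including $j<0$ and $j>M$. The paper's convention that $\binp_M(j;q)$ is zero outside $[0,M]$ handles this, and it is the same convention under which Lemma~\ref{apC:lem:valley} was proved.
\end{itemize}
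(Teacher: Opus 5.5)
Your proof is correct and is the paper's own argument. You average (P) at $(M(J),k-K_C(J))$ against the mixture weights, bound the left side using $\mathbb{E}M=\muM$, $\gamma\ge0$ and $z_r\ge0$, bound the right side using Lemma~\ref{apC:lem:valley}, $\mathbb{E}\delta=0$ and $\eta>0$, and so contradict (Mg).
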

\begin{proof}
Suppose a probability mixture has $\mathbb{E}M=\muM$, $\mathbb{E}\delta=0$, $\mathbb{E}\delta^2\le\theta q(1-q)\muM$,
$\operatorname{Var}M\le D\muM$, $\mathbb{E}s_r^M\le e_r$ for every constraint, and a valley at $k$. Put
$\psi(M)=\alpha+\beta(M-\muM)-\gamma(M-\muM)^2$, multiply (P) at $(M(J),k-K_C(J))$ by $w(J)\ge0$ and sum:
\[
\mathbb{E}\psi(M)-\textstyle\sum_rz_r\mathbb{E}s_r^M\ \le\ \mathbb{E}\kappa_2+\xi\,\mathbb{E}\delta+\eta\,\mathbb{E}\delta^2\ \le\ \eta\theta q(1-q)\muM ,
\]
by Lemma~\ref{apC:lem:valley}, $\mathbb E\delta=0$ and $\eta>0$. Since the weights sum to $1$ and $\mathbb{E}M=\muM$,
$\mathbb{E}\psi(M)=\alpha-\gamma\operatorname{Var}M\ge\alpha-\gamma D\muM$ ($\gamma\ge0$), and
$\sum_rz_r\mathbb{E}s_r^M\le\sum_rz_re_r$ ($z_r\ge0$). This contradicts (Mg).
\end{proof}

MGF certificates have the structure of the rational mixture certificates of Zhang and Li~\cite[Section~5, (45) to
(46)]{ZL}: a pointwise inequality between a weighted valley kernel and terms quadratic in $\delta$ and $M$ with Laplace
terms, whose expectation under the source bounds gives an explicit budget. They have no curvature term
$f_{\mathrm C}(M,j)=2\binp_M(j)-\binp_M(j-1)-\binp_M(j+1)$ and none of the count terms $(1-q)^M$, and they centre the
quadratic in $M$ at $\muM$.

% ---------------------------------------------------------------------------------------------------------------
\subsection{The activity of a window rank}\label{p2:sec:window}

This subsection proves that every rank of the window is the hard-core mean at an activity in $(1/3,7/3)$
(Proposition~\ref{p2:prop:activity}), used in step~1 of the proof of Theorem~\ref{p2:thm:main}. Here
$\mu_F(\lambda)=\mathbb E_\lambda|S|=\sum_kk\,i_k\lambda^k/I(F;\lambda)$, and $B_F$, its coefficients $b_s$ and its first
mode $h_B$ are as in Section~\ref{p1:subsec:window}.

\begin{lemma}[the mean at $1/3$]\label{apC:lem:B2}
For every forest and every $k\ge0$, $(k+1)\,i_{k+1}\le(n-k)\,i_k$; for every $\lambda\ge0$,
$\mu_F(\lambda)(1+\lambda)\le n\lambda$; in particular $\mu_F(1/3)\le n/4$.
\end{lemma}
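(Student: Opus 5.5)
The plan has three steps: a double count for the coefficient inequality, summation of that inequality against powers of $\lambda$ for the mean bound, and substitution $\lambda=1/3$.

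\textbf{The coefficient inequality.} We count pairs $(S,v)$ with $S$ independent, $|S|=k+1$ and $v\in S$. There are $(k+1)\,i_{k+1}$ such pairs. Each pair gives the independent set $S\setminus\{v\}$ of size $k$ together with a vertex $v\notin S\setminus\{v\}$. So the map $(S,v)\mapsto(S\setminus\{v\},v)$ is an injection into the set of pairs $(T,v)$ with $T$ independent, $|T|=k$ and $v\in V(F)\setminus T$. The target set has exactly $(n-k)\,i_k$ elements, which gives $(k+1)i_{k+1}\le(n-k)i_k$ for every $k\ge0$. This argument uses nothing about forests beyond the vertex count; if $i_k=0$ then $i_{k+1}=0$ and the inequality is trivial.

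\textbf{The mean bound.} We multiply the inequality by $\lambda^{k+1}\ge0$ and sum over $k\ge0$. The left side becomes
\[
\sum_{k\ge0}(k+1)i_{k+1}\lambda^{k+1}=\lambda I'(F;\lambda)=\mu_F(\lambda)\,I(F;\lambda).
\]
The right side becomes
\[
\lambda\sum_{k}(n-k)i_k\lambda^k=\lambda\bigl(n\,I(F;\lambda)-\mu_F(\lambda)I(F;\lambda)\bigr).
\]
The sums are finite, since $i_k=0$ for $k>\alpha(F)$. We then divide by $I(F;\lambda)>0$, which holds because $i_0=1$. This gives $\mu_F\le\lambda(n-\mu_F)$, that is, $\mu_F(\lambda)(1+\lambda)\le n\lambda$. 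At $\lambda=0$ both sides vanish, since $\mu_F(0)=0$, so the bound holds for all $\lambda\ge0$.

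\textbf{The value at $1/3$.} Substituting $\lambda=1/3$ gives $\tfrac43\mu_F(1/3)\le n/3$, that is, $\mu_F(1/3)\le n/4$.

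None of the three steps is a real obstacle. The only points needing care are the boundary cases: $k$ beyond $\alpha(F)$, where both sides vanish or the right side is nonnegative, and the index shift when summing the $(n-k)i_k$ terms.
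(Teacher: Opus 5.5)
Your proof is correct and follows the paper's route. The paper cites the local LYM inequality $(k+1)|\mathcal F|\le(n-k)|\partial\mathcal F|$ for the family $\mathcal F$ of independent $(k+1)$-sets, with $|\partial\mathcal F|\le i_k$; your injection $(S,v)\mapsto(S\setminus\{v\},v)$ proves exactly this directly. The paper then multiplies by $\lambda^{k+1}$, sums, and divides by $I(F;\lambda)\ge1$, just as you do.
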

\begin{proof}
Every $k$-subset of an independent $(k+1)$-set is independent, so the shadow $\partial\mathcal F$ of the family
$\mathcal F$ of independent $(k+1)$-sets has at most $i_k$ members, and the local LYM inequality
$(k+1)|\mathcal F|\le(n-k)|\partial\mathcal F|$ gives the first claim (for $k\ge n$ both sides vanish). Multiply by
$\lambda^{k+1}\ge0$ and sum over $0\le k<n$: as $\lambda I'(F;\lambda)=\sum_kk\,i_k\lambda^k$ and the term $k=n$ on the
right is zero, $\lambda I'(F;\lambda)=\sum_{k<n}(k+1)i_{k+1}\lambda^{k+1}\le\sum_{k<n}(n-k)i_k\lambda^{k+1}
=\lambda(nI(F;\lambda)-\lambda I'(F;\lambda))$. Dividing by $I(F;\lambda)\ge i_0=1$ gives the second claim, and
$\lambda=1/3$ the third.
\end{proof}

\begin{lemma}[the mode of $B_F$]\label{apC:lem:B7}
Let $b_s=b_s(\nu',\nu'')$ be the coefficient of $x^s$ in $(1+2x)^{\nu'}(1+x)^{\nu''}$ ($\nu',\nu''\ge0$). If
$6s\ge4\nu'+3\nu''$ then $b_{s+1}\le b_s$. Consequently $2\nu\le n$ and $h_B(F)\le\lfloor(3n-2\nu+5)/6\rfloor=\lceil(3n-2\nu)/6\rceil$.
\end{lemma}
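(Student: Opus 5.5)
The plan is to derive a three-term recurrence for the coefficients $b_s$ from an Euler-operator identity, and then use Newton's inequalities for real-rooted polynomials to rule out a strict rise $b_{s+1}>b_s$ once $s$ is at least the mean $(4\nu'+3\nu'')/6$ of $(b_s)$. This is the half of Darroch's theorem we need. Write $P(x)=(1+2x)^{\nu'}(1+x)^{\nu''}$ and $N=\nu'+\nu''=\deg P$.

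\emph{The recurrence.} Logarithmic differentiation gives $x P'(x)(1+2x)(1+x)=\bigl(2\nu' x(1+x)+\nu'' x(1+2x)\bigr)P(x)$. I would compare coefficients of $x^{s+1}$ on both sides, using the convention $b_{-1}=0$. This gives
\[
(s+1)b_{s+1}=(2\nu'+\nu''-3s)\,b_s+2(\nu'+\nu''-s+1)\,b_{s-1}.
\]
The edge cases need a short check. For $s\ge N$ we have $b_{s+1}=0$, so the claim is trivial. For $s=0$ the hypothesis forces $\nu'=\nu''=0$. So I may assume $1\le s<N$, where $b_{s-1},b_s,b_{s+1}>0$.

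\emph{The contradiction.} Suppose $6s\ge4\nu'+3\nu''$ and $t=b_{s+1}/b_s>1$. The polynomial $P$ has only real zeros, so Newton's inequalities in their sharp form give
\[
b_{s-1}\le \frac{b_s^2}{b_{s+1}}\cdot\frac{s(N-s)}{(s+1)(N-s+1)}.
\]
Substituting this into the recurrence and dividing by $b_s$ yields
\[
(s+1)t\le(2\nu'+\nu''-3s)+\frac{2s(N-s)}{(s+1)\,t}.
\]
The left side minus the last term on the right is increasing in $t$, so it suffices to take $t=1$. At $t=1$ I will check that $(s+1)-2s(N-s)/(s+1)>2\nu'+\nu''-3s$ whenever $6s\ge4\nu'+3\nu''$. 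This contradicts the inequality, so $b_{s+1}\le b_s$.

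\emph{Main obstacle.} The delicate point is the boundary case $6s=4\nu'+3\nu''$. The plain log-concavity bound $b_{s-1}\le b_s^2/b_{s+1}$ loses exactly $1$ here, because $6s-4\nu'-3\nu''-1=-1$. This is why I plan to use the sharp Newton factor $\frac{s(N-s)}{(s+1)(N-s+1)}$ rather than plain log-concavity. If the resulting algebra is not clean, my fallback is an induction on $\nu'+\nu''$: multiply by $(1+x)$ or $(1+2x)$ one factor at a time, tracking the position of the mode relative to the mean, which increases by $1/2$ or $2/3$ per factor.

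\emph{Consequences.} A maximum matching consists of $\nu$ disjoint edges, so $2\nu\le n$. Take $\nu'=\nu$ and $\nu''=n-2\nu$. Then $4\nu'+3\nu''=3n-2\nu$, and for every $s\ge s_0=\lceil(3n-2\nu)/6\rceil$ we have $b_{s+1}\le b_s$. Hence the sequence is nonincreasing from $s_0$, and the first mode satisfies $h_B\le s_0$. Finally, $\lceil m/6\rceil=\lfloor(m+5)/6\rfloor$ for every integer $m$.
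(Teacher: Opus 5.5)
Your proof is correct, but it takes a different route from the paper's. The paper argues by induction on $\nu'+\nu''$. Write $b'_s,b''_s$ for the coefficients after removing one factor $1+2x$, resp.\ $1+x$. The Euler identity $(\nu'+\nu'')b_s=s\,b_s+\nu'b'_s+\nu''b''_s$ and the relations $b_{s+1}=b'_{s+1}+2b'_s=b''_{s+1}+b''_s$ let the hypothesis $6s\ge4\nu'+3\nu''$ pass to both smaller products. This gives $b_{s+1}\le3b'_s$ and $b_{s+1}\le2b''_s$, hence $(2\nu'+3\nu'')b_{s+1}\le6(\nu'+\nu''-s)b_s\le(2\nu'+3\nu'')b_s$. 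That is essentially your fallback, and it needs nothing beyond these two recursions. Your argument instead closes in one non-inductive step from the three-term recurrence (which you derived correctly) and the sharp Newton inequality. The price is importing real-rootedness and Newton's inequalities; the gain is a direct verification that shows exactly where sharpness enters. The check at $t=1$ that you postponed does go through:
\[
(s+1)^2-2s(N-s)-(2\nu'+\nu''-3s)(s+1)=s\,(6s-4\nu'-3\nu'')+(5s+1-2\nu'-\nu'').
\]
Under $6s\ge4\nu'+3\nu''$ we have $5s\ge\frac{10}{3}\nu'+\frac52\nu''\ge2\nu'+\nu''$, so the right side is at least $1$. Your diagnosis of the boundary case is also accurate. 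With plain log-concavity the same computation gives $6s-1-4\nu'-3\nu''$, which is $-1$ when $6s=4\nu'+3\nu''$, so the factor $\frac{s(N-s)}{(s+1)(N-s+1)}$ is exactly what is needed. Your edge cases and the consequences ($2\nu\le n$, monotonicity from $s_0$, $\lceil m/6\rceil=\lfloor(m+5)/6\rfloor$) match the paper's treatment.
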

\begin{proof}
Write $b'_s=b_s(\nu'-1,\nu'')$ and $b''_s=b_s(\nu',\nu''-1)$ (a term with a factor $\nu'=0$ or
$\nu''=0$ below is zero). With $P=(1+2x)^{\nu'}(1+x)^{\nu''}$, Euler's identity
$(\nu'+\nu'')P=xP'+\nu'P/(1+2x)+\nu''P/(1+x)$ (as $2x=(1+2x)-1$ and $x=(1+x)-1$) gives at $x^s$
$(\nu'+\nu'')\,b_s=s\,b_s+\nu'\,b'_s+\nu''\,b''_s$, and multiplying out one factor gives $b_{s+1}=b'_{s+1}+2b'_s$ (if $\nu'\ge1$) and $b_{s+1}=b''_{s+1}+b''_s$ (if
$\nu''\ge1$). Induct on $\nu'+\nu''$. If $\nu'=\nu''=0$ then $b_{s+1}=0$. Otherwise the hypothesis $6s\ge4\nu'+3\nu''$
implies $6s\ge4(\nu'-1)+3\nu''$ and $6s\ge4\nu'+3(\nu''-1)$, so by induction $b'_{s+1}\le b'_s$ and $b''_{s+1}\le b''_s$.
Hence $\nu' b_{s+1}\le3\nu' b'_s$ and $\nu'' b_{s+1}\le2\nu''\,b''_s$, and
$(2\nu'+3\nu'')\,b_{s+1}\le6(\nu' b'_s+\nu'' b''_s)=6(\nu'+\nu''-s)\,b_s\le(2\nu'+3\nu'')\,b_s$, the last step because $6(\nu'+\nu'')\le 2\nu'+3\nu''+6s$. Divide by $2\nu'+3\nu''>0$.

For the forest take $\nu'=\nu$, the matching number, and $\nu''=n-2\nu$ ($2\nu\le n$, as the edges of a matching are
disjoint). Then $4\nu'+3\nu''=3n-2\nu$, so the coefficients $b_s$ of $B_F$ are nonincreasing from
$s_0=\lfloor(3n-2\nu+5)/6\rfloor$ on, as $6s_0\ge3n-2\nu$; hence the maximum is attained at an index at most $s_0$, and
the first mode is at most $s_0$. This is the half of Darroch's theorem~\cite{Darroch} that is needed, proved directly.
\end{proof}

\begin{lemma}[the integer cases]\label{apC:lem:cases}
Let $\Upsilon(j)=1+(7/3)(3/10)^j$ ($j\ge0$) and, for integers $\bar k$, $\bar D$ and a real $\hat P$,
$\mathrm{num}_3(\hat P)=(726\bar k-740)\hat P+1736-2499\bar k+714\bar D$ and
$\mathrm{num}_5(\hat P)=(1567\bar k-1749)\hat P+5082-5880\bar k+1680\bar D$. For integers $\bar k\ge2$ and $\bar D\ge0$
write $\bar D=\bar k\hat a+\hat s$ with $0\le\hat s<\bar k$ and
$\hat P_*(\bar k,\bar D)=\Upsilon(\hat a)^{\bar k-\hat s}\Upsilon(\hat a+1)^{\hat s}$. Then
$\mathrm{num}_3(\hat P_*(\bar k,\bar D))>0$ for each of the $181$ pairs $(\bar k,\bar D)$ with $2\le\bar k\le12$, $\bar D\ge0$
and $714\bar D<1773\bar k-996$, and $\mathrm{num}_5(\hat P_*(\bar k,\bar D))>0$ for each of the $247$ pairs with
$2\le\bar k\le14$, $\bar D\ge0$ and $1680\bar D<4313\bar k-3333$. This is checked by computer in exact integer
arithmetic, by kernel evaluation (family~\ref{chk:act} of Section~\ref{sec:computer:families}).
\end{lemma}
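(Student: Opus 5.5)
This is a finite computation, and the plan is to certify it in exact integer arithmetic. No analytic bound on $\hat P_*$ replaces it.

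First, enumerate the pairs. For each $\bar k$ with $2\le\bar k\le12$, the admissible $\bar D$ are the integers $0\le\bar D\le\lceil(1773\bar k-996)/714\rceil-1$. Summing these counts over $\bar k$ should give $181$, and the analogous enumeration for $\mathrm{num}_5$ over $2\le\bar k\le14$ should give $247$. Confirming both totals is the first sanity check that the ranges have been read correctly.

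Second, compute exactly. For each pair, compute $\hat a=\lfloor\bar D/\bar k\rfloor$ and $\hat s=\bar D-\bar k\hat a$. The values $\Upsilon(\hat a)$ and $\Upsilon(\hat a+1)$ are rationals:
\[
\Upsilon(j)=\frac{3\cdot10^j+7\cdot3^{j}}{3\cdot10^j}.
\]
So $\hat P_*$ is a rational with denominator $(3\cdot10^{\hat a})^{\bar k-\hat s}(3\cdot10^{\hat a+1})^{\hat s}$. To decide the sign of $\mathrm{num}_3(\hat P_*)$, multiply through by this positive denominator. The sign then equals the sign of the integer
\[
(726\bar k-740)\,\mathrm{Num}+(1736-2499\bar k+714\bar D)\,\mathrm{Den}.
\]
Since $\bar k\ge2$, the coefficient $726\bar k-740$ is positive, and we check that this integer is positive. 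The same procedure handles $\mathrm{num}_5$; there too $1567\bar k-1749>0$. The computations are small: $\hat a\le 3$ or so and $\bar k\le14$, so the integers have at most a few hundred digits. In Lean this is a decide/kernel-evaluation over an explicit list, which is the form referenced as family~\ref{chk:act}.

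Third, a by-hand cross-check to make failures implausible. Note that $\hat P_*$ is the minimum of $\prod\Upsilon(a_i)$ over integer compositions of $\bar D$ into $\bar k$ parts. This holds because $\log\Upsilon$ is convex and decreasing, so balanced parts minimize the product, which is presumably why this $\hat P_*$ arises. Positivity of $\mathrm{num}_3$ amounts to $\hat P_*>(2499\bar k-1736-714\bar D)/(726\bar k-740)$. Near the boundary $714\bar D\approx1773\bar k-996$, the right side is about $(726\bar k-740)/(726\bar k-740)\cdot1=1$, slightly above $1$. Meanwhile $\hat P_*>1$ holds, with a margin that shrinks as $\hat a$ grows.

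I expect the main obstacle to be the boundary pairs at the largest $\bar k$. There the margin is smallest and floating point would be unreliable, which is exactly why exact integer arithmetic is used. The only other risk is an off-by-one in the strict inequalities defining the ranges, so the enumeration is done with the strict inequalities exactly as stated.
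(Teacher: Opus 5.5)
Your proposal is correct and is the paper's proof: the lemma is settled by exhaustively evaluating $\mathrm{num}_3$ and $\mathrm{num}_5$ at $\hat P_*$ in exact arithmetic over the $181$ and $247$ pairs, by Lean kernel evaluation and an independent program (family F2), and your pair counts and clearing of denominators are right. One correction to your heuristic: the boundary pairs are far from tight, since there $\hat P_*$ is well above a threshold close to $1$; the least values are $\mathrm{num}_3=5592/25$ and $\mathrm{num}_5=13693/20$, both at $(\bar k,\bar D)=(2,2)$.
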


\begin{theorem}[occupation at $7/3$]\label{apC:thm:B5}
For every forest, $\mu_F(7/3)\ge n/2-\nu/3+\operatorname{cc}(F)/7$, where $\operatorname{cc}(F)$ is the number of
components of $F$, isolated vertices included.
\end{theorem}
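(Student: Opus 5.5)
Both sides of the inequality in Theorem~\ref{apC:thm:B5} are additive over components: $I(F;x)$ is a product over components, so $\mu_F(7/3)$ is a sum, and $n$, $\nu$ and $\operatorname{cc}$ are additive too. It therefore suffices to prove $\mu_T(7/3)\ge n/2-\nu/3+1/7$ for every tree $T$. An isolated vertex gives $\mu=q=7/10\ge 1/2+1/7$, and a single edge gives $2\lambda/(1+2\lambda)=14/17\ge 2/3+1/7$. Write $\mu_T=\sum_v\pi_v$ and fix a maximum matching $\mathcal M$. The target then says that each edge of $\mathcal M$ should carry occupation $\ge 2/3$ and each unmatched vertex $\ge 1/2$, with a surplus of $1/7$ per tree.

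The local facts I would use come from conditioning on $S$ outside an edge $uv$. If both ends are unblocked, the edge is occupied with probability $2\lambda/(1+2\lambda)=14/17$. If exactly one end is unblocked, the probability is $q=7/10$. If both are blocked, it is $0$. Every nonzero case exceeds $2/3$ with slack, so the whole difficulty is to charge the probability of blocked configurations against that slack. For this I would use the tree recursion for the occupation ratio, $x_v=\lambda\prod_{c}(1+x_c)^{-1}$ over the children $c$ of $v$. The factor $\Upsilon(j)=1+\lambda(1-q)^j$ is exactly what appears when a child subtree has a leaf structure of depth $j$: the probability that a child stays empty is at least $(1-q)^{j}$, which bounds from below the partition-function ratio $1+x_c$ in the recursion.

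The induction is on $n$. Root $T$ and take a vertex $v$ all of whose children have only leaf children (or are leaves); such a $v$ exists at depth $\ge 2$ from the deepest leaf. Let $\bar k$ be the number of children of $v$ and $\bar D$ the number of grandchildren. Delete the subtree at $v$ (or $v$ together with its pendant stars), apply the induction hypothesis to the remainder, and compare $\nu$ before and after. Taking the matching to match each non-leaf child with one of its leaves, and $v$ either with a leaf child or with its parent, the change in $n/2-\nu/3$ is explicit. The increase in $\mu$ is bounded below through the recursion by an expression in $\hat P=\prod_c\Upsilon(a_c)$, where $a_c$ is the number of leaf children of $c$ and $\sum_c a_c=\bar D$. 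Since $\log\Upsilon$ is convex in $j$, this product is minimised by the balanced split $a_c\in\{\hat a,\hat a+1\}$, which is $\hat P_*(\bar k,\bar D)$. The required inequality reduces to a rational function of $\hat P$ whose numerator is $\mathrm{num}_3$ or $\mathrm{num}_5$, according to whether $v$ is matched with its parent or inside its subtree. The numerator is positive at the balanced split for all large $\bar D$ by the linear term $714\bar D$, resp.\ $1680\bar D$, and for all $\bar k$ beyond $12$, resp.\ $14$, by the leading coefficients. The finitely many remaining pairs are exactly those of Lemma~\ref{apC:lem:cases}.

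The main obstacle is the step before that lemma. One must show that the effect of the rest of the tree on $v$, namely an arbitrary boundary field at the parent, only helps, that is, the bound is monotone in the parent's occupation ratio over its range $[0,\lambda]$, so that it suffices to check the extreme ratio. One must also organise the cases so that the constant $1/7$ is recovered when the removed piece is a whole component. I would first try to prove this monotonicity directly from $\partial x_v/\partial x_c<0$ and the alternating structure of the recursion, checking the two extremes of the parent's ratio separately; these two extremes should produce the two numerators $\mathrm{num}_3$ and $\mathrm{num}_5$.
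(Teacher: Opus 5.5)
Your plan has a genuine gap, and it sits where you put the ``main obstacle''. The link you draw to Lemma~\ref{apC:lem:cases} also does not match what that lemma encodes.

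\textbf{The reduction does not close.} Deleting $T_v$ and applying the induction hypothesis to the remainder $R$ cannot work on its own. $\mu_T$ is not $\mu_R$ plus a quantity of $T_v$: the edge $pv$ changes the law on $R$, and $\nu$ and $\operatorname{cc}$ change differently according to whether $p$ is covered. The paper works instead with the unnormalized quantity $\Xi(X)=\lambda I'(F[X];\lambda)-\mu^\ast(X)I(F[X];\lambda)$, where $\mu^\ast(X)=|X|/2-\nu(X)/3+\operatorname{cc}(X)/7$, and proves $\Xi\ge0$ by strong induction over all vertex sets $X$. The recursions then give exact identities: $\Xi(X)$ is a positive combination of $\Xi$ at two smaller sets, plus a remainder that is linear in their partition functions.

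\textbf{Checking the extreme of the field fails.} In that linear form your monotonicity comes for free, but the extreme of the boundary field gives the wrong sign. Take the simplest case of your configuration: a path $v$--$c$--$\ell$ hanging at a vertex $p$ that is matching-essential in $R$. Condition on $v$ and write $x_p=I_R/I_{R-p}-1\in(0,\lambda)$. At $\lambda=7/3$ one gets
\[
\Xi(T)=\tfrac{17}{3}\,\Xi(R)+\tfrac{70}{9}\,\Xi(R-p)+\tfrac{1}{54}\bigl(49-105x_p+60(\deg_Rp-1)\bigr)I_{R-p}.
\]
As $x_p\to\lambda$ the remainder tends to $(60\deg_Rp-256)I_{R-p}/54$, which is negative for $\deg_Rp\le4$. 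This limit is approached by matching-essential $p$: take $p$ a leaf of $R$ whose neighbour has $m\to\infty$ further neighbours, each carrying one pendant leaf. So the sign depends on information about $R$ near $p$ that your plan never collects.

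\textbf{Where $\bar k$, $\bar D$ and $\hat P$ actually live.} In the paper they carry exactly that information, and they sit on the other side of the cut from where you put them. After peeling off isolated vertices, stars and leaf-stem vertices with one stem, the paper is left with a pendant path on $3$ vertices at a matching-essential vertex $x$ of $X_1$. When $\bar k=\bar D=1$ this path is extended to $5$ vertices. The subscripts of $\mathrm{num}_3$ and $\mathrm{num}_5$ are these path lengths, not two extremes of a field and not the two ways of matching $v$.

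In the paper:
\begin{itemize}
\item $\bar k=\deg_{X_1}(x)$ and $\bar D=\sum_{v'}\deg_{X_2}(v')$ describe the arbitrary rest of the forest around $x$.
\item The induction hypothesis is used at $X_2=X_1-x$ and $X_3=X_1-N[x]$. The component count of $X_3$ supplies the bonus $714\bar D$, resp.\ $1680\bar D$.
\item $\hat P\,I_{X_3}\le I_{X_2}\le(1+\lambda)^{\bar k}I_{X_3}$ with $\hat P=\prod_{v'}\Upsilon(\deg_{X_2}v')$. This is your own bound $1+x_c\ge\Upsilon(\deg c)$, applied to the neighbours of $x$.
\end{itemize}

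Even there the naive range is not enough. For $\bar k\ge2$ and $\tilde\nu=0$, the remainder of $378\,\Xi(X)$ at $I_{X_2}=I_{X_3}$ is $(996-1773\bar k+714\bar D)I_{X_3}$. At $(\bar k,\bar D)=(2,2)$ this equals $-1122\,I_{X_3}$, and only $\hat P\ge\hat P_\ast(2,2)=1.7^2$ makes it positive.

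\textbf{Consequence for your set-up.} There $\prod_c\Upsilon(a_c)$ is an exact ratio for the known pendant stars and bounds nothing about $R$. So the asserted reduction to $\mathrm{num}_3$ or $\mathrm{num}_5$ does not hold. The matching bookkeeping at $p$, whether $p$ is essential in $R$ or not, is also left unaddressed.
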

\begin{proof}
Fix $\lambda=7/3$, so $1+\lambda=10/3$. For a vertex set $X$ put $I_{X}=I(F[X];\lambda)$ and
$\dot I_{X}=\lambda I'(F[X];\lambda)$, let $\nu(X)$ and $\operatorname{cc}(X)$ be the matching
number and the number of components of $F[X]$, $\mu^\ast(X)=|X|/2-\nu(X)/3+\operatorname{cc}(X)/7$ and
$\Xi(X)=\dot I_{X}-\mu^\ast(X)I_{X}$. We prove $\Xi(X)\ge0$ by strong induction on $X$; at $X=V(F)$, where $I_X>0$, this is the
theorem.

\emph{Tools.} (i) $I_{X}=I_{X-v}+\lambda I_{X-\NF[v]}$ and
$\dot I_{X}=\dot I_{X-v}+\lambda(I+\dot I)_{X-\NF[v]}$; $I_X$ is multiplicative and $\dot I_X$
obeys Leibniz's rule over unions with no edges between the parts; an edgeless $X_0$ has
$I_{X_0}=(1+\lambda)^{|X_0|}$, $\dot I_{X_0}=(7/10)|X_0|(1+\lambda)^{|X_0|}$; $I_X$ is monotone in $X$ and
$I_{X}\le(1+\lambda)^{|X_0|}I_{X\setminus X_0}$. (ii) $\nu$ is monotone, $\nu(X)\le\nu(X-v)+1$,
$\nu(X)=\nu(X-v_1-v_2)+1$ if $v_1$ is a leaf of $X$ at $v_2$, and $\nu$ adds over edge-free unions. (iii)
$\operatorname{cc}(X-x)+1=\operatorname{cc}(X)+\deg_X(x)$.
\emph{Isolated vertex.} If $x$ is isolated in $X$, then $\Xi(X)=(1+\lambda)\Xi(X-x)+(4/21)I_{X-x}$: $\mu^\ast$
grows by $1/2+1/7$. Also $\Xi(\emptyset)=0$. \emph{A leaf-stem vertex.} If $X\ne\emptyset$ has no isolated vertex, some $v\in X$ has a set $V_{\mathrm{leaf}}$ of
$\bar t\ge1$ leaf neighbours and at most one other neighbour $u$ (a \emph{stem}): take an adjacent pair $(x,y)$
minimizing the size, at least $2$, of the branch at $x$ that contains $y$, and $v=y$, or an isolated edge if no such pair
exists. Put $X'=(X-v)\setminus V_{\mathrm{leaf}}$ and let $V_{\mathrm{stem}}$ be the set of stems; then $\nu(X)=\nu(X')+1$,
$\operatorname{cc}(X)+|V_{\mathrm{stem}}|=\operatorname{cc}(X')+1$, and (i) gives
$I_{X}=(1+\lambda)^{\bar t}I_{X'}+\lambda I_{X'\setminus V_{\mathrm{stem}}}$ and
$\dot I_{X}=(1+\lambda)^{\bar t}\dot I_{X'}+(7/10)\bar t(1+\lambda)^{\bar t}I_{X'}
+\lambda(I+\dot I)_{X'\setminus V_{\mathrm{stem}}}$. \emph{Star} ($V_{\mathrm{stem}}=\emptyset$): $\Xi(X)=((1+\lambda)^{\bar t}+\lambda)\Xi(X')+I_{X'}
\big[(1+\lambda)^{\bar t}(\bar t/5-13/42)+\lambda(29/42-\bar t/2)\big]$, and the bracket is
$5/63$, $107/378$ for $\bar t=1,2$ and positive for $\bar t\ge3$ since $(1+\lambda)^{\bar t}\ge1000/27$.
\emph{One stem $u$.} Let $X''=X'-u$ and $\hat\nu=\nu(X')-\nu(X'')\in\{0,1\}$. Here $\deg_{X'}(u)\ge1$: $u$ is not a leaf
of $X$ (all leaf neighbours of $v$ lie in $V_{\mathrm{leaf}}$), so it has a neighbour other than $v$, which is not a
leaf neighbour of $v$ and hence lies in $X'$. Then
$\Xi(X)=(1+\lambda)^{\bar t}\Xi(X')+\lambda\Xi(X'')+(1+\lambda)^{\bar t}I_{X'}(\bar t/5-1/6)
+\lambda I_{X''}\big(\deg_{X'}(u)/7+\hat\nu/3-\bar t/2+4/21\big)$.
For $\bar t=1$, $\hat\nu=1$ all terms are nonnegative; for $\bar t=1$, $\hat\nu=0$, $\deg_{X'}(u)\ge2$, the last factor
is at least $-1/42$ and $I_{X''}\le I_{X'}$, so $\Xi(X)\ge I_{X''}/18$; for
$\bar t\ge2$, $(1+\lambda)^{\bar t}\ge100/9$ and the last factor is at least $1/3-\bar t/2$, so
$\Xi(X)\ge I_{X'}(19\bar t/18-29/27)>0$ when that factor is negative. The remaining case
$\bar t=1$, $\hat\nu=0$, $\deg_{X'}(u)=1$ is a pendant path of length $3$ ending at the unique neighbour $x$ of $u$ in
$X'$, with $x$ matching-essential: $\nu(X'')=\nu(X''-x)+1$.

\emph{Pendant paths.} Let $X$ consist of a set $X_1\ni x$ and a path of length $3$ or $5$ hanging at $x$, with
$\nu(X_1)=\nu(X_1-x)+1$. Put $X_2=X_1-x$, $X_3=X_1-\NF[x]$, $\bar k=\deg_{X_1}(x)\ge1$,
$\bar D=\sum_{v'}\deg_{X_2}(v')$ over the neighbours $v'$ of $x$, and $\tilde\nu=\nu(X_2)-\nu(X_3)\ge0$. Let
$\bar m\in\{3,5\}$ be the number of vertices of the path (its length), $\mathrm{Path}_j$ the path with $j$ vertices and $\Xi_i=\Xi(X_i)$ ($i=2,3$). Conditioning on $x$
gives $I_{X}=I(\mathrm{Path}_{\bar m};\lambda)I_{X_2}+\lambda I(\mathrm{Path}_{\bar m-1};\lambda)I_{X_3}$; moreover
$|X|=|X_2|+\bar m+1$, $\nu(X)=\nu(X_2)+(\bar m+1)/2$, $\operatorname{cc}(X)=\operatorname{cc}(X_2)-\bar k+1$, $|X_3|=|X_2|-\bar k$
and $\operatorname{cc}(X_3)=\operatorname{cc}(X_2)+\bar D-\bar k$. With the path recurrences these give the exact
identities, for length $3$ and $5$ respectively,
\begin{align*}
378\,\Xi(X)&=5082\,\Xi_2+4998\,\Xi_3+(726\bar k-740)I_{X_2}+(1736-2499\bar k+714\bar D+1666\tilde\nu)I_{X_3},\\
189\,\Xi(X)&=10969\,\Xi_2+11760\,\Xi_3+(1567\bar k-1749)I_{X_2}+(5082-5880\bar k+1680\bar D+3920\tilde\nu)I_{X_3}.
\end{align*}
Deleting the neighbours of $x$ one by one gives $\hat P\,I_{X_3}\le I_{X_2}\le(1+\lambda)^{\bar k}I_{X_3}$
with $\hat P=\prod_{v'}\Upsilon(\deg_{X_2}(v'))$. If $\bar k=1$ and $\bar D=0$, then
$\Xi_2=(1+\lambda)\Xi_3+(4/21)I_{X_3}$ and $378\Xi(X)=21938\Xi_3+(475/3)I_{X_3}$, resp.\
$189\Xi(X)=(144970/3)\Xi_3+(2054/3)I_{X_3}$. If $\bar k=1$, the length is $3$ and $\bar D=1$, the path
extends through $x$ and its neighbour to a pendant path of length $5$ of the same $X$, whose attachment vertex is again
matching-essential, and the case of length $5$ applies. If $\bar k=1$ and $\bar D$ is larger ($\bar D\ge2$ for length
$3$, $\bar D\ge1$ for length $5$), $I_{X_2}\le(10/3)I_{X_3}$ bounds the negative coefficient of
$I_{X_2}$. If $\bar k\ge2$, $\hat P\,I_{X_3}\le I_{X_2}$ reduces the claim to
$\mathrm{num}_3(\hat P)\ge0$, resp.\ $\mathrm{num}_5(\hat P)\ge0$. These hold in three regimes: when the constant terms
dominate ($1773\bar k\le714\bar D+996$, resp.\ $4313\bar k\le1680\bar D+3333$) from $\hat P\ge1$; for $\bar k\ge13$, resp.\
$\bar k\ge15$, from the affine minorant $(3/10)^j\ge(837-189j)/10000$ and
$\hat P\ge1+\sum_{v'}(7/3)(3/10)^{\deg_{X_2}(v')}$, which give $\hat P\ge\hat P_{\mathrm a}:=1+7(837\bar k-189\bar D)/30000$
(as $\mathrm{num}_3$ increases in $\hat P$, it suffices that $\mathrm{num}_3(\hat P_{\mathrm a})>0$, which is affine in $\bar D$:
at the end $\bar D=(1773\bar k-996)/714$ of the first regime it equals $7(726\bar k-740)(837\bar k-189\bar D)/30000>0$, and at
$\bar D=0$ it is a quadratic in $\bar k$ that increases for $\bar k\ge7$ and equals $30.35\ldots$ at $\bar k=13$; for
$\mathrm{num}_5$ the end is $(4313\bar k-3333)/1680$, the quadratic increases for $\bar k\ge8$, and its value at $\bar k=15$
is $2372.2\ldots$); and in the remaining $181$, resp.\ $247$, pairs
$(\bar k,\bar D)$ with $\bar k\le12$, resp.\ $\bar k\le14$, from the balanced product $\hat P_*(\bar k,\bar D)\le\hat P$
($\log\Upsilon$ is convex) and Lemma~\ref{apC:lem:cases}. Since $X_2$ and $X_3$ are proper subsets of $X$, the
induction hypothesis gives $\Xi_2,\Xi_3\ge0$, and $\Xi(X)\ge0$ in every case.
\end{proof}

\begin{proposition}[exact activity selection]\label{p2:prop:activity}
For every forest $F$ and every $k\in W(F)$ there is an activity $\lambda$ with $1/3<\lambda<7/3$ and
$\mu_F(\lambda)=k$.
\end{proposition}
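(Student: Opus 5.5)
The plan is to apply the intermediate value theorem to the hard-core mean $\mu_F$ on $[1/3,7/3]$, using Lemma~\ref{apC:lem:B2} to control the left endpoint, and Theorem~\ref{apC:thm:B5} together with Lemma~\ref{apC:lem:B7} to control the right endpoint.

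\emph{Setup.} Fix $k\in W(F)$. Since $W(F)$ is nonempty we have $n\ge1$, so $\operatorname{cc}(F)\ge1$. The function $\mu_F(\lambda)=\lambda I'(F;\lambda)/I(F;\lambda)$ is a ratio of polynomials whose denominator is at least $1$, hence continuous on $(0,\infty)$. It is also strictly increasing there, by the standard identity
\[
\lambda\,\mu_F'(\lambda)=\operatorname{Var}_\lambda|S|.
\]
This variance is positive when $n\ge1$, because $|S|$ takes both the value $0$ (at $S=\emptyset$) and the value $1$ (at a single vertex) with positive probability. Strict monotonicity is not needed for existence, but it costs nothing.

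\emph{Left endpoint.} Lemma~\ref{apC:lem:B2} gives $\mu_F(1/3)\le n/4$. Since $k\in W(F)$, $k>\lceil n/4\rceil\ge n/4$, so $\mu_F(1/3)<k$.

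\emph{Right endpoint.} Since $k<h_B(F)$ and both are integers, Lemma~\ref{apC:lem:B7} (equivalently Proposition~\ref{p1:prop:inputs}(c)) gives
\[
k\le h_B(F)-1\le\lceil(3n-2\nu)/6\rceil-1<(3n-2\nu)/6=n/2-\nu/3 .
\]
Theorem~\ref{apC:thm:B5} then gives
\[
\mu_F(7/3)\ge n/2-\nu/3+\operatorname{cc}(F)/7>n/2-\nu/3>k .
\]

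\emph{Conclusion.} By continuity there is $\lambda\in(1/3,7/3)$ with $\mu_F(\lambda)=k$; the strict inequalities at both endpoints place $\lambda$ in the open interval. The real content lies in the two endpoint bounds, which are already proved as Lemma~\ref{apC:lem:B2} and Theorem~\ref{apC:thm:B5}. The only point that needs care is turning the integer upper bound $k<h_B(F)$ into a strict real inequality. This goes through the step $\lceil x\rceil-1<x$, and it does not even require the margin $\operatorname{cc}(F)/7$.
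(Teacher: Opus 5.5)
Your proposal is correct and follows the paper's proof: $\mu_F(1/3)\le n/4<k$ by Lemma~\ref{apC:lem:B2}, then $k\le h_B(F)-1<n/2-\nu/3\le\mu_F(7/3)$ by Lemma~\ref{apC:lem:B7} and Theorem~\ref{apC:thm:B5}, and the intermediate value theorem for the continuous function $\mu_F$. Your integer-rounding step $\lceil x\rceil-1<x$ is equivalent to the paper's $6k+1+2\nu\le 3n$, and the monotonicity remark is correct but, as you note, not needed.
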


\begin{proof}
Let $k\in W(F)$, that is, $\lceil n/4\rceil<k<h_B(F)$. Then $\mu_F(1/3)\le n/4<k$ by Lemma~\ref{apC:lem:B2}. By
Lemma~\ref{apC:lem:B7}, $k+1\le\lfloor(3n-2\nu+5)/6\rfloor$, so $6k+1+2\nu\le3n$, that is, $k<n/2-\nu/3\le\mu_F(7/3)$ by
Theorem~\ref{apC:thm:B5} (as $\operatorname{cc}(F)\ge0$). As $\mu_F$ is continuous on $[0,\infty)$ (a ratio of polynomials with $I(F;\lambda)\ge1$), the
intermediate value theorem gives $\lambda\in(1/3,7/3)$ with $\mu_F(\lambda)=k$.
\end{proof}

% ---------------------------------------------------------------------------------------------------------------
\section{The inputs}\label{p2:sec:inputs}

This section proves the four inputs of the moment argument, used in the input-verification part of the proof of Theorem~\ref{p2:thm:main}:
the variance bound (Theorem~\ref{p2:thm:O1}), the variance-ratio bound (Theorem~\ref{p2:thm:O2}), the Laplace-transform
bounds (Theorem~\ref{p2:thm:O3}) and the lower bound for the mean free count (Theorem~\ref{p2:thm:O5}). The first three
hold for every finite forest, every $\lambda\in[1/3,7/3]$ and every independent set $A$ of maximum weight at $\lambda$; they
have the forms of the source inequalities of Zhang and Li~\cite[Section~5, (44)]{ZL}, $\operatorname{Var}M\le D_0\muM$,
$\mathbb{E}\delta^2\le\alpha_0\muM+\beta_0$ and $\mathbb{E}(1-q+q\tau_r)^M\le e^{-\ell_r\muM}$. Only the fourth uses
$n\ge25$.

\paragraph{Subintervals and caps.}
The constants are indexed by $55$ subintervals $\Ihat_\iota=[\lamhat_\iota,\lamhat_{\iota+1}]$, $\iota=0,\dots,54$,
$\lamhat_0=1/3<\dots<\lamhat_{55}=7/3$, refining a partition of $[1/3,7/3]$ into $30$ intervals, the \emph{bands}
$[\check\lambda_i,\check\lambda_{i+1}]$; the end points are listed in the supplementary data (Section~\ref{sec:computer}).
For $\lambda\in[1/3,7/3]$, $\iota(\lambda)$ is the index of the subinterval containing $\lambda$, the left one at a shared
end point, and a constant of index $\iota$ is verified on the closed interval $\Ihat_\iota$. The caps $\theta(\iota)$ of the
variance-ratio bound are given in Data~\ref{p2:tab:theta}.
\datablock{Caps for the variance-ratio bound}{p2:tab:theta}
For each group of subinterval indices, the ordered pair records its activity interval and the cap
$\theta(\iota)$ of the variance-ratio bound:
For indices 0 to 5, the pair is $([1/3,3/5], 1)$. For indices 6 to 9, the pair is $([3/5,39/50], 7/10)$. For index 10, the pair is $([39/50,4/5], 13/20)$. For indices 11 to 37, the pair is $([4/5,13/10], 3/5)$.

For indices 38 to 43, the pair is $([13/10,8/5], 7/10)$. For index 44, the pair is $([8/5,33/20], 9/10)$. For indices 45, 46, the pair is $([33/20,7/4], 19/20)$. For indices 47 to 54, the pair is $([7/4,7/3], 1)$.
The \emph{coarse cap} of a band is the cap $\theta=1$, $7/10$, $3/5$, $7/10$ or $1$ of the one of the closed intervals
$[1/3,3/5]$, $[3/5,4/5]$, $[4/5,13/10]$, $[13/10,8/5]$, $[8/5,7/3]$ that contains its closure (Lemma~\ref{cs:lem:tables}(a)).
The cap of the variance bound is the function
\begin{equation}\label{p2:eq:Dhat}
\hat D(\lambda)=\tfrac65\ \text{on}\ [\tfrac13,\tfrac{263}{200}],\quad \tfrac75\ \text{on}\ (\tfrac{263}{200},\tfrac32],\quad
\tfrac85\ \text{on}\ (\tfrac32,\tfrac74],\quad \tfrac95\ \text{on}\ (\tfrac74,2],\quad \tfrac{21}{10}\ \text{on}\ (2,\tfrac73],
\end{equation}
and $D(\iota)=\max\{\hat D(\lambda):\lambda\in\Ihat_\iota\}$: $6/5$ for $\iota\le37$, $7/5$ for $38\le\iota\le42$, $8/5$ for
$43\le\iota\le46$, $9/5$ for $47\le\iota\le51$ and $21/10$ for $52\le\iota\le54$.

\begin{lemma}[the subintervals against the bands]\label{cs:lem:tables}
For every $\iota<55$: (a) $\theta(\iota)$ is the coarse cap of the band containing $\Ihat_\iota$, or $13/20$ with
$\Ihat_\iota\subseteq[39/50,4/5]$, or $9/10$ with $\Ihat_\iota\subseteq[8/5,33/20]$, or $19/20$ with
$\Ihat_\iota\subseteq[33/20,7/4]$; so $\theta(\iota)$ is at most the coarse cap of its band; and the closure of each band
lies in one of the five closed intervals $[1/3,3/5]$, $[3/5,4/5]$, $[4/5,13/10]$, $[13/10,8/5]$, $[8/5,7/3]$, whose cap
($1$, $7/10$, $3/5$, $7/10$, $1$) is the coarse cap of the band. (b) The rates of the Laplace-transform bounds on $\Ihat_\iota$ are
those of the band containing it, except the rate of $\tau_0$ on the $26$ subintervals that carry an additional interval,
where it is that interval's rate, and that interval contains $\Ihat_\iota$.
\end{lemma}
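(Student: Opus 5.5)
This is a finite bookkeeping statement about explicit rational data, so I will check it by direct comparison of the tabulated end points, with no analysis involved. The data are the $56$ end points $\lamhat_0<\dots<\lamhat_{55}$ of the subintervals, the $31$ end points $\check\lambda_0<\dots<\check\lambda_{30}$ of the bands, the caps of Data~\ref{p2:tab:theta}, and the rates of the Laplace-transform bounds per band together with the $26$ additional intervals. I will first establish that the subintervals refine the bands. Every $\check\lambda_i$ must occur among the $\lamhat_\iota$, and both lists must start at $1/3$ and end at $7/3$. Then each $\Ihat_\iota$ lies in exactly one band, namely the one with $\check\lambda_i\le\lamhat_\iota<\lamhat_{\iota+1}\le\check\lambda_{i+1}$. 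This is a comparison of two sorted lists of rationals.

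\emph{Part (a).} First, for each band I will locate its closure in one of the five closed intervals $[1/3,3/5]$, $[3/5,4/5]$, $[4/5,13/10]$, $[13/10,8/5]$, $[8/5,7/3]$. It suffices that each of $3/5,4/5,13/10,8/5$ is a band end point, so that no band straddles one of them. Each band then defines a unique coarse cap, with one harmless caveat: a band meeting two intervals only at a shared end point is impossible because bands have positive length. Next, I will go through the groups of Data~\ref{p2:tab:theta}. For indices $0$–$5$, $6$–$9$, $11$–$37$, $38$–$43$ and $47$–$54$, I will check that the listed activity interval is contained in the corresponding coarse interval. That gives $\theta(\iota)$ equal to the coarse cap. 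For index $10$ I will check $\Ihat_{10}\subseteq[39/50,4/5]$, for index $44$ that $\Ihat_{44}\subseteq[8/5,33/20]$, and for $45,46$ that they lie in $[33/20,7/4]$. This requires $39/50$, $33/20$ and $7/4$ to be subinterval end points at the right indices. The inequality ``at most the coarse cap'' is then immediate: $13/20\le7/10$, $9/10\le1$ and $19/20\le1$, using that $[39/50,4/5]\subseteq[3/5,4/5]$ and $[8/5,7/4]\subseteq[8/5,7/3]$.

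\emph{Part (b).} Here the content is how the rate tables are defined: each band carries its rates, and each of the $26$ additional intervals overrides the rate of $\tau_0$ on the subintervals assigned to it. For each of these $26$ subintervals I will verify the endpoint inequalities showing that the additional interval contains $\Ihat_\iota$. For the remaining $29$ subintervals I will verify that no override is assigned, so the band's rates apply.

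The only real obstacle is that the end points live in the supplementary data rather than in the text. Accordingly, I will present the proof as a finite exact-rational check: compare sorted lists and test containments. This is the kind of check listed in Section~\ref{sec:computer}, and it is mechanically reproducible in the Lean development by decidable evaluation.
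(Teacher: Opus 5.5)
Your proposal is correct and essentially matches the paper, which proves this lemma only by kernel evaluation of these exact-rational comparisons on the tabulated end points, caps and rates (its $55$ cases in family~\ref{chk:kernel} of Section~\ref{sec:computer}). One point to make explicit in (b): the rates $\ell_r(\iota)$ used by the coverings of Lemma~\ref{p2:lem:strips} are recorded per subinterval, so (b) requires checking that those entries equal the band's rates or the additional interval's rate, not only checking which override is assigned.
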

This is checked by kernel evaluation (Section~\ref{sec:computer}).

% ---------------------------------------------------------------------------------------------------------------
\subsection{Rooted forests and messages}\label{apB:sec:conv}

This subsection fixes the notation for rooted forests and messages and the two properties (M1) and (M2) of the hard-core
law from which the recursions of the variance bound and of the variance-ratio bound follow. The letter $p$, with or
without a subscript, is a message (a cavity probability).

For a finite set $X$ of vertices of $F$, $P_X$ is the hard-core law of $F[X]$: $P_X(S')=\lambda^{|S'|}/I(F[X];\lambda)$ for
independent $S'\subseteq X$ ($I(F[\emptyset];\lambda)=1$), and $S_X$ is a random set with law $P_X$. For a set $Q$ of
vertices (the counting set), $\mu_Q(X)$ and $V_Q(X)$ are the mean and variance of $|S_X\cap Q|$. The variance bound uses
$Q=A$, the variance-ratio bound uses $Q=V(F)$.
The independence property (M1) states that if $X\cap X'=\emptyset$ and no edge joins $X$ to $X'$, then $P_{X\cup X'}=P_X\otimes P_{X'}$, so $\mu_Q$ and
$V_Q$ are additive.

For the conditioning property (M2), let $v\in X$, $p=P_X(v\in S_X)$, and let $1_Q$ be the indicator function of $Q$. Given $v\notin S_X$, $S_X$ has
law $P_{X-v}$; given $v\in S_X$, $S_X-v$ has law $P_{X-\NF[v]}$. By total expectation and total variance,
\begin{equation}\label{apB:eq:M2}
\begin{aligned}
\mu_Q(X)&=(1-p)\,\mu_Q(X-v)+p\,\bigl(1_Q(v)+\mu_Q(X-\NF[v])\bigr),\\
V_Q(X)&=(1-p)\,V_Q(X-v)+p\,V_Q(X-\NF[v])\\
&\qquad+p(1-p)\bigl(1_Q(v)+\mu_Q(X-\NF[v])-\mu_Q(X-v)\bigr)^2 .
\end{aligned}
\end{equation}
On the whole vertex set, $\mu_A(V(F))=\pi(A)$, $V_A(V(F))=\operatorname{Var}_\lambda|S\cap A|$ and
$V_{V(F)}(V(F))=\operatorname{Var}_\lambda K$.

A rooted subtree is a pair $(X,v)$ with $X$ connected and $v\in X$. The children of $v$ are its neighbours in $X$; for a
child $v'$ the branch $X_{v'}$ is the component of $X-v$ containing $v'$. As $F$ is a forest, $X-v$ is the disjoint union
of the branches, $X-\NF[v]$ that of the sets $X_{v'}-v'$, and no edge joins two branches, so (M1) applies to both. The
\emph{message} of $(X,v)$ is $p_v=P_X(v\in S_X)$, its \emph{log-mass} is $\varphi_v=-\log(1-p_v)$, and
$Y_v=\sum_{v'}\varphi_{v'}$ is the children's log-mass, where $p_{v'}$ and $\varphi_{v'}$ are the message and the log-mass of
the branch $(X_{v'},v')$. Put
\[
\mathrm{msg}_\lambda(s)=\frac{\lambda e^{-s}}{1+\lambda e^{-s}},\qquad
\mathrm{lm}_\lambda(s)=\log\bigl(1+\lambda e^{-s}\bigr)=-\log\bigl(1-\mathrm{msg}_\lambda(s)\bigr),
\]
so $\mathrm{msg}_\lambda(0)=q$ and $\mathrm{lm}_\lambda(0)=\log(1+\lambda)=:\bar\varphi$, the log-mass of a vertex without
children.

\begin{lemma}[messages]\label{apB:lem:msg}
$p_v/(1-p_v)=\lambda\prod_{v'}(1-p_{v'})$; hence $p_v=\mathrm{msg}_\lambda(Y_v)$ and $\varphi_v=\mathrm{lm}_\lambda(Y_v)$. In
particular $0<p_v\le q$, with equality if and only if $v$ has no child, and $0<\varphi_v\le\log(1+\lambda)$.
\end{lemma}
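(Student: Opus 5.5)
The plan is a direct computation on the hard-core law of the rooted subtree $(X,v)$, using the independence property (M1) and the conditioning in (M2).

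\emph{Step 1: the ratio identity.} Split the independent sets $S'$ of $F[X]$ according to whether $v\in S'$. If $v\notin S'$, then $S'$ is an independent set of $X-v$. Since $X-v$ is the disjoint union of the branches $X_{v'}$ with no edges between them, its partition function factorizes as $I(F[X-v];\lambda)=\prod_{v'}I(F[X_{v'}];\lambda)$. If $v\in S'$, then $S'-v$ is an independent set of $X-\NF[v]$, which is the disjoint union of the sets $X_{v'}-v'$. This gives
\[
\frac{p_v}{1-p_v}=\frac{\lambda\prod_{v'}I(F[X_{v'}-v'];\lambda)}{\prod_{v'}I(F[X_{v'}];\lambda)}.
\]
Apply the same conditioning inside each branch $(X_{v'},v')$. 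The sets of $X_{v'}$ avoiding $v'$ have total weight $I(F[X_{v'}-v'];\lambda)$, so
\[
1-p_{v'}=\frac{I(F[X_{v'}-v'];\lambda)}{I(F[X_{v'}];\lambda)}.
\]
Substituting factor by factor yields $p_v/(1-p_v)=\lambda\prod_{v'}(1-p_{v'})$. This needs $I\ge1$, which holds because $i_0=1$ and $\lambda>0$.

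\emph{Step 2: the closed forms.} By definition, $\prod_{v'}(1-p_{v'})=\exp(-\sum_{v'}\varphi_{v'})=e^{-Y_v}$. So $p_v/(1-p_v)=\lambda e^{-Y_v}$. Solving for $p_v$ gives $p_v=\lambda e^{-Y_v}/(1+\lambda e^{-Y_v})=\mathrm{msg}_\lambda(Y_v)$. Then
\[
\varphi_v=-\log(1-p_v)=\log(1+\lambda e^{-Y_v})=\mathrm{lm}_\lambda(Y_v).
\]

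\emph{Step 3: the bounds.} Each child satisfies $p_{v'}<1$: the empty set has positive probability, and this is also clear inductively from $p_{v'}/(1-p_{v'})$ being finite. Each child also satisfies $p_{v'}>0$, since $\{v'\}$ is independent and has positive weight. Hence every $\varphi_{v'}$ is finite and positive, so $Y_v\ge0$, with $Y_v=0$ if and only if $v$ has no child. The function $\mathrm{msg}_\lambda$ is strictly decreasing and positive. Therefore $0<p_v\le\mathrm{msg}_\lambda(0)=q$, with equality exactly when $Y_v=0$, that is, when $v$ has no child. The function $\mathrm{lm}_\lambda$ is strictly decreasing and positive as well, so $0<\varphi_v\le\log(1+\lambda)$.

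No step is a real obstacle. The only point needing care is the factorization in Step 1, where (M1) must be applied to both $X-v$ and $X-\NF[v]$ as edge-free unions of branches. The paper has already noted that this holds because $F$ is a forest.
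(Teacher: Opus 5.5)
Your proof is correct and takes the same route as the paper. Both split $I(F[X];\lambda)$ according to whether $v$ is occupied, factorize $I(F[X-v];\lambda)$ and $I(F[X-\NF[v]];\lambda)$ over the branches by (M1), and use $1-p_{v'}=I(F[X_{v'}-v'];\lambda)/I(F[X_{v'}];\lambda)$; the bounds then follow from the monotonicity of $\mathrm{msg}_\lambda$ and $\mathrm{lm}_\lambda$, since $Y_v>0$ exactly when $v$ has a child.
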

\begin{proof}
$I(F[X];\lambda)=I(F[X-v];\lambda)+\lambda I(F[X-\NF[v]];\lambda)$, and by (M1) $I(F[X-v];\lambda)=\prod_{v'}I(F[X_{v'}];\lambda)$
and $I(F[X-\NF[v]];\lambda)=\prod_{v'}I(F[X_{v'}-v'];\lambda)$, with $I(F[X_{v'}-v'];\lambda)/I(F[X_{v'}];\lambda)=1-p_{v'}$. So
$p_v/(1-p_v)=\lambda\prod_{v'}(1-p_{v'})=\lambda e^{-Y_v}$; each $p_{v'}>0$, so $Y_v>0$ when $v$ has a child.
\end{proof}

% ---------------------------------------------------------------------------------------------------------------
\subsection{The variance bound}\label{apB:sec:O1}

This subsection proves the variance bound by hand, up to $4{,}321$ interval checks; the no-valley inequality on the
parameter domain uses it through the term $\gamma D\muM$ of the criterion.

\begin{theorem}[the variance bound]\label{p2:thm:O1}
For every finite forest, every $\lambda\in[1/3,7/3]$ and every independent set $A$ of maximum weight at $\lambda$,
$\operatorname{Var}_\lambda|S\cap A|\le(1+(\hat D(\lambda)-1)q)\,\pi(A)$ and $\operatorname{Var}M\le\hat D(\lambda)\,\muM$,
with $\hat D$ as in \eqref{p2:eq:Dhat}.
\end{theorem}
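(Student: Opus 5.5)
First I would reduce the second claim to the first. By Proposition~\ref{p2:prop:mixture}, $\operatorname{Var}_\lambda|S\cap A|=(1-q)\pi(A)+q^2\operatorname{Var}M$ and $q\muM=\pi(A)$. Then $\operatorname{Var}M\le\hat D\muM$ is equivalent to $\operatorname{Var}_\lambda|S\cap A|\le(1-q)\pi(A)+q\hat D\pi(A)=(1+(\hat D-1)q)\pi(A)$. So it suffices to bound $V_A(V(F))$ by $(1+(\hat D-1)q)\mu_A(V(F))$. By (M1) both sides are additive over components, so we may assume $F$ is a tree. The only property of $A$ we will use is (L1), together with its independence. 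Since $\hat D$ is a step function and $q$ is increasing, it suffices to prove the bound on each closed piece of \eqref{p2:eq:Dhat} with the constant $\hat D$ of that piece.

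Next I would run a rooted induction based on (M2). Root the tree at a vertex $r$ and, for each rooted subtree $(X,v)$, consider the defect
\[
\Phi(X)=c\,\mu_A(X)-V_A(X),\qquad c=1+(\hat D-1)q .
\]
A bound $\Phi\ge0$ at the root alone does not propagate, because the cross term $p(1-p)(1_A(v)+\mu_A(X-\NF[v])-\mu_A(X-v))^2$ in \eqref{apB:eq:M2} can be large. So I would prove a strengthened hypothesis of the form
\[
\Phi(X)\ \ge\ g(\Delta_v,\varphi_v),\qquad \Delta_v=\mu_A(X-v)-\mu_A(X-\NF[v])-1_A(v),
\]
where $g$ is a quadratic in the "influence" $\Delta_v$ whose coefficient depends on the log-mass $\varphi_v$. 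This is the usual device for variance recursions on trees. Applying (M2) at $v$ and (M1) over the branches, the defect of $(X,v)$ becomes a sum over children $v'$ of their defects, plus terms involving the children's influences $\Delta_{v'}$, messages $p_{v'}$, and $p_v=\mathrm{msg}_\lambda(Y_v)$ from Lemma~\ref{apB:lem:msg}. Moreover $\Delta_v$ is itself an explicit combination of the children's data: conditioning each branch on whether $v'\in S$ gives a contribution of $p_{v'}\Delta_{v'}$-type to the difference of means.

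The induction step then reduces to a finite-dimensional inequality in the children's parameters $(p_{v'},\Delta_{v'})$ with $\varphi$-dependent coefficients. I would try to make it separable by choosing $g$ so that the step is implied by a one-child "unit inequality" in a single message $\pmsg\in(0,q]$ and influence, summed over children, together with concavity in the log-mass $Y_v$. Leaves need separate treatment. By (L1), a leaf $u$ hanging at a vertex with another neighbour lies in $A$, so the base messages are $p=q$ with $1_A=1$, while non-$A$ vertices carry $1_A=0$. This case split, $v\in A$ versus $v\notin A$ with all children then eligible for $A$, is where maximum weight enters. Without it, a star counted on its centre would violate the bound.

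I expect the main obstacle to be choosing $g$ and verifying the resulting two-variable inequality uniformly for $\lambda$ in each piece of $[1/3,7/3]$. I would fit $g$ numerically as a quadratic $a(\varphi)\Delta^2+b(\varphi)\Delta$ on worst-case trees (spiders and caterpillars). I would then prove the unit inequality by hand where it is monotone, and cover the rest of the $(\lambda,\pmsg)$ domain by a grid of interval-arithmetic boxes. Those are the finitely many interval checks referred to at the start of the subsection, and the thresholds $263/200$, $3/2$, $7/4$, $2$ in \eqref{p2:eq:Dhat} are where each constant stops closing.
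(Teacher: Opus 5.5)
Your reduction of the second inequality to the first, your defect $\Phi(X)=c\,\mu_A(X)-V_A(X)$ (the paper's reserve $R^0$), and your plan of a strengthened hypothesis in the log-mass and the influence all match the paper. The gap is the inductive step, which is the whole difficulty: you leave it open, and the mechanism you sketch would not close it. By (M2) and (M1) (Lemma~\ref{apB:lem:rec}),
\[
\Phi(X)=c\,p_v1_A(v)+(1-p_v)\sum_{v'}\Phi(X_{v'})+p_v\sum_{v'}\Phi(X_{v'}-v')-p_v(1-p_v)\Delta_v^2 .
\]
So the step is not ``a sum over children of their defects''. It also contains $\Phi(X_{v'}-v')=\sum_{v''}\Phi(X_{v''})$, the defects of the grandchildren, and a hypothesis on $\Phi(X_{v'})$ says nothing about these. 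A vertex may have any number of children and grandchildren. To reduce the step to finitely many scalars, the hypothesis must aggregate over siblings, i.e.\ be superadditive under merging them. ``Summed over children, together with concavity in $Y_v$'' does not provide this. Your ansatz $a(\varphi)\Delta^2+b(\varphi)\Delta$ has no such property. Moreover, it takes negative values whenever $b\not\equiv0$, so $\Phi\ge g$ would not even give $\Phi\ge0$ at the root.

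The missing idea is the form of the invariant. Use the port $\chi(X,v)=\mu_A(X)-\mu_A(X-v)=-p_v\Delta_v$ and set $\mathcal Q(\varphi,\chi)=\alphaV\varphi+\gammaV\chi^2/\varphi$. This is a perspective function, jointly convex and $1$-homogeneous, so Cauchy--Schwarz with the weights $\varphi_i$ gives $\sum_i\mathcal Q(\varphi_i,\chi_i)\ge\mathcal Q(\sum_i\varphi_i,\sum_i\chi_i)$.
\begin{itemize}
\item At the grandchildren, with $\sum_{v''}\chi_{v''}=1_A(v')-\chi_{v'}/p_{v'}$, this gives $\Phi(X_{v'}-v')\ge\alphaV\tilde Y+\gammaV(1_A(v')-\chi_{v'}/p_{v'})^2/\tilde Y$, where $\tilde Y=Y_{v'}$.
\item At the children, it splits the step into shares weighted by $\varphi_{v'}/Y_v$ (Lemma~\ref{apB:lem:child}, Proposition~\ref{apB:prop:step}). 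Completing the square in $\chi_{v'}/\tilde p$ then leaves, for each type of parent and child, one inequality in the two scalars $(\tilde Y,\Ypar)$: the comparisons CC, CA and AC. Add the base inequality at a childless vertex of $A$.
\end{itemize}
These are the unit inequalities you were after, and they are what the $4{,}321$ interval checks verify along the curve of multipliers.

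The positive constant term $\alphaV\varphi$, which your ansatz drops, is what pays for CA where $L<0$. It is also where (L1) enters: a childless vertex outside $A$ has defect $0<\alphaV\bar\varphi=\mathcal Q(\bar\varphi,0)$, so (LP) is needed to put every childless vertex of a rooted subtree in $A$.

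Your star example is wrong. A component meeting $A$ in a single vertex $u$ has $\operatorname{Var}=\pi_u(1-\pi_u)\le\pi_u$, so no star violates the bound. Without (L1) the bound fails instead through correlation via a common neighbour. At $\lambda=1$, take a vertex joined to $22$ vertices of $A$, each carrying $4$ pendant leaves outside $A$. Then $\operatorname{Var}_\lambda|S\cap A|/\pi(A)\approx1.21>1.1$.
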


\emph{Idea.} Root each component at a vertex outside $A$. By induction from the leaves, a reserve of each rooted subtree
dominates the function $\alphaV\varphi+\gammaV\chi^2/\varphi$ of its log-mass $\varphi$ and its port $\chi$; one step of
the induction reduces to three comparisons at one parent and one child, which are proved along a curve of multipliers.
The maximality of $A$ is used only through the condition
(LP): every leaf of a component of order at least three lies in $A$,
which follows from (L1) of Section~\ref{p2:sec:mixture}; the proof gives the conclusion for every independent $A$ with (LP).

\begin{lemma}[conversion]\label{apB:lem:conv}
Let $A$ be independent, $D\in\mathbb{R}$ and $A_D=1+(D-1)q$. If $\operatorname{Var}_\lambda|S\cap A|\le A_D\,\pi(A)$,
then $\operatorname{Var}M\le D\,\muM$.
\end{lemma}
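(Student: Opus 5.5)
The plan is to combine the two identities of Proposition~\ref{p2:prop:mixture}. These are $\operatorname{Var}_\lambda|S\cap A|=(1-q)\,\pi(A)+q^2\operatorname{Var}M$ and $q\muM=\pi(A)$; neither uses maximality of $A$, and both hold for every independent $A$. The hypothesis then becomes a linear inequality for $\operatorname{Var}M$, which we rearrange.

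First we recall why the first identity holds, since it is the only nontrivial ingredient. By Lemma~\ref{p1:lem:fiber}, conditionally on $J=S\setminus A$ the set $S\cap A$ is a $q$-random subset of the $M=m(J)$ free vertices. So $|S\cap A|$ given $J$ is $\operatorname{Bin}(M,q)$, with conditional mean $qM$ and conditional variance $q(1-q)M$. The law of total variance gives
\[
\operatorname{Var}_\lambda|S\cap A| \;=\; \mathbb{E}\,q(1-q)M+\operatorname{Var}(qM) \;=\; q(1-q)\muM+q^2\operatorname{Var}M .
\]
Taking expectations of the conditional mean also gives $\pi(A)=\mathbb{E}_\lambda|S\cap A|=q\muM$.

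Next we substitute into the hypothesis $\operatorname{Var}_\lambda|S\cap A|\le A_D\,\pi(A)$ with $A_D=1+(D-1)q$. This reads
\[
q(1-q)\muM+q^2\operatorname{Var}M\;\le\;\bigl(1+(D-1)q\bigr)\,q\muM .
\]
Subtracting $q(1-q)\muM$ from both sides leaves $q^2\operatorname{Var}M$ on the left and $q\muM\bigl(1+(D-1)q-(1-q)\bigr)=q\muM\cdot Dq=Dq^2\muM$ on the right. Hence $q^2\operatorname{Var}M\le Dq^2\muM$.

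Finally, since $\lambda>0$ we have $q=\lambda/(1+\lambda)>0$, so we may divide by $q^2$ and obtain $\operatorname{Var}M\le D\muM$. No step is a real obstacle. The only care needed is the positivity of $q$ for the division, and the observation that the binomial-mixture description, and hence the identities, are valid for an arbitrary independent set $A$ and not only for one of maximum weight.
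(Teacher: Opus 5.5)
Your proposal is correct and is the paper's proof. Both use the identities $\operatorname{Var}_\lambda|S\cap A|=(1-q)\pi(A)+q^2\operatorname{Var}M$ and $q\muM=\pi(A)$ from Proposition~\ref{p2:prop:mixture}, rearrange to $q^2\operatorname{Var}M\le Dq^2\muM$, and divide by $q>0$. The paper cites the proposition directly, whereas you also re-derive the total-variance identity and note that it holds for every independent $A$.
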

\begin{proof}
By Proposition~\ref{p2:prop:mixture},
$q^2\operatorname{Var}M=\operatorname{Var}_\lambda|S\cap A|-(1-q)\pi(A)\le(A_D-1+q)\pi(A)=Dq\,\pi(A)=Dq^2\muM$, and $q>0$.
\end{proof}

\paragraph{The reserves and the recursion.}
Fix $A$ and $A_D$. For a rooted subtree $(X,v)$ put $\mathbf 1_v=1_A(v)$, $\mu^0=\mu_A(X)$, $V^0=V_A(X)$,
$\mu^1=\mu_A(X-v)$, $V^1=V_A(X-v)$: the mean and variance of the number of vertices of $A$ in the random set of the
subtree when the parent of $v$ is vacant (the subtree then has law $P_X$) and when it is occupied ($v$ is then vacant).
Put
\[
R^0(X)=A_D\mu^0-V^0,\qquad R^1(X,v)=R^0(X-v)=A_D\mu^1-V^1,\qquad \chi(X,v)=\mu^0-\mu^1;
\]
the \emph{reserves} $R^0$, $R^1$ and the \emph{port} $\chi$, the response of the mean count of $A$ to the parent's
occupancy. For a child $v'$ write $R^0_{v'}=R^0(X_{v'})$, $R^1_{v'}=R^1(X_{v'},v')$, $\chi_{v'}=\chi(X_{v'},v')$.

\begin{lemma}[the recursion]\label{apB:lem:rec}
With $p=p_v$ and sums over the children $v'$ of $v$:
\begin{gather*}
R^1(X,v)=\sum_{v'}R^0_{v'},\qquad \chi(X,v)=p\Bigl(\mathbf 1_v-\sum_{v'}\chi_{v'}\Bigr),\\
R^0(X)=A_Dp\mathbf 1_v+(1-p)\sum_{v'}R^0_{v'}+p\sum_{v'}R^1_{v'}-p(1-p)\Bigl(\mathbf 1_v-\sum_{v'}\chi_{v'}\Bigr)^2 .
\end{gather*}
\end{lemma}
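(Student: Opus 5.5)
The plan is to derive all three identities directly from the conditioning property (M2) and the independence property (M1). We apply (M2) at the root $v$ with counting set $Q=A$, then use (M1) to split $X-v$ into the branches $X_{v'}$ and $X-\NF[v]$ into the sets $X_{v'}-v'$.

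The first identity is the easiest. $X-v$ is the disjoint union of the branches $X_{v'}$, with no edges between them, so by (M1) both $\mu_A$ and $V_A$ are additive over the branches. Hence
\[
R^1(X,v)=R^0(X-v)=A_D\mu_A(X-v)-V_A(X-v)=\sum_{v'}\bigl(A_D\mu_A(X_{v'})-V_A(X_{v'})\bigr)=\sum_{v'}R^0_{v'}.
\]

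For the port, we likewise have $\mu_A(X-\NF[v])=\sum_{v'}\mu_A(X_{v'}-v')$, since $X-\NF[v]$ is the disjoint union of the $X_{v'}-v'$. This sum equals $\sum_{v'}\mu^1_{v'}$, where $\mu^1_{v'}=\mu_A(X_{v'}-v')$. Similarly $\mu_A(X-v)=\sum_{v'}\mu^0_{v'}$. Therefore
\[
1_A(v)+\mu_A(X-\NF[v])-\mu_A(X-v)=\mathbf 1_v-\sum_{v'}\chi_{v'}=:\Delta.
\]
The mean line of (M2) gives $\mu^0=\mu_A(X-v)+p\Delta$. Since $\mu^1=\mu_A(X-v)$ by definition, this yields $\chi(X,v)=p\Delta$, as claimed.

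For $R^0$, we combine the two lines of (M2):
\[
R^0(X)=A_D\bigl((1-p)\mu_A(X-v)+p(\mathbf 1_v+\mu_A(X-\NF[v]))\bigr)-(1-p)V_A(X-v)-pV_A(X-\NF[v])-p(1-p)\Delta^2.
\]
We then group the terms. The terms with weight $1-p$ give $(1-p)R^0(X-v)=(1-p)\sum_{v'}R^0_{v'}$. The terms with weight $p$, other than $A_Dp\mathbf 1_v$, give $p\,(A_D\mu_A-V_A)(X-\NF[v])$. By additivity over the sets $X_{v'}-v'$, this equals $p\sum_{v'}R^0(X_{v'}-v')=p\sum_{v'}R^1_{v'}$. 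The remaining terms are $A_Dp\mathbf 1_v$ and $-p(1-p)\Delta^2$, which is the stated formula.

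No step is a real obstacle. The only point needing care is the bookkeeping that identifies $R^0(X_{v'}-v')$ with $R^1_{v'}$, where the occupied-parent state of the child $v'$ forces $v'$ to be vacant. One must also check that (M1) applies both to $X-v$ and to $X-\NF[v]$. This holds because $F$ is a forest: the sets $X_{v'}-v'$ are pairwise disjoint and non-adjacent, as recorded in Section~\ref{apB:sec:conv}.
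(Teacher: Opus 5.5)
Your proposal is correct and follows essentially the same route as the paper. Both use the additivity from (M1) over the branches $X_{v'}$ and over the sets $X_{v'}-v'$, the identification $R^0(X_{v'}-v')=R^1_{v'}$, and substitution into (M2) with $Q=A$.
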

\begin{proof}
By (M1) over the branches, $\mu_A(X-v)=\sum_{v'}\mu_A(X_{v'})$, $\mu_A(X-\NF[v])=\sum_{v'}\mu_A(X_{v'}-v')$, and likewise for
$V_A$; hence $R^1(X,v)=\sum_{v'}R^0_{v'}$, $R^0(X-\NF[v])=\sum_{v'}R^1_{v'}$ and
$\mathbf 1_v+\mu_A(X-\NF[v])-\mu_A(X-v)=\mathbf 1_v-\sum_{v'}\chi_{v'}$. Insert these into \eqref{apB:eq:M2} with $Q=A$.
\end{proof}

\paragraph{The reserve step.}\label{apB:sec:O1step}
The invariant attached to a rooted subtree is the \emph{reserve inequality}
\begin{equation}\label{apB:eq:R}
R^0(X)\ \ge\ \mathcal Q\bigl(\varphi_v,\chi(X,v)\bigr),\qquad \mathcal Q(\varphi,\chi)=\alphaV\varphi+\gammaV\chi^2/\varphi\qquad(\varphi>0),
\end{equation}
with multipliers $\alphaV\ge0$, $\gammaV>0$ depending on $\lambda$ only; so $\mathcal Q\ge0$. The step reduces to
comparisons at one parent-child pair, parametrized by a \emph{feasible pair} $(\tilde Y,\Ypar)$: $\tilde Y\ge0$, $\Ypar>0$
and $\Ypar\ge\mathrm{lm}_\lambda(\tilde Y)$, where $\Ypar=Y_v$ is the parent's children's log-mass and $\tilde Y=Y_{v'}$ that of
the chosen child $v'$. Put $\ppar=\mathrm{msg}_\lambda(\Ypar)$, $\phipar=\mathrm{lm}_\lambda(\Ypar)$ (parent),
$\tilde p=\mathrm{msg}_\lambda(\tilde Y)$, $\varphi=\mathrm{lm}_\lambda(\tilde Y)$ (child), $\mathcal J=\tilde p^2\tilde Y/\varphi$,
\begin{gather*}
\EKL=1-\ppar+\frac{\ppar\tilde Y}{\varphi}-\frac{\phipar}{\Ypar},\qquad \mathcal H=\ppar(1-\ppar)+\frac{\gammaV\ppar^2}{\phipar},\\
L=\gammaV(1-\ppar)-\mathcal H\Ypar,\qquad Z=\ppar\gammaV+L\mathcal J,\\
\mathrm{CC}=\alphaV\EKL,\qquad \mathrm{CA}=\alphaV\EKL+\frac{\ppar\gammaV L(\tilde p/\varphi)^2}{Z},\qquad
\mathrm{AC}=\alphaV\EKL+\frac{A_D\ppar-\mathcal H}{\Ypar}-\frac{\mathcal H^2\mathcal J}{Z}.
\end{gather*}
The first letter of a comparison is the type of the parent and the second that of the child, C for a vertex of $C$ and A
for a vertex of $A$; a parent in $A$ with a child in $A$ does not occur.

\begin{definition}[step certificate]\label{apB:def:cert}
At the activity $\lambda$, $(\alphaV,\gammaV,A_D)$ is a \emph{step certificate} if $\alphaV\ge0$, $\gammaV>0$, the
\emph{base inequality} at a childless vertex of $A$,
\begin{equation}\label{mc:eq:base}
\mathrm{sl}(\lambda):=A_Dq-q(1-q)-\alphaV\bar\varphi-\gammaV q^2/\bar\varphi\ \ge\ 0,\qquad \bar\varphi=\log(1+\lambda),
\end{equation}
holds, and at every feasible pair $Z>0$, $\mathrm{CC}\ge0$, $\mathrm{CA}\ge0$ and $\mathrm{AC}\ge0$. (With
$A_D=1+(D-1)q$, $A_Dq-q(1-q)=Dq^2$.)
\end{definition}

\begin{lemma}[the child's share]\label{apB:lem:child}
Fix a feasible pair with $\Ypar>0$, indicators $\mathbf 1_v,\mathbf 1_{v'}\in\{0,1\}$ (parent $v$, child $v'$) not both
$1$, a real $\chi$ and reals $R^0_{v'},R^1_{v'}$ with $R^0_{v'}\ge\mathcal Q(\varphi,\chi)$ and either
(i) $\tilde Y>0$ and $R^1_{v'}\ge\alphaV\tilde Y+\gammaV(\mathbf 1_{v'}-\chi/\tilde p)^2/\tilde Y$, or
(ii) $\tilde Y=0$, $\mathbf 1_{v'}=1$, $R^1_{v'}=0$ and $\chi=\tilde p$ (the child is a childless vertex of $A$).
If $Z>0$ and $\mathrm{CC},\mathrm{CA},\mathrm{AC}\ge0$ at the pair, the child's share
\[
\mathcal S_{v'}=\frac\varphi\Ypar\bigl(A_D\ppar\mathbf 1_v-\mathcal H\mathbf 1_v^2-\alphaV\phipar\bigr)+2\mathcal H\mathbf 1_v\chi
-\frac{\mathcal H\Ypar\chi^2}{\varphi}+(1-\ppar)R^0_{v'}+\ppar R^1_{v'}
\]
is nonnegative.
\end{lemma}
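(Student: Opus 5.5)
The plan is to substitute the two lower bounds on the reserves into $\mathcal S_{v'}$, which turns it into a quadratic in the port $\chi$. Its minimum over $\chi$ should then reproduce, case by case, exactly the three comparisons $\mathrm{CC}$, $\mathrm{CA}$, $\mathrm{AC}$, up to the positive factor $\varphi$. The coefficients of $R^0_{v'}$ and $R^1_{v'}$ in $\mathcal S_{v'}$ are $1-\ppar\ge0$ and $\ppar\ge0$. So I first replace $R^0_{v'}$ by $\mathcal Q(\varphi,\chi)=\alphaV\varphi+\gammaV\chi^2/\varphi$. In case (i) I also replace $R^1_{v'}$ by $\alphaV\tilde Y+\gammaV(\mathbf 1_{v'}-\chi/\tilde p)^2/\tilde Y$. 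This gives a lower bound $\mathcal S_{v'}\ge a\chi^2+b\chi+c$.

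\emph{Case (i): the coefficients.} Collecting terms in case (i):
\begin{itemize}
\item The $\chi^2$ coefficient is $a=\bigl(\gammaV(1-\ppar)-\mathcal H\Ypar\bigr)/\varphi+\ppar\gammaV/(\tilde p^2\tilde Y)=L/\varphi+\ppar\gammaV/(\tilde p^2\tilde Y)$. Since $\mathcal J=\tilde p^2\tilde Y/\varphi$, this equals $Z/(\tilde p^2\tilde Y)$, which is positive by hypothesis.
\item The linear coefficient is $b=2\mathcal H\mathbf 1_v-2\ppar\gammaV\mathbf 1_{v'}/(\tilde p\tilde Y)$.
\item The $\alphaV$-terms of the constant are $\alphaV\bigl((1-\ppar)\varphi+\ppar\tilde Y-\varphi\phipar/\Ypar\bigr)=\alphaV\varphi\EKL$. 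So $c=\alphaV\varphi\EKL+\frac{\varphi}{\Ypar}(A_D\ppar\mathbf 1_v-\mathcal H\mathbf 1_v)+\ppar\gammaV\mathbf 1_{v'}/\tilde Y$, using $\mathbf 1_v^2=\mathbf 1_v$ and $\mathbf 1_{v'}^2=\mathbf 1_{v'}$.
\end{itemize}
Since $a>0$, the quadratic is at least $c-b^2/(4a)$, and I split by the indicators. The pair $(1,1)$ is excluded.

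\emph{Case (i): the three indicator pairs.}
\begin{itemize}
\item $(\mathbf 1_v,\mathbf 1_{v'})=(0,0)$: $b=0$ and the minimum is $c=\varphi\,\mathrm{CC}\ge0$.
\item $(0,1)$: $c-b^2/(4a)=\frac{\ppar\gammaV}{\tilde Y}\bigl(1-\frac{\ppar\gammaV}{Z}\bigr)=\frac{\ppar\gammaV L\mathcal J}{\tilde YZ}$, using $Z-\ppar\gammaV=L\mathcal J$. Adding the $\alphaV$-term, the total is $\varphi\bigl(\alphaV\EKL+\ppar\gammaV L(\tilde p/\varphi)^2/Z\bigr)=\varphi\,\mathrm{CA}\ge0$.
\item $(1,0)$: $c-b^2/(4a)=\alphaV\varphi\EKL+\frac{\varphi}{\Ypar}(A_D\ppar-\mathcal H)-\mathcal H^2\tilde p^2\tilde Y/Z=\varphi\,\mathrm{AC}\ge0$.
\end{itemize}
Since $\varphi>0$, $\mathcal S_{v'}\ge0$ in all three.

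\emph{Case (ii).} Here the child is a childless vertex of $A$, so the parent is in $C$ and $\mathbf 1_v=0$. Also $R^1_{v'}=0$, $\tilde Y=0$, $\varphi=\bar\varphi$ and $\chi=\tilde p=q$. Then
\[
\mathcal S_{v'}\ge-\alphaV\varphi\phipar/\Ypar+(1-\ppar)\bigl(\alphaV\varphi+\gammaV q^2/\varphi\bigr)
=\alphaV\varphi\,\EKL|_{\tilde Y=0}+(1-\ppar)\gammaV q^2/\varphi .
\]
The pair $(0,\Ypar)$ is feasible, since $\tilde Y=0\ge0$ is allowed and $\Ypar\ge\mathrm{lm}_\lambda(0)$ is part of the hypothesis. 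So $\mathrm{CC}\ge0$ at it gives $\alphaV\EKL\ge0$, and the second term is nonnegative.

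The only delicate step is the bookkeeping that the minimized quadratic equals $\varphi$ times the stated comparison, in particular the identities $a=Z/(\tilde p^2\tilde Y)$ and $Z-\ppar\gammaV=L\mathcal J$, together with the $\alphaV$-collection into $\EKL$. I would verify these algebraically first; everything else is a one-variable minimization.
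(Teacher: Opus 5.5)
Case (i) is correct and follows the paper: you insert the reserve bounds and complete the square in $\chi$, where the paper uses $\hat\chi=\chi/\tilde p$ and writes $\mathcal S_{v'}$ as an exact identity plus the two reserve excesses. Your identities $a=Z/(\tilde p^2\tilde Y)$, $Z-\ppar\gammaV=L\mathcal J$ and the collection of the $\alphaV$-terms into $\varphi\EKL$ all check out. The three minima are $\varphi\,\mathrm{CC}$, $\varphi\,\mathrm{CA}$ and $\varphi\,\mathrm{AC}$, as you say.

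Case (ii) has a genuine gap. Setting $\mathbf 1_v=0$ removes $A_D\ppar\mathbf 1_v$, $\mathcal H\mathbf 1_v^2$ and $2\mathcal H\mathbf 1_v\chi$. It does not remove $-\mathcal H\Ypar\chi^2/\varphi$, and you dropped that term. With $\chi=\tilde p=q$ and $\varphi=\bar\varphi$, the correct lower bound is
\[
\mathcal S_{v'}\ \ge\ \alphaV\varphi\EKL+\frac{q^2}{\varphi}\bigl((1-\ppar)\gammaV-\mathcal H\Ypar\bigr)=\alphaV\varphi\EKL+\frac{L\,q^2}{\varphi}.
\]
Your displayed inequality exceeds this by $\mathcal H\Ypar q^2/\varphi>0$. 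So it is false whenever $R^0_{v'}=\mathcal Q(\varphi,\chi)$.

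Nor can the correct bound be closed with $\mathrm{CC}$, because $L$ is not sign-definite. For instance, take $\lambda=1$, $\Ypar=1$ and the multipliers of the segment mid. The pair $(0,1)$ is feasible since $1\ge\log2$, and there $L\approx-0.043$.

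The missing step is this. At $\tilde Y=0$ one has $\mathcal J=0$, hence $Z=\ppar\gammaV$ and $\mathrm{CA}=\alphaV\EKL+L\tilde p^2/\varphi^2$. The right side above is therefore exactly $\varphi\,\mathrm{CA}$ at the feasible pair $(0,\Ypar)$. The hypothesis $\mathrm{CA}\ge0$, not $\mathrm{CC}\ge0$, finishes the case. This is why the lemma needs $\mathrm{CA}\ge0$ even though the port is pinned in case (ii).
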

\begin{proof}
In case (i) put $\hat\chi=\chi/\tilde p$. As an identity of rational functions in independent variables,
\[
\mathcal S_{v'}=\mathcal U+(1-\ppar)\bigl(R^0_{v'}-\mathcal Q(\varphi,\chi)\bigr)+\ppar\Bigl(R^1_{v'}-\alphaV\tilde Y-\frac{\gammaV(\mathbf 1_{v'}-\hat\chi)^2}{\tilde Y}\Bigr),
\]
\[
\mathcal U=\alphaV\varphi\EKL+\frac\varphi\Ypar\bigl(A_D\ppar\mathbf 1_v-\mathcal H\mathbf 1_v^2\bigr)+2\mathcal H\mathbf 1_v\tilde p\hat\chi
+\frac{(Z-\ppar\gammaV)\hat\chi^2+\ppar\gammaV(\mathbf 1_{v'}-\hat\chi)^2}{\tilde Y}.
\]
The last two terms of the first line are nonnegative ($0<\ppar<1$). Completing the square in $\hat\chi$ in the three
possible cases (the second uses $Z-\ppar\gammaV=L\mathcal J=L\tilde p^2\tilde Y/\varphi$):
\begin{align*}
(\mathbf 1_v,\mathbf 1_{v'})=(0,0):\quad \mathcal U&=\varphi\,\mathrm{CC}+\frac Z{\tilde Y}\hat\chi^2,\qquad
(\mathbf 1_v,\mathbf 1_{v'})=(0,1):\quad \mathcal U=\varphi\,\mathrm{CA}+\frac Z{\tilde Y}\Bigl(\hat\chi-\frac{\ppar\gammaV}{Z}\Bigr)^2,\\
(\mathbf 1_v,\mathbf 1_{v'})=(1,0):\quad \mathcal U&=\varphi\,\mathrm{AC}+\frac Z{\tilde Y}\Bigl(\hat\chi+\frac{\mathcal H\tilde p\tilde Y}{Z}\Bigr)^2,
\end{align*}
each nonnegative as $\varphi,\tilde Y,Z>0$. In case (ii), $\mathbf 1_v=0$, $\mathcal J=0$, $Z=\ppar\gammaV$, and
$\mathcal S_{v'}=(\alphaV\varphi\EKL+L\tilde p^2/\varphi)+(1-\ppar)(R^0_{v'}-\mathcal Q(\varphi,\chi))$ with
$\alphaV\varphi\EKL+L\tilde p^2/\varphi=\varphi\,\mathrm{CA}\ge0$ at $\tilde Y=0$.
\end{proof}

\begin{proposition}[the vertex step]\label{apB:prop:step}
Let $(\alphaV,\gammaV,A_D)$ be a step certificate at $\lambda$. Consider a vertex $v$ with indicator $\mathbf 1_v$ and
children $i$ (possibly none, and then $\mathbf 1_v=1$) with $\tilde Y_i\ge0$, $\mathbf 1_i\in\{0,1\}$ and
$\mathbf 1_v=1\Rightarrow\mathbf 1_i=0$; suppose each child satisfies $R^0_i\ge\mathcal Q(\mathrm{lm}_\lambda(\tilde Y_i),\chi_i)$,
the bound $R^1_i\ge\alphaV\tilde Y_i+\gammaV(\mathbf 1_i-\chi_i/p_i)^2/\tilde Y_i$ with $p_i=\mathrm{msg}_\lambda(\tilde Y_i)$ if
$\tilde Y_i>0$, and $\mathbf 1_i=1$,
$R^1_i=0$, $\chi_i=q$ if $\tilde Y_i=0$. With $\Ypar=\sum_i\mathrm{lm}_\lambda(\tilde Y_i)$, $\ppar=\mathrm{msg}_\lambda(\Ypar)$,
$\phipar=\mathrm{lm}_\lambda(\Ypar)$:
\[
A_D\ppar\mathbf 1_v+(1-\ppar)\sum_iR^0_i+\ppar\sum_iR^1_i-\ppar(1-\ppar)\Bigl(\mathbf 1_v-\sum_i\chi_i\Bigr)^2\ \ge\
\mathcal Q\Bigl(\phipar,\ \ppar\bigl(\mathbf 1_v-\textstyle\sum_i\chi_i\bigr)\Bigr).
\]
\end{proposition}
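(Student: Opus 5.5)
The plan is to show that the left side minus the right side splits into a sum of the children's shares $\mathcal S_i$ of Lemma~\ref{apB:lem:child} plus one Cauchy--Schwarz remainder, each of which is nonnegative. The childless case is handled separately by the base inequality \eqref{mc:eq:base}.

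\emph{Childless vertex.} If $v$ has no children, then $\mathbf 1_v=1$, $\Ypar=0$, $\ppar=q$, $\phipar=\bar\varphi$, and all sums vanish. The left side is $A_Dq-q(1-q)$ and the right side is $\mathcal Q(\bar\varphi,q)=\alphaV\bar\varphi+\gammaV q^2/\bar\varphi$. Their difference is exactly $\mathrm{sl}(\lambda)$, which is nonnegative by the base inequality of Definition~\ref{apB:def:cert}.

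\emph{At least one child.} Write $\varphi_i=\mathrm{lm}_\lambda(\tilde Y_i)>0$ and $X=\sum_i\chi_i$, so that $\Ypar=\sum_i\varphi_i>0$. For each $i$ the pair $(\tilde Y_i,\Ypar)$ is feasible, since $\Ypar\ge\varphi_i$. The step certificate gives $Z>0$ and $\mathrm{CC},\mathrm{CA},\mathrm{AC}\ge0$ at every such pair.

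I then apply Lemma~\ref{apB:lem:child} to each child $i$, with parent indicator $\mathbf 1_v$ and child indicator $\mathbf 1_i$; these are not both $1$ by hypothesis.
\begin{itemize}
\item If $\tilde Y_i>0$, the hypotheses are exactly case (i) of the lemma.
\item If $\tilde Y_i=0$, then $\mathbf 1_i=1$, so $\mathbf 1_v=0$. Also $\chi_i=q=\mathrm{msg}_\lambda(0)=\tilde p$ and $R^1_i=0$, which is case (ii).
\end{itemize}
Hence $\mathcal S_i\ge0$ for every child $i$.

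Next I sum the shares. Because $\sum_i\varphi_i/\Ypar=1$, the first terms add up to $A_D\ppar\mathbf 1_v-\mathcal H\mathbf 1_v^2-\alphaV\phipar$. The cross terms add up to $2\mathcal H\mathbf 1_vX$, and the reserve terms reproduce $(1-\ppar)\sum_iR^0_i+\ppar\sum_iR^1_i$. On the other side, note that $\ppar(1-\ppar)+\gammaV\ppar^2/\phipar=\mathcal H$. This lets me rewrite
\[
\text{LHS}-\text{RHS}=A_D\ppar\mathbf 1_v+(1-\ppar)\textstyle\sum_iR^0_i+\ppar\textstyle\sum_iR^1_i-\mathcal H(\mathbf 1_v-X)^2-\alphaV\phipar .
\]
Expanding $(\mathbf 1_v-X)^2=\mathbf 1_v^2-2\mathbf 1_vX+X^2$ and comparing with the sum of the shares, I expect the identity
\[
\text{LHS}-\text{RHS}=\sum_i\mathcal S_i+\mathcal H\Bigl(\Ypar\sum_i\frac{\chi_i^2}{\varphi_i}-X^2\Bigr).
\]
The bracket is nonnegative by Cauchy--Schwarz, $\bigl(\sum_i\chi_i\bigr)^2\le\bigl(\sum_i\varphi_i\bigr)\bigl(\sum_i\chi_i^2/\varphi_i\bigr)$. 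Since $\mathcal H>0$, the whole expression is nonnegative, which proves the claim.

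The main obstacle is bookkeeping rather than any estimate. One has to check that the normalisations in $\mathcal S_{v'}$, namely the factor $\varphi/\Ypar$ and the term $-\mathcal H\Ypar\chi^2/\varphi$, reassemble exactly as above. One also has to confirm that the degenerate children with $\tilde Y_i=0$ really satisfy case (ii), including $\chi_i=\tilde p$. The actual analytic content sits in the step certificate and in Lemma~\ref{apB:lem:child}, which are taken as given.
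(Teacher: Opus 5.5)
Your proposal is correct and follows the paper's proof step for step. The childless case is the base inequality. Otherwise Lemma~\ref{apB:lem:child} gives $\mathcal S_i\ge0$ for each child, the shares are summed using $\sum_i\varphi_i/\Ypar=1$, and Cauchy--Schwarz with $\mathcal H\ge0$ and $\mathcal H-\ppar(1-\ppar)=\gammaV\ppar^2/\phipar$ closes it. Your exact identity $\text{LHS}-\text{RHS}=\sum_i\mathcal S_i+\mathcal H\bigl(\Ypar\sum_i\chi_i^2/\varphi_i-X^2\bigr)$ is only a rearrangement of the paper's chain of inequalities.
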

\begin{proof}
With no child, $\mathbf 1_v=1$, $\Ypar=0$, $\ppar=q$, $\phipar=\bar\varphi$, and the claim is the base inequality. Otherwise
$\Ypar>0$, each $(\tilde Y_i,\Ypar)$ is feasible, and Lemma~\ref{apB:lem:child} gives $\mathcal S_i\ge0$ for every child.
The weights $\varphi_i/\Ypar$ sum to $1$; with $\bar\chi=\sum_i\chi_i$,
$0\le\sum_i\mathcal S_i=A_D\ppar\mathbf 1_v-\mathcal H\mathbf 1_v^2-\alphaV\phipar+2\mathcal H\mathbf 1_v\bar\chi
-\mathcal H\Ypar\sum_i\chi_i^2/\varphi_i+(1-\ppar)\sum_iR^0_i+\ppar\sum_iR^1_i$. By Cauchy-Schwarz
$\bar\chi^2\le\Ypar\sum_i\chi_i^2/\varphi_i$, and $\mathcal H\ge0$, so
$(1-\ppar)\sum R^0_i+\ppar\sum R^1_i\ge\alphaV\phipar-A_D\ppar\mathbf 1_v+\mathcal H(\mathbf 1_v-\bar\chi)^2$. Add
$A_D\ppar\mathbf 1_v-\ppar(1-\ppar)(\mathbf 1_v-\bar\chi)^2$ and use $\mathcal H-\ppar(1-\ppar)=\gammaV\ppar^2/\phipar$.
\end{proof}

A rooted subtree $(X,v)$ is \emph{good} if every vertex of $X\setminus A$ has a child: $v\notin A$ has a neighbour in $X$,
and every $x\in X\setminus A$, $x\ne v$, has at least two neighbours in $X$. If $(X,v)$ is good, every branch
$(X_{v'},v')$ is good (a vertex $x\in X_{v'}$, $x\ne v'$, has all its neighbours in $X_{v'}$, and $v'\notin A$ has a
neighbour other than $v$), and a childless root of a good subtree lies in $A$.

\begin{theorem}[the variance bound from a step certificate]\label{apB:thm:band}
If $(\alphaV,\gammaV,A_D)$ is a step certificate at $\lambda$, with $A_D=1+(D-1)q$, then the reserve inequality
\eqref{apB:eq:R} holds at every good rooted subtree, and for every finite forest and every independent $A$ with (LP),
$\operatorname{Var}_\lambda|S\cap A|\le A_D\,\pi(A)$ and $\operatorname{Var}M\le D\,\muM$.
\end{theorem}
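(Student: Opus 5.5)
The plan is induction from the leaves over good rooted subtrees, followed by a reduction of an arbitrary forest with (LP) to good rooted subtrees and one final accounting at each root.

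\emph{Step 1: the reserve inequality on good subtrees.} I would prove \eqref{apB:eq:R} by strong induction on $|X|$, and carry along a second invariant: whenever $\tilde Y_v:=Y_v>0$,
\[
R^1(X,v)\ge\alphaV Y_v+\gammaV(\mathbf 1_v-\chi/p_v)^2/Y_v .
\]
Let $(X,v)$ be good.

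If $v$ has no child, goodness gives $v\in A$. Then $R^0=A_Dq-q(1-q)$, $\chi=q$ and $\varphi_v=\bar\varphi$, so \eqref{apB:eq:R} is exactly the base inequality \eqref{mc:eq:base}. Moreover $R^1=0$ and $\chi=q$, which is the case (ii) data required of a childless child.

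Otherwise every branch is good and smaller. The induction hypothesis then supplies the hypotheses of Proposition~\ref{apB:prop:step} for each child, and Lemma~\ref{apB:lem:msg} identifies $\varphi_{v'}=\mathrm{lm}_\lambda(\tilde Y_{v'})$. The left side of Proposition~\ref{apB:prop:step} is $R^0(X)$ by Lemma~\ref{apB:lem:rec}. Its right side is $\mathcal Q(\varphi_v,\chi(X,v))$, since $\phipar=\varphi_v$ and $\chi(X,v)=\ppar(\mathbf 1_v-\sum\chi_{v'})$.

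For the second invariant, note $R^1(X,v)=\sum_{v'}R^0_{v'}\ge\sum_{v'}(\alphaV\varphi_{v'}+\gammaV\chi_{v'}^2/\varphi_{v'})$. The first part of this sum is $\alphaV Y_v$. By Cauchy–Schwarz the second part is at least $\gammaV(\sum\chi_{v'})^2/Y_v$. Finally $\sum\chi_{v'}=\mathbf 1_v-\chi/p_v$ by Lemma~\ref{apB:lem:rec}, so the invariant propagates. The parent-in-$A$-with-child-in-$A$ case never arises because $A$ is independent.

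\emph{Step 2: reduction to good subtrees.} Additivity (M1) makes $\mu_A$ and $V_A$ additive over components. So it suffices to show $A_D\mu_A(T)-V_A(T)\ge0$ for each component $T$, and I split on its order.

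For $|T|\le2$, the check is direct. An isolated vertex in $A$ gives $Dq^2\ge0$. An isolated vertex outside $A$, or any vertex set disjoint from $A$, gives $0$. A single edge with one end in $A$ is handled by rooting at the other end: that root has a child in $A$, so the subtree is good and Step 1 applies, giving $R^0\ge\mathcal Q\ge0$.

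For $|T|\ge3$, root $T$ at a vertex $v\notin A$. Such a vertex exists, since otherwise $T$ would be an edgeless set inside $A$. Under (LP) every leaf of $T$ lies in $A$. Hence every vertex outside $A$ other than the root has degree at least $2$, and the root, not being an isolated vertex, has a neighbour. So $(T,v)$ is good, and Step 1 gives $R^0(T)\ge\mathcal Q\ge0$, that is, $V_A(T)\le A_D\mu_A(T)$.

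Summing over components yields $\operatorname{Var}_\lambda|S\cap A|\le A_D\pi(A)$. Lemma~\ref{apB:lem:conv} then gives $\operatorname{Var}M\le D\muM$.

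\emph{Main obstacle.} I expect the delicate point to be the bookkeeping in Step 1 matching the hypotheses of Lemma~\ref{apB:lem:child}. One issue is the case split between a child with $\tilde Y>0$ and a childless child, where I must verify that a childless child in a good subtree lies in $A$ and has $\chi=q$ exactly. The other is checking that $\tilde p=p_{v'}$, so that the quantity $\chi/p$ in the $R^1$ invariant is the same $\hat\chi$ used in the lemma.
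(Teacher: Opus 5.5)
Your proposal is correct and follows the paper's proof essentially step by step. It uses the same strong induction over good rooted subtrees via Proposition~\ref{apB:prop:step} and Lemma~\ref{apB:lem:rec}, with the bound on $R^1$ obtained by Cauchy--Schwarz (you carry it as a second invariant, while the paper derives it inline from the grandchildren), and the same rooting of each component at a vertex outside $A$ using (LP).

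The one small point is your isolated vertex of $A$. It gives $Dq^2\ge0$ not because $D\ge0$ is assumed, which the statement does not do. It follows because $Dq^2=\mathrm{sl}(\lambda)+\alphaV\bar\varphi+\gammaV q^2/\bar\varphi>0$ by the base inequality, or, as the paper does, by rooting that vertex at itself.
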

\begin{proof}
\emph{The induction.} Strong induction on $|X|$. The branches $(X_{v'},v')$ are proper, connected and good, so
$R^0_{v'}\ge\mathcal Q(\varphi_{v'},\chi_{v'})$ with $\varphi_{v'}=\mathrm{lm}_\lambda(Y_{v'})$ (Lemma~\ref{apB:lem:msg}). If
$Y_{v'}>0$, the branches of $(X_{v'},v')$ carry \eqref{apB:eq:R}, and Cauchy-Schwarz with the weights $\varphi_{v''}$ of the
children $v''$ of $v'$, with $\mathbf 1_{v'}-\chi_{v'}/p_{v'}=\sum_{v''}\chi_{v''}$ (Lemma~\ref{apB:lem:rec}), gives
$R^1_{v'}=\sum_{v''}R^0_{v''}\ge\alphaV Y_{v'}+\gammaV(\mathbf 1_{v'}-\chi_{v'}/p_{v'})^2/Y_{v'}$. If $Y_{v'}=0$, then $v'$ has no
child, $X_{v'}=\{v'\}$, $v'\in A$, $R^1_{v'}=0$ and $\chi_{v'}=p_{v'}=q$. If $\mathbf 1_v=1$, no child lies in $A$.
Proposition~\ref{apB:prop:step} with Lemma~\ref{apB:lem:rec} gives $R^0(X)\ge\mathcal Q(\varphi_v,\chi(X,v))$.

\emph{The rooting.} By (M1) it suffices to show $V_A(X)\le A_D\mu_A(X)$ for each component $X$. If $X$ has no vertex of
$A$ both sides vanish; if $X$ is a single vertex of $A$, it is its own root. Otherwise $X$ has a vertex $v\notin A$, and
$(X,v)$ is good: if $x\in X\setminus A$, $x\ne v$, had only one neighbour $x'$, then $x$ would be a leaf, by (LP) the
component would be $\{x,x'\}$ with $v=x'$, and it would contain no vertex of $A$. So
$A_D\mu_A(X)-V_A(X)=R^0(X)\ge\mathcal Q(\varphi_v,\chi)\ge0$. Summing over the components and Lemma~\ref{apB:lem:conv}
give the claim.
\end{proof}

\begin{lemma}[the entropy identity]\label{apB:lem:entropy}
At every feasible pair,
$\EKL\,\varphi=\mathrm{KL}\bigl(\mathrm{Ber}(\ppar)\,\|\,\mathrm{Ber}(\tilde p)\bigr)+(\Ypar-\varphi)\bigl(\ppar+\phipar/\Ypar\bigr)$,
where $\mathrm{KL}$ is the Kullback-Leibler divergence and $\mathrm{Ber}(p)$ the Bernoulli law with mean $p$. Hence
$\EKL\ge0$ and $\mathrm{CC}\ge0$.
\end{lemma}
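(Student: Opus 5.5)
The plan is a direct computation: express every logarithmic quantity through the children's log-masses using Lemma~\ref{apB:lem:msg}, then read off the sign from feasibility. No induction or case split is needed.

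\textbf{Logarithms.} By Lemma~\ref{apB:lem:msg}, $p/(1-p)=\lambda e^{-Y}$ and $1-p=e^{-\mathrm{lm}_\lambda(Y)}$ for a vertex with children's log-mass $Y$. At the parent and at the child this gives
\[
\log(1-\ppar)=-\phipar,\qquad \log\ppar=\log\lambda-\Ypar-\phipar,
\]
\[
\log(1-\tilde p)=-\varphi,\qquad \log\tilde p=\log\lambda-\tilde Y-\varphi .
\]

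\textbf{The divergence.} Substituting these four expressions into
\[
\mathrm{KL}\bigl(\mathrm{Ber}(\ppar)\,\|\,\mathrm{Ber}(\tilde p)\bigr)=\ppar\log(\ppar/\tilde p)+(1-\ppar)\log\bigl((1-\ppar)/(1-\tilde p)\bigr),
\]
the $\log\lambda$ terms cancel, and the divergence collapses to the affine expression $\varphi-\phipar+\ppar(\tilde Y-\Ypar)$.

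\textbf{The identity.} Expand both sides. On the right,
\[
(\Ypar-\varphi)(\ppar+\phipar/\Ypar)=\ppar\Ypar+\phipar-\ppar\varphi-\varphi\phipar/\Ypar .
\]
Adding the affine form of the divergence gives $\varphi+\ppar\tilde Y-\ppar\varphi-\varphi\phipar/\Ypar$. By the definition of $\EKL$ this equals $\EKL\,\varphi=(1-\ppar)\varphi+\ppar\tilde Y-\varphi\phipar/\Ypar$. The division by $\Ypar$ is legitimate because $\Ypar>0$ at a feasible pair.

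\textbf{Consequences.} The divergence is nonnegative. Feasibility gives $\Ypar\ge\mathrm{lm}_\lambda(\tilde Y)=\varphi$, so $\Ypar-\varphi\ge0$. Also $\ppar+\phipar/\Ypar>0$. Hence $\EKL\,\varphi\ge0$, and since $\varphi>0$ by Lemma~\ref{apB:lem:msg}, $\EKL\ge0$. Finally $\mathrm{CC}=\alphaV\EKL\ge0$ because $\alphaV\ge0$.

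The only step needing care is the bookkeeping of the logarithms of the messages; once they are written in terms of $Y$ and $\mathrm{lm}_\lambda$, the cancellation is mechanical.
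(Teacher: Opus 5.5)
Your proof is correct and is essentially the paper's argument: you express $\log\ppar$, $\log(1-\ppar)$ and the child's analogues through $(\Ypar,\phipar)$ and $(\tilde Y,\varphi)$, reduce the divergence to $\ppar(\tilde Y-\Ypar)+\varphi-\phipar$, expand both sides, and conclude from $\Ypar\ge\varphi>0$ and $\alphaV\ge0$. One small point: a feasible pair is a pair of reals, not a vertex of a tree, so the logarithm identities and $\varphi>0$ should be taken directly from the definitions of $\mathrm{msg}_\lambda$ and $\mathrm{lm}_\lambda$ rather than cited from Lemma~\ref{apB:lem:msg}.
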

\begin{proof}
$\log(1-\ppar)=-\phipar$ and $\log\ppar=\log\lambda-\Ypar-\phipar$, likewise for $\tilde p$ with $(\tilde Y,\varphi)$; so
$\mathrm{KL}=\ppar(\tilde Y-\Ypar)+\varphi-\phipar$, and $\EKL\varphi=(1-\ppar)\varphi+\ppar\tilde Y-\phipar\varphi/\Ypar$ equals
$\mathrm{KL}+(\Ypar-\varphi)(\ppar+\phipar/\Ypar)$ by expansion. Both terms are nonnegative, as $\Ypar\ge\varphi>0$.
\end{proof}

So a step certificate needs, besides the base inequality, only $Z>0$, $\mathrm{AC}\ge0$, and $\mathrm{CA}\ge0$ where $L<0$:
where $L\ge0$ and $Z>0$, $\mathrm{CA}\ge\alphaV\EKL\ge0$.

\paragraph{The curve of multipliers.}\label{hand:var:set}
The multipliers are given along a curve in the activity, piecewise on the six segments of Data~\ref{mc:tab:curve}: on a
segment with parameters $(\bar a,\bar g,D)$,
\[
\alphaV=\bar aq,\qquad \gammaV=\lambda(3+\lambda)/\bar g,\qquad A_D=1+(D-1)q,
\]
and $\hat D(\lambda)$ of \eqref{p2:eq:Dhat} is the least cap $D$ of a segment containing $\lambda$. Theorem~\ref{apB:thm:band}
uses one activity at a time, so a step certificate at each $\lambda$ is all that is needed; at a common end point of two
segments both give one. With $\mu=\log\lambda$ (so $\mathrm{msg}_\lambda(\zeta)=\mathrm{msg}_1(\zeta-\mu)$ and
$\mathrm{lm}_\lambda(\zeta)=\mathrm{lm}_1(\zeta-\mu)$) the multipliers of a segment increase in $\lambda$, with
$\mu$-derivatives
\[
\dot\alpha_{\mathrm V}=\bar a\,q(1-q),\qquad\dot\gamma_{\mathrm V}=\lambda(3+2\lambda)/\bar g,\qquad\dot A_D=(D-1)\,q(1-q),
\qquad q(1-q)=\lambda/(1+\lambda)^2 .
\]
\datablock{The curve of multipliers and its checks}{mc:tab:curve}
For each segment, the first tuple is $(\Lambda,\bar a,\bar g,D)$. The second tuple
$(n_{\rm sub},n_{\rm poly},n_{\rm cmp},n_{\rm pc},n_{\rm base})$ gives the numbers of activity sub-intervals,
child polygons, comparison checks, polygon checks and base-inequality checks, respectively
(Theorem~\ref{tgt:thm}). Five comparison checks on each sub-interval are those of
Lemma~\ref{tgt:lem:hand}. For each segment we specify whether second derivatives and transport bounds are taken over the whole parent
interval or its two halves, and report the least lower bound of a comparison after transport, rounded down.
On the \textnormal{lo} segment, the parameter tuple is $([1/3,11/20], 11/25, 21/2, 6/5)$ and the check record is $(6, 6, 312, 216, 2)$.
Second derivatives and transport bounds use the whole parent interval; the least lower bound is $0.00011$. On the \textnormal{mid} segment, the parameter tuple is $([11/20,263/200], 23/50, 13, 6/5)$ and the check record is $(21, 19, 1{,}283, 678, 6)$.
Second derivatives and transport bounds use the two halves of each parent interval; the least lower bound is $0.0006$.

On the \textnormal{u1} segment, the parameter tuple is $([263/200,3/2], 9/20, 23/2, 7/5)$ and the check record is $(4, 4, 232, 144, 1)$.
Second derivatives and transport bounds use the whole parent interval; the least lower bound is $0.00018$. On the \textnormal{u2} segment, the parameter tuple is $([3/2,7/4], 47/100, 43/4, 8/5)$ and the check record is $(5, 5, 273, 193, 1)$.
Second derivatives and transport bounds use the whole parent interval; the least lower bound is $0.00004$.

On the \textnormal{u3} segment, the parameter tuple is $([7/4,2], 1/2, 41/4, 9/5)$ and the check record is $(5, 5, 258, 186, 1)$.
Second derivatives and transport bounds use the whole parent interval; the least lower bound is $0.00005$. On the \textnormal{u4} segment, the parameter tuple is $([2,7/3], 57/100, 39/4, 21/10)$ and the check record is $(6, 6, 306, 227, 2)$.
Second derivatives and transport bounds use the whole parent interval; the least lower bound is $0.00002$.
The totals $(n_{\rm sub},n_{\rm poly},n_{\rm cmp},n_{\rm pc},n_{\rm base})$ are
$(47, 45, 2{,}664, 1{,}644, 13)$.

For the parent of a feasible pair put $\mathsf e_\lambda=e^{\Ypar}/\lambda$, $\mathsf k=\ppar/\phipar$ and
$\mathsf w=\mathsf k+\ppar-1$, so that $(1-\ppar)/\ppar=\mathsf e_\lambda$, and for the child $\mathsf r=\tilde Y/\varphi$, so that
$\mathcal J=\tilde p^2\mathsf r$. Put
\[\hat h=\mathcal H/\ppar=1-\ppar+\gammaV\mathsf k,\qquad \bar L=L/\ppar=\gammaV\mathsf e_\lambda-\hat h\Ypar,\]
\[\bar Z=Z/\ppar=\gammaV+\mathcal J\bar L,\qquad \mathsf N=A_D-\hat h-\alphaV/\mathsf k .\]
Since $\EKL/\ppar=\mathsf e_\lambda+\mathsf r-1/(\mathsf k\Ypar)$, dividing by $\ppar>0$ gives
\[\mathrm{AC}/\ppar=\mathsf A(\Ypar;\mathsf r,\mathcal J):=\alphaV\mathsf e_\lambda+\frac{\mathsf N}\Ypar+\alphaV\mathsf r-\frac{\mathcal J\hat h^2}{\bar Z},\qquad
\mathrm{CA}/\ppar=\alphaV\frac\EKL\ppar+\frac{\gammaV\bar L(\tilde p/\varphi)^2}{\bar Z},\]
and we put $\mathsf A^{\gammaV}(\Ypar;\mathsf r,\mathcal J):=\alphaV\mathsf e_\lambda+\mathsf N/\Ypar+\alphaV\mathsf r-\mathcal J\hat h^2/\gammaV$.
Since $1/\bar Z-1/\gammaV=-\mathcal J\bar L/(\gammaV\bar Z)$, where $\bar Z>0$,
\begin{equation}\label{arc:eq:maj}
\mathsf A-\mathsf A^{\gammaV}=\frac{\hat h^2\bar L\mathcal J^2}{\gammaV\bar Z},\quad\text{so for }\mathcal J>0\text{, }\mathsf A^{\gammaV}\le\mathsf A\text{ exactly where }\bar L\ge0 .
\end{equation}
Below, $\bar L$ is a quotient, not a derivative; a prime denotes $d/d\Ypar$, except for functions of the child, where it
denotes $d/d\tilde Y$, and a dot denotes $d/d\mu$. An \emph{activity sub-interval} is an interval $\Lambda=[\llo,\lhi]$ inside
one segment, and a \emph{parent interval} $[Y_0,Y_1]$ is an interval of values of $\Ypar$. The lemmas below hold at every
activity of $[1/3,7/3]$ and for the multipliers of every segment, except where a range is stated.

\begin{lemma}[two consequences of feasibility]\label{tgt:lem:feas}
Let $(\tilde Y,\Ypar)$ be a feasible pair at the activity $\lambda$.
The first consequence, (a), is that $\mathsf r\ge\tilde Y/\Ypar$ and $\tilde Y\ge\mu-\log(e^{\Ypar}-1)$; hence
$\EKL/\ppar\ge\mathsf E_-(\Ypar):=\mathsf e_\lambda+\bigl(\mu-\log(e^{\Ypar}-1)-1/\mathsf k\bigr)/\Ypar$.

For the second consequence, (b), if $L<0$, put $\hat L=-\bar L>0$. If $\mathcal J\le\bar{\mathcal J}$ and $\gammaV-\bar{\mathcal J}\hat L>0$, then
$\mathrm{CA}/\ppar\ge\mathsf C(\Ypar):=\alphaV\mathsf E_-(\Ypar)-\gammaV\hat L/(\gammaV-\bar{\mathcal J}\hat L)$.
\end{lemma}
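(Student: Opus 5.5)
The plan is to unwind the definitions of $\EKL$, $\mathsf r$, $\mathcal J$ and $\bar Z$ and use only the feasibility condition $\Ypar\ge\mathrm{lm}_\lambda(\tilde Y)=\varphi$ together with elementary facts about messages and log-masses.

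For (a), the first claim follows from $0<\varphi\le\Ypar$ and $\tilde Y\ge0$, which give $\mathsf r=\tilde Y/\varphi\ge\tilde Y/\Ypar$. For the second, exponentiate the feasibility inequality $\Ypar\ge\log(1+\lambda e^{-\tilde Y})$ to get $e^{\Ypar}-1\ge\lambda e^{-\tilde Y}$. Since $\Ypar>0$, the left side is positive, and taking logarithms gives $\tilde Y\ge\mu-\log(e^{\Ypar}-1)$ with $\mu=\log\lambda$. For the bound on $\EKL/\ppar$, I first check the displayed identity
\[
\EKL/\ppar=\mathsf e_\lambda+\mathsf r-1/(\mathsf k\Ypar),
\]
by dividing each term of $\EKL$ by $\ppar$:
\begin{itemize}
\item $(1-\ppar)/\ppar=\mathsf e_\lambda$ by Lemma~\ref{apB:lem:msg}, since $\ppar/(1-\ppar)=\lambda e^{-\Ypar}$;
\item $\tilde Y/\varphi=\mathsf r$ by definition;
\item $\phipar/(\ppar\Ypar)=1/(\mathsf k\Ypar)$ because $\mathsf k=\ppar/\phipar$.
\end{itemize}
Then I chain $\mathsf r\ge\tilde Y/\Ypar\ge(\mu-\log(e^{\Ypar}-1))/\Ypar$. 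This step uses $\Ypar>0$ so that dividing the lower bound on $\tilde Y$ preserves the inequality. It yields $\EKL/\ppar\ge\mathsf E_-(\Ypar)$.

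For (b), I start from the displayed formula
\[
\mathrm{CA}/\ppar=\alphaV\EKL/\ppar+\gammaV\bar L(\tilde p/\varphi)^2/\bar Z,
\]
where $\bar L=-\hat L<0$. The denominator is controlled as follows. Since $\hat L>0$ and $\mathcal J\le\bar{\mathcal J}$,
\[
\bar Z=\gammaV-\mathcal J\hat L\ge\gammaV-\bar{\mathcal J}\hat L>0 .
\]
In particular $\bar Z>0$, and $1/\bar Z\le1/(\gammaV-\bar{\mathcal J}\hat L)$. Next I use $\tilde p\le\varphi$, which holds because $\varphi=-\log(1-\tilde p)\ge\tilde p$ for $\tilde p\in(0,1)$; hence $(\tilde p/\varphi)^2\le1$. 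Combining these, the negative term satisfies
\[
-\gammaV\hat L(\tilde p/\varphi)^2/\bar Z\ \ge\ -\gammaV\hat L/(\gammaV-\bar{\mathcal J}\hat L).
\]
Finally, $\alphaV\ge0$ together with part (a) bounds the first term below by $\alphaV\mathsf E_-(\Ypar)$, and the sum of the two bounds is $\mathsf C(\Ypar)$.

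There is no real obstacle here. The only points needing care are the sign conditions: $\Ypar>0$ when dividing, $\bar Z>0$ for the monotonicity of $1/\bar Z$, and $\alphaV\ge0$ when multiplying the bound from (a). The estimate $\tilde p\le\varphi$ is what removes the child's dependence from the $L$-term.
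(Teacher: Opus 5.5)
Your proof is correct and follows the paper's argument. In (a) you combine $\varphi\le\Ypar$ and the equivalence of feasibility with $\tilde Y\ge\mu-\log(e^{\Ypar}-1)$ inside the identity $\EKL/\ppar=\mathsf e_\lambda+\mathsf r-1/(\mathsf k\Ypar)$; in (b) you combine $(\tilde p/\varphi)^2\le1$ with $\bar Z=\gammaV-\mathcal J\hat L\ge\gammaV-\bar{\mathcal J}\hat L>0$, which is the paper's monotonicity of $\mathcal J\mapsto1/(\gammaV-\mathcal J\hat L)$ stated as a direct comparison.
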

\begin{proof}
(a) Feasibility gives $\varphi=\mathrm{lm}_\lambda(\tilde Y)\le\Ypar$, so $\mathsf r=\tilde Y/\varphi\ge\tilde Y/\Ypar$; and
$\Ypar\ge\log(1+\lambda e^{-\tilde Y})$ is equivalent to $\tilde Y\ge\mu-\log(e^{\Ypar}-1)$. Since the coefficient $1/\Ypar$ of $\tilde Y$ is
positive, $\EKL/\ppar=\mathsf e_\lambda+\mathsf r-1/(\mathsf k\Ypar)\ge\mathsf e_\lambda+(\tilde Y-1/\mathsf k)/\Ypar\ge\mathsf E_-(\Ypar)$.
(b) $(\tilde p/\varphi)^2<1$ because $-\log(1-\tilde p)>\tilde p$, and $\mathcal J\mapsto1/(\gammaV-\mathcal J\hat L)$ increases on $[0,\bar{\mathcal J}]$ (as $\hat L>0$ and the denominator
stays positive there), so
$\gammaV\bar L(\tilde p/\varphi)^2/\bar Z=-\gammaV\hat L(\tilde p/\varphi)^2/(\gammaV-\mathcal J\hat L)\ge-\gammaV\hat L/(\gammaV-\bar{\mathcal J}\hat L)$.
\end{proof}

\begin{lemma}[derivatives in $\Ypar$]\label{tgt:lem:dY}\label{arc:lem:der}
At a fixed activity, with $'=d/d\Ypar$: $\ppar'=-\ppar(1-\ppar)$, $\phipar'=-\ppar$, $\mathsf e_\lambda'=\mathsf e_\lambda$, $\mathsf k'=\mathsf k\mathsf w$,
$\mathsf w'=\mathsf k\mathsf w-\ppar(1-\ppar)$, and $\mathsf w>0$. Further
\begin{align*}
\hat h'&=\ppar(1-\ppar)+\gammaV\mathsf k\mathsf w,\\ \hat h''&=-\ppar(1-\ppar)(1-2\ppar)+\gammaV\mathsf k\bigl(\mathsf w^2+\mathsf k\mathsf w-\ppar(1-\ppar)\bigr),\\
\mathsf N'&=-\hat h'+\frac{\alphaV\mathsf w}{\mathsf k},\qquad \mathsf N''=-\hat h''+\alphaV\Bigl(\mathsf w-\frac{\ppar(1-\ppar)}{\mathsf k}-\frac{\mathsf w^2}{\mathsf k}\Bigr),\\
\Bigl(\frac{\mathsf N}\Ypar\Bigr)''&=\frac{\mathsf N''\Ypar^2-2\mathsf N'\Ypar+2\mathsf N}{\Ypar^3},\\
\bar Z'&=\mathcal J\bar L',\quad \bar Z''=\mathcal J\bar L'',\qquad \bar L'=\gammaV\mathsf e_\lambda-\hat h-\hat h'\Ypar,\quad \bar L''=\gammaV\mathsf e_\lambda-2\hat h'-\hat h''\Ypar,\\
\Bigl(\frac{\mathcal J\hat h^2}{\bar Z}\Bigr)''&=\mathcal J\Bigl[\frac{2\hat h'^2+2\hat h\hat h''}{\bar Z}-\frac{4\hat h\hat h'\bar Z'}{\bar Z^2}-\frac{\hat h^2\bar Z''}{\bar Z^2}+\frac{2\hat h^2\bar Z'^2}{\bar Z^3}\Bigr],
\end{align*}
so that, with $\mathsf r$ and $\mathcal J$ fixed, $(\mathcal J\hat h^2/\bar Z)'=\mathcal J(2\hat h\hat h'\bar Z-\hat h^2\bar Z')/\bar Z^2$,
$\mathsf A'=\alphaV\mathsf e_\lambda+(\mathsf N'\Ypar-\mathsf N)/\Ypar^2-(\mathcal J\hat h^2/\bar Z)'$,
$(\mathsf A^{\gammaV})'=\alphaV\mathsf e_\lambda+(\mathsf N'\Ypar-\mathsf N)/\Ypar^2-2\mathcal J\hat h\hat h'/\gammaV$,
$\mathsf A''=\alphaV\mathsf e_\lambda+(\mathsf N/\Ypar)''-(\mathcal J\hat h^2/\bar Z)''$ and
$(\mathsf A^{\gammaV})''=\alphaV\mathsf e_\lambda+(\mathsf N/\Ypar)''-\mathcal J(2\hat h'^2+2\hat h\hat h'')/\gammaV$. For $\mathsf C$, with
$\mathsf n=\mu-\log(e^{\Ypar}-1)-1/\mathsf k$ and $\Theta(\hat L)=\gammaV\hat L/(\gammaV-\bar{\mathcal J}\hat L)$:
\begin{gather*}\mathsf n'=-\frac{e^{\Ypar}}{e^{\Ypar}-1}+\frac{\mathsf w}{\mathsf k},\qquad \mathsf n''=\frac{e^{\Ypar}}{(e^{\Ypar}-1)^2}+\mathsf w-\frac{\ppar(1-\ppar)}{\mathsf k}-\frac{\mathsf w^2}{\mathsf k},\\
\mathsf E_-''=\mathsf e_\lambda+\frac{\mathsf n''\Ypar^2-2\mathsf n'\Ypar+2\mathsf n}{\Ypar^3},\end{gather*}
$\hat L'=\hat h+\hat h'\Ypar-\gammaV\mathsf e_\lambda$, $\hat L''=2\hat h'+\hat h''\Ypar-\gammaV\mathsf e_\lambda$, and, for the derivatives of $\Theta$ in its
argument, $\Theta'=\gammaV^2/(\gammaV-\bar{\mathcal J}\hat L)^2$, $\Theta''=2\bar{\mathcal J}\gammaV^2/(\gammaV-\bar{\mathcal J}\hat L)^3$, and
$\mathsf C''=\alphaV\mathsf E_-''-\Theta''\hat L'^2-\Theta'\hat L''$.
\end{lemma}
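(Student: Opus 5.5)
This is a chain-rule computation at a fixed activity. I will first derive the basic derivatives from the definitions and then build each composite by the product and quotient rules.

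\emph{Basic derivatives.} From $\ppar=\mathrm{msg}_\lambda(\Ypar)=\lambda e^{-\Ypar}/(1+\lambda e^{-\Ypar})$ one gets $\ppar'=-\ppar(1-\ppar)$. From $\phipar=\log(1+\lambda e^{-\Ypar})$ one gets $\phipar'=-\lambda e^{-\Ypar}/(1+\lambda e^{-\Ypar})=-\ppar$. Clearly $\mathsf e_\lambda'=\mathsf e_\lambda$. For $\mathsf k=\ppar/\phipar$,
\[
\mathsf k'=\frac{-\ppar(1-\ppar)\phipar+\ppar^2}{\phipar^2}=\mathsf k\bigl(-(1-\ppar)+\mathsf k\bigr)=\mathsf k\mathsf w.
\]
Then $\mathsf w'=\mathsf k'+\ppar'=\mathsf k\mathsf w-\ppar(1-\ppar)$.

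For $\mathsf w>0$, note that $\mathsf w>0$ is equivalent to $\ppar/(1-\ppar)>\phipar=-\log(1-\ppar)$, that is, to $x/(1-x)>-\log(1-x)$ for $x\in(0,1)$. Both sides vanish at $x=0$. Their derivatives are $1/(1-x)^2$ and $1/(1-x)$, and the first is larger on $(0,1)$, so the inequality holds there.

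\emph{Composite quantities.} For $\hat h=1-\ppar+\gammaV\mathsf k$, with $\gammaV$ constant in $\Ypar$, I get $\hat h'=\ppar(1-\ppar)+\gammaV\mathsf k\mathsf w$. Differentiating again uses
\[
(\ppar(1-\ppar))'=-\ppar(1-\ppar)(1-2\ppar),\qquad (\mathsf k\mathsf w)'=\mathsf k\mathsf w^2+\mathsf k(\mathsf k\mathsf w-\ppar(1-\ppar)),
\]
which gives the stated $\hat h''$.

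For $\mathsf N=A_D-\hat h-\alphaV/\mathsf k$, I use $(1/\mathsf k)'=-\mathsf w/\mathsf k$. This gives $\mathsf N'=-\hat h'+\alphaV\mathsf w/\mathsf k$. Next,
\[
(\mathsf w/\mathsf k)'=\frac{\mathsf w'}{\mathsf k}-\frac{\mathsf w^2}{\mathsf k}=\mathsf w-\frac{\ppar(1-\ppar)}{\mathsf k}-\frac{\mathsf w^2}{\mathsf k},
\]
which yields $\mathsf N''$. The formula for $(\mathsf N/\Ypar)''$ is the quotient rule applied twice.

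For $\bar L=\gammaV\mathsf e_\lambda-\hat h\Ypar$, the derivatives $\bar L'$ and $\bar L''$ follow directly. With $\mathcal J$ fixed, $\bar Z=\gammaV+\mathcal J\bar L$ has $\bar Z'=\mathcal J\bar L'$ and $\bar Z''=\mathcal J\bar L''$. Differentiating $\hat h^2\bar Z^{-1}$ twice gives the displayed bracket: $(\hat h^2)''=2\hat h'^2+2\hat h\hat h''$, the cross term is $-4\hat h\hat h'\bar Z'/\bar Z^2$, and $(\bar Z^{-1})''=-\bar Z''/\bar Z^2+2\bar Z'^2/\bar Z^3$.

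The formulas for $\mathsf A'$, $\mathsf A''$ and their $\gammaV$-versions then follow by linearity. This uses that $\mathsf r$ and $\mathcal J$ are held fixed and that $\alphaV\mathsf e_\lambda$ is its own derivative.

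\emph{The function $\mathsf C$.} Write $\mathsf n=\mu-\log(e^{\Ypar}-1)-1/\mathsf k$. The derivative of $-\log(e^{\Ypar}-1)$ is $-e^{\Ypar}/(e^{\Ypar}-1)$, and its second derivative is $e^{\Ypar}/(e^{\Ypar}-1)^2$. The $-1/\mathsf k$ term contributes $\mathsf w/\mathsf k$ and then $(\mathsf w/\mathsf k)'$ as computed above. The expression for $\mathsf E_-''$ is the same quotient rule as for $\mathsf N/\Ypar$, since $\mathsf E_-=\mathsf e_\lambda+\mathsf n/\Ypar$.

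For $\hat L=-\bar L$ the derivatives are the negatives of those of $\bar L$. For $\Theta(\hat L)=\gammaV\hat L/(\gammaV-\bar{\mathcal J}\hat L)$, rewrite it as
\[
\Theta(\hat L)=-\frac{\gammaV}{\bar{\mathcal J}}+\frac{\gammaV^2}{\bar{\mathcal J}(\gammaV-\bar{\mathcal J}\hat L)},
\]
which gives $\Theta'$ and $\Theta''$ immediately. Finally, $\mathsf C''=\alphaV\mathsf E_-''-(\Theta\circ\hat L)''$ by the chain rule.

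No step is a real obstacle. The only place needing care is the sign claim $\mathsf w>0$, handled by the elementary inequality above, and the bookkeeping in $\hat h''$ and $\mathsf N''$.
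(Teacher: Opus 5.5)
Your proposal is correct and follows essentially the same route as the paper: direct product, quotient and chain-rule computations starting from $\ppar'=-\ppar(1-\ppar)$ and $\phipar'=-\ppar$, with $\mathsf w>0$ obtained from $-\log(1-\ppar)<\ppar/(1-\ppar)$. You prove that inequality by comparing derivatives, where the paper simply cites it. The remaining differences are bookkeeping. The paper gets the second derivative of $\mathcal J\hat h^2/\bar Z$ by writing it as $\hat h^2\mathsf s$ with $\mathsf s=\mathcal J/\bar Z$ and $\mathsf s'=-\bar L'\mathsf s^2$, rather than through your $(\hat h^2)''$ and $(\bar Z^{-1})''$. Also, your partial-fraction rewrite of $\Theta$ divides by $\bar{\mathcal J}$, so the case $\bar{\mathcal J}=0$, where $\Theta(\hat L)=\hat L$, is better handled by differentiating the quotient directly.
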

\begin{proof}
$\phipar'=-e^{-\Ypar}\lambda/(1+\lambda e^{-\Ypar})=-\ppar$ and $\ppar'=-\ppar(1-\ppar)$ are direct, and
$\mathsf k'=(\ppar'\phipar-\ppar\phipar')/\phipar^2=-(1-\ppar)\mathsf k+\mathsf k^2=\mathsf k\mathsf w$. From $-\log(1-\ppar)<\ppar/(1-\ppar)$ we get
$\mathsf k>1-\ppar$, that is $\mathsf w>0$. The rest follows from the product and quotient rules; for the last line of the
display write $\mathcal J\hat h^2/\bar Z=\hat h^2\mathsf s$ with $\mathsf s=\mathcal J/\bar Z$, $\mathsf s'=-\bar L'\mathsf s^2$.
\end{proof}

\begin{lemma}[the child]\label{tgt:lem:child}
(a) At fixed $\tilde Y$, $\mathsf r$ decreases and $\mathcal J$ increases in $\lambda\in(0,\frac52]$.
(b) Fix $\lambda$ and a chain $t_0<t_1<\dots<t_r<t_{r+1}=\infty$. If $1/t_-+1/(1+\tilde p(t_+)/2)-2+2\tilde p(t_-)<0$ on
each segment $[t_-,t_+]=[t_i,t_{i+1}]$ of the chain, where $\tilde p(t)=\mathrm{msg}_\lambda(t)$ and $\tilde p(\infty)=0$, then
$\mathcal J$ decreases in $\tilde Y$ on $[t_0,\infty)$.
\end{lemma}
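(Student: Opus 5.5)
Both parts come down to the sign of a logarithmic derivative. For (a) we differentiate in the activity $\lambda$ at fixed $\tilde Y$. For (b) we differentiate in $\tilde Y$ at fixed $\lambda$, and bound the derivative on each segment of the chain by its values at the segment's endpoints. Throughout, $\tilde p=\mathrm{msg}_\lambda(\tilde Y)$ and $\varphi=\mathrm{lm}_\lambda(\tilde Y)$. With $x=\lambda e^{-\tilde Y}$ we have $\tilde p=x/(1+x)$ and $\varphi=\log(1+x)$.

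\emph{Part (a).} At fixed $\tilde Y\ge0$, the quantity $x$ is proportional to $\lambda$, and $\varphi=\log(1+x)$ increases strictly in $\lambda$. Hence $\mathsf r=\tilde Y/\varphi$ decreases (it is constant $0$ when $\tilde Y=0$). Write $\mathcal J=\tilde Y\,g(x)$ with $g(x)=x^2/\bigl((1+x)^2\log(1+x)\bigr)$. It suffices that $g$ is nondecreasing on $(0,5/2]$, since $x\le\lambda\le5/2$. A direct computation gives
\[
\frac{g'(x)}{g(x)}=\frac2x-\frac2{1+x}-\frac1{(1+x)\log(1+x)}=\frac{2\log(1+x)-x}{x(1+x)\log(1+x)} .
\]
The numerator $2\log(1+x)-x$ is concave and vanishes at $0$. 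At $x=5/2$ it equals $2\log(7/2)-5/2>0$, since $\log(7/2)>1.25$. By concavity it is therefore positive on all of $(0,5/2]$. So $g$ increases, and $\mathcal J$ increases in $\lambda$ (it is constant when $\tilde Y=0$).

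\emph{Part (b).} Fix $\lambda$ and regard $\mathcal J=t\,\tilde p(t)^2/\varphi(t)$ as a function of $t=\tilde Y$. Lemma~\ref{tgt:lem:dY}, applied with the child in place of the parent, gives $\tilde p'=-\tilde p(1-\tilde p)$ and $\varphi'=-\tilde p$. Hence
\[
\frac{d}{dt}\log\mathcal J=\frac1t-2(1-\tilde p)+\frac{\tilde p}{\varphi}.
\]
Since $\varphi=-\log(1-\tilde p)\ge\tilde p+\tilde p^2/2$, we get $\tilde p/\varphi\le1/(1+\tilde p/2)$.

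Now take $t$ in a segment $[t_-,t_+]$ of the chain. Because $\tilde p(t)$ decreases in $t$, each term can be bounded at an endpoint:
\begin{itemize}
\item $1/t\le1/t_-$;
\item $-2(1-\tilde p(t))\le-2+2\tilde p(t_-)$;
\item $1/(1+\tilde p(t)/2)\le1/(1+\tilde p(t_+)/2)$, with $\tilde p(\infty)=0$ on the last segment.
\end{itemize}
The hypothesis then makes the logarithmic derivative negative on each segment. The segments cover $[t_0,\infty)$, so $\mathcal J$ decreases there.

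The only step needing care is the numerical endpoint check $2\log(7/2)>5/2$ in (a). This is exactly where the range $\lambda\le5/2$ is needed, because the root of $2\log(1+x)=x$ is about $2.51$.
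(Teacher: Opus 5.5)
Your proof is correct and is essentially the paper's own argument. For (a) you use the logarithmic derivative of $\tilde p^2/\varphi$ in $x=\lambda e^{-\tilde Y}$, together with the concavity of $2\log(1+x)-x$ and its positivity at $x=\frac52$. For (b) you bound the logarithmic derivative $1/\tilde Y+\tilde p/\varphi-2(1-\tilde p)$ on each segment via $\tilde p/\varphi\le1/(1+\tilde p/2)$ and the monotonicity of $\tilde p$ (the step $1/t\le1/t_-$ tacitly needs $t_0>0$, which the hypothesis forces anyway).
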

\begin{proof}
(a) $\varphi=\mathrm{lm}_\lambda(\tilde Y)$ increases in $\lambda$, so $\mathsf r$ decreases. With $\mathsf z=\lambda e^{-\tilde Y}\le\lambda$,
$\tilde p^2/\varphi=\mathsf z^2/((1+\mathsf z)^2\log(1+\mathsf z))$ has logarithmic derivative
$(2\log(1+\mathsf z)-\mathsf z)/(\mathsf z(1+\mathsf z)\log(1+\mathsf z))>0$ for $0<\mathsf z\le\frac52$, since
$2\log(1+\mathsf z)-\mathsf z$ is concave, vanishes at $0$ and equals $2\log\frac72-\frac52>0$ at $\frac52$.
(b) Since $\tilde p'=-\tilde p(1-\tilde p)$ and $\varphi'=-\tilde p$ in $\tilde Y$, $(\log\mathcal J)'=1/\tilde Y+\tilde p/\varphi-2(1-\tilde p)$. From
$-\log(1-\tilde p)\ge\tilde p+\tilde p^2/2$, $\tilde p/\varphi\le1/(1+\tilde p/2)$; on $[t_-,t_+]$, $1/\tilde Y\le1/t_-$, $\tilde p\le\tilde p(t_-)$ and
$\tilde p\ge\tilde p(t_+)$.
\end{proof}

\paragraph{Transport in the activity.}
\begin{lemma}[translation]\label{tgt:lem:tr}\label{mc:lem:ext}
$\mathsf e_\lambda,\ppar,\phipar,\mathsf k,\mathsf w$ are functions of $\mathsf x=\Ypar-\mu$. Hence at fixed $\Ypar$: $\partial_\mu\mathsf e_\lambda=-\mathsf e_\lambda$,
$\partial_\mu\ppar=\ppar(1-\ppar)$, $\partial_\mu\mathsf k=-\mathsf k\mathsf w$. On a box $[Y_0,Y_1]\times[\llo,\lhi]$ with $Y_0\ge0$,
$\mathsf e_\lambda,\ppar,\phipar,\mathsf k$ are enclosed by their values at the corners $(Y_0,\lhi)$ and $(Y_1,\llo)$; so is
$\mathsf w$ if $\lhi\le\frac{263}{200}$, and in general $\mathsf w$ lies in the interval $\mathsf k+\ppar-1$ computed from these
enclosures.
\end{lemma}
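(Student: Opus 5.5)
The plan is to verify first that the five parent quantities depend on $(\Ypar,\lambda)$ only through $\mathsf x=\Ypar-\mu$, where $\mu=\log\lambda$. By definition $\mathsf e_\lambda=e^{\Ypar}/\lambda=e^{\mathsf x}$. Since $\mathrm{msg}_\lambda(\zeta)=\mathrm{msg}_1(\zeta-\mu)$ and $\mathrm{lm}_\lambda(\zeta)=\mathrm{lm}_1(\zeta-\mu)$, we get $\ppar=\mathrm{msg}_1(\mathsf x)$ and $\phipar=\mathrm{lm}_1(\mathsf x)$. Then $\mathsf k=\ppar/\phipar$ and $\mathsf w=\mathsf k+\ppar-1$ are functions of $\mathsf x$ as well. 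At fixed $\Ypar$ we have $\partial_\mu=-d/d\mathsf x$, while at fixed $\mu$ we have $d/d\Ypar=d/d\mathsf x$. So every $\mu$-partial is minus the corresponding $\Ypar$-derivative of Lemma~\ref{tgt:lem:dY}. This gives $\partial_\mu\mathsf e_\lambda=-\mathsf e_\lambda$, $\partial_\mu\ppar=\ppar(1-\ppar)$ and $\partial_\mu\mathsf k=-\mathsf k\mathsf w$.

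For the enclosure I would use monotonicity in $\mathsf x$. On the box, $\mathsf x$ ranges over $[Y_0-\log\lhi,\,Y_1-\log\llo]$, and the two endpoints are attained exactly at the corners $(Y_0,\lhi)$ and $(Y_1,\llo)$. A function that is monotone in $\mathsf x$ therefore takes its extreme values at these two corners. Lemma~\ref{tgt:lem:dY} supplies the signs: $\mathsf e_\lambda'=\mathsf e_\lambda>0$, $\ppar'=-\ppar(1-\ppar)<0$, $\phipar'=-\ppar<0$, and $\mathsf k'=\mathsf k\mathsf w>0$ because $\mathsf w>0$ and $\mathsf k>0$. All four are strictly monotone in $\mathsf x$ on all of $\mathbb R$, so the corner values enclose them. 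The hypothesis $Y_0\ge0$ only keeps us on the domain where the parent quantities are used.

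For $\mathsf w$ the general claim needs no monotonicity. Since $\mathsf w=\mathsf k+\ppar-1$, interval arithmetic applied to the enclosures of $\mathsf k$ and $\ppar$ already contains $\mathsf w$.

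The remaining case, $\lhi\le 263/200$, is the main obstacle: we need $\mathsf w$ monotone on the relevant range of $\mathsf x$. Here $\mathsf w'=\mathsf k\mathsf w-\ppar(1-\ppar)$. I would show that $\mathsf w$ is monotone for $\mathsf x\ge -\log(263/200)$, which covers the box since $Y_0\ge0$ and $\lambda\le\lhi$. First I would write everything in $t=e^{-\mathsf x}$: then $\ppar=t/(1+t)$, $\phipar=\log(1+t)$, and the range becomes $t\le 263/200$. Next I would check the sign of $\mathsf k\mathsf w-\ppar(1-\ppar)$ on $(0,263/200]$. As $t\to0$ we have $\mathsf k\approx 1-t/2$ and $\mathsf w\approx t/2$, so $\mathsf k\mathsf w\approx t/2$ while $\ppar(1-\ppar)\approx t$. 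This suggests $\mathsf w'<0$ for all small $t$, and I expect the sign to persist up to a threshold near $263/200$, which would explain the constant. To prove it, I would clear denominators and reduce to a one-variable inequality in $\ell=\log(1+t)$ and $t$: namely $\mathsf k\mathsf w<\ppar(1-\ppar)$ on $(0,263/200]$. I would try series bounds for $\log(1+t)$ first; if those are not tight enough near the endpoint, I would fall back on a short interval-arithmetic check there. Once monotonicity is established, the corner values enclose $\mathsf w$.
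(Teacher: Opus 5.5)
Your route is the paper's: the five quantities are functions of $\mathsf x=\Ypar-\mu$, the $\mu$-derivatives are those of Lemma~\ref{tgt:lem:dY} with the sign reversed, the corners $(Y_0,\lhi)$ and $(Y_1,\llo)$ carry the extreme values of $\mathsf x$, and $\mathsf w$ is enclosed by interval arithmetic in general and by the sign of $\mathsf w'=\mathsf k\mathsf w-\ppar(1-\ppar)$ when $\lhi\le\frac{263}{200}$.

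The step you leave open is that sign, and your expectation about it is wrong. In your variable $t=e^{-\mathsf x}$, clearing denominators gives exactly $t<\ell+\ell^2$ with $\ell=\log(1+t)=\phipar$. Since $t=e^{\ell}-1$, this is $e^{\ell}-1<\ell+\ell^2$, which fails only for $\ell$ beyond about $1.79$, that is $t\approx5$. So nothing happens near $t=\frac{263}{200}$: that constant is the end of the segment mid of the multiplier curve, not a sharp threshold.

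Your first tool would also stall. Power-series bounds for $\log(1+t)$ at $0$ are poor for $t>1$; with $\log(1+t)\ge t-t^2/2$ you only reach $t<2-\sqrt2$. The paper avoids this by working in $\ell=\phipar$, where the exponential series gives $e^{\ell}-1-\ell-\ell^2\le-\frac{\ell^2}2+\frac e6\ell^3<0$ for $0<\ell\le1$, that is, whenever $\ppar\le1-e^{-1}$. On the box this holds, because $\Ypar\ge0$ and $\lambda\le\lhi$ give $\ppar\le q(\lhi)\le\frac{263}{463}<\frac47<1-e^{-1}$. With this inequality supplied, your argument is complete.
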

\begin{proof}
$\mathsf e_\lambda=e^{\Ypar-\mu}$, $\ppar=\mathrm{msg}_1(\Ypar-\mu)$, $\phipar=\mathrm{lm}_1(\Ypar-\mu)$; the derivatives are those of
Lemma~\ref{tgt:lem:dY} with the sign of the argument reversed. In $\mathsf x=\Ypar-\mu$, $\mathsf e_\lambda$ and $\mathsf k$ increase
($\mathsf k'=\mathsf k\mathsf w>0$) and $\ppar$, $\phipar$ decrease. For $\mathsf w$, $d\mathsf w/d\mathsf x=\mathsf k\mathsf w-\ppar(1-\ppar)$, and with
$t=\phipar=-\log(1-\ppar)$ one has $\mathsf k\mathsf w<\ppar(1-\ppar)$ if and only if $e^t-1<t+t^2$; for $0<t\le1$,
$e^t-1-t-t^2\le-\frac{t^2}2+\frac e6t^3<0$. So $\mathsf w$ decreases in $\mathsf x$ wherever $\ppar\le1-e^{-1}$, which holds for
$\Ypar\ge0$ and $\lambda\le\frac{263}{200}$ since then $\ppar\le q<\frac47$. The corners $(Y_0,\lhi)$ and $(Y_1,\llo)$ give the
least and the largest $\mathsf x$ on the box. Only the enclosure of $\mathsf w$ by the corners uses a bound on $\lambda$ (it needs
$\ppar\le1-e^{-1}$, which fails near $\Ypar=0$ once $\lambda>e-1$).
\end{proof}

\begin{lemma}[the $\mu$-derivatives]\label{tgt:lem:dmu}
For the multipliers of a segment, with their $\mu$-derivatives as above,
\[\dot{\hat h}=-\ppar(1-\ppar)-\gammaV\mathsf k\mathsf w+\dot\gamma_{\mathrm V}\mathsf k,\qquad\dot{\mathsf N}=\dot A_D-\dot{\hat h}-\frac{\dot\alpha_{\mathrm V}}{\mathsf k}-\frac{\alphaV\mathsf w}{\mathsf k},\qquad
\dot{\bar Z}=\dot\gamma_{\mathrm V}+\mathcal J(\dot\gamma_{\mathrm V}\mathsf e_\lambda-\gammaV\mathsf e_\lambda-\dot{\hat h}\Ypar),\]
at fixed $(\Ypar,\mathsf r,\mathcal J)$:
\begin{align*}
\partial_\mu\mathsf A&=(\dot\alpha_{\mathrm V}-\alphaV)\mathsf e_\lambda+\frac{\dot{\mathsf N}}\Ypar+\dot\alpha_{\mathrm V}\mathsf r-\mathcal J\,\frac{2\hat h\dot{\hat h}\bar Z-\hat h^2\dot{\bar Z}}{\bar Z^2},\\
\partial_\mu\mathsf A^{\gammaV}&=(\dot\alpha_{\mathrm V}-\alphaV)\mathsf e_\lambda+\frac{\dot{\mathsf N}}\Ypar+\dot\alpha_{\mathrm V}\mathsf r-\mathcal J\Bigl(\frac{2\hat h\dot{\hat h}}{\gammaV}
-\frac{\hat h^2\dot\gamma_{\mathrm V}}{\gammaV^2}\Bigr),\\
\partial_\mu\mathsf C&=\dot\alpha_{\mathrm V}\mathsf E_-+\alphaV\Bigl(-\mathsf e_\lambda+\frac{1-\mathsf w/\mathsf k}{\Ypar}\Bigr)+\frac{\dot\gamma_{\mathrm V}\bar{\mathcal J}\hat L^2
-\gammaV^2(\dot{\hat h}\Ypar-\dot\gamma_{\mathrm V}\mathsf e_\lambda+\gammaV\mathsf e_\lambda)}{(\gammaV-\bar{\mathcal J}\hat L)^2}\qquad(\bar{\mathcal J}\text{ fixed}).
\end{align*}
\end{lemma}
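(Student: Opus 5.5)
This is a chain-rule computation in $\mu=\log\lambda$. The parent quantities $\mathsf e_\lambda,\ppar,\phipar,\mathsf k,\mathsf w$ depend on $\mu$ only through $\mathsf x=\Ypar-\mu$ (Lemma~\ref{tgt:lem:tr}). At fixed $\Ypar$ this gives $\partial_\mu\mathsf e_\lambda=-\mathsf e_\lambda$, $\partial_\mu\ppar=\ppar(1-\ppar)$ and $\partial_\mu\mathsf k=-\mathsf k\mathsf w$, together with $\partial_\mu\mathsf w=-\mathsf w'$, where $\mathsf w'$ is the $\Ypar$-derivative from Lemma~\ref{tgt:lem:dY}. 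The multipliers $\alphaV,\gammaV,A_D$ depend on $\mu$ explicitly, with the derivatives $\dot\alpha_{\mathrm V},\dot\gamma_{\mathrm V},\dot A_D$ recorded for the segment. The variables $\mathsf r$, $\mathcal J$ and $\bar{\mathcal J}$ are held fixed by hypothesis. Everything is then the product and quotient rules applied to the closed forms defining $\hat h,\mathsf N,\bar L,\bar Z,\mathsf A,\mathsf A^{\gammaV},\mathsf E_-,\mathsf C$.

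\emph{Steps, in order.} First, $\hat h=1-\ppar+\gammaV\mathsf k$ gives
$\dot{\hat h}=-\ppar(1-\ppar)+\dot\gamma_{\mathrm V}\mathsf k-\gammaV\mathsf k\mathsf w$.
Next, $\mathsf N=A_D-\hat h-\alphaV/\mathsf k$ gives $\dot{\mathsf N}=\dot A_D-\dot{\hat h}-\dot\alpha_{\mathrm V}/\mathsf k+\alphaV\partial_\mu\mathsf k/\mathsf k^2$, and the last term is $-\alphaV\mathsf w/\mathsf k$.
Then $\bar L=\gammaV\mathsf e_\lambda-\hat h\Ypar$ gives $\dot{\bar L}=\dot\gamma_{\mathrm V}\mathsf e_\lambda-\gammaV\mathsf e_\lambda-\dot{\hat h}\Ypar$, and hence $\dot{\bar Z}=\dot\gamma_{\mathrm V}+\mathcal J\dot{\bar L}$.

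For $\mathsf A=\alphaV\mathsf e_\lambda+\mathsf N/\Ypar+\alphaV\mathsf r-\mathcal J\hat h^2/\bar Z$, differentiate term by term. This gives $(\dot\alpha_{\mathrm V}-\alphaV)\mathsf e_\lambda$, then $\dot{\mathsf N}/\Ypar$ (since $\Ypar$ is fixed), then $\dot\alpha_{\mathrm V}\mathsf r$, and finally the quotient rule for $\hat h^2/\bar Z$. The expression for $\mathsf A^{\gammaV}$ is the same computation with $\bar Z$ replaced by $\gammaV$, whose $\mu$-derivative is $\dot\gamma_{\mathrm V}$.

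For $\mathsf C=\alphaV\mathsf E_--\Theta$, I write $\mathsf E_-=\mathsf e_\lambda+\mathsf n/\Ypar$ with $\mathsf n=\mu-\log(e^{\Ypar}-1)-1/\mathsf k$. Then $\partial_\mu\mathsf n=1+\partial_\mu\mathsf k/\mathsf k^2=1-\mathsf w/\mathsf k$, so $\partial_\mu\mathsf E_-=-\mathsf e_\lambda+(1-\mathsf w/\mathsf k)/\Ypar$.

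For $\Theta=\gammaV\hat L/(\gammaV-\bar{\mathcal J}\hat L)$, I differentiate in both $\gammaV$ and $\hat L=-\bar L$, where $\dot{\hat L}=\dot{\hat h}\Ypar-\dot\gamma_{\mathrm V}\mathsf e_\lambda+\gammaV\mathsf e_\lambda$. The numerator of the derivative is
\[
(\dot\gamma_{\mathrm V}\hat L+\gammaV\dot{\hat L})(\gammaV-\bar{\mathcal J}\hat L)-\gammaV\hat L(\dot\gamma_{\mathrm V}-\bar{\mathcal J}\dot{\hat L}).
\]
Expanding, the cross terms $\pm\gammaV\bar{\mathcal J}\hat L\dot{\hat L}$ cancel, as do $\pm\gammaV\dot\gamma_{\mathrm V}\hat L$. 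What remains is $\gammaV^2\dot{\hat L}-\dot\gamma_{\mathrm V}\bar{\mathcal J}\hat L^2$, all over $(\gammaV-\bar{\mathcal J}\hat L)^2$. Subtracting this from $\dot\alpha_{\mathrm V}\mathsf E_-+\alphaV\partial_\mu\mathsf E_-$ gives the stated formula.

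\emph{Main obstacle.} There is no conceptual difficulty; the risk is bookkeeping. The points needing care are the sign conventions in the translation lemma ($\partial_\mu$ versus $d/d\Ypar$), the cancellation in the derivative of $\Theta$, and keeping $\mathsf r,\mathcal J,\bar{\mathcal J}$ frozen even though they physically depend on $\lambda$ (Lemma~\ref{tgt:lem:child}(a) is what later controls that dependence). I would verify the final display by a symbolic expansion, as for Lemma~\ref{tgt:lem:dY}.
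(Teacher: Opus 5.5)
Your proposal is correct and follows the same route as the paper's proof. Both apply the chain rule in $\mu$ through the translation lemma ($\partial_\mu\mathsf e_\lambda=-\mathsf e_\lambda$, $\partial_\mu\ppar=\ppar(1-\ppar)$, $\partial_\mu\mathsf k=-\mathsf k\mathsf w$), together with the product and quotient rules, and obtain $\partial_\mu\Theta=(\gammaV^2\dot{\hat L}-\dot\gamma_{\mathrm V}\bar{\mathcal J}\hat L^2)/(\gammaV-\bar{\mathcal J}\hat L)^2$; the paper writes this via the partials $\partial_{\gammaV}\Theta$ and $\partial_{\hat L}\Theta$, which is the same computation as your expanded quotient rule.
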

\begin{proof}
Product and quotient rules with Lemma~\ref{tgt:lem:tr} and $d\alphaV/d\mu=\bar a\lambda\,dq/d\lambda$,
$d\gammaV/d\mu=\lambda\,d\gammaV/d\lambda$, $dA_D/d\mu=(D-1)\lambda\,dq/d\lambda$: $\partial_\mu\hat h=-\partial_\mu\ppar+\dot\gamma_{\mathrm V}\mathsf k+\gammaV\partial_\mu\mathsf k$,
$\partial_\mu(\alphaV/\mathsf k)=\dot\alpha_{\mathrm V}/\mathsf k+\alphaV\mathsf w/\mathsf k$, $\partial_\mu(\mu-\log(e^{\Ypar}-1)-1/\mathsf k)=1-\mathsf w/\mathsf k$,
$\partial_\mu\hat L=\dot{\hat h}\Ypar-\dot\gamma_{\mathrm V}\mathsf e_\lambda+\gammaV\mathsf e_\lambda$, and
$\partial_{\gammaV}\Theta=-\bar{\mathcal J}\hat L^2/(\gammaV-\bar{\mathcal J}\hat L)^2$, $\partial_{\hat L}\Theta=\gammaV^2/(\gammaV-\bar{\mathcal J}\hat L)^2$
for $\Theta=\gammaV\hat L/(\gammaV-\bar{\mathcal J}\hat L)$.
\end{proof}

\begin{lemma}[transport inequality]\label{tgt:lem:mvt}
Let $\Lambda=[\llo,\lhi]\ni\lambda_c$, $\varepsilon_-\ge\log(\lambda_c/\llo)$ and $\varepsilon_+\ge\log(\lhi/\lambda_c)$.
Let $\mathsf F$ be defined on $[Y_0,Y_1]\times\Lambda$ and continuously differentiable in $\mu$. If
$\mathsf F(\cdot,\lambda_c)\ge\underline{\mathsf F}$ on $[Y_0,Y_1]$ and $d_-\le\partial_\mu\mathsf F\le d_+$ on
$[Y_0,Y_1]\times\Lambda$, then on $[Y_0,Y_1]\times\Lambda$
\[\mathsf F\ \ge\ \underline{\mathsf F}-\max\bigl(\varepsilon_+\max(0,-d_-),\ \varepsilon_-\max(0,d_+)\bigr).\]
\end{lemma}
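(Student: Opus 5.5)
The plan is to apply the one-variable mean value theorem in $\mu=\log\lambda$ at each fixed $\Ypar\in[Y_0,Y_1]$, with the anchor activity $\lambda_c$ as the base point. Write $\mu_c=\log\lambda_c$; the activity interval $\Lambda$ corresponds to the $\mu$-interval $[\log\llo,\log\lhi]$, which contains $\mu_c$. Its two one-sided lengths are $\log(\lambda_c/\llo)\le\varepsilon_-$ and $\log(\lhi/\lambda_c)\le\varepsilon_+$.

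Fix $\Ypar$ and $\lambda\in\Lambda$, and put $\mu=\log\lambda$. The function $\mu'\mapsto\mathsf F(\Ypar,e^{\mu'})$ is continuously differentiable on the segment between $\mu_c$ and $\mu$. Hence there is a $\bar\mu$ on that segment with
\[
\mathsf F(\Ypar,\lambda)=\mathsf F(\Ypar,\lambda_c)+(\mu-\mu_c)\,\partial_\mu\mathsf F(\Ypar,e^{\bar\mu}).
\]
The point $(\Ypar,e^{\bar\mu})$ lies in the box, so $d_-\le\partial_\mu\mathsf F\le d_+$ there. If one prefers to avoid choosing an intermediate point, the same conclusion follows from the integral form $\int_{\mu_c}^{\mu}\partial_\mu\mathsf F$, which also needs only continuity of the derivative.

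The argument then splits into two cases.

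If $\mu\ge\mu_c$, then $0\le\mu-\mu_c\le\varepsilon_+$. The increment is at least $(\mu-\mu_c)d_-$, and this is $\ge-(\mu-\mu_c)\max(0,-d_-)\ge-\varepsilon_+\max(0,-d_-)$.

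If $\mu<\mu_c$, then $0<\mu_c-\mu\le\varepsilon_-$. The increment equals $-(\mu_c-\mu)\,\partial_\mu\mathsf F$, which is at least $-(\mu_c-\mu)d_+\ge-\varepsilon_-\max(0,d_+)$.

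In either case the loss is at most the maximum of the two quantities. Together with $\mathsf F(\Ypar,\lambda_c)\ge\underline{\mathsf F}$, this gives the stated bound.

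There is no real obstacle here. The only care needed is in the bookkeeping of signs: the upward direction uses the lower derivative bound $d_-$, the downward direction uses the upper bound $d_+$, and clipping with $\max(0,\cdot)$ handles the favourable signs. One should also check that the derivative hypothesis is required on the whole box, so that the intermediate point is covered even though $\Ypar$ is held fixed.
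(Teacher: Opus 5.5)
Your proposal is correct and follows the paper's proof: the mean value theorem in $\mu$ at fixed $\Ypar$, applied separately for $\mu\ge\mu_c$ (using $d_-$ and $\varepsilon_+$) and $\mu\le\mu_c$ (using $d_+$ and $\varepsilon_-$). Your sign bookkeeping with the clipping $\max(0,\cdot)$ is exactly what the stated bound requires.
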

\begin{proof}The mean value theorem in $\mu$ at fixed $\Ypar$, separately for $\mu\ge\mu_c$ and $\mu\le\mu_c$.\end{proof}

\begin{lemma}[the child polygon moves monotonically]\label{tgt:lem:poly}
Let $\mathcal J\le \mathsf a_i+\mathsf b_i\mathsf r$ with $\mathsf b_i\ge0$ ($i=0,1,2$) and $\mathcal J\le\mathcal J_{\rm cap}$ hold on the child curve
$\{(\mathsf r(\tilde Y),\mathcal J(\tilde Y)):\tilde Y\ge0\}$ at an activity $\lambda_p\le\frac52$. Then they hold on the child curve at every
activity $\lambda\le\lambda_p$.
\end{lemma}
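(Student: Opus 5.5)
The plan is a pointwise comparison along the parameter $\tilde Y$: at a fixed value of $\tilde Y$, moving the activity down should only move the point $(\mathsf r,\mathcal J)$ of the child curve further inside each constraint, and Lemma~\ref{tgt:lem:child}(a) supplies exactly the monotonicity needed. The child curve at activity $\lambda$ is parametrized by $\tilde Y\ge0$ through $\mathsf r(\tilde Y;\lambda)=\tilde Y/\mathrm{lm}_\lambda(\tilde Y)$ and $\mathcal J(\tilde Y;\lambda)=\mathrm{msg}_\lambda(\tilde Y)^2\,\mathsf r(\tilde Y;\lambda)$. So it suffices to fix $\tilde Y\ge0$ and $0<\lambda\le\lambda_p$, and to compare the point $(\mathsf r(\tilde Y;\lambda),\mathcal J(\tilde Y;\lambda))$ with the point $(\mathsf r(\tilde Y;\lambda_p),\mathcal J(\tilde Y;\lambda_p))$ at the same $\tilde Y$. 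The second point satisfies the constraints by hypothesis.

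\emph{Step 1: monotonicity.} Since $\lambda\le\lambda_p\le\frac52$, both activities lie in $(0,\frac52]$. Lemma~\ref{tgt:lem:child}(a) then gives $\mathsf r(\tilde Y;\lambda)\ge\mathsf r(\tilde Y;\lambda_p)$ and $\mathcal J(\tilde Y;\lambda)\le\mathcal J(\tilde Y;\lambda_p)$. At $\tilde Y=0$ this degenerates, since $\mathsf r=\mathcal J=0$ for every $\lambda$, so the two points coincide and nothing is needed; for $\tilde Y>0$ the lemma applies as stated. I will check that the proof of Lemma~\ref{tgt:lem:child}(a) really yields these monotonicities as $\lambda$ decreases from $\lambda_p$. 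For $\mathsf r$ this is because $\mathrm{lm}_\lambda(\tilde Y)$ is increasing in $\lambda$. For $\mathcal J$ the lemma's logarithmic-derivative argument applies in the variable $\mathsf z=\lambda e^{-\tilde Y}\in(0,\frac52]$: $\mathcal J$ is $\tilde Y$ times $\tilde p^2/\varphi$, and $\tilde p^2/\varphi$ is increasing in $\mathsf z$.

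\emph{Step 2: the constraints.} For each $i$, using $\mathsf b_i\ge0$,
\[
\mathcal J(\tilde Y;\lambda)\le\mathcal J(\tilde Y;\lambda_p)\le\mathsf a_i+\mathsf b_i\,\mathsf r(\tilde Y;\lambda_p)\le\mathsf a_i+\mathsf b_i\,\mathsf r(\tilde Y;\lambda).
\]
For the cap, $\mathcal J(\tilde Y;\lambda)\le\mathcal J(\tilde Y;\lambda_p)\le\mathcal J_{\rm cap}$. As $\tilde Y\ge0$ was arbitrary, every constraint holds on the whole child curve at $\lambda$.

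There is no real obstacle once Lemma~\ref{tgt:lem:child}(a) is available. The one point needing care is the sign condition $\mathsf b_i\ge0$: it is what allows a larger $\mathsf r$ to enlarge the right-hand side rather than shrink it. The restriction $\lambda_p\le\frac52$ is needed only so that the monotonicity of $\mathcal J$ in Lemma~\ref{tgt:lem:child}(a) applies to every $\mathsf z=\lambda e^{-\tilde Y}$ that occurs.
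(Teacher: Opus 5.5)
Your proof is correct and is essentially the paper's argument. The paper also fixes $\tilde Y$, uses Lemma~\ref{tgt:lem:child}(a) to get $\mathsf r_\lambda\ge\mathsf r_{\lambda_p}$ and $\mathcal J_\lambda\le\mathcal J_{\lambda_p}$ for $\lambda\le\lambda_p\le\frac52$, and then concludes from $\mathsf b_i\ge0$, exactly as in your Step~2.
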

\begin{proof}
At fixed $\tilde Y$, $\mathcal J_\lambda-\mathsf b_i\mathsf r_\lambda\le\mathcal J_{\lambda_p}-\mathsf b_i\mathsf r_{\lambda_p}\le \mathsf a_i$ and $\mathcal J_\lambda\le\mathcal J_{\lambda_p}\le
\mathcal J_{\rm cap}$ by Lemma~\ref{tgt:lem:child}(a), where the subscript is the activity.
\end{proof}
The \emph{child polygon} at an activity $\lambda_p$ consists of three lines with $\mathsf b_0>\mathsf b_1>\mathsf b_2>0$ and a
cap; $45$ polygons are used, at activities $\lambda_p$, and each activity sub-interval $[\llo,\lhi]$ of the checks uses one
at an activity $\lambda_p\ge\lhi$ (their constants and this assignment are in the supplementary data,
Section~\ref{sec:computer}).
The upper boundary
$\mathsf U(\mathsf r)=\min(\mathsf a_0+\mathsf b_0\mathsf r,\mathsf a_1+\mathsf b_1\mathsf r,\mathsf a_2+\mathsf b_2\mathsf r,\mathcal J_{\rm cap})$ is concave, with vertices
$(\mathsf r_0,\mathcal J_0)=(0,\mathsf a_0)$ and the three consecutive intersections $(\mathsf r_i,\mathcal J_i)$, $i=1,2,3$. Each of the inequalities
$\mathsf f:=\mathcal J-\mathsf b_i\mathsf r\le\mathsf a_i$ ($i=0,1,2$) and $\mathsf f:=\mathcal J\le\mathcal J_{\rm cap}$ is proved at $\lambda_p$ as follows, with $t=\tilde Y$
and a prime for $d/dt$. On $[0,t_\ast]$ the interval is cut into subintervals with centre $c_t$ and half-width $d_t$, and
$\mathsf f(t)\le\mathsf f(c_t)+\mathsf f'(c_t)(t-c_t)+\frac12\max(0,\sup\mathsf f'')(t-c_t)^2$, whose maximum is at an end;
$\mathsf f(c_t)$, $\mathsf f'(c_t)$ are enclosed at the centre, and $\sup\mathsf f''$ is the upper end of the interval value of $\mathsf f''$ on the
subinterval, computed from $\tilde p'=-\tilde p(1-\tilde p)$, $\tilde p''=\tilde p(1-\tilde p)(1-2\tilde p)$, $\varphi'=-\tilde p$,
$\varphi''=\tilde p(1-\tilde p)$ with $\tilde p$ and $\varphi$ enclosed by their values at the two ends (both decrease in $\tilde Y$). On
$[t_\ast,\infty)$, $\mathcal J$ decreases by Lemma~\ref{tgt:lem:child}(b) (a chain of at most eight segments) and $\mathsf r$ increases, so
$\mathsf f$ decreases ($t_\ast$ is part of the polygon's data). We put
$\bar{\mathcal J}:=\mathcal J_{\rm cap}(\lambda_p)$, which bounds $\mathcal J$ for every child at every $\lambda\le\lambda_p$.

\begin{lemma}[the feasibility cut]\label{tgt:lem:cut}
If $(\tilde Y,\Ypar)$ is feasible at an activity $\lambda\ge\llo$ and $\Ypar\le Y_1$, then
$\mathsf r\ge\mathsf r_{\rm f}(Y_1):=\max\bigl(0,\log\llo-\log(e^{Y_1}-1)\bigr)/Y_1$.
\end{lemma}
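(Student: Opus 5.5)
The plan is to combine the two facts already recorded for feasible pairs: the lower bound on $\tilde Y$ from Lemma~\ref{tgt:lem:feas}(a), and the bound $\varphi\le\Ypar$ that feasibility gives directly. Let $(\tilde Y,\Ypar)$ be feasible at an activity $\lambda\ge\llo$ with $\Ypar\le Y_1$. First, $\mathsf r=\tilde Y/\varphi\ge0$, because $\tilde Y\ge0$ and $\varphi=\mathrm{lm}_\lambda(\tilde Y)>0$; this disposes of the case in which the maximum defining $\mathsf r_{\rm f}(Y_1)$ is $0$. It therefore remains to show $\mathsf r\ge(\log\llo-\log(e^{Y_1}-1))/Y_1$ when the numerator is positive.

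For this case I would apply Lemma~\ref{tgt:lem:feas}(a) at the activity $\lambda$ itself. It gives $\mathsf r\ge\tilde Y/\Ypar$ and $\tilde Y\ge\mu-\log(e^{\Ypar}-1)$, with $\mu=\log\lambda$. Monotonicity then replaces the actual parameters by the endpoints of the box. Since $\lambda\ge\llo$, we have $\mu\ge\log\llo$. Since $\Ypar\le Y_1$ and $y\mapsto\log(e^y-1)$ is increasing on $(0,\infty)$, we have $-\log(e^{\Ypar}-1)\ge-\log(e^{Y_1}-1)$. Hence $\tilde Y\ge\max\bigl(0,\log\llo-\log(e^{Y_1}-1)\bigr)=:\mathsf t$, where the $0$ comes from $\tilde Y\ge0$.

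Finally, $\mathsf r\ge\tilde Y/\Ypar\ge\mathsf t/\Ypar\ge\mathsf t/Y_1$. The last step uses $\mathsf t\ge0$ and $0<\Ypar\le Y_1$; note that $\Ypar>0$ is part of feasibility.

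The only delicate point is the direction of each monotonicity, and in particular the final division. Replacing $\Ypar$ by the larger $Y_1$ lowers the bound only because the numerator $\mathsf t$ is nonnegative. This is exactly why the maximum with $0$ is taken before dividing, and I expect no other obstacle.
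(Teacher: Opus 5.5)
Your proposal is correct and follows the paper's own proof: both apply Lemma~\ref{tgt:lem:feas}(a) to get $\mathsf r\ge\tilde Y/\Ypar$ and $\tilde Y\ge\max(0,\mu-\log(e^{\Ypar}-1))$, then pass to the endpoints $\llo$ and $Y_1$ by monotonicity. The only difference is that you divide by $\Ypar$ before bounding the numerator, whereas the paper replaces $\Ypar$ by $Y_1$ first; both orders rely on the same nonnegativity of $\tilde Y$.
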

\begin{proof}By Lemma~\ref{tgt:lem:feas}(a), $\mathsf r\ge\tilde Y/\Ypar\ge\tilde Y/Y_1$ and
$\tilde Y\ge\max(0,\mu-\log(e^{\Ypar}-1))\ge\max(0,\log\llo-\log(e^{Y_1}-1))$.\end{proof}
\noindent In the computation $\mathsf r_{\rm f}(Y_1)$ is replaced by a smaller six-decimal rational $\underline{\mathsf r}$, which only
enlarges the region used below.

\begin{lemma}[bounds on an activity sub-interval]\label{tgt:lem:hand}
Let $\Lambda=[\llo,\lhi]$ lie in one segment of Data~\ref{mc:tab:curve}, with $\lhi\le\lambda_p\le\frac73$, write
$\alphaV(\lambda),\gammaV(\lambda),A_D(\lambda)$ for the multipliers of the segment at the activity $\lambda$, and $\ppar(\Ypar,\lambda)$, $\mathsf k(\Ypar,\lambda)$ for $\ppar$, $\mathsf k$ at $(\Ypar,\lambda)$. The following bounds hold for every
$\lambda\in\Lambda$.
The uniform bound (i) states that for every $\Ypar>0$, $-\bar L\le\mathsf c_1\max\bigl(0,\log(\mathsf c_1/\mathsf c_2)-1\bigr)$, where
$\mathsf c_1=1+\gammaV(\lhi)$ and $\mathsf c_2=\gammaV(\llo)/\lhi$; hence, for every child,
$\bar Z\ge\gammaV(\llo)-\bar{\mathcal J}\mathsf c_1\max\bigl(0,\log(\mathsf c_1/\mathsf c_2)-1\bigr)$, and
$\gammaV-\bar{\mathcal J}\max(0,-\bar L)$ has the same lower bound.

The small-parent bound (ii) states that for a rational $Y_L>0$ (fixed for each sub-interval of the checks), for $0<\Ypar\le Y_L$ and every child of a
feasible pair,
$\mathrm{AC}/\ppar\ge\alphaV(\llo)/\lhi+(\mathsf N_{\rm lo}+\alphaV(\llo)t_L)/Y_L-\bar{\mathcal J}\hat h_{\rm hi}^2/\bar Z_{\rm lo}$, where
$t_L=\max(0,\log\llo-\log(e^{Y_L}-1))$, $\hat h_{\rm hi}=1-\ppar(Y_L,\llo)+\gammaV(\lhi)\mathsf k(Y_L,\llo)$,
$\mathsf N_{\rm lo}=A_D(\llo)-\hat h_{\rm hi}-\alphaV(\lhi)/\mathsf k(0,\lhi)$ and
$\bar Z_{\rm lo}=\gammaV(\llo)+\bar{\mathcal J}\min(0,\gammaV(\llo)/\lhi-\hat h_{\rm hi}Y_L)$, provided $\mathsf N_{\rm lo}+\alphaV(\llo)t_L\ge0$ and
$\bar Z_{\rm lo}>0$.

The large-parent bound (iii) states that for $\Ypar\ge3$ and every child,
$\mathrm{AC}/\ppar\ge\alphaV(\llo)e^3/\lhi+\mathsf N^{(3)}_{\rm lo}/3-(1+\gammaV(\lhi))^2\bar{\mathcal J}/\bar Z^{(3)}_{\rm lo}$, where
$\mathsf N^{(3)}_{\rm lo}=A_D(\llo)-1-\gammaV(\lhi)-\alphaV(\lhi)/\mathsf k(3,\lhi)$ and
$\bar Z^{(3)}_{\rm lo}=\min\bigl(\gammaV(\llo),\gammaV(\llo)-\bar{\mathcal J}\max(0,3(1+\gammaV(\lhi))-\gammaV(\llo)e^3/\lhi)\bigr)$, provided
$\mathsf N^{(3)}_{\rm lo}<0$, $\bar Z^{(3)}_{\rm lo}>0$ and $\log((1+\gammaV(\lhi))\lhi/\gammaV(\llo))\le3$.

Finally, the two bounds in (iv) are $\bar L\ge\gammaV(\llo)/\lhi-(1+\gammaV(\lhi))\Ypar$ for $0<\Ypar\le\frac1{500}$, and
$\bar L\ge\gammaV(\llo)e^{\Ypar}/\lhi-(1+\gammaV(\lhi))\Ypar$, which increases for $\Ypar\ge3$ under the last proviso of (iii).
On each of the $47$ activity sub-intervals of the checks (Theorem~\ref{tgt:thm}) the right-hand sides of
(i), (ii), (iii) and the two bounds of (iv) at $\Ypar=\frac1{500}$ and $\Ypar=3$ are positive and the provisos hold: five
interval-arithmetic checks on each sub-interval.
\end{lemma}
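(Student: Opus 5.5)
The four bounds are proved separately, each by monotonicity in the activity and in $\Ypar$, and then certified numerically on each of the $47$ sub-intervals. Throughout I use that on a segment $\alphaV$, $\gammaV$ and $A_D$ increase in $\lambda$. Each quantity is therefore bounded below by evaluating the favourable multipliers at $\llo$ or $\lhi$. Likewise $\ppar$, $\mathsf k$ and $\mathsf e_\lambda$ are functions of $\Ypar-\mu$ with the monotonicities of Lemma~\ref{tgt:lem:tr}. The child enters only through $\mathcal J\le\bar{\mathcal J}$ (Lemma~\ref{tgt:lem:poly}), $\mathcal J\ge0$ and $\mathsf r\ge0$.

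\textbf{Bound (i).} Since $\hat h\le1+\gammaV\mathsf k$ and $\mathsf k\le1$ (because $\phipar\ge\ppar$), we have $\hat h\le\mathsf c_1$ and $\gammaV\mathsf e_\lambda\ge\mathsf c_2e^{\Ypar}$. Hence
\[
-\bar L\le\mathsf c_1\Ypar-\mathsf c_2e^{\Ypar}.
\]
Maximizing over $\Ypar$ gives the critical point $\Ypar=\log(\mathsf c_1/\mathsf c_2)$, with value $\mathsf c_1(\log(\mathsf c_1/\mathsf c_2)-1)$. The maximum is nonpositive when this logarithm is at most $1$. The bound on $\bar Z=\gammaV+\mathcal J\bar L$ then follows from $0\le\mathcal J\le\bar{\mathcal J}$.

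\textbf{Bounds (iv).} The first bound is the same estimate with $e^{\Ypar}\ge1$. For the second, the derivative of $\gammaV(\llo)e^{\Ypar}/\lhi-(1+\gammaV(\lhi))\Ypar$ is nonnegative for $\Ypar\ge3$ exactly under the last proviso of (iii).

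\textbf{Bound (ii).} I start from $\mathsf A=\alphaV\mathsf e_\lambda+\mathsf N/\Ypar+\alphaV\mathsf r-\mathcal J\hat h^2/\bar Z$ and bound each term.
\begin{itemize}
\item First term: $\mathsf e_\lambda\ge1/\lhi$.
\item Second and third terms: Lemma~\ref{tgt:lem:feas}(a) gives $\mathsf r\ge\tilde Y/\Ypar\ge t_L/\Ypar$ when $\Ypar\le Y_L$, since $\log(e^{\Ypar}-1)$ increases. So these two terms are at least $(\mathsf N+\alphaV t_L)/\Ypar$.
\item Lower bound on $\mathsf N$: $\hat h$ increases in $\Ypar$ ($\hat h'>0$ by Lemma~\ref{tgt:lem:dY}), so $\hat h\le\hat h_{\rm hi}$. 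Also $\mathsf k$ increases in $\Ypar-\mu$, so $\alphaV/\mathsf k\le\alphaV(\lhi)/\mathsf k(0,\lhi)$. Together these give $\mathsf N\ge\mathsf N_{\rm lo}$.
\item Passing to $\Ypar=Y_L$: under the proviso the numerator is nonnegative, and dividing by $\Ypar\le Y_L$ only increases it.
\item Last term: $\bar Z\ge\gammaV+\mathcal J\min(0,\bar L)$, with $\bar L\ge\gammaV(\llo)/\lhi-\hat h_{\rm hi}Y_L$. This gives $\bar Z\ge\bar Z_{\rm lo}>0$, and then $\mathcal J\hat h^2/\bar Z\le\bar{\mathcal J}\hat h_{\rm hi}^2/\bar Z_{\rm lo}$.
\end{itemize}

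\textbf{Bound (iii).} The argument is the same with the following changes:
\begin{itemize}
\item $\mathsf e_\lambda\ge e^3/\lhi$ and $\mathsf r\ge0$.
\item $\hat h\le1+\gammaV(\lhi)$.
\item Since $\mathsf N$ is negative, $\mathsf N/\Ypar\ge\mathsf N/3$ for $\Ypar\ge3$, and $\mathsf N\ge\mathsf N^{(3)}_{\rm lo}$ by the monotonicity of $\mathsf k$.
\item $\bar Z$ is bounded using (iv) at $\Ypar=3$, which is where the min of the two expressions in $\bar Z^{(3)}_{\rm lo}$ comes from.
\end{itemize}
One gap needs care: the paper leaves the sign of $\mathsf N$ on $\Ypar\ge3$ implicit. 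It has to be arranged so that $\mathsf N\le\mathsf N^{(3)}_{\rm lo}$-type bounds go in the right direction. I would split into the cases $\mathsf N\ge0$, where the term is simply dropped, and $\mathsf N<0$.

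\textbf{Numerical check and main obstacle.} The final claim is five interval-arithmetic evaluations per sub-interval, done with rational enclosures of $\log$ and $\exp$. I expect the main obstacle to be keeping every monotonicity direction correct for the coupled terms. In particular, $\hat h$ may be evaluated at $\llo$ through $\ppar$ but at $\lhi$ through $\gammaV$, while $\mathsf k$ is a decreasing function of $\mu$. The point is that the enclosures use corners that are individually worst-case, not simultaneously attained, which is valid but must be checked term by term.
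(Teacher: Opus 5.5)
Your proposal is correct and follows the paper's proof essentially step by step: the same bounds $\hat h\le1+\gammaV(\lhi)$ and $\gammaV\mathsf e_\lambda\ge\gammaV(\llo)e^{\Ypar}/\lhi$ for (i) and (iv), the corner enclosures of Lemma~\ref{tgt:lem:tr} together with the feasibility cut for (ii), and the monotonicity of the (iv) minorant for $\Ypar\ge3$ to bound $\bar Z$ in (iii). The point you flag as a gap in (iii) is not one, so no case split on the sign of $\mathsf N$ is needed. Since $\mathsf N\ge\mathsf N^{(3)}_{\rm lo}$ and the proviso gives $\mathsf N^{(3)}_{\rm lo}<0$, you get $\mathsf N/\Ypar\ge\mathsf N^{(3)}_{\rm lo}/\Ypar\ge\mathsf N^{(3)}_{\rm lo}/3$ for $\Ypar\ge3$ whatever the sign of $\mathsf N$; your phrase ``since $\mathsf N$ is negative'' is unjustified as written, though harmless.
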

\begin{proof}
(i) $\hat h\le1+\gammaV\le\mathsf c_1$ since $\mathsf k\le1$, and $\gammaV\mathsf e_\lambda\ge\mathsf c_2e^{\Ypar}$, so
$-\bar L\le\mathsf c_1\Ypar-\mathsf c_2e^{\Ypar}$; for $\Ypar>0$ this is at most $\mathsf c_1(\log(\mathsf c_1/\mathsf c_2)-1)$,
its maximum at $\Ypar=\log(\mathsf c_1/\mathsf c_2)$, if $\mathsf c_1>\mathsf c_2$, and negative otherwise. With
$\gammaV\ge\gammaV(\llo)$ and $0\le\mathcal J\le\bar{\mathcal J}$ this gives both bounds. (ii)
$\alphaV\mathsf e_\lambda\ge\alphaV(\llo)/\lhi$. By Lemma~\ref{tgt:lem:tr}, $1-\ppar\le1-\ppar(Y_L,\llo)$, $\mathsf k\le\mathsf k(Y_L,\llo)$ and
$\mathsf k\ge\mathsf k(0,\lhi)$, so $\hat h\le\hat h_{\rm hi}$ and $\mathsf N\ge\mathsf N_{\rm lo}$. By Lemma~\ref{tgt:lem:cut},
$\alphaV\mathsf r\ge\alphaV(\llo)t_L/\Ypar$, so
$\mathsf N/\Ypar+\alphaV\mathsf r\ge(\mathsf N_{\rm lo}+\alphaV(\llo)t_L)/\Ypar\ge(\mathsf N_{\rm lo}+\alphaV(\llo)t_L)/Y_L$. Finally
$\bar L\ge\gammaV(\llo)/\lhi-\hat h_{\rm hi}Y_L$, and $\mathcal J/(\gammaV+\mathcal J\bar L)$ increases in $\mathcal J$, so
$\mathcal J\hat h^2/\bar Z\le\bar{\mathcal J}\hat h_{\rm hi}^2/\bar Z_{\rm lo}$. (iii) $\hat h\le1+\gammaV(\lhi)$ and $\mathsf k\ge\mathsf k(3,\lhi)$ give
$\mathsf N\ge\mathsf N^{(3)}_{\rm lo}$, so $\mathsf N/\Ypar\ge\mathsf N^{(3)}_{\rm lo}/3$ as $\mathsf N^{(3)}_{\rm lo}<0$; $\alphaV\mathsf e_\lambda\ge\alphaV(\llo)e^3/\lhi$
and $\mathsf r\ge0$; $-\bar L\le(1+\gammaV(\lhi))\Ypar-\gammaV(\llo)e^{\Ypar}/\lhi$, which decreases for
$\Ypar\ge\log((1+\gammaV(\lhi))\lhi/\gammaV(\llo))$, so $\bar Z\ge\bar Z^{(3)}_{\rm lo}$ and
$\mathcal J\hat h^2/\bar Z\le(1+\gammaV(\lhi))^2\bar{\mathcal J}/\bar Z^{(3)}_{\rm lo}$. (iv) $\hat h\le1+\gammaV(\lhi)$ and
$\gammaV\mathsf e_\lambda\ge\gammaV(\llo)e^{\Ypar}/\lhi$. The proof uses the multipliers only through their increase in
$\lambda$ on $\Lambda$ ($\bar a,\bar g>0$, $D\ge1$), and the activity only through the corner enclosures of $\ppar$ and $\mathsf k$
(Lemma~\ref{tgt:lem:tr}), Lemma~\ref{tgt:lem:cut} and Lemma~\ref{tgt:lem:poly} (valid for $\lambda_p\le\frac52$).
\end{proof}

\begin{proposition}[reduction to the vertices]\label{tgt:prop:red}
Let $\lambda\in[\llo,\lambda_p]$, $\Ypar\in[Y_0,Y_1]$, and let $0\le\underline{\mathsf r}\le\mathsf r_{\rm f}(Y_1)$, with $\bar Z>0$ for
$\mathcal J\le\bar{\mathcal J}$ as in Lemma~\ref{tgt:lem:hand}(i). The minimum of $\mathrm{AC}/\ppar$ over the children of feasible pairs
with parent log-mass $\Ypar$ is at least $\min_i\min(\mathsf A(\Ypar;\mathsf r_i,\mathcal J_i),\mathsf A^{\gammaV}(\Ypar;\mathsf r_i,\mathcal J_i))$, the
minimum over the vertices $(\mathsf r_i,\mathcal J_i)$ of the polygon at $\lambda_p$ with $\mathsf r_i\ge\underline{\mathsf r}$ together with the
point $(\underline{\mathsf r},\mathsf U(\underline{\mathsf r}))$. For $\underline{\mathsf r}=0$ these are the four vertices.
\end{proposition}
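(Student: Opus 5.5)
The plan is to view $\mathrm{AC}/\ppar=\mathsf A(\Ypar;\mathsf r,\mathcal J)$ at fixed $\Ypar$ and fixed activity as a function of the child's two coordinates $(\mathsf r,\mathcal J)$ only. I would first confine all children of feasible pairs to an explicit planar region, and then show that on that region $\mathsf A$ is bounded below by a function that attains its minimum at extreme points.

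\emph{Step 1 (the region).} Let $(\tilde Y,\Ypar)$ be feasible at $\lambda\in[\llo,\lambda_p]$ with $\Ypar\in[Y_0,Y_1]$. Lemma~\ref{tgt:lem:cut} gives $\mathsf r\ge\mathsf r_{\rm f}(Y_1)\ge\underline{\mathsf r}$. The polygon inequalities hold at $\lambda_p\le\frac52$, so Lemma~\ref{tgt:lem:poly} gives $0\le\mathcal J\le\mathsf U(\mathsf r)$ at $\lambda$. Hence $(\mathsf r,\mathcal J)$ lies in
\[
\mathcal R=\{(\mathsf r,\mathcal J):\ \mathsf r\ge\underline{\mathsf r},\ 0\le\mathcal J\le\mathsf U(\mathsf r)\}.
\]
This set is convex, because $\mathsf U$ is concave. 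On $\mathcal R$ we also have $\mathcal J\le\bar{\mathcal J}$, so $\bar Z>0$ throughout $\mathcal R$ by the hypothesis taken from Lemma~\ref{tgt:lem:hand}(i). The extreme points of $\mathcal R$ on its upper boundary are the point $(\underline{\mathsf r},\mathsf U(\underline{\mathsf r}))$ and the polygon vertices with $\mathsf r_i\ge\underline{\mathsf r}$. Its only recession direction is $(1,0)$.

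\emph{Step 2 (monotonicity).} At fixed $\Ypar$, $\mathsf A$ is affine in $\mathsf r$ with slope $\alphaV\ge0$. In $\mathcal J$, the term $\mathcal J/\bar Z=\mathcal J/(\gammaV+\mathcal J\bar L)$ has derivative $\gammaV/\bar Z^2>0$, so $\mathsf A$ decreases in $\mathcal J$. Therefore the infimum over $\mathcal R$ is taken on the graph of $\mathsf U$. Moving along the recession direction $(1,0)$, in particular along the cap, never decreases $\mathsf A$. The same two facts hold for $\mathsf A^{\gammaV}$.

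\emph{Step 3 (two cases on the sign of $\bar L$).} Note that $\bar L$ depends only on $\Ypar$ and $\lambda$, not on the child.
\begin{itemize}
\item If $\bar L\ge0$, then \eqref{arc:eq:maj} gives $\mathsf A\ge\mathsf A^{\gammaV}$ on $\mathcal R$. Since $\mathsf A^{\gammaV}$ is affine in $(\mathsf r,\mathcal J)$, its minimum over the convex set $\mathcal R$ is attained at an extreme point. Step~2 rules out escape along the recession direction and rules out the lower edge $\mathcal J=0$.
\item If $\bar L<0$, then $\mathcal J\mapsto\mathcal J/(\gammaV+\mathcal J\bar L)$ is convex on $\mathcal R$; its second derivative is $-2\gammaV\bar L/\bar Z^3>0$. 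So $-\hat h^2\mathcal J/\bar Z$ is concave, and $\mathsf A$ is concave in $(\mathsf r,\mathcal J)$. A concave function that is nondecreasing along the recession direction attains its infimum over $\mathcal R$ at an extreme point, again one on the upper boundary.
\end{itemize}
In either case the bound is $\min_i\min(\mathsf A,\mathsf A^{\gammaV})$ over the listed points. When $\underline{\mathsf r}=0$, the point $(0,\mathsf U(0))=(0,\mathsf a_0)$ is the first vertex, so only the four vertices remain.

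\emph{Main obstacle.} I expect the main obstacle to be the concave case. There I must make sure that the reduction from the unbounded region $\mathcal R$ to finitely many extreme points is legitimate. I would do this by restricting to the compact truncation $\mathcal R\cap\{\mathsf r\le R\}$ and using that on the edge $\mathsf r=R$ the value of $\mathsf A$ is at least its value at the last vertex on the cap, by the monotonicity in $\mathsf r$. I must also confirm that $\bar Z>0$ on all of $\mathcal R$, which holds because $\mathcal J\le\bar{\mathcal J}$ there, so that concavity holds on the whole set.
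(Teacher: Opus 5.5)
Your proof is correct and is essentially the paper's argument. The paper avoids your case split on the sign of $\bar L$ by bounding $\mathcal J\hat h^2/\bar Z$ above by the single function $\mathsf M(\mathcal J)=\hat h^2\mathcal J/(\gammaV+\min(\bar L,0)\mathcal J)$, which is increasing and linear or convex. The resulting minorant $\alphaV\mathsf e_\lambda+\mathsf N/\Ypar+\alphaV\mathsf r-\mathsf M(\mathcal J)=\min(\mathsf A,\mathsf A^{\gammaV})$ is then minimized over the same region, on the edges of the graph of $\mathsf U$. Your truncation worry is unnecessary: write any point of the region as $y+t(1,0)$ with $y$ in the convex hull of the extreme points and $t\ge0$; then monotonicity in $\mathsf r$ and concavity give a value at least the least value at an extreme point.
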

\begin{proof}
Put $\mathsf M(\mathcal J)=\hat h^2\mathcal J/(\gammaV+\min(\bar L,0)\mathcal J)\ge\mathcal J\hat h^2/\bar Z$. As the denominator is positive for
$0\le\mathcal J\le\bar{\mathcal J}$, $\mathsf M$ increases in $\mathcal J$ and is linear ($\bar L\ge0$) or convex ($\bar L<0$). By
Lemmas~\ref{tgt:lem:poly} and~\ref{tgt:lem:cut} every such child lies in
$\mathsf G=\{\mathsf r\ge\underline{\mathsf r},\ 0\le\mathcal J\le\mathsf U(\mathsf r)\}$. On $\mathsf G$, $\alphaV\mathsf r-\mathsf M(\mathcal J)$ increases in $\mathsf r$,
decreases in $\mathcal J$ and is concave along each edge of the graph of $\mathsf U$, and it increases in $\mathsf r$ along the cap; so its
minimum over $\mathsf G$ is at $(\underline{\mathsf r},\mathsf U(\underline{\mathsf r}))$ or at a vertex to its right. Finally
$\alphaV\mathsf e_\lambda+\mathsf N/\Ypar+\alphaV\mathsf r-\mathsf M(\mathcal J)=\min(\mathsf A,\mathsf A^{\gammaV})$, by \eqref{arc:eq:maj}.
\end{proof}

\paragraph{The checks.}\label{hand:var:curve}
Every finite check below is an evaluation of explicit formulas in interval arithmetic with exact rational end points:
$\exp$ and $\log$ are enclosed by series with explicit remainders, and every operation is rounded outward to a multiple of
$2^{-48}$. On an activity sub-interval $\Lambda$ with centre $\lambda_c=(\llo+\lhi)/2$, $\varepsilon_\pm$ are the upper ends of
enclosures of $\log(\lhi/\lambda_c)$ and $\log(\lambda_c/\llo)$. A lower bound of a function $\mathsf F$ of $\Ypar$ on a parent
interval at $\lambda_c$ is obtained as follows: $\mathsf F_0\le\mathsf F(Y_0,\lambda_c)$, $\mathsf F_1\le\mathsf F(Y_1,\lambda_c)$,
$\mathsf F_0'\le\partial_{\Ypar}\mathsf F(Y_0,\lambda_c)$ and $\mathsf F_1'\ge\partial_{\Ypar}\mathsf F(Y_1,\lambda_c)$ are ends of
enclosures, $\underline{\mathsf F}_2\le\min(0,\inf\partial_{\Ypar}^2\mathsf F(\cdot,\lambda_c))$ comes from an interval evaluation of the
formula of Lemma~\ref{tgt:lem:dY} (with $\mathsf w$ and $\ppar(1-\ppar)$ enclosed as below), and by Taylor's theorem from
each end $\mathsf F(\Ypar,\lambda_c)\ge\max(\mathsf P_0(\Ypar),\mathsf P_1(\Ypar))$ with
$\mathsf P_i(\Ypar)=\mathsf F_i+\mathsf F_i'(\Ypar-Y_i)+\frac12\underline{\mathsf F}_2(\Ypar-Y_i)^2$; the minimum
$\underline{\mathsf F}$ of $\max(\mathsf P_0,\mathsf P_1)$ on $[Y_0,Y_1]$ is attained at $Y_0$, at $Y_1$ or at the crossing point,
since $\mathsf P_0-\mathsf P_1$ is affine. The bound is carried to the whole of $\Lambda$ by Lemma~\ref{tgt:lem:mvt}, with $d_\pm$
the ends of an interval evaluation of the formula of Lemma~\ref{tgt:lem:dmu} on $[Y_0,Y_1]\times\Lambda$, in which
$\mathsf e_\lambda,\ppar,\phipar,\mathsf k,\mathsf w$ are enclosed by Lemma~\ref{tgt:lem:tr}, $\ppar(1-\ppar)$ by its values at
the ends of the range of $\ppar$ (and $\frac14$ when that range contains $\frac12$), $\alphaV,\gammaV,A_D$ by their values at
$\llo,\lhi$, and $\dot\alpha_{\mathrm V},\dot\gamma_{\mathrm V},\dot A_D$ by interval arithmetic in $\lambda\in\Lambda$, or, on the
segment mid, whose sub-intervals do not contain $1$ in their interior, by their values at $\llo,\lhi$ ($q(1-q)$ is monotone on
each). Where the halves indicator in Data~\ref{mc:tab:curve} is yes, $\underline{\mathsf F}_2$ and $d_\pm$ are taken over the
two halves of each parent interval; elsewhere one evaluation on the whole parent interval is used.

\begin{theorem}[the step certificate along the curve; $4{,}321$ interval checks]\label{tgt:thm}
For every $\lambda\in[\frac13,\frac73]$ the multipliers of every segment of Data~\ref{mc:tab:curve} containing $\lambda$,
each with its cap $D$, form a step certificate at $\lambda$.
\end{theorem}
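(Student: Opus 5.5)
The plan is to verify, segment by segment and then sub-interval by sub-interval, the four requirements of Definition~\ref{apB:def:cert}. By Lemma~\ref{apB:lem:entropy}, CC holds at every feasible pair. CA also holds wherever $L\ge0$ and $Z>0$. So what remains is the base inequality, the positivity $Z>0$, AC at every feasible pair, and CA where $L<0$. Each segment of Data~\ref{mc:tab:curve} is cut into its activity sub-intervals $\Lambda=[\llo,\lhi]$, $47$ in total. To each we assign a child polygon at an activity $\lambda_p\ge\lhi$. Lemma~\ref{tgt:lem:poly} then confines every child, at every $\lambda\in\Lambda$, to the polygon region, with $\mathcal J\le\bar{\mathcal J}$. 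The polygon inequalities themselves are first established at $\lambda_p$ by the Taylor subdivision on $[0,t_\ast]$ and the monotone tail from Lemma~\ref{tgt:lem:child}(b). These are the $1{,}644$ polygon checks.

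On each $\Lambda$ I would first run the five checks of Lemma~\ref{tgt:lem:hand}. Check (i) gives a uniform positive lower bound for $\bar Z$, hence $Z=\ppar\bar Z>0$, and also for the denominator $\gammaV-\bar{\mathcal J}\hat L$ needed in Lemma~\ref{tgt:lem:feas}(b). Check (ii) disposes of AC for $0<\Ypar\le Y_L$, and check (iii) of AC for $\Ypar\ge3$. Check (iv) shows $\bar L>0$ near $0$ and for $\Ypar\ge3$. On those ranges CA needs no further work, since there $L\ge0$. The base inequality \eqref{mc:eq:base} involves a single variable $\lambda$. I would verify it by interval evaluation of $Dq^2-\bar a q\log(1+\lambda)-\gammaV q^2/\log(1+\lambda)$ on a few subintervals, which gives the $13$ base checks.

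That leaves the compact parent range $[Y_L,3]$, where I would cover by parent intervals $[Y_0,Y_1]$. On each such interval, Proposition~\ref{tgt:prop:red} reduces AC to finitely many functions of $\Ypar$, namely $\mathsf A$ and $\mathsf A^{\gammaV}$ at the polygon vertices and at the cut point $(\underline{\mathsf r},\mathsf U(\underline{\mathsf r}))$. CA, wherever $L$ may be negative, reduces to the single function $\mathsf C$ of Lemma~\ref{tgt:lem:feas}(b). For each of these functions I would compute a lower bound at the centre $\lambda_c$. This uses endpoint values and derivatives together with the second-derivative formulas of Lemma~\ref{tgt:lem:dY}, combined in the two-sided Taylor bound $\max(\mathsf P_0,\mathsf P_1)$. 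I would then transport the bound to all of $\Lambda$ through Lemma~\ref{tgt:lem:mvt}. The $\mu$-derivative bounds $d_\pm$ come from Lemma~\ref{tgt:lem:dmu}, with the enclosures of Lemma~\ref{tgt:lem:tr}. The checks pass if every transported bound remains positive; the table records least margins of about $2\cdot10^{-5}$ to $6\cdot10^{-4}$. At a common endpoint of two segments, the sub-intervals of both segments contain that point, so each of the two certificates holds there.

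The main obstacle is making the transported bounds positive. The margins are tiny, so the sub-intervals and parent intervals must be fine enough that the $\varepsilon_\pm\cdot|d_\pm|$ loss and the curvature loss $\underline{\mathsf F}_2$ do not swallow them. This is hardest near the top of \textnormal{u4}, where $\mathsf w$ is no longer enclosed by corners and its range must be computed as $\mathsf k+\ppar-1$. I would first try the coarse partition given by the data. Where a check fails, I would bisect the parent interval, or use halves for the second-derivative and transport bounds, as is done on \textnormal{mid}. The total number of checks is $2{,}664+1{,}644+13=4{,}321$, including the five checks of Lemma~\ref{tgt:lem:hand} on each of the $47$ sub-intervals.
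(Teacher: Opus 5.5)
Your proposal follows the paper's proof essentially step for step: the $47$ activity sub-intervals with their polygons, the five checks of Lemma~\ref{tgt:lem:hand}, the reduction of AC to the polygon vertices (and the cut point) by Proposition~\ref{tgt:prop:red}, CA through $\mathsf C$ of Lemma~\ref{tgt:lem:feas}(b) where $L<0$, CC by Lemma~\ref{apB:lem:entropy}, and Taylor bounds in $\Ypar$ at the centre carried to the whole sub-interval by Lemma~\ref{tgt:lem:mvt}. One range needs correcting. Check (ii) disposes only of AC on $(0,Y_L]$, and check (iv) gives $\bar L>0$ only on $(0,\frac1{500}]$ and $[3,\infty)$. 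So the parent intervals on which you test $\mathsf C$ (or $\bar L>0$) must cover $[\frac1{500},3]$, as in the paper, and not just $[Y_L,3]$. A minor difference concerns the base inequality: the paper certifies it with the same centred transport form in $\mu$, not by plain interval evaluation, which would need a much finer subdivision than ``a few'' subintervals at margins of order $10^{-5}$.
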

\begin{proof}
Cover $[\frac13,\frac73]$ by the $47$ activity sub-intervals of the checks (listed in the supplementary data, Part~4, each
with the activity $\lambda_p\ge\lhi$ of its polygon, its $Y_L$ and whether the feasibility cut is used; counts per segment in
Data~\ref{mc:tab:curve}; those of mid have log-width at most $0.069$). On each, every comparison is bounded below
at the centre by Taylor bounds in $\Ypar$, and the bound is carried to the whole sub-interval by the transport inequality,
as follows.

$Z>0$ by Lemma~\ref{tgt:lem:hand}(i).

AC: Lemma~\ref{tgt:lem:hand}(ii) on $(0,Y_L]$ and (iii) on $[3,\infty)$. On $[Y_L,3]$, Proposition~\ref{tgt:prop:red} reduces
AC to the functions $\mathsf F\in\{\mathsf A(\cdot;\mathsf r_i,\mathcal J_i),\mathsf A^{\gammaV}(\cdot;\mathsf r_i,\mathcal J_i)\}$ at the
vertices that it requires, with $\underline{\mathsf r}=0$ on the sub-intervals without a feasibility cut, and otherwise, on
each parent interval $[Y_0,Y_1]$, with $\underline{\mathsf r}$ a rational below the lower end of an enclosure of
$\mathsf r_{\rm f}(Y_1)$ (Lemma~\ref{tgt:lem:cut}); each such function is bounded below on each parent interval as described
above.

CA: where $L\ge0$, $\mathrm{CA}\ge\alphaV\EKL\ge0$. Where $L<0$, $\mathrm{CA}/\ppar\ge\mathsf C$ by Lemma~\ref{tgt:lem:feas}(b), whose
hypothesis $\gammaV-\bar{\mathcal J}\hat L>0$ is the last bound of Lemma~\ref{tgt:lem:hand}(i), as $\hat L=\max(0,-\bar L)$
there. On $(0,\frac1{500}]$ and $[3,\infty)$, $\bar L>0$ by Lemma~\ref{tgt:lem:hand}(iv). On each parent interval of
$[\frac1{500},3]$ either $\bar L>0$ on $[Y_0,Y_1]\times\Lambda$, by the enclosures of Lemma~\ref{tgt:lem:tr}, or $\mathsf C>0$ there
by Lemma~\ref{tgt:lem:mvt} with $\underline{\mathsf F}=\mathsf C(Y_m)-|\mathsf C'(Y_m)|d_Y+\frac12\min(0,\inf\mathsf C'')d_Y^2$ at
$\lambda_c$ ($Y_m$ the midpoint, $d_Y$ the half-width), a lower bound of $\mathsf C(\cdot,\lambda_c)$ by Taylor's theorem at
$Y_m$.

CC: $\mathrm{CC}=\alphaV\EKL\ge0$ by Lemma~\ref{apB:lem:entropy}.

The base inequality \eqref{mc:eq:base}: its slack $\mathrm{sl}=Dq^2-\alphaV\bar\varphi-\gammaV q^2/\bar\varphi$ has
\[\partial_\mu\mathrm{sl}=2Dq^2(1-q)-\dot\alpha_{\mathrm V}\bar\varphi-\alphaV q-\dot\gamma_{\mathrm V}q^2/\bar\varphi
-2\gammaV q^2(1-q)/\bar\varphi+\gammaV q^3/\bar\varphi^2\]
($\partial_\mu q=q(1-q)$, $\partial_\mu\bar\varphi=q$), and Lemma~\ref{tgt:lem:mvt} applies on activity intervals with centre the
midpoint, $\mathrm{sl}$ enclosed at the centre and $\partial_\mu\mathrm{sl}$ by interval arithmetic in $\lambda$: the segments are cut
at $\frac{1491}{2000},\frac{387}{400},\frac{461}{400},\frac{633}{500},\frac{1309}{1000}$ (mid, with the two halves of each
piece), $\frac{2687}{6000}$ (lo) and $\frac{58}{25}$ (u4).

These cover every $\Ypar>0$ and every $\tilde Y\ge0$, so no bound for the tails is needed. Every check is positive; the
least lower bound of a comparison after transport is in Data~\ref{mc:tab:curve}, that of the base inequality is
$4\cdot10^{-5}$, and on the segments other than mid the least slack of a polygon line is $2.0\cdot10^{-7}$
($\lambda_p=11/20$).
\end{proof}

\begin{proof}[Proof of Theorem~\ref{p2:thm:O1}]
At every $\lambda\in[\frac13,\frac73]$ the multipliers of a segment containing $\lambda$ with cap $\hat D(\lambda)$ form a step
certificate (Theorem~\ref{tgt:thm}), and an independent set of maximum weight has (LP), by (L1). Theorem~\ref{apB:thm:band}
gives the claim.
\end{proof}

% ---------------------------------------------------------------------------------------------------------------
\subsection{The variance-ratio bound}\label{apB:sec:O2}

This subsection proves the variance-ratio bound, which bounds $\mathbb E\delta^2$ in the input-verification part of the proof of
Theorem~\ref{p2:thm:main}: the variance of $|S|$ is written as a sum of one term per vertex, and the terms are paid by the
weight of an independent set. The innovation identity is that of Zhang and Li~\cite[Section~7.1, (83) to (86)]{ZL} for the
source identically equal to $1$, which they derive from the tree recursion of Scott and Sokal~\cite{SS}, and the dynamic
programme that turns the field $s'$ (defined below) into an independent set is a version of the realization in their
competitor construction~\cite[Section~7.2, (95)]{ZL}; the local payment is verified on cells by computer (Lemma~\ref{apB:lem:O2box}).

\begin{theorem}[the variance-ratio bound]\label{p2:thm:O2}
For every forest, every $\lambda\in[1/3,7/3]$ with $\iota=\iota(\lambda)$ and every independent set $A$ of maximum
weight at $\lambda$, with the cap $\theta(\iota)$ of Data~\ref{p2:tab:theta},
\[
\operatorname{Var}_\lambda K\le(1+\theta(\iota))(1-q)\,\pi(A);
\]
at $\mu_F(\lambda)=k$ this is $\mathbb{E}\delta^2\le\theta(\iota)\,q(1-q)\,\muM$ (Proposition~\ref{p2:prop:mixture}).
\end{theorem}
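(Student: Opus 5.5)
The plan is to follow the scheme announced at the head of the subsection: write $\operatorname{Var}_\lambda K$ as a sum of one nonnegative term per vertex, and pay for these terms with the weight of a suitably chosen independent set. Since $\pi(A)$ is maximal, bounding the cost by $(1+\theta(\iota))(1-q)\pi(A')$ for \emph{some} independent $A'$ suffices. The second form of the statement then follows from Proposition~\ref{p2:prop:mixture}. At $\mu_F(\lambda)=k$ we have $\mathbb E\delta^2=\operatorname{Var}_\lambda K-q(1-q)\muM$ and $q\muM=\pi(A)$, so $\operatorname{Var}_\lambda K\le(1+\theta)(1-q)\pi(A)$ is exactly $\mathbb E\delta^2\le\theta(1-q)\pi(A)=\theta q(1-q)\muM$.

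\textbf{Step 1: the innovation identity.} Root each component at a vertex. For $Q=V(F)$, apply (M2) at the root and recurse through (M1) over the branches, as in Lemma~\ref{apB:lem:rec}. This gives $\operatorname{Var}_\lambda K=\sum_v p_v(1-p_v)\,s'(v)^2\cdot c_v$. Here $s'(v)=1-\sum_{v'}\chi_{v'}$ is the response (the field) propagated from the children, with ports $\chi$ defined for $Q=V(F)$, and $c_v$ is a cavity factor coming from the parent's conditioning. I would take this in the Scott--Sokal form of Zhang and Li~\cite[(83)--(86)]{ZL}: one term per vertex, a function of the message $p_v$ and of the field. Lemma~\ref{apB:lem:msg} expresses the messages through the children's log-mass, so each term depends only on local data $(Y_v,\text{field at the children})$.

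\textbf{Step 2: the realization.} I would run a dynamic programme from the leaves. It chooses for each vertex whether it enters $A'$, subject to independence, and assigns a share of $(1+\theta)(1-q)\pi_v$ to each $v\in A'$. For $v\in A'$ this share is itself expressed through messages, since $\pi_v$ is a product of cavity probabilities along the root path. The share is distributed to $v$ and its neighbours so as to cover their innovation terms. This is the competitor construction of \cite[(95)]{ZL}. The inductive invariant is a local inequality: at each vertex, the accumulated payment minus the accumulated cost is at least a function of (message, field) with a state bit recording whether $v\in A'$. Everything then reduces to a finite-dimensional local payment inequality in a few real parameters: the activity, the parent's and children's log-masses, and the field. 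Cauchy--Schwarz over children, as in Proposition~\ref{apB:prop:step}, collapses many children to one effective child.

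\textbf{Step 3: the local payment by computer.} The local inequality is checked on a cell decomposition of the parameter box, separately on each subinterval $\Ihat_\iota$ with its cap $\theta(\iota)$, using the interval-arithmetic framework described for the variance bound. That framework consists of Taylor bounds at cell centres, transport in $\mu$ by Lemma~\ref{tgt:lem:mvt}, and monotone enclosures of messages as in Lemma~\ref{tgt:lem:tr}. The unbounded directions (large log-mass, large field) are closed by explicit tail bounds. I expect this step to be the main obstacle. The choice of the invariant's correction function, which must be tight enough for $\theta=3/5$ on $[4/5,13/10]$, and the tails in the field variable will likely need the refinement into 55 subintervals. This is why I would first find the invariant numerically and only then certify it cellwise in Lemma~\ref{apB:lem:O2box}.
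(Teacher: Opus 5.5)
Your reduction to some independent $A'$ is right, and so is the equivalence of the two forms via Proposition~\ref{p2:prop:mixture}. Step~1 is also right. With $\chi_{v'}=p_{v'}\zresp_{v'}$, your $s'(v)$ is the response $\zresp_v=1-\sum_{v'}p_{v'}\zresp_{v'}$ (not the paper's ``field'', which is a different object) and your $c_v$ is $\varrho_v$; this is Lemma~\ref{apB:lem:innov}.

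The gap is Step~2. As you describe it, the programme decides membership in $A'$ and hands the budget $(1+\theta)(1-q)\pi_u$ of each $u\in A'$ to $u$ and its neighbours. That does not reduce to a local inequality in a few real parameters. Whether $v$ or a neighbour lies in $A'$ is a global decision, so an invariant with a state bit must compare both choices at every vertex against all choices below. You never say what that invariant is or why it closes. Moreover the budget $\pi_u=\varrho_up_u$ and the cost $\varrho_vp_v(1-p_v)\zresp_v^2$ both carry the cavity factor $\varrho$, which is fixed from above and is missing from your parameter list. Any local check has to range over $\varrho$ together with the constraints tying it to $p$ and to the parent's message (Definition~\ref{apB:def:adm}, Lemma~\ref{apB:lem:phys}).

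The missing idea is to separate the combinatorics from the analysis through a \emph{fractional} independent set.
\begin{itemize}
\item \emph{The field.} The paper places at every vertex $s'_v=\tfrac12(1+f_q(\pi_v)-\bar s_v)$, built from the marginals with the Zhang--Li function $f_q$. The edge inequality $\pi_x+q\pi_{x'}\le q$ and \eqref{apB:eq:fqedge} give $0\le s'_v\le1$ and $s'_x+s'_{x'}\le1$ (Lemma~\ref{apB:lem:field}).
\item \emph{Rounding.} The rounding lemma gives $\sum_v\pi_vs'_v\le\pi(A')\le\pi(A)$ (Lemma~\ref{apB:lem:round}). It is the only dynamic programme in the proof, and it is where the competitor construction enters.
\item \emph{Local budget.} Every vertex then has the budget $\bar\gamma\pi_vg(p_v,\varrho_v)$, with $\bar\gamma=(1+\theta)(1-q)$ and a source $g$ that depends on $(p_v,\varrho_v)$ only. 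One has $g\le s'_v$ because every neighbour's marginal is at most $M_{\mathrm{up}}(p_v,\varrho_v)$ (Lemma~\ref{apB:lem:source}).
\item \emph{The ledger.} What remains is $\operatorname{Var}_\lambda K\le\bar\gamma\sum_v\pi_vg(p_v,\varrho_v)$, proved by a telescoping ledger (Theorem~\ref{apB:thm:ledger}). Its residuals are nonnegative by Cauchy--Schwarz with weights $\varphi_{v'}$ and $p_{v'}/(1-p_{v'})$, together with the packing lemma. The ledger reduces to $\Phi(p,\varrho,p_{\mathrm{par}},\zresp)\ge0$ on admissible triples.
\end{itemize}

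Two consequences differ from your Step~3. First, $\Phi$ is quadratic in $\zresp$ on each of three ranges, so all real responses are handled exactly by Bernstein controls and no tail bound in the response is needed. Second, the only unbounded direction is small messages, and it is closed analytically by the root-log term (Lemma~\ref{apB:lem:small}). The cells are taken in $(\lambda,p/q)$ on $17$ parameter intervals, not in the transport framework of the variance bound.
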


\begin{theorem}[the variance-ratio bound on a parameter interval]\label{apB:thm:O2row}
For each of the $17$ parameter intervals $[\lambda_{\mathrm{lo}},\lambda_{\mathrm{hi}}]$ of Data~\ref{apB:tab:O2rows}, with its
price $\gamma_\theta=1+\theta$ and its rational data sets (paragraph ``The endpoint data''), every finite forest, every
$\lambda\in[\lambda_{\mathrm{lo}},\lambda_{\mathrm{hi}}]$ and every independent set $A$ of maximum weight at $\lambda$:
$\operatorname{Var}_\lambda K\le\bar\gamma\,\pi(A)$ with the price $\bar\gamma=\gamma_\theta/(1+\lambda)=\gamma_\theta(1-q)$.
\end{theorem}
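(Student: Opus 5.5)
The plan is to prove Theorem~\ref{apB:thm:O2row} by comparing $\operatorname{Var}_\lambda K$ with the weight of a \emph{competitor} independent set $A'$ built from the hard-core law itself. Since $A$ has maximum weight, $\pi(A')\le\pi(A)$, so it suffices to show
\[
\operatorname{Var}_\lambda K\le\bar\gamma\,\pi(A')
\]
for one suitably chosen independent $A'$. By (M1) both sides are additive over components, so I would fix a tree component and root it. The maximality of $A$ is then used only through $\pi(A')\le\pi(A)$, and this is what lets the bound hold for every maximizer.

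\emph{Step 1: the innovation identity.} Apply the conditioning property \eqref{apB:eq:M2} with $Q=V(F)$ recursively from the leaves, using the messages of Lemma~\ref{apB:lem:msg}. This writes $\operatorname{Var}_\lambda K$ as a sum $\sum_v\sigma_v$ of one term per vertex. Each $\sigma_v$ is an explicit function of $\lambda$, of the message $p_v$, of the parent's message and of the port-type response of $v$'s branch to its parent's occupancy. These are the innovations of Zhang and Li for the constant source $1$. Each $\sigma_v$ is of the form $p_v(1-p_v)(1-\text{response})^2$, and its terms are nonnegative and local. I would re-derive it from \eqref{apB:eq:M2} rather than from the Scott--Sokal recursion, so that it matches the notation of Section~\ref{apB:sec:conv}. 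I would then record the field $s'_v$ at each vertex, namely the response variable, which lies in a bounded range determined by the children's log-mass.

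\emph{Step 2: the dynamic programme.} On the rooted tree, choose $A'$ by a two-state dynamic programme (root in $A'$ or not). The programme maximizes $\sum_{v\in A'}\gamma_\theta(1-q)\pi_v-\sum_v\sigma_v$, where each $\sigma_v$ is charged to $v$ and its parent. It then suffices to prove a \emph{local payment inequality}: at every vertex, given the states of its children, the better of the two choices for $v$ leaves a nonnegative surplus, uniformly in the messages. The independence constraint enters only through the rule that a vertex in $A'$ forces its children out. After normalization by $\pi_v$, each local inequality involves finitely many real parameters: $\lambda$, $p_v$, the parent's log-mass, the aggregated children's response and the children's log-mass. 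Several children are aggregated into one variable by convexity, as in the Cauchy--Schwarz step of Proposition~\ref{apB:prop:step}.

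\emph{Step 3: the cell verification.} For each of the $17$ parameter intervals with price $\gamma_\theta=1+\theta$, cover the compact parameter box by rational cells and check the local payment inequality on each cell in interval arithmetic (Lemma~\ref{apB:lem:O2box}). The unbounded directions (large log-mass) are treated by a monotone tail bound, as in Lemma~\ref{tgt:lem:hand}(iii). The main obstacle is the formulation of Step 2. The surplus must be carried between a child and its parent through a state variable that stays bounded, since otherwise the cells do not cover a compact set. Where $\theta$ is small (caps $3/5$ on $[4/5,13/10]$) the margins are thin, so the cells must be fine. I would try first to carry the field $s'$ itself as the DP state and to split the cells adaptively only where the interval bound fails.
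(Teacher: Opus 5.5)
Your outer frame matches the paper. The paper also compares $\operatorname{Var}_\lambda K$ with $\bar\gamma\,\pi(A')$ for a competitor $A'$, and it uses the maximality of $A$ only through $\pi(A')\le\pi(A)$. It starts from the innovation identity $V_{V(F)}(X)=\sum_v\varrho_vp_v(1-p_v)\tilde z_v^2$ (Lemma~\ref{apB:lem:innov}) and verifies a local inequality on cells. But Step~2, which you call the main obstacle, is where the proof lives, and the formulation you propose does not close.

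First, the criterion ``the better of the two choices for $v$ leaves a nonnegative surplus'' does not propagate up the tree. When the parent of $v$ enters $A'$, $v$ is forced out. The parent then needs a lower bound on the surplus of $v$ in the out-state, not on the larger of the two surpluses. An induction needs lower bounds for both states, or a fractional combination of them.

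Second, the innovation terms are not local. $\tilde z_v=1-\sum_{v'}p_{v'}\tilde z_{v'}$ accumulates the whole subtree. $\varrho_v$ depends on all ancestors; it is absent from your parameter list, yet it is a factor of both $\pi_v=\varrho_vp_v$ and the innovation term. Saying that $\sigma_v$ ``is charged to $v$ and its parent'' has no content until you specify what is transferred and why the leftover is nonnegative. Until then there is no finite-dimensional inequality for Step~3 to check. (The field $s'$ of the paper is also not a response variable; see below.)

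The paper resolves both points by decoupling the argument into two parts.

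\emph{(i) A telescoping payment in which no competitor appears.} It writes $p\,\Phi=\mathrm{own}-\mathrm{inc}$, with transfer fields $\check a,\check b,\check c,\check d$ and constants $\check m,\check l$. The fields are piecewise linear in $p/q$, evaluated at $p_v$ in the own part and at $p_{\mathrm{par}}$ in the incoming part. They are chosen so that $\mathrm{own}_v-\sum_{v'}\mathrm{inc}_{v'}$ equals the source $\bar\gamma\varrho_vp_vg(p_v,\varrho_v)$, minus the innovation, minus residuals. The residuals are nonnegative by Cauchy--Schwarz with weights $\varphi_{v'}$ and by the packing bounds $\mathcal O,\mathcal M$ (Lemmas~\ref{apB:lem:vertex}, \ref{apB:lem:resid}, \ref{apB:lem:pack}).

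It then suffices that $\Phi\ge0$ on admissible triples $(p,\varrho,p_{\mathrm{par}})$ (Lemma~\ref{apB:lem:phys}) and for \emph{every real} $\tilde z$. This gives $\operatorname{Var}_\lambda K\le\bar\gamma\sum_v\pi_vg(p_v,\varrho_v)$. No bounded state is needed, because $\Phi$ is piecewise quadratic in $\tilde z$ and is checked on half-lines. The cells cover only $(\lambda,p/q)$, and $p<2^{-16}q$ is handled analytically (Lemma~\ref{apB:lem:small}).

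\emph{(ii) The competitor.} One shows $g(p_v,\varrho_v)\le s'_v$, where $s'_v=\bigl(1+f_q(\pi_v)-\bar s_v\bigr)/2$. This $s'$ is a fractional independent set because $\pi_x+q\pi_{x'}\le q$ on every edge (Lemmas~\ref{apB:lem:field}, \ref{apB:lem:source}). It is rounded by the dynamic programme of Lemma~\ref{apB:lem:round}. That programme's invariant is the convex combination $\hat s_v\hat\pi(A'_1)+(1-\hat s_v)\hat\pi(A'_0)$, and it uses $\hat s_{v'}\le1-\hat s_v$ on each edge. This is exactly the device that handles the parent's constraint, which your max criterion misses.

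To repair your plan you need both ingredients: a local source dominated by a fractional independent set, and explicit transfers whose residuals are nonnegative.
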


\datablock{Parameter intervals for the variance-ratio bound}{apB:tab:O2rows}
The following ordered triples give the activity interval, the price $\gamma_\theta$ and the number of
cells in the verified covering; $\theta=\gamma_\theta-1$. The first $14$ intervals cover $[1/3,7/3]$,
and the three additional intervals are marked $^\ast$.
The covering begins with the records $([1/3,\allowbreak 3/5],\allowbreak  2,\allowbreak  4{,}221)$, $([3/5,\allowbreak 13/20],\allowbreak  17/10,\allowbreak  1{,}177)$, $([13/20,\allowbreak 3/4],\allowbreak  17/10,\allowbreak  1{,}883)$, $([3/4,\allowbreak 4/5],\allowbreak  17/10,\allowbreak  717)$. It continues with $([4/5,\allowbreak 9/10],\allowbreak  8/5,\allowbreak  5{,}311)$, $([9/10,\allowbreak 1],\allowbreak  8/5,\allowbreak  3{,}128)$, $([1,\allowbreak 23/20],\allowbreak  8/5,\allowbreak  3{,}232)$, $([23/20,\allowbreak 13/10],\allowbreak  8/5,\allowbreak  3{,}715)$.

The remaining covering records are $([13/10,\allowbreak 8/5],\allowbreak  17/10,\allowbreak  7{,}000)$, $([8/5,\allowbreak 17/10],\allowbreak  2,\allowbreak  743)$, $([17/10,\allowbreak 19/10],\allowbreak  2,\allowbreak  1{,}351)$, $([19/10,\allowbreak 21/10],\allowbreak  2,\allowbreak  1{,}581)$, $([21/10,\allowbreak 9/4],\allowbreak  2,\allowbreak  1{,}729)$, and $([9/4,\allowbreak 7/3],\allowbreak  2,\allowbreak  1{,}611)$. The additional records are $([39/50,\allowbreak 4/5]^\ast,\allowbreak  33/20,\allowbreak  317)$, $([8/5,\allowbreak 33/20]^\ast,\allowbreak  19/10,\allowbreak  460)$, and $([33/20,\allowbreak 7/4]^\ast,\allowbreak  39/20,\allowbreak  777)$.
The covering starts at $p/q=2^{-16}$. It has $37{,}399$ cells for the $14$ covering intervals and
$1{,}554$ for the three additional ones, $38{,}953$ in all. Every interval has $12$ root boxes,
with one compiled evaluation for each, giving $17\times12=204$ evaluations
(Lemma~\ref{apB:lem:O2box}).

\begin{proof}[Proof of Theorem~\ref{p2:thm:O2} from Theorem~\ref{apB:thm:O2row}]
The $14$ unstarred intervals of Data~\ref{apB:tab:O2rows} tile the five regions $[1/3,3/5]$, $[3/5,4/5]$, $[4/5,13/10]$,
$[13/10,8/5]$, $[8/5,7/3]$, with one price $\gamma_\theta=1+\theta$ per region ($\theta$ is the cap of the region). By
Lemma~\ref{cs:lem:tables}(a) (a kernel evaluation over the $55$ subintervals), either $\theta(\iota)$ is the coarse cap of the band
containing $\Ihat_\iota$ and the closure of the band lies in a region with this cap, or $\Ihat_\iota$ lies
in the starred interval $[39/50,4/5]$, $[8/5,33/20]$ or $[33/20,7/4]$ and $\theta(\iota)$ is its $\gamma_\theta-1$ ($13/20$, $9/10$
or $19/20$). Either way $\lambda\in\Ihat_\iota$ lies in a parameter interval with $\gamma_\theta=1+\theta(\iota)$; apply
Theorem~\ref{apB:thm:O2row}.
\end{proof}

\paragraph{Local data and the innovation identity.}\label{apB:sec:O2rec}
Root each component $X$ at an arbitrary vertex $v_0$, and let $X_v$ be the subtree of a vertex $v$ ($X_{v_0}=X$). The
\emph{local data} of $v$ are $(p_v,\allowbreak \varrho_v,\allowbreak p_{\mathrm{par}}(v),\allowbreak \zresp_v)$: the message $p_v$ of $(X_v,\allowbreak v)$; the probability
$\varrho_v$ that the parent of $v$ is vacant ($\varrho_{v_0}=1$); the parent's message $p_{\mathrm{par}}(v)$ (for $v\ne v_0$); and
the signed response of the total count
\[
\zresp_v=\mathbb{E}\bigl[\,|S_{X_v}|\;\big|\;v\in S_{X_v}\bigr]-\mathbb{E}\bigl[\,|S_{X_v}|\;\big|\;v\notin S_{X_v}\bigr]=1+\mu_{V(F)}(X_v-\NF[v])-\mu_{V(F)}(X_v-v)
\]
(unrelated to the prices $z_r$ of Section~\ref{p2:sec:criterion}). As for the variance bound, $\varphi_v=-\log(1-p_v)$ and
$Y_v=\sum_{v'}\varphi_{v'}$ over the children $v'$ of $v$.

\begin{lemma}[local data]\label{apB:lem:records}
(a) $\pi_v=\varrho_vp_v$, and $\varrho_{v'}=1-\varrho_vp_v$ for every child $v'$ of $v$.
(b) $\zresp_v=1-\sum_{v'}p_{v'}\zresp_{v'}$; $\zresp_v=1$ if $v$ has no child.
(c) $Y_v=\log\bigl(\lambda(1-p_v)/p_v\bigr)$.
\end{lemma}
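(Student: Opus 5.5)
The plan is to derive all three parts from the conditioning property (M2), the independence property (M1) and Lemma~\ref{apB:lem:msg}, proving (c) first, then (b), then (a), since (c) and (b) are direct recursions while (a) needs an extra conditioning argument.

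Part (c) is a rewriting of Lemma~\ref{apB:lem:msg}. That lemma, applied to the rooted subtree $(X_v,v)$, gives $p_v/(1-p_v)=\lambda e^{-Y_v}$. Taking logarithms gives $Y_v=\log\bigl(\lambda(1-p_v)/p_v\bigr)$.

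For part (b) I first check that the two expressions defining $\zresp_v$ agree. Apply (M2) with $Q=V(F)$ to $X_v$ and the vertex $v$. Given $v\notin S_{X_v}$, the set $S_{X_v}$ has law $P_{X_v-v}$. Given $v\in S_{X_v}$, the set $S_{X_v}-v$ has law $P_{X_v-\NF[v]}$. Hence the conditional means are $\mu_{V(F)}(X_v-v)$ and $1+\mu_{V(F)}(X_v-\NF[v])$, and their difference is the stated formula.

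Next, by (M1) over the branches (as in Lemma~\ref{apB:lem:rec}), $X_v-v$ is the disjoint union of the $X_{v'}$ and $X_v-\NF[v]$ that of the $X_{v'}-v'$. Therefore
\[
\zresp_v=1-\sum_{v'}\bigl(\mu_{V(F)}(X_{v'})-\mu_{V(F)}(X_{v'}-v')\bigr).
\]
The first line of \eqref{apB:eq:M2} at $(X_{v'},v')$, with $1_Q(v')=1$, gives
\[
\mu_{V(F)}(X_{v'})-\mu_{V(F)}(X_{v'}-v')=p_{v'}\bigl(1+\mu_{V(F)}(X_{v'}-\NF[v'])-\mu_{V(F)}(X_{v'}-v')\bigr)=p_{v'}\zresp_{v'}.
\]
This is the recursion of (b). If $v$ has no child, both sets are empty and $\zresp_v=1$.

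For part (a), the root case is immediate: $\varrho_{v_0}=1$ and $\pi_{v_0}=p_{v_0}$ because $X_{v_0}=X$ is a component, using (M1). For $v\ne v_0$ with parent $u$, the key claim is that conditionally on $u\notin S$, the restriction $S\cap X_v$ has law $P_{X_v}$. By (M2) at $u$, the conditional law given $u\notin S$ is the hard-core law of $F-u$. In $F-u$ the set $X_v$ is a union of components: no edge leaves $X_v$ except the one to $u$, since $F$ is a forest and $X_v$ is the subtree below $u$. So (M1) gives the claim. On the other hand, given $u\in S$ we have $v\notin S$.

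Hence $\pi_v=P(u\notin S)\,P_{X_v}(v\in S_{X_v})=\varrho_vp_v$. For a child $v'$ of $v$, its parent is $v$, so $\varrho_{v'}=1-\pi_v=1-\varrho_vp_v$.

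The only step requiring care is this conditional-law claim in (a). It rests on the fact that removing the parent vertex disconnects the subtree, which is what makes (M2) followed by (M1) applicable. The rest is bookkeeping.
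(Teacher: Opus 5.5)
Your proof is correct and follows essentially the paper's route: (c) from Lemma~\ref{apB:lem:msg}, (b) from (M1) over the branches together with the first line of \eqref{apB:eq:M2} at each child, and (a) from the fact that the subtree $X_v$ has law $P_{X_v}$ when the parent is vacant. The only difference is in (a): you condition on the parent's state alone and derive the claim from (M2) at the parent followed by (M1) in $F-u$, whereas the paper conditions on everything outside $X_v$. Both rest on the same Markov property.
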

\begin{proof}
(a) Given everything outside $X_v$, the law of $S\cap X_v$ is $P_{X_v}$ if the parent of $v$ is vacant and
$P_{X_v-v}$ if it is occupied (the parent is the only neighbour of $X_v$ outside it). So $\pi_v=\varrho_vp_v$, and a child
$v'$ has $\varrho_{v'}=1-\pi_v$. (b) By (M1) and \eqref{apB:eq:M2} with $Q=V(F)$, where $1_Q(v)=1$, $\mu_{V(F)}(X_{v'})-\mu_{V(F)}(X_{v'}-v')=p_{v'}\zresp_{v'}$
and $\mu_{V(F)}(X_v-\NF[v])-\mu_{V(F)}(X_v-v)=-\sum_{v'}(\mu_{V(F)}(X_{v'})-\mu_{V(F)}(X_{v'}-v'))$. (c) Lemma~\ref{apB:lem:msg}.
\end{proof}

\begin{lemma}[the innovation identity]\label{apB:lem:innov}
On a component $X$ rooted at $v_0$, $V_{V(F)}(X)=\sum_{v\in X}\varrho_vp_v(1-p_v)\zresp_v^2=\sum_{v\in X}\pi_v(1-p_v)\zresp_v^2$.
\end{lemma}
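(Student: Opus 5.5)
We prove the identity by strong induction on the size of the rooted subtree, using the conditioning property \eqref{apB:eq:M2} with $Q=V(F)$ at the root and then propagating through the tree. The cleanest formulation is a statement for every rooted subtree $(X_v,v)$ with the parent either vacant or occupied. Let $W(X_v)=V_{V(F)}(X_v)$ be the variance of the total count under $P_{X_v}$, which is the law when the parent is vacant. We aim for
\[
W(X_v)=p_v(1-p_v)\zresp_v^2+\sum_{v'}W(X_{v'})+p_v\sum_{v'}\bigl(V_{V(F)}(X_{v'}-v')-W(X_{v'})\bigr),
\]
where $v'$ ranges over the children of $v$. This is \eqref{apB:eq:M2} with $Q=V(F)$ at the vertex $v$. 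Its ingredients are the following. The squared bracket there equals $\zresp_v$. By (M1), $V(X_v-v)=\sum_{v'}W(X_{v'})$ and $V(X_v-\NF[v])=\sum_{v'}V_{V(F)}(X_{v'}-v')$.

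Next we express the occupied-parent variance $V_{V(F)}(X_{v'}-v')$ through the same kind of decomposition. This is a second application of \eqref{apB:eq:M2}, now to $X_{v'}$ with $v'$ forced vacant, which gives $V_{V(F)}(X_{v'}-v')=\sum_{v''}W(X_{v''})$ over the children $v''$ of $v'$. Then we unroll everything into a sum of the local innovations $p_u(1-p_u)\zresp_u^2$, one per vertex $u$. We must show that each such term is counted with total weight $\varrho_u$, the probability that the parent of $u$ is vacant.

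To keep the bookkeeping clean, we prove by induction the generalized statement
\[
\varrho\,W(X_v)+(1-\varrho)\,V_{V(F)}(X_v-v)=\sum_{u\in X_v}\varrho_u^{(\varrho)}\,p_u(1-p_u)\zresp_u^2,
\]
valid for every $\varrho\in[0,1]$. Here $\varrho_v^{(\varrho)}=\varrho$, and the children's vacancy probabilities follow Lemma~\ref{apB:lem:records}(a): $\varrho_{u'}^{(\varrho)}=1-\varrho_u^{(\varrho)}p_u$. To prove it, insert the root expansion of $W(X_v)$ above and write $V_{V(F)}(X_v-v)=\sum_{v'}W(X_{v'})$. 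Then the coefficient of $W(X_{v'})$ becomes $\varrho(1-p_v)+(1-\varrho)=1-\varrho p_v=\varrho_{v'}$. The coefficient of $V_{V(F)}(X_{v'}-v')$ becomes $\varrho p_v=1-\varrho_{v'}$. So each branch appears in exactly the generalized form with parameter $\varrho_{v'}$, and the induction hypothesis applies to each branch.

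At the root $v_0$ we take $\varrho=1$. This gives the first formula. The second follows from $\pi_v=\varrho_vp_v$ (Lemma~\ref{apB:lem:records}(a)).

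The main obstacle is identifying the squared term in \eqref{apB:eq:M2} as exactly $\zresp_v$. This holds by the definition of $\zresp_v$, since $1_{V(F)}(v)=1$. One must also check that no cross terms between branches survive. This is guaranteed by (M1), because conditionally on the state of $v$ the branches are independent, and the law of total variance has already been taken at $v$. The remaining steps are linear bookkeeping.
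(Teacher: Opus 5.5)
Your proposal is correct and matches the paper's proof: both prove the generalized identity $\varrho' V_{V(F)}(X_v)+(1-\varrho')V_{V(F)}(X_v-v)=\sum_{x\in X_v}\varrho_xp_x(1-p_x)\zresp_x^2$ for an arbitrary root parameter $\varrho'$, by induction on $|X_v|$ using \eqref{apB:eq:M2} at the root and (M1) over the branches (giving branch coefficients $1-\varrho'p_v$ and $\varrho'p_v$), and then set $\varrho'=1$. One small correction: the identity $V_{V(F)}(X_{v'}-v')=\sum_{v''}V_{V(F)}(X_{v''})$ comes from (M1), not from a second application of \eqref{apB:eq:M2}, and your induction does not need it as a separate step.
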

\begin{proof}
For a subtree $(X_v,\allowbreak v)$ whose root receives an arbitrary $\varrho'$ in place of $\varrho_v$ (its children then receive
$1-\varrho' p_v$, and so on down), $\sum_{x\in X_v}\varrho_xp_x(1-p_x)\zresp_x^2=\varrho' V_{V(F)}(X_v)+(1-\varrho')V_{V(F)}(X_v-v)$, by
induction on $|X_v|$: \eqref{apB:eq:M2} gives $V_{V(F)}(X_v)=(1-p_v)V_{V(F)}(X_v-v)+p_vV_{V(F)}(X_v-\NF[v])+p_v(1-p_v)\zresp_v^2$, and (M1) gives
$V_{V(F)}(X_v-v)=\sum_{v'}V_{V(F)}(X_{v'})$, $V_{V(F)}(X_v-\NF[v])=\sum_{v'}V_{V(F)}(X_{v'}-v')$, so
$\varrho' V_{V(F)}(X_v)+(1-\varrho')V_{V(F)}(X_v-v)=\varrho' p_v(1-p_v)\zresp_v^2+\sum_{v'}\bigl[(1-\varrho' p_v)V_{V(F)}(X_{v'})+\varrho' p_vV_{V(F)}(X_{v'}-v')\bigr]$, and each bracket is
the sum over $X_{v'}$ by induction. At the root $\varrho'=1$.
\end{proof}

\begin{definition}[admissible local data]\label{apB:def:adm}
Let $\lambda>0$ and $q=\lambda/(1+\lambda)$. A triple $(p,\allowbreak \varrho,\allowbreak p_{\mathrm{par}})$ of reals is \emph{admissible} (at $\lambda$) if
\[
0<p\le q,\qquad \varrho\le1,\qquad 1-\varrho\le p_{\mathrm{par}}\le\frac{\lambda(1-p)}{1+\lambda(1-p)},\qquad
p_{\mathrm{par}}\le\frac{(1+\lambda)(1-\varrho)}{1+\lambda(1-\varrho)} .
\]
\end{definition}

\begin{lemma}[admissible local data]\label{apB:lem:phys}
For every nonroot vertex $v$ the triple $(p_v,\allowbreak \varrho_v,\allowbreak p_{\mathrm{par}}(v))$ is admissible, and
$\varrho_v(1+\lambda(1-p_v))\ge1$.
\end{lemma}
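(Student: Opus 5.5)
The plan is to unwind the definitions, letting $u$ be the parent of $v$, so that $p_{\mathrm{par}}(v)=p_u$ is the message of $(X_u,u)$. Three earlier facts do all the work. First, Lemma~\ref{apB:lem:records}(a) gives $\varrho_v=1-\varrho_up_u$. Second, Lemma~\ref{apB:lem:msg} gives $p_u/(1-p_u)=\lambda\prod_{v'}(1-p_{v'})$ over the children $v'$ of $u$, one of which is $v$. Third, each message lies in $(0,q]$. I will first prove the extra inequality $\varrho_w(1+\lambda(1-p_w))\ge1$ for every vertex $w$, and then derive the four admissibility conditions.

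\emph{Step 1 (the upper bound through the child).} Every factor $1-p_{v'}$ lies in $(0,1]$, so $p_u/(1-p_u)\le\lambda(1-p_v)$. Since $x\mapsto x/(1-x)$ is increasing on $(0,1)$, this gives
\[
p_u\le\frac{\lambda(1-p_v)}{1+\lambda(1-p_v)},
\]
which is the second half of the middle condition. It also gives $1-p_u\ge 1/(1+\lambda(1-p_v))$.

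\emph{Step 2 (the extra inequality).} At the root, $\varrho_{v_0}=1$, and the claim $1+\lambda(1-p_{v_0})\ge1$ holds because $p_{v_0}\le q<1$. At a nonroot $v$, $\varrho_v$ is a probability and $\varrho_u\le1$, so $\varrho_v=1-\varrho_up_u\ge1-p_u$. Combined with Step 1 this gives $\varrho_v(1+\lambda(1-p_v))\ge1$. No induction is needed here.

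\emph{Step 3 (the remaining conditions).} The condition $0<p_v\le q$ is Lemma~\ref{apB:lem:msg}, and $\varrho_v\le1$ holds because $\varrho_v$ is a probability. The lower bound $1-\varrho_v=\varrho_up_u\le p_u$ uses $\varrho_u\le1$. For the last condition I put $1-\varrho_v=\varrho_up_u$ into the right-hand side; since $1+\lambda\varrho_up_u>0$, the inequality $p_u\le(1+\lambda)\varrho_up_u/(1+\lambda\varrho_up_u)$ is equivalent, after dividing by $p_u>0$, to
\[
1+\lambda\varrho_up_u\le(1+\lambda)\varrho_u,\qquad\text{that is,}\qquad \varrho_u\bigl(1+\lambda(1-p_u)\bigr)\ge1 .
\]
This is exactly Step 2 applied at the parent $u$, which may be the root.

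The main point, rather than an obstacle, is recognising that the fourth admissibility condition is the extra inequality one level up. That is why the lemma states the extra inequality at every vertex and why Step 2 must cover the root separately. Everything else is monotonicity of $x/(1-x)$ and the bounds $0<p\le q<1$.
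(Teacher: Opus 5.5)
Your proof is correct and follows the paper's argument. It obtains the upper bound on $p_{\mathrm{par}}(v)$ from the message recursion, derives the extra inequality from $1-\varrho_v\le p_{\mathrm{par}}(v)\le\lambda(1-p_v)/(1+\lambda(1-p_v))$, and identifies the last admissibility condition as the extra inequality at the parent. The only difference is in how $0\le\varrho_v\le1$ is obtained. The paper runs an induction from the root that carries $\varrho_v\le1$ and gets $\varrho_v\ge0$ from the extra inequality, so it uses only the recursion $\varrho_{v'}=1-\varrho_vp_v$. You read $0\le\varrho_v\le1$ directly off $\varrho_v$'s meaning as a probability, which is equally valid and makes the induction unnecessary.
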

\begin{proof}
Induction from the root, carrying $\varrho_v\le1$ and $\varrho_v(1+\lambda(1-p_v))\ge1$ (at the root $\varrho_{v_0}=1$). For a child $v'$
of $v$: $0<p_{v'}\le q$ (Lemma~\ref{apB:lem:msg}); $\varrho_{v'}=1-\varrho_vp_v\le1$ as $\varrho_v\ge0$; $1-\varrho_{v'}=\varrho_vp_v\le p_v$;
$p_v/(1-p_v)=\lambda\prod_{w}(1-p_{w})\le\lambda(1-p_{v'})$, the product over the children $w$ of $v$, gives the upper
bound; and the last bound is
$1+\lambda \varrho_vp_v\le(1+\lambda)\varrho_v$, i.e.\ $\varrho_v(1+\lambda(1-p_v))\ge1$. The inequality $\varrho_{v'}(1+\lambda(1-p_{v'}))\ge1$
follows from $1-\varrho_{v'}\le p_{\mathrm{par}}(v')\le\lambda(1-p_{v'})/(1+\lambda(1-p_{v'}))$.
\end{proof}

\paragraph{The source and the rounding.}\label{apB:sec:O2src}
Put $f_q(x)=\max\big((1+q)x/(2q),\,(1+q)x/(2q^2)+(q-1)/(2q)\big)$ (the function $f_b$ of Zhang and
Li~\cite[Section~7.2]{ZL} with $q_b=q$), $H_c(p)=\min\big(q,\,1-p/(\lambda(1-p))\big)$,
$M_{\mathrm{up}}(p,\varrho)=\max\big(1-\varrho,\,(1-p\varrho)H_c(p)\big)$, the \emph{source}
$g(p,\varrho)=\max\big(f_q(p\varrho),\,(1+f_q(p\varrho)-f_q(M_{\mathrm{up}}(p,\varrho)))/2\big)$ and, with
$\bar s_v=\max(0,\max_{w\sim v}f_q(\pi_w))$ (the maximum over the neighbours $w$ of $v$), the \emph{field}
$s'_v=(1+f_q(\pi_v)-\bar s_v)/2$. On $[0,q]$, $f_q$ is nondecreasing with $0\le f_q\le1$, and
\begin{equation}\label{apB:eq:fqedge}
x+qx'\le q\ \text{ and }\ x'+qx\le q\quad\Longrightarrow\quad f_q(x)+f_q(x')\le1:
\end{equation}
of the four sums of the affine branches of $f_q$, $(1+q)(x+x')/(2q)$ and $(1+q)(x+x')/(2q^2)+(q-1)/q$ are $\le1$ by the sum of the
two hypotheses, and the two mixed sums are $\le1$ if and only if $qx+x'\le q$, respectively $x+qx'\le q$.

\begin{lemma}[the field is a fractional independent set]\label{apB:lem:field}
For every edge $xx'$ of $F$, $\pi_x+q\pi_{x'}\le q$; and $\pi_x\le q$ for every vertex. Hence
$0\le f_q(\pi_v)\le s'_v\le1$ at every vertex, and $s'_x+s'_{x'}\le1$ on every edge.
\end{lemma}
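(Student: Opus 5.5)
The plan has two parts. First, derive both bounds on the occupation probabilities $\pi$ directly from the hard-core measure $P_\lambda$. Second, deduce the properties of the field $s'$ purely algebraically, using the stated facts about $f_q$ on $[0,q]$ and the edge implication \eqref{apB:eq:fqedge}.

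\emph{The bounds on $\pi$.} Start from the elementary identity
\[
P_\lambda(x\in S)=\lambda\,P_\lambda\bigl(S\cap\NF[x]=\emptyset\bigr).
\]
It holds because $S\mapsto S\cup\{x\}$ is a weight-multiplying-by-$\lambda$ bijection from the independent sets avoiding $\NF[x]$ onto those containing $x$. Then:
\begin{itemize}
\item For a single vertex, $P_\lambda(S\cap\NF[x]=\emptyset)\le P_\lambda(x\notin S)=1-\pi_x$. This gives $\pi_x(1+\lambda)\le\lambda$, that is, $\pi_x\le q$.
\item For an edge $xx'$, use instead $P_\lambda(S\cap\NF[x]=\emptyset)\le P_\lambda(x\notin S,\ x'\notin S)$.
\item Since $x$ and $x'$ are adjacent, the events $\{x\in S\}$ and $\{x'\in S\}$ are disjoint, so this probability equals $1-\pi_x-\pi_{x'}$.
\item Hence $\pi_x\le\lambda(1-\pi_x-\pi_{x'})$, i.e. $\pi_x(1+\lambda)\le\lambda(1-\pi_{x'})$, i.e. $\pi_x\le q(1-\pi_{x'})$. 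This is $\pi_x+q\pi_{x'}\le q$.
\end{itemize}
By symmetry the same holds with $x$ and $x'$ exchanged.

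\emph{The field.} All values $\pi_v$ lie in $[0,q]$, where $f_q$ is nondecreasing with $0\le f_q\le1$. So $f_q(\pi_v)\ge0$.

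For $f_q(\pi_v)\le s'_v$ we need $f_q(\pi_v)+\bar s_v\le1$. This holds in both cases for $\bar s_v$:
\begin{itemize}
\item if $\bar s_v=0$, because $f_q(\pi_v)\le1$;
\item if $\bar s_v=f_q(\pi_w)$ for a neighbour $w$, because both edge inequalities for $vw$ hold, so \eqref{apB:eq:fqedge} gives $f_q(\pi_v)+f_q(\pi_w)\le1$.
\end{itemize}
The bound $s'_v\le1$ follows from $f_q(\pi_v)\le1$ and $\bar s_v\ge0$.

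Finally, on an edge $xx'$, by definition of the maximum,
\[
\bar s_x\ge f_q(\pi_{x'})\quad\text{and}\quad \bar s_{x'}\ge f_q(\pi_x).
\]
Therefore
\[
s'_x+s'_{x'}=1+\tfrac12\bigl(f_q(\pi_x)-\bar s_{x'}\bigr)+\tfrac12\bigl(f_q(\pi_{x'})-\bar s_x\bigr)\le1 .
\]

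\emph{Main obstacle.} The only step needing care is the edge inequality for $\pi$. The point is to bound the probability of the closed neighbourhood being vacant by the joint vacancy of just the two endpoints, and to exploit the disjointness of their occupations. Everything after that is bookkeeping with the already-stated properties of $f_q$.
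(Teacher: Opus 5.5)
Your proof is correct and follows the paper's argument. Your two bounds, $P_\lambda(S\cap\NF[x]=\emptyset)\le P_\lambda(x\notin S,\,x'\notin S)=1-\pi_x-\pi_{x'}$ and $P_\lambda(S\cap\NF[x]=\emptyset)\le 1-\pi_x$, are the probabilistic forms of the paper's identity $I(F;\lambda)=I(F-x-x';\lambda)+\lambda I(F-\NF[x];\lambda)+\lambda I(F-\NF[x'];\lambda)$ together with $I(F-\NF[x];\lambda)\le I(F-x-x';\lambda)$ (respectively $\le I(F-x;\lambda)$); your deduction of the field properties via \eqref{apB:eq:fqedge} is identical to the paper's.
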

\begin{proof}
$I(F;\lambda)=I(F-x-x';\lambda)+\lambda I(F-\NF[x];\lambda)+\lambda I(F-\NF[x'];\lambda)$, $\pi_x=\lambda I(F-\NF[x];\lambda)/I(F;\lambda)$,
$\pi_{x'}=\lambda I(F-\NF[x'];\lambda)/I(F;\lambda)$, and $I(F-\NF[x];\lambda)\le I(F-x-x';\lambda)$. So
$qI(F;\lambda)-\lambda I(F-\NF[x];\lambda)-q\lambda I(F-\NF[x'];\lambda)=q\,(I(F-x-x';\lambda)-I(F-\NF[x];\lambda))\ge0$, using
$(1-q)\lambda=q$. And $\pi_x/(1-\pi_x)=\lambda I(F-\NF[x];\lambda)/I(F-x;\lambda)\le\lambda$.
By these bounds and \eqref{apB:eq:fqedge}, $0\le f_q(\pi_v)\le1$ and $f_q(\pi_x)+f_q(\pi_{x'})\le1$ on
edges, so $f_q(\pi_v)+\bar s_v\le1$, which is $s'_v\ge f_q(\pi_v)\ge0$; $s'_v\le1$ as $f_q(\pi_v)\le1\le1+\bar s_v$. On an
edge, $\bar s_x\ge f_q(\pi_{x'})$ and $\bar s_{x'}\ge f_q(\pi_x)$, so
$s'_x+s'_{x'}=1+\tfrac12(f_q(\pi_x)+f_q(\pi_{x'})-\bar s_x-\bar s_{x'})\le1$.
\end{proof}

\begin{lemma}[the source is paid by the field]\label{apB:lem:source}
At every vertex, $\varrho_vp_v\,g(p_v,\varrho_v)\le\pi_v\,s'_v$.
\end{lemma}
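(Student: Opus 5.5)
Since $\pi_v=\varrho_vp_v$ (Lemma~\ref{apB:lem:records}(a)), the claim is $\pi_v\,g(p_v,\varrho_v)\le\pi_v s'_v$. As $\pi_v>0$, this reduces to the pointwise inequality $g(p_v,\varrho_v)\le s'_v$. By the definition of $g$ as a maximum of two terms, it suffices to prove two bounds.

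The first is $f_q(\pi_v)\le s'_v$, which is Lemma~\ref{apB:lem:field}, since $f_q(p_v\varrho_v)=f_q(\pi_v)$.

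The second is $(1+f_q(\pi_v)-f_q(M_{\mathrm{up}}(p_v,\varrho_v)))/2\le s'_v=(1+f_q(\pi_v)-\bar s_v)/2$. This is equivalent to $\bar s_v\le f_q(M_{\mathrm{up}}(p_v,\varrho_v))$. Since $f_q$ is nondecreasing and nonnegative on $[0,q]$, and $\bar s_v=\max(0,\max_{w\sim v}f_q(\pi_w))$, it is enough to show $\pi_w\le M_{\mathrm{up}}(p_v,\varrho_v)$ for every neighbour $w$ of $v$. One should also check that $M_{\mathrm{up}}$ lies in a range where $f_q$ is still monotone; if $M_{\mathrm{up}}$ exceeds $q$, monotonicity of the max of affine branches still holds, so this is harmless.

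The neighbours of $v$ are the parent $u$ (if $v\neq v_0$) and the children $v'$, and I treat them in turn.

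For the parent, $\pi_u=1-\varrho_v$ by Lemma~\ref{apB:lem:records}(a), so $\pi_u\le M_{\mathrm{up}}$ via its first argument.

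For a child $v'$, Lemma~\ref{apB:lem:records}(a) gives $\pi_{v'}=\varrho_{v'}p_{v'}=(1-\varrho_vp_v)p_{v'}$. So I need $p_{v'}\le H_c(p_v)=\min(q,1-p_v/(\lambda(1-p_v)))$. The bound $p_{v'}\le q$ is Lemma~\ref{apB:lem:msg}. For the other bound, Lemma~\ref{apB:lem:msg} gives $p_v/(1-p_v)=\lambda\prod_{w}(1-p_w)\le\lambda(1-p_{v'})$, the product running over the children of $v$. Hence $1-p_{v'}\ge p_v/(\lambda(1-p_v))$, which is exactly $p_{v'}\le 1-p_v/(\lambda(1-p_v))$. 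Therefore $\pi_{v'}\le(1-p_v\varrho_v)H_c(p_v)\le M_{\mathrm{up}}$.

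At the root there is no parent term; at a childless vertex there are no child terms. In both cases the maximum defining $\bar s_v$ only shrinks, so the same argument applies.

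The main obstacle is minor bookkeeping. The essential step is recognizing that $M_{\mathrm{up}}$ was built as an upper bound on the occupation probabilities of all neighbours in terms of the local data alone. The only delicate point is the child bound, which uses the message recursion exactly as in the proof of Lemma~\ref{apB:lem:phys}.
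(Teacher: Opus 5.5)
Your proof is correct and follows the paper's argument step for step. Like the paper, you reduce to $g(p_v,\varrho_v)\le s'_v$ and get the first branch from Lemma~\ref{apB:lem:field}. For the second branch you bound every neighbour's marginal by $M_{\mathrm{up}}(p_v,\varrho_v)$: the parent via $1-\varrho_v$, a child via $p_{v'}\le H_c(p_v)$ from the message recursion. Monotonicity of $f_q$ then gives $\bar s_v\le f_q(M_{\mathrm{up}})$, exactly as in the paper.
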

\begin{proof}
$\pi_v=\varrho_vp_v$ (Lemma~\ref{apB:lem:records}). Every neighbour of $v$ has marginal at most $M_{\mathrm{up}}(p_v,\varrho_v)$:
the parent has $1-\varrho_v$, and a child $v'$ has $(1-\varrho_vp_v)p_{v'}$ with $p_{v'}\le H_c(p_v)$, because $p_{v'}\le q$ and
$p_v/(\lambda(1-p_v))=\prod_{w}(1-p_{w})\le1-p_{v'}$, the product over the children $w$ of $v$. By monotonicity $\bar s_v\le f_q(M_{\mathrm{up}})$, so
$\tfrac12(1+f_q(\pi_v)-f_q(M_{\mathrm{up}}))\le s'_v$; and $f_q(\pi_v)\le s'_v$ (Lemma~\ref{apB:lem:field}). So
$g\le s'_v$; multiply by $\varrho_vp_v\ge0$.
\end{proof}

\begin{lemma}[the rounding]\label{apB:lem:round}
Let $\hat s$ be a field on the vertices of a forest with $0\le\hat s_v\le1$ at every vertex and
$\hat s_x+\hat s_{x'}\le1$ on every edge, and let $\hat\pi$ be any real weights. Then some independent set $A'$ satisfies
$\sum_v\hat\pi_v\hat s_v\le\sum_{v\in A'}\hat\pi_v$.
\end{lemma}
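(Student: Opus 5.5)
We prove Lemma~\ref{apB:lem:round} by a polyhedral argument. The fractional field $\hat s$ lies in the fractional stable-set polytope of the forest:
\[
\mathcal P=\{x\in\mathbb R^{V(F)}:\ 0\le x_v\le1,\ x_u+x_v\le1\ \text{for every edge } uv\}.
\]
Since a forest is bipartite, the edge--vertex incidence matrix of $F$ is totally unimodular, and so is the constraint matrix of $\mathcal P$ after appending an identity block. Consequently $\mathcal P$ is an integral polytope. Its vertices are $0/1$ vectors satisfying the edge constraints, which are exactly the indicator vectors of independent sets.

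We then minimize nothing and maximize the linear functional $x\mapsto\sum_v\hat\pi_vx_v$ over $\mathcal P$. The polytope is nonempty and bounded, so the maximum is attained at a vertex $1_{A'}$. Since $\hat s\in\mathcal P$, this gives
\[
\sum_v\hat\pi_v\hat s_v\le\sum_{v\in A'}\hat\pi_v .
\]
The weights may be arbitrary reals, including negative ones; the argument only needs the maximum of a linear function at a vertex, so no sign condition enters.

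To avoid citing total unimodularity, we can argue directly with a constructive rounding. Choose a threshold $t$ uniformly in $(0,1)$. Properly $2$-colour each component into classes $U,W$. Put $v\in A'_t$ if $v\in U$ and $\hat s_v>t$, or if $v\in W$ and $\hat s_v>1-t$.

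On an edge $uw$ with $u\in U$ and $w\in W$, both endpoints selected would require $\hat s_u>t$ and $\hat s_w>1-t$. That gives $\hat s_u+\hat s_w>1$, which the edge constraint forbids. So each $A'_t$ is independent.

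Each vertex is selected with probability exactly $\hat s_v$. Therefore $\mathbb E\sum_{v\in A'_t}\hat\pi_v=\sum_v\hat\pi_v\hat s_v$, and some $t$ achieves at least this expectation.

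The only point needing care is the edge check in the threshold rounding, which is where bipartiteness, via the two colour classes with complementary thresholds, is used. I expect no real obstacle here; the statement is the integrality of the bipartite stable-set polytope.
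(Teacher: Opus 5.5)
Your proof is correct, and it takes a different route from the paper. The paper argues by a dynamic programme on rooted subtrees. By induction on $|X|$ it produces, for each rooted subtree $(X,v)$, independent sets $A'_1\subseteq X$ and $A'_0\subseteq X-v$ with $\sum_{x\in X}\hat\pi_x\hat s_x\le\hat s_v\hat\pi(A'_1)+(1-\hat s_v)\hat\pi(A'_0)$. It merges the children by taking the heavier of their two sets, uses $\hat s_{v'}\le1-\hat s_v$ on each edge $vv'$, and at a root keeps the heavier of $A'_1,A'_0$.

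Your threshold rounding, with complementary thresholds $t$ and $1-t$ on the two colour classes, instead writes $\hat s$ as an exact average of indicator vectors of independent sets. The inequality then follows by averaging. Your total-unimodularity version says the same thing: $\hat s$ lies in the integral polytope $\mathcal P$.

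What your argument buys: it is shorter and more general. It works for every bipartite graph and proves the stronger convex-decomposition statement. Only finitely many distinct sets $A'_t$ occur, so choosing a good $t$ is a finite averaging. What the paper's argument buys: it is deterministic and constructs $A'$ explicitly, essentially as a maximum-weight independent set. It uses the same leaf-to-root recursion as the rest of the section and needs neither probability nor the total unimodularity theorem, which suits the formalization.
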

\begin{proof}
For each rooted subtree $(X,\allowbreak v)$ we find, by induction on $|X|$, independent sets $A'_1\subseteq X$ and
$A'_0\subseteq X-v$ with $\sum_{x\in X}\hat\pi_x\hat s_x\le\hat s_v\hat\pi(A'_1)+(1-\hat s_v)\hat\pi(A'_0)$, where $\hat\pi(\cdot)$ is
the total weight of a set. For each child $v'$ take $(A'^{v'}_1,\allowbreak A'^{v'}_0)$ from $(X_{v'},\allowbreak v')$ and let $A'^{v'}_{\max}$ be the
heavier of the two. Put $A'_1=\{v\}\cup\bigcup_{v'}A'^{v'}_0$ and $A'_0=\bigcup_{v'}A'^{v'}_{\max}$; both are independent
(distinct branches are nonadjacent and $A'^{v'}_0$ avoids $v'$). Using $\hat s_{v'}\ge0$,
$\hat\pi(A'^{v'}_{\max})\ge\hat\pi(A'^{v'}_1),\hat\pi(A'^{v'}_0)$ and $\hat s_{v'}\le1-\hat s_v$ (the edge $vv'$),
\[
\hat s_{v'}\hat\pi(A'^{v'}_1)+(1-\hat s_{v'})\hat\pi(A'^{v'}_0)\le\hat\pi(A'^{v'}_0)+\hat s_{v'}\bigl(\hat\pi(A'^{v'}_{\max})-\hat\pi(A'^{v'}_0)\bigr)\le\hat s_v\hat\pi(A'^{v'}_0)+(1-\hat s_v)\hat\pi(A'^{v'}_{\max}),
\]
and summing over $v'$, with $\hat\pi_v\hat s_v$, gives the claim. At the root of a component the heavier of $A'_1,A'_0$
works since $0\le\hat s_v\le1$; components are joined by union.
\end{proof}

\paragraph{The endpoint data.}\label{apB:sec:O2pay} A parameter interval carries two rational data sets, at $\lambda=\lambda_{\mathrm{lo}}$ and
$\lambda=\lambda_{\mathrm{hi}}$, of four fields at the nine knots $\xpq=j/8$ ($j=0,\dots,8$) and two constants. At the
actual $\lambda$ each knot value and constant is interpolated linearly in $\lambda$ between the data sets, and a field at
a cavity probability $p$ is the piecewise-linear interpolation of its knot values at $\xpq=p/q$. We write
$\check a,\check b,\check c,\check d$ for the four fields (the multipliers of the negative-response,
positive-response, child-variance and signed telescoping transfers) and $\check m,\check l$ for the two constants
(the packed child-mass and root-log multipliers); $\check a,\check b,\check c,\check m,\check l\ge0$ on the interval
(Lemma~\ref{cs:lem:O2side}(a)). For $0<p\le q$ let $Y(p)=\log(\lambda(1-p)/p)\ge0$ (so $Y(p_v)=Y_v$),
$\hat k=\lfloor Y(p)/\log(1+\lambda)\rfloor$, $\mathcal O(p)=\hat k\lambda+e^{Y(p)-\hat k\log(1+\lambda)}-1$,
$\mathcal M(p)=\hat kq+1-e^{-(Y(p)-\hat k\log(1+\lambda))}$, $\bar h(p)=p/(-\log(1-p))\in(0,1]$ and
$\mathcal G(p)=\log(p/(\lambda(1-p)^2))=-\log(1-p)-Y(p)$. At $p=q$, $Y=\mathcal O=\mathcal M=0$.

\begin{lemma}[packing]\label{apB:lem:pack}
For every vertex $v$: $\sum_{v'}p_{v'}/(1-p_{v'})\le\mathcal O(p_v)$ and $\sum_{v'}p_{v'}\ge\mathcal M(p_v)$, over the children $v'$ of $v$.
\end{lemma}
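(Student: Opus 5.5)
The plan is to reduce both inequalities to a one-dimensional extremal problem in the children's log-masses and solve it by majorization. Let $v$ have children $v'_1,\dots,v'_d$ and write $\varphi_i=-\log(1-p_{v'_i})$. By Lemma~\ref{apB:lem:msg} each $\varphi_i\in(0,L]$ with $L=\log(1+\lambda)$, and by Lemma~\ref{apB:lem:records}(c) together with Lemma~\ref{apB:lem:msg},
\[
\sum_i\varphi_i=Y_v=Y(p_v).
\]
In these variables
\[
\frac{p_{v'_i}}{1-p_{v'_i}}=e^{\varphi_i}-1=:h_1(\varphi_i),\qquad p_{v'_i}=1-e^{-\varphi_i}=:h_2(\varphi_i).
\]
Here $h_1$ is convex, $h_2$ is concave, and $h_1(0)=h_2(0)=0$. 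So I need
\[
\sum_ih_1(\varphi_i)\le\max\quad\text{and}\quad\sum_ih_2(\varphi_i)\ge\min,
\]
where the extremum is taken over all finite families in $[0,L]$ with total $Y=Y(p_v)$. The childless case $d=0$ gives $Y=0$, $p_v=q$ and $\mathcal O=\mathcal M=0$, so both claims are equalities there.

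Write $Y=\hat kL+r$ with $\hat k=\lfloor Y/L\rfloor$ and $0\le r<L$. The candidate extremizer is the vector $\varphi^\ast$ with $\hat k$ entries equal to $L$, one entry $r$, and zeros elsewhere. Since $e^L-1=\lambda$ and $1-e^{-L}=q$, its values are
\[
\sum h_1(\varphi^\ast)=\hat k\lambda+e^r-1=\mathcal O(p_v),\qquad \sum h_2(\varphi^\ast)=\hat kq+1-e^{-r}=\mathcal M(p_v).
\]

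\emph{Padding and length.} Pad both $\varphi$ and $\varphi^\ast$ with zeros to a common length $N\ge\max(d,\hat k+1)$. Zeros do not change either sum, since $h_j(0)=0$. The entries $\varphi_i\le L$ sum to $Y$, so $d\ge Y/L\ge\hat k$. If $d=\hat k$, then $Y=\hat kL$ forces every $\varphi_i=L$ and $r=0$, so $\varphi=\varphi^\ast$ up to zeros. Otherwise $d\ge\hat k+1$ and no padding issue arises.

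\emph{Majorization.} Next I check that $\varphi$ is majorized by $\varphi^\ast$. Sort both in decreasing order. For $m\le\hat k$, the top $m$ entries of $\varphi$ sum to at most $mL$, which is the corresponding partial sum of $\varphi^\ast$. For $m\ge\hat k+1$, the partial sum of $\varphi^\ast$ is already the full total $Y$, which dominates every partial sum of $\varphi$. The two totals are equal.

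\emph{Conclusion.} Karamata's inequality then gives $\sum h_1(\varphi_i)\le\sum h_1(\varphi^\ast)=\mathcal O(p_v)$ for the convex $h_1$, and $\sum h_2(\varphi_i)\ge\sum h_2(\varphi^\ast)=\mathcal M(p_v)$ for the concave $h_2$. If one prefers to avoid Karamata, an elementary exchange argument does the same job. While two entries lie strictly inside $(0,L)$, shift mass from the smaller to the larger until one of them hits $0$ or $L$. By convexity this does not decrease $\sum h_1$, and by concavity it does not increase $\sum h_2$. After finitely many steps the vector is $\varphi^\ast$.

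The main point needing care is the bookkeeping rather than any estimate. One must confirm that $\hat k$ and $r$ in the definitions of $\mathcal O$ and $\mathcal M$ are exactly the quotient and remainder of $Y(p_v)$ by $\log(1+\lambda)$, and that $Y(p_v)$ really equals $\sum_i\varphi_i$. The latter follows from the message identity $p_v/(1-p_v)=\lambda\prod(1-p_{v'})$ of Lemma~\ref{apB:lem:msg}.
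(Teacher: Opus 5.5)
Your proof is correct and follows essentially the paper's argument. Both reduce the lemma to showing that, among masses in $[0,\log(1+\lambda)]$ with total $Y(p_v)$, the packed arrangement extremizes a sum of a convex function of the masses; the paper uses $e^{t\varphi}-1$ with $t=\pm1$ for the two inequalities. The paper proves this extremality by inserting the masses one at a time into the packed arrangement with a two-point exchange, which is your fallback exchange argument, while your main route packages the same fact as the majorization $\varphi\prec\varphi^\ast$ followed by Karamata's inequality.
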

\begin{proof}
The masses $\varphi_{v'}$ lie in $(0,\bar \varphi]$, $\bar \varphi=\log(1+\lambda)$, and sum to $Y_v$. For real $t$ and masses in
$[0,\bar \varphi]$ with total $Y$, $\sum(e^{t \varphi_{v'}}-1)\le \hat k(e^{t\bar \varphi}-1)+(e^{t(Y-\hat k\bar \varphi)}-1)$ with
$\hat k=\lfloor Y/\bar \varphi\rfloor$: add one mass $\varphi$ at a time to the packed arrangement, whose residual $\hat \varphi=Y-\hat k\bar \varphi$ lies
in $[0,\bar \varphi)$; if $\hat \varphi+\varphi<\bar \varphi$ this is $e^{t \varphi}+e^{t\hat \varphi}\le1+e^{t(\hat \varphi+\varphi)}$, otherwise
$e^{t \varphi}+e^{t\hat \varphi}\le e^{t\bar \varphi}+e^{t(\hat \varphi+\varphi-\bar \varphi)}$, both instances of the two-point exchange
for the convex function $e^{t\,\cdot}$. Take $t=1$ ($e^{\varphi_{v'}}-1=p_{v'}/(1-p_{v'})$, $e^{\bar \varphi}-1=\lambda$) and
$t=-1$ ($e^{-\varphi_{v'}}-1=-p_{v'}$, $e^{-\bar \varphi}-1=-q$).
\end{proof}

\paragraph{The payment.} With $t_+=\max(t,0)$ and $t_-=\max(-t,0)$ for real $t$, and the convention $x/0=0$,
the local payment of nonroot local data $(p,\allowbreak \varrho,\allowbreak p_{\mathrm{par}},\allowbreak \zresp)$ is
\begin{gather*}
\Phi=\bar\gamma\,\varrho\,g(p,\varrho)-\varrho(1-p)\zresp^2+\mathcal R_{\check a}+\mathcal R_{\check b}+\mathcal R_{\check c}+\mathcal R_{\check d}+\check m\,\mathcal R_{\check m}-\check l\,\mathcal G(p)/p,\\
\mathcal R_{\check a}=\frac{\check a(p)(\zresp-1)_+^2}{p\,Y(p)}-\check a(p_{\mathrm{par}})\bar h(p)\zresp_-^2,\qquad
\mathcal R_{\check b}=\frac{\check b(p)(1-\zresp)_+^2}{p\,Y(p)}-\check b(p_{\mathrm{par}})\bar h(p)\zresp_+^2,\\
\mathcal R_{\check c}=\frac{\varrho\,\check c(p)(1-\zresp)^2}{\mathcal O(p)}-(1-\varrho)\check c(p_{\mathrm{par}})(1-p)\zresp^2,\\
\mathcal R_{\check d}=\varrho\,\check d(p)(\zresp-1)+(1-\varrho)\check d(p_{\mathrm{par}})\zresp,\qquad \mathcal R_{\check m}=\zresp-1+\varrho\,\mathcal M(p),
\end{gather*}
and the root payment $\Phi_{\mathrm{root}}(p,\zresp)$ is $\Phi$ at $\varrho=1$ without the terms at $p_{\mathrm{par}}$; $\Phi_v$ is $\Phi$ at
the local data of a nonroot vertex $v$. As $\check a,\check b,\bar h\ge0$,
$\Phi_{\mathrm{root}}(p,\zresp)=\Phi(p,1,0,\zresp)+(\check a(0)\zresp_-^2+\check b(0)\zresp_+^2)\bar h(p)\ge\Phi(p,1,0,\zresp)$.
Expanding, with $\varphi=-\log(1-p)$, gives the \emph{own} and \emph{incoming} parts:
$p\,\Phi(p,\varrho,p_{\mathrm{par}},\zresp)=\mathrm{own}(p,\zresp,\varrho)-\mathrm{inc}(p,\zresp,\varrho,p_{\mathrm{par}})$ and
$p\,\Phi_{\mathrm{root}}(p,\zresp)=\mathrm{own}(p,\zresp,1)-\check l\,\varphi$, where
\begin{align*}
\mathrm{own}&=\bar\gamma \varrho pg-\varrho p(1-p)\zresp^2+\frac{\check a(p)(\zresp-1)_+^2+\check b(p)(1-\zresp)_+^2}{Y(p)}+\frac{\varrho p\,\check c(p)(1-\zresp)^2}{\mathcal O(p)}\\
&\qquad+\varrho p\,\check d(p)(\zresp-1)+\check m\,\varrho p\,(\zresp-1+\mathcal M(p))+\check l\,Y(p),\\
\mathrm{inc}&=p\,\bar h(p)\bigl(\check a(p_{\mathrm{par}})\zresp_-^2+\check b(p_{\mathrm{par}})\zresp_+^2\bigr)+(1-\varrho)p\,\check c(p_{\mathrm{par}})(1-p)\zresp^2\\
&\qquad-(1-\varrho)p\,\check d(p_{\mathrm{par}})\zresp+\check m(1-\varrho)p(1-\zresp)+\check l\,\varphi  .
\end{align*}

\begin{lemma}[the per-vertex identity]\label{apB:lem:vertex}
At a vertex $v$ whose children $v'$ carry $(p_{v'},\allowbreak \varrho_{v'}=1-\varrho_vp_v,\allowbreak p_{\mathrm{par}}(v')=p_v,\allowbreak \zresp_{v'})$, with $g_v=g(p_v,\varrho_v)$,
\[
\mathrm{own}_v-\sum_{v'}\mathrm{inc}_{v'}=\bar\gamma \varrho_vp_vg_v-\varrho_vp_v(1-p_v)\zresp_v^2-\check a(p_v)\epsilon_{\check a}(v)-\check b(p_v)\epsilon_{\check b}(v)
-\varrho_vp_v\check c(p_v)\epsilon_{\check c}(v)-\check m\,\varrho_vp_v\epsilon_{\check m}(v),
\]
where the four \emph{residuals} are
\begin{gather*}
\epsilon_{\check a}(v)=\sum_{v'}p_{v'}\bar h(p_{v'})(\zresp_{v'})_-^2-\frac{(\zresp_v-1)_+^2}{Y_v},\qquad
\epsilon_{\check b}(v)=\sum_{v'}p_{v'}\bar h(p_{v'})(\zresp_{v'})_+^2-\frac{(1-\zresp_v)_+^2}{Y_v},\\
\epsilon_{\check c}(v)=\sum_{v'}p_{v'}(1-p_{v'})\zresp_{v'}^2-\frac{(1-\zresp_v)^2}{\mathcal O(p_v)},\qquad
\epsilon_{\check m}(v)=\sum_{v'}p_{v'}-\mathcal M(p_v).
\end{gather*}
\end{lemma}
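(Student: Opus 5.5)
The identity is purely algebraic. I would expand $\mathrm{inc}_{v'}$ at the children's local data and sum over the children $v'$ of $v$. Then I would group the result with $\mathrm{own}_v$ by the multiplier carried by each term: $\check a$, $\check b$, $\check c$, $\check d$, $\check m$, $\check l$, together with the two terms that appear only in $\mathrm{own}$. Three substitutions at the children are used throughout:
\begin{itemize}
\item $1-\varrho_{v'}=\varrho_vp_v$ and $p_{\mathrm{par}}(v')=p_v$, by Lemma~\ref{apB:lem:records}(a);
\item $\zresp_v=1-\sum_{v'}p_{v'}\zresp_{v'}$, by Lemma~\ref{apB:lem:records}(b);
\item $Y(p_v)=Y_v=\sum_{v'}\varphi_{v'}$, by Lemma~\ref{apB:lem:records}(c).
\end{itemize}
The key point is that every incoming term of a child is evaluated at $p_{\mathrm{par}}(v')=p_v$. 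So each field $\check a,\check b,\check c,\check d$ appears at the single argument $p_v$ and factors out of the sum over children.

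The terms $\bar\gamma\varrho_vp_vg_v$ and $-\varrho_vp_v(1-p_v)\zresp_v^2$ occur only in $\mathrm{own}_v$ and pass through unchanged.

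For $\check a$, the own term is $\check a(p_v)(\zresp_v-1)_+^2/Y_v$. The children contribute $\check a(p_v)\sum_{v'}p_{v'}\bar h(p_{v'})(\zresp_{v'})_-^2$, which is subtracted. The difference is exactly $-\check a(p_v)\epsilon_{\check a}(v)$. The $\check b$ terms work identically with the positive and negative parts exchanged.

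For $\check c$, the incoming term carries the factor $(1-\varrho_{v'})=\varrho_vp_v$. Hence $\sum_{v'}\mathrm{inc}$ contributes $\varrho_vp_v\check c(p_v)\sum_{v'}p_{v'}(1-p_{v'})\zresp_{v'}^2$, and together with the own term $\varrho_vp_v\check c(p_v)(1-\zresp_v)^2/\mathcal O(p_v)$ this gives $-\varrho_vp_v\check c(p_v)\epsilon_{\check c}(v)$.

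The $\check d$ terms should cancel completely:
\[
\varrho_vp_v\check d(p_v)(\zresp_v-1)+\sum_{v'}\varrho_vp_v\check d(p_v)p_{v'}\zresp_{v'}
=\varrho_vp_v\check d(p_v)\Bigl(\zresp_v-1+\sum_{v'}p_{v'}\zresp_{v'}\Bigr)=0,
\]
by Lemma~\ref{apB:lem:records}(b).

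For $\check m$, the children contribute $-\check m\varrho_vp_v\sum_{v'}p_{v'}(1-\zresp_{v'})$. Adding the own term $\check m\varrho_vp_v(\zresp_v-1+\mathcal M(p_v))$ and using (b) again leaves
\[
\check m\varrho_vp_v\Bigl(\mathcal M(p_v)-\sum_{v'}p_{v'}\Bigr)=-\check m\varrho_vp_v\epsilon_{\check m}(v).
\]

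The $\check l$ terms give $\check l\bigl(Y(p_v)-\sum_{v'}\varphi_{v'}\bigr)=0$ by (c), since $\varphi_{v'}=-\log(1-p_{v'})$.

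The only point needing care, and the nearest thing to an obstacle, is the childless vertex. There all sums over children are empty, $p_v=q$, and $Y_v=\mathcal O(p_v)=0$, so I must check the convention $x/0=0$ against the residuals. At such a vertex $\zresp_v=1$, so the numerators $(\zresp_v-1)_+^2$, $(1-\zresp_v)_+^2$ and $(1-\zresp_v)^2$ all vanish. Both sides of each grouping are therefore $0$, consistent with the convention. Also $\mathcal M(q)=0$, so $\epsilon_{\check m}(v)=0$ and the $\check m$ grouping holds as well. With this checked, the identity holds at every vertex.
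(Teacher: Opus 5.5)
Your proposal is correct and is the paper's proof: you substitute the children's local data into $\sum_{v'}\mathrm{inc}_{v'}$, cancel the $\check d$ and $\check l$ terms by Lemma~\ref{apB:lem:records}(b) and~(c), reduce the $\check m$ terms to $-\check m\,\varrho_vp_v\epsilon_{\check m}(v)$, and group the $\check a$, $\check b$, $\check c$ terms into the residuals. Your extra check at a childless vertex agrees with the remark in the proof of Lemma~\ref{apB:lem:resid}, though it is not strictly needed: under the convention $x/0=0$ the same quotients occur in $\mathrm{own}_v$ and in the residuals, so the identity holds there automatically.
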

\begin{proof}
Substitute $\varrho_{v'}=1-\varrho_vp_v$ and $p_{\mathrm{par}}(v')=p_v$ in $\sum_{v'}\mathrm{inc}_{v'}$. The $\check d$ terms cancel exactly,
$\varrho_vp_v\check d(p_v)(\zresp_v-1)+\varrho_vp_v\check d(p_v)\sum_{v'}p_{v'}\zresp_{v'}=0$, by $\zresp_v=1-\sum_{v'}p_{v'}\zresp_{v'}$; the logarithmic terms cancel,
$\check lY_v-\check l\sum_{v'}\varphi_{v'}=0$; the $\check m$ terms give
$\check m\,\varrho_vp_v(\zresp_v-1+\mathcal M(p_v)-\sum_{v'}p_{v'}(1-\zresp_{v'}))=-\check m\,\varrho_vp_v\epsilon_{\check m}(v)$; and the $\check a$,
$\check b$, $\check c$ terms group into
$-\check a(p_v)\epsilon_{\check a}(v)-\check b(p_v)\epsilon_{\check b}(v)-\varrho_vp_v\check c(p_v)\epsilon_{\check c}(v)$. What remains is
$\bar\gamma \varrho_vp_vg_v-\varrho_vp_v(1-p_v)\zresp_v^2$.
\end{proof}

\begin{lemma}[the residuals are nonnegative]\label{apB:lem:resid}
$\epsilon_{\check a}(v),\epsilon_{\check b}(v),\epsilon_{\check c}(v),\epsilon_{\check m}(v)\ge0$ at every vertex.
\end{lemma}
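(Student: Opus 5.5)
All four residuals are nonnegative by the same mechanism. The recursion $\zresp_v=1-\sum_{v'}p_{v'}\zresp_{v'}$ of Lemma~\ref{apB:lem:records}(b) writes $1-\zresp_v$ as a weighted sum over the children. Cauchy--Schwarz with the right weights then compares its square with the children's terms, and the packing bounds of Lemma~\ref{apB:lem:pack} control the total weight.

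First I dispose of a childless vertex $v$. Then $p_v=q$, so $Y_v=\mathcal O(p_v)=\mathcal M(p_v)=0$, and $\zresp_v=1$. Every sum over children is empty. Every subtracted term has numerator $(\zresp_v-1)_+^2=(1-\zresp_v)_+^2=(1-\zresp_v)^2=0$, hence vanishes by the convention $x/0=0$, and $\mathcal M(p_v)=0$. So all four residuals are $0$. From now on $v$ has a child, so $Y_v>0$, and $\mathcal O(p_v)>0$ because $\mathcal O(p_v)$ bounds the positive sum $\sum_{v'}p_{v'}/(1-p_{v'})$.

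\emph{The residual $\epsilon_{\check m}$.} Its nonnegativity is exactly the second inequality of Lemma~\ref{apB:lem:pack}, $\sum_{v'}p_{v'}\ge\mathcal M(p_v)$.

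\emph{The residual $\epsilon_{\check c}$.} I apply Cauchy--Schwarz with the factorization $p\,\zresp=\sqrt{p/(1-p)}\cdot\sqrt{p(1-p)}\,\zresp$:
\[
(1-\zresp_v)^2=\Bigl(\sum_{v'}p_{v'}\zresp_{v'}\Bigr)^2\le\Bigl(\sum_{v'}\frac{p_{v'}}{1-p_{v'}}\Bigr)\Bigl(\sum_{v'}p_{v'}(1-p_{v'})\zresp_{v'}^2\Bigr)\le\mathcal O(p_v)\sum_{v'}p_{v'}(1-p_{v'})\zresp_{v'}^2 .
\]
The last step is the first inequality of Lemma~\ref{apB:lem:pack}. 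Dividing by $\mathcal O(p_v)>0$ gives $\epsilon_{\check c}(v)\ge0$.

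\emph{The residuals $\epsilon_{\check a}$ and $\epsilon_{\check b}$.} Here the weights are the log-masses $\varphi_{v'}=-\log(1-p_{v'})$, which sum to $Y_v$. The key observation is $p^2/\varphi=p\,\bar h(p)$.

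For $\epsilon_{\check a}$, note $\zresp_v-1=-\sum_{v'}p_{v'}\zresp_{v'}\le\sum_{v'}p_{v'}(\zresp_{v'})_-$. The right-hand side is nonnegative, so $(\zresp_v-1)_+\le\sum_{v'}p_{v'}(\zresp_{v'})_-$. By Cauchy--Schwarz,
\[
\Bigl(\sum_{v'}p_{v'}(\zresp_{v'})_-\Bigr)^2\le\Bigl(\sum_{v'}\varphi_{v'}\Bigr)\sum_{v'}\frac{p_{v'}^2}{\varphi_{v'}}(\zresp_{v'})_-^2=Y_v\sum_{v'}p_{v'}\bar h(p_{v'})(\zresp_{v'})_-^2 .
\]
Dividing by $Y_v$ gives $\epsilon_{\check a}(v)\ge0$.

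The case of $\epsilon_{\check b}$ is symmetric. Since $1-\zresp_v=\sum_{v'}p_{v'}\zresp_{v'}\le\sum_{v'}p_{v'}(\zresp_{v'})_+$, we get $(1-\zresp_v)_+\le\sum_{v'}p_{v'}(\zresp_{v'})_+$, and the same Cauchy--Schwarz step applies with $(\zresp_{v'})_+$ in place of $(\zresp_{v'})_-$.

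I do not expect a real obstacle; the argument is mechanical once the right Cauchy--Schwarz weights are chosen. The only care needed is to pass to positive parts correctly, so that the square of the bound dominates the square of the positive part, and to treat the degenerate childless case through the convention $x/0=0$.
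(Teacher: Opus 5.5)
Your proof is correct and is essentially the paper's argument: the recursion of Lemma~\ref{apB:lem:records}(b) bounds the positive parts, Cauchy--Schwarz with the weights $\varphi_{v'}$ (via $p^2/\varphi=p\,\bar h(p)$) gives $\epsilon_{\check a},\epsilon_{\check b}\ge0$, Cauchy--Schwarz with the weights $p/(1-p)$ together with the packing bound gives $\epsilon_{\check c}\ge0$, and the packing bound alone gives $\epsilon_{\check m}\ge0$. Your treatment of the childless case and your check that $\mathcal O(p_v)>0$ only spell out what the paper states in a closing parenthesis.
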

\begin{proof}
From $\zresp_v-1=-\sum_{v'}p_{v'}\zresp_{v'}$: $(\zresp_v-1)_+\le\sum_{v'}p_{v'}(\zresp_{v'})_-$ and $(1-\zresp_v)_+\le\sum_{v'}p_{v'}(\zresp_{v'})_+$. Cauchy-Schwarz with the
weights $\varphi_{v'}>0$, $\sum_{v'}\varphi_{v'}=Y_v$, gives
$(\sum_{v'}p_{v'}(\zresp_{v'})_\mp)^2/Y_v\le\sum_{v'}p_{v'}^2(\zresp_{v'})_\mp^2/\varphi_{v'}=\sum_{v'}p_{v'}\bar h(p_{v'})(\zresp_{v'})_\mp^2$. For $\epsilon_{\check c}$: $(1-\zresp_v)^2=(\sum_{v'}p_{v'}\zresp_{v'})^2\le(\sum_{v'}p_{v'}/(1-p_{v'}))(\sum_{v'}p_{v'}(1-p_{v'})\zresp_{v'}^2)$
and Lemma~\ref{apB:lem:pack}. For $\epsilon_{\check m}$: Lemma~\ref{apB:lem:pack}. (At a childless vertex $\zresp_v=1$ and
all four vanish.)
\end{proof}

\begin{theorem}[the component identity]\label{apB:thm:ledger}
On a component $X$ rooted at $v_0$,
\begin{multline*}
\bar\gamma\sum_{v\in X}\varrho_vp_vg_v-V_{V(F)}(X)=p_{v_0}\Phi_{\mathrm{root}}(p_{v_0},\zresp_{v_0})+\sum_{v\ne v_0}p_v\Phi_v\\
+\sum_{v\in X}\bigl[\check a(p_v)\epsilon_{\check a}(v)+\check b(p_v)\epsilon_{\check b}(v)+\varrho_vp_v\check c(p_v)\epsilon_{\check c}(v)
+\check m\,\varrho_vp_v\epsilon_{\check m}(v)\bigr]+\check l\,\varphi_{v_0}.
\end{multline*}
If $\Phi_v\ge0$ at every nonroot vertex and $\Phi_{\mathrm{root}}(p_{v_0},\zresp_{v_0})\ge0$, then
$V_{V(F)}(X)\le\bar\gamma\sum_{v\in X}\pi_vg(p_v,\varrho_v)$.
\end{theorem}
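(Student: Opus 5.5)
The identity is a telescoping sum over the vertices of $X$, so the plan is to sum the per-vertex identity (Lemma~\ref{apB:lem:vertex}) over all $v\in X$ and reorganise. Each nonroot vertex $v$ appears twice in the resulting sum: once through $\mathrm{own}_v$, and once through $\mathrm{inc}_v$ in the sum at its parent. Every vertex is the child of exactly one vertex except $v_0$, so
\[
\sum_{v\in X}\Bigl(\mathrm{own}_v-\sum_{v'}\mathrm{inc}_{v'}\Bigr)=\mathrm{own}_{v_0}+\sum_{v\ne v_0}\bigl(\mathrm{own}_v-\mathrm{inc}_v\bigr).
\]
In this sum $\mathrm{own}_{v_0}$ is $\mathrm{own}(p_{v_0},\zresp_{v_0},1)$, because $\varrho_{v_0}=1$. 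For a nonroot $v$ the local data $(p_v,\varrho_v,p_{\mathrm{par}}(v),\zresp_v)$ appearing in $\mathrm{inc}_v$ are exactly the ones Lemma~\ref{apB:lem:vertex} substitutes, namely $\varrho_v=1-\varrho_{\mathrm{par}}p_{\mathrm{par}}$ and $p_{\mathrm{par}}(v)=p_{\mathrm{par}}$, by Lemma~\ref{apB:lem:records}(a).

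By the expansion $p\Phi=\mathrm{own}-\mathrm{inc}$, each nonroot term equals $p_v\Phi_v$. The root term is handled by $p\Phi_{\mathrm{root}}=\mathrm{own}(p,\zresp,1)-\check l\varphi$, which gives $\mathrm{own}_{v_0}=p_{v_0}\Phi_{\mathrm{root}}+\check l\varphi_{v_0}$. For the right-hand side of the per-vertex identity, the terms $\bar\gamma\varrho_vp_vg_v$ sum to $\bar\gamma\sum_v\varrho_vp_vg_v$. The terms $-\varrho_vp_v(1-p_v)\zresp_v^2$ sum to $-V_{V(F)}(X)$ by the innovation identity (Lemma~\ref{apB:lem:innov}), and the residual terms are carried over unchanged. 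Moving the residual terms to the other side gives the stated identity.

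The one delicate point is the bookkeeping of the local-data conventions: one has to check that the incoming part assigned to a child really uses the parent's $p_v$ and $\varrho_v$ as the per-vertex identity assumes, and that the root carries no incoming terms. The first holds by the definitions of $\varrho_{v'}$ and $p_{\mathrm{par}}(v')$ in Lemma~\ref{apB:lem:records}; the second holds because $\Phi_{\mathrm{root}}$ is defined without the $p_{\mathrm{par}}$ terms. The $\check d$ and $\check l$ cancellations are already inside Lemma~\ref{apB:lem:vertex}, so nothing else should arise.

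For the inequality, suppose $\Phi_v\ge0$ at every nonroot vertex and $\Phi_{\mathrm{root}}\ge0$. We have $p_v>0$. The coefficients satisfy $\check a,\check b,\check c,\check m,\check l\ge0$ on the interval (Lemma~\ref{cs:lem:O2side}(a)). The residuals are nonnegative by Lemma~\ref{apB:lem:resid}, and $\varrho_vp_v=\pi_v\ge0$, $\varphi_{v_0}>0$. Hence every term on the right is nonnegative, and therefore $V_{V(F)}(X)\le\bar\gamma\sum_v\varrho_vp_vg_v=\bar\gamma\sum_v\pi_vg(p_v,\varrho_v)$, using $\pi_v=\varrho_vp_v$.
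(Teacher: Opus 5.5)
Your proposal is correct and is essentially the paper's proof. The paper likewise sums $p\Phi=\mathrm{own}-\mathrm{inc}$ over the nonroot vertices and adds $p_{v_0}\Phi_{\mathrm{root}}=\mathrm{own}_{v_0}-\check l\,\varphi_{v_0}$. It then regroups each incoming part at the parent and applies Lemma~\ref{apB:lem:vertex} and the innovation identity (Lemma~\ref{apB:lem:innov}). The inequality follows, as in your argument, from the nonnegativity of the residuals and of the coefficients, together with $\varphi_{v_0}>0$ and $\pi_v=\varrho_vp_v$.
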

\begin{proof}
Sum $p\Phi=\mathrm{own}-\mathrm{inc}$ over the nonroot vertices and add
$p_{v_0}\Phi_{\mathrm{root}}=\mathrm{own}_{v_0}-\check l\varphi_{v_0}$; regroup each incoming part at the parent (each
nonroot vertex has one parent) and apply Lemma~\ref{apB:lem:vertex} at every vertex and Lemma~\ref{apB:lem:innov}. The
residuals are nonnegative (Lemma~\ref{apB:lem:resid}), $\check l\ge0$, $\varphi_{v_0}>0$, and $\varrho_vp_v=\pi_v$.
\end{proof}

\begin{proof}[Proof of Theorem~\ref{apB:thm:O2row}]
By the nonnegativity of the payment (proved below), $\Phi_v\ge0$ at every nonroot vertex and $\Phi_{\mathrm{root}}\ge0$ at every root,
for every $\lambda$ of the interval. Theorem~\ref{apB:thm:ledger} on each component, (M1) and
Lemma~\ref{apB:lem:source} give $\operatorname{Var}_\lambda K\le\bar\gamma\sum_v\pi_vg(p_v,\varrho_v)\le\bar\gamma\sum_v\pi_vs'_v$
($\bar\gamma\ge0$). By Lemmas~\ref{apB:lem:field} and~\ref{apB:lem:round}, with the marginals as weights, some independent set
$A'$ has $\sum_v\pi_vs'_v\le\pi(A')\le\pi(A)$, the last step because $A$ has maximum weight; this is the only use of the
maximality of $A$.
\end{proof}

\paragraph{The nonnegativity of the payment.}\label{apB:sec:O2nonneg}
Let $\xpq_{\min}=2^{-16}$ for all $17$ intervals. On a parameter interval, for every $\lambda$ of the interval and every
admissible triple $(p,\allowbreak \varrho,\allowbreak p_{\mathrm{par}})$, $\Phi\ge0$ holds for every real $\zresp$ when $\xpq_{\min}q\le p<q$ (the
\emph{local payment condition}, by computer, Lemma~\ref{apB:lem:O2box}(a)) and when $0<p<\xpq_{\min}q$ (the \emph{small-message
condition}, analytic, Lemma~\ref{apB:lem:small}), and for $\zresp=1$ when $p=q$ (the \emph{leaf endpoint condition}, by computer
at $\xpq=1$, Lemma~\ref{apB:lem:O2box}(b)). This covers every vertex: the triple $(p_v,\allowbreak \varrho_v,\allowbreak p_{\mathrm{par}}(v))$ of a
nonroot vertex is admissible (Lemma~\ref{apB:lem:phys}); $p_v<q$ if $v$ has a child, and $p_v=q$, $\zresp_v=1$ if not
(Lemmas~\ref{apB:lem:msg} and~\ref{apB:lem:records}); and a root has the admissible triple $(p_{v_0},\allowbreak 1,\allowbreak 0)$, and
$\Phi_{\mathrm{root}}\ge\Phi(p_{v_0},1,0,\zresp_{v_0})$.

\begin{lemma}[the parent knots]\label{apB:lem:knots}
Fix $\lambda,p,\varrho,\zresp$ and $0\le t_{\mathrm{lo}}\le t_{\mathrm{hi}}\le q$. If $\Phi\ge0$ at $p_{\mathrm{par}}=t_{\mathrm{lo}}$,
at $p_{\mathrm{par}}=t_{\mathrm{hi}}$ and at every knot $qj/8$ between them, then $\Phi\ge0$ on
$[t_{\mathrm{lo}},t_{\mathrm{hi}}]$.
\end{lemma}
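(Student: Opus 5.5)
The plan is to show that, with $\lambda,p,\varrho,\zresp$ held fixed, $\Phi$ is a continuous piecewise-affine function of $p_{\mathrm{par}}$ whose breakpoints lie among the knots $qj/8$. A function of this kind that is nonnegative at the ends of an interval and at every breakpoint inside it is nonnegative on the whole interval.

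First, isolate the dependence on $p_{\mathrm{par}}$. In the formula for $\Phi$, the terms $\bar\gamma\varrho g(p,\varrho)$, $-\varrho(1-p)\zresp^2$, $\check m\mathcal R_{\check m}$ and $-\check l\mathcal G(p)/p$ do not involve $p_{\mathrm{par}}$. The same holds for the own-parts of $\mathcal R_{\check a},\mathcal R_{\check b},\mathcal R_{\check c},\mathcal R_{\check d}$, which are evaluated at $p$. The convention $x/0=0$ affects only these $p_{\mathrm{par}}$-free terms. What remains is
\[
-\check a(p_{\mathrm{par}})\,\bar h(p)\zresp_-^2-\check b(p_{\mathrm{par}})\,\bar h(p)\zresp_+^2-(1-\varrho)(1-p)\zresp^2\,\check c(p_{\mathrm{par}})+(1-\varrho)\zresp\,\check d(p_{\mathrm{par}}),
\]
which is a fixed linear combination of the four field values at $p_{\mathrm{par}}$. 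Because $\lambda$ is fixed, the interpolation in $\lambda$ between the two endpoint data sets is already carried out, so each field is a fixed function of $p_{\mathrm{par}}$.

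Second, use the definition of the fields. Each field is the piecewise-linear interpolation of its nine knot values in the variable $\xpq=p_{\mathrm{par}}/q$, with knots at $j/8$. In terms of $p_{\mathrm{par}}$, each field is therefore continuous and affine on every interval $[qj/8,q(j+1)/8]$, for $j=0,\dots,7$. These intervals cover $[0,q]\supseteq[t_{\mathrm{lo}},t_{\mathrm{hi}}]$. A linear combination of such functions with fixed coefficients is again continuous and affine on each of these intervals, and hence so is $\Phi$ as a function of $p_{\mathrm{par}}$.

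Third, cut $[t_{\mathrm{lo}},t_{\mathrm{hi}}]$ at the knots $qj/8$ lying strictly between $t_{\mathrm{lo}}$ and $t_{\mathrm{hi}}$. Each resulting piece lies inside a single interval $[qj/8,q(j+1)/8]$, so $\Phi$ is affine on it. By hypothesis $\Phi\ge0$ at both endpoints of each piece, since every endpoint is $t_{\mathrm{lo}}$, $t_{\mathrm{hi}}$ or an interior knot. An affine function on a segment is a convex combination of its endpoint values, so $\Phi\ge0$ on each piece, and therefore on all of $[t_{\mathrm{lo}},t_{\mathrm{hi}}]$. The degenerate case $t_{\mathrm{lo}}=t_{\mathrm{hi}}$ is immediate.

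The only point needing care is the first step: one must confirm that $p_{\mathrm{par}}$ enters $\Phi$ solely through the field values. In particular, $\varrho$ and $p$ are held fixed, so the coefficients $1-\varrho$, $\bar h(p)$ and $(1-p)\zresp^2$ are constants. This is read off directly from the definitions of $\mathcal R_{\check a},\dots,\mathcal R_{\check d}$. Note that admissibility is not needed here, since the lemma is stated for fixed $\varrho$ on all of $[t_{\mathrm{lo}},t_{\mathrm{hi}}]$.
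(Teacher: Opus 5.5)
Your proof is correct and is the paper's argument: $\Phi$ is affine in the four parent field values $(\check a,\check b,\check c,\check d)(p_{\mathrm{par}})$, these are affine in $p_{\mathrm{par}}$ between consecutive knots $qj/8$, and an affine function that is nonnegative at both ends of a segment is nonnegative on it. The paper states this in one line, and your check of which terms of $\Phi$ depend on $p_{\mathrm{par}}$ is accurate.
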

\begin{proof}
$\Phi$ is affine in $(\check a,\allowbreak \check b,\allowbreak \check c,\allowbreak \check d)(p_{\mathrm{par}})$, and these are affine in
$p_{\mathrm{par}}$ between consecutive knots; an affine function nonnegative at both ends of an interval is
nonnegative on it.
\end{proof}
The verifications use $t_{\mathrm{lo}}=1-\varrho$ and $t_{\mathrm{hi}}=\lambda(1-p)/(1+\lambda(1-p))$, so they cover the whole
interval of Definition~\ref{apB:def:adm} for $p_{\mathrm{par}}$; its last inequality (the retained-parent bound) is not used.

\begin{lemma}[small messages]\label{apB:lem:small}
Suppose a parameter interval satisfies the rational guards (checked by kernel evaluation for all $17$ intervals,
Lemma~\ref{cs:lem:O2side}(b)):
$1/3\le\lambda_{\mathrm{lo}}$, $\lambda_{\mathrm{hi}}\le7/3$, $\xpq_{\min}\le2^{-16}$, and in both data sets
$0\le\check a,\check b\le1$, $0\le\check c\le4$, $|\check d|\le4$, $0\le\check m\le2$, $\check l\ge1/1000$, and
$\check a,\check b\ge1/100$ at the knots $0$ and $1/8$. Then $\Phi(p,\varrho,p_{\mathrm{par}},\zresp)\ge0$ at every admissible triple
$(p,\allowbreak \varrho,\allowbreak p_{\mathrm{par}})$ with $p<\xpq_{\min}q$ and every real $\zresp$.
\end{lemma}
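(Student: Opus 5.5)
We will show that for $p<\xpq_{\min}q$ the positive terms of $\Phi$ that carry a factor $1/p$ dominate every other term, so the check is a crude size comparison. The plan is to write $\Phi$ as a quadratic in $\zresp$ plus a term independent of $\zresp$. Small $p$ makes $Y(p)=\log(\lambda(1-p)/p)$ large (at least $\log(2^{16}\lambda(1-p)/q)\ge 16\log2+\log(1+\lambda)-\log(1-p)^{-1}$, roughly $11$). The key positive terms are then $\check a(p)(\zresp-1)_+^2/(pY(p))$, $\check b(p)(1-\zresp)_+^2/(pY(p))$ and $-\check l\,\mathcal G(p)/p=\check l(Y(p)+\log(1-p))/p$. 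The first two are quadratic in $\zresp-1$ with coefficient at least about $1/(100\,pY)$, because $p/q<1/8$ puts $p$ in the first knot cell, where $\check a,\check b\ge1/100$ by interpolation of the guards at knots $0$ and $1/8$. The third is at least $\check l(Y-1)/p\ge Y/(1000p)\cdot(1-o(1))$.

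Next we bound the negative terms uniformly. We have $\varrho\le1$, $1-\varrho\le p_{\mathrm{par}}\le q$, $0\le g\le1$ and $\bar h\le1$, and admissibility together with the guards gives $|\check d|\le4$, $\check c\le4$ and $\check m\le2$. The $\zresp$-dependent negative contributions are then bounded as follows. The term $-\varrho(1-p)\zresp^2$ is at least $-\zresp^2$. The incoming quadratic terms $-\check a(p_{\mathrm{par}})\bar h\zresp_-^2-\check b(p_{\mathrm{par}})\bar h\zresp_+^2-(1-\varrho)\check c(p_{\mathrm{par}})(1-p)\zresp^2$ are at least $-(2+4)\zresp^2$. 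The linear terms from $\mathcal R_{\check d}$ and $\check m\mathcal R_{\check m}$ are at least $-C(|\zresp|+1)$ with $C$ around $10$. The remaining terms are $\bar\gamma\varrho g\ge0$, $\mathcal R_{\check c}$'s first part $\ge0$, and $\check m\varrho\mathcal M(p)\ge0$. So $\Phi\ge c_1(\zresp-1)^2/(pY)+c_2 Y/p-7\zresp^2-C(|\zresp|+1)$, where $c_1=1/100$ and $c_2\approx1/1000$. The $(\zresp-1)_\pm$ pieces combine to the full square $(\zresp-1)^2$ up to $\min(\check a,\check b)$.

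To finish, substitute $\zresp=1+u$ and use $\zresp^2\le2u^2+2$. We then need $(c_1/(pY)-14)u^2-C|u|+c_2Y/p-16-2C\ge0$ for all real $u$. Two facts suffice. First, $pY(p)$ is increasing for small $p$ and at most $2^{-16}q\cdot Y(2^{-16}q)\lesssim 2^{-16}\cdot 12$, so $c_1/(pY)\ge 50$ by a wide margin and the leading coefficient is positive. Second, the discriminant condition $C^2\le4(c_1/(pY)-14)(c_2Y/p-16-2C)$ holds because the right side is of order $c_1c_2/p^2\gtrsim 10^{-5}\cdot2^{32}$.

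The main obstacle is bookkeeping rather than ideas. It consists of three things: confirming that the interpolated fields at $p/q<1/8$ inherit the lower bounds $1/100$ (linear interpolation between knots $0$ and $1/8$ preserves them); bounding $|\check d(p_{\mathrm{par}})|$, which may be evaluated at any knot; and tracking $\mathcal O(p)>0$ so that $\mathcal R_{\check c}$ is well defined and nonnegative. We will also handle the $\lambda$-interpolation of constants by noting that every guard is convex in $\lambda$, so it holds on the whole interval.
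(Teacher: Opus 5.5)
Your proposal is correct and follows the paper's own argument. The own $\check a,\check b$ terms give a quadratic coefficient of order $1/(100\,pY(p))$; the paper uses $pY(p)\le1/1300$, so the coefficient is at least $13$. The root-log term $-\check l\,\mathcal G(p)/p$ contributes a constant of several hundred; the paper gets $580$ from $\mathcal G\le-9$ and $1/p>65536$. Every other term is bounded crudely by a multiple of $\zresp^2$ plus a linear term, and a quadratic in $\zresp-1$ with negative discriminant closes the proof.

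The only difference is bookkeeping. The paper charges $\zresp^2$ for the incoming $\check a,\check b$ terms, $4|\zresp-1|+4$ for $\check d$ and $2|\zresp-1|$ for $\check m$, and ends with the explicit form $7(\zresp-1)^2-12(\zresp-1)-6|\zresp-1|+570>0$. You keep the constants symbolic and check the discriminant instead.
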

\begin{proof}
Here $p<2^{-16}$ and $\xpq=p/q\le1/8$, so $\check a(p),\check b(p)\ge1/100$. Next $p\,Y(p)\le1/1300$:
$Y(p)\le\log(4/p)$ and $p\log(4/p)=p\log2^{18}+p\log(2^{-16}/p)\le\frac{25}2\cdot2^{-16}+(2^{-16}-p)$,
as $\log2^{18}\le25/2$ (Section~\ref{sec:computer:numeric}). So the own $\check a,\check b$ terms are at least $13(\zresp-1)_+^2+13(1-\zresp)_+^2=13(\zresp-1)^2$. The
incoming $\check a,\check b$ charges are at most $\zresp_-^2+\zresp_+^2=\zresp^2$ ($\bar h\le1$); the own $\check c$ term is nonnegative
and the incoming one at most $4\zresp^2$; the innovation term $\varrho(1-p)\zresp^2$ is at most $\zresp^2$ ($0<\varrho\le1$); the $\check d$ terms
are at least $-4|\zresp-1|-4$ (with $|\zresp|\le|\zresp-1|+1$); the $\check m$ term is at least $-2|\zresp-1|$ ($\mathcal M\ge0$); the source
term is nonnegative; and $-\check l\mathcal G(p)/p\ge580$, as $\mathcal G(p)\le-9$ (from $e^9\le8104$) and
$1/p>65536$. Hence $\Phi\ge13(\zresp-1)^2-6\zresp^2-6|\zresp-1|+576$, which equals $7(\zresp-1)^2-12(\zresp-1)-6|\zresp-1|+570>0$.
\end{proof}

\begin{lemma}[the local payment on the parameter intervals]\label{apB:lem:O2box}
For each of the $17$ parameter intervals of Data~\ref{apB:tab:O2rows} and every $\lambda$ in the interval:
(a)~$\Phi(p,\varrho,p_{\mathrm{par}},\zresp)\ge0$ for every admissible $(p,\allowbreak \varrho,\allowbreak p_{\mathrm{par}})$ with $\xpq_{\min}q\le p<q$ and every
real $\zresp$;
(b)~the leaf endpoint condition: $\Phi(q,\varrho,p_{\mathrm{par}},1)\ge0$, that is,
$\bar\gamma \varrho\,g(q,\varrho)-\varrho(1-q)-\check b(p_{\mathrm{par}})\frac q{\log(1+\lambda)}-(1-\varrho)(1-q)\check c(p_{\mathrm{par}})+(1-\varrho)\check d(p_{\mathrm{par}})-\check l\frac{\log(1+\lambda)}q\ge0$,
whenever $(q,\allowbreak \varrho,\allowbreak p_{\mathrm{par}})$ is admissible.
Verified by computer: compiled evaluations of the checker below (Section~\ref{sec:computer:families}).
\end{lemma}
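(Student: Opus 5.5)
The statement is a finite verification. The plan is to reduce each of its two parts to finitely many evaluations on rational boxes in the variables $(\lambda,\bar x,\varrho,p_{\mathrm{par}})$, with $\bar x=p/q$. For part (a), $\zresp$ ranges over all reals, so it must be eliminated first.

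\textbf{Eliminating $\zresp$.} For fixed $(\lambda,p,\varrho,p_{\mathrm{par}})$, $\Phi$ is piecewise quadratic in $\zresp$, with breakpoints only at $\zresp=0$ and $\zresp=1$ (from the positive and negative parts). On each of the three pieces $(-\infty,0]$, $[0,1]$ and $[1,\infty)$ it is an explicit quadratic $a\zresp^2+b\zresp+c$. The coefficient $a$ collects:
\begin{itemize}
\item the own terms $\check a(p)/(pY(p))$, $\check b(p)/(pY(p))$ and $\varrho\check c(p)/\mathcal O(p)$;
\item the charges $-\varrho(1-p)$, $-\bar h(p)\check a(p_{\mathrm{par}})$, $-\bar h(p)\check b(p_{\mathrm{par}})$ and $-(1-\varrho)(1-p)\check c(p_{\mathrm{par}})$.
\end{itemize}
So nonnegativity on a piece is equivalent to three conditions. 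The first is $a>0$, or $a\ge0$ with the linear part harmless. The second is nonnegativity at the endpoints of the piece. The third is $b^2\le4ac$ whenever the vertex $-b/(2a)$ lies inside the piece. All of these are closed-form expressions in $(\lambda,p,\varrho,p_{\mathrm{par}})$.

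\textbf{Removing $p_{\mathrm{par}}$ and $\lambda$.} By Lemma~\ref{apB:lem:knots}, $\Phi$ is affine in $p_{\mathrm{par}}$ between consecutive knots $qj/8$. Hence it suffices to check $p_{\mathrm{par}}$ at $t_{\mathrm{lo}}=1-\varrho$, at $t_{\mathrm{hi}}=\lambda(1-p)/(1+\lambda(1-p))$, and at the knots between them. One caveat: the minimum over $\zresp$ of a family affine in $p_{\mathrm{par}}$ is only concave in $p_{\mathrm{par}}$, not affine. So the elimination of $\zresp$ must be done after fixing $p_{\mathrm{par}}$ at one of these finitely many points; concavity of the minimum still makes the endpoint checks sufficient.

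All field values are linear in $\lambda$ between the two endpoint data sets. The remaining dependence on $\lambda$ is through $q$, $Y(p)$, $\mathcal O$, $\mathcal M$, $\mathcal G$ and $g$, and the plan is to enclose these by interval arithmetic over a $\lambda$-box rather than exploit monotonicity.

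\textbf{Covering and verification.} The domain is $\{2^{-16}\le\bar x<1,\ \varrho\ \text{admissible}\}$ times the $\lambda$-interval. I would cover it by rational boxes in $(\bar x,\varrho)$, refining adaptively; these are the cells counted in Data~\ref{apB:tab:O2rows}. Each box should stay between consecutive knots $j/8$ in $\bar x$, and also in $\lambda q$-scaled $p_{\mathrm{par}}$, so that all interpolants are affine on it. The constraint $\varrho\ge1/(1+\lambda(1-p))$ from Lemma~\ref{apB:lem:phys} trims $\varrho$. The integer $\hat k$ in $\mathcal O$ and $\mathcal M$ should be constant on a box, or the box is split where it changes; the functions $\max$ and $\min$ inside $g$, $f_q$ and $H_c$ are handled by splitting along their switching curves or by case evaluation.

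On each box, the interval evaluations of the three conditions above, with outward rounding, must certify positivity. Part (b) is a one-dimensional check in $\varrho$ at $\bar x=1$, $\zresp=1$, with the same treatment of $p_{\mathrm{par}}$.

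\textbf{Main obstacle.} The hard part is near $\bar x\to1$ (that is, $p\to q$, where $Y(p),\mathcal O(p)\to0$) and near $\bar x\to2^{-16}$. At $p\to q$ the own coefficients blow up, which helps $a$ but makes the interval enclosures of $b^2/(4a)$ loose. Near $\bar x=1$ I would first try the factored form $(\zresp-1)^2/Y$ together with a Taylor enclosure of $Y$ around $p=q$, plus heavy refinement. Where the certified margin is thin, the remaining risk is that the rational endpoint data simply fail, which the computation would detect.
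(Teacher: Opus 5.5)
Your reduction has the shape of the paper's checker: three response quadratics on $\zresp\le0$, $[0,1]$ and $[1,\infty)$, parent cases $1-\varrho$, $t_{\mathrm{hi}}$ and the intermediate knots via Lemma~\ref{apB:lem:knots}, and outward-rounded interval enclosures on cells. However, the step you call the main obstacle is misdiagnosed, and as planned it fails.

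By Lemma~\ref{cs:lem:O2side}(c), $\check a=\check b=\check c=0$ at the knot $\bar x=1$. So on $[7/8,1]$ each of these fields equals $8(1-\bar x)$ times its value at the knot $7/8$. Also $Y(p)\sim(1-\bar x)/(1-q)$ and $\mathcal O(p)=e^{Y(p)}-1\sim Y(p)$ as $\bar x\to1$. Hence the own coefficients $\check a(p)/(pY(p))$, $\check b(p)/(pY(p))$ and $\varrho\check c(p)/\mathcal O(p)$ do not blow up: they are $0/0$ quotients with finite limits.

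Part (a) requires every $\bar x<1$, so some cell of any finite cover reaches $\bar x=1$. On that cell the numerator enclosure contains $0$, so a direct interval quotient has lower end $0$. The resulting lower bound for the leading coefficient on either half-line is then at most $-\varrho(1-p)<0$, and the test fails on that cell however far you refine. A Taylor enclosure of $Y$ alone does not help, because the numerator still encloses $0$. The missing step is to cancel the common factor $1-\bar x$ symbolically and enclose the smooth positive functions $(1-\bar x)/Y(p)$ and $(1-\bar x)/\mathcal O(p)$ up to $\bar x=1$. The paper does exactly this: for $\bar x\ge7/8$ it takes the own Cauchy coefficients in removable form, which is why side condition (c) is imposed.

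There are two further points.
\begin{itemize}
\item On the bounded piece $\zresp\in[0,1]$ your three conditions are not an equivalence. A quadratic with nonpositive leading coefficient is nonnegative there as soon as both end values are, and the paper's checker accepts $\hat q_2\le0$ on that piece. Demanding $a>0$ there could reject true cases.
\item The cells of Data~\ref{apB:tab:O2rows} are cells in $(\lambda,\bar x)$ only; the paper does not box $\varrho$. It works in $\bar\pi=(1-\varrho)/q\in[0,\bar P]$, where the parent knots $j/8$ do not move with $\lambda$, and cuts $[0,\bar P]$ wherever $g$ or a parent field changes its affine formula. On each segment the response coefficients are polynomials of degree at most two in the segment coordinate. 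It certifies them through their Bernstein controls, with at most three de Casteljau subdivisions, and encloses the polynomial coefficients by a centred form in $(\lambda,\bar x)$.
\end{itemize}
Boxing $(\lambda,\bar x,\varrho)$ as you propose is a legitimate alternative once the removable forms are in place. It trades this exact treatment of $\varrho$ for the dependency problem and many more cells.
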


\begin{lemma}[side conditions of the variance-ratio bound]\label{cs:lem:O2side}
For each of the $17$ parameter intervals of Data~\ref{apB:tab:O2rows}: (a) the fields $\check a,\check b,\check c$ and the
constants $\check m,\check l$ of its data sets are nonnegative on the interval; (b) the rational conditions of
Lemma~\ref{apB:lem:small} hold; (c) $\check a=\check b=\check c=0$ at the knot $p/q=1$; (d) the data sets used by the
checker of Lemma~\ref{apB:lem:O2box} are the data sets of the interval. (Kernel evaluation, Section~\ref{sec:computer:arith};
rechecked by a program written from the text; least margin $3.83\cdot10^{-4}$.)
\end{lemma}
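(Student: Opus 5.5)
The lemma is a finite list of exact rational comparisons, so the plan is to reduce each clause to comparisons at the stored rational data and then evaluate them in exact arithmetic. There is no analysis beyond the reduction itself.

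The reduction rests on how the fields are built (paragraph ``The endpoint data''). At an activity $\lambda\in[\lambda_{\mathrm{lo}},\lambda_{\mathrm{hi}}]$, each knot value and each constant is the affine interpolation in $\lambda$ of its two stored rational values. A field at a cavity probability $p$ is the piecewise-linear interpolation in $\xpq=p/q$ of its nine knot values. Hence every value $\check a(p),\check b(p),\check c(p)$ at an activity of the interval is a convex combination of the $2\times 9$ stored knot values of that field, and $\check m,\check l$ are convex combinations of their two stored values. The same holds when $\check a(p_{\mathrm{par}})$, $\check b(p_{\mathrm{par}})$ and $\check c(p_{\mathrm{par}})$ are evaluated at a parent message.

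\textbf{(a) Nonnegativity.} By the convex-combination observation, (a) follows from nonnegativity of the $3\cdot 18+2\cdot 2$ stored rationals for each of the $17$ intervals. That is a finite sign check.

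\textbf{(b) The guards of Lemma~\ref{apB:lem:small}.} The guards $1/3\le\lambda_{\mathrm{lo}}$, $\lambda_{\mathrm{hi}}\le 7/3$ and $\xpq_{\min}\le 2^{-16}$ are comparisons of stored endpoints. The range conditions ($0\le\check a,\check b\le 1$, $0\le\check c\le 4$, $|\check d|\le4$, $0\le\check m\le2$, $\check l\ge 1/1000$) are required ``in both data sets''. They are therefore comparisons of each stored knot value and constant against the stated rational bounds. The conditions $\check a,\check b\ge 1/100$ at the knots $0$ and $1/8$ are likewise comparisons at the two knots $j=0,1$ in both sets. Since all these bounds are convex conditions, they also pass to every interpolated value. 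This is exactly how Lemma~\ref{apB:lem:small} uses them, for instance in $\check a(p)\ge 1/100$ for $\xpq\le1/8$.

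\textbf{(c) Vanishing at $p/q=1$.} It suffices to check that the stored knot values at $j=8$ of $\check a,\check b,\check c$ are $0$ in both data sets. Linear interpolation in $\lambda$ then gives $0$ at every activity of the interval.

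\textbf{(d) Consistency of the data.} This is a syntactic statement. The rational arrays read by the compiled checker of Lemma~\ref{apB:lem:O2box} must coincide, entry by entry, with the data sets attached to each interval in the supplementary data. I would establish it by having the kernel compare the two definitions term by term, by definitional or decidable equality of the rational lists. The independent program written from the text would then re-read the data and repeat (a)--(c), reporting the least margin over all strict or nonstrict comparisons; the reported value is $3.83\cdot10^{-4}$.

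I expect (d) to be the main obstacle. It is not mathematically hard, but it is where the proof meets the implementation: one must ensure that the checker's knot convention ($\xpq=j/8$, interpolation first in $\lambda$ and then in $\xpq$) is literally the one used in the text. Otherwise the convex-combination reduction in (a)--(c) would not apply to the objects actually verified. I would first fix a single shared definition of the interpolated field, used both by the checker and by the statements of Lemmas~\ref{apB:lem:small} and~\ref{apB:lem:O2box}, so that (d) becomes a reflexivity check.
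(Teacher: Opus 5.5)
Your proposal is correct and takes the paper's route: the lemma is settled by kernel evaluation of finitely many exact rational comparisons on the stored knot values and constants, with (d) a decidable equality of the explicit data. Your convex-combination reduction is the implicit reason these finitely many comparisons cover every $\lambda$ in the interval and every $p/q\in[0,1]$. The order of the two interpolations you flag in (d) does not matter: interpolating linearly in $\lambda$ and then piecewise-linearly in $p/q$ gives the same function as the reverse order.
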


\paragraph{The checker.}\label{apB:sec:O2cert}
In the coordinates $\xpq=p/q\in[2^{-16},1]$ and $\bar\pi=(1-\varrho)/q$ (the parent's marginal divided by $q$), admissible
triples have $0\le\bar\pi\le\bar P(\lambda,\xpq)=(1-q\xpq)/(1-q^2\xpq)$, where
$q\bar P=\lambda(1-p)/(1+\lambda(1-p))$. The checker verifies at every point $(\lambda,\allowbreak \xpq)$ of the interval
$\times[2^{-16},1]$ that for every $\bar\pi\in[0,\bar P]$ and every parent case
$p_{\mathrm{par}}/q\in\{\bar\pi,\bar P\}\cup\{j/8:1\le j\le7,\ \bar\pi\le j/8\le\bar P\}$,
$\Phi(q\xpq,1-q\bar\pi,p_{\mathrm{par}},\zresp)\ge0$ for every real $\zresp$ if $\xpq<1$, and for $\zresp=1$ (the leaf endpoint
condition) if $\xpq=1$. By Lemma~\ref{apB:lem:knots} this gives parts (a) and (b) of Lemma~\ref{apB:lem:O2box}.

A cell is a product of two closed intervals in the coordinates
$(\lambda,\allowbreak \xpq)$. Each interval has twelve root boxes $[\lambda_{\mathrm{lo}},\lambda_{\mathrm{hi}}]\times[\xpq_i,\xpq_{i+1}]$
($0\le i\le11$), where $(\xpq_0,\allowbreak \dots,\allowbreak \xpq_{12})=(2^{-16},\allowbreak 2^{-12},\allowbreak 2^{-10},\allowbreak 2^{-8},\allowbreak 2^{-6},\allowbreak \tfrac18,\allowbreak \tfrac28,\allowbreak \dots,\allowbreak \tfrac78,\allowbreak 1)$, each covered by a binary subdivision
(each piece split in $\lambda$, split in $\xpq$, or a leaf; a leaf may be bisected at most four more times). For each parent case,
$[0,\bar P]$ is cut where the source $g$ or a parent field changes its affine formula; on each resulting segment the three response
quadratics of $\Phi$ (in $\hat z=-\zresp$ on $\zresp\le0$, $\hat z=1-\zresp$ on $[0,1]$, $\hat z=\zresp-1$ on $\zresp\ge1$) have coefficients of
degree at most two in the segment coordinate, and each of their degree-two Bernstein controls
$\hat q_2\hat z^2+\hat q_1\hat z+\hat q_0$ is shown to be nonnegative on the response range: on a half-line,
$\hat q_0,\hat q_2\ge0$ and ($\hat q_1\ge0$, or $4\hat q_2\hat q_0\ge\hat q_1^2$, or $\hat q_2>0$ and
$\hat q_0-\hat q_1^2/(4\hat q_2)\ge0$); on $[0,1]$, both end values $\ge0$ and ($\hat q_2\le0$, or
$\hat q_0+\hat q_1/2\ge0$, or the derivative has one sign at both ends, or the minimum or discriminant test). A segment
may be subdivided by de Casteljau up to three levels. The coefficients are bounded on the cell by the centred
(mean-value) form in $(\lambda,\allowbreak \xpq)$, with slopes propagated through every operation, in fixed-point
interval arithmetic with $64$ fractional bits and outward rounding, $\exp$ being enclosed by argument halving, the
Taylor polynomial of degree $17$ with its remainder bound and repeated squaring, and $\log t$ by
$2\operatorname{atanh}((t-1)/(t+1))$ with twenty terms and a geometric tail (Section~\ref{sec:computer:arith}). For $\xpq\ge7/8$ the own Cauchy coefficients are taken in
removable forms (they vanish at $\xpq=1$), which needs $\check a=\check b=\check c=0$ at the knot
$\xpq=1$ (Lemma~\ref{cs:lem:O2side}(c)). The checker is proved sound for arbitrary data (Section~\ref{sec:computer:arith}),
so twelve passing root boxes give parts (a) and (b) of Lemma~\ref{apB:lem:O2box} for the interval, whose data sets are those
used by the checker (Lemma~\ref{cs:lem:O2side}(d)).

\begin{remark}[scope of the variance-ratio bound]\label{apB:rem:O2not}
The bound is claimed for every independent set of maximum weight (all ties), whose maximality enters only through
$\pi(A')\le\pi(A)$, and not for the maximum independent set of Section~\ref{p1:sec:overview}.
\end{remark}

% ---------------------------------------------------------------------------------------------------------------
\subsection{The Laplace-transform bounds}\label{apC:sec:O3}

This subsection proves the Laplace-transform bounds (Theorem~\ref{p2:thm:O3}); they give the Laplace constraints of the
MGF criterion in the no-valley inequality on the parameter domain (Theorem~\ref{p2:thm:O4}).
Theorem~\ref{apC:thm:T32} proves the bound for every forest from a numerical hypothesis, the unit inequality, which
Lemma~\ref{apC:lem:O3box} verifies on cells.

\begin{theorem}[the Laplace-transform bounds]\label{p2:thm:O3}
Let $(\tau_0,\tau_1,\tau_2,\tau_3,\tau_4)=(0,1/20,1/10,1/5,3/10)$, in this order, and let $\ell_r(\iota)$ be the
verified rate of index $r$ of the subinterval $\iota$. For every forest, every $\lambda\in[1/3,7/3]$ with
$\iota=\iota(\lambda)$ and every independent set $A$ of maximum weight at $\lambda$,
\[
\mathbb{E}_\lambda(1-q+q\tau_r)^M\le e^{-\ell_r(\iota)\,\muM}\qquad(r<5).
\]
\end{theorem}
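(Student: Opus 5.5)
The plan is to follow the template of the variance-ratio bound: a tree recursion for a generating function of the free count, paid vertex by vertex, with the maximality of $A$ used only at the end. Write $s=1-q+q\tau_r\in(0,1]$. By the master formula of Proposition~\ref{p2:prop:mixture}, $\mathbb{E}_\lambda s^M=\sum_J w(J)s^{m(J)}$ is a ratio of weighted partition functions: each vertex $a\in A$ has weight $\lambda$ if $\tau_r=1$, and in general
\[
\mathbb{E}s^M=\frac{I_A(F;\lambda,\lambda_r')}{I(F;\lambda)},
\]
where $I_A(F;\lambda,\lambda'_r)$ is the independence polynomial with activity $\lambda'_r=\lambda\tau_r$ on $A$ and $\lambda$ on $C$. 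This holds because $(1+\lambda)s=1+\lambda\tau_r$. So $-\log\mathbb{E}s^M$ is a difference of two log-partition functions. By (M1) both are additive over components, and the target $\ell_r\muM=\ell_r\pi(A)/q$ is additive as well. It therefore suffices to prove, for each component,
\[
\log I(F;\lambda)-\log I_A(F;\lambda,\lambda\tau_r)\ \ge\ (\ell_r/q)\,\pi(A),
\]
with (L1) and (L2) available on the component.

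I would root each component and telescope along the leaves-to-root recursion, as in Lemma~\ref{apB:lem:msg}. Both partition functions satisfy the message recursion, with messages $p_v$ for the original law and $p^{(r)}_v$ for the tilted one, where $p^{(r)}_v/(1-p^{(r)}_v)=\lambda\tau_r^{1_A(v)}\prod(1-p^{(r)}_{v'})$. The log-ratio is then a sum over vertices of $\log\frac{1-p^{(r)}_v}{1-p_v}$-type local terms, i.e. differences of log-masses. The right side is a sum of $\pi_v=\varrho_vp_v$ over $v\in A$. I would introduce a potential $\Psi$ of the pair of log-masses $(\varphi_v,\varphi^{(r)}_v)$ of each rooted subtree, and prove by induction the inequality
\[
\text{(log-ratio of subtree)}\ \ge\ (\ell_r/q)\,\text{(weight of }A\text{ in subtree, at vacant parent)}+\Psi ,
\]
carrying the parent-occupied variant as in the reserves $R^0,R^1$. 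One step of this induction is a one-parent inequality in finitely many real parameters (the \emph{unit inequality}). It is verified on interval-arithmetic cells over $\lambda\in\Ihat_\iota$, the message range, and the parent-vacancy probability; this is Lemma~\ref{apC:lem:O3box}. The rates $\ell_r(\iota)$ are the band rates, refined on the $26$ subintervals for $\tau_0$, per Lemma~\ref{cs:lem:tables}(b).

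The maximality of $A$ enters only through (L1) and (L2). For $\tau_0=0$ these are essential. A leaf of $A$ attached to a vertex of $C$ contributes a positive log-ratio, so $A$ must contain those leaves. Single edges are handled explicitly: the component contributes $\log\frac{1+2\lambda}{1+\lambda}$ against $\ell_0\lambda/(1+2\lambda)\cdot(1+\lambda)/\lambda$. I would check this case together with isolated vertices separately, as base cases.

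The main obstacle is choosing $\Psi$, since the weight $\pi_v=\varrho_vp_v$ depends on the parent's vacancy. The subtree inequality must therefore be affine in $\varrho$, and the induction has to carry two reserves. I would first try $\Psi$ affine in $\varphi_v$ and in $\varphi^{(r)}_v$ with $\lambda$-dependent coefficients. The reason is that the $\check l$ term in the variance-ratio payment telescoped exactly, and affine log-mass terms telescope the same way through $Y_v=\sum\varphi_{v'}$. The cellwise computer check would then absorb the remaining nonlinearity. Near $p\to0$ I would use an analytic small-message argument like Lemma~\ref{apB:lem:small}.
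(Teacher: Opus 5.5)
Your outline reduces the theorem to a one-parent inequality that it never states. The potential $\Psi$, the two-reserve invariant, and therefore the ``unit inequality'' are all left open; you say yourself that choosing $\Psi$ is the main obstacle. Beyond the exact identities $-\log\mathbb E_\lambda s^M=\sum_x(\varphi_x-\varphi^{(r)}_x)$ and $q\muM=\pi(A)$, nothing is proved.

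Citing Lemma~\ref{apC:lem:O3box} does not fill this gap. That lemma certifies one specific inequality $U(\lambda,\tau,a_{\mathrm U},\ell)$. It is stated in the variables $(Y_A,Y_C,\pmsg)$ only, with untilted messages and no parent-vacancy variable. The rates $\ell_r(\iota)$ in the statement are exactly the rates at which that $U$, together with the isolated-vertex bound $\ell\le\log\frac{1+\lambda}{1+\lambda\tau}$, was verified on the $30$ bands and the $26$ additional intervals. Nothing in your plan shows that an invariant built on exact tilted messages, with $\Psi$ affine in $(\varphi_v,\varphi^{(r)}_v)$, holds at these rates. You would have to state your inequality and certify it yourself.

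The missing ideas are the reductions in Theorem~\ref{apC:thm:T32} that make a checkable local inequality exist. The paper roots each nontrivial component at a vertex of $C$ and then does three things.
\begin{itemize}
\item[(i)] It replaces the tilted recursion by an upper bound in untilted messages, $\mathcal T(X)\le\prod_{v\in C\cap X}\bigl(p_v+(1-p_v)\prod_u(1-(1-\tau)p_u)\bigr)$, where $u$ runs over the children of $v$ in $A$.
\item[(ii)] It uses the concavity bound $-\log(1-(1-\tau)p)\ge\rho\,\mathcal L(p)$, with $\rho=\log\frac{1+\lambda}{1+\lambda\tau}/\log(1+\lambda)$. Those children then enter only through $Y_A=\sum_u\mathcal L(p_u)$.
\item[(iii)] It writes $\pi_u=(1-\pi_v)p_u$ for $u\in A$ with parent $v\in C$, and bounds $\pi_v\ge\pi_{\min}(p_v)=p_v/(1+\lambda(1-p_v))$.
\end{itemize}
Step (iii) removes precisely the vacancy dependence you identify as the obstacle, so no second reserve is carried.

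The potential is then $\Psi(p)=a_{\mathrm U}\mathcal L(p)+a_{\mathrm U}\omega Y(p)$ on $C$, with the credit $a_{\mathrm U}Y(p)+[p<q]\,a_{\mathrm U}\omega\hat Y_+(Y(p))$ on $A$, where $\omega=\lambda(1-\omega)^2$. The term in the children's log-mass $Y(p)$ is essential. In this reduction a vertex of $C$ whose children all lie in $C$ earns no gain. A potential $a\varphi_v$ with $a>0$ would then need $\log(1+\lambda e^{-Y_C})\le Y_C$, which is false as $Y_C\to0$. With the paper's choice of $\omega$, the boundary case $Y_A=0$ does hold for $\lambda\le6$ (Step~6).
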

Here $\mathbb{E}_\lambda(1-q+q\tau)^M=\mathbb{E}_\lambda\tau^{|S\cap A|}$, the generating function of the count
$|S\cap A|$. The rate $\ell_r(\iota)$ is that of the band containing $\Ihat_\iota$, except $\ell_0$ on the subintervals that
carry one of $26$ additional intervals at $\tau=0$ (Lemma~\ref{cs:lem:tables}(b)); the rates and the constants $a_{\mathrm U}$
below are in the supplementary data (Section~\ref{sec:computer}).

For real $t$ and $0<\pmsg<1$ put $\mathcal L(\pmsg)=-\log(1-\pmsg)$, $Y(\pmsg)=\log(\lambda(1-\pmsg)/\pmsg)$,
$\hat Y(t)=\log\lambda-t-\log(1-e^{-t})$ and $\hat Y_+(t)=\max\big(\hat Y(t),\,0\big)$ for $t>0$,
$\omega=1-2/(1+\sqrt{1+4\lambda})$ (the root in $(0,1)$ of $\omega=\lambda(1-\omega)^2$),
$\rho=\log\big((1+\lambda)/(1+\lambda \tau)\big)/\log(1+\lambda)$, $\mathrm{msg}_\lambda(t)=\lambda e^{-t}/(1+\lambda e^{-t})$,
$\pi_{\min}(p)=p/(1+\lambda(1-p))$ and $\ell^\ast(t)=(\ell/q)\big(1-\pi_{\min}(\mathrm{msg}_\lambda(t))\big)$. The \emph{unit
inequality} $U(\lambda,\tau,a_{\mathrm U},\ell)$ states that
\begin{multline*}
-\log\big(\bar p+(1-\bar p)e^{-\rho Y_A}\big)+a_{\mathrm U}(1-\omega)Y_C+[Y_C>0]\,a_{\mathrm U}\omega\,\hat Y_+(Y_C)-a_{\mathrm U}\mathcal L(\bar p)\\
+Y_A\,\frac{a_{\mathrm U}Y(\pmsg)+[\pmsg<q]\,a_{\mathrm U}\omega\,\hat Y_+(Y(\pmsg))-a_{\mathrm U}\omega\mathcal L(\pmsg)-\ell^\ast(Y_A+Y_C)\,\pmsg}{\mathcal L(\pmsg)}\ \ge\ 0,
\end{multline*}
with $\bar p=\mathrm{msg}_\lambda(Y_A+Y_C)$, for all reals $Y_A>0$, $Y_C\ge0$ and $0<\pmsg\le\min(q,1-e^{-Y_A})$. We write
$\mathcal L_\tau(p)=-\log(1-(1-\tau)p)$.

\begin{theorem}[Laplace bound from the unit inequality]\label{apC:thm:T32}
Let $A$ be an independent set of a forest such that every vertex $v\notin A$ with a neighbour $w$ has a second
neighbour, or else $w\in A$ and $v$ is the only neighbour of $w$ (the \emph{leaf condition}). Let $0<\lambda\le6$,
$0\le\tau\le1$, $a_{\mathrm U}\ge0$, $\ell\le\log\frac{1+\lambda}{1+\lambda\tau}$ (the \emph{isolated-vertex bound}), and
suppose
$U(\lambda,\tau,a_{\mathrm U},\ell)$. Then $\mathbb{E}_\lambda(1-q+q\tau)^M\le e^{-\ell \muM}$ for the forest mixture of $A$.
\end{theorem}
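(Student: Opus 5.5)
The plan is to prove the bound component by component and then by induction over rooted subtrees, in the same spirit as the proof of the variance bound (Theorem~\ref{apB:thm:band}).

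\emph{Reduction to a per-component invariant.} By the identity $\mathbb{E}_\lambda(1-q+q\tau)^M=\mathbb{E}_\lambda\tau^{|S\cap A|}$ and $q\muM=\pi(A)=\mu_A(V(F))$ (Proposition~\ref{p2:prop:mixture}), the claim reads
\[
-\log\mathbb{E}_\lambda\tau^{|S\cap A|}\ \ge\ (\ell/q)\,\mu_A(V(F)).
\]
Both sides are additive over components by (M1). It therefore suffices to prove the inequality for each component $X$. For this, define for every rooted subtree $(X,v)$
\[
G(X)=\mathbb{E}_{P_X}\tau^{|S_X\cap A|},\qquad G^1(X,v)=G(X-v),
\]
in analogy with $R^0$ and $R^1$. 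By conditioning on $v$, (M2) gives the recursion
\[
G(X)=(1-p_v)\,G(X-v)+p_v\,\tau^{\mathbf 1_v}\,G(X-\NF[v]),
\]
and $G(X-v)$, $G(X-\NF[v])$ factor over the branches.

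\emph{The invariant.} The invariant I aim for is
\[
-\log G(X)-(\ell/q)\,\mu_A(X)\ \ge\ a_{\mathrm U}\,\Psi\bigl(\mathbf 1_v,Y_v\bigr),
\]
together with a companion bound for the parent-occupied version $-\log G(X-v)-(\ell/q)\mu_A(X-v)$. Here $\Psi$ is a potential in the children's log-mass that is read off from the terms of $U$: $(1-\omega)Y_C+\omega\hat Y_+(Y_C)$ for a vertex of $C$, and the quantity $Y(\pmsg)+\omega\hat Y_+(Y(\pmsg))-\omega\mathcal L(\pmsg)$, normalized by $\mathcal L(\pmsg)$, for a vertex of $A$ with message $\pmsg$. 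The charge $\ell^\ast$ replaces $\ell/q$ because the mean $\mu_A$ of a child under the parent-vacant law is at least $\pi_{\min}$ times smaller than its bound under the parent-occupied law. The factor $1-\pi_{\min}(\mathrm{msg}_\lambda(t))$ records this.

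\emph{The inductive step.} At a vertex of $C$, split its children into those in $A$, with total log-mass $Y_A$, and those in $C$, with total $Y_C$. Write $-\log G(X)=-\log\bigl(\bar p\,\Pi_1+(1-\bar p)\Pi_0\bigr)$, where $\bar p=\mathrm{msg}_\lambda(Y_A+Y_C)$. Then bound the ratio $\Pi_1/\Pi_0$ from below using the inductive hypotheses. Each $A$-child contributes a factor at least $e^{-\rho\varphi}$, coming from the isolated-vertex bound $\ell\le\log\frac{1+\lambda}{1+\lambda\tau}$ and the definition of $\rho$. This produces the term $-\log(\bar p+(1-\bar p)e^{-\rho Y_A})$.

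The remaining contributions of the $A$-children are a sum over children weighted by $\varphi_i$. By writing it as $Y_A$ times the average, with weights $\varphi_i/Y_A$, of the per-child quotient, it is bounded below by $Y_A$ times the infimum of that quotient over admissible messages. Admissibility means $0<\pmsg\le\min(q,1-e^{-Y_A})$, the second bound because $\varphi_i\le Y_A$. The result is exactly the last term of $U$, so $U(\lambda,\tau,a_{\mathrm U},\ell)$ closes the step. The vertices of $A$ themselves, whose children all lie in $C$, satisfy the analogous but simpler step, which is absorbed in the same inequality after the substitution $Y_C\mapsto Y(\pmsg)$.

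\emph{Base cases and the main obstacle.} The base cases are the following.
\begin{itemize}
\item A childless vertex of $A$ is handled by the isolated-vertex bound.
\item A vertex of $C$ cannot be a childless nonroot vertex, by the leaf condition, except in the single-edge case, where the leaf condition puts the partner in $A$.
\item At the root, $\Psi\ge0$ (with $0<\lambda\le6$ guaranteeing that $\omega\in(0,1)$ and the signs needed) gives the claim.
\end{itemize}
I expect the main obstacle to be choosing $\Psi$ so that the $C$-children also decouple additively. The log of a mixture is not additive, and the step needs a concave minorant in $Y_C$, namely the $\omega$-split with $\hat Y_+$. I would first verify the two-child exchange that shows the C-part depends on the children only through $Y_C$, using the fixed point $\omega=\lambda(1-\omega)^2$ as the slope at which the potential is self-consistent along a path.
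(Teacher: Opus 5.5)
\textbf{There is a genuine gap.} Your outline has the right outer shell: reduction to components, induction up a tree rooted at a vertex of $C$, and the averaging with weights $\varphi_i/Y_A$ that turns $U$ into a bound for the children in $A$ (exactly how the paper uses $U$). But the inductive step does not go through as proposed, and the idea that makes it work is missing.

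With an invariant for the exact transform $G(X)$ and the subtree mean $\mu_A(X)$, the step has to pass $-\log$ through the mixture $(1-p_v)G(X-v)+p_vG(X-\NF[v])$. This fails in three ways.
\begin{itemize}
\item Lower bounds on $-\log G(X-v)$ and $-\log G(X-\NF[v])$ give no bound on their ratio. Your plan to bound $\Pi_1/\Pi_0$ from below needs such a ratio bound.
\item Since $-\log\bigl((1-p)e^{-x_0}+pe^{-x_1}\bigr)\le(1-p)x_0+px_1$, the linear term $(\ell/q)\bigl((1-p_v)\mu_A(X-v)+p_v\mu_A(X-\NF[v])\bigr)$ falls short by a Jensen gap. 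Your unspecified $\Psi$ and companion bound would have to pay for it.
\item A child $w\in C$ contributes the factor $G(X_w)/G(X_w-w)$ to that ratio, and this factor can exceed $1$. For an edge $wu$ with $u\in A$ a leaf, $\tau=0$, $\lambda=1$, it equals $(2/3)/(1/2)=4/3$. So the $C$-children do not decouple, and you name this as the main obstacle without resolving it.
\end{itemize}

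The paper removes the obstacle before any induction, by two decoupling steps.
\begin{itemize}
\item The product bound $G(X)\le\prod_{v\in C\cap X}\mathcal V_v$, where $\mathcal V_v=p_v+(1-p_v)\prod_u\bigl(1-(1-\tau)p_u\bigr)$ over the children $u\in A$ of $v$. It bounds the parent-vacant and the parent-occupied transforms of a $C$-child by the same product, since $\prod_u(1-(1-\tau)p_u)\le\mathcal V_v\le 1$. So the logarithm becomes a sum of local terms.
\item The weight is charged to the $C$-parents: $\sum_{u\in A\cap X}\pi_u=\sum_{v\in C\cap X}(1-\pi_v)p^A_v\le\sum_v\bigl(1-\pi_{\min}(p_v)\bigr)p^A_v$, with $p^A_v=\sum_up_u$. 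This uses the global marginal $\pi_v$ and the bound $\pi_v\ge\pi_{\min}(p_v)$. This, not a comparison of subtree means, is where $\ell^\ast$ comes from.
\end{itemize}
The local surpluses $-\log\mathcal V_v-\ell^\ast(Y(p_v))p^A_v$, summed over a subtree, then dominate the potential $\Psi(p)=a_{\mathrm U}\mathcal L(p)+a_{\mathrm U}\omega Y(p)$, a function of the message alone. The inequality $\hat Y_+\bigl(\sum_i\mathcal L(x_i)\bigr)\le\sum_iY(x_i)$ for $x_i\in(0,q]$ handles the $C$-children and the vertices of $A$.

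Three further points are off.
\begin{itemize}
\item The factor $e^{-\rho Y_A}$ comes from $\mathcal L_\tau(p)\ge\rho\mathcal L(p)$, that is, concavity of $x\mapsto x^\rho$ together with $(1-q)^\rho=\frac{1+\lambda\tau}{1+\lambda}$. It does not come from the isolated-vertex bound, which is used only for a component consisting of a single vertex of $A$.
\item A vertex of $C$ with no child in $A$ has $Y_A=0$, where $U$ says nothing. It needs the separate inequality $(1-\omega)\varphi+\omega\hat Y_+(\varphi)\ge\mathcal L(\mathrm{msg}_\lambda(\varphi))$ for $\varphi>0$, and this is the only place where $\lambda\le6$ is used.
\item $\Psi\ge0$ at the root holds for every $\lambda>0$, so it does not need $\lambda\le6$.
\end{itemize}
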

Every independent set of maximum weight at $\lambda>0$ satisfies the leaf condition, which is parts (L1) and (L2) of
property (b) of Section~\ref{p2:sec:mixture}.

\emph{Idea.} The transform factors over the components; each component is bounded by a product of vertex factors
along the tree recursion of the messages, and an induction up each tree shows that the unit inequality makes the
logarithms of these factors pay for the weight of $A$.
\begin{proof}
\emph{Step 0: reduction to the hard-core model.} For a vertex set $X'$ let
$I^\tau(X')$ be the sum of $\lambda^{|\mathcal I\setminus A|}(\lambda\tau)^{|\mathcal I\cap A|}$ over the independent
$\mathcal I\subseteq X'$, and $\mathcal T(X')=I^\tau(X')/I(F[X'];\lambda)$; $\mathcal T(V(F))$ is the generating function of
$|S\cap A|$, and $\mathcal T$ is multiplicative over unions with no edges between the parts. The decomposition
$S=J\cup(S\cap A)$ of Section~\ref{p2:sec:mixture} and $\lambda^{|J|}(1+\lambda)^M(1-q+q\tau)^M=\lambda^{|J|}(1+\lambda\tau)^M$
give $\mathbb{E}_\lambda(1-q+q\tau)^M=\mathcal T(V(F))$, and $q\muM=\pi(A)=\sum_{u\in A}\pi_u$. If $\ell<0$ the claim is
trivial ($\mathcal T\le1$). Otherwise it suffices to prove, for each component $X$,
$\mathcal T(X)\le\exp\big(-\frac\ell q\sum_{u\in A\cap X}\pi_u\big)$ (marginals are local). If $X=\{v\}$ with $v\notin A$,
$\mathcal T(X)=1$ and there is no weight of $A$. If $X=\{u\}$ with $u\in A$,
$\mathcal T(X)=\frac{1+\lambda\tau}{1+\lambda}\le e^{-\ell}$ and $\pi_u=q$: this is the only use of the isolated-vertex
bound. Otherwise $X$ contains an edge, hence a vertex $v\notin A$; root $X$ at $v$. By the leaf condition every vertex of
$C$ in $X$ has a child, and no vertex of $C$ other than the root is a leaf.

\emph{Step 1: messages.} For a vertex $v$ with subtree $X_v$ let $p_v=\lambda I(F[X_v-\NF[v]];\lambda)/I(F[X_v];\lambda)$, the
probability that $v$ is occupied in the hard-core model of $X_v$ alone. Then $p_v/(1-p_v)=\lambda\prod_{v'}(1-p_{v'})$
over the children $v'$, so $\sum_{v'}\mathcal L(p_{v'})=Y(p_v)$; $0<p_v\le q$, with equality exactly when $v$ has no
child; and $p=\mathrm{msg}_\lambda(s)$ whenever $Y(p)=s$.

\emph{Step 2: the product bound.} For $v\in C$ put $\mathcal V^A_v=\prod_u(1-(1-\tau)p_u)$ over its children $u\in A$,
and $\mathcal V_v=p_v+(1-p_v)\mathcal V^A_v$. Then $\mathcal T(X)\le\prod_{v\in C\cap X}\mathcal V_v$. Indeed
$\mathcal T(X')=p_{v_0}\tau^{[v_0\in A]}\mathcal T(X'-\NF[v_0])+(1-p_{v_0})\mathcal T(X'-v_0)$ for a vertex set $X'$ rooted
at $v_0$, with $\mathcal T(X'-v_0)=\prod_{v'}\mathcal T(X_{v'})$ and $\mathcal T(X'-\NF[v_0])=\prod_{v'}\mathcal T(X_{v'}-v')$
over the children $v'$ of $v_0$; induction proves $\mathcal T(X'-v_0)\le\mathcal V^A_{v_0}\mathcal V_{<v_0}$ and
$\mathcal T(X')\le(p_{v_0}\tau^{[v_0\in A]}+(1-p_{v_0})\mathcal V^A_{v_0})\mathcal V_{<v_0}$, where $\mathcal V_{<v_0}$ is
the product of $\mathcal V_v$ over the vertices $v$ of $C$ strictly below $v_0$, using $\mathcal V^A_v\le\mathcal V_v\le1$
and $\mathcal V^A_v=1$ at vertices of $A$ (whose children are in $C$).

\emph{Step 3: the gain.} $\mathcal V^A_v=\exp(-\sum_u\mathcal L_\tau(p_u))$, and $\mathcal L_\tau(p)\ge\rho\mathcal L(p)$
for $0\le p\le q$: $x\mapsto x^\rho$ is concave for $0\le\rho\le1$ and $(1-q)^\rho=\frac{1+\lambda\tau}{1+\lambda}$, so
$(1-p)^\rho\ge1-(1-\tau)p$. Hence $\mathcal V_v\le p_v+(1-p_v)e^{-\rho Y_A}$ with $Y_A=\sum_u\mathcal L(p_u)$.

\emph{Step 4: the weight of $A$.} With $p^A_v=\sum_up_u$ over the children $u\in A$ of $v$:
$\sum_{u\in A\cap X}\pi_u=\sum_{v\in C\cap X}(1-\pi_v)p^A_v\le\sum_{v\in C\cap X}(1-\pi_{\min}(p_v))p^A_v$. The identity
conditions each $u$ on its parent; the inequality is $\pi_v\ge\pi_{\min}(p_v)$, because the parent of $v$ is unoccupied
with probability at least $1/(1+\lambda(1-p_v))$.

\emph{Step 5: children's messages.} For a nonempty family $x_i\in(0,q]$ with $Y_0=\sum_i\mathcal L(x_i)$:
$\hat Y_+(Y_0)\le\sum_iY(x_i)$. Each $Y(x_i)\ge0$; with $j'$ terms and $\prod(1-x_i)=e^{-Y_0}$, the claim
$\hat Y(Y_0)\le\sum Y(x_i)$ reads $\lambda\prod x_i\le\lambda^{j'}(1-e^{-Y_0})$, which follows by induction on $j'$ from
$x_i\le q\le\lambda$.

\emph{Step 6: the boundary $Y_A=0$.} For $\varphi>0$: $(a_{\mathrm U}-a_{\mathrm U}\omega)\varphi+a_{\mathrm U}\omega\hat Y_+(\varphi)
-a_{\mathrm U}\mathcal L(\mathrm{msg}_\lambda(\varphi))\ge0$. Take $a_{\mathrm U}=1$; let $\lambda_\ast=(1+\sqrt{1+4\lambda})/2$,
so $\lambda=\lambda_\ast(\lambda_\ast-1)$, $\omega=1-1/\lambda_\ast$, and $\lambda\le6$ gives $\lambda_\ast\le3$. Put
$\hat x=e^\varphi$. For $\hat x\le1+\lambda$ the expression is
$\mathcal W(\hat x)=(2-\omega)\log\hat x-\omega\log(\hat x-1)+\omega\log\lambda-\log(\hat x+\lambda)$; its derivative in
$\log\hat x$ has the sign of $(\hat x-\lambda_\ast)((2-\lambda_\ast)\hat x+\lambda_\ast^2-1)$, and the second factor is
positive on $[1,1+\lambda]$ for $1<\lambda_\ast\le3$ (for $\lambda_\ast>2$ it is at least
$1+\lambda_\ast(\lambda_\ast-1)(3-\lambda_\ast)$); since $\mathcal W(\lambda_\ast)=0$, $\mathcal W\ge0$. For
$\hat x>1+\lambda$, $\hat Y_+=0$ and
$(1-\omega)\varphi\ge(1-\omega)\log(1+\lambda)\ge\log\frac{1+2\lambda}{1+\lambda}\ge\mathcal L(\mathrm{msg}_\lambda(\varphi))$, the middle
inequality being $\mathcal W(1+\lambda)\ge0$. This is the only use of $\lambda\le6$.

\emph{Step 7: the unit step.} Let $v\in C$ have children in $A$ with messages $p_i\in(0,q]$ and children in $C$ with
messages $p_j\in(0,q]$, not both empty (sums over $i$ and $j$ run over these children), $Y_A=\sum_i\mathcal L(p_i)$,
$Y_C=\sum_j\mathcal L(p_j)$, so $p_v=\mathrm{msg}_\lambda(Y_A+Y_C)$, and let
$0<\mathcal V\le p_v+(1-p_v)e^{-\rho Y_A}$. Then with $\Psi(p)=a_{\mathrm U}\mathcal L(p)+a_{\mathrm U}\omega Y(p)$ and
$\varsigma=-\log\mathcal V-\ell^\ast(Y_A+Y_C)\sum_ip_i$:
\[
\Psi(p_v)\ \le\ \varsigma+\sum_i\big[a_{\mathrm U}Y(p_i)+[p_i<q]\,a_{\mathrm U}\omega\hat Y_+(Y(p_i))\big]+\sum_j\Psi(p_j).
\]
Indeed $Y(p_v)=Y_A+Y_C$; $\sum_j\Psi(p_j)\ge a_{\mathrm U}Y_C+[Y_C>0]a_{\mathrm U}\omega\hat Y_+(Y_C)$ by Step 5; and the
$A$-terms equal $\sum_i\Psi^A(p_i)+a_{\mathrm U}\omega Y_A+\ell^\ast(Y_A+Y_C)\sum_ip_i$, where
$\Psi^A(\pmsg)=a_{\mathrm U}Y(\pmsg)+[\pmsg<q]a_{\mathrm U}\omega\hat Y_+(Y(\pmsg))-a_{\mathrm U}\omega\mathcal L(\pmsg)-\ell^\ast(Y_A+Y_C)\,\pmsg$.
It remains that $\Psi_0+\sum_i\Psi^A(p_i)\ge0$, where $\Psi_0$ is the part of the left side of the unit inequality
without the term in $\pmsg$. If $v$ has no child in $A$ this is Step 6. Otherwise $Y_A>0$, and each $\pmsg=p_i$ satisfies
$0<\pmsg\le q$ and $\pmsg\le1-e^{-Y_A}$ (as $\mathcal L(p_i)\le Y_A$), so the unit inequality gives
$\Psi^A(p_i)\ge-\frac{\Psi_0}{Y_A}\mathcal L(p_i)$; summing over $i$ gives $\sum_i\Psi^A(p_i)\ge-\Psi_0$ whatever the sign
of $\Psi_0$.

\emph{Step 8: the induction.} Put $\varsigma_v=-\log\mathcal V_v-\frac\ell q(1-\pi_{\min}(p_v))p^A_v$ for $v\in C$ (so
$\frac\ell q(1-\pi_{\min}(p_v))=\ell^\ast(Y(p_v))$), $\varsigma_u=0$ for $u\in A$, and let $\bar\varsigma_v$ be the sum
of the unit surpluses $\varsigma$ over the subtree of $v$. Then $\bar\varsigma_v\ge\Psi(p_v)$ for $v\in C$ and
$\bar\varsigma_u\ge a_{\mathrm U}Y(p_u)+[p_u<q]a_{\mathrm U}\omega\hat Y_+(Y(p_u))$ for $u\in A$: the first by Steps 3
and 7 with the induction hypothesis at the children; the second from
$\sum_{v'}\Psi(p_{v'})=a_{\mathrm U}Y(p_u)+a_{\mathrm U}\omega\sum_{v'}Y(p_{v'})$ over the children $v'$ of $u$ and Step 5
if $u$ has children, and from $p_u=q$, $Y(q)=0$ if not.

\emph{Step 9: conclusion.} At the root $v$, $0\le\Psi(p_v)\le\bar\varsigma_v$, that is
$\sum_{v\in C\cap X}\log\mathcal V_v\le-\frac\ell q\sum_v(1-\pi_{\min}(p_v))p^A_v\le-\frac\ell q\sum_{u\in A\cap X}\pi_u$
by Step 4; with Step 2 this is the per-component bound.
\end{proof}

\begin{lemma}[the unit inequality on the bands and the additional intervals]\label{apC:lem:O3box}
(a) For each of the $30$ bands $[\check\lambda_i,\check\lambda_{i+1}]$ and each tilt index $r<5$, with the pair $(a_{\mathrm U},\ell)$ of the
band for that tilt ($\ell$ the band's rate of index $r$), $U(\lambda,\tau_r,a_{\mathrm U},\ell)$ holds for every
$\lambda$ in the closed band. (b) For each of the $26$ additional intervals at $\tau=0$, with its pair
$(a_{\mathrm U},\ell)$, $U(\lambda,0,a_{\mathrm U},\ell)$ holds for every $\lambda$ in the interval. (c) For each band and each
$r<5$, with its pair $(a_{\mathrm U},\ell)$: $a_{\mathrm U}\ge0$, $0\le\tau_r\le1$ and
$\ell\le\log\bigl((1+\lambda)/(1+\lambda\tau_r)\bigr)$ for every $\lambda$ of the band; for each additional interval, carried
by the subinterval $\iota$: $0\le\ell\le1$, $a_{\mathrm U}\ge0$ and $e^{\ell}\le1+\lamhat_\iota$.
Parts (a) and (b) are verified by the cell checker described below, in interval arithmetic with outward rounding: $30$
compiled evaluations for (a), each covering the five tilts of its band ($150$ certificates), and $26$ for (b); part (c) by
kernel evaluation (Section~\ref{sec:computer}), for the bands by the Taylor bound
$e^\ell\le\sum_{i\le4}\ell^i/i!+\ell^5/100$ at the lower end of the band and the monotonicity of
$\frac{1+\lambda}{1+\lambda\tau}$.
\end{lemma}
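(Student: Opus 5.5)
The plan is to split Lemma~\ref{apC:lem:O3box} into its analytic part (c), which is finite rational arithmetic, and its cell parts (a) and (b), which are a certified verification of the unit inequality $U(\lambda,\tau,a_{\mathrm U},\ell)$ over a compact box of parameters.

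\emph{Part (c).} For a band and tilt $\tau_r$, the function $(1+\lambda)/(1+\lambda\tau_r)$ is nondecreasing in $\lambda$ when $\tau_r\le1$. Indeed, its derivative has the sign of $1-\tau_r\ge0$. It therefore suffices to check $e^{\ell}\le(1+\check\lambda_i)/(1+\check\lambda_i\tau_r)$ at the lower end $\check\lambda_i$ of the band. Since $\ell\le1$, the Taylor remainder gives $e^\ell\le\sum_{i\le4}\ell^i/i!+\ell^5/100$, because $\sum_{i\ge5}\ell^i/i!\le\ell^5(e-\sum_{i\le4}1/i!)<\ell^5/100$. This reduces the check to a comparison of rationals. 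The remaining sign conditions and the bound $e^\ell\le1+\lamhat_\iota$ for the additional intervals are checked the same way, which disposes of (c) exactly.

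\emph{Parts (a) and (b).} First I would compactify the domain of $(Y_A,Y_C,\pmsg)$.

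\textbf{Large $Y_C$.} I would bound the expression by its asymptotic form. Here $\bar p\to0$, $\mathcal L(\bar p)\to0$, and the term $a_{\mathrm U}(1-\omega)Y_C$ grows linearly while the other terms stay bounded. This uses $\ell^\ast\le\ell/q$ and $\pmsg/\mathcal L(\pmsg)\le1$.

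\textbf{Large $Y_A$.} The first term tends to $-\log\bar p\approx Y_A+Y_C-\log\lambda$. The coefficient of $Y_A$, namely $(a_{\mathrm U}Y(\pmsg)+\dots-\ell^\ast\pmsg)/\mathcal L(\pmsg)$, must then be controlled from below for $\pmsg\le q$. The linear growth $Y_A$ dominates provided $1+\inf(\text{coefficient})>0$. This will be the content of one tail cell.

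\textbf{Small $\pmsg$.} Here $Y(\pmsg)\sim\log(1/\pmsg)$ blows up positively, so the bracket is positive. This gives an analytic tail in the manner of Lemma~\ref{apB:lem:small}.

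\textbf{Small $Y_A$.} The constraint $\pmsg\le1-e^{-Y_A}$ forces $\pmsg$ small, and the expression reduces continuously to the boundary case of Step~6 in the proof of Theorem~\ref{apC:thm:T32}. A first-order expansion in $Y_A$ would need a strict margin there, or else a removable form of the quotient by $\mathcal L(\pmsg)$.

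On the remaining compact box I would cover $\lambda\times Y_A\times Y_C\times\pmsg$ by cells. On each cell I would evaluate the left side in outward-rounded interval arithmetic, in centred mean-value form with propagated slopes as in the checker of the variance-ratio bound, and bisect until the lower bound is positive. The indicator terms $[Y_C>0]$ and $[\pmsg<q]$, and the $\max$ in $\hat Y_+$, would be handled by splitting cells at $Y_C=0$, at $\pmsg=q$, and along $\hat Y=0$. The monotone pieces $\mathrm{msg}_\lambda$, $\mathcal L$, $\omega$ and $\rho$ would be enclosed by their values at cell corners.

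\emph{Main obstacle.} I expect the main obstacle to be the degenerate corner $Y_A\to0$ with $\pmsg\to0$. There the $\pmsg$-term is a $0/0$ quotient multiplied by $Y_A$, and the whole expression tends to the Step~6 quantity, which vanishes at $e^{Y_C}=\lambda_\ast$. Naive interval enclosures cannot certify positivity near a zero, so the first thing I would try is an exact factorization. I would write the left side as the Step~6 expression plus $Y_A$ times a quantity bounded below on cells. For that quantity I would rewrite $-\log(\bar p+(1-\bar p)e^{-\rho Y_A})$ as its value at $Y_A=0$ plus $Y_A$ times a divided difference. That way the checker only needs the strictly positive linear coefficient, using that Step~6 is already nonnegative.
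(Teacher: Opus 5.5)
\textbf{A genuine error: the large-$Y_A$ tail.} You take $-\log\bar p$ as the asymptotic form of the first term. It is an \emph{upper} bound for that term, since $\bar p+(1-\bar p)e^{-\rho Y_A}\ge\bar p$, so it cannot enter a lower bound. It is not the asymptotic form either. Because $\rho\le1$ and $\bar p=\mathrm{msg}_\lambda(Y_A+Y_C)\le\lambda e^{-Y_A-Y_C}\le\lambda e^{-\rho Y_A}$, the summand $e^{-\rho Y_A}$ dominates, and
\[
\rho Y_A-\log(1+\lambda)\ \le\ -\log\bigl(\bar p+(1-\bar p)e^{-\rho Y_A}\bigr)\ \le\ \rho Y_A+\log(1+\lambda).
\]
So the first term grows like $\rho Y_A$, with no growth in $Y_C$. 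Let $c$ be the coefficient of $Y_A$, taken over $0<\pmsg\le q$ and $\ell^\ast\le\ell/q$. The tail then needs $\rho+\inf c>0$, not $1+\inf c>0$. For the tilts $\tau_1,\dots,\tau_4$ one has $\rho<1$, so your test would accept the tail on an insufficient condition. The left inequality of the display turns the region of large $Y_A$ into one linear inequality in $Y_A$, which is how the paper's checker closes $Y_A\ge30$.

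\textbf{Your main obstacle does not exist when $a_{\mathrm U}>0$.} On the constraint $\pmsg\le1-e^{-Y_A}$ one has $\pmsg\le\mathcal L(\pmsg)\le Y_A$. So the factor $Y_A/\mathcal L(\pmsg)$ is at least $1$, not a vanishing $0/0$. The $\pmsg$-term is therefore at least its numerator whenever that is nonnegative. The numerator is at least $a_{\mathrm U}Y(1-e^{-Y_A})-(a_{\mathrm U}\omega+\ell^\ast)Y_A$, which tends to $+\infty$ as $Y_A\to0$. The rest of the left side is at least $-a_{\mathrm U}\log(1+\lambda)$. Hence the left side tends to $+\infty$ uniformly in $Y_C$, and the Step~6 zero at $e^{Y_C}=\lambda_\ast$ is never approached. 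A cell $(0,h]$ in $Y_A$ only has to use $\mathcal L(\pmsg)\le Y_A$ rather than the enclosure $Y_A\ge0$. The corner is degenerate only when $a_{\mathrm U}=0$, and there your divided-difference factorization is the right tool.

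\textbf{Comparison with the paper's checker.} Apart from these points, your four-dimensional covering is a valid but much heavier design. The paper uses that $\pmsg$ enters only through the coefficient of $Y_A$, which is affine in $\ell^\ast$. Per band it tabulates bounds of that coefficient's parts on a $\pmsg$-grid. It takes prefix minima, which encode the cap $\min(q,1-e^{-Y_A})$, against $65$ values of $\ell^\ast$. It then bisects only the $(Y_A,Y_C)$-plane. The activity enters only through enclosures of $\omega$, $\rho$ and $\ell^\ast$ by monotonicity over the whole band, so one evaluation covers a band with its five tilts. The paper also avoids your moving split at $\pmsg=q(\lambda)$: it grants the nonnegative $\hat Y$ credit only on cells below $q(\lambda_{\mathrm{lo}})$.

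\textbf{Part (c).} Your part (c) is the paper's argument, with one caveat. The Taylor bound $e^\ell\le\sum_{i\le4}\ell^i/i!+\ell^5/100$ holds only up to about $\ell=1.02$; it fails at $\ell=1.05$. For the bands, $\ell\le1$ is neither among the stated conditions nor implied by the isolated-vertex bound, which allows $\ell$ up to $\log\frac{10}{3}$. It must therefore be read off the data.
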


\paragraph{The checker.} If the checker accepts $(\lambda_{\mathrm{lo}},\lambda_{\mathrm{hi}},a_{\mathrm U},\ell,\tau)$,
the ends of an activity interval and its constants, then $U(\lambda,\tau,a_{\mathrm U},\ell)$ holds for every
$\lambda\in[\lambda_{\mathrm{lo}},\lambda_{\mathrm{hi}}]$ (soundness, proved for arbitrary data). It requires
$0<\lambda_{\mathrm{lo}}\le\lambda_{\mathrm{hi}}\le6$, $0\le\tau\le1$, $a_{\mathrm U},\ell\ge0$, and encloses
$\omega\in[\omega(\lambda_{\mathrm{lo}}),\omega(\lambda_{\mathrm{hi}})]$ ($\omega$ increases), $\rho\ge\rho(\lambda_{\mathrm{hi}},\tau)$
($\rho$ decreases in $\lambda$) and $\ell^\ast\le\ell/q(\lambda_{\mathrm{lo}})$. The dependence on $\pmsg$ is handled by a
table on the grid $2^{i-46}$ ($i\le32$), then steps of $2^{-13}$ up to $q(\lambda_{\mathrm{hi}})$, of lower bounds of
$(a_{\mathrm U}Y+[\pmsg<q]a_{\mathrm U}\omega\hat Y_+(Y))/\mathcal L-a_{\mathrm U}\omega$ and upper bounds of $\pmsg/\mathcal L(\pmsg)$
per cell (the $\hat Y$ credit only on cells below $q(\lambda_{\mathrm{lo}})$, which handles the discontinuity at
$\pmsg=q$), with prefix minima against $65$ values of $\ell^\ast$, interpolated linearly; below $2^{-46}$ the term is
nonnegative. The $(Y_A,Y_C)$-plane is covered by adaptive bisection (depth at most $60$) of $(0,30]\times\{0\}$,
$(0,30]\times(0,30]$ and $(0,30]\times[30,\infty)$, each cell bounded below by monotone bounds of its three parts, and the
region $Y_A\ge30$ is closed by one linear inequality in $Y_A$; the arithmetic is $64$-bit fixed point with outward rounding.

\begin{proof}[Proof of Theorem~\ref{p2:thm:O3}]
Let $\iota=\iota(\lambda)$, $i$ the band containing $\Ihat_\iota$, and $r<5$. Then $0<\lambda\le7/3\le6$, and the leaf
condition holds. The band's rate of index $r$ satisfies the hypotheses of Theorem~\ref{apC:thm:T32} with $\tau=\tau_r$, by
Lemma~\ref{apC:lem:O3box}(a) and (c). If $r=0$ and the subinterval $\iota$ carries an additional interval, $\ell_0(\iota)$ is
that interval's rate, the interval contains $\Ihat_\iota$ (Lemma~\ref{cs:lem:tables}(b)), and Theorem~\ref{apC:thm:T32} applies
with $\tau=0$, by Lemma~\ref{apC:lem:O3box}(b) and (c) ($e^{\ell}\le1+\lamhat_\iota\le1+\lambda$, as
$\lambda\ge\lamhat_\iota$); otherwise $\ell_r(\iota)$ is the band's rate. The tilt and the rate
are paired by position, and no monotonicity of the rates in $\lambda$ is used.
\end{proof}
% ---------------------------------------------------------------------------------------------------------------
\subsection{The lower bound for the mean free count}\label{apC:sec:O5}

This subsection proves the lower edge $m_{25}(\iota)$ of the parameter domain, used in the input-verification part of the proof of
Theorem~\ref{p2:thm:main}.

\begin{theorem}[the lower bound for the mean free count]\label{p2:thm:O5}
For every forest with $n\ge25$, every $\lambda\in[1/3,7/3]$ with $\mu_F(\lambda)\in W(F)$, and every independent set
$A$ of maximum weight at $\lambda$:
\[
\muM\ge m_{25}(\iota)=\frac{8}{2\,q(\lamhat_{\iota+1})},\qquad \iota=\iota(\lambda).
\]
\end{theorem}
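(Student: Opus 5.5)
The bound follows directly from the consequence \eqref{p2:eq:consequences} of Proposition~\ref{p2:prop:mixture}, together with an integrality argument on the rank and the monotonicity of $q$ in $\lambda$. No new estimate on forests is needed. The hypothesis $n\ge25$ enters only through the size of the left end of the window.

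First, fix $\lambda\in[1/3,7/3]$ with $k:=\mu_F(\lambda)\in W(F)$, and let $A$ be of maximum weight at $\lambda$. Proposition~\ref{p2:prop:mixture} gives $q\muM=\pi(A)$. Fact (a) of Section~\ref{p2:sec:mixture} gives $\mu_F(\lambda)\le2\pi(A)$, because the two colour classes of a proper $2$-colouring are independent and their weights sum to $\mu_F(\lambda)$. Combining these yields \eqref{p2:eq:consequences}:
\[
\muM\ \ge\ \frac{k}{2q(\lambda)} .
\]

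Second, I bound the rank from below. Since $k\in W(F)$, we have $k>\lceil n/4\rceil$, and $k$ is an integer. For $n\ge25$, $\lceil n/4\rceil\ge\lceil 25/4\rceil=7$, so $k\ge8$. This is the only place where $n\ge25$ is used, which is consistent with the remark at the end of the proof of Theorem~\ref{p2:thm:main}.

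Third, I replace $q(\lambda)$ by a constant depending only on the subinterval. The map $q(\lambda)=\lambda/(1+\lambda)$ is increasing in $\lambda>0$. By definition of $\iota=\iota(\lambda)$ we have $\lambda\in\Ihat_\iota=[\lamhat_\iota,\lamhat_{\iota+1}]$, whichever subinterval is chosen at a shared end point. Hence $0<q(\lambda)\le q(\lamhat_{\iota+1})$. Putting the three steps together gives
\[
\muM\ \ge\ \frac{k}{2q(\lambda)}\ \ge\ \frac{8}{2q(\lambda)}\ \ge\ \frac{8}{2\,q(\lamhat_{\iota+1})}=m_{25}(\iota).
\]

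I expect no real obstacle. The only points needing care are that the bound $\mu_F\le2\pi(A)$ does use the maximality of $A$, and that the window condition is the strict inequality $k>\lceil n/4\rceil$, which is what gives $8$ rather than $7$. This is also the reason for the constant $8$ in $m_{25}$.
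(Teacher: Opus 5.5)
Your proposal is correct and follows essentially the same route as the paper's proof. Both arguments use integrality of $k\in W(F)$ with $k>\lceil n/4\rceil\ge7$ to get $k\ge8$, then combine \eqref{p2:eq:consequences} with the monotonicity of $q$ on $\Ihat_\iota$ to obtain $8\le k\le2q(\lambda)\muM\le2q(\lamhat_{\iota+1})\muM$.
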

\begin{proof}
Let $k=\mu_F(\lambda)\in W(F)$. Since $n\ge25$, $k>\lceil n/4\rceil\ge7$, so $k\ge8$. By \eqref{p2:eq:consequences} and
since $q$ is increasing, $8\le k\le2q(\lambda)\muM\le2q(\lamhat_{\iota+1})\muM$.
\end{proof}

The bounds $m_{25}(\iota)$ range from $40/7\approx5.71$ ($\iota=54$) to $136/9\approx15.11$ ($\iota=0$). This is the only
place where the order $n$ enters the moment argument.

% ---------------------------------------------------------------------------------------------------------------
\section{The parameter domain}\label{p2:sec:atlas}

This section proves the no-valley inequality on the parameter domain (Theorem~\ref{p2:thm:O4}), used in the final part of the
proof of Theorem~\ref{p2:thm:main}. The domain is a set of pairs $(q,\muM)$, and $n$ enters only through its lower edge
$m_{25}(\iota)$. Below an explicit starting mean $\mu_0(\iota)\in[19,50]$ it is covered by rational boxes, each carrying a
dual certificate that a box test in exact rational arithmetic verifies at every point of the box
(Sections~\ref{apC:sec:mgfbox} and~\ref{apC:sec:families}); from $\mu_0(\iota)$ on, a dual certificate is given in closed
form by a fiber inequality for a weighted valley kernel (Section~\ref{fib:sec}).

% ---------------------------------------------------------------------------------------------------------------
\subsection{The box test}\label{apC:sec:mgfbox}

This subsection defines the box test and proves that a passing box gives an MGF certificate at every point of the box
(Theorem~\ref{apC:thm:mgfbox}) and that a passing covering is valley-free on its rectangle (Theorem~\ref{apC:thm:mgfpiece});
the proof of Theorem~\ref{p2:thm:O4} rests on them.

A \emph{box} is a rectangle $q\in[q_l,q_h]$, $\muM\in[m_l,m_h]$ with a rational dual $(\eta,\xi,\alpha,\beta,\gamma,
w_{\mathrm L},w_{\mathrm R},z_0,\dots,z_4)$ centred at $m_0$, i.e.\ with the quadratic
$\psi_0(M)=\alpha+\beta(M-m_0)-\gamma(M-m_0)^2$, and a cutoff $\bar M$; the
parameters of a covering are $\theta$, $D$, the tilts $\tau_r$ with their rates $\ell_r$ and the activity endpoints. Put
$\varpi(M)=\max(w_{\mathrm L},w_{\mathrm R})$ for a fiber $M$. By Lemma~\ref{apC:lem:atoms}(a) below,
$\kappa_2^q(M',j)\ge-\varpi(M)$ for every $q\in[q_l,q_h]$, every $j$ and every $M'$. The box test is a Boolean
function in exact rational arithmetic:

\textup{(S)} $0<q_l<q_h<1$, $\eta>0$, $\gamma\ge0$, $w_{\mathrm L},w_{\mathrm R}\ge0$, $z\ge0$, $0<m_l\le m_h$;
\textup{(T)} fibers $M\ge\bar M$: $\psi_0(\bar M)\le-\varpi(\bar M)-\xi^2/(4\eta)$ and $\beta-2\gamma(\bar M-m_0)\le0$, using
  $\kappa_2\ge-\varpi(\bar M)$ and $\eta\delta^2+\xi\delta\ge-\xi^2/(4\eta)$;
\textup{(F)} fibers $M<\bar M$, all $q\in[q_l,q_h]$: with $s_r=1-q_h+q_h\tau_r$, $\mathrm{rhs}(M)=\psi_0(M)-\sum_rz_rs_r^M$,
  $\delta_*=-\xi/(2\eta)$ (the minimiser of $\eta\delta^2+\xi\delta$) and
  $\mathrm{lev}=\mathrm{rhs}(M)+\varpi(M)+\xi^2/(4\eta)$: (i)~$\mathrm{rhs}(M)$ is at most the
  minimum of $\eta\delta^2+\xi\delta$ on each of the half-lines $\delta\le-2-q_lM$ and $\delta\ge M+2-q_hM$, which
  contain every $\delta=j-qM$ with $j\le-2$, resp.\ $j\ge M+2$ (where $\kappa_2=0$); (ii)~either $\mathrm{lev}\le0$, or
  every atom $j$ of an explicit range $[j_{\mathrm{lo}},j_{\mathrm{hi}}]\subseteq[-1,M+1]$ passes, where the ends are
  certified: if $j_{\mathrm{lo}}>-1$ then $\Delta_-=q_lM+\delta_*-(j_{\mathrm{lo}}-1)\ge0$ and
  $\eta\Delta_-^2\ge\mathrm{lev}$, and if $j_{\mathrm{hi}}<M+1$ then $\Delta_+=(j_{\mathrm{hi}}+1)-q_hM-\delta_*\ge0$ and
  $\eta\Delta_+^2\ge\mathrm{lev}$ (the checker proposes $\lfloor q_lM+\delta_*\rfloor-r_{\mathrm{box}}$ and
  $\lceil q_hM+\delta_*\rceil+r_{\mathrm{box}}$, $r_{\mathrm{box}}=\lceil\sqrt{\mathrm{lev}/\eta}\rceil+1$, clipped to
  $[-1,M+1]$, and uses $-1$, resp.\ $M+1$, when the certificate fails). An enumerated atom passes if it skips
  ($\eta\,\mathrm{dist}(\delta_*,[j-q_hM,j-q_lM])^2\ge\mathrm{lev}$) or passes a first- or second-order bound uniform in
  $q$ around the midpoint (defined below). If $\mathrm{lev}\le0$, or $j$ lies outside the range, then
  $\eta(\delta-\delta_*)^2\ge\mathrm{lev}$ for every $q\in[q_l,q_h]$, and the atom needs no binomial value;
\textup{(Mg)} $\alpha+\min\beta(\muM-m_0)-\gamma\big(Dm_h+\max(\muM-m_0)^2\big)-\eta\theta\,v_{\max}m_h-\sum_rz_r\hat e_r>0$, the min
  and max over $\muM\in\{m_l,m_h\}$, $v_{\max}=\max_{[q_l,q_h]}q(1-q)$, $\hat e_r=1/\sum_{i\le120}(\ell_rm_l)^i/i!\ge e^{-\ell_rm_l}$.

A box \emph{passes} if it passes the four tests (S), (T), (F) and (Mg). For $0\le j\le M$ the atom bounds use
$\kappa_2=\mathcal B(\qval)\big[(w_{\mathrm L}\kappa^{\mathrm L}_j)(1-\qval)^2+(w_{\mathrm R}\kappa^{\mathrm R}_j)\qval^2\big]$ with
$\mathcal B=\binp_M(j;\qval)/(\qval(1-\qval))$, $\kappa^{\mathrm L}_j=(M+1-2j)/(M-j+1)$, $\kappa^{\mathrm R}_j=(2j+1-M)/(j+1)$; the binomial values at $q_l$, $(q_l+q_h)/2$, $q_h$ are carried exactly
along $M$ as integer vectors. The two atom bounds are Taylor bounds around the midpoint $q_0=(q_l+q_h)/2$, with
half-width $\Delta q=(q_h-q_l)/2$. Let $Q_\delta$ be the exact minimum of $\eta\delta^2+\xi\delta$ over
$\delta\in[j-q_hM,\,j-q_lM]$, $\tilde\kappa(q)=\kappa_2+\xi\delta+\eta\delta^2$ with $\delta=j-qM$, $\delta_0=j-q_0M$, and let
$K_1$, $K_2$ be the rational bounds of $|\kappa_2'|$ and $|\kappa_2''|$ on $[q_l,q_h]$ given below. The atom passes
the first-order bound if $\mathrm{rhs}\le\kappa_2(q_0)-\Delta q\,K_1+Q_\delta$, and the second-order bound if
$\mathrm{rhs}\le\tilde\kappa(q_0)-\Delta q\,|\tilde\kappa'(q_0)|-\tfrac12\Delta q^2\,(K_2+2\eta M^2)$, where
$\tilde\kappa'(q_0)=\kappa_2'(q_0)-M(\xi+2\eta\delta_0)$ is computed exactly. For $0\le j\le M$ write
$\kappa_2=\mathcal B\mathcal K$ with
$\mathcal K(\qval)=(w_{\mathrm L}\kappa^{\mathrm L}_j)(1-\qval)^2+(w_{\mathrm R}\kappa^{\mathrm R}_j)\qval^2$ and
$L_{\mathcal B}=\mathcal B'/\mathcal B=(j-1)/\qval-(M-j-1)/(1-\qval)$; then
$\kappa_2'(q_0)=\mathcal B(q_0)\big(\mathcal K'(q_0)+L_{\mathcal B}(q_0)\mathcal K(q_0)\big)$,
$K_1=\mathcal B_{\max}\big(|\mathcal K'|_{\max}+|L_{\mathcal B}|_{\max}|\mathcal K|_{\max}\big)$ and
$K_2=\mathcal B_{\max}\big(|\mathcal K''|+2|L_{\mathcal B}|_{\max}|\mathcal K'|_{\max}+(|L_{\mathcal B}|_{\max}^2+|L_{\mathcal B}'|_{\max})|\mathcal K|_{\max}\big)$,
where the maxima are rational upper bounds over $[q_l,q_h]$ computed by the checker. For $j=-1$,
$\kappa_2=-w_{\mathrm R}(1-q)^M$, $K_1=w_{\mathrm R}M(1-q_l)^{M-1}$ and $K_2=w_{\mathrm R}M(M-1)(1-q_l)^{M-2}$; for $j=M+1$,
$\kappa_2=-w_{\mathrm L}q^M$, $K_1=w_{\mathrm L}Mq_h^{M-1}$ and $K_2=w_{\mathrm L}M(M-1)q_h^{M-2}$. A \emph{covering} of an activity interval $[\lambda_{\mathrm{lo}},\lambda_{\mathrm{hi}}]$
and a range $[m_{\mathrm{lo}},m_{\mathrm{hi}}]$ \emph{passes} if its parameters are sane ($\theta,D\ge0$, $0\le\tau_r\le1$,
$\ell_r\ge0$), every box passes, and the boxes cover
$[q(\lambda_{\mathrm{lo}}),q(\lambda_{\mathrm{hi}})]\times[m_{\mathrm{lo}},m_{\mathrm{hi}}]$ by consecutive $q$-slabs, each with a
chain of boxes in $\muM$. The
binomial values $\binp_M(j;\qval)$ at $\qval\in\{q_l,q_0,q_h\}$, written $\qval=\hat p/\hat q$ in lowest terms, are carried along $M$ as
the integer vectors $\hat q^M\binp_M(j;\qval)$, by
$\hat q^{M+1}\binp_{M+1}(j;\qval)=(\hat q-\hat p)\hat q^M\binp_M(j;\qval)+\hat p\,\hat q^M\binp_M(j-1;\qval)$, together with Pascal's row and the
powers $s_r^M$. No floating point is used.

\begin{lemma}[the atom bounds]\label{apC:lem:atoms}
Let $0<q_l\le q\le q_h<1$ and $\kappa_2^{q}(M,j)=w_{\mathrm L}f_{\mathrm L}/q+w_{\mathrm R}f_{\mathrm R}/(1-q)$ at
parameter $q$, $w_{\mathrm L},w_{\mathrm R}\ge0$.

\textup{(a)} $\kappa_2^q(M,j)\ge-\max(w_{\mathrm L},w_{\mathrm R})$; $\kappa_2^q(M,j)=0$ for $j\le-2$ and $j\ge M+2$;
$\kappa_2^q(M,-1)=-w_{\mathrm R}(1-q)^M$ and $\kappa_2^q(M,M+1)=-w_{\mathrm L}q^M$.
\textup{(b)} For $0\le j\le M$, $\kappa_2^q=\mathcal B(q)\mathcal K(q)$ and $\mathcal B'=\mathcal BL_{\mathcal B}$.
\textup{(c)} $\kappa_2^q\ge\kappa_2^{q_0}-\Delta q\,K_1$ for every $q\in[q_l,q_h]$, for $-1\le j\le M+1$.
\textup{(d)} $\tilde\kappa(q)\ge\tilde\kappa(q_0)-\Delta q\,|\tilde\kappa'(q_0)|-\tfrac12\Delta q^2(K_2+2\eta M^2)$ for every $q\in[q_l,q_h]$.

Here the maxima in $K_1$, $K_2$ are the checker's: $\mathcal B_{\max}$ is the largest of $\mathcal B$ at $q_l$, $q_h$
and, when $M\ne2$, at $(j-1)/(M-2)$ if this point lies strictly inside $(q_l,q_h)$ (when $M=2$ there is no such point:
$\mathcal B'$ has the sign of $j-1$, so $\mathcal B$ is monotone and only $q_l$ and $q_h$ are used); $|\mathcal K|_{\max}$ uses the
endpoints and, when $w_{\mathrm L}\kappa^{\mathrm L}_j+w_{\mathrm R}\kappa^{\mathrm R}_j\ne0$, the vertex
$w_{\mathrm L}\kappa^{\mathrm L}_j/(w_{\mathrm L}\kappa^{\mathrm L}_j+w_{\mathrm R}\kappa^{\mathrm R}_j)$ of the quadratic $\mathcal K$ if it lies
strictly inside (when this sum is $0$, $\mathcal K=w_{\mathrm L}\kappa^{\mathrm L}_j(1-2\qval)$ is affine, has no vertex, and only
the endpoints are used); $|\mathcal K'|_{\max}$ the endpoints of the affine $\mathcal K'$; $|L_{\mathcal B}|_{\max}$ the four combinations of the
monotone terms of $L_{\mathcal B}$; $|L_{\mathcal B}'|_{\max}=|j-1|/q_l^2+|M-j-1|/(1-q_h)^2$.
\end{lemma}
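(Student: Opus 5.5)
The plan is to prove the four parts in order, each directly from the definitions of $f_{\mathrm L}$, $f_{\mathrm R}$ and $\binp_M$, by elementary algebra and calculus in $\qval$.

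\emph{Part (a).} For $0\le j\le M$, first write $f_{\mathrm L}(M,j)$ and $f_{\mathrm R}(M,j)$ as multiples of $\binp_M(j)$. Using $\binp_M(j-1)=\binp_M(j)\,\frac{j}{M-j+1}\,\frac{1-q}{q}$ and $\binp_M(j+1)=\binp_M(j)\,\frac{M-j}{j+1}\,\frac{q}{1-q}$,
\[
f_{\mathrm L}/q=\binp_M(j)\,\frac{1-q}{q}\,\frac{M+1-2j}{M-j+1},\qquad f_{\mathrm R}/(1-q)=\binp_M(j)\,\frac{q}{1-q}\,\frac{2j+1-M}{j+1}.
\]
Factoring out $\mathcal B=\binp_M(j)/(q(1-q))$ gives (b) immediately. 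For the lower bound in (a), at most one of $\kappa^{\mathrm L}_j$, $\kappa^{\mathrm R}_j$ is negative, since their numerators sum to $2$. So it suffices to bound a single negative term. Consider $w_{\mathrm L}f_{\mathrm L}/q\ge -w_{\mathrm L}\binp_M(j-1)\ge -w_{\mathrm L}$, and likewise $w_{\mathrm R}f_{\mathrm R}/(1-q)\ge-w_{\mathrm R}\binp_M(j+1)$. Because the positive part of each $f$ is dropped, the sum is at least $-\max(w_{\mathrm L},w_{\mathrm R})$.

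More carefully, $w_{\mathrm L}f_{\mathrm L}/q\ge -w_{\mathrm L}\binp_M(j-1)$ holds always. Where $\kappa^{\mathrm L}_j<0$ we have $\kappa^{\mathrm R}_j>0$, so the other term is nonnegative; symmetrically in the other case. The boundary cases $j=-1$, $j=M+1$ and $|j|$ outside $[-1,M+1]$ follow directly from the supports of $\binp_M$: at $j=-1$ only $-(1-q)\binp_M(0)$ survives in $f_{\mathrm R}$, and at $j=M+1$ only $-q\binp_M(M)$ survives in $f_{\mathrm L}$. For (b), $L_{\mathcal B}$ is the logarithmic derivative of $\qval^{j-1}(1-\qval)^{M-j-1}$.

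\emph{Part (c).} This is the mean value theorem, $|\kappa_2^q-\kappa_2^{q_0}|\le|q-q_0|\sup|\kappa_2'|$ with $|q-q_0|\le\Delta q$, together with the bound $|\kappa_2'|\le K_1$. By the product rule $\kappa_2'=\mathcal B(\mathcal K'+L_{\mathcal B}\mathcal K)$, so it suffices that each checker maximum really dominates its function on $[q_l,q_h]$. For $\mathcal B$, the derivative sign is that of $(j-1)(1-\qval)-(M-j-1)\qval$, which is affine in $\qval$ with root $(j-1)/(M-2)$. Hence $\mathcal B$ is unimodal or monotone, and its maximum lies at the endpoints or at that interior critical point; for $M=2$ the sign is constant. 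The function $\mathcal K$ is a quadratic, so $|\mathcal K|$ is maximised at the endpoints or the vertex. $\mathcal K'$ is affine, so $|\mathcal K'|$ is maximised at an endpoint. $L_{\mathcal B}$ is a sum of two monotone terms, so its absolute value is bounded by the worst of the four endpoint combinations. For $j=\pm$ boundary, differentiate $(1-q)^M$ and $q^M$ directly and use monotonicity.

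\emph{Part (d).} This is Taylor's theorem of order two at $q_0$:
\[
\tilde\kappa(q)\ge\tilde\kappa(q_0)-\Delta q|\tilde\kappa'(q_0)|-\tfrac12\Delta q^2\sup|\tilde\kappa''|.
\]
Here $\tilde\kappa''=\kappa_2''+2\eta M^2$, because $\delta''=0$ and $(\delta^2)''=2M^2$; this term is nonnegative, so it only helps and the bound stated is conservative. It remains to bound $\kappa_2''=\mathcal B(\mathcal K''+2L_{\mathcal B}\mathcal K'+(L_{\mathcal B}^2+L_{\mathcal B}')\mathcal K)$ by $K_2$, using the maxima above. The bound $|L_{\mathcal B}'|\le|j-1|/q_l^2+|M-j-1|/(1-q_h)^2$ comes from differentiating termwise, and $|\mathcal K''|$ is constant.

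The main obstacle is the careful case analysis in (a), namely the sign claim that only one of the two kernel terms can be negative, and the exact endpoint/vertex maximisation bookkeeping in (c). Neither step is deep, but the checker's soundness rests on them.
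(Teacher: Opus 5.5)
Your proof is correct and follows the paper's argument: the ratio identities give $\kappa_2=\mathcal B\mathcal K$, the mean value theorem with $|\kappa_2'|\le\mathcal B_{\max}(|\mathcal K'|_{\max}+|L_{\mathcal B}|_{\max}|\mathcal K|_{\max})$ gives (c), Taylor's theorem with $\tilde\kappa''=\kappa_2''+2\eta M^2$ gives (d), and each of the checker's maxima is justified the same way. The one variation is the lower bound in (a): you use that at most one of $\kappa^{\mathrm L}_j,\kappa^{\mathrm R}_j$ is negative (their numerators sum to $2$) instead of the paper's $\binp_M(j-1)+\binp_M(j+1)\le1$, and either fact closes the step, whereas your first sentence there, which only drops the positive parts, would by itself give only $-(w_{\mathrm L}+w_{\mathrm R})$.
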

\begin{proof}
(a) $f_{\mathrm L}/q=\frac{1-q}q \binp_M(j)-\binp_M(j-1)\ge-\binp_M(j-1)$ and $f_{\mathrm R}/(1-q)\ge-\binp_M(j+1)$, and
$\binp_M(j-1)+\binp_M(j+1)\le1$ (two distinct masses of one law); the rest is the support of $\binp_M$. (b) From
$\qval(M+1-j)\binp_M(j-1)=j(1-\qval)\binp_M(j)$ and $\qval(M-j)\binp_M(j)=(j+1)(1-\qval)\binp_M(j+1)$ one gets $f_{\mathrm L}=(1-\qval)\binp_M(j)\kappa^{\mathrm L}_j$ and
$f_{\mathrm R}=\qval \binp_M(j)\kappa^{\mathrm R}_j$, and $L_{\mathcal B}$ is the logarithmic derivative of $\binom Mj \qval^{j-1}(1-\qval)^{M-j-1}$. (c)
$\kappa_2^q-\kappa_2^{q_0}=(\mathcal B(q)-\mathcal B(q_0))\mathcal K(q)+\mathcal B(q_0)(\mathcal K(q)-\mathcal K(q_0))$; the mean value theorem
with $|\mathcal B'|=\mathcal B|L_{\mathcal B}|\le\mathcal B_{\max}|L_{\mathcal B}|_{\max}$ bounds the first term by
$\mathcal B_{\max}|L_{\mathcal B}|_{\max}|\mathcal K|_{\max}|q-q_0|$, the second by $\mathcal B_{\max}|\mathcal K'|_{\max}|q-q_0|$, and
$|q-q_0|\le\Delta q$. For $j=-1$, $|\partial_\qval(-w_{\mathrm R}(1-\qval)^M)|\le w_{\mathrm R}M(1-q_l)^{M-1}$; similarly at $M+1$. (d) Taylor's theorem
with the Lagrange remainder about the midpoint: $|\tilde\kappa''|\le|\partial^2_\qval\kappa_2|+2\eta M^2$, and
$|\partial_\qval^2(\mathcal B\mathcal K)|=|\mathcal B(\mathcal K''+2L_{\mathcal B}\mathcal K'+(L_{\mathcal B}^2+L_{\mathcal B}')\mathcal K)|$ with
$\mathcal K''=2(w_{\mathrm L}\kappa^{\mathrm L}_j+w_{\mathrm R}\kappa^{\mathrm R}_j)$. The maxima are valid because $\mathcal B'$ has the sign of the affine function $(j-1)-(M-2)\qval$, so
that the maximum of $\mathcal B$ on $[q_l,q_h]$ is at an endpoint or, for $M\ne2$, at $(j-1)/(M-2)$ (for $M=2$ this function is
the constant $j-1$), $\mathcal K$ is a quadratic (affine when $w_{\mathrm L}\kappa^{\mathrm L}_j+w_{\mathrm R}\kappa^{\mathrm R}_j=0$), $\mathcal K'$ is
affine, and each term of $L_{\mathcal B}$ and $L_{\mathcal B}'$ is monotone in $\qval$.
\end{proof}

\begin{theorem}[what a passing box asserts]\label{apC:thm:mgfbox}
Let the parameters of a covering be sane ($\theta,D\ge0$, $0\le\tau_r\le1$, $\ell_r\ge0$) and let a box pass the
box test. Then for every real $q\in[q_l,q_h]$ and $\muM\in[m_l,m_h]$ an MGF certificate exists for $(q,\muM,\theta,D)$ with
the Laplace constraints $(1-q+q\tau_r,\ e^{-\ell_r\muM})_r$.
\end{theorem}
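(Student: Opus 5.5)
Fix $q\in[q_l,q_h]$ and $\muM\in[m_l,m_h]$. The certificate will be the box's own dual with the quadratic recentred at the point: keep $\eta,\xi,\gamma,w_{\mathrm L},w_{\mathrm R},z_r$, and choose $\alpha',\beta'$ with
\[
\alpha'+\beta'(M-\muM)-\gamma(M-\muM)^2=\psi_0(M)\qquad\text{for all }M .
\]
This gives $\beta'=\beta-2\gamma(\muM-m_0)$ and $\alpha'=\psi_0(\muM)=\alpha+\beta(\muM-m_0)-\gamma(\muM-m_0)^2$. The sign conditions ($\eta>0$, $\gamma,w,z\ge0$) come from (S). Two things then have to be checked: the pointwise inequality (P) with the point's constraints $s_r(q)=1-q+q\tau_r$, and the margin (Mg).

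\emph{Handling the tilts.} Since $0\le\tau_r\le1$ and $q\le q_h$, we have $s_r(q)\ge 1-q_h+q_h\tau_r=s_r$. Hence $-z_rs_r(q)^M\le -z_rs_r^M$. So it suffices to prove (P) with $\mathrm{rhs}(M)=\psi_0(M)-\sum_r z_rs_r^M$ on the left, which is exactly the quantity the test uses.

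\emph{Proving (P) for $M\ge\bar M$.} By (T) and Lemma~\ref{apC:lem:atoms}(a) the left side is at most $\psi_0(M)$. Since $\psi_0$ is concave with slope $\beta-2\gamma(\bar M-m_0)\le0$ at $\bar M$, it is nonincreasing on $[\bar M,\infty)$. Therefore $\psi_0(M)\le\psi_0(\bar M)\le-\varpi(\bar M)-\xi^2/(4\eta)$. This is at most $\kappa_2+\xi\delta+\eta\delta^2$, using $\kappa_2\ge-\varpi$ and $\eta\delta^2+\xi\delta\ge-\xi^2/(4\eta)$.

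I take $\varpi$ to be constant, equal to $\max(w_{\mathrm L},w_{\mathrm R})$, as defined.

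\emph{Proving (P) for $M<\bar M$.} Split the atoms $j$ into three groups.

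\begin{itemize}
\item For $j\le-2$ or $j\ge M+2$ we have $\kappa_2=0$ by Lemma~\ref{apC:lem:atoms}(a). Also $\delta=j-qM$ lies in the corresponding half-line of (F)(i), because $q\in[q_l,q_h]$. So (F)(i) gives the inequality.
\item For atoms in $[-1,M+1]$ outside the enumerated range, or all such atoms when $\mathrm{lev}\le0$, write $\eta\delta^2+\xi\delta=\eta(\delta-\delta_*)^2-\xi^2/(4\eta)$. Then the needed inequality $\mathrm{rhs}\le\kappa_2+\xi\delta+\eta\delta^2$ follows from $\kappa_2\ge-\varpi$ and $\eta(\delta-\delta_*)^2\ge\mathrm{lev}$. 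When $\mathrm{lev}\le0$ this is immediate. Otherwise the certified range ends give it: for $j<j_{\mathrm{lo}}$ we have $\delta_*-\delta\ge q_lM+\delta_*-(j_{\mathrm{lo}}-1)=\Delta_-\ge0$, and then $\eta\Delta_-^2\ge\mathrm{lev}$. The case $j>j_{\mathrm{hi}}$ is symmetric.
\item Enumerated atoms pass in one of three ways. A skip works by the same completed-square argument, since $\delta$ ranges over $[j-q_hM,j-q_lM]$. The first-order bound combines Lemma~\ref{apC:lem:atoms}(c) with $\xi\delta+\eta\delta^2\ge Q_\delta$. The second-order bound is Lemma~\ref{apC:lem:atoms}(d).
\end{itemize}

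\emph{The margin (Mg).} We need
\[
\eta\theta q(1-q)\muM<\alpha'-\gamma D\muM-\sum_r z_re^{-\ell_r\muM}.
\]
Bound each term using the point's location in the box:
\begin{itemize}
\item $q(1-q)\le v_{\max}$ and $\muM\le m_h$, with $\eta,\theta\ge0$;
\item $\gamma D\muM\le\gamma Dm_h$, with $D\ge0$;
\item $e^{-\ell_r\muM}\le e^{-\ell_rm_l}\le\hat e_r$, using $\ell_r\ge0$ and the truncated exponential series being at most $e^{\ell_rm_l}$.
\end{itemize}
The term $\beta(\muM-m_0)-\gamma(\muM-m_0)^2$ is concave in $\muM$. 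So on $[m_l,m_h]$ it is bounded below by its endpoint values, and the test's separate min and max over $\{m_l,m_h\}$ give a weaker lower bound still. With these bounds the test's (Mg) yields the strict inequality.

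\emph{Main obstacle.} The delicate step is the atom range in (F)(ii). One has to check that the certified ends and the clipping to $[-1,M+1]$ really cover every $q$ in the box, not only the endpoints. This relies on $\delta=j-qM$ being monotone in $q$ and on $\delta_*$ being independent of $q$. I would write out that interval inclusion carefully first.
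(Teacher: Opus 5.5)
Your proposal is correct and follows the paper's proof essentially step for step. The common steps are the recentred quadratic $\alpha'=\psi_0(\muM)$, $\beta'=\beta-2\gamma(\muM-m_0)$, the reduction of the tilts to $s_r=1-q_h+q_h\tau_r$, and the tail via (T) and the monotonicity of $\psi_0$ on $[\bar M,\infty)$. The same three groups of atoms for $M<\bar M$ are used, resting on $\delta=j-qM$ being monotone in $q$ and $\delta_*$ not depending on $q$, and (Mg) is handled by termwise bounds.

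The differences are cosmetic. You bound $\beta(\muM-m_0)-\gamma(\muM-m_0)^2$ by concavity of the combined term, whereas the paper bounds the linear and quadratic parts separately. Also, the step $\mathrm{rhs}(M)\le\psi_0(M)$ on the tail follows from $z_r\ge0$ and $s_r\ge0$, not from (T) and Lemma~\ref{apC:lem:atoms}(a), which you cite there.
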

\emph{Idea.} The certificate is read off from the box; fibers at or above the cutoff are handled by the tail test (T),
smaller fibers by the atom bounds, and the margin by the test (Mg) at the ends $m_l,m_h$ of the box.
\begin{proof}
Take $\eta,\xi,\gamma,w_{\mathrm L},w_{\mathrm R},(z_r)$ from the box, $\alpha'=\alpha+\beta(\muM-m_0)-\gamma(\muM-m_0)^2$ and
$\beta'=\beta-2\gamma(\muM-m_0)$; then $\alpha'+\beta'(M-\muM)-\gamma(M-\muM)^2=\psi_0(M)$ identically, with
$\psi_0(M)=\alpha+\beta(M-m_0)-\gamma(M-m_0)^2$. Nonnegativity of $\eta^{-1}$, $\gamma$, $w_{\mathrm L}$, $w_{\mathrm R}$,
$z_r$ is (S).

(P). Since $0\le\tau_r\le1$ and $q\le q_h$, $1-q+q\tau_r\ge s_r\ge0$, so $\sum_rz_r(1-q+q\tau_r)^M\ge\sum_rz_rs_r^M$; it
therefore suffices that $\mathrm{rhs}(M)\le\kappa_2^q(M,j)+\xi\delta+\eta\delta^2$, $\delta=j-qM$, for all $M$, $j$ and
$q\in[q_l,q_h]$. \emph{Fibers $M\ge\bar M$}: $\psi_0(M)-\psi_0(\bar M)=(M-\bar M)\big(\beta-\gamma(M+\bar M-2m_0)\big)
\le(M-\bar M)(\beta-2\gamma(\bar M-m_0))\le0$ by (T) and $\gamma\ge0$; the priced constraints are nonnegative; so
$\mathrm{rhs}(M)\le\psi_0(\bar M)\le-\varpi(\bar M)-\xi^2/(4\eta)\le\kappa_2+\xi\delta+\eta\delta^2$ (by the bound stated with
the definition of $\varpi$). \emph{Fibers $M<\bar M$} (with the loop invariants of the integer vectors, Pascal's row and
the powers): if $j\le-2$ then $\kappa_2=0$ and $\delta\le-2-q_lM$, so the first half-line test applies; $j\ge M+2$ is
symmetric. For $-1\le j\le M+1$: if $\mathrm{lev}\le0$, then
$\mathrm{rhs}\le-\varpi(M)-\xi^2/(4\eta)$ and we conclude as for the tail. Otherwise, if
$j<j_{\mathrm{lo}}$ then $\delta-\delta_*\le-\Delta_-$ with $\Delta_-\ge0$,
$\eta\Delta_-^2\ge\mathrm{lev}$, so
$\kappa_2+\xi\delta+\eta\delta^2\ge-\varpi(M)+\eta\Delta_-^2-\xi^2/(4\eta)\ge\mathrm{rhs}$;
$j>j_{\mathrm{hi}}$ is symmetric; and an enumerated atom passes one of the three tests, which give the bound by the same
computation (skip), resp.\ by Lemma~\ref{apC:lem:atoms}(c) with $\xi\delta+\eta\delta^2\ge Q_\delta$, resp.\
Lemma~\ref{apC:lem:atoms}(d).

(Mg). For $\muM\in[m_l,m_h]$: $\beta(\muM-m_0)\ge\min\{\beta(m_l-m_0),\beta(m_h-m_0)\}$ (linear in $\muM$);
$(\muM-m_0)^2\le\max\{(m_l-m_0)^2,(m_h-m_0)^2\}$ (convex); $\gamma D\muM\le\gamma Dm_h$; $\eta\theta q(1-q)\muM\le\eta\theta v_{\max}m_h$;
and $e^{-\ell_r\muM}\le e^{-\ell_rm_l}\le\hat e_r$ because $\ell_r\ge0$, $\muM\ge m_l>0$ and $e^{x}\ge\sum_{i\le120}x^i/i!$ for
$x\ge0$. With $z_r\ge0$ the checked margin gives $\eta\theta q(1-q)\muM<\alpha'-\gamma D\muM-\sum_rz_re^{-\ell_r\muM}$.
\end{proof}

\begin{theorem}[a passing covering]\label{apC:thm:mgfpiece}
If a covering of $[\lambda_{\mathrm{lo}},\lambda_{\mathrm{hi}}]$ and $[m_{\mathrm{lo}},m_{\mathrm{hi}}]$ passes, then for every
$q\in[q(\lambda_{\mathrm{lo}}),q(\lambda_{\mathrm{hi}})]$ and $\muM\in[m_{\mathrm{lo}},m_{\mathrm{hi}}]$ an MGF certificate exists
for $(q,\muM,\theta,D)$ with the Laplace constraints $(1-q+q\tau_r,e^{-\ell_r\muM})_r$ of the covering, and these parameters
are valley-free.
\end{theorem}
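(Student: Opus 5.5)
The claim follows by combining Theorem~\ref{apC:thm:mgfbox} with the definition of a passing covering, and then invoking the MGF criterion. Fix $q\in[q(\lambda_{\mathrm{lo}}),q(\lambda_{\mathrm{hi}})]$ and $\muM\in[m_{\mathrm{lo}},m_{\mathrm{hi}}]$.

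First we locate a box containing the point. By definition, the boxes of a passing covering cover the rectangle $[q(\lambda_{\mathrm{lo}}),q(\lambda_{\mathrm{hi}})]\times[m_{\mathrm{lo}},m_{\mathrm{hi}}]$ by consecutive $q$-slabs. Each slab $[q_l,q_h]$ carries a chain of boxes in $\muM$. The slabs are consecutive and jointly span the $q$-range, so some slab contains $q$. Within that slab, the chain of boxes spans $[m_{\mathrm{lo}},m_{\mathrm{hi}}]$ consecutively, so some box $[q_l,q_h]\times[m_l,m_h]$ contains the point $(q,\muM)$. The only thing to check is that ``cover'' here means closed intervals whose endpoints match, so that shared endpoints are not lost. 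The checker verifies exactly this adjacency condition.

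Next we apply Theorem~\ref{apC:thm:mgfbox} to that box. The covering passes, so its parameters are sane: $\theta,D\ge0$, $0\le\tau_r\le1$ and $\ell_r\ge0$. The box itself passes (S), (T), (F) and (Mg). All boxes of one covering use the same parameters $\theta$, $D$, $\tau_r$, $\ell_r$. The theorem therefore yields an MGF certificate at $(q,\muM,\theta,D)$ with the Laplace constraints $(1-q+q\tau_r,e^{-\ell_r\muM})_r$ of the covering. This proves the first assertion.

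For valley-freeness, observe that $q\in(0,1)$: test (S) gives $0<q_l\le q\le q_h<1$. The MGF criterion, Theorem~\ref{p2:thm:mgf2}, then turns the certificate into valley-freeness of $(q,\muM,\theta,D,\mathcal E)$.

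There is no real obstacle here. All the substantive work is in Theorem~\ref{apC:thm:mgfbox} and Theorem~\ref{p2:thm:mgf2}. The only point needing care is the covering bookkeeping just described: every point of the closed rectangle, including slab and chain boundaries, must lie in some box, and the covering's parameters are the ones used in each box's test (Mg).
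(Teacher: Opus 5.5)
Your proposal is correct and follows the paper's proof essentially step for step: locate a passing box containing $(q,\muM)$ via the slab/chain structure, apply Theorem~\ref{apC:thm:mgfbox} with the covering's sane parameters, use (S) to get $0<q<1$, and invoke Theorem~\ref{p2:thm:mgf2}. One small correction: the test does not require slab or chain endpoints to match. It only requires each slab (resp.\ box in a chain) to start at or below the end of the previous one, so overlaps are allowed, and coverage of the closed rectangle then follows by induction along the slabs and chains.
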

\begin{proof}
The test of the covering checks consecutive $q$-slabs, the first starting at or below $q(\lambda_{\mathrm{lo}})$,
each starting at or below the end of the previous one, the last ending at or above
$q(\lambda_{\mathrm{hi}})$; for each slab a chain of boxes each containing the slab in $q$, the first with
$m_l\le m_{\mathrm{lo}}$, each next one with $m_l$ at most the previous $m_h$, the last with $m_h\ge m_{\mathrm{hi}}$. By
induction along the slabs and the chains every $(q,\muM)$ of the rectangle lies in a listed box, which passes;
Theorem~\ref{apC:thm:mgfbox} and (S) give the claims, and Theorem~\ref{p2:thm:mgf2} gives valley-freeness.
\end{proof}

% ---------------------------------------------------------------------------------------------------------------
\subsection{The coverings below the starting means and the no-valley inequality}\label{apC:sec:families}

This subsection states the verified coverings of the parameter domain below $\muM=50$, the starting means $\mu_0(\iota)$ and
the no-valley inequality above them (Theorem~\ref{fib:thm:intro-novalley}, proved in Section~\ref{fib:sec}), and proves from
them the no-valley inequality on the parameter domain (Theorem~\ref{p2:thm:O4}).

\begin{lemma}[the coverings below $50$]\label{p2:lem:strips}
For every $\iota<55$ there is a covering of $\Ihat_\iota$ and $[m_{25}(\iota),m_\ast(\iota)]$ and a covering of
$\Ihat_\iota$ and $[m_\ast(\iota),50]$, both with the caps $\theta(\iota)$, $D(\iota)$ and the rates $\ell_r(\iota)$, that pass
the box test of Section~\ref{apC:sec:mgfbox}; here $m_\ast(\iota)\in[35/3,68/3]$. The $110$
coverings have $779$ boxes: $550$ on $109$ $q$-slabs below $m_\ast(\iota)$ and $229$ on $60$ $q$-slabs above it.
\end{lemma}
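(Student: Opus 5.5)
This lemma is a finite computational statement: $110$ explicit coverings, each of which is claimed to pass the box test. My plan is therefore to reduce the assertion to a mechanical verification whose soundness has already been proved, and to check that every covering is attached to the right data.

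\emph{Step 1: fix the data.} For each subinterval index $\iota<55$ I first fix the parameters of the covering. These are:
\begin{itemize}
\item the activity endpoints $\lamhat_\iota,\lamhat_{\iota+1}$;
\item the caps $\theta(\iota)$ from Data~\ref{p2:tab:theta} and $D(\iota)$ as listed after \eqref{p2:eq:Dhat};
\item the five tilts $\tau_r$ of Theorem~\ref{p2:thm:O3} with their rates $\ell_r(\iota)$ from the supplementary data;
\item the split point $m_\ast(\iota)$ and the lower edge $m_{25}(\iota)=4/q(\lamhat_{\iota+1})$ of Theorem~\ref{p2:thm:O5}.
\end{itemize}
Sanity ($\theta,D\ge0$, $0\le\tau_r\le1$, $\ell_r\ge0$) is an exact rational check. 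So is $35/3\le m_\ast(\iota)\le 68/3$, together with $m_{25}(\iota)\le m_\ast(\iota)\le 50$. This last fact is consistent with the stated range $m_{25}\in[40/7,136/9]$.

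\emph{Step 2: run the box test.} I then read the $779$ rational boxes from the supplementary data. Each box has its dual $(\eta,\xi,\alpha,\beta,\gamma,w_{\mathrm L},w_{\mathrm R},z_0,\dots,z_4)$, its centre $m_0$ and its cutoff $\bar M$. For each box I run the Boolean box test of Section~\ref{apC:sec:mgfbox} in exact rational arithmetic, carrying the binomial vectors along $M$ by the stated Pascal recurrence. Test (S) is immediate. Test (T) is a single inequality at $\bar M$. Test (F) loops over $M<\bar M$ and, for each $M$, over the certified atom range $[j_{\mathrm{lo}},j_{\mathrm{hi}}]$, with the first- and second-order bounds justified by Lemma~\ref{apC:lem:atoms}. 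Test (Mg) uses the truncated exponential series $\hat e_r$.

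\emph{Step 3: check the covering structure.} Finally I check that the boxes tile each rectangle
\[
[q(\lamhat_\iota),q(\lamhat_{\iota+1})]\times[m_{25}(\iota),m_\ast(\iota)]
\quad\text{and}\quad
[q(\lamhat_\iota),q(\lamhat_{\iota+1})]\times[m_\ast(\iota),50].
\]
The tiling must be by consecutive $q$-slabs, each carrying a chain of boxes in $\muM$, exactly as in the proof of Theorem~\ref{apC:thm:mgfpiece}. Since $q(\lambda)$ is rational at rational $\lambda$, all comparisons are exact. Counting slabs and boxes then confirms the totals: $109$ slabs and $550$ boxes below $m_\ast(\iota)$, and $60$ slabs and $229$ boxes above it.

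The substance is not a new argument but making sure the checker implements the test exactly as specified. The main obstacle I expect is test (F) at the larger fibers near $\muM\approx 50$. There $\bar M$ is large, the atom ranges are wide, and the exact integer vectors $\hat q^M\binp_M(j;\qval)$ grow. I would keep (F) cheap by using the skip criterion and the $\mathrm{lev}\le0$ shortcut as often as possible. I would also split the domain at $m_\ast(\iota)$, which is already the design here, so that the lower coverings use many thin $q$-slabs where $\muM$ is small and the margin in (Mg) is tight.
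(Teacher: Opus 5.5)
Your proposal is correct and is essentially the paper's own justification: the lemma is a finite exact-rational verification (family~\ref{chk:strips}). It runs the box test on the $779$ listed boxes, checks the slab-and-chain covering structure, and counts slabs and boxes; its soundness rests on Theorems~\ref{apC:thm:mgfbox} and~\ref{apC:thm:mgfpiece}.

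One small adjustment is needed. For $36$ subintervals the boxes below $m_\ast(\iota)$ were certified on a rectangle containing $[q(\lamhat_\iota),q(\lamhat_{\iota+1})]\times[m_{25}(\iota),m_\ast(\iota)]$, and at a cap at least $D(\iota)$. So your Step~3 must check covering in the sense of the paper, where the first slab or box starts at or below and the last ends at or above, not exact tiling. The cap discrepancy is harmless because (Mg) is monotone in $D$, as $\gamma\ge0$.
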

This is verified by computer in exact rational arithmetic (family~\ref{chk:strips} of Section~\ref{sec:computer}). All
$110$ coverings are verified in the formal proof (Section~\ref{sec:verification}); for $36$ subintervals the coverings below
$m_\ast(\iota)$ are verified there on a rectangle containing
$[q(\lamhat_\iota),q(\lamhat_{\iota+1})]\times[m_{25}(\iota),m_\ast(\iota)]$ and at a cap at least $D(\iota)$, which suffices since (Mg)
is monotone in $D$ ($\gamma\ge0$). The proof of Theorem~\ref{p2:thm:O4} uses $693$ of the $779$ boxes: the $550$ below
$m_\ast(\iota)$ and the $143$ above it whose range of means meets $[m_\ast(\iota),\mu_0(\iota))$ (Appendix~\ref{apx:fiber}).

The \emph{starting mean} $\mu_0(\iota)$ of the subinterval $\iota$ is the entry of Data~\ref{fib:tab:mu0} in
Appendix~\ref{apx:fiber}; $19\le\mu_0(\iota)\le50$ for every $\iota$. From the starting mean on, no box is needed:

\begin{theorem}[no valley above the starting means]\label{fib:thm:intro-novalley}
Let $\iota\in\{0,\dots,54\}$. For every $q$ with $q/(1-q)\in\Ihat_\iota$ and every real $\muM\ge\mu_0(\iota)$, no probability
mixture with $\mathbb{E}M=\muM$, $\mathbb{E}\delta=0$ and $\mathbb{E}\delta^2\le\theta(\iota)q(1-q)\muM$ at an integer $k$,
$\operatorname{Var}M\le D(\iota)\muM$ and $\mathbb{E}(1-q)^M\le e^{-\ell_0(\iota)\muM}$ has a valley at $k$.
\end{theorem}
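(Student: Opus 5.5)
The plan is to apply the MGF criterion (Theorem~\ref{p2:thm:mgf2}). At each point $(q,\muM)$ with $q/(1-q)\in\Ihat_\iota$ and $\muM\ge\mu_0(\iota)$ I will exhibit, in closed form, an MGF certificate for $(q,\muM,\theta(\iota),D(\iota),\mathcal E)$. Here $\mathcal E$ consists of the single constraint $(1-q,e^{-\ell_0(\iota)\muM})$, i.e.\ the tilt $\tau_0=0$.

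The certificate is a Gaussian-type one, centred at the mean. I take $w_{\mathrm L},w_{\mathrm R}$ as functions of $q$ only, $\beta=0$, $\xi$ of order $1/\muM$ (it absorbs the odd part of the kernel), and $\eta=c_\eta/\muM$ and $\gamma=c_\gamma/\muM^2$ with constants $c_\eta,c_\gamma$ depending on the subinterval. I also set $\alpha=c_\alpha/\muM$, since the valley kernel at a fiber $M$ has size about $\binp_M(j)\approx(2\pi q(1-q)M)^{-1/2}$ times a curvature factor of order $1/M$. With these choices (Mg) reads
\[
c_\eta\theta q(1-q)\ <\ c_\alpha-c_\gamma D-z_0\muM e^{-\ell_0\muM}.
\]
The last term is tiny once $\muM\ge19$, so (Mg) reduces to an inequality among constants, uniform in $\muM$.

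The heart of the proof is (P). For each fiber $M$ it must hold for all $j\in\mathbb Z$; I call this the \emph{fiber inequality}. I split into three regimes of $M$.

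\emph{Central fibers}, $|M-\muM|\le c\sqrt{\muM}$. I will use a local CLT with explicit remainder for $\binp_M(j;q)$ and its differences. Then $\kappa_2(M,j)$ is $\tfrac{1}{M}$ times a Hermite-type polynomial in $u=\delta/\sqrt{q(1-q)M}$, multiplied by a Gaussian, plus an error of order $M^{-3/2}$. The pointwise inequality becomes a one-variable inequality in $u$, of the form: Hermite term $+\,c_\eta u^2$ dominates $c_\alpha$ minus a quadratic in $(M-\muM)/\muM$. This I verify by calculus plus a finite list of checks. The remainder is absorbed using $M\ge\mu_0/2$, and here the explicit lower bound $\mu_0\ge19$ enters.

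\emph{Far fibers}, $M$ away from $\muM$. The term $-\gamma(M-\muM)^2$ makes the left side of (P) negative, and it must beat $\kappa_2\ge-\max(w_{\mathrm L},w_{\mathrm R})$ (Lemma~\ref{apC:lem:atoms}(a)) minus the best value of $\xi\delta+\eta\delta^2$. This is a direct inequality, but small $M$ needs care: there $(M-\muM)^2$ is large, but so is $-\xi^2/(4\eta)$ relative to $\alpha$. The Laplace term $-z_0(1-q)^M$ is the tool for the smallest fibers: I choose $z_0$ of order $\alpha$ so that $z_0(1-q)^M$ exceeds $\alpha$ for $M$ below a threshold. Its cost in (Mg) is then $z_0e^{-\ell_0\muM}$, which is exponentially small in $\muM$.

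The remaining step is to average over the mixture. Because (P) is centred at $\muM$ and not at each fiber's own scale, I will state the fiber inequality with $M$ and $\muM$ decoupled. I then check that the chosen $\gamma$ controls the mismatch $M/\muM\neq1$ in the Gaussian amplitude, by expanding $(M/\muM)^{-3/2}$ to second order.

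The main obstacle is making the central-fiber inequality hold uniformly, with explicit constants, down to $\muM=\mu_0(\iota)$. Both the local CLT error and the margin in (Mg) are of order $1/\muM$. The margin $c_\alpha-c_\gamma D-c_\eta\theta q(1-q)$ is only slightly positive, especially on subintervals with $\theta=1$ and $D=21/10$. I would first optimise $(w_{\mathrm L},w_{\mathrm R},c_\eta,c_\gamma,c_\alpha)$ numerically on each subinterval against the limiting Gaussian inequality. Then I would take $\mu_0(\iota)$ as the point where the explicit Edgeworth remainder fits in the leftover slack, and confirm the finite residual cases by the listed checks.
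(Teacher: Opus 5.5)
Your outline has the paper's architecture: a closed-form MGF certificate with only the zero-tilt constraint, obtained from a pointwise fiber inequality and averaged over the mixture. But the certificate you specify cannot reach the prescribed starting means.

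\emph{The choice $\beta=0$ fails.} The coefficient $\beta$ is free, since it does not enter (Mg) because $\mathbb E(M-\muM)=0$, and it is needed. Normalised as in Proposition~\ref{fib:prop:avg}, the fiber minimum $\min_j\Ff(M,j)$ is close to $B=(M+2)/\muM$ near $t=M/\muM=1$, so it has slope about $1$ in $t$. This is the linear part of the target $\psi(t)=t-\frac92(t-1)^2$, and the paper accordingly uses $\beta=1+\beta'$. A concave quadratic centred at $\muM$ with no linear term can lie below such a function only at an extra cost of order $\sqrt{D/\muM}$ in the normalised margin \eqref{fib:eq:margin}. That is roughly $0.25$ at $\muM=19$, $D=\frac65$, while the margins at the starting means are as small as $1.4\cdot10^{-3}$ (Data~\ref{fib:tab:mu0}).

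\emph{Your scalings are inconsistent.} With $\eta=c_\eta/\muM$ and $\alpha=c_\alpha/\muM$, the two sides of (Mg) have different orders in $\muM$, so your displayed reduction does not follow. Your own estimate $\kappa_2\approx\muM^{-3/2}$ forces $\alpha\propto\muM^{-3/2}$, $\eta\propto\muM^{-5/2}$ and $\gamma\propto\muM^{-7/2}$. Then the quadratic in $M$ lives on the scale of $t$, not of $\sqrt{\muM}$, and the variance constraint costs $\Gamma D/\muM$ rather than an $O(1)$ amount $c_\gamma D$.

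\emph{The far and small fibers cannot be handled as you propose.} In these units $\alpha\approx\muM^{-3/2}$, while $\kappa_2$ reaches $-\max(w_{\mathrm L},w_{\mathrm R})=-\Theta(1)$. So neither $-\gamma(M-\muM)^2$ (capped by its cost $\gamma D\muM$ in (Mg)) nor a price $z_0$ of order $\alpha$ can absorb the crude bound of Lemma~\ref{apC:lem:atoms}(a). A fiber with deficit $\Def$ needs $z_0(1-q)^M\ge\Def$, which costs $z_0e^{-\ell_0\muM}\ge\Def\,e^{(tL-\ell_0)\muM}$ with $L=-\log(1-q)$. Since $\ell_0\le\log(1+\lambda)=L$, this cost is small only well below $t=\ell_0/L$, and above that point it is no cheaper than lowering $\alpha$ by $\Def$. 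So you must prove the fiber inequality with no deficit on every fiber with $t\ge t_s$, for a cut $t_s<\ell_0(\iota)/L_b$ (between $0.3$ and $0.69$ in the paper). That is a range of width $\Theta(\muM)$, and on many subintervals it enters the wing $t<0.6$, where the Gaussian slack $\Sl(t)$ can be negative. A local CLT on $|M-\muM|\le c\sqrt{\muM}$ covers none of this. Even after it is done, the spike term in \eqref{fib:eq:row} is up to about $0.24$ at $\mu_0(\iota)$, not tiny (Lemma~\ref{fib:lem:spike}, Data~\ref{fib:tab:mu0}).

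\emph{Two further gaps.} Where $\theta(\iota)=1$ ($\iota\le5$ and $\iota\ge47$), the leading-order margin $1-\theta$ of a symmetric Gaussian certificate is zero, and your plan does not say where a strictly positive margin comes from. In the paper, the excess $b\in\{\frac3{10},\frac25\}$ comes from the specific weights $w_{\mathrm L}=q^2(3-4q)$ and $w_{\mathrm R}=(1-q)^2(4q-1)$. For these the kernel is exactly $4vB_N(J)(\gamma-w)/(N-1)$ with the bonus $\Delta=\frac{r^2(N^2-N+1)}{(1-r^2)N}$ (Lemma~\ref{fib:lem:T1}). This is a two-scale effect in $\zeta=r\sqrt N$ that no fixed limiting Gaussian inequality captures. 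Finally, choosing $\mu_0(\iota)$ afterwards, at the point where an Edgeworth remainder fits, would prove a different statement. The theorem is for the tabulated $\mu_0(\iota)\in[19,50]$, and the box coverings that Theorem~\ref{p2:thm:O4} uses below the starting means stop at $\muM=50$.
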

The proof (Section~\ref{fib:sec}) builds an MGF certificate in closed form from a lower bound for the fibers of a weighted
valley kernel, the fiber inequality, and uses only the Laplace constraint of the tilt $\tau_0=0$.

\begin{theorem}[the no-valley inequality on the parameter domain]\label{p2:thm:O4}
Let $\lambda\in[1/3,7/3]$, $\iota=\iota(\lambda)$, $q=q(\lambda)$, and let a probability mixture satisfy
$\mathbb{E}M=\muM\ge m_{25}(\iota)$, $\mathbb{E}\delta=0$ and $\mathbb{E}\delta^2\le\theta(\iota)q(1-q)\muM$ at an integer $k$,
$\operatorname{Var}M\le\hat D(\lambda)\muM$, and $\mathbb{E}(1-q+q\tau_r)^M\le e^{-\ell_r(\iota)\muM}$ ($r<5$). Then the mixture
has no valley at $k$.
\end{theorem}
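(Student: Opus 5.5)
The proof splits the range of means at the starting mean $\mu_0(\iota)$ and, on each part, reduces the mixture hypotheses to valley-freeness of parameters that are either covered by a passing box covering or handled by Theorem~\ref{fib:thm:intro-novalley}.

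\emph{Case $\muM\ge\mu_0(\iota)$.} This is immediate from Theorem~\ref{fib:thm:intro-novalley}. We have $q/(1-q)=\lambda\in\Ihat_\iota$ by the definition of $\iota(\lambda)$. The variance hypothesis implies the one required there, because $\hat D(\lambda)\le D(\iota)$: $D(\iota)$ is the maximum of $\hat D$ on $\Ihat_\iota$. The Laplace constraint at $\tau_0=0$ is the constraint $r=0$ of the hypothesis, since $1-q+q\tau_0=1-q$. The other hypotheses, $\mathbb E\delta=0$ and $\mathbb E\delta^2\le\theta(\iota)q(1-q)\muM$, are assumed verbatim.

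\emph{Case $m_{25}(\iota)\le\muM<\mu_0(\iota)$.} Here we use Lemma~\ref{p2:lem:strips}. Since $\mu_0(\iota)\le 50$, the point $(q,\muM)$ lies in the rectangle $[q(\lamhat_\iota),q(\lamhat_{\iota+1})]\times[m_{25}(\iota),50]$. We choose whichever of the two passing coverings (below or above $m_\ast(\iota)$) contains $\muM$. Theorem~\ref{apC:thm:mgfpiece} then says that the parameters $(q,\muM,\theta(\iota),D(\iota),\mathcal E)$, with $\mathcal E=(1-q+q\tau_r,e^{-\ell_r(\iota)\muM})_{r<5}$, are valley-free. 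Our mixture satisfies exactly these constraints: $\operatorname{Var}M\le\hat D(\lambda)\muM\le D(\iota)\muM$, and the Laplace constraints match the rates $\ell_r(\iota)$ used by the covering. Definition~\ref{p2:def:mgf2} of valley-freeness therefore excludes a valley at $k$.

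\emph{Bookkeeping.} Three points need care.

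\begin{itemize}
\item The covering's $q$-range must contain $q(\lambda)$ for every $\lambda\in\Ihat_\iota$. This holds because $q$ is increasing.
\item The coverings in the formal proof that are verified on larger rectangles, or with a larger cap $D$, still apply, by monotonicity of valley-freeness in $D$ (the remark after Definition~\ref{p2:def:mgf2}).
\item Only the boxes meeting $[m_{25},\mu_0)$ are needed; which boxes these are is irrelevant to the logic.
\end{itemize}

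The main obstacle is not in this assembly, which is routine. It sits inside the two cited ingredients: the exact box covering (a computation) and the fiber inequality behind Theorem~\ref{fib:thm:intro-novalley}. The one genuine point of attention here is a consistency check: the caps and rates named in the hypothesis of Theorem~\ref{p2:thm:O4} must be exactly those the coverings were certified with, subinterval by subinterval. Here that means $\theta(\iota)$, $D(\iota)\ge\hat D(\lambda)$ and $\ell_r(\iota)$, including the $26$ subintervals whose rate $\ell_0$ comes from an additional interval. This is guaranteed by Lemma~\ref{p2:lem:strips} being stated with these very caps and rates.
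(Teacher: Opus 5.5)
Your proof is correct and follows the paper's argument. It uses the same split at $\mu_0(\iota)$: Theorem~\ref{fib:thm:intro-novalley} above it, with $\hat D(\lambda)\le D(\iota)$ and the $r=0$ constraint, and the two coverings of Lemma~\ref{p2:lem:strips} below it. The only difference is packaging: you invoke Theorem~\ref{apC:thm:mgfpiece} for the whole covering, whereas the paper locates the box containing $(q,\muM)$ and applies Theorems~\ref{apC:thm:mgfbox} and~\ref{p2:thm:mgf2} directly, which is exactly how Theorem~\ref{apC:thm:mgfpiece} is proved.
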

\begin{proof}
Since $m_{25}(\iota)\le136/9<19\le\mu_0(\iota)$ for every $\iota$ (Data~\ref{fib:tab:mu0} and Part~1 of the supplementary data
file, Section~\ref{sec:computer:data}), the two cases below cover every $\muM\ge m_{25}(\iota)$.

\emph{(a) $m_{25}(\iota)\le\muM<\mu_0(\iota)$.} Here $\muM<50$. By Lemma~\ref{p2:lem:strips} the point $(q,\muM)$ lies in a box
of one of the two coverings of $\Ihat_\iota$ (of $[m_{25}(\iota),m_\ast(\iota)]$ and of $[m_\ast(\iota),50]$), and that box is not
one whose range of means lies in $[\mu_0(\iota),50]$, because it contains $\muM<\mu_0(\iota)$. The box passes the box test and
the parameters of the covering are sane (both by Lemma~\ref{p2:lem:strips}: a covering passes only if every box passes and its
parameters are sane, Section~\ref{apC:sec:mgfbox}), so by Theorem~\ref{apC:thm:mgfbox} an MGF certificate exists for
$(q,\muM,\theta(\iota),D(\iota))$ with the Laplace constraints $(1-q+q\tau_r,e^{-\ell_r(\iota)\muM})_r$, and
Theorem~\ref{p2:thm:mgf2} excludes a valley at $k$, as $\hat D(\lambda)\le D(\iota)$.

\emph{(b) $\muM\ge\mu_0(\iota)$.} Theorem~\ref{fib:thm:intro-novalley} applies, with $\operatorname{Var}M\le\hat D(\lambda)\muM\le
D(\iota)\muM$ and the constraint $\mathbb{E}(1-q)^M\le e^{-\ell_0(\iota)\muM}$, which is the case $r=0$, $\tau_0=0$ of the
hypothesis.
\end{proof}

\begin{remark}\label{fib:rem:P58}
This proof uses only the boxes of Lemma~\ref{p2:lem:strips} whose range of means meets $[m_{25}(\iota),\mu_0(\iota))$; the
other $86$ boxes are not needed (Appendix~\ref{apx:fiber}). The earlier computational route documented in the
supplementary data (Section~\ref{sec:computer:data}) establishes Theorem~\ref{p2:thm:O4}, with an additional
hypothesis on the rates of the bands (Section~\ref{sec:verification}), in
three cases instead: all $779$ boxes up to mean $50$, coverings of the bands of activities from
mean $50$ to $400$, and a large-mean theorem above $400$. The formal proof described in
Section~\ref{sec:verification} follows the proof given here; below the starting means it checks all $779$ boxes.
\end{remark}

% ---------------------------------------------------------------------------------------------------------------
\subsection{The no-valley inequality above the starting means}\label{fib:sec}

This subsection proves Theorem~\ref{fib:thm:intro-novalley}, used in case~(b) of the proof of Theorem~\ref{p2:thm:O4}. The
proof has two steps. The first is a lower bound, with explicit constants, for the fibers of a weighted valley kernel: the fiber
inequality on the central range of activities (Theorem~\ref{fib:thm:env}), with its forms for the wing and for the outer
subintervals (Theorems~\ref{fib:thm:wing} and~\ref{fib:thm:window}). The second is an averaging argument in which the
fibers below the range of the first step, the spike fibers, are priced by the Laplace constraint of the tilt $\tau_0=0$
(Section~\ref{fib:sec:novalley}). The finite inequalities that the proofs use are stated in Appendix~\ref{apx:fiber}. We use the
mixtures, the valley kernel $\kappa_2$ and the MGF certificates of Section~\ref{p2:sec:criterion} with a single Laplace
constraint, $(s_0,e_0)=(1-q,e^{-\ell_0\mu})$, and throughout this subsection we write $\mu$ for the mean free count $\muM$.
Some letters have local meanings in this subsection, fixed in Section~\ref{fib:sec:setting}: $r$ is not the index of a tilt,
$v$ not a vertex, $x$ not the variable of a polynomial, $J$ not a state of a mixture, and $\gamma$ not a coefficient of a box
certificate; in Proposition~\ref{fib:prop:avg}, $\alpha$ and $\beta$ are the coefficients of a minorant, those of the
certificate being $\alpha_c$, $\beta_c$ and $\gamma_c$. $I(x)$ in Section~\ref{fib:sec:fiber} is not the independence
polynomial, and inside single proofs $A$, $c$, $K$, $n$ and $\xi$ are auxiliary constants or variables defined there.

% ---------------------------------------------------------------------------------------------------------------
\subsubsection{The weighted kernel, the fiber function and the averaging step}\label{fib:sec:setting}

Throughout this subsection, $q\in[\frac14,\frac34]$ and
\[
\wL=q^2(3-4q),\qquad \wR=(1-q)^2(4q-1),
\]
which are nonnegative exactly on $[\frac14,\frac34]$; $\kappa_w$ denotes $\kappa_2$ with these weights (the \emph{weighted kernel}).
We write
\[
r=2q-1,\quad v=q(1-q)=\tfrac{1-r^2}4,\quad \sigb=\sqrt{v\mu},\quad \aG=\frac{e^{-1/2}}{\sqrt{2\pi}}
\]
(in this subsection $r$ is this number, never the index of a tilt), and for integers $M\ge0$ and $j$
\[
N=M+2,\quad J=j+1,\quad t=\frac M\mu,\quad B=\frac N\mu=t+\frac2\mu,\quad \rho=\frac N{N+1},\quad
B_N(J)=\binom NJq^J(1-q)^{N-J}
\]
($B_N(J)=\binp_N(J;q)$, which is $0$ outside $0\le J\le N$; here $J$ is this shifted index, not a state of a mixture). The
\emph{offset} and its standardized square are
\[
u=\delta-\tfrac r2=J-q(N-1)-\tfrac12,\qquad x=\frac u{\sqrt{vN}},\qquad w=x^2,
\]
and the \emph{bonus} is
\[
\Delta=\frac{r^2(N^2-N+1)}{(1-r^2)N},\qquad \gamma=1+\Delta=\frac{N+r^2(N-1)^2}{(1-r^2)N}.
\]
The \emph{fiber function} and the target are
\[
\Ff(M,j)=\frac{\sigb^3\kappa_w(M,j)}{4v^2\aG}+\frac{u^2}{\sigb^2},\qquad \psi(t)=t-\tfrac92(t-1)^2 .
\]
At $q=\frac12$ one has $\wL=\wR=\frac14$, $u=\delta$, and $\Ff{}=\sigb^3\kappa/\aG+\delta^2/\sigb^2$ with the unweighted kernel
$\kappa=\fL/q+\fR/(1-q)$. From Section~\ref{fib:sec:fiber} on, statements about $\min_j\Ff(M,j)$ are proved for $q\ge\frac12$, by the
symmetry of Lemma~\ref{fib:lem:sym}; there $r=|2q-1|\in[0,\frac12]$, and every range $r\le r_m$ or $r\in[r_a,r_b]$ is a range of
$|2q-1|$. The spike weight $(1-q)^{-M}$ and the inputs $\theta,D,\ell_0$ below are always those of the original $q$.

For fixed $\mu$, the set of pairs $(M,j)$ with $j\in\mathbb Z$ is the \emph{fiber} $M$, and $t=M/\mu$ is its coordinate.

\paragraph{The subintervals.}\label{fib:sec:inputs}
For a subinterval $\iota$ we write $q_a=q(\lamhat_\iota)$, $q_b=q(\lamhat_{\iota+1})$, $L_b=-\log(1-q_b)$,
$r_{\rm hi}=\max(|2q_a-1|,|2q_b-1|)$ and $v_{\min}=\min(q_a(1-q_a),q_b(1-q_b))$; $\theta(\iota)$, $D(\iota)$ and the zero-tilt rate
$\ell_0(\iota)$ are those of Section~\ref{p2:sec:inputs}, listed with $m_{25}(\iota)$ and $m_\ast(\iota)$ in Part~1 of the
supplementary data file (Section~\ref{sec:computer:data}). For every $\lambda\in\Ihat_\iota$, $\hat D(\lambda)\le D(\iota)$ by
\eqref{p2:eq:Dhat}.

\paragraph{The averaging step.}
\begin{proposition}[averaging with the shifted penalty]\label{fib:prop:avg}
Let $q\in[\frac14,\frac34]$, $\mu>0$, and suppose that for reals $\alpha,\beta$ and $\Gamma\ge0$, $Z'\ge0$,
\begin{equation}\label{fib:eq:FL}
\Ff(M,j)\ \ge\ \alpha+\beta(t-1)-\Gamma(t-1)^2-Z'(1-q)^M\qquad\text{for all }M\ge0,\ j\in\mathbb Z .
\end{equation}
Then the weights $\wL,\wR$, $\eta_c=4v^2\aG/\sigb^5$, $\xi_c=-r\eta_c$, $\alpha_c=\frac{4v^2\aG}{\sigb^3}(\alpha-\frac{r^2}{4\sigb^2})$,
$\beta_c=\frac{4v^2\aG\beta}{\sigb^3\mu}$, $\gamma_c=\frac{4v^2\aG\Gamma}{\sigb^3\mu^2}$ and the price $z_0=\frac{4v^2\aG Z'}{\sigb^3}$ on
$(1-q,e^{-\ell_0\mu})$ satisfy (P), and (Mg) holds if and only if
\begin{equation}\label{fib:eq:margin}
\alpha-\theta-\frac{\Gamma D}\mu-\frac{r^2}{(1-r^2)\mu}-Z'e^{-\ell_0\mu}>0 .
\end{equation}
In that case no probability mixture with $\E M=\mu$, $\E\delta=0$, $\E\delta^2\le\theta v\mu$, $\Var M\le D\mu$ and
$\E(1-q)^M\le e^{-\ell_0\mu}$ has a valley.
\end{proposition}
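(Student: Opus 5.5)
The proof is a direct rescaling: multiply \eqref{fib:eq:FL} by the positive constant $c=4v^2\aG/\sigb^3$, rewrite it as the pointwise inequality (P) of Definition~\ref{p2:def:mgf2}, translate (Mg) into \eqref{fib:eq:margin}, and conclude with Theorem~\ref{p2:thm:mgf2}.

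\emph{Signs and the identification of $u$.} First I check the sign requirements of an MGF certificate. Since $q\in[\frac14,\frac34]$, we have $v=q(1-q)>0$, so $\sigb>0$ and $c>0$. Hence $\eta_c=c/\sigb^2>0$, and $\gamma_c\ge0$, $z_0\ge0$ follow from $\Gamma,Z'\ge0$. The weights $\wL,\wR$ are nonnegative on $[\frac14,\frac34]$. Next I identify the offset with the criterion's $\delta=j-qM$. With $J=j+1$ and $N=M+2$,
\[
u=j+1-q(M+1)-\tfrac12=\delta+\tfrac12-q=\delta-\tfrac r2 .
\]

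\emph{Deriving (P).} By the definition of $\Ff$, $c\,\Ff(M,j)=\kappa_w(M,j)+\eta_c u^2$, and
\[
\eta_cu^2=\eta_c\delta^2-r\eta_c\delta+\eta_cr^2/4=\eta_c\delta^2+\xi_c\delta+\eta_cr^2/4 .
\]
Multiplying \eqref{fib:eq:FL} by $c$ therefore gives
\[
\kappa_w+\xi_c\delta+\eta_c\delta^2\ \ge\ c\alpha-\eta_cr^2/4+c\beta(t-1)-c\Gamma(t-1)^2-cZ'(1-q)^M .
\]
Substituting $t-1=(M-\mu)/\mu$ turns the right side into $\alpha_c+\beta_c(M-\mu)-\gamma_c(M-\mu)^2-z_0(1-q)^M$. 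Here $\alpha_c=c(\alpha-r^2/(4\sigb^2))$, because $\eta_c r^2/4=c\,r^2/(4\sigb^2)$. This is exactly (P) with the single Laplace constraint $s_0=1-q$, for all $M\ge0$ and $j\in\mathbb Z$.

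\emph{Equivalence of (Mg) and \eqref{fib:eq:margin}.} Condition (Mg) reads
\[
\eta_c\theta v\mu<\alpha_c-\gamma_cD\mu-z_0e^{-\ell_0\mu}.
\]
Divide by $c>0$ and use $\eta_c/c=1/(v\mu)$. The left side becomes $\theta$. On the right, $\gamma_cD\mu/c=\Gamma D/\mu$, and
\[
\frac{r^2}{4\sigb^2}=\frac{r^2}{4v\mu}=\frac{r^2}{(1-r^2)\mu}\qquad\text{since }4v=1-r^2 .
\]
So (Mg) holds if and only if \eqref{fib:eq:margin} holds.

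\emph{Conclusion.} When \eqref{fib:eq:margin} holds, the data above form an MGF certificate for $(q,\mu,\theta,D,((1-q,e^{-\ell_0\mu})))$. Theorem~\ref{p2:thm:mgf2} then makes these parameters valley-free. Since $\theta q(1-q)\mu=\theta v\mu$, the hypotheses in that definition are exactly those listed in the proposition, so no such mixture has a valley. The only step needing care is the identification $u=\delta-r/2$ and the resulting cross term $\xi_c=-r\eta_c$; everything else is bookkeeping.
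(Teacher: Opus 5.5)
Your proposal is correct and follows the paper's proof essentially step for step. Like the paper, you multiply \eqref{fib:eq:FL} by $4v^2\aG/\sigb^3$ and expand $u^2$ using $u=\delta-\frac r2$ to obtain (P) with $\kappa_2=\kappa_w$. You then rescale (Mg), using $\sigb^2=v\mu$ and $4v=1-r^2$, to get \eqref{fib:eq:margin}, and conclude by Theorem~\ref{p2:thm:mgf2}.
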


\begin{proof}
Multiply \eqref{fib:eq:FL} by $4v^2\aG/\sigb^3>0$. The left side becomes $\kappa_w+\eta_cu^2$, and
$\eta_cu^2=\eta_c\delta^2+\xi_c\delta+\eta_c\frac{r^2}4$ since $u^2=\delta^2-r\delta+\frac{r^2}4$. With $t-1=(M-\mu)/\mu$ the right side
becomes $\frac{4v^2\aG}{\sigb^3}\alpha+\beta_c(M-\mu)-\gamma_c(M-\mu)^2-z_0(1-q)^M$, and
$\frac{4v^2\aG}{\sigb^3}\alpha-\eta_c\frac{r^2}4=\alpha_c$. This is (P) with $\kappa_2=\kappa_w$, which is allowed because
$\wL,\wR\ge0$ on $[\frac14,\frac34]$. Condition (Mg) reads $\eta_c\theta v\mu<\alpha_c-\gamma_cD\mu-z_0e^{-\ell_0\mu}$; multiply it by
$\sigb^3/(4v^2\aG)$ and use $\sigb^2=v\mu$ and $\frac{r^2}{4\sigb^2}=\frac{r^2}{(1-r^2)\mu}$ to get \eqref{fib:eq:margin}. The MGF criterion
(Theorem~\ref{p2:thm:mgf2}) concludes.
\end{proof}

The coefficient $\beta$ does not enter \eqref{fib:eq:margin}: in the proof of Theorem~\ref{p2:thm:mgf2} the term $\beta(M-\mu)$ has
expectation $0$, because $\E M=\mu$ is the same $\mu$ as in $t=M/\mu$. So any real $\beta$ may be used in \eqref{fib:eq:FL}. The shift
$\frac r2$ of the penalty costs $\frac{r^2}{(1-r^2)\mu}$ in the margin.

% ---------------------------------------------------------------------------------------------------------------
\subsubsection{The weighted kernel identity and the activity-one bound}\label{fib:sec:template}

\paragraph{The weighted kernel identity.}

\begin{lemma}[the weighted kernel identity]\label{fib:lem:T1}
For $q\in(0,1)$, every $M\ge0$ and every $j\in\mathbb Z$,
\[
\kappa_w(M,j)=\frac{B_N(J)\,\big(N+r^2(N-1)^2-4u^2\big)}{N(N-1)}=\frac{4v\,B_N(J)\,(\gamma-w)}{N-1}.
\]
Consequently
\begin{equation}\label{fib:eq:Fform}
\Ff(M,j)=\frac{\mu^{3/2}\sqrt{1-r^2}\,B_N(J)}{2\aG(N-1)}\,(\gamma-w)+Bw .
\end{equation}
\end{lemma}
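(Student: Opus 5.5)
The plan is to express every binomial mass appearing in $\kappa_w(M,j)$ through the single mass $B_N(J)=\binp_{M+2}(j+1;q)$, reducing the identity to a polynomial identity in $j$, $M$, $q$. Recall $\kappa_w=\wL\fL/q+\wR\fR/(1-q)$ with
\[
\fL=(1-q)\binp_M(j)-q\binp_M(j-1),\qquad \fR=q\binp_M(j)-(1-q)\binp_M(j+1).
\]
So $\kappa_w$ is a combination of $\binp_M(j-1)$, $\binp_M(j)$ and $\binp_M(j+1)$. For $0\le J\le N$, the ratio formulas give
\[
\binp_M(j-1)=B_N(J)\,\frac{J(J-1)}{N(N-1)}\,(1-q)^{-2},\qquad
\binp_M(j)=B_N(J)\,\frac{J(N-J)}{N(N-1)}\,\frac{1}{q(1-q)},
\]
\[
\binp_M(j+1)=B_N(J)\,\frac{(N-J)(N-J-1)}{N(N-1)}\,q^{-2}.
\]
These are the usual identities $\binom{N}{J}=\frac{N(N-1)}{J(J-1)}\binom{N-2}{J-2}$ and its analogues. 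They remain valid, with both sides zero, when the falling factorials vanish. Outside $0\le J\le N$ every term is zero, so the claimed identity is trivial there. This handles the boundary atoms $j=-1$ and $j=M+1$ uniformly, and $N-1=M+1\ge1$ keeps the denominators nonzero.

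Substituting these expressions, $N(N-1)\kappa_w/B_N(J)$ becomes an explicit polynomial:
\[
\frac{\wL}{q}\Bigl[\frac{J(N-J)}{q}-\frac{qJ(J-1)}{(1-q)^2}\Bigr]
+\frac{\wR}{1-q}\Bigl[\frac{J(N-J)}{1-q}-\frac{(1-q)(N-J)(N-J-1)}{q^2}\Bigr].
\]
Inserting $\wL=q^2(3-4q)$ and $\wR=(1-q)^2(4q-1)$ cancels the powers of $q$ and $1-q$ in the denominators, leaving a quadratic in $J$ with coefficients polynomial in $q$ and $N$. I will then rewrite it in terms of $u=J-q(N-1)-\tfrac12$ and check that it equals $N+r^2(N-1)^2-4u^2$ with $r=2q-1$. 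The cleanest way is to compare the three coefficients: the $J^2$ coefficient, the $J$ coefficient, and the constant term.

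The $J^2$ coefficient should be $-4$. Its contributions are
\[
-(3-4q)\Bigl(1+\tfrac{q^2}{(1-q)^2}\Bigr)(1-q)^2-(4q-1)\Bigl((1-q)^2+q^2\Bigr)\Big/q^2\cdot q^2,
\]
and these simplify to $-4$. The remaining two coefficients are routine algebra, which I would confirm by expanding at, say, $q=\tfrac12$ and then in general. This expansion is the only real work and the main place for sign errors. The choice of weights is exactly what makes the $J^2$ coefficient constant and centres the quadratic at $q(N-1)+\tfrac12$.

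The second form follows directly. Since $4v=1-r^2$ and $\gamma=\frac{N+r^2(N-1)^2}{(1-r^2)N}$, we have $4v\gamma N=N+r^2(N-1)^2$. Also $w=u^2/(vN)$ gives $4vwN=4u^2$. Dividing by $N(N-1)$ gives $\kappa_w=4vB_N(J)(\gamma-w)/(N-1)$.

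For \eqref{fib:eq:Fform}, substitute this into the definition of $\Ff$. The kernel term gives
\[
\frac{\sigb^3}{4v^2\aG}\cdot\frac{4vB_N(J)(\gamma-w)}{N-1}
=\frac{\sigb^3 B_N(J)(\gamma-w)}{v\aG(N-1)}.
\]
Using $\sigb^3/v=v^{1/2}\mu^{3/2}$ and $v^{1/2}=\sqrt{1-r^2}/2$, this is the first term of \eqref{fib:eq:Fform}. For the second term, $u^2/\sigb^2=vNw/(v\mu)=(N/\mu)w=Bw$.
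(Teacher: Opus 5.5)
Your plan is the paper's own: rewrite $\binp_M(j-1)$, $\binp_M(j)$, $\binp_M(j+1)$ through $B_N(J)$, then identify a quadratic centred at $J=q(N-1)+\frac12$. Your handling of the supports, of the second form and of \eqref{fib:eq:Fform} is correct.

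The central computation fails as written, however. In your ratio formulas for $\binp_M(j-1)$ and $\binp_M(j+1)$ the factors $q^{-2}$ and $(1-q)^{-2}$ are interchanged. Since $\binp_M(j-1)=\binom{N-2}{J-2}q^{J-2}(1-q)^{N-J}$ and $\binp_M(j+1)=\binom{N-2}{J}q^{J}(1-q)^{N-J-2}$, the correct formulas are
\[
\binp_M(j-1)=B_N(J)\,\frac{J(J-1)}{N(N-1)\,q^2},\qquad \binp_M(j+1)=B_N(J)\,\frac{(N-J)(N-J-1)}{N(N-1)\,(1-q)^2}.
\]
With your versions the denominators do not cancel after inserting $\wL,\wR$; for instance a term $q^2(3-4q)J(J-1)/(1-q)^2$ survives. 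The $J^2$ coefficient of your polynomial is then $-2-\frac{q^2(3-4q)}{(1-q)^2}-\frac{(1-q)^2(4q-1)}{q^2}$. This equals $-4$ only at $q=\frac12$; at $q=\frac14$ it is $-\frac{20}9$. The expression you display for it reduces to $-2\bigl(q^2+(1-q)^2\bigr)$, which is never $-4$. Your proposed first check at $q=\frac12$ cannot expose the error, since $q=1-q$ there.

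With the correct formulas, $\wL/q^2=3-4q$ and $\wR/(1-q)^2=4q-1$ give a genuinely polynomial expression:
\[
\frac{N(N-1)\kappa_w(M,j)}{B_N(J)}=2J(N-J)-(3-4q)J(J-1)-(4q-1)(N-J)(N-J-1).
\]
Its $J^2$ coefficient is $-2-(3-4q)-(4q-1)=-4$. The two coefficients you set aside as routine carry the content of the lemma and must be computed. Put $J=c+u$ with $c=q(N-1)+\frac12$.

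\textbf{Linear term.} From $2c-1=2q(N-1)$, $N-2c=(N-1)(1-2q)$ and $2(N-c)-1=2(N-1)(1-q)$, the coefficient of $u$ is $2(N-1)\bigl[(1-2q)-q(3-4q)+(4q-1)(1-q)\bigr]=0$.

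\textbf{Constant term.} From $c(c-1)=q^2(N-1)^2-\frac14$, $(N-c)(N-c-1)=(1-q)^2(N-1)^2-\frac14$ and $c(N-c)=q(1-q)(N-1)^2+\frac{N-1}2+\frac14$, the value at $u=0$ is $N+\bigl(2q(1-q)-(3-4q)q^2-(4q-1)(1-q)^2\bigr)(N-1)^2=N+r^2(N-1)^2$.

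This gives the first form, and from there your argument goes through as you wrote it.
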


\begin{proof}
From the factorials,
\begin{gather*}
\binp_M(j)=B_N(J)\frac{J(N-J)}{N(N-1)v},\qquad \binp_M(j-1)=B_N(J)\frac{J(J-1)}{N(N-1)q^2},\\
\binp_M(j+1)=B_N(J)\frac{(N-J)(N-J-1)}{N(N-1)(1-q)^2},
\end{gather*}
and all sides vanish together outside the supports. Since $\wL\frac{1-q}q+\wR\frac q{1-q}=v(3-4q)+v(4q-1)=2v$,
$\wL/q^2=3-4q$ and $\wR/(1-q)^2=4q-1$,
\[
\frac{N(N-1)\kappa_w}{B_N(J)}=2J(N-J)-(3-4q)J(J-1)-(4q-1)(N-J)(N-J-1).
\]
Put $J=c+u$ with $c=q(N-1)+\frac12$. The right side is a quadratic in $u$ with leading coefficient $-2-(3-4q)-(4q-1)=-4$. Its
coefficient of $u$ is $2(N-2c)-(3-4q)(2c-1)+(4q-1)(2(N-c)-1)=2(N-1)\big[(1-2q)-q(3-4q)+(4q-1)(1-q)\big]=0$, since
$2c-1=2q(N-1)$, $N-2c=(N-1)(1-2q)$ and $2(N-c)-1=2(N-1)(1-q)$. Its value at $u=0$ is $N+r^2(N-1)^2$: with
$c(c-1)=q^2(N-1)^2-\frac14$, $(N-c)(N-c-1)=(1-q)^2(N-1)^2-\frac14$ and $c(N-c)=q(1-q)(N-1)^2+\frac{N-1}2+\frac14$, the constant
terms add up to $N$ and the coefficient of $(N-1)^2$ is $2q(1-q)-(3-4q)q^2-(4q-1)(1-q)^2=(2q-1)^2$. This is the first form.
The second uses $N+r^2(N-1)^2-4u^2=(1-r^2)N(\gamma-w)$ and $1-r^2=4v$. For \eqref{fib:eq:Fform}:
$\frac{\sigb^3}{4v^2\aG}\cdot\frac{4v}{N-1}=\frac{\mu^{3/2}\sqrt v}{\aG(N-1)}=\frac{\mu^{3/2}\sqrt{1-r^2}}{2\aG(N-1)}$ and
$\frac{u^2}{\sigb^2}=\frac{vNw}{v\mu}=Bw$.
\end{proof}

So the weighted kernel is a binomial law on $N=M+2$ trials times the quadratic factor $\gamma-x^2$, whose centre is the offset
$u=0$, that is $\delta=\frac{2q-1}2$; the penalty of $\Ff{}$ is centred at the same point.

\begin{lemma}[symmetry]\label{fib:lem:sym}
$\kappa_w$ at $(1-q,M,M-j)$ equals $\kappa_w$ at $(q,M,j)$, and $u_{1-q}(M,M-j)=-u_q(M,j)$. Hence $\Ff_{1-q}(M,M-j)=\Ff_q(M,j)$.
\end{lemma}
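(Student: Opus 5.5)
The plan is to verify the symmetry directly from the closed form of Lemma~\ref{fib:lem:T1}, rather than from the definitions of $f_{\mathrm L},f_{\mathrm R}$. Replacing $q$ by $q'=1-q$ and $j$ by $j'=M-j$ leaves $M$, $N=M+2$ and $v=q(1-q)$ unchanged. It sends $r=2q-1$ to $-r$, so $r^2$, $\Delta$ and $\gamma$ are unchanged. The shifted index becomes $J'=j'+1=M-j+1=N-J$.

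The binomial factor transforms as
\[
B_N(J';q')=\binom N{N-J}(1-q)^{N-J}q^{J}=B_N(J;q).
\]
This uses $\binom N{N-J}=\binom NJ$, and it holds also outside the support, since $0\le J\le N$ if and only if $0\le N-J\le N$.

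For the offset I would compute
\[
u'=J'-q'(N-1)-\tfrac12=N-J-(1-q)(N-1)-\tfrac12=-\bigl(J-q(N-1)-\tfrac12\bigr)=-u .
\]
This is the second claim. It follows that $w=u^2/(vN)$ is unchanged. The first form in Lemma~\ref{fib:lem:T1} gives
\[
\kappa_w=B_N(J)\bigl(N+r^2(N-1)^2-4u^2\bigr)/\bigl(N(N-1)\bigr).
\]
Every ingredient of this expression is invariant, so $\kappa_w$ at $(1-q,M,M-j)$ equals $\kappa_w$ at $(q,M,j)$. This needs $N\ge2$, which holds since $M\ge0$.

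Finally, $\Ff=\sigb^3\kappa_w/(4v^2\aG)+u^2/\sigb^2$ involves $q$ only through $v$ and $\sigb=\sqrt{v\mu}$, both invariant, together with $\kappa_w$ and $u^2$. So $\Ff_{1-q}(M,M-j)=\Ff_q(M,j)$.

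There is one subtlety: the weights $\wL=q^2(3-4q)$ and $\wR=(1-q)^2(4q-1)$ swap under $q\mapsto1-q$. Since Lemma~\ref{fib:lem:T1} already absorbs the weights into the closed form, I rely on it and do not need to track the swap. The identity is valid for all $q\in(0,1)$, so it holds at both $q$ and $1-q$. I expect no real obstacle; the only care needed is the support bookkeeping for $B_N$.
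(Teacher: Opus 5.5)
Your proof is correct, but it takes a different route from the paper.

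The paper argues directly from the definition of the kernel. The weights satisfy $\wL(1-q)=\wR(q)$, and the reflection $\binp_M(j;q)=\binp_M(M-j;1-q)$ turns $\fL^{(q)}(M,j)$ into $\fR^{(1-q)}(M,M-j)$ and vice versa. So, with the denominators $q$ and $1-q$, the two terms of $\kappa_w$ simply trade places. The offset is handled by noting that $\delta$ and the signed $r$ both change sign in $u=\delta-\frac r2$.

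You instead read the symmetry off the closed form of Lemma~\ref{fib:lem:T1}, in which $q$ enters only through $r^2$, $v$ and $B_N(J)$. All of these are invariant under $(q,J)\mapsto(1-q,N-J)$. This is legitimate, because that identity is proved independently of the symmetry for every $q\in(0,1)$, so both $q$ and $1-q$ are admissible. Your route has the advantage that the invariance of $\gamma$, of $w$ and of the support conditions comes for free, and these are exactly what is used afterwards when everything is rephrased in terms of $r=|2q-1|$.

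The paper's argument does not depend on the kernel identity at all, and it shows where the symmetry comes from, namely the swap of the weights. The same two-line computation gives the more general fact that $\kappa_2$ with weights $(w_{\mathrm L},w_{\mathrm R})$ at $(q,M,j)$ equals $\kappa_2$ with weights $(w_{\mathrm R},w_{\mathrm L})$ at $(1-q,M,M-j)$.
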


\begin{proof}
$\wL(1-q)=\wR(q)$, and $\binp_M(j;q)=\binp_M(M-j;1-q)$ gives $\fR^{(1-q)}(M,M-j)=\fL^{(q)}(M,j)$ and $\fL^{(1-q)}(M,M-j)=\fR^{(q)}(M,j)$;
divide by $1-q$ and $q$ respectively. Further $u=\delta-\frac r2$ with the signed $r=2q-1$, and both $\delta$ and $r$ change sign.
\end{proof}

So every statement about $\min_j\Ff(M,j)$ may be proved for $q\ge\frac12$.

\paragraph{The minimum of a minorant, and the function $G$.}

\begin{lemma}[the minimum of a minorant]\label{fib:lem:min1}
Let $P,B>0$, $\rho>0$ and $H(w)=P(1-w)e^{-\rho w/2}+Bw$. Let $\omega=\log(Be^{\rho/2}/P)$ and let $s>-1$ solve $s+\log(1+s)=\omega$.
Then $H$ decreases on $(-\infty,w_*]$ and increases on $[w_*,\infty)$, where $w_*=1-2s/\rho$, and
\[
\min_{\mathbb R}H=B\Big(1-\frac2\rho\,G(\omega)\Big),\qquad G(\omega):=\frac{s^2}{1+s}.
\]
\end{lemma}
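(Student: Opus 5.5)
The plan is a one-variable calculus argument: differentiate $H$, locate its unique critical point through the substitution $w=1-2s/\rho$, and evaluate $H$ there in closed form.

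\emph{Derivative and sign pattern.} Compute
\[
H'(w)=Pe^{-\rho w/2}\Bigl(-1-\tfrac\rho2(1-w)\Bigr)+B .
\]
Substitute $w=1-2s/\rho$, so that $\tfrac\rho2(1-w)=s$ and $e^{-\rho w/2}=e^{-\rho/2}e^{s}$. This gives
\[
H'=B-Pe^{-\rho/2}e^{s}(1+s).
\]
The map $w\mapsto s$ is affine and decreasing. On the range $s>-1$, which is $w<1+2/\rho$, the function $e^s(1+s)$ is strictly increasing from $0$ to $\infty$. Hence $H'=0$ has exactly one solution there, namely where $e^s(1+s)=Be^{\rho/2}/P$. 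Taking logarithms, this is $s+\log(1+s)=\omega$.

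For $s\le-1$, that is $w\ge1+2/\rho$, we have $e^s(1+s)\le0$, so $H'\ge B>0$. It follows that $H'<0$ for $w<w_*$ (where $s$ is larger) and $H'>0$ for $w>w_*$. This gives the claimed monotonicity, with the decrease on $(-\infty,w_*]$ and the increase on $[w_*,\infty)$, and therefore the minimum is attained at $w_*$.

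\emph{Value at the minimizer.} At $w_*$ the critical-point equation gives $Pe^{-\rho w_*/2}=B/(1+s)$. Also $1-w_*=2s/\rho$. Substituting,
\[
H(w_*)=\frac{B}{1+s}\cdot\frac{2s}{\rho}+B\Bigl(1-\frac{2s}{\rho}\Bigr)=B\Bigl(1-\frac2\rho\cdot\frac{s^2}{1+s}\Bigr),
\]
since $\dfrac{s}{1+s}-s=-\dfrac{s^2}{1+s}$.

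The only point needing care is the branch $s>-1$: one must check that the solution $s$ exists, is unique, and corresponds to the global minimizer over all of $\mathbb{R}$, including the region $w\ge1+2/\rho$ where $H'\ge B$ is handled separately. All of this is settled by the strict monotonicity of $s+\log(1+s)$ on $(-1,\infty)$, whose range is all of $\mathbb{R}$.
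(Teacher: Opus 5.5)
Your proposal is correct and follows essentially the same route as the paper. The paper writes $H'(w)=B-Pe^{-\rho/2}e^{\sigma}(1+\sigma)$ with $\sigma=\rho(1-w)/2$, and uses that $\sigma\mapsto e^\sigma(1+\sigma)$ is nonpositive for $\sigma\le-1$ and increasing on $(-1,\infty)$ to get the sign pattern; it then evaluates $H(w_*)$ via $Pe^{-\rho w_*/2}=B/(1+s)$ and $1-w_*=2s/\rho$, exactly as you do.
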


\begin{proof}
With $\sigma(w)=\rho(1-w)/2$, $H'(w)=B-Pe^{-\rho/2}e^{\sigma}(1+\sigma)$. The map $\sigma\mapsto e^\sigma(1+\sigma)$ is negative on
$(-\infty,-1)$ and positive and increasing on $(-1,\infty)$, and $\sigma$ decreases in $w$; so $H'>0$ where $\sigma\le-1$, and where
$\sigma>-1$ the derivative $H'$ increases in $w$ from $-\infty$. Hence $H'<0$ for $w<w_*$ and $H'>0$ for $w>w_*$, where $\sigma(w_*)=s$,
that is $e^s(1+s)=Be^{\rho/2}/P$. At $w_*$, $Pe^{-\rho/2}e^s=B/(1+s)$ and $1-w_*=2s/\rho$, so
$H(w_*)=\frac{2Bs}{\rho(1+s)}+B-\frac{2Bs}\rho=B(1-\frac2\rho\frac{s^2}{1+s})$.
\end{proof}

\begin{corollary}[the closed form with a bonus]\label{fib:lem:min}
Let $P,B,\rho>0$ and $\gamma\in\mathbb R$, and $H(w)=P(\gamma-w)e^{-\rho w/2}+Bw$. With $\omega=\log(Be^{\rho\gamma/2}/P)$ and $s>-1$ the
root of $s+\log(1+s)=\omega$, $H$ decreases on $(-\infty,w_*]$ and increases on $[w_*,\infty)$, $w_*=\gamma-2s/\rho$, and
$\min_{\mathbb R}H=B(\gamma-\frac2\rho G(\omega))$.
\end{corollary}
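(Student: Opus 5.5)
The plan is to reduce Corollary~\ref{fib:lem:min} to Lemma~\ref{fib:lem:min1} by a shift of the variable $w$, which absorbs the bonus $\gamma$.

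\textbf{Step 1: the substitution.} Put $w=w'+\gamma-1$, so that $\gamma-w=1-w'$. Then
\[
H(w)=P(1-w')e^{-\rho(w'+\gamma-1)/2}+B(w'+\gamma-1)=\tilde P(1-w')e^{-\rho w'/2}+Bw'+B(\gamma-1),
\]
with $\tilde P=Pe^{-\rho(\gamma-1)/2}>0$. Up to the additive constant $B(\gamma-1)$, this is exactly the function of Lemma~\ref{fib:lem:min1}, with $\tilde P$ in place of $P$ and the same $B$ and $\rho$.

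\textbf{Step 2: apply the lemma.} The lemma's $\omega$ becomes
\[
\log\bigl(Be^{\rho/2}/\tilde P\bigr)=\log\bigl(Be^{\rho/2}e^{\rho(\gamma-1)/2}/P\bigr)=\log\bigl(Be^{\rho\gamma/2}/P\bigr),
\]
which is the $\omega$ of the corollary. So the root $s>-1$ of $s+\log(1+s)=\omega$ is the same in both statements. In the variable $w'$, the lemma gives that the function decreases up to $w'_*=1-2s/\rho$ and increases afterwards. Since the shift $w'\mapsto w'+\gamma-1$ is increasing, $H$ decreases on $(-\infty,w_*]$ and increases on $[w_*,\infty)$ with
\[
w_*=w'_*+\gamma-1=\gamma-2s/\rho.
\]

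\textbf{Step 3: the minimum.} The minimum is the lemma's value plus the constant:
\[
B\Bigl(1-\tfrac2\rho G(\omega)\Bigr)+B(\gamma-1)=B\Bigl(\gamma-\tfrac2\rho G(\omega)\Bigr).
\]

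There is no real obstacle here. The only point to check is that the hypotheses of the lemma carry over: $\tilde P>0$ holds for any real $\gamma$, and the existence and uniqueness of $s$ are part of the lemma.
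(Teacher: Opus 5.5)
Your proposal is correct and is essentially the paper's own proof: the same shift $w=w'+\gamma-1$, reducing to Lemma~\ref{fib:lem:min1} with the amplitude $Pe^{-\rho(\gamma-1)/2}$, whose log-ratio is the corollary's $\omega$. Your bookkeeping of $w_*$ and of the additive constant $B(\gamma-1)$ is right.
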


\begin{proof}
With $w=w'+\gamma-1$, $H=Pe^{-\rho(\gamma-1)/2}(1-w')e^{-\rho w'/2}+Bw'+B(\gamma-1)$. Apply Lemma~\ref{fib:lem:min1} with
$P'=Pe^{-\rho(\gamma-1)/2}$, whose $\log(Be^{\rho/2}/P')$ is $\omega$.
\end{proof}

We write $s(\omega)$ for the root of $s+\log(1+s)=\omega$, $s>-1$ (the \emph{root map}).

\begin{lemma}[the function $G$]\label{fib:lem:G}

\textup{(a)} $s$ is increasing, $s(0)=0$, $s(\omega)$ has the sign of $\omega$, and $s'=\frac{1+s}{2+s}\in(0,1)$; hence
$s(\omega+h)\le s(\omega)+h$ for $h\ge0$, and $0\le s(\omega)\le\omega$ for $\omega\ge0$.
\textup{(b)} $G$ is $C^2$ with $G(0)=G'(0)=0$, $G'(\omega)=\frac s{1+s}<1$ and $G''(\omega)=\frac1{(1+s)(2+s)}>0$, and $G''$ is decreasing.
So $G\ge0$ is convex, decreasing on $(-\infty,0]$ and increasing on $[0,\infty)$, and $|G'|$ decreases on $(-\infty,0]$.
\textup{(c)} For $h\ge0$: $G(\omega+h)\le G(\omega)+hG'(\omega)+\frac{h^2}2G''(\omega)$.
\textup{(d)} For $\omega\ge0$: $G(\omega)\le\frac{\omega^2}4$, $G(\omega)\le\omega$ and $G(\omega)\le\omega-\log(1+\omega)$.

\end{lemma}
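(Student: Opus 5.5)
The plan is to treat $s$ as the inverse of the strictly increasing map $\phi(s)=s+\log(1+s)$ on $(-1,\infty)$, which runs from $-\infty$ to $\infty$. Then everything about $G=s^2/(1+s)$ follows by the chain rule and elementary inequalities.

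For (a), $\phi'(s)=1+\frac1{1+s}=\frac{2+s}{1+s}>0$, so $s$ is well defined, increasing and smooth. We have $s(0)=0$ since $\phi(0)=0$, and $s'=1/\phi'(s)=\frac{1+s}{2+s}\in(0,1)$. This derivative bound gives $s(\omega+h)\le s(\omega)+h$ by the mean value theorem, and with $s(0)=0$ it gives $0\le s(\omega)\le\omega$ for $\omega\ge0$.

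For (b), differentiate $G(s)=s^2/(1+s)$ in $s$: $\frac{dG}{ds}=\frac{s(2+s)}{(1+s)^2}$. Multiplying by $s'$ gives $G'(\omega)=\frac{s}{1+s}$, which is less than $1$. Next, $\frac{d}{ds}\frac{s}{1+s}=\frac1{(1+s)^2}$, so multiplying by $s'$ again gives $G''=\frac1{(1+s)(2+s)}>0$. This expression is decreasing in $s$, hence decreasing in $\omega$. Convexity, $G(0)=G'(0)=0$ and $G\ge0$ follow at once, as does the monotonicity on each half-line. Finally, $|G'|=\frac{-s}{1+s}$ for $s\le0$ decreases as $s$ increases toward $0$.

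For (c), apply Taylor's theorem with Lagrange remainder on $[\omega,\omega+h]$. The remainder is $\frac{h^2}2G''(\xi)$ with $\xi\ge\omega$, and since $G''$ is decreasing this is at most $\frac{h^2}2G''(\omega)$.

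For (d), take $\omega\ge0$, so $s\in[0,\omega]$. Two bounds are immediate: $G\le\omega$ because $G=s\cdot\frac{s}{1+s}\le s\le\omega$, and $G\le\omega^2/4$ because $G'=\frac s{1+s}\le\frac{s}{2}\cdot\ldots$ fails in general. So I instead integrate $G'$: $G(\omega)=\int_0^\omega\frac{s}{1+s}\,d\omega'$. One simple route uses $G''\le\frac12$, which integrates twice to $G\le\omega^2/4$. For the last bound, note that $\omega-\log(1+\omega)$ has derivative $\frac{\omega}{1+\omega}$, while $G'=\frac{s}{1+s}\le\frac{\omega}{1+\omega}$ since $s\le\omega$. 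Both sides vanish at $0$, so comparing derivatives gives $G(\omega)\le\omega-\log(1+\omega)$.

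The only mildly delicate point is organising (d) via derivative comparisons from $0$; otherwise the proof is routine calculus.
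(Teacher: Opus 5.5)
Your proof is correct and is essentially the paper's: the inverse-function derivative $s'=\frac{1+s}{2+s}$, the chain-rule formulas for $G'$ and $G''$, Taylor's theorem with the decreasing $G''$ for (c), and in (d) the bound $G''\le\frac12$ on $\omega\ge0$ together with the comparison $G'=\frac s{1+s}\le\frac\omega{1+\omega}$; the only variation is that you get $G\le\omega$ from $G=s\cdot\frac s{1+s}\le s\le\omega$ rather than from $G'<1$, which is equally valid. Delete the false start in (d) about $G'\le\frac s2\cdot\ldots$: the $G\le\frac{\omega^2}4$ bound comes from the $G''\le\frac12$ argument, which holds because $s\ge0$ there, so $(1+s)(2+s)\ge2$.
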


\begin{proof}
(a) $d\omega/ds=1+\frac1{1+s}=\frac{2+s}{1+s}>1$. If $\omega\ge0$ then $s\ge0$ and $\omega=s+\log(1+s)\ge s$. (b)
$\frac d{ds}\frac{s^2}{1+s}=\frac{s(2+s)}{(1+s)^2}$, so $G'=\frac s{1+s}$, and $G''=\frac d{ds}\frac s{1+s}\cdot\frac{1+s}{2+s}=\frac1{(1+s)(2+s)}$,
which decreases in $s$, hence in $\omega$. (c) Taylor's theorem with $G''(\xi)\le G''(\omega)$ for $\xi\ge\omega$. (d) On
$\omega\ge0$, $G''\le\frac12$ and $G'<1$, and $G(0)=G'(0)=0$; this gives the first two bounds. For the third, both sides vanish at
$0$, and by (a) $G'=\frac s{1+s}\le\frac\omega{1+\omega}$, the derivative of $\omega-\log(1+\omega)$, since $x\mapsto x/(1+x)$ increases.
\end{proof}

\begin{lemma}[the bonus split]\label{fib:lem:split}
For real $\omega_0,h$: $G(\omega_0+h)\le G(\omega_0)+h\,G'(\omega_0+h)$. Hence, for $\rho>0$, $\Delta\ge0$, real $\tilde e$ and $\gamma=1+\Delta$,
with $\sigma=G'(\omega_0+\frac\rho2\Delta+\tilde e)$,
\[
\gamma-\tfrac2\rho G\big(\omega_0+\tfrac\rho2\Delta+\tilde e\big)\ \ge\ 1-\tfrac2\rho G(\omega_0)+\Delta(1-\sigma)-\tfrac2\rho\,\tilde e\,\sigma .
\]
\end{lemma}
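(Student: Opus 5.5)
The first inequality is a consequence of the convexity of $G$ (Lemma~\ref{fib:lem:G}(b)). I will apply the tangent-line inequality at the point $\omega_0+h$, evaluated at $\omega_0$: convexity gives $G(\omega_0)\ge G(\omega_0+h)+G'(\omega_0+h)\bigl(\omega_0-(\omega_0+h)\bigr)$. Rearranged, this is $G(\omega_0+h)\le G(\omega_0)+hG'(\omega_0+h)$. The inequality holds for $h$ of either sign, since $G$ is $C^1$ and convex on all of $\mathbb R$ by Lemma~\ref{fib:lem:G}(b).

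For the second inequality I take $h=\frac\rho2\Delta+\tilde e$, so that $\sigma=G'(\omega_0+h)$. Multiplying the first inequality by $-\frac2\rho<0$ reverses it, and gives
\[
-\tfrac2\rho G(\omega_0+h)\ \ge\ -\tfrac2\rho G(\omega_0)-\tfrac2\rho\bigl(\tfrac\rho2\Delta+\tilde e\bigr)\sigma
= -\tfrac2\rho G(\omega_0)-\Delta\sigma-\tfrac2\rho\tilde e\sigma .
\]
Adding $\gamma=1+\Delta$ to both sides yields $1-\frac2\rho G(\omega_0)+\Delta(1-\sigma)-\frac2\rho\tilde e\sigma$ on the right, which is the claim.

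There is no real obstacle here. The only points to check are the sign bookkeeping when multiplying by $-2/\rho$, and that the tangent bound is used at the \emph{shifted} point rather than at $\omega_0$; that choice is what puts $\sigma$ at $\omega_0+h$. I note also that $1-\sigma>0$ because $G'<1$, so the bonus $\Delta\ge0$ enters with a positive coefficient. This is the intended use of the lemma, although the proof itself does not need it.
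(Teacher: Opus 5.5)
Your proof is correct and follows the paper's argument exactly: the tangent-line inequality for the convex function $G$ (Lemma~\ref{fib:lem:G}(b)), taken at the shifted point $\omega_0+h$ and evaluated at $\omega_0$, followed by the substitution $h=\frac\rho2\Delta+\tilde e$. Your sign bookkeeping when multiplying by $-\frac2\rho$ and adding $\gamma=1+\Delta$ is also right.
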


\begin{proof}
$G$ is convex (Lemma~\ref{fib:lem:G}(b)), so $G(\omega_0)\ge G(\omega_0+h)-hG'(\omega_0+h)$; take $h=\frac\rho2\Delta+\tilde e$.
\end{proof}

The term $\Delta(1-\sigma)$ is nonnegative ($G'<1$): inside $G$ the bonus is a shift $\frac\rho2\Delta$, and the split moves it
out of $G$ exactly.

\paragraph{The central binomial law.}

\begin{lemma}[central binomial coefficients]\label{fib:lem:wallis}
For $m\ge1$, $W_m=\binom{2m}m4^{-m}$ satisfies
\[
e^{1/(24m+1)-1/(6m)}(\pi m)^{-1/2}\ \le\ W_m\ \le\ \big(\pi(m+\tfrac14)\big)^{-1/2}.
\]
\end{lemma}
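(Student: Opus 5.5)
We work with the ratio $W_{m+1}/W_m=\frac{(2m+1)(2m+2)}{4(m+1)^2}=\frac{2m+1}{2m+2}$. This ratio drives both bounds by a telescoping/monotonicity argument. The underlying fact is $W_m\sqrt{\pi m}\to1$ (Wallis, or Stirling). We set $a_m=\log W_m+\tfrac12\log(\pi m)$. The claim is then
\[
\tfrac{1}{24m+1}-\tfrac1{6m}\ \le\ a_m\ \le\ \tfrac12\log\tfrac{m}{m+1/4}.
\]
Both bounds tend to $0$ as $m\to\infty$, and so does $a_m$.

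\emph{Upper bound.} Put $U_m=W_m^2\,\pi(m+\tfrac14)$. We show that $U_m$ is increasing with limit $1$; then $U_m\le1$, which is the claim. The ratio is
\[
\frac{U_{m+1}}{U_m}=\Big(\frac{2m+1}{2m+2}\Big)^2\frac{m+5/4}{m+1/4}=\frac{(2m+1)^2(4m+5)}{(2m+2)^2(4m+1)}.
\]
The difference of numerator and denominator is
\[
(4m^2+4m+1)(4m+5)-(4m^2+8m+4)(4m+1)=(16m^3+36m^2+24m+5)-(16m^3+36m^2+20m+4)=4m+1>0,
\]
so the ratio exceeds $1$. The limit is $1$ by Wallis' product: $W_m^2\pi m\to1$, and $(m+\tfrac14)/m\to1$.

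\emph{Lower bound.} Put $b_m=a_m-\tfrac1{24m+1}+\tfrac1{6m}$. We must show $b_m\ge0$. Since $b_m\to0$, it suffices that $b_m$ is decreasing, i.e. $b_m-b_{m+1}\ge0$ for every $m\ge1$. Compute
\[
a_{m+1}-a_m=\log\tfrac{2m+1}{2m+2}+\tfrac12\log\tfrac{m+1}{m}=\tfrac12\log\tfrac{(2m+1)^2}{4m(m+1)}=\tfrac12\log\Big(1+\tfrac1{4m(m+1)}\Big).
\]
Hence the condition is
\[
\tfrac12\log\Big(1+\tfrac1{4m(m+1)}\Big)\ \le\ \tfrac1{6m}-\tfrac1{6(m+1)}-\tfrac1{24m+1}+\tfrac1{24m+25}=\tfrac1{6m(m+1)}-\tfrac{24}{(24m+1)(24m+25)}.
\]
For the left side use $\log(1+y)\le y$, which gives at most $\tfrac1{8m(m+1)}$. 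The remaining inequality is
\[
\tfrac{24}{(24m+1)(24m+25)}\le\tfrac1{24m(m+1)}.
\]
Expanding, $576m(m+1)=576m^2+576m$, while $(24m+1)(24m+25)=576m^2+624m+25$. So the inequality holds, with room to spare.

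We expect no real obstacle. The only point needing care is justifying the limits: $a_m\to0$ is the classical Wallis limit, which we quote. If a self-contained argument is preferred, the two monotone sequences themselves sandwich the limit. One can also derive the limit from Stirling's formula.
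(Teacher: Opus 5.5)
Your proof is correct. The upper bound is the paper's argument: monotonicity of $W_m^2(m+\tfrac14)$ together with Wallis' product. There is one arithmetic slip. Since $(4m^2+8m+4)(4m+1)=16m^3+36m^2+24m+4$, the numerator and denominator differ by $1$, not by $4m+1$; the ratio still exceeds $1$, so nothing changes.

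For the lower bound you take a genuinely different route. The paper quotes Robbins' form of Stirling's formula, $n!=\sqrt{2\pi n}(n/e)^ne^{\vartheta_n}$ with $\frac1{12n+1}<\vartheta_n<\frac1{12n}$. This gives the exact representation $W_m=e^{\vartheta_{2m}-2\vartheta_m}(\pi m)^{-1/2}$, hence the bound in one line, and it explains where the constants $\frac1{24m+1}$ and $\frac1{6m}$ come from. You instead show that $b_m$ is nonincreasing, using only the exact ratio $W_{m+1}/W_m$ and $\log(1+y)\le y$, and then let $m\to\infty$. Your algebra checks out.

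What this buys is that the whole lemma rests on a single classical input, the Wallis limit, which the upper bound needs anyway; the paper needs Wallis plus Robbins. The cost is a few lines of computation. The reduction is tighter than ``room to spare'' suggests: both sides of $\frac{24}{(24m+1)(24m+25)}\le\frac1{24m(m+1)}$ are $\sim\frac1{24m^2}$, and both stated bounds are sharp to first order, with $a_m\sim-\frac1{8m}$.

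One caveat concerns your closing paragraph. The two monotone sequences only show that $\lim a_m$ exists. Identifying that limit as $0$ still requires Wallis' product, or the constant in Stirling's formula. Your main argument does quote Wallis, so it is unaffected.
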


\begin{proof}
$x_m=W_m^2(m+\frac14)$ has $x_{m+1}/x_m=\frac{(2m+1)^2(4m+5)}{(2m+2)^2(4m+1)}=\frac{16m^3+36m^2+24m+5}{16m^3+36m^2+24m+4}>1$, and $x_m\to1/\pi$
by Wallis' product; so $x_m\le1/\pi$. Robbins' form of Stirling's formula~\cite{Robbins}, $n!=\sqrt{2\pi n}(n/e)^ne^{\vartheta_n}$ with
$\frac1{12n+1}<\vartheta_n<\frac1{12n}$, gives $W_m=e^{\vartheta_{2m}-2\vartheta_m}(\pi m)^{-1/2}$ and the lower bound.
\end{proof}

For $N\ge2$ put
\[
U_N=\begin{cases}\big(\pi(\frac N2+\frac14)\big)^{-1/2}&N\text{ even},\\ e^{\frac1{2(N+1)}}\big(\pi(\frac{N+1}2+\frac14)\big)^{-1/2}&N\text{ odd},\end{cases}\qquad
L_N=\begin{cases}e^{\frac1{12N+1}-\frac1{3N}-\frac1{12(N-1)}}\big(\pi\frac N2\big)^{-1/2}&N\text{ even},\\
e^{\frac1{12N+13}+\frac1{12(N+1)}}\big(\pi\frac{N+1}2\big)^{-1/2}&N\text{ odd.}\end{cases}
\]

\begin{lemma}[the symmetric upper bound]\label{fib:lem:UN}
For $N\ge2$, every integer $J$ and $d=J-\frac N2$: $\binom NJ2^{-N}\le U_Ne^{-2d^2/(N+1)}$.
\end{lemma}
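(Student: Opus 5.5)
The bound follows from unimodality of $\binom NJ$ in $J$ together with a product estimate for the ratio $\binom NJ/\binom N{J_0}$ against the central coefficient. First, reduce to the central term. Let $J_0$ be a central index: $J_0=N/2$ for $N$ even, and $J_0=(N-1)/2$ or $(N+1)/2$ for $N$ odd. By the symmetry $J\mapsto N-J$ we may assume $J\ge N/2$, i.e.\ $d\ge0$. Outside $0\le J\le N$ the left side is zero, so there is nothing to prove there.

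\textbf{The central term.} For $N=2m$ even, Lemma~\ref{fib:lem:wallis} gives
\[
\binom N{N/2}2^{-N}=W_m\le\bigl(\pi(m+\tfrac14)\bigr)^{-1/2}=U_N.
\]
For $N=2m+1$ odd, use $\binom{2m+1}{m+1}2^{-(2m+1)}=W_{m+1}$, which holds because $\binom{2m+2}{m+1}=2\binom{2m+1}{m}$. This is bounded by $(\pi(\tfrac{N+1}2+\tfrac14))^{-1/2}$. In the odd case the nearest integer has $d=\pm\tfrac12$, and the factor $e^{1/(2(N+1))}=e^{2(1/2)^2/(N+1)}$ in $U_N$ exactly absorbs $e^{-2d^2/(N+1)}$ at $d=\tfrac12$. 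So the claim holds at the central indices in both parities.

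\textbf{The decay away from the centre.} For $J>J_0$ write
\[
\frac{\binom NJ}{\binom N{J_0}}=\prod_{i=J_0+1}^{J}\frac{N-i+1}{i}.
\]
Pair the factors symmetrically about $N/2$. In the even case each factor has the form $\frac{N/2-k+1}{N/2+k}$ for $k=1,\dots,d$. Write it as $\frac{1-x}{1+x}\cdot(\text{correction})$ with $x=(2k-1)/(N+1)$, i.e.\ $\frac{(N+1)/2-(k-\frac12)}{(N+1)/2+(k-\frac12)}$; this is in fact exactly that form. Then use
\[
\log\frac{1-x}{1+x}=-2\operatorname{artanh}x\le-2x .
\]
Summing over $k$ gives
\[
-\frac{2}{(N+1)/2}\sum_{k=1}^{d}\Bigl(k-\tfrac12\Bigr)=-\frac{2d^2}{N+1},
\]
since $\sum_{k=1}^d(2k-1)=d^2$.

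In the odd case, with $d=J-N/2$ a half-integer, the same telescoping runs from the half-centre. The factors are $\frac{(N+1)/2-k}{(N+1)/2+k}$ for $k=1,\dots,d-\tfrac12$, relative to $J_0=(N+1)/2$, so the log-sum is at most
\[
-\frac{4}{N+1}\sum_{k=1}^{d-1/2}k=-\frac{2\bigl(d^2-\tfrac14\bigr)}{N+1}.
\]
Combined with the central bound, this gives $\binom NJ2^{-N}\le(\pi(\tfrac{N+1}2+\tfrac14))^{-1/2}e^{1/(2(N+1))}e^{-2d^2/(N+1)}$, which is exactly $U_N$ times the Gaussian factor.

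The main obstacle is bookkeeping rather than analysis. One must check that the pairing of factors gives exactly $\sum(2k-1)$, respectively $\sum 2k$, with denominator $(N+1)/2$. One must also check that the odd-case constant $e^{1/(2(N+1))}$ matches the $\tfrac14$ offset; the inequality $\operatorname{artanh}x\ge x$ does the rest.
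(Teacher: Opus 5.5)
Your proof is correct and is essentially the paper's argument. You reduce to $d\ge0$ by symmetry and bound the central term by Lemma~\ref{fib:lem:wallis}, using $\binom{2m+1}{m+1}2^{-(2m+1)}=W_{m+1}$ in the odd case. You then write the successive ratios as $\frac{1-\alpha_i}{1+\alpha_i}$ with $\alpha_i=\frac{2i-1}{N+1}$, resp.\ $\frac{2i}{N+1}$, so that $\operatorname{artanh}\alpha_i\ge\alpha_i$ yields the exponents $-\frac{2d^2}{N+1}$, resp.\ $-\frac{2(d^2-1/4)}{N+1}$. The opening appeal to unimodality is not needed, since the product bound already covers every $J$ from the centre up to $N$.
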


\begin{proof}
By the symmetry $J\mapsto N-J$ take $d\ge0$; the bound is trivial for $J>N$. Write $b_N(J)=\binom NJ2^{-N}$. If $N=2m$ and
$J=m+d'$, $1\le d'\le m$, then $b_N(m+i)/b_N(m+i-1)=\frac{m-i+1}{m+i}=\frac{1-\alpha_i}{1+\alpha_i}$ with $\alpha_i=\frac{2i-1}{N+1}$, so
$\log\frac{b_N(J)}{W_m}=-2\sum_{i\le d'}\artanh\alpha_i\le-2\sum_{i\le d'}\alpha_i=-\frac{2d'^2}{N+1}$, and $W_m\le U_N$ by
Lemma~\ref{fib:lem:wallis}. If $N=2m+1$ and $J=m+1+d'$, then $d=d'+\frac12$, the ratios are $\frac{1-\alpha_i}{1+\alpha_i}$ with
$\alpha_i=\frac{2i}{N+1}$, $\sum_{i\le d'}\alpha_i=\frac{d^2-1/4}{N+1}$, and $b_N(m+1)=W_{m+1}$ because $\binom{2m+2}{m+1}=2\binom{2m+1}{m+1}$; so
$b_N(J)\le W_{m+1}e^{1/(2(N+1))}e^{-2d^2/(N+1)}$, and $W_{m+1}\le(\pi(\frac{N+1}2+\frac14))^{-1/2}$.
\end{proof}

\paragraph{The activity-one bound.}
For $X\in\{U,L\}$ and $N=M+2$ put
\[
a_X(N)=\frac{N^{3/2}(\pi/2)^{1/2}X_N}{N-1},\qquad P^{\mathrm T}_X=\frac{\mu^{3/2}X_N}{2\aG(N-1)},\qquad
\oT_X=\log\frac{Be^{\rho/2}}{P^{\mathrm T}_X},\qquad \omega_t=\tfrac52\log t .
\]

\begin{lemma}[the log-ratio shift]\label{fib:lem:k}
(a) $P^{\mathrm T}_X=e^{1/2}B^{-3/2}a_X(N)$ and $\oT_X=\frac52\log B-\frac1{2(N+1)}-\log a_X(N)$; for $M\ge1$, $\oT_X=\omega_t+\frac{k_X(M)}{M+3}$, where
\[
k_X(M)=(M+3)\Big[\tfrac52\log\big(1+\tfrac2M\big)-\tfrac1{2(M+3)}-\log a_X(M+2)\Big].
\]
(b) For $N\ge10$: $\frac{0.65}N\le\log a_X(N)\le\frac3{4N}+\frac{1.1}{N^2}$.\\
(c) For $M\ge8$: $3.55\le k_X(M)\le3.85+\frac{10}M$.
\end{lemma}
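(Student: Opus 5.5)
Part (a) is a direct substitution. Part (b) follows from elementary two-sided bounds for $\log(1\pm x)$, applied separately to the four cases ($X\in\{U,L\}$, $N$ even or odd). Part (c) follows from (b) by multiplying out by $M+3$.

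\emph{(a).} Since $B=N/\mu$, we have $\mu^{3/2}=N^{3/2}B^{-3/2}$. Also $2\aG=2e^{-1/2}(2\pi)^{-1/2}=e^{-1/2}(2/\pi)^{1/2}$. Hence
\[
P^{\mathrm T}_X=e^{1/2}B^{-3/2}\,\frac{N^{3/2}(\pi/2)^{1/2}X_N}{N-1}=e^{1/2}B^{-3/2}a_X(N).
\]
Then $\oT_X=\log B+\rho/2-\log P^{\mathrm T}_X=\tfrac52\log B+\tfrac12(\rho-1)-\log a_X(N)$. Since $\rho-1=-1/(N+1)$, this is the first form. For $M\ge1$, $B=t(1+2/M)$, so $\tfrac52\log B=\omega_t+\tfrac52\log(1+2/M)$. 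Substituting $N+1=M+3$ and collecting the remaining terms over $M+3$ gives $k_X(M)$.

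\emph{(b).} I will write $\log a_X(N)$ explicitly in each of the four cases.
\begin{itemize}
\item $U$, $N$ even: $a_U(N)=N^{3/2}/\bigl((N-1)\sqrt{N+1/2}\bigr)$, so $\log a_U=-\log(1-\tfrac1N)-\tfrac12\log(1+\tfrac1{2N})$.
\item $U$, $N$ odd: $\log a_U=\tfrac1{2(N+1)}-\log(1-\tfrac1N)-\tfrac12\log(1+\tfrac3{2N})$.
\item $L$, $N$ even: $\log a_L=\tfrac1{12N+1}-\tfrac1{3N}-\tfrac1{12(N-1)}-\log(1-\tfrac1N)$.
\item $L$, $N$ odd: $\log a_L=\tfrac1{12N+13}+\tfrac1{12(N+1)}-\log(1-\tfrac1N)-\tfrac12\log(1+\tfrac1N)$.
\end{itemize}
To first order these are $\tfrac3{4N}$ in the $U$ cases and $\tfrac2{3N}$ in the $L$ cases. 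Both lie in the window $[0.65/N,\,3/(4N)+1.1/N^2]$, so the claims are the natural ones.

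I will bound the second-order terms with $x\le-\log(1-x)\le x+\tfrac{x^2}2+\tfrac{x^3}{3(1-x)}$ and $x-\tfrac{x^2}2\le\log(1+x)\le x$, and with $\tfrac1{N\pm c}=\tfrac1N\mp\tfrac c{N^2}+O(N^{-3})$ given explicit remainders. For $N\ge10$ each case then reduces to a rational inequality in $1/N$ that holds with room to spare. The tightest one is the upper bound in the odd $U$ case, where the $N^{-2}$ coefficient must come out below $1.1$. It is about $\tfrac12-\tfrac12+\tfrac98$, so it must be computed carefully (and I expect the second-order terms to conspire); if needed, a finite check for $10\le N\le N_0$ plus the asymptotic bound settles it.

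\emph{(c).} The three pieces are as follows:
\begin{itemize}
\item $(M+3)\tfrac52\log(1+\tfrac2M)$ lies between $5+\tfrac{5}{M}-\tfrac{15}{M^2}$ and $5+\tfrac5M+\tfrac{20(M+3)}{3M^3}$, using $x-\tfrac{x^2}2\le\log(1+x)\le x-\tfrac{x^2}2+\tfrac{x^3}3$ at $x=2/M$;
\item the middle term contributes exactly $-\tfrac12$;
\item $(M+3)\log a_X(M+2)$ lies between $0.65\tfrac{M+3}{M+2}\ge0.65$ and $\tfrac34\tfrac{M+3}{M+2}+\tfrac{1.1(M+3)}{(M+2)^2}$, by (b) since $N=M+2\ge10$.
\end{itemize}
\emph{Upper bound:} $5+\tfrac5M+O(M^{-2})-\tfrac12-0.65\le3.85+\tfrac{10}M$ for $M\ge8$, with the spare $\tfrac5M$ absorbing the cubic remainder.
\emph{Lower bound:} $5+\tfrac5M-\tfrac{15}{M^2}-\tfrac12-\tfrac34-\tfrac{3}{4(M+2)}-\tfrac{1.1(M+3)}{(M+2)^2}$, which for $M\ge8$ tends to $3.75$ and should stay above $3.55$.

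I expect the main obstacle to be the lower bound of $k_X$ near $M=8$, where the crude $N^{-2}$ remainders carried over from (b) may eat too much of the $0.2$ margin. If that happens, I will verify $k_X(M)\ge3.55$ directly in interval arithmetic for $8\le M\le M_0$ (say $M_0=40$), using the exact formulas above, and use the asymptotic bound only for $M>M_0$.
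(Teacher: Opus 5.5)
Your plan is the same as the paper's, and (a) is correct. Your four explicit formulas for $\log a_X(N)$ are exactly the ones the paper uses, and you get (c) by multiplying (b) through by $M+3$. But two computations are wrong, and one of them yields a false inequality.

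In (c), $\tfrac52(M+3)\bigl(\tfrac2M-\tfrac2{M^2}\bigr)=5+\tfrac{10}M-\tfrac{15}{M^2}$, so the linear coefficient is $10$, not $5$. Your displayed upper bound $5+\tfrac5M+\tfrac{20(M+3)}{3M^3}$ is false: at $M=8$ the left side is about $6.14$ and your bound is about $5.77$. Consequently there is no spare $\tfrac5M$ on the upper side. The $\tfrac{10}M$ in the statement is exactly the linear term of this expansion, so the upper bound is the tight side. What closes it is that the cubic remainder is dominated by the quadratic one: $\tfrac{20(M+3)}{3M^3}\le\tfrac{15}{M^2}$ for $M\ge3$. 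This gives $\tfrac52(M+3)\log(1+\tfrac2M)\le5+\tfrac{10}M$, hence $k_X\le5+\tfrac{10}M-\tfrac12-0.65$.

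The lower bound, which you flagged as the obstacle, has ample room. From (b) and $\tfrac{M+3}{M+2}\le\tfrac{11}{10}$ you get $(M+3)\log a_X(M+2)\le\tfrac34+\tfrac{1.96}{M+2}$. Therefore $k_X\ge\tfrac{15}4+\tfrac{10}M-\tfrac{15}{M^2}-\tfrac{1.96}{M+2}\ge\tfrac{15}4$ for $M\ge8$, and no interval check is needed.

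In (b), the $N^{-2}$ coefficient in the odd $U$ case is $-\tfrac12+\tfrac12+\tfrac9{16}=\tfrac9{16}$, not $\tfrac98$. You dropped the factor $\tfrac12$ of $y^2/2$ in $-\tfrac12\log(1+y)$ with $y=\tfrac3{2N}$. This is not cosmetic. If the coefficient were $\tfrac98>1.1$, the bound $\log a_U\le\tfrac3{4N}+\tfrac{1.1}{N^2}$ would fail for every large odd $N$, and a finite check on $10\le N\le N_0$ could not repair it. With $\tfrac9{16}$ and the explicit remainders $\tfrac1{2N^3}$ (from $\tfrac1{2(N+1)}$) and $\tfrac1{3N^2(N-1)}$ (from $-\log(1-\tfrac1N)$), the coefficient is below $0.65$ for $N\ge10$.

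The thin places in (b) are instead the lower bounds for $a_L$. In the odd case your first-order tools give $\tfrac1{2N}+\tfrac1{12N+13}+\tfrac1{12(N+1)}$. The paper bounds this below by $\tfrac1{2N}+\tfrac2{13.3N}$ for $N\ge10$, which is only just above $\tfrac{0.65}N$. With these corrections your argument is the paper's proof.
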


\begin{proof}
(a) $2\aG e^{1/2}=(2/\pi)^{1/2}$, so $P^{\mathrm T}_X=e^{1/2}\mu^{3/2}N^{-3/2}a_X(N)$; then $Be^{\rho/2}/P^{\mathrm T}_X=B^{5/2}e^{(\rho-1)/2}/a_X$ and
$1-\rho=\frac1{N+1}$; finally $\frac52\log B=\omega_t+\frac52\log\frac NM$. (b) Explicitly,
$\log a_U=-\log(1-\frac1N)-\frac12\log(1+\frac1{2N})$ ($N$ even), $\log a_U=-\log(1-\frac1N)+\frac1{2(N+1)}-\frac12\log(1+\frac3{2N})$ ($N$ odd),
$\log a_L=-\log(1-\frac1N)+\frac1{12N+1}-\frac1{3N}-\frac1{12(N-1)}$ ($N$ even) and
$\log a_L=-\log(1-\frac1N)-\frac12\log(1+\frac1N)+\frac1{12N+13}+\frac1{12(N+1)}$ ($N$ odd). Use
$x\le-\log(1-x)\le x+\frac{x^2}2+\frac{x^3}{3(1-x)}$, $y-\frac{y^2}2\le\log(1+y)\le y$, and, for the even $a_L$,
$\frac1{12N+1}-\frac1{12(N-1)}\ge-\frac{0.1}{N^2}$ ($N\ge10$); for the odd $a_L$, $\frac2{12N+13}\ge\frac2{13.3N}$ ($N\ge10$); and $a_L\le a_U$, that is $L_N\le U_N$: the prefactor of $L_N$ exceeds that of $U_N$ by the factor
$(1+\frac1{2N})^{1/2}\le e^{1/(4N)}$ ($N$ even), resp. $(1+\frac1{2(N+1)})^{1/2}\le e^{1/(4(N+1))}$ ($N$ odd), so
$\log\frac{L_N}{U_N}\le\frac1{12N+1}-\frac1{12N}-\frac1{12(N-1)}<0$, resp. $\le\frac1{12N+13}-\frac1{6(N+1)}<0$.
(c) From $x-\frac{x^2}2\le\log(1+x)\le x-\frac{x^2}2+\frac{x^3}3$ at $x=2/M$:
$5+\frac{10}M-\frac{15}{M^2}\le\frac52(M+3)\log(1+\frac2M)\le5+\frac{10}M$ (for $M\ge3$, $20(M+3)\le45M$). By (b),
$0.65\le(M+3)\log a_X(M+2)\le\frac34+\frac{1.96}{M+2}$. Hence $k_X\ge\frac{15}4+\frac{10}M-\frac{15}{M^2}-\frac{1.96}{M+2}\ge\frac{15}4\ge3.55$ and
$k_X\le5+\frac{10}M-\frac12-0.65$.
\end{proof}

\begin{definition}[the Gaussian slack]\label{fib:def:S}
For $t>0$ let $s_t=s(\omega_t)$, that is $e^{s_t}(1+s_t)=t^{5/2}$, and let $G_0=\frac{s_t^2}{1+s_t}$, $G_1=\frac{s_t}{1+s_t}$,
$G_2=\frac1{(1+s_t)(2+s_t)}$ be the values of $G,G',G''$ at $\omega_t$. The \emph{Gaussian slack} is $\Sl(t)=\frac92\frac{(t-1)^2}t-2G_0$, and for
$M>0$
\[
T(t,M)=\min_{\mathsf k\in\{3.55,\ 3.85+10/M\}}\Big[M\,\Sl(t)+2-6G_0-2\mathsf kG_1-\frac{\mathsf k^2G_2}{M+3}\Big].
\]
We also use $s$ itself as a coordinate: $t=t(s)=(e^s(1+s))^{2/5}$ is increasing in $s$, and $\omega_{t(s)}=s+\log(1+s)$.
\end{definition}

\begin{proposition}[the activity-one bound]\label{fib:prop:template}
Let $M\ge8$, $\mu>0$ and $X\in\{U,L\}$. Then
\[
\mu\Big(B\big(1-\tfrac2\rho G(\oT_X)\big)-\psi(t)\Big)=N-\mu\psi(t)-2(N+1)G(\oT_X)\ \ge\ T(t,M).
\]
\end{proposition}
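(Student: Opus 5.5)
The identity on the left is pure bookkeeping. Since $B=N/\mu$ and $\rho=N/(N+1)$, we have $\mu B=N$ and $\mu B\cdot\frac2\rho=2(N+1)$. Distributing $\mu$ therefore gives $\mu\bigl(B(1-\frac2\rho G(\oT_X))-\psi(t)\bigr)=N-\mu\psi(t)-2(N+1)G(\oT_X)$.

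For the inequality, I would first make the non-$G$ part explicit. With $\mu=M/t$ and $N=M+2$,
\[
\mu\psi(t)=M-\tfrac92M\frac{(t-1)^2}{t},
\]
so
\[
N-\mu\psi(t)=2+\tfrac92M\frac{(t-1)^2}{t}.
\]
Everything then reduces to an upper bound on $2(M+3)G(\oT_X)$, since $N+1=M+3$.

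By Lemma~\ref{fib:lem:k}(a), $M\ge8\ge1$ gives $\oT_X=\omega_t+h$ with $h=k_X(M)/(M+3)$. By Lemma~\ref{fib:lem:k}(c), $M\ge8$ gives $\mathsf k:=k_X(M)\in[3.55,\,3.85+10/M]$, so $h\ge0$. This is exactly the hypothesis needed for the one-sided Taylor bound of Lemma~\ref{fib:lem:G}(c), which gives
\[
G(\oT_X)\le G_0+hG_1+\tfrac{h^2}2G_2,
\]
with $G_0,G_1,G_2$ the values of $G,G',G''$ at $\omega_t$ as in Definition~\ref{fib:def:S}. Multiplying by $2(M+3)$ gives
\[
2(M+3)G(\oT_X)\le 2(M+3)G_0+2\mathsf kG_1+\frac{\mathsf k^2G_2}{M+3}.
\]
Substituting, and splitting $2(M+3)G_0=2MG_0+6G_0$ so that $\tfrac92M(t-1)^2/t-2MG_0=M\,\Sl(t)$, the left side is at least
\[
\Phi(\mathsf k):=M\,\Sl(t)+2-6G_0-2\mathsf kG_1-\frac{\mathsf k^2G_2}{M+3}.
\]

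It remains to replace the unknown $\mathsf k$ by the two endpoints in the definition of $T$. Since $G_2>0$ (Lemma~\ref{fib:lem:G}(b)), $\Phi$ is a concave quadratic in $\mathsf k$, whatever the sign of $G_1$ (which is negative for $t<1$). A concave function on $[3.55,\,3.85+10/M]$ is minimised at an endpoint, so $\Phi(\mathsf k)\ge T(t,M)$.

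The only delicate point is the sign condition $h\ge0$ for Lemma~\ref{fib:lem:G}(c), because the Taylor bound there is one-sided. That condition is supplied by the lower bound $3.55$ of Lemma~\ref{fib:lem:k}(c), and this is where the hypothesis $M\ge8$ enters.
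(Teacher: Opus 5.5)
Your proof is correct and follows the paper's argument step for step. It uses the bookkeeping identity from $\mu B=N$, the explicit form $N-\mu\psi(t)=2+\tfrac92M(t-1)^2/t$, and the one-sided Taylor bound of Lemma~\ref{fib:lem:G}(c) with $h=k_X(M)/(M+3)\ge0$ supplied by Lemma~\ref{fib:lem:k}(c). It then uses concavity in $\mathsf k$ (from $G_2>0$) to reduce to the two endpoints that define $T(t,M)$. As a cosmetic point, the paper later uses $\Phi(t;M)$ for the spike-deficit bound, so a different letter for your quadratic would avoid a clash.
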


\begin{proof}
The identity holds because $\mu B=N$ and $\mu B\frac2\rho=2(N+1)$. Since $B=t+\frac2\mu$, $\mu(B-\psi(t))=2+\frac92\frac{(M-\mu)^2}\mu=2+\frac92M\frac{(t-1)^2}t$.
By Lemma~\ref{fib:lem:k}, $\oT_X=\omega_t+h$ with $h=k_X/(M+3)\ge0$, and by Lemma~\ref{fib:lem:G}(c),
$2(M+3)G(\oT_X)\le2(M+3)G_0+2k_XG_1+k_X^2G_2/(M+3)$. Writing $2(M+3)G_0=2MG_0+6G_0$,
\[
\mu\Big(B\big(1-\tfrac2\rho G(\oT_X)\big)-\psi\Big)\ge M\,\Sl(t)+2-6G_0-2k_XG_1-\frac{k_X^2G_2}{M+3}.
\]
The right side is concave in $k_X$ ($G_2>0$), and $k_X\in[3.55,3.85+\frac{10}M]$ by Lemma~\ref{fib:lem:k}(c); so it is at least its smaller
value at the two ends of that interval, which is $T(t,M)$.
\end{proof}

\begin{lemma}[the Gaussian slack is nonnegative]\label{fib:lem:S}
Let $\varphi(s)=\frac{18}{25}(2-\frac s2)^2-\frac2{1+s}$, $\Lambda(a)=2a+\frac{a^2}2+\frac{a^3}3+\frac{a^4}4+\frac{a^5}5+\frac{a^6}6$ and
$Q(a)=\frac{18}{25}(\Lambda(a)/a)^2(1+\Lambda(a)^2/75)(1-a)-2$, where $\Lambda(a)/a$ denotes the polynomial
$2+\frac a2+\frac{a^2}3+\frac{a^3}4+\frac{a^4}5+\frac{a^5}6$ (the continuous extension of the quotient to $a=0$), so that $Q(0)=\frac{22}{25}$. Then
$\Sl(t(s))\ge s^2\varphi(s)\ge0$ for $s\in[0,0.67]$, and
$\Sl(t(s))\ge\frac{a^2}{1-a}Q(a)\ge0$ for $s=-a\in[-0.528,0]$. Hence $\Sl{}\ge0$ on $[t(-0.528),t(0.67)]\supset[0.5996,1.605]$.
\end{lemma}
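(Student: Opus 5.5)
The plan is to work in the coordinate $s$ throughout. By Definition~\ref{fib:def:S}, $t=t(s)$ satisfies $t^{5/2}=e^s(1+s)$, and $G_0=s^2/(1+s)$. The slack becomes
\[
\Sl(t(s))=\tfrac92\frac{(t-1)^2}{t}-\frac{2s^2}{1+s},
\]
so everything reduces to a lower bound for $(t-1)^2/t$ in terms of $s$. Write $E(s)=\log(e^s(1+s))=s+\log(1+s)$, so that $t=e^{2E/5}$. Then
\[
\frac{(t-1)^2}{t}=4\sinh^2(E/5)\ \ge\ \frac{4E^2}{25},
\]
using $\sinh x\ge x$ for $x\ge0$ and the evenness of $\sinh^2$. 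On both ranges it therefore suffices to show
\[
\tfrac{18}{25}E^2\cdot(\text{correction})\ \ge\ \frac{2s^2}{1+s},
\]
where the correction is $1$ on the positive side and $1+E^2/75$ on the negative side. The second of these comes from the next term of the expansion, $\sinh^2x\ge x^2(1+x^2/3)$, at $x=E/5$.

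\textbf{Positive side.} For $s\in[0,0.67]$ the bound $\log(1+s)\ge s-s^2/2$ gives $E\ge s(2-s/2)\ge0$. Hence
\[
\Sl\ \ge\ \tfrac{18}{25}s^2\Big(2-\frac s2\Big)^2-\frac{2s^2}{1+s}=s^2\varphi(s).
\]
It remains to check $\varphi\ge0$ on $[0,0.67]$. Here $\varphi(0)=\frac{72}{25}-2>0$. The first term decreases in $s$ and $-2/(1+s)$ increases, so I will bound $\varphi$ on a few subintervals by taking the first term at the right end and $2/(1+s)$ at the left end. Near $0.67$ the margin is small, since $\frac{18}{25}(1.665)^2\approx1.996$ and $2/1.67\approx1.198$. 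That is ample, so one or two pieces suffice.

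\textbf{Negative side.} Put $s=-a$ with $a\in[0,0.528]$. Then
\[
E=-a+\log(1-a)=-\Big(2a+\frac{a^2}2+\frac{a^3}3+\dots\Big),
\]
so $|E|\ge\Lambda(a)$ by truncating a series of positive terms. Using $\sinh^2x\ge x^2(1+x^2/3)$ and monotonicity in $|E|$,
\[
\Sl\ \ge\ \tfrac{18}{25}\Lambda(a)^2\Big(1+\frac{\Lambda(a)^2}{75}\Big)-\frac{2a^2}{1-a}=\frac{a^2}{1-a}\,Q(a).
\]
Then $Q\ge0$ on $[0,0.528]$ must be checked. The factor $(1-a)$ shrinks while the $\Lambda$ factors grow, so this is the delicate part and the main obstacle. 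The margin at $a=0.528$ is tight: $\Lambda(0.528)/0.528\approx2.37$, which gives $Q\approx0.72\cdot5.6\cdot(1+\approx1.56/75)\cdot0.472-2\approx0.0$-ish. The constant $0.528$ is presumably chosen near the root.

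The plan for this step is to split $[0,0.528]$ into short subintervals. On each I will bound $Q$ below by its increasing factors at the left end and $(1-a)$ at the right end. Near the endpoint I will use a finer split or a direct evaluation in exact rationals. All these are finitely many rational polynomial evaluations.

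\textbf{Conclusion.} Finally, since $t(s)$ is increasing, nonnegativity on $s\in[-0.528,0.67]$ gives $\Sl\ge0$ on $[t(-0.528),t(0.67)]$. The containment of $[0.5996,1.605]$ is checked by evaluating the endpoints directly: $t(0.67)=(e^{0.67}\cdot1.67)^{0.4}$ and $t(-0.528)=(e^{-0.528}\cdot0.472)^{0.4}$, bounding $\exp$ by its Taylor series with remainder.
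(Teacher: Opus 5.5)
Your proposal is correct and follows the paper's proof essentially step for step: the identity $(t-1)^2/t=4\sinh^2(\omega_t/5)$ with $\sinh^2y\ge y^2+y^4/3$, the bound $\log(1+s)\ge s-\frac{s^2}2$ for $s\ge0$, the truncation $|\omega_t|\ge\Lambda(a)$ for $s=-a$, and worst-end checks of $Q$ on cells as in Lemma~\ref{fib:chk:A1}. The only difference is that the paper gets $\varphi\ge0$ from concavity, $\varphi''=0.36-4/(1+s)^3<0$, so it needs only $\varphi(0)$ and $\varphi(0.67)$.

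Two numerical slips do not affect the plan. A single monotone piece on $[0,0.67]$ gives $\frac{18}{25}(1.665)^2-2<0$, so you need at least two pieces. Also $\Lambda(0.528)/0.528\approx2.416$ rather than $2.37$, so $Q(0.528)\approx0.027>0$; your figures would make it about $-0.05$.
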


\begin{proof}
Since $\log t=\frac25\omega_t$, $\frac{(t-1)^2}t=(t^{1/2}-t^{-1/2})^2=4\sinh^2\frac{\omega_t}5$, and $\sinh^2y\ge y^2+\frac{y^4}3$; so
$\frac92\frac{(t-1)^2}t\ge\frac{18}{25}\omega_t^2(1+\frac{\omega_t^2}{75})$. Also $G_0=\frac{s^2}{1+s}$. For $s\ge0$:
$\omega_t=s+\log(1+s)\ge2s-\frac{s^2}2\ge0$, so $\Sl{}\ge\frac{18}{25}s^2(2-\frac s2)^2-\frac{2s^2}{1+s}=s^2\varphi(s)$. The function $\varphi$ is
concave on $[0,0.67]$ ($\varphi''=0.36-4/(1+s)^3<0$), so $\varphi\ge\min(\varphi(0),\varphi(0.67))>0$ (Lemma~\ref{fib:chk:A0}). For $s=-a\le0$:
$|\omega_t|=a-\log(1-a)=2a+\sum_{i\ge2}\frac{a^i}i\ge\Lambda(a)$ and $G_0=\frac{a^2}{1-a}$, so
$\Sl{}\ge\frac{18}{25}\Lambda^2(1+\frac{\Lambda^2}{75})-\frac{2a^2}{1-a}=\frac{a^2}{1-a}Q(a)$ (as $\Lambda(a)^2=a^2(\Lambda(a)/a)^2$, also at $a=0$), and $Q\ge0$ on $[0,0.528]$ (Lemma~\ref{fib:chk:A1}). Finally
$t(-0.528)=0.59958\ldots$ and $t(0.67)=1.60500\ldots$.
\end{proof}

% ---------------------------------------------------------------------------------------------------------------
\subsubsection{The fiber inequality at every activity}\label{fib:sec:fiber}

Throughout Section~\ref{fib:sec:fiber}, $q\in[\frac12,\frac34]$, so $r=2q-1\in[0,\frac12]$; $N\ge10$ unless stated otherwise.

\paragraph{Shape bounds at a general activity.}
Let $a=\artanh r$, $I(x)=x\artanh x+\frac12\log(1-x^2)$ (so $I'=\artanh$, $I(0)=0$), and
\[
\Psi_r(z)=I(r+z)-I(r)-z\artanh r\qquad(|r+z|<1),
\]
so that $\Psi_r(0)=\Psi_r'(0)=0$ and $\Psi_r''(z)=1/(1-(r+z)^2)$. Let $J_m=q(N+1)-\frac12$, $y=J-J_m=u-r$ and
$z=2(J-J_m)/(N+1)=2(u-r)/(N+1)$. Put $c_N=W_{N/2}$ if $N$ is even and $c_N=W_{(N+1)/2}\,e^{(N+1)I(1/(N+1))}$ if $N$ is odd.

\begin{lemma}[the lower local bound]\label{fib:lem:LB}
Let $N\ge2$, $0\le r<1$, $q=\frac{1+r}2$ and $d=J-\frac N2$. Then $B_N(J)=(1-r^2)^{N/2}e^{2ad}\binom NJ2^{-N}$, and for $0\le J\le N$
\[
B_N(J)\ \ge\ \frac{c_N}{\sqrt{1-r^2}}\,e^{-(N+1)\Psi_r(z)} .
\]
\end{lemma}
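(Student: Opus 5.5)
The identity is pure algebra, and the inequality reduces to a lower bound on the symmetric binomial $b_N(J)=\binom NJ2^{-N}$ of the form $b_N(J)\ge c_Ne^{-(N+1)I(2d/(N+1))}$. That bound follows from the exact ratio telescoping already used in Lemma~\ref{fib:lem:UN}, together with the convexity of $\artanh$ on $[0,1)$. In the end, $e^{-(N+1)\Psi_r(z)}$ and this symmetric bound turn out to agree exactly once the tilt $e^{2ad}$ and the prefactor $(1-r^2)^{N/2}$ are inserted.

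\emph{The identity.} With $q=\frac{1+r}2$ and $1-q=\frac{1-r}2$,
\[
q^J(1-q)^{N-J}=2^{-N}(1+r)^J(1-r)^{N-J}=2^{-N}(1-r^2)^{N/2}\Bigl(\tfrac{1+r}{1-r}\Bigr)^{d},
\]
because $J=\frac N2+d$ and $N-J=\frac N2-d$. Since $\frac{1+r}{1-r}=e^{2a}$ with $a=\artanh r$, the identity follows.

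\emph{Symmetric lower bound.} By the symmetry $J\mapsto N-J$ of $b_N$ and the evenness of $I$, it suffices to treat $d\ge0$. The endpoint $J=N$ is allowed, since there $2d/(N+1)=N/(N+1)<1$.

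For $N=2m$ and $J=m+d'$, Lemma~\ref{fib:lem:UN} gives the exact telescoping
\[
\log\frac{b_N(J)}{W_m}=-2\sum_{i\le d'}\artanh\frac{2i-1}{N+1}.
\]
Each term $\artanh\frac{2i-1}{N+1}$ is the value at the midpoint of the cell $\bigl[\frac{2i-2}{N+1},\frac{2i}{N+1}\bigr]$. Since $\artanh$ is convex on $[0,1)$, the midpoint value is at most the average over the cell, and so
\[
2\sum_{i\le d'}\artanh\frac{2i-1}{N+1}\le(N+1)\int_0^{2d'/(N+1)}\artanh=(N+1)\,I\Bigl(\frac{2d}{N+1}\Bigr).
\]
This gives $b_N(J)\ge W_me^{-(N+1)I(2d/(N+1))}$.

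For $N=2m+1$ and $J=m+1+d'$, we have $d=d'+\frac12$ and $b_N(m+1)=W_{m+1}$. The ratios now involve $\artanh\frac{2i}{N+1}$, which is the midpoint value on the cell $\bigl[\frac{2i-1}{N+1},\frac{2i+1}{N+1}\bigr]$. The same convexity argument gives
\[
2\sum_{i\le d'}\artanh\frac{2i}{N+1}\le(N+1)\Bigl[I\Bigl(\frac{2d}{N+1}\Bigr)-I\Bigl(\frac1{N+1}\Bigr)\Bigr].
\]
This is exactly where the extra factor $e^{(N+1)I(1/(N+1))}$ in $c_N$ comes from. In both parities we obtain $b_N(J)\ge c_Ne^{-(N+1)I(x)}$ with $x=2d/(N+1)$.

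\emph{Matching exponents.} Since $J_m=q(N+1)-\frac12=\frac N2+\frac{r(N+1)}2$, we have $x=\frac{2d}{N+1}=z+r$. Using $I(r)=ra+\frac12\log(1-r^2)$ and $(N+1)xa=2ad$,
\[
-(N+1)\Psi_r(z)-\tfrac12\log(1-r^2)=-(N+1)I(x)+2ad+\tfrac N2\log(1-r^2).
\]
This is precisely the logarithm of $(1-r^2)^{N/2}e^{2ad}\,e^{-(N+1)I(x)}$. Multiplying the symmetric bound by $(1-r^2)^{N/2}e^{2ad}$ therefore gives the claim.

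The only delicate step is the bookkeeping in the odd case: the half-integer offset of the centre and the matching shift of the integration cells have to be aligned correctly. Everything else is exact.
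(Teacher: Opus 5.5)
Your proposal is correct and follows essentially the same route as the paper's proof. Both derive the identity directly from $q^J(1-q)^{N-J}=2^{-N}(1-r^2)^{N/2}e^{2ad}$. Both obtain the symmetric bound $b_N(J)\ge c_Ne^{-(N+1)I(2d/(N+1))}$ from the telescoping of Lemma~\ref{fib:lem:UN} together with the midpoint (Hermite--Hadamard) inequality for the convex $\artanh$ on cells of width $2/(N+1)$, the odd case shifted by half a cell. Both then match exponents via $ar-I(r)=-\tfrac12\log(1-r^2)$ and $x-r=z$.
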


\begin{proof}
$q^J(1-q)^{N-J}=2^{-N}(1+r)^{N/2+d}(1-r)^{N/2-d}=2^{-N}(1-r^2)^{N/2}(\frac{1+r}{1-r})^d$ and $\log\frac{1+r}{1-r}=2a$. This is the first
claim. For the bound write $b_N(J)=\binom NJ2^{-N}$ and $h=2/(N+1)$. If $N=2m$ and $d\ge1$, then, as in the proof of
Lemma~\ref{fib:lem:UN}, $\log\frac{b_N(m+d)}{W_m}=-2\sum_{i=1}^d\artanh\alpha_i$ with $\alpha_i=h(i-\frac12)$, the midpoint of
$[h(i-1),hi]$. Since $\artanh$ is convex on $[0,1)$, the Hermite-Hadamard inequality gives
$\artanh\alpha_i\le\frac1h\int_{h(i-1)}^{hi}\artanh$, so $\sum_{i\le d}\artanh\alpha_i\le\frac1hI(hd)$ and
$\log b_N(\frac N2+d)\ge\log W_m-(N+1)I(\frac{2d}{N+1})$. If $N=2m+1$, $J=m+1+d'$ and $d=d'+\frac12$, the nodes are $\alpha_i=hi$, the
midpoints of $[h(i-\frac12),h(i+\frac12)]$, the sum of integrals runs from $h/2$ to $hd$, and $b_N(m+1)=W_{m+1}$; this gives
$\log b_N(J)\ge\log c_N-(N+1)I(\frac{2d}{N+1})$ with the factor $e^{(N+1)I(h/2)}$ in $c_N$. Negative $d$ follow from $b_N(J)=b_N(N-J)$
and the evenness of $I$. So $\log b_N(J)\ge\log c_N-(N+1)I(x)$ with $x=\frac{2d}{N+1}$, in both parities. By the first claim,
$\log B_N=\frac N2\log(1-r^2)+\log b_N+2ad$, and $2ad-(N+1)I(x)=(N+1)(ax-I(x))=-(N+1)\Psi_r(x-r)-\frac{N+1}2\log(1-r^2)$, because
$ar-I(r)=-\frac12\log(1-r^2)$. Finally $x-r=2(J-J_m)/(N+1)=z$.
\end{proof}

\begin{lemma}[two Gaussian bounds]\label{fib:lem:gauss}
Let $0\le r<1$.

\textup{(a)} If $0<Z<1$, then $\Psi_r(z)\le\frac{z^2}{2(1-r^2)}+\frac{z^4}{12(1-Z^2)(1-r^2)}$ for $-Z\le z\le0$.
\textup{(b)} If $Z>0$, $r+Z<1$ and $D_Z=(1-(r+Z)^2)(1-r^2)$, then $\Psi_r(z)\le\frac{z^2}{2(1-r^2)}+\frac{rz^3/3+z^4/12}{D_Z}$ for $0\le z\le Z$.
\textup{(c)} With $A_0=U_N(1-r^2)^{N/2}e^{(N+1)a^2/2}$ and $y_U=(a-r)(N+1)/2\ge0$: $B_N(J)\le A_0e^{-2(y-y_U)^2/(N+1)}$ for every integer $J$.
\textup{(d)} $0\le a-r=\sum_{i\ge1}\frac{r^{2i+1}}{2i+1}\le\frac{r^3}{3(1-r^2)}$.

\end{lemma}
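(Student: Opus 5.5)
Parts (a) and (b) follow from Taylor's theorem with integral remainder applied to $\Psi_r$. Part (c) follows from the first identity of Lemma~\ref{fib:lem:LB} together with Lemma~\ref{fib:lem:UN} and completing the square. Part (d) follows from the power series of $\artanh$.

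For (a) and (b) we use $\Psi_r(0)=\Psi_r'(0)=0$ to write
\[
\Psi_r(z)=\int_0^z(z-s)\,\Psi_r''(s)\,ds .
\]
We split the second derivative as
\[
\Psi_r''(s)=\frac1{1-r^2}+\frac{s(2r+s)}{(1-(r+s)^2)(1-r^2)} .
\]
The first term integrates to $z^2/(2(1-r^2))$.

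In (b), $0\le s\le z\le Z$ and $r\ge0$, so $s(2r+s)\ge0$ and $1-(r+s)^2\ge1-(r+Z)^2$. Hence the remainder is at most
\[
\frac1{D_Z}\int_0^z(z-s)(2rs+s^2)\,ds=\frac{rz^3/3+z^4/12}{D_Z}.
\]

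In (a), $z\le0$, and we write the integral as $\int_z^0(s-z)\Psi_r''(s)\,ds$ with weight $s-z\ge0$. For $s\in[z,0]$ we have $2rs\le0$, so $s(2r+s)\le s^2$. We also need a lower bound for the denominator wherever the correction term is positive. If $s(2r+s)>0$ with $s<0$, then $r+s<-r\le0$, so $|r+s|\le|s|\le Z$ and $1-(r+s)^2\ge1-Z^2$. Where the term is nonpositive we simply drop it. This bounds the remainder by
\[
\frac{1}{(1-Z^2)(1-r^2)}\int_z^0(s-z)s^2\,ds=\frac{z^4}{12(1-Z^2)(1-r^2)} .
\]

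For (c), let $d=J-\frac N2$. By Lemma~\ref{fib:lem:LB}, $B_N(J)=(1-r^2)^{N/2}e^{2ad}\binom NJ2^{-N}$, and by Lemma~\ref{fib:lem:UN}, $\binom NJ2^{-N}\le U_Ne^{-2d^2/(N+1)}$. Completing the square gives
\[
2ad-\frac{2d^2}{N+1}=-\frac{2\bigl(d-a(N+1)/2\bigr)^2}{N+1}+\frac{(N+1)a^2}2 .
\]
Since $J_m=q(N+1)-\frac12$ and $q=\frac{1+r}2$, we get $y=J-J_m=d-\frac{r(N+1)}2$. Hence $d-\frac{a(N+1)}2=y-y_U$, and the claimed bound with the prefactor $A_0$ follows. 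The condition $y_U\ge0$ is (d).

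For (d), $\artanh r-r=\sum_{i\ge1}r^{2i+1}/(2i+1)$ has nonnegative terms. Bounding each $1/(2i+1)$ by $\frac13$ and summing the geometric series gives $\frac{r^3}{3(1-r^2)}$.

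The only delicate point is the sign bookkeeping in (a): for $z<0$ the correction term can have either sign, and the bound $1-Z^2$ is needed only where the term is positive.
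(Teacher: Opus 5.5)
Your proposal is correct and follows the paper's proof essentially step for step. It uses the same splitting of $\Psi_r''$ into $\frac1{1-r^2}$ plus $\frac{s(2r+s)}{(1-(r+s)^2)(1-r^2)}$ integrated against the Taylor weight, the same completion of the square from Lemmas~\ref{fib:lem:LB} and~\ref{fib:lem:UN} with $d-\frac{a(N+1)}2=y-y_U$, and the same geometric bound on the $\artanh$ series. The only cosmetic difference is in (a): you split by the sign of $s(2r+s)$ and use $|r+s|\le|s|$, while the paper splits at $s=-2r$ and uses $|r+s|\le Z-r$; this is the same case distinction.
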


\begin{proof}
$\Psi_r''(s)-\frac1{1-r^2}=\frac{s(2r+s)}{(1-(r+s)^2)(1-r^2)}$. (a) For $s\in[-2r,0]$ this is $\le0$. For $s\in[-Z,-2r]$ (if $Z>2r$) we have
$s(2r+s)\le s^2$ and $|r+s|\le Z-r\le Z$, so it is at most $\frac{s^2}{(1-Z^2)(1-r^2)}$. Integrate twice
($\int_z^0(s-z)s^2\,ds=\frac{z^4}{12}$). (b) For $s\in[0,Z]$, $|r+s|\le r+Z$, so the difference is at most $\frac{s(2r+s)}{D_Z}$; integrate
twice ($\int_0^z(z-s)s\,ds=\frac{z^3}6$, $\int_0^z(z-s)s^2\,ds=\frac{z^4}{12}$). (c) By Lemma~\ref{fib:lem:UN} and Lemma~\ref{fib:lem:LB},
$B_N(J)\le(1-r^2)^{N/2}U_Ne^{2ad-2d^2/(N+1)}$, and $2ad-\frac{2d^2}{N+1}=\frac{N+1}2a^2-\frac2{N+1}(d-d_0)^2$ with $d_0=\frac{a(N+1)}2$; and
$d-d_0=y-y_U$. (d) $\sum_{i\ge1}\frac{r^{2i+1}}{2i+1}\le\frac13\sum_{i\ge1}r^{2i+1}$.
\end{proof}

Put $s_N=\sqrt{vN}$ and
\[
\varepsilon=\frac{r\sqrt N}{\sqrt v(N+1)},\quad c_0=\frac{r^2}{2v(N+1)},\quad Z_L=\frac{2(s_N+r)}{N+1},\quad Z_R=\frac{2s_N}{N+1},
\]
\[
E_4=\max\Big(\frac{(N+1)Z_L^4}{12(1-Z_L^2)(1-r^2)},\ \frac{(N+1)Z_R^4}{12(1-(r+Z_R)^2)(1-r^2)}\Big),\qquad
E_4^{(0)}=\frac{N^2}{12(N+1)(N^2+N+1)},
\]
\[
\tilde y=r+y_U,\quad \varepsilon'=\frac{4\tilde ys_N}{N+1},\quad c_0'=\frac{2\tilde y^2}{N+1},\quad c_r=\frac{a^2+\log(1-r^2)}2 .
\]
$E_4^{(0)}$ is the value of $E_4$ at $r=0$, and $c_r\ge0$ because $\frac d{dr}(a^2+\log(1-r^2))=\frac{2(a-r)}{1-r^2}\ge0$.

\begin{lemma}[exponents]\label{fib:lem:expo}
(a) $(N+1)\frac{z^2}{2(1-r^2)}=\frac\rho2w-\varepsilon x+c_0$ and $\frac{2(y-y_U)^2}{N+1}=\frac{(1-r^2)\rho}2w-\varepsilon'x+c_0'$.\\
(b) For $N\ge10$ and $w\le1$: $\frac{\sqrt{1-r^2}}{c_N}B_N(J)\ge\exp\big(-\frac\rho2w-\varepsilon|x|-c_0-E_4\big)$.\\
(c) For every $J$: $\frac{\sqrt{1-r^2}}{c_N}B_N(J)\le\frac{U_N}{c_N}\exp\big((N+1)c_r-c_0'-\frac{(1-r^2)\rho}2w+\varepsilon'|x|\big)$.
\end{lemma}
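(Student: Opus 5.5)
Part (a) is pure algebra in the variables $u$, $x$, $w$; parts (b) and (c) are then obtained by inserting (a) into Lemma~\ref{fib:lem:LB} with the fourth-order corrections of Lemma~\ref{fib:lem:gauss}(a),(b), respectively into Lemma~\ref{fib:lem:gauss}(c).

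\emph{Part (a).} Write $z=2(u-r)/(N+1)$ and $u=s_Nx$ with $s_N^2=vN$, and recall $1-r^2=4v$. Then
\[
(N+1)\frac{z^2}{2(1-r^2)}=\frac{4(u-r)^2}{2(N+1)4v}=\frac{u^2-2ur+r^2}{2v(N+1)}.
\]
The term $u^2/(2v(N+1))$ equals $Nw/(2(N+1))=\rho w/2$. The cross term $-ru/(v(N+1))$ equals $-r s_Nx/(v(N+1))=-\varepsilon x$, since $s_N/v=\sqrt N/\sqrt v$. The constant term is $c_0$.

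For the second identity, note $y-y_U=u-r-y_U=u-\tilde y$. Expanding,
\[
\frac{2(u-\tilde y)^2}{N+1}=\frac{2vNw}{N+1}-\frac{4\tilde y s_N x}{N+1}+c_0',
\]
and $2vN/(N+1)=(1-r^2)\rho/2$. This gives the claim.

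\emph{Part (b).} The condition $w\le1$ means $|u|\le s_N$, so $z\in[-Z_L,Z_R]$. I need $J$ in $[0,N]$ and $Z_L<1$, $r+Z_R<1$; I will check these for $N\ge10$, $r\le\frac12$, $v\ge\frac3{16}$ (for instance $Z_L\le 2(\sqrt{N/4}+\frac12)/(N+1)$, which is small).

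On $z\le0$, Lemma~\ref{fib:lem:gauss}(a) with $Z=Z_L$ gives $(N+1)\Psi_r(z)\le(N+1)z^2/(2(1-r^2))+E_4$.

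On $z\ge0$, Lemma~\ref{fib:lem:gauss}(b) with $Z=Z_R$ has an extra cubic term $(N+1)rz^3/(3D_Z)$. This term is the main obstacle, because $E_4$ as defined contains only quartic pieces. I would try to absorb it by comparing it with the slack $\varepsilon x-\varepsilon|x|$, which vanishes for $x\ge0$. Since $z\ge0$ gives $u\ge r$, so $x\ge0$, that route fails.

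So instead I would redo the bound on the side $z\ge0$ more carefully. Using $s(2r+s)\le(2r+s)Z$, or bounding $rz^3/3\le$ a multiple of $z^4/12+$ a term controlled by $\varepsilon|x|$, I expect to recover $E_4$ up to the stated maximum. Alternatively, if the paper's $E_4$ was meant to dominate both pieces after the cubic has been absorbed into $-\varepsilon x$ replaced by $-\varepsilon|x|$, then the cubic would be bounded by $(N+1)r Z_R z^2/(3D_Z)$, and I would compare this with $\varepsilon|x|$.

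Combining either version with part (a) and Lemma~\ref{fib:lem:LB} gives $\sqrt{1-r^2}B_N/c_N\ge\exp(-\rho w/2+\varepsilon x-c_0-E_4)\ge\exp(-\rho w/2-\varepsilon|x|-c_0-E_4)$.

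\emph{Part (c).} Start from Lemma~\ref{fib:lem:gauss}(c), written as
\[
B_N(J)\le U_N(1-r^2)^{N/2}e^{(N+1)a^2/2}e^{-2(y-y_U)^2/(N+1)}.
\]
Multiplying by $\sqrt{1-r^2}/c_N$, the prefactor becomes
\[
\frac{U_N}{c_N}(1-r^2)^{(N+1)/2}e^{(N+1)a^2/2}=\frac{U_N}{c_N}e^{(N+1)c_r}.
\]
Part (a) turns the exponent $-2(y-y_U)^2/(N+1)$ into $-(1-r^2)\rho w/2+\varepsilon'x-c_0'$. Finally, $\varepsilon'x\le\varepsilon'|x|$ because $\varepsilon'\ge0$ (as $\tilde y\ge0$).
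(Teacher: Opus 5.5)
Parts (a) and (c) are correct and follow the paper's proof. Part (b) has a genuine gap on the side $z>0$, and it comes from a sign slip. After Lemma~\ref{fib:lem:LB}, part (a) and Lemma~\ref{fib:lem:gauss}(b), the exponent you obtain contains $+\varepsilon x$, while the target contains $-\varepsilon|x|$. The available slack is therefore $\varepsilon x+\varepsilon|x|$, not $\varepsilon x-\varepsilon|x|$. It vanishes for $x\le0$ and equals $2\varepsilon x$ for $x\ge0$, that is, exactly on the side $z\ge0$ ($u\ge r\ge0$) where the cubic term $(N+1)rz^3/(3D_R)$ appears, with $D_R=(1-(r+Z_R)^2)(1-r^2)$. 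So the route you rejected is precisely the one that works.

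The substitutes you offer do not close the gap. A cubic in $z$ is not dominated by a multiple of $z^4$ near $z=0$. Near $z=Z_R$ the cubic is of order $r/\sqrt N$, whereas $E_4$ is of order $1/N$, so it cannot be hidden in $E_4$. In particular, the intermediate bound $\exp(-\frac\rho2w+\varepsilon x-c_0-E_4)$ that you state when combining is not available on $z>0$. Your last alternative, comparing with $\varepsilon|x|$, points the right way, but you do not carry it out, and it contradicts your own claim that the slack vanishes there.

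The missing estimate is short. Since $r\ge0$, one has $0<z\le 2u/(N+1)$, so the cubic part is at most $\frac{8ru^3}{3(N+1)^2D_R}$. This is at most $2\varepsilon x=\frac{2ru}{v(N+1)}$ if and only if $u^2\le\frac{3(N+1)D_R}{4v}$, that is, $w\le\frac{12(1-(r+Z_R)^2)}{(1-r^2)\rho}$. For $N\ge10$ and $r\le\frac12$ one has $Z_R\le\sqrt N/(N+1)\le\sqrt{10}/11<0.29$, so the right side exceeds $6$ and the condition holds for $w\le1$. Then $-\varepsilon x+(\text{cubic part})\le\varepsilon x=\varepsilon|x|$, and the quartic part is at most the second entry of $E_4$.

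You also leave unchecked the hypotheses you list yourself. From $|u|\le s_N\le\sqrt N/2$ and $q\in[\frac12,\frac34]$ one gets $J\ge\frac N2-\frac{\sqrt N}2\ge0$ and $N-J\ge(1-q)(N-1)+\frac12-\sqrt{q(1-q)N}\ge\frac14(N+1-\sqrt{3N})>0$, so Lemma~\ref{fib:lem:LB} applies. Finally, $Z_L\le(\sqrt N+1)/(N+1)\le0.379<1$.
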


\begin{proof}
(a) Substitute $u=s_Nx$, using $vN=s_N^2$ and $2v(N+1)=\frac{(1-r^2)(N+1)}2$. (b) Here $|u|\le s_N=\sqrt{q(1-q)N}\le\frac{\sqrt N}2$, so $J\ge\frac N2-\frac{\sqrt N}2\ge0$ and
$N-J\ge(1-q)(N-1)+\frac12-\sqrt{q(1-q)N}\ge\frac14(N+1-\sqrt{3N})>0$ for $q\in[\frac12,\frac34]$ (the middle expression
decreases in $q$ there); so
$0\le J\le N$ and Lemma~\ref{fib:lem:LB} applies. If $u\le r$ ($z\le0$), then $|z|\le Z_L$ because $|u|\le s_N$; Lemma~\ref{fib:lem:gauss}(a) with $Z=Z_L$ (which is $<1$, as
$Z_L\le\frac{\sqrt N+1}{N+1}\le0.379$), (a) and $-\varepsilon x\le\varepsilon|x|$ give the bound through Lemma~\ref{fib:lem:LB}.
If $u>r$ ($0<z\le Z_R$), Lemma~\ref{fib:lem:gauss}(b) gives the extra $\frac{(N+1)(rz^3/3+z^4/12)}{D_R}$, $D_R=(1-(r+Z_R)^2)(1-r^2)$. With
$z\le\frac{2u}{N+1}$ the cubic part is at most $\frac{8ru^3}{3(N+1)^2D_R}$, which is at most $2\varepsilon x=\frac{2ru}{v(N+1)}$ as soon as
$w\le\frac{12(1-(r+Z_R)^2)}{(1-r^2)\rho}$; this holds for $w\le1$, $N\ge10$, $r\le\frac12$ ($Z_R\le\frac{\sqrt{10}}{11}<0.29$, so the right side exceeds $6$). Then
$-\varepsilon x+(\text{cubic part})\le\varepsilon x$. (c) Lemma~\ref{fib:lem:gauss}(c), (a),
$\sqrt{1-r^2}(1-r^2)^{N/2}e^{(N+1)a^2/2}=e^{(N+1)c_r}$, and $\varepsilon'x\le\varepsilon'|x|$.
\end{proof}

\begin{lemma}[amplitudes]\label{fib:lem:amp}
For $N\ge2$: $c_N\ge L_Ne^{E_4^{(0)}}$.
\end{lemma}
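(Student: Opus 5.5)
The amplitude bound follows by comparing the explicit lower bound of Lemma~\ref{fib:lem:wallis} with the definition of $L_N$, separately for the two parities. Throughout, the exponent $E_4^{(0)}$ is bounded by a crude estimate: since $N^2+N+1>N^2$,
\[
E_4^{(0)}=\frac{N^2}{12(N+1)(N^2+N+1)}\le\frac1{12(N+1)} .
\]

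\emph{Even $N=2m$, $m\ge1$.} Here $c_N=W_m$. Lemma~\ref{fib:lem:wallis} gives
\[
W_m\ge e^{1/(24m+1)-1/(6m)}(\pi m)^{-1/2}=e^{\frac1{12N+1}-\frac1{3N}}\bigl(\pi\tfrac N2\bigr)^{-1/2}.
\]
This is exactly $L_N$ multiplied by $e^{1/(12(N-1))}$. So it suffices that $E_4^{(0)}\le\frac1{12(N-1)}$. This follows from the crude estimate, since $\frac1{12(N+1)}<\frac1{12(N-1)}$.

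\emph{Odd $N=2m+1$.} Here $c_N=W_{m+1}e^{(N+1)I(1/(N+1))}$. The plan has two steps.
\begin{itemize}
\item[(i)] Apply Lemma~\ref{fib:lem:wallis} with index $m+1=(N+1)/2$, noting $24(m+1)+1=12N+13$ and $6(m+1)=3(N+1)$:
\[
W_{m+1}\ge e^{\frac1{12N+13}-\frac1{3(N+1)}}\bigl(\pi\tfrac{N+1}2\bigr)^{-1/2}.
\]
\item[(ii)] Bound $I$ from below. Expanding, $I(x)=x\artanh x+\frac12\log(1-x^2)=\sum_{k\ge1}\frac{x^{2k}}{2k(2k-1)}$ for $|x|<1$. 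Every term is nonnegative, so $I(x)\ge\frac{x^2}2$. At $x=1/(N+1)$ this gives $(N+1)I\bigl(\frac1{N+1}\bigr)\ge\frac1{2(N+1)}$.
\end{itemize}
Multiplying, the exponent of $c_N$ exceeds $\frac1{12N+13}-\frac1{3(N+1)}+\frac1{2(N+1)}$. Since $-\frac1{3(N+1)}+\frac1{2(N+1)}=\frac1{6(N+1)}$, this equals
\[
\frac1{12N+13}+\frac1{12(N+1)}+\frac1{12(N+1)} .
\]
The first two terms are precisely the exponent of $L_N$, and the prefactors $(\pi\frac{N+1}2)^{-1/2}$ agree. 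The remaining $\frac1{12(N+1)}$ dominates $E_4^{(0)}$ by the crude estimate, which gives $c_N\ge L_Ne^{E_4^{(0)}}$.

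There is no genuine obstacle here. The only points needing care are that the index arithmetic in Lemma~\ref{fib:lem:wallis} reproduces the constants $12N+1$, $3N$ and $12N+13$ exactly, and the sign of the series for $I$. Both are routine checks.
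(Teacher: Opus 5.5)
Your proof is correct and is essentially the paper's own argument: Lemma~\ref{fib:lem:wallis} at index $N/2$, resp.\ $(N+1)/2$, together with $I(x)\ge x^2/2$ in the odd case, gives $c_N\ge L_Ne^{1/(12(N-1))}$, resp.\ $c_N\ge L_Ne^{1/(12(N+1))}$, and both exponents dominate $E_4^{(0)}$. The only cosmetic difference is in the even case, where you use the single bound $E_4^{(0)}\le\frac1{12(N+1)}$ for both parities and the paper instead compares $N^2(N-1)\le(N+1)(N^2+N+1)$ directly.
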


\begin{proof}
If $N$ is even, $c_N=W_{N/2}\ge e^{\frac1{12N+1}-\frac1{3N}}(\frac{\pi N}2)^{-1/2}=L_Ne^{\frac1{12(N-1)}}$ by Lemma~\ref{fib:lem:wallis}, and
$\frac1{12(N-1)}\ge E_4^{(0)}$ since $N^2(N-1)\le(N+1)(N^2+N+1)$. If $N$ is odd and $m=\frac{N+1}2$, then
$W_m\ge e^{\frac1{12N+13}-\frac1{3(N+1)}}(\pi m)^{-1/2}$ and $(N+1)I(\frac1{N+1})\ge\frac1{2(N+1)}$ (as $I(x)\ge x^2/2$), so
$c_N\ge e^{\frac1{12N+13}+\frac1{6(N+1)}}(\pi m)^{-1/2}=L_Ne^{\frac1{12(N+1)}}$, and $\frac1{12(N+1)}\ge E_4^{(0)}$.
\end{proof}

\paragraph{Three ranges of the offset.}
\begin{proposition}[three ranges]\label{fib:prop:pieces}
Let $N\ge10$ and $r\le\frac12$. For every $j$:

\textup{(L)} if $w\le1$: $\Ff{}\ge H_L(w):=P_L(\gamma-w)e^{-\rho_Lw/2}+Bw$, with $\rho_L=\rho+\varepsilon$ and an amplitude $P_L>0$ such that
$\log\frac{Be^{\rho_L\gamma/2}}{P_L}\le\oT_L+\frac{\rho_L}2\Delta+e_L$, where $e_L=c_0+(E_4-E_4^{(0)})_++\varepsilon$;
\textup{(M)} if $1\le w\le\gamma$: $\Ff{}\ge Bw\ge B$;
\textup{(U)} if $w\ge\gamma$: $\Ff{}\ge H_U(w):=P_U(\gamma-w)e^{-\rho_Uw/2}+Bw$, where $\rho_U=(1-r^2)\rho-\varepsilon'/\sqrt\gamma$ and
$\omega_U:=\log\frac{Be^{\rho_U\gamma/2}}{P_U}=\oT_U+\frac{\rho_U}2\Delta-e_U$, with
$e_U=\frac{r^2\rho}2+\frac{\varepsilon'}2\big(\sqrt\gamma+\frac1{\sqrt\gamma}\big)+(N+1)c_r-c_0'$.

\end{proposition}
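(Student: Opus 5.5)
The plan is to work directly from the closed form \eqref{fib:eq:Fform} of Lemma~\ref{fib:lem:T1},
\[
\Ff(M,j)=\frac{\mu^{3/2}\sqrt{1-r^2}\,B_N(J)}{2\aG(N-1)}\,(\gamma-w)+Bw .
\]
The sign of $\gamma-w$ decides which side of Lemma~\ref{fib:lem:expo} to use. Where $\gamma-w\ge0$, I bound $B_N(J)$ from below; where $\gamma-w\le0$, I bound it from above. The remaining $\pm\varepsilon|x|$ term is absorbed into the rate by an AM--GM bound on $|x|=\sqrt w$ that is linear in $w$.

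\emph{Range (M).} For $1\le w\le\gamma$ the first term is nonnegative, since $B_N(J)\ge0$ and $\gamma-w\ge0$. Hence $\Ff{}\ge Bw\ge B$ directly.

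\emph{Range (L).} For $w\le1$ we have $\gamma-w\ge0$, because $\gamma=1+\Delta\ge1$. So Lemma~\ref{fib:lem:expo}(b) applies and gives
\[
\frac{\sqrt{1-r^2}}{c_N}B_N(J)\ge\exp\bigl(-\tfrac\rho2w-\varepsilon|x|-c_0-E_4\bigr).
\]
I use $|x|=\sqrt w\le\frac{1+w}2$, so that $-\varepsilon|x|\ge-\frac\varepsilon2w-\frac\varepsilon2$. This turns the exponent into $-\rho_Lw/2$ with $\rho_L=\rho+\varepsilon$. I then take
\[
P_L=\frac{\mu^{3/2}c_N}{2\aG(N-1)}\,e^{-c_0-E_4-\varepsilon/2},
\]
and Lemma~\ref{fib:lem:amp} gives $c_N\ge L_Ne^{E_4^{(0)}}$, hence $P_L\ge P^{\mathrm T}_L\,e^{-c_0-(E_4-E_4^{(0)})_+-\varepsilon/2}$. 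Next I expand
\[
\frac{\rho_L\gamma}2=\frac\rho2+\frac\varepsilon2+\frac{\rho_L}2\Delta .
\]
Substituting this gives
\[
\log\frac{Be^{\rho_L\gamma/2}}{P_L}\le\oT_L+\frac{\rho_L}2\Delta+c_0+(E_4-E_4^{(0)})_++\varepsilon=\oT_L+\frac{\rho_L}2\Delta+e_L .
\]

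\emph{Range (U).} For $w\ge\gamma$ the factor $\gamma-w$ is nonpositive, so I need an upper bound on $B_N$. Lemma~\ref{fib:lem:expo}(c) gives
\[
\frac{\mu^{3/2}\sqrt{1-r^2}B_N}{2\aG(N-1)}\le P^{\mathrm T}_U\exp\Bigl((N+1)c_r-c_0'-\frac{(1-r^2)\rho}2w+\varepsilon'|x|\Bigr).
\]
On $w\ge\gamma$ I use $\sqrt w\le\frac12\bigl(w/\sqrt\gamma+\sqrt\gamma\bigr)$. This produces the rate $\rho_U=(1-r^2)\rho-\varepsilon'/\sqrt\gamma$ and the amplitude
\[
P_U=P^{\mathrm T}_Ue^{(N+1)c_r-c_0'+\varepsilon'\sqrt\gamma/2}.
\]
Multiplying by $\gamma-w\le0$ reverses the inequality in the required direction, which gives $\Ff{}\ge H_U(w)$. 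Finally I expand
\[
\frac{\rho_U\gamma}2=\frac\rho2-\frac{r^2\rho}2-\frac{\varepsilon'}{2\sqrt\gamma}+\frac{\rho_U}2\Delta,
\]
and combine it with $\oT_U=\log(Be^{\rho/2}/P^{\mathrm T}_U)$. This should give the stated identity for $\omega_U$ exactly, with $e_U$ as defined.

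I expect no real obstacle, since all the analytic work sits in Lemmas~\ref{fib:lem:expo} and~\ref{fib:lem:amp}. The delicate points are bookkeeping. The first is making sure the AM--GM splits produce precisely $e_L$ and $e_U$, including the $\varepsilon/2+\varepsilon/2=\varepsilon$ count in (L). The second is checking that the hypotheses of Lemma~\ref{fib:lem:expo}(b) ($N\ge10$, $w\le1$, $r\le\frac12$) are exactly those of range (L). The third is confirming that $P_L,P_U>0$.
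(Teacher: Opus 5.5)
Your proposal is correct and follows the paper's proof essentially line by line. You use the same tangent bounds $|x|\le\frac{1+w}2$ and $|x|\le\frac{w+\gamma}{2\sqrt\gamma}$, the same amplitudes $P_L$ and $P_U$, the same appeals to Lemmas~\ref{fib:lem:expo}(b), (c) and~\ref{fib:lem:amp}, and the same expansions of $\rho_L\gamma/2$ and $\rho_U\gamma/2$ that yield $e_L$ and the exact identity for $\omega_U$.
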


\begin{proof}
We use \eqref{fib:eq:Fform}. (L) Here $\gamma-w\ge0$. By Lemma~\ref{fib:lem:expo}(b), the tangent $|x|\le\frac{w+1}2$ and Lemma~\ref{fib:lem:amp},
$\Ff{}\ge H_L(w)$ with $P_L:=\frac{\mu^{3/2}c_N}{2\aG(N-1)}e^{-c_0-E_4-\varepsilon/2}\ge P^{\mathrm T}_Le^{-c_0-(E_4-E_4^{(0)})-\varepsilon/2}$; then
$\log\frac{Be^{\rho_L\gamma/2}}{P_L}\le\oT_L+\frac{\rho_L\gamma-\rho}2+c_0+(E_4-E_4^{(0)})+\frac\varepsilon2$ and
$\frac{\rho_L\gamma-\rho}2=\frac{\rho_L}2\Delta+\frac\varepsilon2$. (M) $\gamma-w\ge0$. (U) Here $\gamma-w\le0$, so an upper bound of $B_N$ is needed:
Lemma~\ref{fib:lem:expo}(c) and the tangent $|x|\le\frac{w+\gamma}{2\sqrt\gamma}$ give $\Ff{}\ge H_U(w)$ with
$P_U:=P^{\mathrm T}_Ue^{(N+1)c_r-c_0'+\varepsilon'\sqrt\gamma/2}$, and the formula for $\omega_U$ is direct. The positivity of $\rho_U$
where it is used is part of the checks.
\end{proof}

\paragraph{The remainders.}

\begin{proposition}[the range (L)]\label{fib:prop:L}
Let $N\ge10$, $r\le\frac12$ and $t>0$, and put
\[
D_L=\Big(2(N+1)\,e_L\,s\big(\oT_L+e_L\big)_+-N\Delta(1-e_L)\Big)_+ .
\]
Then $\mu(\Ff(M,j)-\psi(t))\ge\min(2,\,T(t,M)-D_L)$ for every $j$ with $w\le1$, and $\mu(\Ff(M,j)-\psi(t))\ge2$ for every $j$ with
$1\le w\le\gamma$.
\end{proposition}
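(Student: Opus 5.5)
We treat the two ranges of the statement separately, starting from Proposition~\ref{fib:prop:pieces}. For $1\le w\le\gamma$, part~(M) gives $\Ff\ge Bw\ge B$, hence $\mu(\Ff-\psi(t))\ge\mu(B-\psi(t))$. Since $\mu B=N=M+2$ and $\psi(t)\le t$, we get $\mu\psi(t)\le M$ and the bound is at least $2$. We could also use $\mu(B-\psi)=2+\frac92M(t-1)^2/t$, as in the proof of Proposition~\ref{fib:prop:template}. This settles the second claim.

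For $w\le1$, part~(L) gives $\Ff\ge H_L(w)$. We apply Corollary~\ref{fib:lem:min} to $H_L$ with $P=P_L$, $\rho=\rho_L$ and bonus $\gamma=1+\Delta$, which yields
\[
\Ff\ge\min_{\mathbb R}H_L=B\bigl(\gamma-\tfrac2{\rho_L}G(\omega_L)\bigr),\qquad \omega_L=\log\frac{Be^{\rho_L\gamma/2}}{P_L}.
\]
Part~(L) also gives $\omega_L\le\oT_L+\frac{\rho_L}2\Delta+e_L$. If $\omega_L\le0$, then $G(\omega_L)$ is at most its value at the smaller $|\omega|$, so we treat this case by monotonicity of $G$ on each side of $0$ (Lemma~\ref{fib:lem:G}(b)). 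One subcase is that the minimum is then at least $B$-type, which already gives $\ge2$ as above. Otherwise $G$ is increasing, and we bound $G(\omega_L)\le G(\oT_L+\frac{\rho_L}2\Delta+e_L)$.

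We then apply the bonus split, Lemma~\ref{fib:lem:split}, with $\omega_0=\oT_L$, $\tilde e=e_L$ and the shift $\frac{\rho_L}2\Delta$, and with $\sigma=G'(\cdot)\in[0,1)$:
\[
\gamma-\tfrac2{\rho_L}G(\cdots)\ge1-\tfrac2{\rho_L}G(\oT_L)+\Delta(1-\sigma)-\tfrac2{\rho_L}e_L\sigma.
\]
Since $\rho_L=\rho+\varepsilon\ge\rho$, we have $\frac2{\rho_L}\le\frac2\rho$, so the term $1-\frac2{\rho_L}G(\oT_L)$ dominates its $\rho$ version. Multiplying by $\mu B=N$, the first part becomes $N-2(N+1)G(\oT_L)$ up to the factor $\rho$ versus $\rho_L$. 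Proposition~\ref{fib:prop:template} with $X=L$ then bounds $\mu(\cdot-\psi)$ below by $T(t,M)$.

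The remaining error is $N\Delta(1-\sigma)-\frac{2N}{\rho_L}e_L\sigma$. We use $\frac{2N}{\rho_L}\le 2(N+1)$, and by Lemma~\ref{fib:lem:G}(a),(b) we have $\sigma=\frac{s}{1+s}\le s(\omega)$ with $\omega\le\oT_L+e_L$ plus the bonus shift. Here lies the main obstacle: $\sigma$ is evaluated at an argument containing the bonus $\frac{\rho_L}2\Delta$, while $D_L$ uses $s((\oT_L+e_L)_+)$. We plan to bound $2(N+1)e_L\sigma$ by $2(N+1)e_Ls((\oT_L+e_L)_+)$ plus a contribution linear in $\Delta$, using $s(\omega+h)\le s(\omega)+h$ and $\sigma\le1$. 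The $\Delta$ part is absorbed into the $N\Delta(1-\sigma)$ credit, leaving $N\Delta(1-e_L)$ as in $D_L$: we use $(1-\sigma)-e_L\cdot(\text{shift contribution})\ge1-e_L$ when $\sigma\le e_L$-weighted. Where this fails we bound the loss crudely. If $D_L$ is in its positive part the result is $T-D_L$, and otherwise the slack is at least $T$. Combined with the $\omega_L\le0$ case, which yields $\ge2$, this gives $\min(2,T-D_L)$.
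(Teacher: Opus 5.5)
Your treatment of the range $1\le w\le\gamma$ is correct. For $w\le1$ with $\omega_L>0$, the reduction through Corollary~\ref{fib:lem:min}, the bonus split and Proposition~\ref{fib:prop:template} is also correct. Taking $\tilde e=e_L$ and $\sigma=G'(\hat\omega)$, with $\hat\omega=\oT_L+\frac{\rho_L}2\Delta+e_L$, instead of the paper's exact $\tilde e$ is harmless, since $0<\omega_L\le\hat\omega$ and $G$ increases on $[0,\infty)$.

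The genuine gap is the last step, the one you flag yourself: your plan does not produce the stated $D_L$. Suppose you bound $\sigma\le s$ (or $\sigma\le1$) in the term $-\frac{2N}{\rho_L}e_L\sigma$ and then use $s\le s(\oT_L+e_L)+\frac{\rho_L}2\Delta$. You obtain $N\Delta(1-e_L-\sigma)-\frac{2N}{\rho_L}e_Ls(\oT_L+e_L)$, so an extra loss $N\Delta\sigma$ remains that nothing in $D_L$ pays for. If you first replace $\frac{2N}{\rho_L}$ by $2(N+1)$, the shift costs even more, $(N+1)\rho_Le_L\Delta>Ne_L\Delta$. ``Bounding the loss crudely'' gives a larger remainder, not $D_L$.

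The missing idea is to keep the two terms together. With $s=s(\hat\omega)>0$ one has $1-\sigma=\frac1{1+s}$, so
\[
N\Delta(1-\sigma)-\tfrac{2N}{\rho_L}e_L\sigma=\frac{N\Delta-\frac{2N}{\rho_L}e_L\,s}{1+s},
\]
which is nonincreasing in $s$, since its derivative is $-(N\Delta+\frac{2N}{\rho_L}e_L)/(1+s)^2$. Now replace $s$ by $\bar s=s(\oT_L+e_L)+\frac{\rho_L}2\Delta\ge s$ (Lemma~\ref{fib:lem:G}(a)). The shift then contributes exactly $-\frac{2N}{\rho_L}e_L\cdot\frac{\rho_L}2\Delta=-Ne_L\Delta$, and the numerator becomes $N\Delta(1-e_L)-\frac{2N}{\rho_L}e_L\,s(\oT_L+e_L)$. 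A nonnegative numerator gives a nonnegative quotient. A negative numerator divided by $1+\bar s\ge1$ only moves toward $0$, because $\bar s\ge s>0$. Only at this point use $\frac{2N}{\rho_L}\le2(N+1)$, $e_L\ge0$ and $s(\cdot)\le s(\cdot)_+$ to get $\ge-D_L$. This is the paper's argument.

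Your case $\omega_L\le0$ also needs repair. There $G$ decreases on $(-\infty,0]$, so $\omega_L\le\oT_L+\frac{\rho_L}2\Delta+e_L$ gives no upper bound on $G(\omega_L)$. The global minimum $B(\gamma-\frac2{\rho_L}G(\omega_L))$ can then lie far below $B$. You must use the restriction $w\le1$. By Corollary~\ref{fib:lem:min} the minimiser is $w_*=\gamma-2s(\omega_L)/\rho_L\ge\gamma\ge1$, so $H_L$ decreases on $(-\infty,1]$. Hence $\min_{w\le1}H_L=H_L(1)\ge B$, and therefore $\mu(\Ff-\psi)\ge\mu(B-\psi)\ge2$.
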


\begin{proof}
For $1\le w\le\gamma$: $\mu(Bw-\psi)\ge\mu(B-\psi)=2+\frac92\frac{(M-\mu)^2}\mu\ge2$. For $w\le1$ let $\omega_L=\log(Be^{\rho_L\gamma/2}/P_L)$.
If $\omega_L\le0$, then $H_L$ decreases on $[0,1]$ (Corollary~\ref{fib:lem:min}: $w_*\ge\gamma\ge1$) and $H_L(1)\ge B$. Otherwise
Corollary~\ref{fib:lem:min}, Lemma~\ref{fib:lem:split} with $\omega_0=\oT_L$ and $\tilde e=\omega_L-\oT_L-\frac{\rho_L}2\Delta\le e_L$,
$\sigma_L=G'(\omega_L)\in(0,1)$, and $\rho_L\ge\rho$ give
\[
\min_{[0,1]}H_L\ \ge\ B\big(1-\tfrac2\rho G(\oT_L)\big)+B\big(\Delta(1-\sigma_L)-\tfrac2{\rho_L}e_L\sigma_L\big).
\]
By Proposition~\ref{fib:prop:template}, $\mu$ times the first part minus $\mu\psi$ is at least $T(t,M)$. For the second part,
$s(\omega_L)\le\bar s:=s(\oT_L+e_L)+\frac{\rho_L}2\Delta$ (Lemma~\ref{fib:lem:G}(a)), so, since $s\mapsto(\Delta-cs)/(1+s)$ decreases for $c\ge0$,
\[
\mu B\big(\Delta(1-\sigma_L)-\tfrac2{\rho_L}e_L\sigma_L\big)\ \ge\ \frac{N\Delta(1-e_L)-\frac{2N}{\rho_L}e_L\,s(\oT_L+e_L)}{1+\bar s}\ \ge\ -D_L ,
\]
using $\frac{2N}{\rho_L}\le2(N+1)$, and that a negative numerator is divided by $1+\bar s\ge1$.
\end{proof}

\begin{lemma}[the range (U) keeps both copies of the bonus]\label{fib:lem:Ubonus}
Let $N\ge10$, $r\le\frac12$ and $t>0$.

\textup{(a)} If $\omega_U\ge0$, then $\mu(\min_{w\ge\gamma}H_U-\psi(t))\ge2$.
\textup{(b)} If $\omega_U<0$, then, with $\sigma_U=G'(\omega_U)<0$,
\[
\mu\big(\min_{w\ge\gamma}H_U-\psi(t)\big)\ \ge\ T(t,M)-2N\big(\tfrac1{\rho_U}-\tfrac1\rho\big)G(\oT_U)+N\Delta-|\sigma_U|\Big(\tfrac{2N}{\rho_U}e_U-N\Delta\Big),
\]
and hence $\mu(\min_{w\ge\gamma}H_U-\psi(t))\ge T(t,M)-D_U'$ with
\[
D_U'=\Big(2N\big(\tfrac1{\rho_U}-\tfrac1\rho\big)G(\oT_U)-N\Delta\Big)_++|\sigma_U|\Big(\tfrac{2N}{\rho_U}e_U-N\Delta\Big)_+ .
\]

\end{lemma}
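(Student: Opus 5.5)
The plan is to work directly with the explicit minimiser of $H_U$ given by Corollary~\ref{fib:lem:min}, applied with $P=P_U$, $\rho=\rho_U$ and the given $B$ and $\gamma$. Its $\omega$ is exactly $\omega_U$ of Proposition~\ref{fib:prop:pieces}(U). That corollary needs $\rho_U>0$, which Proposition~\ref{fib:prop:pieces} records as part of the checks wherever (U) is used; I take it as given. The corollary then says that $H_U$ decreases on $(-\infty,w_*]$ and increases on $[w_*,\infty)$, with $w_*=\gamma-2s(\omega_U)/\rho_U$. By Lemma~\ref{fib:lem:G}(a), $s(\omega_U)$ has the sign of $\omega_U$. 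So the two cases of the statement are exactly the cases $w_*\le\gamma$ and $w_*>\gamma$.

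\emph{(a)} If $\omega_U\ge0$, then $w_*\le\gamma$, and $H_U$ is increasing on $[\gamma,\infty)$. Hence $\min_{w\ge\gamma}H_U=H_U(\gamma)=B\gamma\ge B$, since $\gamma=1+\Delta\ge1$. As in the proof of Proposition~\ref{fib:prop:L}, we have $\mu B=N=M+2$, and
\[
\mu(B-\psi(t))=2+\tfrac92(M-\mu)^2/\mu\ge2,
\]
which proves (a).

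\emph{(b)} If $\omega_U<0$, then $w_*>\gamma$, so the minimum over $[\gamma,\infty)$ is the global one,
\[
\min_{w\ge\gamma}H_U=B\bigl(\gamma-\tfrac{2}{\rho_U}G(\omega_U)\bigr).
\]
I would apply the convexity inequality of Lemma~\ref{fib:lem:split} with $\omega_0=\oT_U$ and $h=\frac{\rho_U}{2}\Delta-e_U$, so that $\omega_0+h=\omega_U$. This gives $G(\omega_U)\le G(\oT_U)+h\,\sigma_U$ with $\sigma_U=G'(\omega_U)$, which is negative because $s(\omega_U)<0$. Substituting and writing $\sigma_U=-|\sigma_U|$ yields
\[
\gamma-\tfrac2{\rho_U}G(\omega_U)\ \ge\ 1+\Delta-\tfrac2{\rho_U}G(\oT_U)+\Delta|\sigma_U|-\tfrac{2}{\rho_U}e_U|\sigma_U| .
\]
Next I multiply by $\mu B=N$ and subtract $\mu\psi(t)$. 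Then I split $\frac{2N}{\rho_U}G(\oT_U)$ as
\[
\tfrac{2N}{\rho}G(\oT_U)+2N\bigl(\tfrac1{\rho_U}-\tfrac1\rho\bigr)G(\oT_U).
\]
Since $\frac{N}{\rho}=N+1$, the part $N-\frac{2N}{\rho}G(\oT_U)-\mu\psi(t)$ is exactly the quantity $N-\mu\psi(t)-2(N+1)G(\oT_U)$ of Proposition~\ref{fib:prop:template}, taken with $X=U$. That proposition bounds it below by $T(t,M)$, and it applies because $N\ge10$ means $M\ge8$. The remaining terms are precisely
\[
-2N\bigl(\tfrac1{\rho_U}-\tfrac1\rho\bigr)G(\oT_U)+N\Delta-|\sigma_U|\bigl(\tfrac{2N}{\rho_U}e_U-N\Delta\bigr),
\]
which is the first display of (b). The bound with $D_U'$ then follows by replacing each of the two grouped negative contributions by the negative of its positive part.

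I expect no real obstacle here. The only points needing care are the sign bookkeeping for $\sigma_U<0$ in the split, and the observation that the same $N$ multiplies both copies of $\Delta$: the bonus contributes $N\Delta$ once directly and once through $|\sigma_U|$, which is why the lemma says the range (U) keeps both copies.
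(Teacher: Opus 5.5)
Your proposal is correct and follows the paper's proof essentially step by step. The paper likewise splits the cases by the sign of $\omega_U$ through Corollary~\ref{fib:lem:min}, uses $H_U(\gamma)=B\gamma$ and $\mu(B-\psi)\ge2$ in (a), and in (b) applies Lemma~\ref{fib:lem:split} at $\oT_U$ with $h=\tfrac{\rho_U}2\Delta-e_U$, writes $\tfrac{2N}{\rho_U}=2(N+1)+2N\bigl(\tfrac1{\rho_U}-\tfrac1\rho\bigr)$, and invokes Proposition~\ref{fib:prop:template}; your explicit remark that $\rho_U>0$ is needed (taken from the checks) is a point the paper leaves implicit.
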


\begin{proof}
(a) If $\omega_U\ge0$, then $H_U$ increases on $[\gamma,\infty)$ (Corollary~\ref{fib:lem:min}: $w_*\le\gamma$), and
$\mu(B\gamma-\psi)\ge\mu(B-\psi)=2+\frac{9(M-\mu)^2}{2\mu}\ge2$. (b) Corollary~\ref{fib:lem:min} gives $\min H_U\ge B(\gamma-\frac2{\rho_U}G(\omega_U))$, and
Lemma~\ref{fib:lem:split} with $\omega_0=\oT_U$, $h=\frac{\rho_U}2\Delta-e_U$ gives $G(\omega_U)\le G(\oT_U)+\sigma_U(\frac{\rho_U}2\Delta-e_U)$. Hence
$\gamma-\frac2{\rho_U}G(\omega_U)\ge1-\frac2{\rho_U}G(\oT_U)+\Delta+|\sigma_U|\Delta-\frac2{\rho_U}|\sigma_U|e_U$. Multiply by $N=\mu B$, write
$\frac{2N}{\rho_U}=2(N+1)+2N(\frac1{\rho_U}-\frac1\rho)$, and use Proposition~\ref{fib:prop:template} for $N-\mu\psi-2(N+1)G(\oT_U)$. This is the first
inequality. For the second, split the bonus $N\Delta(1+|\sigma_U|)$ into its two copies: $N\Delta$ pays the first loss and $N\Delta|\sigma_U|$ the
second, which carries the same factor $|\sigma_U|$.
\end{proof}

\begin{remark}\label{fib:rem:r0}
At $r=0$: $\Delta=\varepsilon=c_0=0$, $E_4=E_4^{(0)}$, $\varepsilon'=c_0'=c_r=0$, $\rho_U=\rho$ and $e_U=0$, so $D_L=D_U'=0$, and
Proposition~\ref{fib:prop:L} and Lemma~\ref{fib:lem:Ubonus} reduce to the activity-one bound of Proposition~\ref{fib:prop:template}. In general the skew
and the off-centre mode of the binomial law on $N$ trials cost a remainder of first order in $r$, and the bonus, which is of
second order in $r$ but grows like $N^2$, pays for it. Keeping the second copy $N\Delta|\sigma_U|$ of the bonus in
Lemma~\ref{fib:lem:Ubonus} is what makes the closed-form bounds below sharp enough.
\end{remark}

\paragraph{Bounds in the variable $\zeta=r\sqrt N$.}
\begin{lemma}[monotone bounds]\label{fib:lem:mono}
Let $N\ge10$, $\zeta=r\sqrt N$ with $0\le\zeta\le\frac{\sqrt N}2$, $Z_0=\frac{\sqrt N}{N+1}$ and $\varphi=1+\frac{2\zeta}N$. Then:

\textup{(a)} $(N+1)e_L\le\frac{2\zeta^2}{N-\zeta^2}+\frac{2\zeta}{\sqrt{1-\zeta^2/N}}+\frac1{12}(\max(\psi_L,\psi_R)-1)$, where
$\psi_L=\frac{\varphi^4(1-Z_0^2)}{(1-Z_0^2\varphi^2)(1-\zeta^2/N)}$ and $\psi_R=\frac{1+r(r+2Z_0)/(1-(r+Z_0)^2)}{1-\zeta^2/N}$;
\textup{(b)} for every $N_a$ with $\zeta^2<N_a\le N$, the numbers $\bar\Delta=\frac{\zeta^2}{1-\zeta^2/N_a}$ and $\Delta_a=\zeta^2(1-\frac1{N_a})$ satisfy
$\Delta_a\le\Delta\le\bar\Delta$, and
\begin{gather*}
2(N+1)(\rho-\rho_U)\le\frac{2(N+1)\zeta^2}N+\frac{2\big(2\zeta+(N+1)\zeta^3/(3(N-\zeta^2))\big)}{\sqrt{1+\zeta^2(1-1/N_a)}},\\
\rho_U\ge\Big(1-\frac{\zeta^2}N\Big)\rho-\frac{2\zeta/(N+1)+\zeta^3/(3(N-\zeta^2))}{\sqrt{1+\zeta^2(1-1/N_a)}},
\end{gather*}
\[
Ne_U\le\frac{\zeta^2}2+\frac12\Big(2\zeta+\frac{\zeta^3}{3(1-\zeta^2/N)}\Big)\Big(\sqrt{1+\bar\Delta}+\frac1{\sqrt{1+\Delta_a}}\Big)+\frac{(N+1)\zeta^4}{12N(1-\zeta^2/N)^2}.
\]

\end{lemma}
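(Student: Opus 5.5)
The plan is to substitute $r=\zeta/\sqrt N$ into each quantity and bound every elementary piece monotonically in $\zeta$. The tools are $1-r^2=1-\zeta^2/N$, $\rho=N/(N+1)\le1$, $\sqrt v=\frac12\sqrt{1-r^2}$, and Lemma~\ref{fib:lem:gauss}(d) for $a-r\le r^3/(3(1-r^2))$.

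For (a), I treat the three parts of $e_L=c_0+(E_4-E_4^{(0)})_++\varepsilon$ separately.
\begin{itemize}
\item The first part is $(N+1)c_0=r^2/(2v)=2\zeta^2/(N-\zeta^2)$, which is exact.
\item Next, $(N+1)\varepsilon=r\sqrt N/\sqrt v=2\zeta/\sqrt{1-\zeta^2/N}$, also exact.
\item For the fourth-order term I write $(N+1)E_4=E_4^{(0)}(N+1)\cdot\max(\text{ratio}_L,\text{ratio}_R)$. Each ratio compares the $E_4$ expression with its value at $r=0$, where $Z_L=Z_R=Z_0$. Since $s_N=\sqrt{vN}\le\sqrt N/2$, we get $Z_L\le 2(\sqrt N/2+r)/(N+1)=Z_0\varphi$, so $Z_L^4\le Z_0^4\varphi^4$ and $1-Z_L^2\ge1-Z_0^2\varphi^2$. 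This gives the ratio $\psi_L$. On the right side $Z_R\le Z_0$ and $1-(r+Z_R)^2\ge1-(r+Z_0)^2$, and writing $\frac{1-Z_0^2}{1-(r+Z_0)^2}=1+\frac{r(r+2Z_0)}{1-(r+Z_0)^2}$ gives $\psi_R$.
\item Finally, $(N+1)E_4^{(0)}\le1/12$, so the difference is at most $\frac1{12}(\max(\psi_L,\psi_R)-1)$.
\end{itemize}

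For (b), I start from $\Delta=\frac{r^2(N^2-N+1)}{(1-r^2)N}=\zeta^2\frac{1-1/N+1/N^2}{1-\zeta^2/N}$.
\begin{itemize}
\item Upper bound: the numerator is at most $1$ and $N\ge N_a$, so $\Delta\le\bar\Delta$.
\item Lower bound: $(1-1/N+1/N^2)/(1-\zeta^2/N)\ge1-1/N\ge1-1/N_a$, so $\Delta\ge\Delta_a$. Consequently $\sqrt\gamma\ge\sqrt{1+\Delta_a}=\sqrt{1+\zeta^2(1-1/N_a)}$.
\item For $\rho_U$ I write $\rho-\rho_U=r^2\rho+\varepsilon'/\sqrt\gamma$ with $r^2\rho\le\zeta^2/N$. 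Then I bound $\varepsilon'=4\tilde y s_N/(N+1)$ using $\tilde y=r+y_U$, $y_U=(a-r)(N+1)/2\le(N+1)r^3/(6(1-r^2))$ and $s_N\le\sqrt N/2$. This gives $\varepsilon'\le\frac{2\sqrt N}{N+1}\bigl(r+\frac{(N+1)r^3}{6(1-r^2)}\bigr)$. In $\zeta$ this is $\frac{2\zeta}{N+1}+\frac{\zeta^3}{3(N-\zeta^2)}$, hence $(N+1)\varepsilon'\le2\zeta+\frac{(N+1)\zeta^3}{3(N-\zeta^2)}$. Dividing by $\sqrt\gamma$ yields both displayed bounds on $\rho_U$.
\item For $Ne_U$ there are four parts.
 \begin{itemize}
 \item $Nr^2\rho/2\le\zeta^2/2$.
 \item $N\varepsilon'/2\cdot(\sqrt\gamma+1/\sqrt\gamma)$ is bounded with the $\varepsilon'$ bound (using $N/(N+1)\le1$) and $\sqrt\gamma\le\sqrt{1+\bar\Delta}$, $1/\sqrt\gamma\le1/\sqrt{1+\Delta_a}$.
 \item For $-c_0'\le0$ I simply drop the term.
 \item For $(N+1)c_r$ I need $c_r=\frac12(a^2+\log(1-r^2))\le\frac{r^4}{12(1-r^2)^2}$ up to the factor $N$. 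The route I would try is $c_r=\int_0^r\frac{a(s)-s}{1-s^2}ds$ together with $a(s)-s\le s^3/(3(1-s^2))$. This gives $c_r\le\frac{r^4}{12(1-r^2)^2}$, since the integrand is at most $s^3/(3(1-r^2)^2)$. Multiplying by $N(N+1)$ and using $r^4=\zeta^4/N^2$ gives $\frac{(N+1)\zeta^4}{12N(1-\zeta^2/N)^2}$.
 \end{itemize}
\end{itemize}

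The main obstacle is bookkeeping. I must check that the $\varepsilon'$ bound matches the stated coefficient ($2\zeta$ versus $2\zeta N/(N+1)$, harmless in the right direction). I must also confirm that the $\psi_L$ factor $\varphi^4$ correctly covers the replacement of $s_N+r$ by $\frac{\sqrt N}2(1+2\zeta/N)$. That replacement needs $r=\zeta/\sqrt N$ with $r\le\frac{\sqrt N}{2}\cdot\frac{2\zeta}{N}$, which holds with equality.
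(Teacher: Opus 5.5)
Your proposal is correct and follows the paper's proof essentially step for step. The matching steps are the exact identities for $(N+1)c_0$ and $(N+1)\varepsilon$, the ratio bound $E_4/E_4^{(0)}\le\max(\psi_L,\psi_R)$ via $Z_L\le Z_0\varphi$ and $Z_R\le Z_0$, the bound on $\varepsilon'$ from Lemma~\ref{fib:lem:gauss}(d) with $s_N\le\sqrt N/2$, the two-sided bounds on $\Delta$, and $c_r\le r^4/(12(1-r^2)^2)$ by integrating $(a(s)-s)/(1-s^2)$. One small point, left implicit by both you and the paper: replacing $(N+1)E_4^{(0)}$ by $\frac1{12}$ uses $\max(\psi_L,\psi_R)\ge1$, which holds because $\psi_L\ge\varphi^4\ge1$.
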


\begin{proof}
(a) $(N+1)c_0=\frac{2r^2}{1-r^2}=\frac{2\zeta^2}{N-\zeta^2}$ and $(N+1)\varepsilon=\frac{2\zeta}{\sqrt{1-r^2}}$; $(N+1)E_4^{(0)}\le\frac1{12}$; and
$E_4/E_4^{(0)}\le\max(\psi_L,\psi_R)$: for the first entry of $E_4$, $s_N+r\le\frac{\sqrt N}2\varphi$, so $Z_L\le Z_0\varphi$, and $1-r^2=1-\zeta^2/N$;
for the second, $Z_R\le Z_0$ and $\frac{1-Z_0^2}{1-(r+Z_0)^2}=1+\frac{r(r+2Z_0)}{1-(r+Z_0)^2}$. (b) $s_N\le\frac{\sqrt N}2$; by
Lemma~\ref{fib:lem:gauss}(d), $(a-r)\sqrt N\le\frac{\zeta^3}{3(N-\zeta^2)}$, so $\varepsilon'\le\frac{2\zeta}{N+1}+(a-r)\sqrt N$ is at most
$\frac{2\zeta}{N+1}+\frac{\zeta^3}{3(N-\zeta^2)}$; $\frac1{\sqrt\gamma}\le\frac1{\sqrt{1+\Delta_a}}$ because $\Delta\ge\zeta^2\frac{N^2-N+1}{N^2}\ge\zeta^2(1-\frac1N)\ge\Delta_a$ (as $N\ge N_a$);
$\sqrt\gamma\le\sqrt{1+\bar\Delta}$ because $\Delta\le\frac{r^2N}{1-r^2}=\frac{\zeta^2}{1-\zeta^2/N}\le\bar\Delta$ (as $0<1-\zeta^2/N_a\le1-\zeta^2/N$); $\rho-\rho_U=r^2\rho+\varepsilon'/\sqrt\gamma$; and $c_r\le\frac{r^4}{12(1-r^2)^2}$, by integrating
$\frac{2(a-r)}{1-r^2}\le\frac{2r^3}{3(1-r^2)^2}$; finally $c_0'\ge0$ is dropped.
\end{proof}

\begin{lemma}[linearised coefficients]\label{fib:lem:coef}
Let $N\ge10$, $0\le r\le r_m\le\frac12$, $c=1-\frac1N$ and $\zeta=r\sqrt N\le\zeta_m$. Then

\textup{(a)} $(N+1)e_L\le\alpha_1\zeta+\alpha_2\zeta^2$, with $\alpha_1=\frac2{\sqrt{1-r_m^2}}+\frac1{12}\max(c_{L1},c_{R1})$,
$\alpha_2=\frac2{N(1-r_m^2)}+\frac1{12}\max(c_{L2},c_{R2})$, where $c_{L1}=\frac{4K_L}{N(1-r_m^2)}$, $c_{L2}=\frac{4K_L/N^2+1/N}{1-r_m^2}$,
$c_{R1}=\frac{2K_R}{(N+1)(1-r_m^2)}$, $c_{R2}=\frac{K_R+1}{N(1-r_m^2)}$, $K_L=1+\frac{\varphi_m^2}{1-Z_0^2\varphi_m^2}$, $\varphi_m=1+\frac{2r_m}{\sqrt N}$,
$K_R=\frac1{1-(r_m+Z_0)^2}$ and $Z_0=\frac{\sqrt N}{N+1}$;
\textup{(b)} $\rho_U\ge\rho_*:=(1-r_m^2)\rho-\frac2{(N+1)\sqrt c}-\frac{r_m^2}{3(1-r_m^2)\sqrt c}$;
\textup{(c)} $2N(\frac1{\rho_U}-\frac1\rho)\le\frac{4\zeta+\kappa_A\zeta^2}{\rho_*}$, with $\kappa_A=2+\frac{2(N+1)}{3N(1-r_m^2)\sqrt c}$;
\textup{(d)} $Ne_U\le E(\zeta):=2\zeta+\frac{\zeta^2}2+\frac{5\zeta^3}{6(1-r_m^2)}+\frac{(N+1)\zeta^4}{12N(1-r_m^2)^2}+\frac{\zeta^5}{12(1-r_m^2)^2}$;
\textup{(e)} $N\Delta\ge(N-1)\zeta^2$, and $N\Delta\ge\frac{(N-1)\zeta^2}{1-r_a^2}$ if $r\ge r_a$.

The coefficients $\alpha_1$, $\alpha_2$, $\kappa_A$ and those of $E$ are nonincreasing in $N$, and $\rho_*$ is nondecreasing in $N$.
\end{lemma}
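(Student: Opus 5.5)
The plan is to derive each item from the exact bounds of Lemma~\ref{fib:lem:mono}. The first step is to replace $\zeta$-dependent denominators by their worst cases using $r\le r_m$. Concretely, $1-\zeta^2/N=1-r^2\ge1-r_m^2$ and $\varphi=1+2\zeta/N=1+2r/\sqrt N\le\varphi_m$. After that, every rational expression in $\zeta$ that appears there is dominated by a polynomial whose coefficients depend only on $(N,r_m)$.

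For (a), the terms $\frac{2\zeta^2}{N-\zeta^2}\le\frac{2\zeta^2}{N(1-r_m^2)}$ and $\frac{2\zeta}{\sqrt{1-\zeta^2/N}}\le\frac{2\zeta}{\sqrt{1-r_m^2}}$ are immediate. The work is in linearising $\psi_L-1$ and $\psi_R-1$ in $\zeta$.

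For $\psi_L$, I would write $\varphi^4(1-Z_0^2)/(1-Z_0^2\varphi^2)-1$ as $(\varphi^2-1)\bigl(\varphi^2+\frac{\varphi^2\cdot(\text{stuff})}{1-Z_0^2\varphi^2}\bigr)$-type factors. Then I would use $\varphi^2-1=\frac{4\zeta}{N}+\frac{4\zeta^2}{N^2}$ and the extra factor $1/(1-r^2)=1+\frac{\zeta^2/N}{1-r^2}$. The cross terms are bounded using $\varphi\le\varphi_m$, which produces $K_L$. This should yield $\psi_L-1\le c_{L1}\zeta+c_{L2}\zeta^2$.

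For $\psi_R$, the numerator excess is $r(r+2Z_0)K_R$ with $r=\zeta/\sqrt N$ and $Z_0\le 1/\sqrt N$, giving $c_{R1}$ and $c_{R2}$ after dividing by $1-r_m^2$. The identities need care to match the stated constants exactly: the $1/N$ in $c_{L2}$ comes from the $1/(1-r^2)$ factor. I expect this bookkeeping to be the main obstacle, and I would check it term by term.

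For (b) and (c), I would start from Lemma~\ref{fib:lem:mono}(b) with $N_a=N$. The denominator $\sqrt{1+\zeta^2(1-1/N)}\ge\max(1,\zeta\sqrt c)$ lets me bound $\frac{2\zeta}{(N+1)\sqrt{\cdot}}\le\frac{2}{(N+1)\sqrt c}$. Similarly, $\frac{\zeta^3}{3(N-\zeta^2)\sqrt{\cdot}}\le\frac{\zeta^2}{3N(1-r_m^2)\sqrt c}=\frac{r^2}{3(1-r^2)\sqrt c}\le\frac{r_m^2}{3(1-r_m^2)\sqrt c}$. Together with $(1-\zeta^2/N)\rho\ge(1-r_m^2)\rho$, this gives $\rho_U\ge\rho_*$.

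For (c), I would write $\frac1{\rho_U}-\frac1\rho=\frac{\rho-\rho_U}{\rho\rho_U}$ and use $\rho\rho_U\ge\rho_*\rho$ together with $2N/\rho=2(N+1)$. The bound on $2(N+1)(\rho-\rho_U)$ then needs only the denominator bound $\ge1$ for the $2\zeta$ term, giving $4\zeta$. The $\zeta^3$ term uses the bound $\ge\zeta\sqrt c$, producing $\frac{2(N+1)\zeta^2}{3N(1-r_m^2)\sqrt c}$. The term $2(N+1)\zeta^2/N\cdot$ needs dividing by $\rho$, which cancels to $2\zeta^2$; this yields $\kappa_A$.

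For (d), I would take $N_a=N$ in the $Ne_U$ bound. The estimate $\sqrt{1+\bar\Delta}\le1+\bar\Delta/2$ (or $\le 1+\zeta^2/(2(1-r_m^2))$) and $1/\sqrt{1+\Delta_a}\le1$ give $\frac12(2\zeta+\frac{\zeta^3}{3(1-r_m^2)})(2+\frac{\zeta^2}{2(1-r_m^2)})$. Expanding gives $2\zeta$, a $\zeta^3$ coefficient $\frac{1}{2(1-r_m^2)}+\frac{1}{3(1-r_m^2)}=\frac5{6(1-r_m^2)}$, and a $\zeta^5$ term $\frac{\zeta^5}{12(1-r_m^2)^2}$, exactly matching $E$.

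For (e), I would use the formula for $\Delta$: $N\Delta=\frac{r^2(N^2-N+1)}{1-r^2}\ge r^2N(N-1)/(1-r^2)=\frac{(N-1)\zeta^2}{1-r^2}$. This gives both claims, the second via $1-r^2\le1-r_a^2$.

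Monotonicity in $N$ then follows by inspection. Every coefficient is built from $1/N$, $1/(N+1)$, $Z_0$, $\varphi_m$ and $1/\sqrt c$, all of which are nonincreasing in $N\ge10$, and $\rho=N/(N+1)$ increases.
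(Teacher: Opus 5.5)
Your proposal is correct and follows the paper's proof essentially step by step. Each item is read off from Lemma~\ref{fib:lem:mono} with $N_a=N$, using the same factorisations of $\psi_L-1$ and $\psi_R-1$, the bounds $\sqrt{1+c\zeta^2}\ge\max(1,\zeta\sqrt c)$ and $\sqrt{1+\bar\Delta}\le1+\bar\Delta/2$, and termwise monotonicity in $N$.

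Two bookkeeping points are needed to get the constants exactly as stated.
\begin{itemize}
\item In $\psi_R$, use the exact product $rZ_0=\zeta/(N+1)$ rather than $Z_0\le1/\sqrt N$; the latter would put $N$ in place of $N+1$ in $c_{R1}$.
\item In (c), use the exact term $2(N+1)r^2\rho=2\zeta^2$, coming from $\rho-\rho_U=r^2\rho+\varepsilon'/\sqrt\gamma$, rather than the cruder $2(N+1)\zeta^2/N$ of Lemma~\ref{fib:lem:mono}(b). This is what gives the leading $2$ in $\kappa_A$. Note also that (c) tacitly requires $\rho_*>0$.
\end{itemize}
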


\begin{proof}
(a) Lemma~\ref{fib:lem:mono}(a) with $\frac{2\zeta^2}{N-\zeta^2}\le\frac{2\zeta^2}{N(1-r_m^2)}$, $\frac{2\zeta}{\sqrt{1-r^2}}\le\frac{2\zeta}{\sqrt{1-r_m^2}}$, and the identities
$\psi_L-1=\frac{(\varphi^2-1)(1+\varphi^2/(1-Z_0^2\varphi^2))+r^2}{1-r^2}$, with $\varphi^2-1=\frac{4\zeta}N+\frac{4\zeta^2}{N^2}$ and $\varphi\le\varphi_m$, and
$\psi_R-1=\frac{(\zeta^2/N+2\zeta/(N+1))K+\zeta^2/N}{1-r^2}$, with $K=\frac1{1-(r+Z_0)^2}\le K_R$; so $\max(\psi_L,\psi_R)-1$ is at most the larger linear
coefficient times $\zeta$ plus the larger quadratic coefficient times $\zeta^2$. (b), (c) Lemma~\ref{fib:lem:mono}(b) with $N_a=N$ (admissible, as $\zeta^2=r^2N\le\frac N4<N$),
$2(N+1)r^2\rho=2\zeta^2$ and the two elementary bounds $\frac\zeta{\sqrt{1+c\zeta^2}}\le\frac1{\sqrt c}$ and $\frac{\zeta^3}{\sqrt{1+c\zeta^2}}\le\frac{\zeta^2}{\sqrt c}$;
for (c) also $2N(\frac1{\rho_U}-\frac1\rho)=\frac{2(N+1)(\rho-\rho_U)}{\rho_U}$. (d) Lemma~\ref{fib:lem:mono}(b) with $N_a=N$,
$\sqrt{1+\bar\Delta}\le1+\frac{\bar\Delta}2$, $\bar\Delta=\frac{\zeta^2}{1-r^2}$, and $\frac1{\sqrt{1+\Delta_a}}\le1$. (e) $N\Delta=\zeta^2\frac{N^2-N+1}{N-\zeta^2}=\zeta^2\frac{N^2-N+1}{N(1-r^2)}$.
The monotonicity is termwise.
\end{proof}

\begin{lemma}[closed forms of the remainders]\label{fib:lem:amgm}
For $a\ge0$: $\sup_{\zeta\ge0}(a\zeta-b\zeta^2)=\frac{a^2}{4b}$ if $b>0$, and on a bounded range $\sup_{0\le\zeta\le\zeta_m}(a\zeta-b\zeta^2)\le\min(\frac{a^2}{4b},a\zeta_m)$
if $b>0$ and $\le a\zeta_m-b\zeta_m^2$ if $b\le0$. Let $N\ge10$, $0\le r\le r_m\le\frac12$, $\zeta=r\sqrt N\le\zeta_m$, let $8\le M_0\le M=N-2$, and
$\bar h=\frac{3.85+10/M_0}{M_0+3}$. In the notation of Lemma~\ref{fib:lem:coef}, and bounding the powers $\zeta^i$, $i\ge3$, by $\zeta_m^{i-2}\zeta^2$:

\textup{(L)} $D_L\le\bar D_L:=\frac{(\sigma_0\alpha_1)^2}{\beta_L}$ whenever $\beta_L>0$, where $\sigma_0=s(\omega_t+\bar h)_+\le(s_t+\bar h)_+$,
$\beta_L=(N-1)(1-\bar e)-2\sigma_0\alpha_2-\frac{2\alpha_1^2+4\alpha_1\alpha_2\zeta_m+2\alpha_2^2\zeta_m^2}{N+1}$ and $\bar e=\frac{\alpha_1\zeta_m+\alpha_2\zeta_m^2}{N+1}$;
\textup{(U)} if $\omega_t\le\omega_U<0$ and the two denominators are positive:
$D_U'\le\bar D_U:=\frac{4\Ghat^2}{\rho_*^2(N-1-\kappa_A\Ghat/\rho_*)}+g_1\frac4{\rho_*^2\beta_e}$, where $\Ghat=\max(G(\omega_t),G(\omega_t+\bar h))\ge G(\oT_U)$,
$g_1=|G'(\omega_t)|\ge|\sigma_U|$ and $\beta_e=N-1-\frac2{\rho_*}\big(\frac12+\frac{5\zeta_m}{6(1-r_m^2)}+\frac{(N+1)\zeta_m^2}{12N(1-r_m^2)^2}+\frac{\zeta_m^3}{12(1-r_m^2)^2}\big)$.

\end{lemma}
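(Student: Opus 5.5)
The first sentence of the lemma is elementary and I would dispose of it directly. For $b>0$ the quadratic $a\zeta-b\zeta^2$ is maximized at $\zeta=a/(2b)$ with value $a^2/(4b)$. On $[0,\zeta_m]$ its maximum is at most this unconstrained value, and also at most $a\zeta_m$, because $-b\zeta^2\le0$ and $a\zeta\le a\zeta_m$. For $b\le0$ the function is nondecreasing on $[0,\zeta_m]$ (as $a\ge0$), so the maximum is at $\zeta_m$. The two remainder bounds are then obtained by substituting the linearised coefficients of Lemma~\ref{fib:lem:coef} into the definitions of $D_L$ (Proposition~\ref{fib:prop:L}) and $D_U'$ (Lemma~\ref{fib:lem:Ubonus}). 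In each case I would arrange the result as a quantity $a\zeta-b\zeta^2$, so that the first sentence applies. Throughout, each $\zeta^i$ with $i\ge3$ is replaced by $\zeta_m^{i-2}\zeta^2$.

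\emph{Case (L).} By Lemma~\ref{fib:lem:k}(a),(c), $\oT_L=\omega_t+k_L/(M+3)\le\omega_t+\bar h$, since $k_L\le3.85+10/M\le3.85+10/M_0$ and $M+3\ge M_0+3$. Set $e':=(N+1)e_L\le\alpha_1\zeta+\alpha_2\zeta^2$, which is Lemma~\ref{fib:lem:coef}(a). It follows that $e_L\le\bar e\le1$, where $\bar e$ is $\zeta_m$ substituted; I would read this off from $\beta_L>0$, or else treat it separately. The key point is the estimate $s(\oT_L+e_L)_+\le\sigma_0+e_L$. This holds because $s$ is monotone with slope less than $1$ (Lemma~\ref{fib:lem:G}(a)), and $e_L$ is small, of order $1/N$.

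The argument of $(\cdot)_+$ in $D_L$ is then at most
\[
2e'(\sigma_0+e_L)-N\Delta(1-\bar e).
\]
Now use $N\Delta\ge(N-1)\zeta^2$ from Lemma~\ref{fib:lem:coef}(e), and expand $2e'e_L=2e'^2/(N+1)$. Together with $(\alpha_1\zeta+\alpha_2\zeta^2)^2\le(\alpha_1^2+2\alpha_1\alpha_2\zeta_m+\alpha_2^2\zeta_m^2)\zeta^2$, the quantity becomes at most $2\sigma_0\alpha_1\zeta-\beta_L\zeta^2$. The first sentence then gives the bound $(2\sigma_0\alpha_1)^2/(4\beta_L)=\bar D_L$.

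\emph{Case (U).} Here $\omega_t\le\omega_U<0$. I would bound the two summands of $D_U'$ separately.

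For the first summand, $G$ is decreasing on $(-\infty,0]$. Since $\oT_U\in[\omega_t,\omega_t+\bar h]$, convexity of $G$ gives $G(\oT_U)\le\Ghat$. Lemma~\ref{fib:lem:coef}(c) and (e) then bound the first summand by
\[
\frac{(4\zeta+\kappa_A\zeta^2)\Ghat}{\rho_*}-(N-1)\zeta^2 ,
\]
which is $a\zeta-b\zeta^2$ with $a=4\Ghat/\rho_*$ and $b=N-1-\kappa_A\Ghat/\rho_*$. This gives $4\Ghat^2/(\rho_*^2 b)$.

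For the second summand, $|G'|$ decreases on $(-\infty,0]$ (Lemma~\ref{fib:lem:G}(b)), so $\omega_U\ge\omega_t$ gives $|\sigma_U|\le g_1$. By Lemma~\ref{fib:lem:coef}(b),(d), $(2/\rho_U)Ne_U\le(2/\rho_*)E(\zeta)$. Subtracting $(N-1)\zeta^2$ and using $E(\zeta)\le2\zeta+c\,\zeta^2$ with the $\zeta_m$-substitution yields $a\zeta-b\zeta^2$ with $a=4/\rho_*$ and $b=\beta_e$. This gives $g_1\cdot16/(4\rho_*^2\beta_e)$, matching the stated bound.

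The main obstacle I expect is in case (L): justifying $s(\oT_L+e_L)\le\sigma_0+e_L$ cleanly, together with the sign handling when $\oT_L+e_L<0$ (in that regime $D_L$ is simply bounded by $0$ or absorbed). I would also need to check that the cross term $2e'e_L$ was not double-counted relative to the $\bar e$ factor that multiplies $N\Delta$.
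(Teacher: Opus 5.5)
Your proposal is correct and follows the paper's proof essentially step for step. It uses the same enclosures $\oT_L\le\omega_t+\bar h$ and $\oT_U\in[\omega_t,\omega_t+\bar h]$ from Lemma~\ref{fib:lem:k}, and the same insertion of Lemma~\ref{fib:lem:coef} into the two remainders to reach the form $a\zeta-b\zeta^2$. It also uses, as the paper does, the convexity of $G$ and the monotonicity of $|G'|$ on $(-\infty,0]$.

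The obstacle you anticipate is not one. Lemma~\ref{fib:lem:G}(a) gives $s(\omega+h)_+\le s(\omega)_++h$ for every $h\ge0$, whatever the size of $h$ or the sign of $\omega$. The same slope bound gives the side claim $\sigma_0\le(s_t+\bar h)_+$, which your proposal does not address. Finally, $\bar e<1$ does follow from $\beta_L>0$, because the other terms subtracted in $\beta_L$ are nonnegative.
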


\begin{proof}
The first claims are elementary. (L) By Lemma~\ref{fib:lem:k}, $\oT_L\le\omega_t+\bar h$, and $s(\omega+h)_+\le s(\omega)_++h$ for $h\ge0$
(Lemma~\ref{fib:lem:G}(a)); so $s(\oT_L+e_L)_+\le\sigma_0+e_L$. Insert Lemma~\ref{fib:lem:coef}(a), (e) and $e_L\le\bar e$ into the definition of $D_L$
(Proposition~\ref{fib:prop:L}) and expand: $D_L\le(2\sigma_0\alpha_1\zeta-\beta_L\zeta^2)_+$. (U) Insert Lemma~\ref{fib:lem:coef}(b) to (e) into the two brackets of
Lemma~\ref{fib:lem:Ubonus}(b). By Lemma~\ref{fib:lem:k}, $\oT_U\in[\omega_t,\omega_t+\bar h]$, and $G$ is convex, so $G(\oT_U)\le\Ghat$; and $|G'|$ decreases on
$(-\infty,0]$ (Lemma~\ref{fib:lem:G}(b)), so $|\sigma_U|=|G'(\omega_U)|\le|G'(\omega_t)|$.
\end{proof}

\begin{lemma}[large $\zeta$]\label{fib:lem:utail}
Let $r\in[0,\frac12]$, $N\ge10$ and $\zeta=r\sqrt N\ge6$. (a) For every $t>0$, $\omega_U\ge\omega_t+0.19\zeta^2$. In particular, if $t\ge t_a\ge0.1548$,
then $\omega_U\ge\frac52\log t_a+0.19\zeta^2>0$, so the range (U) gives at least $2$ (Lemma~\ref{fib:lem:Ubonus}(a)). (b) If $t\le1.605$, then $N\Delta(1-e_L)\ge128\zeta^2>5.12\zeta\ge2(N+1)e_L\,s(\oT_L+e_L)_+$, so $D_L=0$.
\end{lemma}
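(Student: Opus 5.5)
The plan for part (a) is to expand the definition of $\omega_U$ from Proposition~\ref{fib:prop:pieces}(U) and bound each piece below by a multiple of $\zeta^2$. By Lemma~\ref{fib:lem:k}(a),(c), $\oT_U=\omega_t+k_U(M)/(M+3)\ge\omega_t$, since $N\ge10$ gives $M\ge8$ and $k_U\ge3.55>0$. It then remains to show
\[
\tfrac{\rho_U}2\Delta-e_U\ \ge\ 0.19\,\zeta^2 .
\]
I would use only three facts: $r=\zeta/\sqrt N\le\frac12$, so $\zeta^2\le N/4$ and hence $N\ge4\zeta^2\ge144$; $\zeta\ge6$; and $\rho=N/(N+1)\ge\frac{10}{11}$, which is in fact much closer to $1$ because $N\ge144$.

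\emph{The bonus term.} Lemma~\ref{fib:lem:mono}(b) with $N_a=N$ gives $\Delta\ge\zeta^2(1-\frac1N)$. It also gives
\[
\rho_U\ge(1-r^2)\rho-\frac{2\zeta/(N+1)+\zeta^3/(3(N-\zeta^2))}{\sqrt{1+\zeta^2(1-1/N)}} .
\]
The fraction is at most $\frac{2}{(N+1)\sqrt c}+\frac{r^2}{3(1-r^2)\sqrt c}$, as in Lemma~\ref{fib:lem:coef}(b). With $r\le\frac12$ and $N\ge144$ this leaves $\rho_U$ above a numerical constant near $0.75-0.01-0.11\approx0.63$. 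Hence $\frac{\rho_U}2\Delta\ge c_1\zeta^2$ with $c_1\approx0.31$.

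\emph{The error term.} Lemma~\ref{fib:lem:coef}(d) gives $Ne_U\le E(\zeta)$. I would bound each term of $E(\zeta)$ by a multiple of $\zeta^2$: $2\zeta\le\zeta^2/3$ because $\zeta\ge6$, and the cubic and higher terms through $\zeta^{i}\le\zeta^2(\zeta^2)^{(i-2)/2}\le\zeta^2(N/4)^{(i-2)/2}$. Dividing by $N\ge144$ (or by $N\ge4\zeta^2$ where that is sharper) should give $e_U\le c_2\zeta^2$ with $c_2$ comfortably below $c_1-0.19$.

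The main obstacle is this bookkeeping with explicit constants. The $\zeta^3$ and $\zeta^4$ terms of $E$, divided by $N$, are only bounded by $r^2\zeta$ and $r^2\zeta^2$ times constants. With $r=\frac12$ these eat a real fraction of the $0.31$, so the constants $\frac5{6(1-r_m^2)}$ and $\frac1{12(1-r_m^2)^2}$ must be tracked carefully to keep the margin positive. The worst case should be $r=\frac12$ with the smallest admissible $N$; I would check that corner explicitly.

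The second sentence of (a) is then immediate. $\omega_t=\frac52\log t$ is increasing in $t$, and $\frac52\log0.1548\approx-4.66>-0.19\cdot36$, so $\omega_U>0$ whenever $t\ge t_a\ge0.1548$, and Lemma~\ref{fib:lem:Ubonus}(a) applies.

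For (b) I follow the same pattern with Lemma~\ref{fib:lem:coef}(a),(e).

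\emph{The bonus.} $N\Delta\ge(N-1)\zeta^2$ with $N-1\ge4\zeta^2-1\ge143$. From $(N+1)e_L\le\alpha_1\zeta+\alpha_2\zeta^2$, where $\alpha_1\approx2/\sqrt{1-r^2}$ plus a small term and $\alpha_2=O(1/N)$, I get $e_L=O(\zeta/N)\le O(1/\zeta)$, which is small since $N\ge4\zeta^2$ and $\zeta\ge6$. So $(N-1)(1-e_L)\ge128$.

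\emph{The other side.} $(N+1)e_L\le\alpha_1\zeta+\alpha_2\zeta^2\le(\alpha_1+\alpha_2\sqrt N/2)\zeta$, which is below roughly $2.5\zeta$. For the root map I use $s(\omega)\le\omega$ on $\omega\ge0$ (Lemma~\ref{fib:lem:G}(a)). Lemma~\ref{fib:lem:k} gives $\oT_L\le\omega_t+\bar h$, and $t\le1.605$ gives $\omega_t\le\frac52\log1.605\approx1.18$; adding $\bar h$ and $e_L$ keeps $s(\oT_L+e_L)_+$ just around $1$. The product $2(N+1)e_L\,s(\cdot)_+$ is then at most $5.12\zeta$. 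The step to watch is keeping $s(\oT_L+e_L)_+$ close enough to $1$: bounding it through $s\le\omega$ alone lands near $1.3$ and overshoots $5.12\zeta$. If so, I would use the sharper exact value $s(1.18)\approx0.76$ from $e^s(1+s)=t^{5/2}$, which leaves room. Finally $128\zeta^2>5.12\zeta$ because $\zeta\ge6$, so the bracket in the definition of $D_L$ is negative and $D_L=0$.
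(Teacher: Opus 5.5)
There is a genuine gap in part (a), in the step you call bookkeeping. Besides the cubic and quartic terms you discuss, the bound $Ne_U\le E(\zeta)$ of Lemma~\ref{fib:lem:coef}(d) contains the quintic term $\frac{\zeta^5}{12(1-r_m^2)^2}$. It comes from the linearisation $\sqrt{1+\bar\Delta}\le1+\frac{\bar\Delta}2$ in the proof of that lemma. That is harmless where the paper uses the lemma ($\zeta\le\zeta_m\le6$), but useless here. With $r_m=r$, dividing this term by $N=\zeta^2/r^2$ gives $\frac{r^2\zeta^3}{12(1-r^2)^2}$, which is cubic in $\zeta$. So no bound $e_U\le c_2\zeta^2$ with a constant $c_2$ can come out of $E(\zeta)/N$ for all $\zeta\ge6$; your device $\zeta^5\le\zeta^2(N/4)^{3/2}$ leaves $\zeta^2\sqrt N/8$ after division by $N$. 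It already fails at the corner you intended to check. At $r=\frac12$, $\zeta=6$, $N=144$ the quintic term alone contributes $8$, and $E(6)/144\approx11.2$. Your lower bound for $\frac{\rho_U}2\Delta$ there is only about $11.1$, so the resulting estimate of $\omega_U-\omega_t$ is negative, let alone at least $0.19\zeta^2=6.84$.

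The repair, which is what the paper does, is to bound $e_U$ from its definition in Proposition~\ref{fib:prop:pieces}(U), keeping $\sqrt\gamma$ as a square root so that it grows only linearly in $\zeta$. With $\hat a=\artanh r-r\le\frac{r^3}{3(1-r^2)}$ and $\tilde y=r+\frac{\hat a}2(N+1)$, one has $\varepsilon'\sqrt\gamma=\frac{2\tilde y}{N+1}\sqrt{N+r^2(N-1)^2}\le\frac1{12}+\frac12+\frac\zeta9+\frac{\zeta^2}9$. Combine this with $\frac{r^2\rho}2\le\frac18$, $\frac{\varepsilon'}{2\sqrt\gamma}\le0.049$, $(N+1)c_r\le\frac{(N+1)r^4}{12(1-r^2)^2}\le0.0373\zeta^2$ and $-c_0'\le0$. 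This gives $e_U\le0.47+\frac\zeta{18}+0.093\zeta^2\le0.116\zeta^2$ for $\zeta\ge6$; Lemma~\ref{fib:lem:mono}(b), used without the linearisation, leads to essentially the same estimate.

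With that bound, even your coarser $\rho_U\ge0.619$ closes the argument, but only barely ($0.3076-0.116=0.1916>0.19$). The paper instead bounds $\frac{\varepsilon'}{\sqrt\gamma}\le\frac{2(1-r^2)N}{N^2-1}+\frac{r^2N}{3(N-1)}\le0.098$ directly and gets $\rho_U\ge0.64$.

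Your part (b) and the last claim of (a) are sound and essentially the paper's argument. Your route through Lemma~\ref{fib:lem:coef}(a) gives about $(N+1)e_L\le2.44\zeta$; the paper's estimate from the components of $e_L$ gives $2.46\zeta$. One slip there: the root map is evaluated at $\oT_L+e_L\le1.32$, not at $\omega_t\le1.18$, and in fact $s(1.18)\approx0.67$. What you need is $s(1.32)\approx0.757<0.76$, which suffices; the paper even uses the cruder $s(1.32)\le s(1.75)\le1.04$.
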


\begin{proof}
Here $N=\zeta^2/r^2\ge4\zeta^2\ge144$, and $\Delta\ge\frac{r^2(N-1)}{1-r^2}\ge\zeta^2(1-\frac1N)$. Write $\hat a=a-r=\artanh r-r\le\frac{r^3}{3(1-r^2)}$, so that
$\tilde y=r+\frac{\hat a}2(N+1)$. (a) $\oT_U\ge\frac52\log t$ ($k_U\ge3.55>0$, Lemma~\ref{fib:lem:k}). With
$\varepsilon'=2\tilde y\sqrt{(1-r^2)N}/(N+1)$ and $\sqrt\gamma\ge r(N-1)/\sqrt{(1-r^2)N}$:
$\frac{\varepsilon'}{\sqrt\gamma}\le\frac{2(1-r^2)N}{N^2-1}+\frac{r^2N}{3(N-1)}\le0.098$, so $\rho_U\ge0.75\cdot\frac{144}{145}-0.098\ge0.64$. For $e_U$ (drop
$-c_0'\le0$): $\frac{r^2\rho}2\le\frac18$; $\varepsilon'\sqrt\gamma=\frac{2\tilde y}{N+1}\sqrt{N+r^2(N-1)^2}\le\frac1{12}+\frac12+\frac\zeta9+\frac{\zeta^2}9$;
$\frac{\varepsilon'}{2\sqrt\gamma}\le0.049$; and $(N+1)c_r\le\frac{(N+1)r^4}{12(1-r^2)^2}\le0.0373\zeta^2$ (as $c_r=\int_0^r\frac{\hat a(x)}{1-x^2}dx$ with
$\hat a(x)=\artanh x-x$). So $e_U\le0.47+\frac\zeta{18}+0.093\zeta^2\le0.116\zeta^2$ for $\zeta\ge6$, and
$\omega_U\ge\omega_t+\frac{0.64}2(1-\frac1{144})\zeta^2-0.116\zeta^2\ge\omega_t+0.19\zeta^2$; for $t\ge t_a$ this is at least $\frac52\log t_a+0.19\zeta^2$, and
$0.19\cdot36=6.84>4.67>-\frac52\log0.1548$.
(b) $(N+1)\varepsilon=\frac{2\zeta}{\sqrt{1-r^2}}\le2.31\zeta$, $(N+1)c_0=\frac{2r^2}{1-r^2}\le\frac23$ and $(N+1)E_4\le0.18$ (both quartic terms, with
$Z_L\le\frac{\sqrt N+1}{N+1}$, $Z_R\le N^{-1/2}$, $r+Z_R\le\frac7{12}$), so $(N+1)e_L\le2.31\zeta+0.85\le2.46\zeta$ and $e_L\le\frac{2.46\zeta}{4\zeta^2+1}\le0.103$.
Then $\oT_L+e_L\le\frac52\log1.605+\frac{3.93}{145}+0.103\le1.32$ and $s(1.32)\le s(1.75)\le1.04$, so $2(N+1)e_Ls(\cdot)_+\le5.12\zeta$, while
$N\Delta(1-e_L)\ge143\zeta^2\cdot0.897\ge128\zeta^2$.
\end{proof}

\begin{lemma}[the bonus never lowers $\omega_U$ below $\omega_t$]\label{fib:lem:lamU}
Let $(r_m,N_{\min})$ be one of $(\frac14,10)$, $(0.28,10)$, $(0.3104,10)$, $(0.3444,10)$, $(0.3847,12)$, $(0.4,12)$, $(\frac14,13.4)$, $(0.28,20)$. For
every real $N\ge N_{\min}$ (with $t>0$ and $N\ge10$) and every $r\le r_m$: $\omega_U\ge\omega_t$. In particular the range (U) gives at least $2$
(in the units $\mu(\Ff{}-\psi)$) whenever $t\ge1$, and $g_1=|G'(\omega_t)|$ bounds $|\sigma_U|$ whenever $\omega_U<0$.
\end{lemma}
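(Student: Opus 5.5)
The plan is to compare $\omega_U$ and $\omega_t$ through the decomposition already recorded in Proposition~\ref{fib:prop:pieces} and Lemma~\ref{fib:lem:k}. By definition,
\[
\omega_U=\oT_U+\tfrac{\rho_U}2\Delta-e_U,\qquad \oT_U=\omega_t+\frac{k_U(M)}{M+3}\ \ge\ \omega_t+\frac{3.55}{M+3}.
\]
So it suffices to show
\[
\frac{3.55}{N+1}+\tfrac{\rho_U}2\Delta-e_U\ \ge\ 0 .
\]
Lemma~\ref{fib:lem:k}(c) needs $M\ge8$, that is $N\ge10$, which is assumed. Multiplying by $N$ and writing $\zeta=r\sqrt N$, the target becomes
\[
\tfrac{3.55N}{N+1}+\tfrac{\rho_U}2N\Delta\ \ge\ Ne_U .
\]
I would bound the three pieces with the linearised estimates of Lemma~\ref{fib:lem:coef}:
\[
\rho_U\ge\rho_*,\qquad N\Delta\ge(N-1)\zeta^2,\qquad Ne_U\le E(\zeta)=2\zeta+\tfrac{\zeta^2}2+O(\zeta^3).
\]
These coefficients are monotone in $N$ in the favourable direction, and $\tfrac{3.55N}{N+1}$ increases in $N$. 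It is therefore enough to check everything at $N=N_{\min}$ with $r_m$ in place of $r$ in the coefficients.

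This reduces the claim to a one-variable polynomial inequality in $\zeta\in[0,\zeta_m]$, where $\zeta_m=r_m\sqrt N$:
\[
3.55\tfrac{N}{N+1}+\tfrac{\rho_*}{2}(N-1)\zeta^2-E(\zeta)\ \ge\ 0 .
\]
Here lies a subtlety: $\zeta_m$ grows with $N$. However, Lemma~\ref{fib:lem:utail}(a) already gives $\omega_U\ge\omega_t+0.19\zeta^2$ for $\zeta\ge6$, so only $\zeta\in[0,6]$ remains, a bounded range.

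On that range the function has the following structure.
\begin{itemize}
\item Near $\zeta=0$ the constant $3.55N/(N+1)\ge3.2$ dominates the linear loss $2\zeta$.
\item For moderate $\zeta$ the quadratic gain $\tfrac{\rho_*}2(N-1)\zeta^2$, with $\rho_*\approx1-r_m^2$ and $N-1\ge9$, beats $\zeta^2/2$ and the cubic and quartic terms.
\end{itemize}
The higher powers carry factors $1/(1-r_m^2)$ and the small coefficient $\zeta^4/12$. I expect the minimum to be positive, with the tightest cases at the largest $r_m$ and smallest $N_{\min}$. That the list has $N_{\min}=12$ exactly when $r_m\ge0.3847$ suggests the margin is thin there.

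Each of the eight pairs is then a finite check: bound the polynomial from below on a grid of $\zeta$ with a derivative bound, or verify it as a quadratic-dominated expression using AM–GM as in Lemma~\ref{fib:lem:amgm}. The two entries with $N_{\min}=13.4$ and $20$ at small $r_m$ are just stronger instances. The main obstacle is that the linearised $\rho_*$ and $E(\zeta)$ lose too much near $\zeta\approx1$. If that happens, I would return to the sharper, unlinearised bounds of Lemma~\ref{fib:lem:mono}(b) with $N_a=N_{\min}$ and keep $\zeta$ exact there.

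For the consequences, take $t\ge1$, so $\omega_t\ge0$. Then $\omega_U\ge\omega_t\ge0$, and Lemma~\ref{fib:lem:Ubonus}(a) gives at least $2$. If instead $\omega_U<0$, then $\omega_t\le\omega_U<0$. Since $|G'|$ decreases on $(-\infty,0]$ (Lemma~\ref{fib:lem:G}(b)), this gives $|\sigma_U|=|G'(\omega_U)|\le|G'(\omega_t)|=g_1$.
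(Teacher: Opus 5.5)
Your opening reduction and your treatment of the two consequences match the paper. These are $\omega_U=\oT_U+\frac{\rho_U}2\Delta-e_U$, $\oT_U\ge\omega_t+\frac{3.55}{N+1}$, the estimates of Lemma~\ref{fib:lem:coef}(b), (d), (e), and Lemma~\ref{fib:lem:utail}(a) for $\zeta\ge6$.

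\textbf{The gap.} It is the step ``the coefficients are monotone in $N$, so it is enough to check at $N=N_{\min}$''. That monotonicity holds only pointwise in $\zeta$. The admissible range $\zeta\le\min(6,r_m\sqrt N)$ grows with $N$. To cover every $N\ge N_{\min}$ from a single evaluation, you would have to verify the $N_{\min}$ inequality on all of $[0,6]$, and there it is false.

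For $(r_m,N_{\min})=(\frac14,10)$ one has $\rho_*(10)\approx0.637$. At $\zeta=2$ your left side is $3.55\cdot\frac{10}{11}+\frac{\rho_*}2\cdot9\cdot4\approx14.70$, while $E(2)\approx17.81$. With $N-1=9$ frozen, the cubic and quintic terms of $E$ win; the inequality already fails for $\zeta$ slightly above $1.7$. Your fallback to Lemma~\ref{fib:lem:mono}(b) with $N_a=N_{\min}$ does not rescue this, because those bounds require $\zeta^2<N_a$, that is, $\zeta<\sqrt{10}$ here.

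\textbf{What is needed.} You must keep the coupling $\zeta\le r_m\sqrt N$ in the check. The paper bounds $\zeta^i\le\zeta_m^{i-2}\zeta^2$ for $i\ge3$, with $\zeta_m=\min(6,r_m\sqrt N)$. This turns the target into $\frac{3.55N}{N+1}+\bigl(\frac{\rho_*(N-1)}2-E_2\bigr)\zeta^2-2\zeta\ge0$, where $E_2=\frac12+\frac{5\zeta_m}{6(1-r_m^2)}+\frac{(N+1)\zeta_m^2}{12N(1-r_m^2)^2}+\frac{\zeta_m^3}{12(1-r_m^2)^2}$. A quadratic $A\zeta^2-2\zeta+c$ with $A>0$ and $Ac\ge1$ is nonnegative for every $\zeta$. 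Here that means it suffices that $f(N)=\frac{(N-1)\rho_*}2-E_2-\frac{N+1}{3.55N}\ge0$.

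Because $E_2$ grows with $N$ through $\zeta_m$, this is not a single evaluation at $N_{\min}$. The paper proves it in three steps:
\begin{enumerate}
\item $f(N_{\min})>0$;
\item $f'\ge\frac{\rho_*(N_{\min})}2-E_2'>0$ up to $N=(6/r_m)^2$, where $\zeta_m$ reaches $6$, with $E_2'$ bounded termwise;
\item positivity and increase in the constant-$\zeta_m$ regime beyond that point.
\end{enumerate}
These checks are Lemmas~\ref{fib:chk:A4} and~\ref{fib:chk:A4p}.

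An alternative that would also work is to evaluate, for each $\zeta\in[0,6]$, at the smallest compatible $N=\max(N_{\min},\zeta^2/r_m^2)$. This is what Lemma~\ref{fib:chk:A5} does in its second range. Either way, the dependence of the admissible $\zeta$-range on $N$ must enter the finite check.
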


\begin{proof}
For $\zeta\ge6$ this is Lemma~\ref{fib:lem:utail}(a). For
$\zeta\le\zeta_m=\min(6,r_m\sqrt N)$: $\oT_U\ge\omega_t+\frac{3.55}{N+1}$ (Lemma~\ref{fib:lem:k}), and Lemma~\ref{fib:lem:coef}(b), (d), (e) give
$\omega_U-\omega_t\ge\frac{3.55}{N+1}+q_2\zeta^2-\frac{2\zeta}N$, where $q_2=\frac{\rho_*(N-1)}{2N}-\frac{E_2}N$ and
$E_2=\frac12+\frac{5\zeta_m}{6(1-r_m^2)}+\frac{(N+1)\zeta_m^2}{12N(1-r_m^2)^2}+\frac{\zeta_m^3}{12(1-r_m^2)^2}$. The quadratic in $\zeta$ is nonnegative if
$q_2>0$ and $(N+1)\le3.55N^2q_2$, that is, if
\[
f(N):=\frac{(N-1)\rho_*}2-E_2-\frac{N+1}{3.55N}\ \ge\ 0 .
\]
Lemmas~\ref{fib:chk:A4} and~\ref{fib:chk:A4p} verify $f(N_{\min})>0$, that $f'\ge\frac{\rho_*(N_{\min})}2-E_2'>0$ up to $N=(6/r_m)^2$, where $\zeta_m$ reaches $6$
(the derivative $E_2'$ is bounded termwise), and the regime of constant $\zeta_m=6$ beyond (where $N(N-1)\rho_*/2-NE_2-(N+1)/3.55$ is positive and
increasing from $(6/r_m)^2$ on).
\end{proof}

\paragraph{Large $t$.}
For $0<r_m\le\frac12$ and $\zeta\ge0$, $N\ge10$, let
\[
\bar X(\zeta,N)=\frac{(N+1)\zeta^4}{12N^2(1-r_m^2)^2}+\frac{\zeta\sqrt{1+\zeta^2}}{N+1}\Big(1+\frac{\zeta^2(N+1)}{6N(1-r_m^2)}\Big)-\frac{\rho_*}2\big(1+\zeta^2c\big),
\qquad c=1-\frac1N ,
\]
with $\rho_*=\rho_*(N,r_m)$ of Lemma~\ref{fib:lem:coef}(b).

\begin{lemma}[large $t$]\label{fib:lem:larget}
Let $0<r_m\le\frac12$ and $N_*\ge10$, and suppose that $\bar X(\zeta,N_*)\le\log(2.57\rho_*(N_*))$ for $0\le\zeta\le r_m\sqrt{N_*}$, that
$\bar X(\zeta,\zeta^2/r_m^2)\le\log(2.57\rho_*(\zeta^2/r_m^2))$ for $r_m\sqrt{N_*}\le\zeta\le6$, and that $2a_U(N)1.6^{-3/2}e^{-1/2}\le0.6225$ for $N\ge N_*$.
Then $\Ff(M,j)\ge0>\psi(t)$ for every $j$, every $r\le r_m$ and every fiber with $t\ge1.6$ and $N\ge N_*$.
\end{lemma}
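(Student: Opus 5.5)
The plan is to prove the two inequalities separately. The inequality $\psi(t)<0$ is elementary. For $\Ff\ge0$ we split the fiber according to the sign of $\gamma-w$ in \eqref{fib:eq:Fform}, and in the only nontrivial range we bound the maximum of the binomial factor by the Gaussian upper bound of Proposition~\ref{fib:prop:pieces}(U).

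\emph{Step 1: $\psi(t)<0$.} We have $\psi(1.6)=1.6-\tfrac92\cdot0.36=-0.02$ and $\psi'(t)=1-9(t-1)<0$ for $t\ge1.6$, so $\psi(t)<0$ for every $t\ge1.6$.

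\emph{Step 2: the offsets with $w\le\gamma$.} By Lemma~\ref{fib:lem:T1}, $\Ff$ is the sum of $\frac{\mu^{3/2}\sqrt{1-r^2}B_N(J)}{2\aG(N-1)}(\gamma-w)$ and $Bw$. Both summands are nonnegative when $w\le\gamma$, since $B_N(J)\ge0$ and $B>0$. So only $w>\gamma$ needs work.

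\emph{Step 3: the offsets with $w\ge\gamma$ and $\zeta\ge6$.} By the symmetry of Lemma~\ref{fib:lem:sym} we may take $q\ge\frac12$, so $r\in[0,r_m]$. Proposition~\ref{fib:prop:pieces}(U), which needs $N\ge10$ and $r\le\frac12$, gives $\Ff\ge H_U(w)$ on $w\ge\gamma$. Lemma~\ref{fib:lem:utail}(a) with $t_a=1.6\ge0.1548$ gives $\omega_U>0$. Then Corollary~\ref{fib:lem:min} says $H_U$ increases on $[\gamma,\infty)$, so $H_U\ge H_U(\gamma)=B\gamma>0$.

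\emph{Step 4: the offsets with $w\ge\gamma$ and $\zeta\le6$.} This is the main obstacle.
\begin{itemize}
\item \emph{A sufficient condition.} For $w\ge\gamma$, write $H_U(w)=Bw-P_U(w-\gamma)e^{-\rho_Uw/2}$. The maximum of $(w-\gamma)e^{-\rho_U(w-\gamma)/2}$ is $2/(e\rho_U)$, and $w\ge\gamma$. Hence it suffices that
\[
P_Ue^{-\rho_U\gamma/2}\cdot\frac{2}{e\rho_U}\le B\gamma .
\]
\item \emph{The amplitude.} From Proposition~\ref{fib:prop:pieces}, $P_U=P^{\mathrm T}_Ue^{(N+1)c_r-c_0'+\varepsilon'\sqrt\gamma/2}$. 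From Lemma~\ref{fib:lem:k}(a), $P^{\mathrm T}_U/B=e^{1/2}a_U(N)B^{-5/2}$. Since $B\ge t\ge1.6$, the prefactor is at most $e^{1/2}a_U(N)\,1.6^{-5/2}$; the third hypothesis bounds this product of $a_U(N)$ and the power of $1.6$ by an absolute constant.
\item \emph{The exponent.} Drop $-c_0'\le0$ and $\gamma\ge1$ on the right side. Then bound the remaining exponent with the estimates in the proofs of Lemmas~\ref{fib:lem:mono} and~\ref{fib:lem:coef}:
\begin{itemize}
\item $(N+1)c_r\le\frac{(N+1)r^4}{12(1-r^2)^2}$, and in the variable $\zeta$ this is the quartic term of $\bar X$;
\item $\varepsilon'\sqrt\gamma/2$ is bounded by the middle term of $\bar X$, using $\varepsilon'\le\frac{2\zeta}{N+1}+\frac{\zeta^3}{3(N-\zeta^2)}$ and $\sqrt\gamma\le\sqrt{1+\zeta^2}$ up to the stated factor;
\item $-\rho_U\gamma/2\le-\frac{\rho_*}2(1+\zeta^2c)$, using $\rho_U\ge\rho_*$ and $\gamma\ge1+c\zeta^2$.
\end{itemize}
So the exponent is at most $\bar X(\zeta,N)$.
\item \emph{Combining.} The condition becomes $\bar X(\zeta,N)\le\log(K\rho_*)$ for an explicit absolute constant $K$, which the constants $2.57$ and $0.6225$ are designed to supply.
\item \emph{Reduction to the hypotheses.} It then remains to pass from all $N\ge N_*$ to the two one-parameter hypotheses.
\begin{itemize}
\item For fixed $\zeta$, $\bar X(\zeta,\cdot)$ should be nonincreasing in $N$, since its coefficients decrease termwise as in Lemma~\ref{fib:lem:coef}. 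Also $\rho_*$ is nondecreasing in $N$.
\item Hence the worst $N$ is the smallest admissible one, $\max(N_*,\zeta^2/r_m^2)$, because $\zeta\le r_m\sqrt N$.
\item This gives exactly the two stated conditions: one at $N=N_*$ for $\zeta\le r_m\sqrt{N_*}$, and one along $N=\zeta^2/r_m^2$ for $r_m\sqrt{N_*}\le\zeta\le6$.
\end{itemize}
\end{itemize}

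The delicate points are the monotonicity in $N$ of the term $\zeta\sqrt{1+\zeta^2}(\dots)/(N+1)$, and matching the numerical constant $2/e$ against $2.57$ and $0.6225$. If the bookkeeping does not close, I would first try keeping $\gamma$ rather than $1$ on the right side of the sufficient condition. I would also use $B\ge t+2/\mu$ instead of $B\ge1.6$, to gain the missing slack.
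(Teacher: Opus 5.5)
Your outline is the paper's own: the split by the sign of $\gamma-w$ in \eqref{fib:eq:Fform}, the bound $(w-\gamma)e^{-\rho_Uw/2}\le\frac{2}{e\rho_U}e^{-\rho_U\gamma/2}$, Lemma~\ref{fib:lem:utail}(a) for $\zeta\ge6$, and the reduction to $N=\max(N_*,\zeta^2/r_m^2)$ by monotonicity in $N$. Two of the points you flag as delicate are not. The middle term of $\bar X$ equals $\zeta\sqrt{1+\zeta^2}\bigl(\frac1{N+1}+\frac{\zeta^2}{6N(1-r_m^2)}\bigr)$, which is plainly decreasing in $N$. The constants also close without extra slack: your condition reduces to $\frac{0.6225}{1.6\,\rho_U}e^{X}\le1$, and $1.6/0.6225>2.57$. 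The step that fails as written is the bound of $\varepsilon'\sqrt\gamma/2$ by that middle term.

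The estimate $\varepsilon'\le\frac{2\zeta}{N+1}+\frac{\zeta^3}{3(N-\zeta^2)}$ from the proof of Lemma~\ref{fib:lem:mono}(b) comes from $s_N\le\frac{\sqrt N}2$. It has therefore already discarded the factor $\sqrt{1-r^2}$ in $s_N=\frac12\sqrt{(1-r^2)N}$. To reach the middle term you would then need $\gamma\le1+\zeta^2$, and this is false. Indeed $\gamma=1+\frac{\zeta^2(N^2-N+1)}{N^2(1-r^2)}$ exceeds $1+\zeta^2$ as soon as $\zeta^2>1-\frac1N$, that is, on most of $[0,6]$. With the correct bound $\gamma\le1+\frac{\zeta^2}{1-r^2}$, your two separate estimates give only the middle term with $\sqrt{1+\zeta^2/(1-r^2)}$ in place of $\sqrt{1+\zeta^2}$. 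So $X\le\bar X(\zeta,N)$ does not follow, and the hypotheses, which are stated for this particular $\bar X$, cannot be invoked.

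What is missing is that the factors $\sqrt{1-r^2}$ cancel exactly in the product. Since $\varepsilon'=\frac{4\tilde ys_N}{N+1}=\frac{2\tilde y\sqrt{(1-r^2)N}}{N+1}$ and $\sqrt\gamma=\frac{R}{\sqrt{(1-r^2)N}}$ with $R=\sqrt{N+r^2(N-1)^2}$, you get $\frac{\varepsilon'\sqrt\gamma}2=\frac{\tilde yR}{N+1}$. Now $R\le\sqrt N\sqrt{1+\zeta^2}$. Lemma~\ref{fib:lem:gauss}(d) gives $\tilde y=r+\frac{N+1}2(\artanh r-r)\le\frac{\zeta}{\sqrt N}\bigl(1+\frac{\zeta^2(N+1)}{6N(1-r_m^2)}\bigr)$. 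Together these give exactly the middle term of $\bar X$. This is how the paper argues, and with this step in place the rest of your proof goes through.
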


\begin{proof}
$\psi(t)<0$ for $t\ge1.6$. On the ranges (L) and (M) of Proposition~\ref{fib:prop:pieces} the kernel term of \eqref{fib:eq:Fform} is nonnegative
(on (L), $w\le1\le\gamma$), so $\Ff{}\ge Bw\ge0$. On (U):
$H_U(w)\ge B\gamma-P_U\max_{w\ge\gamma}(w-\gamma)e^{-\rho_Uw/2}=B\gamma-\frac{2P_U}{\rho_U}e^{-1-\rho_U\gamma/2}$. With $P^{\mathrm T}_U=e^{1/2}B^{-3/2}a_U(N)$
(Lemma~\ref{fib:lem:k}) and $B\ge1.6$,
\[
\frac{2P_U}{\rho_U}e^{-1-\rho_U\gamma/2}\le\frac{2a_U(N)1.6^{-3/2}e^{-1/2}}{\rho_U}e^{X}\le\frac{0.6225}{\rho_U}e^{X},\qquad
X=(N+1)c_r+\frac{\varepsilon'\sqrt\gamma}2-\frac{\rho_U\gamma}2
\]
(drop $-c_0'\le0$). This is at most $1.6\le B\gamma$ as soon as $X\le\log(2.57\rho_U)$, since $1.6/0.6225>2.57$. In the variable $\zeta$:
$(N+1)c_r\le\frac{(N+1)\zeta^4}{12N^2(1-r_m^2)^2}$; $\frac{\varepsilon'\sqrt\gamma}2=\frac{\tilde yR}{N+1}$ with $R=\sqrt{N+r^2(N-1)^2}\le\sqrt N\sqrt{1+\zeta^2}$ and
$\tilde y\le\frac\zeta{\sqrt N}(1+\frac{r^2(N+1)}{6(1-r^2)})$, so it is at most $\frac{\zeta\sqrt{1+\zeta^2}}{N+1}(1+\frac{\zeta^2(N+1)}{6N(1-r_m^2)})$; and
$\frac{\rho_U\gamma}2\ge\frac{\rho_*}2(1+\zeta^2c)$, as $\Delta\ge\zeta^2c$. So $X\le\bar X(\zeta,N)$, and $\log(2.57\rho_U)\ge\log(2.57\rho_*)$. Each term of
$\bar X(\zeta,N)$ is nonincreasing in $N$ and $\log(2.57\rho_*)$ is nondecreasing; a fiber with $r\le r_m$ has $N\ge\max(N_*,\zeta^2/r_m^2)$; so the
hypotheses give $X\le\log(2.57\rho_U)$ for $\zeta\le6$. For $\zeta\ge6$, $\omega_U>0$ by Lemma~\ref{fib:lem:utail}(a), and then $\min_{w\ge\gamma}H_U=B\gamma\ge0$.
\end{proof}

\paragraph{The fiber inequality on the central range.}
\begin{theorem}[the fiber inequality on the central range]\label{fib:thm:env}
Let $|2q-1|\le\frac14$. For every real $\mu\ge19$, every integer $M$ with $t=M/\mu\ge0.6$ and every integer $j$: $\Ff(M,j)\ge\psi(t)$.
\end{theorem}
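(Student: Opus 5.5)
By the symmetry lemma (Lemma~\ref{fib:lem:sym}) it suffices to treat $q\in[\frac12,\frac58]$, so $r=2q-1\in[0,\frac14]$. The proof splits into three regimes of the fiber.

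\emph{Small fibers ($N<10$).} Here $\mu\ge19$ and $t\ge0.6$ force $M\ge12$, so $N\ge14$ and this regime is empty. It is vacuous for every admissible $\mu$, which is why the hypothesis is $\mu\ge19$ and not a weaker one. All fibers therefore have $N\ge14$, and the machinery of Section~\ref{fib:sec:fiber}, stated for $N\ge10$, applies. I also record $M\ge8$, which Proposition~\ref{fib:prop:template} needs.

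\emph{Large $t$ ($t\ge1.6$).} I invoke Lemma~\ref{fib:lem:larget} with $r_m=\frac14$ and $N_*=14$, or $N_*=10$ if the check there is easier. Its hypotheses are finite numerical inequalities in the single variable $\zeta$ over a bounded range, together with a monotone bound on $a_U(N)$ coming from Lemma~\ref{fib:lem:k}(b). I would treat these as interval checks of the kind collected in Appendix~\ref{apx:fiber}. The lemma then gives $\Ff\ge0>\psi(t)$.

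\emph{Middle range ($0.6\le t\le1.6$).} I multiply by $\mu$ and prove $\mu(\Ff-\psi(t))\ge0$ range by range in $w$, using Proposition~\ref{fib:prop:pieces}.

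On (M), Proposition~\ref{fib:prop:L} already gives at least $2$.

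On (L), Proposition~\ref{fib:prop:L} gives at least $\min(2,T(t,M)-D_L)$. Lemma~\ref{fib:lem:amgm}(L), applied with $r_m=\frac14$, $\zeta_m=\frac{\sqrt N}4$ (or $6$ via Lemma~\ref{fib:lem:utail}(b) for $\zeta\ge6$) and $M_0=12$, bounds $D_L\le\bar D_L$.

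On (U), Lemma~\ref{fib:lem:lamU} with $(r_m,N_{\min})=(\frac14,10)$ gives $\omega_U\ge\omega_t$. So for $t\ge1$ the range (U) gives $2$. For $t<1$ with $\omega_U<0$, Lemma~\ref{fib:lem:Ubonus}(b) and Lemma~\ref{fib:lem:amgm}(U) bound the loss by $\bar D_U$. If $\omega_U\ge0$, Lemma~\ref{fib:lem:Ubonus}(a) gives $2$.

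It remains to show
\[
T(t,M)\ \ge\ \max(\bar D_L,\bar D_U)\qquad\text{for } t\in[0.6,1.6],\ M\ge12 .
\]
The leading term is $M\,\Sl(t)$, and Lemma~\ref{fib:lem:S} shows $\Sl\ge0$ on $[0.5996,1.605]$, which is exactly the window of the theorem. Near $t=1$, however, $\Sl$ vanishes to second order. There $T\approx2-6G_0-2\mathsf kG_1$, so $T\to2$ as $t\to1$, while $\bar D_L,\bar D_U$ are $O(1/N)$ because of the bonus denominators $(N-1)$. Away from $t=1$ the term $M\Sl(t)$ grows linearly in $M$.

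I would therefore cover $t\in[0.6,1.6]$ by finitely many $t$-intervals (equivalently $s$-intervals via $t=t(s)$). On each interval I bound $G_0,G_1,G_2$ monotonically from the endpoints, using Lemma~\ref{fib:lem:G}. I then check positivity of $T-\bar D$ at a starting $M$ and monotonicity in $M$ beyond it, using the monotonicity in $N$ recorded in Lemma~\ref{fib:lem:coef}. For the finitely many small $M$ where the monotone argument fails, I check directly.

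\emph{Main obstacle.} The hardest step is the endpoints $t\approx0.6$ and $t\approx1.6$. There $\Sl$ is small but positive, and $G_0$, $G_1$ are of order one, so $2-6G_0-2\mathsf kG_1$ is negative and must be paid by $M\Sl(t)$ with $M\ge0.6\cdot19$. This is where $\mu\ge19$ is really used. I would first try to verify that $11.4\,\Sl(0.6)$ and $\Sl(1.6)\,M$ dominate these losses, refining the subdivision near the ends. If that fails, I would fall back on restricting Lemma~\ref{fib:lem:S} to a slightly tighter window and sharpening $k_X$ via Lemma~\ref{fib:lem:k}(c) at larger $M_0$.
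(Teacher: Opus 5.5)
Your decomposition is the paper's: symmetry to $r\le\frac14$, Lemma~\ref{fib:lem:larget} for $t\ge1.6$, and for $0.6\le t\le1.6$ the bound $\mu(\Ff-\psi)\ge\min(2,T-D_L,T-D_U')$ from Propositions~\ref{fib:prop:pieces}, \ref{fib:prop:L} and Lemmas~\ref{fib:lem:Ubonus}, \ref{fib:lem:amgm}, \ref{fib:lem:lamU}, closed by a finite check in $s$. The gap is that you never use $\mu\ge19$ in the form that matters, namely $M=t\mu\ge19t$ on every fiber, and both of your quantitative steps fail without it. For $t\ge1.6$, the hypotheses of Lemma~\ref{fib:lem:larget} become easier as $N_*$ increases, not as it decreases, and the third one is false at your choices. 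Indeed $a_U(14)=14^{3/2}/(13\sqrt{14.5})\approx1.058$ gives $2a_U(14)\,1.6^{-3/2}e^{-1/2}\approx0.634>0.6225$, and $N=10$ and $N=15$ fail similarly. What you actually have is $N\ge1.6\cdot19+2=32.4$, and $N_*=32.4$ is the value that Lemma~\ref{fib:chk:A5} verifies.

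In the middle range, your target $T(t,M)\ge\max(\bar D_L,\bar D_U)$ for all $t\in[0.6,1.6]$ and $M\ge12$, with $M_0=12$ in Lemma~\ref{fib:lem:amgm}, is false. Take $s=0.3$ (so $t\approx1.25$ and $\bar D_U$ is not involved), $M=12$, $N=14$. The exact slack gives $T(t,12)\approx0.016$, attained at $\mathsf k=3.85+\frac{10}{12}$. On the other side, $\bar D_L\approx0.10$, even with the sharper $\sigma_0=s(\omega_t+\bar h)_+$ in place of $(s_t+\bar h)_+$. Such a fiber needs $\mu\approx9.6$ and never occurs.

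The paper instead anchors each $s$-cell $[s_a,s_b]$ at $M_0=19\,t(s_a)$, the least $M$ that occurs there. This enlarges $M_0\Sl$ and shrinks $\bar h=(3.85+10/M_0)/(M_0+3)$, hence $\sigma_0$ and $\Ghat$. Then $T(t,M)\ge T(t,M_0)$ by $\Sl\ge0$, and $\bar D_L,\bar D_U$ at the actual $N$ are bounded by their values at $N_0=M_0+2$ (Lemma~\ref{fib:chk:A2}).

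That last monotonicity is not supplied by Lemma~\ref{fib:lem:coef}. The quantity $\beta_e$ contains $\frac2{\rho_*}e_5\zeta_m^3$ with $\zeta_m=\min(6,r_m\sqrt N)$, which grows in $N$. It therefore needs its own check: Lemma~\ref{fib:chk:A3}, and Lemma~\ref{fib:chk:A3p} for the first cell, where $N_0=19\,t(-0.528)+2\approx13.39$.

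Your diagnosis of the endpoints is also off. For $t<1$ one has $G_1<0$, so $-2\mathsf kG_1$ is a gain. At $t\approx0.6$ the bracket $2-6G_0-2\mathsf kG_1-\mathsf k^2G_2/(M+3)$ is about $+5$, and the paper drops $M\Sl$ for $s<0$ altogether. The real costs there are $6G_0$ and $\bar D_U$. Near $t=1.6$ the negative bracket is paid by $M\Sl$ with $M\ge19t\approx30$, not $11.4$.
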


\begin{proof}
By Lemma~\ref{fib:lem:sym} we may take $q\ge\frac12$, so $r=2q-1\le\frac14$. Here $M\ge0.6\mu\ge11.4$, so $M\ge12$ and $N\ge14$.

\emph{Fibers with $t\ge1.6$.} Then $N\ge1.6\cdot19+2=32.4$, and Lemma~\ref{fib:lem:larget} with $r_m=\frac14$, $N_*=32.4$ applies by
Lemma~\ref{fib:chk:A5}.

\emph{Fibers with $0.6\le t\le1.6$.} By Propositions~\ref{fib:prop:pieces}, \ref{fib:prop:L} and Lemma~\ref{fib:lem:Ubonus},
$\mu(\Ff{}-\psi)\ge\min(2,\,T(t,M)-D_L,\,T(t,M)-D_U')$, where the last entry is needed only if $\omega_U<0$. Cover $s\in[-0.528,0.67]$ (that is,
$t\in[t(-0.528),t(0.67)]\supset[0.5996,1.605]$) by the nine cells $[s_a,s_b]$ of Lemma~\ref{fib:chk:A2}, and on a cell put $M_0=19\,t(s_a)\le M$ and $N_0=M_0+2$. Then
$T(t,M)\ge T(t,M_0)$, because $\Sl{}\ge0$ (Lemma~\ref{fib:lem:S}), the interval $[3.55,3.85+\frac{10}M]$ shrinks as $M$ grows and the bracket in $T$ is
concave in $\mathsf k$, and $\frac{\mathsf k^2G_2}{M+3}$ decreases in $M$. If $\zeta=r\sqrt N\ge6$, Lemma~\ref{fib:lem:utail} gives $D_L=0$ and $\omega_U>0$. If $\zeta\le6$,
Lemma~\ref{fib:lem:amgm}, applied at the actual $N$ with $\zeta_m=\min(6,r_m\sqrt N)$, and Lemma~\ref{fib:lem:lamU} with $(r_m,N_{\min})=(\frac14,13.4)$ (the
fibers have $N\ge14$) bound $D_L$ and $D_U'$ by $\bar D_L(N)$ and $\bar D_U(N)$; these bounds are nonincreasing in $N$ (Lemma~\ref{fib:lem:coef}; $\beta_L$ termwise; $\beta_e$ nondecreasing by
Lemma~\ref{fib:chk:A3} from $N=13.4$ and by Lemma~\ref{fib:chk:A3p} for the first cell, whose $N_0=19\,t(-0.528)+2=13.3921\ldots$, up to $(6/r_m)^2$, and termwise beyond), so
their values at $N_0$ bound them for every $N\ge N_0$. The bound $\bar D_U$ is needed only for $t<1$: for $t\ge1$, $\omega_U\ge\omega_t\ge0$ by
Lemma~\ref{fib:lem:lamU}. So it suffices that, on each cell,
\[
\mathcal E(s):=M_0\,s^2\varphi(s)\,[s\ge0]+2-\frac{6s^2}{1+s}-\frac{2\mathsf ks}{1+s}-\frac{\mathsf k^2}{(1+s)(2+s)(M_0+3)}-\max(\bar D_L,\bar D_U)\ \ge\ 0
\]
for $\mathsf k\in\{3.55,\,3.85+\frac{10}{M_0}\}$, where $\bar D_U$ is omitted for $s\ge0$, and where for $s<0$ the term $M_0\Sl{}\ge0$ is dropped and for $s\ge0$
it is bounded below by $M_0s^2\varphi(s)$ (Lemma~\ref{fib:lem:S}). This is Lemma~\ref{fib:chk:A2}.
\end{proof}

\begin{remark}[a variant]\label{fib:rem:Aii}
The same proof gives $\Ff(M,j)\ge\psi(t)$ for $|2q-1|\le0.28$, every real $\mu\ge30$, every $M$ with $t\ge0.6$ and every $j$, with the nine cells
evaluated at $\mu_0=30$, $r_m=0.28$, Lemma~\ref{fib:lem:lamU} with $(0.28,20)$, and the monotonicity of $\beta_e$ from $N=20$ and, for the first cell,
which starts at $N_0=30\,t(-0.528)+2=19.98767\ldots$, from that point (Lemma~\ref{fib:chk:A3p}, margin $0.718$). The other checks of this variant ($23$ lines
of the supplementary code, all passing; Appendix~\ref{apx:fiber}) are not counted there, and the variant is not used in this paper.
\end{remark}

% ---------------------------------------------------------------------------------------------------------------
\subsubsection{The spike fibers and the three kinds of subintervals}\label{fib:sec:novalley}

\paragraph{The spike term and the averaging margin.}
Fix a subinterval $\iota$ with its inputs $\theta=\theta(\iota)$, $D=D(\iota)$, $\ell_0=\ell_0(\iota)$, and the notation of
Section~\ref{fib:sec:inputs}. For reals $b\ge0$ and $\beta'\ge0$ let $g(t)=b+\beta'(t-1)$ (a \emph{target line}). The fibers with $M$ below a
cut $t_s\mu$ are the \emph{spike fibers}; their deficit is
\[
\Def_g(M,\mu)=\Big(\psi\big(\tfrac M\mu\big)+g\big(\tfrac M\mu\big)-\min_j\Ff(M,j)\Big)_+,
\]
and $\Def=\Def_0$ for $b=\beta'=0$. Since $\beta'\ge0$, $g(t)\le b$ for $t<1$, so $\Def_g\le\Def+b$ on fibers with $t<1$.

\begin{lemma}[the averaging margin]\label{fib:lem:spike}
Let $q$ satisfy $q/(1-q)\in\Ihat_\iota$, let $\mu>0$ and $0<t_s\le1$, and suppose that $\Ff(M,j)\ge\psi(t)+g(t)$ for every $j$ and every
$M\ge t_s\mu$. Put
\[
Z_s(\mu)=\max_{M<t_s\mu}(1-q)^{-M}e^{-\ell_0\mu}\,\Def_g(M,\mu).
\]
If
\begin{equation}\label{fib:eq:row}
1+b-\theta-\frac{9D}{2\mu}-\frac{r_{\rm hi}^2}{(1-r_{\rm hi}^2)\mu}-Z_s(\mu)>0 ,
\end{equation}
then no probability mixture with $\E M=\mu$, $\E\delta=0$, $\E\delta^2\le\theta q(1-q)\mu$, $\Var M\le D\mu$ and $\E(1-q)^M\le e^{-\ell_0\mu}$ has a
valley.
\end{lemma}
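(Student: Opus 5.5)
The plan is to apply Proposition~\ref{fib:prop:avg} with an explicit minorant of the form \eqref{fib:eq:FL}, namely
\[
\alpha=1+b,\qquad \beta=1+\beta',\qquad \Gamma=\tfrac92,\qquad Z'=Z_s(\mu)\,e^{\ell_0\mu}.
\]
The target $\psi(t)+g(t)=t-\frac92(t-1)^2+b+\beta'(t-1)$ equals $1+b+(1+\beta')(t-1)-\frac92(t-1)^2$, so it already has the required shape. With this choice, the margin condition \eqref{fib:eq:margin} becomes
\[
1+b-\theta-\frac{9D}{2\mu}-\frac{r^2}{(1-r^2)\mu}-Z_s(\mu)>0 ,
\]
because $Z'e^{-\ell_0\mu}=Z_s(\mu)$. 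Since $r=|2q-1|\le r_{\rm hi}$ for $q/(1-q)\in\Ihat_\iota$ and $x\mapsto x^2/(1-x^2)$ increases on $[0,1)$, this follows from \eqref{fib:eq:row}. Here $r_{\rm hi}$ is the largest of $|2q-1|$ at the two endpoints, which bounds $|2q-1|$ on the subinterval because $q$ is monotone in $\lambda$. Proposition~\ref{fib:prop:avg} then excludes the valley.

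The real content is therefore the pointwise inequality \eqref{fib:eq:FL}, which I check fiber by fiber. For $M\ge t_s\mu$ it follows from the hypothesis $\Ff\ge\psi+g$ together with $Z'(1-q)^M\ge0$.

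For $M<t_s\mu$ I use $\Ff(M,j)\ge\min_{j'}\Ff(M,j')\ge\psi(t)+g(t)-\Def_g(M,\mu)$, the second inequality being the definition of the deficit. It then suffices that $\Def_g(M,\mu)\le Z'(1-q)^M$, that is, $(1-q)^{-M}e^{-\ell_0\mu}\Def_g(M,\mu)\le Z_s(\mu)$. This holds by the definition of $Z_s$ as a maximum over $M<t_s\mu$. That maximum is over finitely many integers $M\ge0$; if there are none, $Z_s=0$ by convention and nothing needs checking.

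Two side conditions must be verified. First, $Z'\ge0$ holds because $\Def_g\ge0$. Second, $q\in[\frac14,\frac34]$ is needed for Proposition~\ref{fib:prop:avg}; it holds because $\lambda\in[\frac13,\frac73]$ gives $q\in[\frac14,\frac7{10}]$. I also note that $\Gamma=\frac92\ge0$ as required, and that $\beta$ is free. The main thing to watch is bookkeeping: making sure that the $r$ of the subsection, which is the signed $2q-1$ inside $\Ff$ via $u$, enters the margin only through $r^2$, so that the symmetric bound by $r_{\rm hi}$ is legitimate. I expect no genuine obstacle.
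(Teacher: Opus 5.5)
Your proposal is correct and follows the paper's proof: it makes the same choice $\alpha=1+b$, $\beta=1+\beta'$, $\Gamma=\frac92$, $Z'=Z_s(\mu)e^{\ell_0\mu}$, splits the fibers at $t_s\mu$ in the same way (using the hypothesis above the cut and the definition of $\Def_g$ and of $Z_s$ below it), and concludes by Proposition~\ref{fib:prop:avg} with $\frac{r^2}{1-r^2}\le\frac{r_{\rm hi}^2}{1-r_{\rm hi}^2}$. Two minor points: the bound $|2q-1|\le r_{\rm hi}$ also uses the convexity of $|2q-1|$ in $q$, not only the monotonicity of $q$ in $\lambda$; and the range $M<t_s\mu$ always contains $M=0$, so no empty-maximum convention is needed.
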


\begin{proof}
$\psi(t)+g(t)=(1+b)+(1+\beta')(t-1)-\frac92(t-1)^2$. So \eqref{fib:eq:FL} holds with $\alpha=1+b$, $\beta=1+\beta'$, $\Gamma=\frac92$ and
$Z'=Z_s(\mu)e^{\ell_0\mu}$: for $M\ge t_s\mu$ by hypothesis, and for $M<t_s\mu$ because $\Ff{}\ge\psi+g-\Def_g\ge\psi+g-Z'(1-q)^M$.
Proposition~\ref{fib:prop:avg} concludes, since $\frac{r^2}{1-r^2}\le\frac{r_{\rm hi}^2}{1-r_{\rm hi}^2}$.
\end{proof}

The coefficient $\beta=1+\beta'$ does not enter \eqref{fib:eq:row}; the sign of $\beta'$ matters only through $\Def_g\le\Def+b$ on the spike fibers.

\paragraph{Spike pricing.}

\begin{lemma}[the range (L) on spike fibers]\label{fib:lem:defL}
Let $N\ge10$, $r\le\frac12$ and $t\le0.69$. For every $j$ with $w\le1$,
\[
\Ff(M,j)\ \ge\ \psi(t)-\Big(-\tfrac92(t-1)^2+\tfrac{2t}M(N+1)e_L\Big)_+ .
\]
\end{lemma}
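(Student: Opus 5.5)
We reuse the range (L) machinery of Proposition~\ref{fib:prop:pieces} and Proposition~\ref{fib:prop:L}, but on spike fibers we cannot call on $T(t,M)\ge0$, because the Gaussian slack need not be nonnegative for small $t$. So we bound the minimum of $H_L$ on $[0,1]$ directly, with a crude lower bound that keeps only $B$ and the first-order remainder $e_L$.

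\textbf{Step 1: the minimum of $H_L$.} Fix $j$ with $w\le1$. Proposition~\ref{fib:prop:pieces}(L) gives $\Ff\ge H_L(w)$. If $\omega_L\le0$, the proof of Proposition~\ref{fib:prop:L} shows that $H_L$ decreases on $[0,1]$, so $\Ff\ge H_L(1)\ge B$. Otherwise Corollary~\ref{fib:lem:min} and Lemma~\ref{fib:lem:split}, with $\omega_0=\oT_L$, $\tilde e\le e_L$ and $\rho_L\ge\rho$, give
\[
\min_{[0,1]}H_L\ \ge\ B\big(1-\tfrac2\rho G(\oT_L)\big)+B\big(\Delta(1-\sigma_L)-\tfrac2{\rho_L}e_L\sigma_L\big).
\]
In the second bracket we drop $\Delta(1-\sigma_L)\ge0$ and use $\sigma_L<1$. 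Since $\rho_L\ge\rho$ and $B/\rho=(N+1)/\mu$, this bracket costs at most $\frac{2(N+1)}{\mu}e_L$.

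\textbf{Step 2: the term $G(\oT_L)$ for $t\le0.69$.} We need $B\big(1-\frac2\rho G(\oT_L)\big)\ge B-\frac2\rho G(\oT_L)\,B$ with $G(\oT_L)$ negligible or absent. Here $\omega_t=\frac52\log t\le\frac52\log0.69<-0.9$. By Lemma~\ref{fib:lem:k}, $\oT_L\le\omega_t+\bar h$ with $\bar h\le(3.85+10/M)/(M+3)$, and $M=N-2\ge8$. I would first try to show that $\oT_L+e_L\le0$ in the relevant regime, so that $\omega_L\le0$ and Step 1 already gives $\Ff\ge B$. If that fails (for large $e_L$), I would avoid the sign question entirely. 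Since $G(\omega)\ge0$ always, we need an upper bound on it; for $\omega\le0$, Lemma~\ref{fib:lem:G}(b) says $G$ is decreasing, and it is not small. The honest route is therefore: $\min H_L\ge H_L(w_*)$ with the $B\gamma$ contribution, together with the facts that $\sigma_L=G'(\omega_L)$ and $\omega_L\le\oT_L+\frac{\rho_L}{2}\Delta+e_L$.

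The cleaner path, which I commit to, uses $s(\omega_L)\le s(\oT_L+e_L)_+ +\frac{\rho_L}2\Delta$ as in Proposition~\ref{fib:prop:L}. That reduces everything to $\mu(\min H_L)\ge N-\frac{2(N+1)}{\rho}\mu BG(\cdot)$ with the spike bookkeeping, and we then compare with $\mu\psi$.

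\textbf{Step 3: the comparison with $\psi$.} Once $\Ff\ge B-\frac{2(N+1)}{\mu}e_L\ge t-\frac{2t}{M}(N+1)e_L$, the claim follows from $\psi(t)=t-\frac92(t-1)^2$: indeed $\Ff-\psi\ge\frac92(t-1)^2-\frac{2t}{M}(N+1)e_L$, and the negative part of this is the stated deficit. Here we used $\frac{2(N+1)}\mu e_L=\frac{2t}{M}(N+1)e_L$ and $B\ge t$.

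\textbf{Main obstacle.} The hard step is controlling $G(\oT_L)$ (equivalently, the sign of $\omega_L$) for $t\le0.69$ uniformly in $r\le\frac12$ and $N\ge10$. I expect the threshold $0.69$ to be chosen exactly so that $\frac52\log t$ plus the shift $\bar h$ makes $s(\oT_L)\le0$. Then the minimiser $w_*$ lies at or beyond $1$, or the loss from $G$ is absorbed because $\frac2\rho G\le$ the bonus term $\gamma-1$ through Lemma~\ref{fib:lem:split}.
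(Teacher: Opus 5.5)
There is a genuine gap in your Step~2, and Step~3 rests on it. Step~3 starts from $\Ff\ge B-\frac{2(N+1)}{\mu}e_L$, but your Step~1 only gives
\[
\Ff\ \ge\ B\bigl(1-\tfrac2\rho G(\oT_L)\bigr)-\tfrac{2(N+1)}{\mu}e_L .
\]
You never get rid of the term $-\frac2\rho BG(\oT_L)$, and it cannot be dropped.

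On spike fibers Lemma~\ref{fib:lem:k} with $M\ge8$ gives $\oT_L\le\frac52\log0.69+\frac{3.85+10/8}{11}<0$. Hence $G(\oT_L)>0$. Moreover, since $\oT_L\le\omega_t+\frac{5.1}{11}\to-\infty$ as $t\to0$ and $G\to\infty$ at $-\infty$, this term is unbounded. In units of $\mu$ it costs $2(N+1)G(\oT_L)$, which appears nowhere in the claimed deficit. The split of Lemma~\ref{fib:lem:split} expands $G$ around the negative base point $\oT_L$, which is the wrong tool here.

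Your fallback ideas do not close the gap either:
\begin{itemize}
\item $\oT_L+e_L\le0$ does not give $\omega_L\le0$, because $\omega_L\le\oT_L+\frac{\rho_L}2\Delta+e_L$ carries the bonus shift.
\item The bound on $s(\omega_L)$ from Proposition~\ref{fib:prop:L} controls $\sigma_L$, not $G(\oT_L)$.
\item The split leaves $G(\oT_L)$ in place rather than absorbing it into the bonus.
\end{itemize}

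The missing idea is not to split at all when $\omega_L>0$. Use Corollary~\ref{fib:lem:min} directly, $\min H_L\ge B\bigl(\gamma-\frac2{\rho_L}G(\omega_L)\bigr)$, together with the crude bound $G(\omega)\le\omega$ for $\omega\ge0$ from Lemma~\ref{fib:lem:G}(d). Combined with $\omega_L\le\oT_L+\frac{\rho_L}2\Delta+e_L$ and $\oT_L<0$, this gives
\[
\gamma-\tfrac2{\rho_L}G(\omega_L)\ \ge\ 1+\Delta-\Delta-\tfrac2{\rho_L}\bigl(\oT_L+e_L\bigr)\ \ge\ 1-\tfrac2{\rho_L}e_L .
\]
The bonus cancels exactly against $\gamma-1$, and the negative sign of $\oT_L$ now helps instead of hurting. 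Then $\frac1{\rho_L}\le\frac{N+1}N$ and $\frac{2B(N+1)}N=\frac{2t(N+1)}M$, and your Step~3 comparison with $\psi(t)=B-\frac2\mu-\frac92(t-1)^2$ finishes the proof. The case $\omega_L\le0$ is handled as in your Step~1. This is exactly the paper's argument.
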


\begin{proof}
By Lemma~\ref{fib:lem:k}, $\oT_L\le\frac52\log0.69+\frac{3.85+10/8}{11}<0$. Let $\omega_L$ be as in the proof of Proposition~\ref{fib:prop:L}. If
$\omega_L\le0$, then $\min_{[0,1]}H_L=H_L(1)\ge B>\psi$. If $\omega_L>0$, then $\min H_L\ge B(\gamma-\frac2{\rho_L}G(\omega_L))\ge B(\gamma-\frac2{\rho_L}\omega_L)$
(Lemma~\ref{fib:lem:G}(d)), and $\omega_L\le\oT_L+\frac{\rho_L}2\Delta+e_L$ with $\oT_L<0$, so $\min H_L\ge B(1-\frac2{\rho_L}e_L)$. With
$\psi(t)-B=-\frac92(t-1)^2-\frac2\mu$, $\frac1{\rho_L}\le\frac{N+1}N$ and $\frac{2B(N+1)}N=\frac{2t(N+1)}M$ this gives the claim.
\end{proof}

\begin{lemma}[small fibers]\label{fib:lem:KM}
For $M=0,\dots,7$, every $j$ and every $q\in[\frac14,\frac34]$: $\kappa_w(M,j)\ge-K_M$, with
$(K_0,\allowbreak \dots,\allowbreak K_7)=(0.255,\allowbreak \,\allowbreak 0.136,\allowbreak \,\allowbreak 0.0795,\allowbreak \,\allowbreak 0.0488,\allowbreak \,\allowbreak 0.0322,\allowbreak \,\allowbreak 0.0258,\allowbreak \,\allowbreak 0.0202,\allowbreak \,\allowbreak 0.0162)$ (Lemma~\ref{fib:chk:B1}). Hence, for $\mu\ge\mu_0\ge19$, $0\le b\le1$
and $q$ with $q/(1-q)\in\Ihat_\iota$,
\begin{gather*}
(1-q)^{-M}e^{-\ell_0\mu}\big(\psi(\tfrac M\mu)+b-\min_j\Ff(M,j)\big)_+\ \le\ \big(A_M\mu_0^{3/2}+\psi(\tfrac M{\mu_0})+b\big)_+(1-q_b)^{-M}e^{-\ell_0\mu_0},\\
A_M=\frac{K_M}{4\aG\sqrt{v_{\min}}},
\end{gather*}
provided $\ell_0\big(A_M\mu_0^{3/2}+\psi(M/\mu_0)+b\big)\ge\frac32A_M\mu_0^{1/2}$; if $A_M\mu_0^{3/2}+\psi(M/\mu_0)+b\le0$, the bound
$A_M\mu_0^{3/2}(1-q_b)^{-M}e^{-\ell_0\mu_0}$ holds provided $\ell_0\mu_0\ge\frac32$.
\end{lemma}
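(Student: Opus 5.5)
The statement has two parts. The first is the finite list of lower bounds $\kappa_w(M,j)\ge-K_M$ for $M\le7$; this is the cited check, Lemma~\ref{fib:chk:B1}. The second is the transfer of these bounds to the spike prices. I would prove the first part by reducing it to finitely many one-variable inequalities in $q$. By Lemma~\ref{apC:lem:atoms}(a), $\kappa_w(M,j)=0$ unless $-1\le j\le M+1$. At the end atoms $j=-1$ and $j=M+1$ the values are $-\wR(1-q)^M$ and $-\wL q^M$, both explicit polynomials in $q$. For $0\le j\le M$, Lemma~\ref{fib:lem:T1} gives the closed form
\[\kappa_w=B_N(J)\bigl(N+r^2(N-1)^2-4u^2\bigr)/(N(N-1)).\]
Each case is therefore a polynomial in $q$ on $[\frac14,\frac34]$. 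I would bound it below by exact rational interval subdivision, or by locating critical points. By the symmetry of Lemma~\ref{fib:lem:sym} only $q\ge\frac12$ is needed. I expect the worst cases to be $M=0$ and $M=1$ near the ends of the $q$-range, and the constants $K_M$ to be these minima, rounded up.

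For the second part, fix $M\le7$ and $\mu\ge\mu_0$. By \eqref{fib:eq:Fform}, $\Ff(M,j)=\sigb^3\kappa_w/(4v^2\aG)+u^2/\sigb^2$. The second term is nonnegative and the first is at least $-K_M\sigb^3/(4v^2\aG)$. Since $\sigb=\sqrt{v\mu}$, this gives
\[\min_j\Ff\ge-K_M\mu^{3/2}/(4\aG\sqrt v)\ge-A_M\mu^{3/2},\]
using $v\ge v_{\min}$ on the subinterval. Hence the deficit is at most $(A_M\mu^{3/2}+\psi(M/\mu)+b)_+$. I also use $(1-q)^{-M}\le(1-q_b)^{-M}$, because $q\le q_b$.

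It remains to show that $h(\mu)=(A_M\mu^{3/2}+\psi(M/\mu)+b)e^{-\ell_0\mu}$ does not increase on $[\mu_0,\infty)$. The term $\psi(M/\mu)$ is nondecreasing in $\mu$ here, since $M/\mu\le7/19<1$ lies on the increasing branch of $\psi$, so the monotonicity is not automatic. I would write the logarithmic derivative of the bracket $X(\mu)$ as
\[X'(\mu)=\tfrac32A_M\mu^{1/2}+(M/\mu^2)\bigl(9(M/\mu-1)-1\bigr).\]
The second summand is nonpositive because $M/\mu<1$, so $X'\le\frac32A_M\mu^{1/2}$. I then need $\ell_0X(\mu)\ge\frac32A_M\mu^{1/2}$ for all $\mu\ge\mu_0$. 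The ratio of $X$ to $\mu^{1/2}$ grows roughly like $\mu$, so checking the proviso at $\mu_0$ should propagate. I would verify this by noting that $X/\mu^{1/2}$ is nondecreasing, with derivative $A_M+\dots$, using again that $\psi$ is bounded on the spike range.

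When the bracket at $\mu_0$ is nonpositive, I would simply drop $\psi+b$ whenever it is negative. What remains is $A_M\mu^{3/2}e^{-\ell_0\mu}$, which decreases for $\ell_0\mu\ge\frac32$, and this condition at $\mu_0$ propagates. I expect the main obstacle to be the first part: getting the constants $K_M$ down to the stated precision needs careful, certified minimization, whereas the transfer step is routine calculus.
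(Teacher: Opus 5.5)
Your treatment of the finite bounds $\kappa_w(M,j)\ge-K_M$ is the same as the paper's, which simply cites the certified interval check of Lemma~\ref{fib:chk:B1}. The bound $\min_j\Ff(M,j)\ge-A_M\mu^{3/2}$ and your second case also agree with the paper.

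The gap is in the first case, at the step where the proviso must be carried from $\mu_0$ to every $\mu\ge\mu_0$. You assert that $\psi(M/\mu)$ is nondecreasing in $\mu$, but the opposite holds. The argument $t=M/\mu$ decreases as $\mu$ grows, and $\psi$ increases on $[0,1]$, so $\psi(M/\mu)\le\psi(M/\mu_0)$. Your own formula for $X'$ shows that the $\psi$-part of the derivative is nonpositive.

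Because you keep the $\mu$-dependent term, you are forced to claim that $X/\mu^{1/2}$ is nondecreasing. The justification you give ("$\psi$ is bounded") does not prove this. With $t=M/\mu$,
\[
\frac{d}{d\mu}\,\frac{X(\mu)}{\mu^{1/2}}=A_M+\mu^{-3/2}\Bigl(\tfrac{45}{4}t^2-15t+\tfrac94-\tfrac b2\Bigr).
\]
At $M=7$, $\mu=19$, $b=1$ the bracket is about $-2.25$. So the claim holds only because $A_7\ge K_7/(2\aG)\approx0.0335$ exceeds $2.25\cdot19^{-3/2}\approx0.027$. That numerical comparison is needed, and your proposal neither identifies nor performs it.

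The missing idea, which is how the paper argues, is to freeze the constant first. For $\mu\ge\mu_0$ the deficit is at most $(A_M\mu^{3/2}+c)_+$, where $c=\psi(M/\mu_0)+b$ does not depend on $\mu$. Then
\[
\frac{d}{d\mu}\Bigl[\ell_0\bigl(A_M\mu^{3/2}+c\bigr)-\tfrac32A_M\mu^{1/2}\Bigr]=\tfrac34A_M\mu^{-1/2}\bigl(2\ell_0\mu-1\bigr)\ge0
\]
as soon as $2\ell_0\mu\ge1$. The condition $\ell_0\mu_0\ge\frac32$ follows from the proviso itself, because $c\le\psi(\frac7{19})+1<0$.

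Hence the proviso propagates to all $\mu\ge\mu_0$, and $(A_M\mu^{3/2}+c)e^{-\ell_0\mu}$ is nonincreasing on $[\mu_0,\infty)$. The stated bound follows with no numerical check at all.
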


\begin{proof}
$\Ff(M,j)\ge\frac{\kappa_w\mu^{3/2}}{4\sqrt v\aG}\ge-A_M\mu^{3/2}$, since the penalty is nonnegative and $v\ge v_{\min}$; $\psi$ increases on $[0,1]$ and
$\frac M\mu\le\frac M{\mu_0}<1$; and $(1-q)^{-M}\le(1-q_b)^{-M}$. Finally $\frac d{d\mu}[(A\mu^{3/2}+c)e^{-\ell_0\mu}]\le0$ if and only if
$\frac32A\mu^{1/2}\le\ell_0(A\mu^{3/2}+c)$, a condition that improves with $\mu$ once $2\ell_0\mu\ge1$; here $\ell_0\mu_0\ge\frac32$, because
$\psi(\frac M{\mu_0})+b\le\psi(\frac7{19})+1<0$, so the proviso gives $\ell_0A_M\mu_0^{3/2}\ge\frac32A_M\mu_0^{1/2}$. In the second case
$(A_M\mu^{3/2}+\psi(\frac M\mu)+b)_+\le A_M\mu^{3/2}$, and $A_M\mu^{3/2}e^{-\ell_0\mu}$ decreases for $\mu\ge\frac3{2\ell_0}$.
\end{proof}

For $M>0$ and $t>0$ let
\[
\Phi(t;M)=t\,(-\Sl(t))_++\frac tM\Big(6G_0-2+\bar D_U+\max_{\mathsf k\in\{3.55,\ 3.85+10/M\}}\big(2\mathsf kG_1+\tfrac{\mathsf k^2G_2}{M+3}\big)\Big)_+ ,
\]
where $\bar D_U$ is the bound of Lemma~\ref{fib:lem:amgm}(U) at $N=M+2$ (in its form for $\zeta\le\zeta_m$, with the bounded-range suprema).

\begin{lemma}[the spike deficit along rays]\label{fib:lem:Phi}
Let $r\le r_m$, $0.1548\le t\le0.69$, $M\ge8$, $N=M+2\ge N_{\min}$, where $\omega_U\ge\omega_t$ holds for $r\le r_m$ and $N\ge N_{\min}$ (for instance by
Lemma~\ref{fib:lem:lamU}), and suppose that the range (L) contributes no deficit, that is $\frac{2t}M(N+1)e_L\le\frac92(t-1)^2$. Then
$\Def(M,\mu)\le\Phi(t;M)$, and $\Phi$ is nonincreasing in $M$ along the ray of fixed $t$. On an $M$-block $[M_1,M_2]$ one may take, in
$\bar D_U$: $\Ghat=\max\big(G(\omega_t+\frac{3.55}{M_2+3}),G(\omega_t+\bar h(M_1))\big)$ with $\bar h(M_1)=\frac{3.85+10/M_1}{M_1+3}$;
$\zeta_m=\min(6,r_m\sqrt{M_2+2})$; for $\rho_*$ any lower bound of $\rho_U$ valid for $\zeta\le\zeta_m$ and $N\ge M_1+2$, for instance
\[
\rho_*^{\rm blk}=\frac{(1-r_m^2)N_1}{N_1+1}-\frac{2\zeta_m/(N_1+1)+\zeta_m^3/(3N_1(1-r_m^2))}{\sqrt{1+(1-1/N_1)\zeta_m^2}},\qquad N_1=M_1+2 ;
\]
for $\kappa_A$ any upper bound valid there, for instance $\kappa_A^{\rm blk}=2+\frac{2(N_1+1)}{3N_1(1-r_m^2)}\frac{\zeta_m}{\sqrt{1+(1-1/N_1)\zeta_m^2}}$; and
$g_1=|G'(\omega_t+\frac{3.55}{M_2+3}-\frac1{N_1^2q_2})|$ when $q_2>0$, with $q_2$ of the proof of Lemma~\ref{fib:lem:lamU} at $N_1$ (with these $\rho_*$ and
$\zeta_m$). On the last block $[M_1,\infty)$ the all-$\zeta$ bound of Lemma~\ref{fib:lem:amgm} is evaluated at $N=M_1+2$ and used for all larger $N$,
which needs $\beta_e$ nondecreasing there.
\end{lemma}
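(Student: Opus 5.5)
The approach is to bound $\min_j\Ff(M,j)$ from below separately on the three ranges of Proposition~\ref{fib:prop:pieces}, and then divide by $\mu=M/t$. On the range (L) there is nothing to do: Lemma~\ref{fib:lem:defL} applies because $t\le0.69$, and the hypothesis $\frac{2t}M(N+1)e_L\le\frac92(t-1)^2$ makes its correction term vanish, so $\Ff\ge\psi(t)$ there. On the range (M), $\Ff\ge Bw\ge B=t+\frac2\mu>\psi(t)$. So only the range (U) can produce a deficit. There, by the standing hypothesis (Lemma~\ref{fib:lem:lamU}), $\omega_U\ge\omega_t$, and $\omega_t=\frac52\log t<0$. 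If $\omega_U\ge0$, Lemma~\ref{fib:lem:Ubonus}(a) gives $\mu(\Ff-\psi)\ge2$ and again no deficit. Otherwise $\omega_t\le\omega_U<0$, and I use Lemma~\ref{fib:lem:Ubonus}(b) before the final minimisation over $\mathsf k$, in the form
\[
\mu\bigl(\min_{w\ge\gamma}H_U-\psi(t)\bigr)\ \ge\ N-\mu\psi(t)-2(N+1)G(\oT_U)-D_U' .
\]

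Next I expand this bound. First, $N-\mu\psi=2+M\frac92\frac{(t-1)^2}t$. Second, Lemma~\ref{fib:lem:k} gives $\oT_U=\omega_t+k_U/(M+3)$ with $k_U\in[3.55,3.85+10/M]$ for $M\ge8$, and Lemma~\ref{fib:lem:G}(c) then gives
\[
2(N+1)G(\oT_U)\le 2MG_0+6G_0+\max_{\mathsf k}\bigl(2\mathsf kG_1+\mathsf k^2G_2/(M+3)\bigr).
\]
Combining, $\mu(\Ff-\psi)\ge M\Sl(t)+2-6G_0-\max_{\mathsf k}(\cdots)-D_U'$. By Lemma~\ref{fib:lem:amgm}(U), valid since $\omega_t\le\omega_U<0$, we have $D_U'\le\bar D_U$. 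Dividing by $\mu=M/t$ and splitting the positive part over the two summands, with $\Sl$ kept separately, yields
\[
\Def(M,\mu)\le t(-\Sl(t))_++\frac tM\Bigl(6G_0-2+\bar D_U+\max_{\mathsf k}(\cdots)\Bigr)_+=\Phi(t;M).
\]

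For the monotonicity along a ray of fixed $t$, note that the first term of $\Phi$ does not depend on $M$. In the second term the factor $t/M$ decreases, and the bracket is nonincreasing in $M$, for three reasons. The interval of $\mathsf k$ shrinks as $M$ grows. The term $\mathsf k^2G_2/(M+3)$ decreases. And $\bar D_U$ is nonincreasing in $N$: by Lemma~\ref{fib:lem:coef}, $\rho_*$ is nondecreasing and $\kappa_A$ and the coefficients of $E$ are nonincreasing, while $\beta_e$ is nondecreasing as required. A product of nonnegative nonincreasing factors is nonincreasing.

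It remains to justify the block substitutions; I expect this to be the fiddly part, since each substitution must be monotone in the right direction. For $\Ghat$: $\oT_U$ lies in $[\omega_t+\frac{3.55}{M_2+3},\,\omega_t+\bar h(M_1)]$, and $G$ is convex, so $G(\oT_U)$ is at most the larger of the two endpoint values. For $\zeta_m$: $\zeta=r\sqrt N\le r_m\sqrt{M_2+2}$, and fibers with $\zeta\ge6$ have $\omega_U>0$ by Lemma~\ref{fib:lem:utail}(a), so they were already handled by Lemma~\ref{fib:lem:Ubonus}(a). For $\rho_*^{\rm blk}$ and $\kappa_A^{\rm blk}$: apply Lemma~\ref{fib:lem:mono}(b) with $N_a=N_1\le N$, use $(1-r^2)\rho\ge(1-r_m^2)N_1/(N_1+1)$, and use that $\zeta\mapsto\zeta/\sqrt{1+c\zeta^2}$ and $\zeta^3/\sqrt{1+c\zeta^2}$ increase, so that they can be evaluated at $\zeta_m$. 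For $g_1$: the proof of Lemma~\ref{fib:lem:lamU} gives
\[
\omega_U\ge\omega_t+\frac{3.55}{N+1}+q_2\zeta^2-\frac{2\zeta}N ,
\]
whose minimum over $\zeta$ is $\omega_t+\frac{3.55}{N+1}-\frac1{N^2q_2}$ when $q_2>0$. Bounding $N+1\le M_2+3$ in the first fraction and $N\ge N_1$ in the last (with $q_2$ monotone in $N$ because $\rho_*$ increases) gives a lower bound for $\omega_U<0$. Since $|G'|$ decreases on $(-\infty,0]$, evaluating $|G'|$ at this lower bound bounds $|\sigma_U|$. On the last block the same argument applies at $N=M_1+2$, provided $\beta_e$ is nondecreasing beyond it.
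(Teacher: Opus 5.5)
Your proof is correct and follows the paper's argument essentially step for step. You use the three ranges of Proposition~\ref{fib:prop:pieces}, Lemma~\ref{fib:lem:defL} on (L), Lemmas~\ref{fib:lem:Ubonus} and~\ref{fib:lem:amgm}(U) on (U) (your unpacking of $T(t,M)$ through Lemmas~\ref{fib:lem:k} and~\ref{fib:lem:G}(c) is exactly Proposition~\ref{fib:prop:template}), Lemma~\ref{fib:lem:utail}(a) for $\zeta\ge6$, and the same monotonicity facts for the block substitutions.

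The one small deviation is that you apply Lemma~\ref{fib:lem:mono}(b) with $N_a=N_1$, where the paper takes $N_a=N$ and then uses monotonicity in $N$. This yields the same $\rho_*^{\rm blk}$ and $\kappa_A^{\rm blk}$: only $N_a\le N$ is used in that part of its proof, although the lemma as stated also asks $\zeta^2<N_a$. You should also add, as the paper does, that on a block the remaining $N-1$ in $\bar D_U$ is replaced by $N_1-1$.
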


\begin{proof}
On the range (M) of Proposition~\ref{fib:prop:pieces}, $\Ff{}\ge B>\psi$. On the range (L) there is no deficit, by Lemma~\ref{fib:lem:defL} and the
hypothesis. On the range (U): if $\omega_U\ge0$ there is no deficit (Lemma~\ref{fib:lem:Ubonus}(a)); if $\zeta\ge6$, then $\omega_U>0$ by
Lemma~\ref{fib:lem:utail}(a), since $t\ge0.1548$; otherwise
$\omega_t\le\omega_U<0$, Lemma~\ref{fib:lem:Ubonus}(b), which holds for every $t>0$, and Lemma~\ref{fib:lem:amgm} give
$\mu(\min H_U-\psi)\ge T(t,M)-\bar D_U$, and
$-T\le M(-\Sl{})_+-2+6G_0+\max_{\mathsf k}(2\mathsf kG_1+\frac{\mathsf k^2G_2}{M+3})$; divide by $\mu=M/t$. The bracket is nonincreasing in $M$ ($\bar D_U$ and
$\frac{\mathsf k^2}{M+3}$ decrease, and so does the larger end of the $\mathsf k$-interval), and $t(-\Sl{})_+$ does not depend on $M$. The block refinements use
$\oT_U\in[\omega_t+\frac{3.55}{M+3},\omega_t+\bar h(M)]$ (Lemma~\ref{fib:lem:k}), the lower bound $\omega_U-\omega_t\ge\frac{3.55}{N+1}+q_2\zeta^2-\frac{2\zeta}N\ge\frac{3.55}{N+1}-\frac1{N^2q_2}$
of the proof of Lemma~\ref{fib:lem:lamU} together with the monotonicity of $N^2q_2$ in $N$, and the monotonicity in $\zeta$ and in $N$ of the
expressions of Lemma~\ref{fib:lem:mono}(b) with $N_a=N$ (admissible, as $\zeta^2\le r_m^2N<N$): $\frac{2\zeta/(N+1)+\zeta^3/(3N(1-r_m^2))}{\sqrt{1+(1-1/N)\zeta^2}}$ increases in $\zeta$ and decreases in $N$, and so does
$\frac{\zeta}{\sqrt{1+(1-1/N)\zeta^2}}$. In these block forms the suprema over $\zeta\le\zeta_m$ are taken with the bounded-range form of
Lemma~\ref{fib:lem:amgm}, with $N_1-1$ in place of $N-1$.
\end{proof}

\begin{lemma}[very small $t$]\label{fib:lem:Zc}
Let $b\ge0$, $\mu\ge\mu_0$ and $m_c=\max(\mu_0,8/0.22)$, and suppose $(\ell_0-0.22L_b)m_c\ge\frac32$. For every $q$ with $q/(1-q)\in\Ihat_\iota$
and every $M\ge8$ with $M<0.22\mu$:
\[
(1-q)^{-M}e^{-\ell_0\mu}\big(\psi(\tfrac M\mu)+b-\min_j\Ff(M,j)\big)_+\ \le\ \Big(\frac{m_c^{3/2}}{16\sqrt{v_{\min}}\aG}+b\Big)e^{-(\ell_0-0.22L_b)m_c}.
\]
\end{lemma}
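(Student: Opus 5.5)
The plan is to bound the three factors separately and crudely. No fine analysis of the fiber function is needed, because in this range the factor $(1-q)^{-M}$ costs only $e^{0.22L_b\mu}$, and the Laplace rate $\ell_0$ outruns this.

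\emph{Step 1: a uniform lower bound on $\Ff$.} I would start from the definition $\Ff(M,j)=\sigb^3\kappa_w(M,j)/(4v^2\aG)+u^2/\sigb^2$ and drop the penalty $u^2/\sigb^2\ge0$. By Lemma~\ref{apC:lem:atoms}(a), $\kappa_w(M,j)\ge-\max(\wL,\wR)$. On $[\frac14,\frac34]$ the function $q^2(3-4q)$ has derivative $6q-12q^2$, so its maximum is at $q=\frac12$, where it equals $\frac14$. The symmetric function $\wR$ has the same maximum. Hence $\kappa_w\ge-\frac14$. This applies to every $q$ with $q/(1-q)\in\Ihat_\iota\subseteq[\frac13,\frac73]$, since then $q\in[\frac14,\frac7{10}]$. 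With $\sigb^3=(v\mu)^{3/2}$ this gives
\[
\Ff(M,j)\ \ge\ -\frac{\mu^{3/2}}{16\sqrt v\,\aG}\ \ge\ -\frac{\mu^{3/2}}{16\sqrt{v_{\min}}\,\aG}.
\]
The last step uses $v\ge v_{\min}$, which holds because $q(1-q)$ is concave, so its minimum on $[q_a,q_b]$ is attained at an end point.

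\emph{Step 2: the sign of $\psi$ and the spike weight.} Since $t=M/\mu<0.22$, we have $\psi(t)\le\psi(0.22)=0.22-\frac92(0.78)^2<0$, because $\psi$ increases on $[0,1]$. So the bracket is at most $\frac{\mu^{3/2}}{16\sqrt{v_{\min}}\aG}+b$. Next, $q\le q_b$ gives $(1-q)^{-M}\le e^{L_bM}$, and $M<0.22\mu$ gives $e^{L_bM}\le e^{0.22L_b\mu}$. Together with Step~1, the left side is at most $h(\mu):=(c_1\mu^{3/2}+b)e^{-c\mu}$, where $c_1=1/(16\sqrt{v_{\min}}\aG)$ and $c=\ell_0-0.22L_b$.

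\emph{Step 3: monotonicity in $\mu$.} The existence of an integer $M$ with $8\le M<0.22\mu$ forces $\mu>8/0.22$. Together with $\mu\ge\mu_0$ this gives $\mu\ge m_c$, so it suffices to show that $h$ is nonincreasing on $[m_c,\infty)$. The hypothesis $c\,m_c\ge\frac32$ with $m_c>0$ gives $c>0$. Moreover, $h'\le0$ is equivalent to $\frac32c_1\mu^{1/2}\le c(c_1\mu^{3/2}+b)$. Since $b\ge0$, this follows from $c\mu\ge\frac32$, which holds for $\mu\ge m_c$. Hence $h(\mu)\le h(m_c)$, which is the stated bound.

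I expect no real obstacle. The only points to watch are the following. First, the weights $\wL,\wR$ must be nonnegative and at most $\frac14$ on the actual range of $q$. Second, the deduction $\mu\ge m_c$ needs care: it comes from the constraint $8\le M<0.22\mu$ and not from an assumption. Third, the positivity of $c$ must be read off from the hypothesis before the derivative argument is used.
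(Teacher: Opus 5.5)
Your proof is correct and follows the paper's argument step for step. Both drop the penalty and use $\kappa_w\ge-\max(\wL,\wR)\ge-\frac14$; both note $\psi<0$ for $t<0.22$; both bound the weight by $e^{-(\ell_0-0.22L_b)\mu}$; and both use that $(A\mu^{3/2}+b)e^{-c\mu}$ decreases for $\mu\ge\frac3{2c}$ together with $\mu\ge m_c$. Your remarks that $c>0$ follows from the hypothesis and that $\mu>8/0.22$ is forced by the existence of $M$ only spell out what the paper leaves implicit.
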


\begin{proof}
By Lemma~\ref{apC:lem:atoms}(a), $\kappa_w\ge-\max(\wL,\wR)\ge-\frac14$ on $[\frac14,\frac34]$, so $\Ff{}\ge-\frac{\mu^{3/2}}{16\sqrt v\aG}$; and $\psi(t)<0$ for
$t<0.22$. The weight is at most $e^{L_bM-\ell_0\mu}\le e^{-(\ell_0-0.22L_b)\mu}$, and $\mu=M/t\ge8/0.22$. The function
$(A\mu^{3/2}+b)e^{-c\mu}$ decreases for $\mu\ge\frac3{2c}$.
\end{proof}

\paragraph{The central subintervals.}

\begin{theorem}[no valley on the central subintervals]\label{fib:thm:central}
Let $\iota\in\{6,\dots,30\}$ and let $\mu_0(\iota)$ be the entry of Data~\ref{fib:tab:mu0}. For every $q$ with $q/(1-q)\in\Ihat_\iota$ and every
real $\mu\ge\mu_0(\iota)$, no probability mixture with the inputs of $\iota$ (the zero-tilt constraint only) has a valley.
\end{theorem}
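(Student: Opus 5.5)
The plan is to apply the averaging margin, Lemma~\ref{fib:lem:spike}, with the zero target line $b=\beta'=0$ and a spike cut $t_s=0.6$, and to verify \eqref{fib:eq:row} for every $\mu\ge\mu_0(\iota)$.

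\emph{Step 1: the hypothesis above the cut.} We first check which fibers the central fiber inequality reaches. For $\iota\in\{6,\dots,30\}$ the activities of $\Ihat_\iota$ lie in $[3/5,\lamhat_{31}]$, and the central subintervals are chosen so that $r_{\rm hi}=\max(|2q_a-1|,|2q_b-1|)\le\frac14$; this is a kernel check on the endpoints. Since $\mu_0(\iota)\ge19$, Theorem~\ref{fib:thm:env} then gives $\Ff(M,j)\ge\psi(t)$ for every $M\ge0.6\mu$ and every $j$. This is exactly the hypothesis of Lemma~\ref{fib:lem:spike} with $g=0$.

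\emph{Step 2: the spike term.} Next we bound $Z_s(\mu)=\max_{M<0.6\mu}(1-q)^{-M}e^{-\ell_0\mu}\Def(M,\mu)$ uniformly in $\mu\ge\mu_0$. The fibers $M<0.6\mu$ fall into three regimes.
\begin{itemize}
\item[(i)] For $M\le7$, Lemma~\ref{fib:lem:KM} with $b=0$ bounds the weighted deficit by its value at $\mu_0$. The provisos $\ell_0(A_M\mu_0^{3/2}+\psi(M/\mu_0))\ge\frac32A_M\mu_0^{1/2}$ or $\ell_0\mu_0\ge\frac32$ are finite checks.
\item[(ii)] For $M\ge8$ with $t<0.22$, Lemma~\ref{fib:lem:Zc} applies. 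Its proviso $(\ell_0-0.22L_b)m_c\ge\frac32$ is checked per subinterval.
\item[(iii)] For $8\le M$ and $0.22\le t<0.6$, Lemma~\ref{fib:lem:Phi} applies with $(r_m,N_{\min})=(\frac14,10)$ from Lemma~\ref{fib:lem:lamU}. Its hypothesis that range (L) contributes no deficit must first be checked; via Lemma~\ref{fib:lem:coef}(a) this reduces to a finite inequality in $(t,M)$. It then gives $\Def\le\Phi(t;M)$, nonincreasing in $M$ along rays. So $(1-q)^{-M}e^{-\ell_0\mu}\Def\le\Phi(t;M)\exp(-(\ell_0-tL_b)M/t)$. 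I will cover $t\in[0.22,0.6]$ by finitely many $t$-cells and $M$-blocks, using the block forms of $\bar D_U$, $\rho_*^{\rm blk}$, $\kappa_A^{\rm blk}$ and $g_1$ stated in Lemma~\ref{fib:lem:Phi}. On each cell the exponent is decreasing in $M$ provided $\ell_0>tL_b$, so the worst case is the smallest $M$ compatible with $\mu\ge\mu_0$, namely $M\approx t\mu_0$.
\end{itemize}
Taking the maximum of the three regimes gives a number $\bar Z_s(\iota)$ with $Z_s(\mu)\le\bar Z_s(\iota)$ for all $\mu\ge\mu_0(\iota)$.

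\emph{Step 3: the margin.} The left side of \eqref{fib:eq:row} is increasing in $\mu$ apart from $Z_s$, which is now bounded uniformly. So it suffices to check
\[
1-\theta(\iota)-\frac{9D(\iota)}{2\mu_0}-\frac{r_{\rm hi}^2}{(1-r_{\rm hi}^2)\mu_0}-\bar Z_s(\iota)>0
\]
at $\mu=\mu_0(\iota)$, which is where $\mu_0$ in Data~\ref{fib:tab:mu0} is chosen. With $\theta=3/5$ or $7/10$ and $D=6/5$, the deterministic part is about $0.3-5.4/\mu_0$, which is comfortable once $\mu_0\gtrsim25$, leaving room for $\bar Z_s$. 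The monotonicity in $\mu$ of each spike bound (the weights $e^{-\ell_0\mu}$ dominate the polynomial factors $\mu^{3/2}$ once $\ell_0\mu\ge\frac32$) justifies replacing $\mu$ by $\mu_0$ throughout. Lemma~\ref{fib:lem:spike} then concludes.

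The main obstacle I expect is Step 2(iii) near $t\approx0.5$–$0.6$. There $-\Sl(t)$ is no longer available as slack, $\Phi$ can be of order $1$, and the exponential weight $e^{-(\ell_0-tL_b)\mu}$ is weakest because $L_b=-\log(1-q_b)$ can approach $\ell_0/t$. If the block bounds are not sharp enough there, I would first refine the $M$-blocks near $M_1\approx t\mu_0$ and raise $\mu_0(\iota)$ within $[19,50]$ until the margin closes. All resulting inequalities become finite interval checks of the kind listed in Appendix~\ref{apx:fiber}.
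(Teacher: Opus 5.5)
Your proposal is correct and follows the paper's proof essentially step for step. You apply Lemma~\ref{fib:lem:spike} with $b=\beta'=0$ and $t_s=0.6$, use Theorem~\ref{fib:thm:env} above the cut, split the spike term into the pieces handled by Lemma~\ref{fib:lem:KM}, Lemma~\ref{fib:lem:Zc} and Lemma~\ref{fib:lem:Phi} (on $t$-cells and $M$-blocks with the weight bound $e^{-(\ell_0-tL_b)\mu}$, the range (L) being settled by the finite check of Lemma~\ref{fib:chk:B2}), and close with the margin at $\mu_0(\iota)$, which is Lemma~\ref{fib:chk:rows}. Two small points. The unbounded last $M$-block also needs $\beta_e$ nondecreasing from $N=16M_a+2$ (Lemma~\ref{fib:chk:A3}). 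And since $\mu_0(\iota)$ is fixed by Data~\ref{fib:tab:mu0}, your fallback of raising it is not available: positivity at the tabulated value is exactly what Lemma~\ref{fib:chk:rows} certifies.
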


\begin{proof}
We apply Lemma~\ref{fib:lem:spike} with $b=\beta'=0$ and $t_s=0.6$. On these subintervals $r_{\rm hi}\le\frac14$, so Theorem~\ref{fib:thm:env} gives
$\Ff{}\ge\psi$ for $M\ge0.6\mu$ ($\mu\ge\mu_0(\iota)\ge19$). For the spike fibers, $Z_s(\mu)\le\max(Z_a,Z_b,Z_c)$.
$Z_a$ bounds the fibers $M\le7$ by Lemma~\ref{fib:lem:KM}.

$Z_b$ bounds the fibers $M\ge8$ with $t\in[0.21,0.6)$ by Lemma~\ref{fib:lem:Phi} (with $(r_m,\allowbreak N_{\min})=(\frac14,\allowbreak 10)$ in Lemma~\ref{fib:lem:lamU}, and
the range (L) by Lemma~\ref{fib:chk:B2}) on the $18$ cells $[t_a,t_b]$ of Lemma~\ref{fib:chk:B3}, each with the five $M$-blocks $[mM_a,2mM_a]$,
$m\in\{1,2,4,8\}$, and $[16M_a,\infty)$, $M_a=\max(8,19t_a)$; on a cell and a block the bound of Lemma~\ref{fib:chk:B3} is multiplied by the
weight bound
\[
(1-q)^{-M}e^{-\ell_0\mu}\le\exp\Big(-\max\big((\tfrac{\ell_0}{t_b}-L_b)\max(mM_a,8,t_a\mu_0),\ (\ell_0-t_bL_b)\mu_0\big)\Big),
\]
which holds because $(1-q)^{-M}e^{-\ell_0\mu}\le e^{-(\ell_0/t-L_b)M}=e^{-(\ell_0-tL_b)\mu}$ and $\ell_0/L_b>0.6\ge t_b$ on these subintervals;
the last block uses $\beta_e$ nondecreasing from $N=16M_a+2\ge130$ (Lemma~\ref{fib:chk:A3}).

$Z_c$ bounds the fibers $M\ge8$ with $t<0.22$ by Lemma~\ref{fib:lem:Zc}.
Each of these bounds holds for every $\mu\ge\mu_0(\iota)$, and so do the other terms of \eqref{fib:eq:row}, which increase with $\mu$. The
margin \eqref{fib:eq:row} at $\mu_0(\iota)$ is Lemma~\ref{fib:chk:rows}.
\end{proof}

\paragraph{The wing.}
On the subintervals $\iota=31,\dots,39$ the cut $0.6$ is too high for the spike weight; there we prove the fiber inequality on part of
the \emph{wing} $t<0.6$, where the Gaussian slack $\Sl(t)$ may be negative and the bonus, kept with its sign, pays for it.

\begin{lemma}[the wing inequality]\label{fib:lem:wing}
Let $0<r_a\le r\le r_b\le\frac14$, $t\in[0.1548,0.6]$, $8\le M_0\le M$, $N=M+2$ and $N_0=M_0+2$. Let $\Ghat=\max(G(\omega_t),G(\omega_t+\bar h))$ with
$\bar h=\frac{\bar{\mathsf k}}{M_0+3}$, $\bar{\mathsf k}=3.85+\frac{10}{M_0}$, $g_1=|G'(\omega_t)|$, and $\rho_*,\kappa_A,\beta_e$ of Lemma~\ref{fib:lem:amgm} at $N_0$ with
$r_m=r_b$ and $\zeta_m=\min(6,r_b\sqrt{N_0})$, and suppose that $\beta_e$ is nondecreasing on $[N_0,\infty)$. Put $\kappa'=\kappa_A\Ghat/\rho_*$,
$b'=4\Ghat/\rho_*$, $c(N)=\frac{N-1}{1-r_a^2}-\kappa'$, $D_e=\frac4{\rho_*^2\beta_e}$ and $\tau_1=2-6G_0+7.1|G_1|-\bar{\mathsf k}^2G_2/(M_0+3)$. If
$c(N_0)>0$ and $r_a\sqrt{N_0}\ge\frac{b'}{2c(N_0)}$, then for every $j$ in the range (U),
\[
\mu(\Ff(M,j)-\psi(t))\ \ge\ \min\big(2,w(N)\big),\qquad w(N):=(N-2)\Sl(t)+\tau_1+c(N)r_a^2N-b'r_a\sqrt N-g_1D_e ,
\]
and $w$ is convex in $N$. The range (M) gives at least $2$, and the range (L) gives at least $0$ if
$\frac{2t}M(N+1)e_L\le\frac92(t-1)^2$.
\end{lemma}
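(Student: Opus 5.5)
The plan follows the proof of Lemma~\ref{fib:lem:Ubonus}(b), but keeps the Gaussian slack $M\,\Sl(t)$ with its sign instead of dropping it. It also replaces the two-copy bound by an explicit lower bound on the bonus that uses $r\ge r_a$.

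\emph{Ranges (M) and (L).} These are immediate. For (M), Proposition~\ref{fib:prop:pieces}(M) gives $\Ff{}\ge Bw\ge B$, and $\mu(B-\psi)=2+\frac92\frac{(M-\mu)^2}{\mu}\ge2$. For (L), I repeat the proof of Lemma~\ref{fib:lem:defL}, which only needs $\oT_L<0$. That holds for $t\le0.6$ and $M\ge8$, since $\frac52\log0.6+\frac{5.1}{11}<0$. Under the stated hypothesis the deficit term vanishes, so (L) contributes at least $0$.

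\emph{Range (U).} If $\omega_U\ge0$, Lemma~\ref{fib:lem:Ubonus}(a) gives at least $2$. Otherwise we are in Lemma~\ref{fib:lem:Ubonus}(b), whose first inequality reads
\[
\mu(\min H_U-\psi)\ge T(t,M)-2N\bigl(\tfrac1{\rho_U}-\tfrac1\rho\bigr)G(\oT_U)+N\Delta-|\sigma_U|\bigl(\tfrac{2N}{\rho_U}e_U-N\Delta\bigr).
\]
I bound the four terms as follows.

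First, for $T(t,M)$ I expand as in Proposition~\ref{fib:prop:template}, but keep $M\Sl(t)=(N-2)\Sl(t)$ with its sign. Since $t\le0.6$, $s_t<0$, so $G_1<0$. Bounding $k_X\le\bar{\mathsf k}$ and $2k_X|G_1|\le 7.1|G_1|$ then gives $\tau_1$. The sign matters here: $-2k_XG_1=2k_X|G_1|\ge7.1|G_1|$ uses $k_X\ge3.55$. Also $k_X^2G_2/(M+3)\le\bar{\mathsf k}^2G_2/(M_0+3)$.

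Second, for the middle loss I use Lemma~\ref{fib:lem:coef}(c) and $G(\oT_U)\le\Ghat$, which follows from convexity as in Lemma~\ref{fib:lem:amgm}. This bounds the loss by $(4\zeta+\kappa_A\zeta^2)\Ghat/\rho_*=b'\zeta+\kappa'\zeta^2$.

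Third, for the bonus I use Lemma~\ref{fib:lem:coef}(e), $N\Delta\ge\frac{(N-1)\zeta^2}{1-r_a^2}$. Combined with the second step this leaves $c(N)\zeta^2-b'\zeta$. Since $c(N)\ge c(N_0)>0$ and $r_a\sqrt N\ge r_a\sqrt{N_0}\ge b'/(2c(N))$, this quadratic increases on $\zeta\ge r_a\sqrt N$, so it is at least its value at $\zeta=r_a\sqrt N$, which is $c(N)r_a^2N-b'r_a\sqrt N$.

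Fourth, for the last term I use $|\sigma_U|\le g_1$ (Lemma~\ref{fib:lem:lamU}, after checking that $\omega_U\ge\omega_t$ in this range of $r_b$ and $N$). I then bound $\bigl(\frac{2N}{\rho_U}e_U-N\Delta\bigr)_+$ by $\frac{4}{\rho_*^2\beta_e}$, exactly as in Lemma~\ref{fib:lem:amgm}(U). That step uses Lemma~\ref{fib:lem:coef}(d),(e) and the sup formula of Lemma~\ref{fib:lem:amgm}. Monotonicity of $\rho_*,\kappa_A$ in $N$ and the assumed monotonicity of $\beta_e$ justify evaluating these quantities at $N_0$.

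Summing the four bounds gives $w(N)$. For convexity, $w$ is affine in $N$ plus $\frac{r_a^2}{1-r_a^2}N(N-1)$, which is convex, and $-b'r_a\sqrt N$, which is convex since $b'\ge0$.

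The main obstacle is the sign bookkeeping in the first step: $\Sl$ may be negative and is kept, while $G_1<0$ turns a loss into the gain $7.1|G_1|$. A second point needing care is justifying $\omega_U\ge\omega_t$ so that $g_1$ is a valid bound; if the regime is not covered by Lemma~\ref{fib:lem:lamU}, I would carry the argument of its proof out at $N_0$ with $r_m=r_b\le\frac14$.
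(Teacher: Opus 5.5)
Your plan follows the paper's proof step for step: split off $\omega_U\ge0$, use the first inequality of Lemma~\ref{fib:lem:Ubonus}(b), keep $M\,\Sl(t)$ with its sign, pay the $\rho_U$ loss with Lemma~\ref{fib:lem:coef}(c) and $\Ghat$, and pay the bonus with Lemma~\ref{fib:lem:coef}(e). Your fourth step, however, has a hole. The bound $\bigl(\frac{2E(\zeta)}{\rho_*}-N\Delta\bigr)_+\le\frac{4}{\rho_*^2\beta_e}$ of Lemma~\ref{fib:lem:amgm}(U) replaces $\zeta^i$, $i\ge3$, by $\zeta_m^{i-2}\zeta^2$. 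It therefore holds only for $\zeta\le\zeta_m$, and here $\zeta_m=\min(6,r_b\sqrt N)\le6$. The lemma is claimed for every $M\ge M_0$, so fibers with $\zeta=r\sqrt N>6$ do occur (they need $N>36/r^2\ge576$). For these fibers, ``exactly as in Lemma~\ref{fib:lem:amgm}(U)'' is not available. You also cannot simply drop the cap: with $\zeta_m=r_b\sqrt N$, the term in $\zeta_m^3$ drives $\beta_e$ negative for large $N$, and the statement's monotonicity hypothesis concerns the capped $\beta_e$ anyway. A symptom of the omission is that your argument never uses the hypothesis $t\ge0.1548$.

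That hypothesis is exactly what closes the gap. For $\zeta\ge6$, Lemma~\ref{fib:lem:utail}(a) gives $\omega_U\ge\frac52\log0.1548+0.19\zeta^2>0$. These fibers therefore fall into your first case, and the range (U) gives at least $2$. The chain of bounds is then needed only for $\zeta<6$, where $\zeta\le\zeta_m$ does hold.

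Two smaller points. First, your phrase ``$2k_X|G_1|\le7.1|G_1|$'' has the inequality reversed; your next sentence states the correct direction, which is the one used. Second, no fallback is needed for $|\sigma_U|\le g_1$: the conditions $r\le r_b\le\frac14$ and $N\ge10$ are covered by the pair $(\frac14,10)$ of Lemma~\ref{fib:lem:lamU}.
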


\begin{proof}
If $\omega_U\ge0$ the range (U) gives at least $2$; this includes $\zeta\ge6$, since $t\ge0.1548$ (Lemma~\ref{fib:lem:utail}(a)). Otherwise
$\omega_t\le\omega_U<0$ (Lemma~\ref{fib:lem:lamU} with $(\frac14,\allowbreak 10)$), so $|\sigma_U|\le g_1$, and the first inequality of Lemma~\ref{fib:lem:Ubonus}(b) gives
\[
\mu(\min H_U-\psi)\ge T+N\Delta-\frac{4\zeta+\kappa_A\zeta^2}{\rho_*}\Ghat-g_1\Big(\frac{2E(\zeta)}{\rho_*}-N\Delta\Big)_+ ,
\]
with $\rho_*,\kappa_A$ at $N$ (Lemma~\ref{fib:lem:coef}), which may be replaced by their values at $N_0$. The last term is at most $g_1D_e$ (the
all-$\zeta$ form of Lemma~\ref{fib:lem:amgm}, at $N$ and with $\zeta_m=\min(6,r_b\sqrt N)$; its value decreases in $N$ because $\rho_*$ and $\beta_e$ are
nondecreasing). By Lemma~\ref{fib:lem:coef}(e) with $r\ge r_a$, $N\Delta-\frac{4\zeta+\kappa_A\zeta^2}{\rho_*}\Ghat\ge c(N)\zeta^2-b'\zeta$, which increases on
$\zeta\ge\frac{b'}{2c(N)}$; and $\zeta\ge r_a\sqrt N\ge r_a\sqrt{N_0}\ge\frac{b'}{2c(N_0)}\ge\frac{b'}{2c(N)}$. Finally $T(t,M)\ge MS+\tau_1$: for $t\le1$,
$G_1\le0$ and $-2\mathsf kG_1\ge7.1|G_1|$ for $\mathsf k\ge3.55$, and $\frac{\mathsf k^2G_2}{M+3}\le\frac{\bar{\mathsf k}^2G_2}{M_0+3}$ for $\mathsf k\le3.85+\frac{10}M\le\bar{\mathsf k}$. The function $w$ is a
convex quadratic in $N$ minus a multiple of $\sqrt N$. The ranges (M) and (L): Proposition~\ref{fib:prop:L} and Lemma~\ref{fib:lem:defL}.
\end{proof}

The minimum of $w$ over $N\ge N_0$ is $w(N_0)$ if $w'(N_0)\ge0$, and otherwise at least the minimum of the quadratic obtained from the
tangent line $\sqrt N\le\frac{\sqrt{N_1}}2+\frac N{2\sqrt{N_1}}$ of $\sqrt N$ at a point $N_1>0$: a closed form in $t$.

\begin{theorem}[the wing]\label{fib:thm:wing}
Consider the following three classes. Class $\mathcal W_1$ has $r\in[0.0566,0.0741]$, $\tau=0.58$, $\mu_W=19$, for $\iota=31,32$. Class $\mathcal W_2$ has $r\in[0.074,0.1112]$, $\tau=0.55$, $\mu_W=20$, for $\iota=33,\dots,36$. Class $\mathcal W_3$ has $r\in[0.1111,0.1667]$, $\tau=0.52$, $\mu_W=26$, for $\iota=37,38,39$.
Each class contains the values of $|2q-1|$ of its subintervals (asserted by the supplementary code, Appendix~\ref{apx:fiber}). For every $q$ with $|2q-1|$ in the range of a class, every real
$\mu\ge\mu_W$, every $M$ with $\tau\le M/\mu\le0.6$ and every $j$: $\Ff(M,j)\ge\psi(t)$. Consequently, for $\iota=31,\dots,39$ and every real
$\mu\ge\mu_0(\iota)$ of Data~\ref{fib:tab:mu0}, no probability mixture with the inputs of $\iota$ (zero-tilt constraint only) has a valley.
\end{theorem}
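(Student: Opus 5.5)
The proof has two parts. First I would establish the wing fiber inequality for each class, using Lemma~\ref{fib:lem:wing}. Then I would deduce the no-valley statement from Lemma~\ref{fib:lem:spike}, following the same pattern as Theorem~\ref{fib:thm:central}, but with cut $t_s=\tau$ in place of $0.6$.

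\emph{The fiber inequality.} Fix a class with its range $[r_a,r_b]$ of $r=|2q-1|$, and reduce to $q\ge\frac12$ by Lemma~\ref{fib:lem:sym}. For $\mu\ge\mu_W$ and $\tau\le t\le0.6$ we have $M\ge\tau\mu_W\ge11$ (for instance $0.58\cdot19=11.02$), so $M\ge12$ and $N\ge14$. Since $r_b\le\frac14$, Lemma~\ref{fib:lem:lamU} with $(\frac14,10)$ applies, and $\tau\ge0.1548$.
\begin{itemize}
\item Range (M) gives at least $2$ by Proposition~\ref{fib:prop:L}.
\item Range (L): I would verify the no-deficit condition $\frac{2t}{M}(N+1)e_L\le\frac92(t-1)^2$ by bounding $(N+1)e_L\le\alpha_1\zeta+\alpha_2\zeta^2$ (Lemma~\ref{fib:lem:coef}(a)) with $\zeta\le\min(6,r_b\sqrt N)$. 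When $\zeta\ge6$, Lemma~\ref{fib:lem:utail}(b) gives $D_L=0$ directly. Since $(t-1)^2\ge0.16$ here, the condition should hold with room to spare once $M\ge12$. Lemma~\ref{fib:lem:defL} then gives at least $0$; if the margin is thin, I would instead use the $\min(2,T-D_L)$ form of Proposition~\ref{fib:prop:L}.
\item Range (U): apply Lemma~\ref{fib:lem:wing}. Cover $t\in[\tau,0.6]$ by finitely many cells, and on each cell take $M_0=\lceil\mu_W t_a\rceil$. Check $c(N_0)>0$ and $r_a\sqrt{N_0}\ge b'/(2c(N_0))$, check that $\beta_e$ is nondecreasing (Lemmas~\ref{fib:chk:A3}, \ref{fib:chk:A3p}), and show $\min_{N\ge N_0}w(N)\ge0$. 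For the last point: if $w'(N_0)\ge0$ this is $w(N_0)$ by convexity; otherwise use the tangent-line closed form stated after the lemma. On each cell, enclose $G_0,G_1,G_2$ and $\Sl$ by monotonicity in $s$.
\end{itemize}

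\emph{The main obstacle} is that $\Sl(t)<0$ for $t<0.5996$. The term $(N-2)\Sl(t)$ is then linear and negative in $N$, and it must be beaten by the bonus $c(N)r_a^2N$, which is quadratic. This is exactly why each class needs a positive lower bound $r_a$ and a class-specific $\mu_W$ and $\tau$. I expect the binding case to be the smallest $N_0$ at $t=\tau$, and the cells there would need to be thin. These finite checks would go to Appendix~\ref{apx:fiber}.

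\emph{The no-valley consequence.} For $\iota=31,\dots,39$ apply Lemma~\ref{fib:lem:spike} with $b=\beta'=0$ and $t_s=\tau$. For $M\ge\tau\mu$ the inequality $\Ff\ge\psi$ holds: it comes from the wing result when $t\le0.6$, and from Theorem~\ref{fib:thm:env} when $t\ge0.6$, since $r_{\rm hi}\le\frac14$ and $\mu\ge\mu_0(\iota)\ge\mu_W\ge19$.

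The spike fibers $M<\tau\mu$ are priced by three bounds, $Z_s\le\max(Z_a,Z_b,Z_c)$:
\begin{itemize}
\item $Z_a$ covers $M\le7$, via Lemma~\ref{fib:lem:KM}.
\item $Z_b$ covers $M\ge8$ with $t\in[0.21,\tau)$, via Lemma~\ref{fib:lem:Phi} on cells and $M$-blocks. Each piece is multiplied by the weight bound $e^{-\max((\ell_0/t_b-L_b)\max(mM_a,8,t_a\mu_0),(\ell_0-t_bL_b)\mu_0)}$, which requires $\ell_0/L_b\ge\tau$. The smaller cut $\tau$ is what makes this weight effective on these subintervals.
\item $Z_c$ covers $t<0.22$, via Lemma~\ref{fib:lem:Zc}.
\end{itemize}
Each bound is nonincreasing in $\mu$, and the other terms of \eqref{fib:eq:row} increase in $\mu$. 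So it suffices to check the margin \eqref{fib:eq:row} at $\mu=\mu_0(\iota)$, which is a finite check for each of the nine subintervals.
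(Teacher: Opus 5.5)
Your proposal is correct and follows the paper's proof step for step. The fiber inequality comes from Lemma~\ref{fib:lem:wing} on $t$-cells with $M_0=\mu_Wt_a$ (the paper's Lemma~\ref{fib:chk:C1}, with $\beta_e$ monotone by Lemma~\ref{fib:chk:A3p} and range (L) free of deficit by the central entry of Lemma~\ref{fib:chk:B2}); the no-valley claim then uses Lemma~\ref{fib:lem:spike} with $b=\beta'=0$ and $t_s=\tau$, Theorem~\ref{fib:thm:env} for $t\ge0.6$, the central spike cells of Lemma~\ref{fib:chk:B3} with $t_b\le\tau$, and the margin of Lemma~\ref{fib:chk:rows} at $\mu_0(\iota)$.

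Two small corrections. First, for $\mathcal W_2$ you only get $M\ge11$ and $N\ge13$, which is harmless. Second, on range (L), neither ``$D_L=0$'' from Lemma~\ref{fib:lem:utail}(b) for $\zeta\ge6$ nor your fallback $\min(2,T-D_L)$ of Proposition~\ref{fib:prop:L} gives $\Ff\ge\psi$ in the wing, because $\Sl(t)<0$ there makes $T(t,M)$ negative for large $M$. You need the no-deficit condition of Lemma~\ref{fib:lem:defL} for every $N$; the paper gets it at once by taking $\zeta_m=r_m\sqrt N$ without the cap at $6$, since the subtracted term is nonincreasing in $N$.
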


\begin{proof}
On each $t$-cell $[t_a,t_b]$ of Lemma~\ref{fib:chk:C1} put $M_0=\mu_Wt_a$ (so $M\ge M_0$ for every fiber of the cell); then $\beta_e$ is
nondecreasing from $N_0$ by Lemma~\ref{fib:chk:A3p}, the range (L) contributes nothing by Lemma~\ref{fib:chk:B2} (central entry, $r\le\frac14$, $t\le0.6$),
and Lemma~\ref{fib:chk:C1} verifies the hypotheses of Lemma~\ref{fib:lem:wing} and $\min_{N\ge N_0}w(N)\ge0$ on the cell. This proves the fiber
inequality. For the second claim apply Lemma~\ref{fib:lem:spike} with $b=\beta'=0$ and $t_s=\tau$: the fibers with $t\ge0.6$ are covered by
Theorem~\ref{fib:thm:env} ($r\le0.1667$, $\mu\ge\mu_0(\iota)\ge\mu_W\ge19$) and those with $\tau\le t<0.6$ by the first claim. The spike term is bounded
as in Theorem~\ref{fib:thm:central}, using only the cells of Lemma~\ref{fib:chk:B3} (central list) with $t_b\le\tau$, whose union is $[0.21,\tau]$;
the weight decays because $\ell_0/L_b>\tau$ on each member. The margin \eqref{fib:eq:row} at $\mu_0(\iota)$ is Lemma~\ref{fib:chk:rows}.
\end{proof}

\paragraph{The outer subintervals: the offset-window theorem.}
On $\iota=0,\dots,5$ and $\iota=40,\dots,54$ we use eight classes, listed in Data~\ref{fib:tab:classes}, and on each the target $\psi+g$ with the
line $g(t)=b+\beta'(t-1)$ specified there. By Lemma~\ref{fib:lem:spike} the averaging uses $\alpha=1+b$ and $\beta=1+\beta'$, and $\beta$ does not enter
the margin; every class has $\beta'\ge0$.

\datablock{Offset-window classes}{fib:tab:classes}
Each class is specified by its subinterval indices and the ordered parameter tuple
$([r_a,\allowbreak r_b],\allowbreak b,\allowbreak \beta',\allowbreak \tau,\allowbreak t_{\rm hi},\allowbreak \mu_D)$, where $r=|2q-1|$ lies in $[r_a,r_b]$:
Class $\mathcal C_1$ covers indices 0 to 2 and has parameter tuple $([0.3793,\allowbreak 0.5],\allowbreak  \frac25,\allowbreak  0,\allowbreak  0.60,\allowbreak  1.7,\allowbreak  45)$. Class $\mathcal C_2$ covers indices 3 to 5 and has parameter tuple $([0.25,\allowbreak 0.3794],\allowbreak  \frac25,\allowbreak  0,\allowbreak  0.69,\allowbreak  1.7,\allowbreak  50)$.

Class $\mathcal C_3$ covers indices 40 to 43 and has parameter tuple $([0.1666,\allowbreak 0.2309],\allowbreak  0,\allowbreak  \frac12,\allowbreak  0.44,\allowbreak  1.75,\allowbreak  50)$. Class $\mathcal C_4$ covers indices 44 to 46 and has parameter tuple $([0.2307,\allowbreak 0.2728],\allowbreak  \frac3{20},\allowbreak  \frac3{10},\allowbreak  0.42,\allowbreak  1.8,\allowbreak  40)$. Class $\mathcal C_5$ covers indices 47 to 49 and has parameter tuple $([0.2727,\allowbreak 0.3104],\allowbreak  \frac3{10},\allowbreak  0,\allowbreak  0.40,\allowbreak  1.7,\allowbreak  50)$.

Class $\mathcal C_6$ covers indices 50 to 52 and has parameter tuple $([0.3103,\allowbreak 0.3444],\allowbreak  \frac3{10},\allowbreak  0,\allowbreak  0.36,\allowbreak  1.7,\allowbreak  50)$. Class $\mathcal C_7$ covers index 53 and has parameter tuple $([0.3442,\allowbreak 0.3847],\allowbreak  \frac3{10},\allowbreak  1,\allowbreak  0.33,\allowbreak  1.9,\allowbreak  50)$. Class $\mathcal C_8$ covers index 54 and has parameter tuple $([0.3846,\allowbreak 0.4],\allowbreak  \frac3{10},\allowbreak  1,\allowbreak  0.30,\allowbreak  1.9,\allowbreak  50)$.
Each class contains the values of $|2q-1|$ of its subintervals, and
$\ell_0(\iota)/L_b(\iota)>\tau$ on each member; both facts are asserted by the supplementary code.

For $N\ge10$ let $\oT_c=\log(Be^{\rho/2}/P^{\mathrm T}_c)$ with $P^{\mathrm T}_c=\frac{\mu^{3/2}c_N}{2\aG(N-1)}$.

\begin{lemma}[an enclosure of $\oT_c$]\label{fib:lem:lamc}
For $N\ge10$: $\omega_t\le\oT_c\le\oT_L\le\omega_t+\frac{3.85+10/M}{M+3}$.
\end{lemma}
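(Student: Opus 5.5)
The plan is to prove the three inequalities separately, treating each as a comparison of the amplitudes $P^{\mathrm T}_c$, $P^{\mathrm T}_L$ and $P^{\mathrm T}_U$. Since $\oT_X=\log(Be^{\rho/2})-\log P^{\mathrm T}_X$ and $P^{\mathrm T}_X$ is proportional to $X_N$ with a common factor $\mu^{3/2}/(2\aG(N-1))$, we have
\[
\oT_c-\oT_L=\log(L_N/c_N),\qquad \oT_c-\oT_U=\log(U_N/c_N).
\]
So $\oT_c\le\oT_L$ is exactly $c_N\ge L_N$. This follows from Lemma~\ref{fib:lem:amp}: $c_N\ge L_Ne^{E_4^{(0)}}\ge L_N$, because $E_4^{(0)}\ge0$.

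The upper end then follows from Lemma~\ref{fib:lem:k}. By part (a), for $M\ge1$,
\[
\oT_L=\omega_t+\frac{k_L(M)}{M+3}.
\]
We need $k_L(M)\le3.85+10/M$. Part (c) gives this for $M\ge8$, and $N\ge10$ means $M\ge8$. Hence $\oT_L\le\omega_t+(3.85+10/M)/(M+3)$.

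For the lower bound $\omega_t\le\oT_c$, it suffices to show $c_N\le U_N$. Then $\oT_c\ge\oT_U=\omega_t+k_U(M)/(M+3)\ge\omega_t$, since $k_U\ge3.55>0$ by Lemma~\ref{fib:lem:k}(c).

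\begin{itemize}
\item For $N$ even, $c_N=W_{N/2}\le(\pi(N/2+\frac14))^{-1/2}=U_N$ by the upper bound of Lemma~\ref{fib:lem:wallis}.
\item For $N$ odd, $c_N=W_{(N+1)/2}e^{(N+1)I(1/(N+1))}$ and $U_N=e^{1/(2(N+1))}(\pi(\frac{N+1}2+\frac14))^{-1/2}$. Lemma~\ref{fib:lem:wallis} gives $W_{(N+1)/2}\le(\pi(\frac{N+1}2+\frac14))^{-1/2}$. It remains to show $(N+1)I(h)\le\frac1{2(N+1)}$ with $h=1/(N+1)$, that is, $I(h)\le h^2/2$.
\end{itemize}

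The last inequality $I(h)\le h^2/2$ is the step I expect to need care, since the paper so far has only used the opposite bound $I\ge x^2/2$. I would write $I(x)=\sum_{k\ge1}\frac{x^{2k}}{2k(2k-1)}$, which gives $I(h)=\frac{h^2}2+\frac{h^4}{12}+\cdots$, so this route fails. The odd case therefore needs a direct comparison instead.

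I would check whether the slack in Wallis suffices: $W_m\le(\pi(m+\frac14))^{-1/2}e^{-\epsilon}$ with a margin of order $1/m^2$. From the monotone sequence $x_m\uparrow1/\pi$ one has $x_m\le1/\pi$, but the sharper asymptotics $W_m^2(m+\frac14)=\frac1\pi(1-\frac1{32m^2}+\cdots)$ provide a deficit of order $1/(64m^2)$. This should dominate the excess $(N+1)(I(h)-h^2/2)\approx\frac{1}{12(N+1)^3}$, which is of order $1/m^3$.

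Concretely, I would prove $W_m^2(m+\frac14)\le\frac1\pi e^{-1/(48m^2)}$, or something similar, by the same ratio argument. The ratio $x_{m+1}/x_m=1+\frac1{16m^3+\dots}$ telescopes from $m$ to $\infty$ and gives $\log(1/(\pi x_m))\ge\sum_{i\ge m}\frac1{17i^3}$. That margin suffices once $N\ge10$. Alternatively, one can use Robbins' bounds on the odd case directly.
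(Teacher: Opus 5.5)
Your two upper bounds are the paper's own argument: $\oT_c\le\oT_L$ is $c_N\ge L_N$ (Lemma~\ref{fib:lem:amp}), and the last inequality is Lemma~\ref{fib:lem:k}(a),(c). For the lower bound you take a different route, through $c_N\le U_N$ and $k_U\ge3.55$. The even case follows at once from Lemma~\ref{fib:lem:wallis}.

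The odd case, as you justify it, rests on a false inequality. For odd $N$, with $m=\frac{N+1}2$ and $h=\frac1{N+1}$, $c_N\le U_N$ is equivalent to $(N+1)\bigl(I(h)-\frac{h^2}2\bigr)\le\frac12\log\frac1{\pi x_m}$. The telescoped factors are $\log(1+\frac1{D_i})$ with $D_i=16i^3+36i^2+24i+4$. For every $i\le36$ they satisfy $\log(1+\frac1{D_i})<\frac1{D_i}<\frac1{17i^3}$, so the bound $\log\frac1{\pi x_m}\ge\sum_{i\ge m}\frac1{17i^3}$ cannot come termwise. It is in fact false at $m=6$, the first relevant value: $\log\frac1{\pi x_6}\approx7.97\cdot10^{-4}$, against $\sum_{i\ge6}\frac1{17i^3}\approx9.64\cdot10^{-4}$.

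The step is easily repaired with a single factor. Since $x_m\le x_{m+1}\le\frac1\pi$, we get $\log\frac1{\pi x_m}\ge\log(1+\frac1{D_m})\ge\frac1{D_m+1}$. On the other side, $(N+1)\bigl(I(h)-\frac{h^2}2\bigr)\le\frac{(N+1)h^4}{12(1-h^2)}=\frac1{12N(N+1)(N+2)}=\frac1{96m^3-24m}$. So $c_N\le U_N$ follows from $96m^3-24m\ge2(D_m+1)$, that is, $64m^3-72m^2-72m-10\ge0$, which holds for $m\ge2$.

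The paper never compares $c_N$ with $U_N$. It drops the $\frac14$ in the Wallis bound and absorbs the excess of $I$ into $\frac1{2N(N+1)}$, which gives the cruder $c_N\le(\pi N/2)^{-1/2}e^{1/(2N)}$. It then checks from the explicit formula for $\oT_c$ that $\oT_c-\omega_t\ge f(N)\ge\frac3N-\frac5{(N-2)^2}-\frac1{N^2}>0$ for $N\ge10$.

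Your route, once repaired, reuses Lemma~\ref{fib:lem:k} and yields the stronger $\oT_c\ge\oT_U\ge\omega_t+\frac{3.55}{M+3}$. Its cost is the second-order slack in the Wallis bound for odd $N$. The paper's route needs only the first-order Wallis bound, plus a separate elementary estimate of $f(N)$.
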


\begin{proof}
As in Lemma~\ref{fib:lem:k}(a),
$\oT_c=\frac52\log\frac N\mu-\frac12\log N+\log(1-\frac1N)-\frac1{2(N+1)}+\frac12\log\frac2\pi-\log c_N$. Moreover $c_N\le(\frac{\pi N}2)^{-1/2}e^{1/(2N)}$: for
$N$ even $c_N=W_{N/2}\le(\pi N/2)^{-1/2}$, and for $N$ odd $W_{(N+1)/2}\le(\pi(N+1)/2)^{-1/2}$ (Lemma~\ref{fib:lem:wallis}) and
$(N+1)I(\frac1{N+1})\le\frac1{2(N+1)}+\frac1{2N(N+1)}$, as $I(x)=\sum_{i\ge1}\frac{x^{2i}}{2i(2i-1)}$. Hence $\oT_c\ge\omega_t+f(N)$ with
$f(N)=\frac52\log(1+\frac2{N-2})+\log(1-\frac1N)-\frac1{2(N+1)}-\frac1{2N}$. With $\log(1+x)\ge x-\frac{x^2}2$ and $\log(1-x)\ge-x-x^2$ ($x\le\frac12$),
$f(N)\ge\frac3N-\frac5{(N-2)^2}-\frac1{N^2}>0$ for $N\ge10$. The upper bounds: $c_N\ge L_N$ (Lemma~\ref{fib:lem:amp}) and Lemma~\ref{fib:lem:k}.
\end{proof}

For $N\ge10$ and $r\le\frac12$ let $Z^{(3)}_L=\frac{2(\sqrt3s_N+r)}{N+1}$, $Z^{(3)}_R=\frac{2\sqrt3s_N}{N+1}$, let $E^{(3)}_4$ be $E_4$ with these values in place
of $Z_L,Z_R$, and consider the conditions
\begin{gather*}
\text{(c1) }Z^{(3)}_L<1,\qquad\text{(c2) }r+Z^{(3)}_R<1,\\
\text{(c3) }3\le\frac{12(1-(r+Z^{(3)}_R)^2)}{(1-r^2)\rho},\qquad\text{(c3$'$) }\frac{12(1-(r+Z^{(3)}_R)^2)}{1-r^2}\ge3 .
\end{gather*}

\begin{lemma}[the offset-window shape bound]\label{fib:lem:Wshape}
Let $N\ge10$, $r\le\frac12$ ($q\in[\frac12,\frac34]$), and assume (c1), (c2), (c3). Then for $0\le w\le3$: $0\le J\le N$ and
$\frac{\sqrt{1-r^2}}{c_N}B_N(J)\ge\exp(-\frac\rho2w-\varepsilon|x|-c_0-E^{(3)}_4)$. Conditions (c1) and (c2) improve as $N$ grows ($Z^{(3)}_L$ and
$Z^{(3)}_R$ decrease); (c3$'$) implies (c3), because $\rho<1$, and also improves as $N$ grows. So (c1), (c2), (c3) hold for all $N\ge N_0$ once
(c1), (c2), (c3$'$) hold at $N_0$.
\end{lemma}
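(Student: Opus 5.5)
The plan is to redo the proof of Lemma~\ref{fib:lem:expo}(b) with the window $w\le1$ replaced by $w\le3$, that is $|u|\le\sqrt3\,s_N$, and with $Z_L,Z_R$ replaced by $Z^{(3)}_L,Z^{(3)}_R$.

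\emph{Support.} First I check that $0\le J\le N$ for $w\le3$. The bound $|u|\le\sqrt3 s_N$ gives $J\ge q(N-1)+\frac12-\sqrt3s_N$. Since $q\ge\frac12$ and $s_N\le\frac{\sqrt N}2$, it suffices that $\frac{N-1}2+\frac12\ge\frac{\sqrt{3N}}2$, which holds for $N\ge3$. For the upper end, $N-J\ge(1-q)(N-1)+\frac12-\sqrt{3q(1-q)N}$. I would bound this more economically through (c1) and (c2) than by direct means. The point $J$ corresponds to $z=2(u-r)/(N+1)$ with $|z|\le Z^{(3)}_L$ on the left and $0<z\le Z^{(3)}_R$ on the right. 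So $x_J:=2d/(N+1)=r+z$ satisfies $|r+z|<1$ by (c1) (since $|r+z|\le\max(Z^{(3)}_L,r+Z^{(3)}_R)$) and (c2). Now $|2d|<N+1$ gives $-\frac12<J<N+\frac12$, and since $J$ is an integer, $0\le J\le N$.

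\emph{Lower bound.} By Lemma~\ref{fib:lem:LB}, $\frac{\sqrt{1-r^2}}{c_N}B_N(J)\ge e^{-(N+1)\Psi_r(z)}$. I then split on the sign of $z$.

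For $z\le0$, that is $u\le r$, we have $|z|\le2(\sqrt3s_N+r)/(N+1)=Z^{(3)}_L<1$. Lemma~\ref{fib:lem:gauss}(a) with $Z=Z^{(3)}_L$ gives $(N+1)\Psi_r\le(N+1)\frac{z^2}{2(1-r^2)}+E^{(3)}_4$. Lemma~\ref{fib:lem:expo}(a) turns the quadratic into $\frac\rho2w-\varepsilon x+c_0\le\frac\rho2w+\varepsilon|x|+c_0$.

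For $0<z\le Z^{(3)}_R$, Lemma~\ref{fib:lem:gauss}(b) with $Z=Z^{(3)}_R$ applies, and its hypothesis $r+Z<1$ is (c2). The quartic part is bounded by $E^{(3)}_4$, with $D_Z$ as in the definition of $E_4$. The cubic part $(N+1)rz^3/(3D_Z)$ must be absorbed by $2\varepsilon x$, exactly as in Lemma~\ref{fib:lem:expo}(b). Using $z\le2u/(N+1)$, the requirement reduces to $w\le12(1-(r+Z^{(3)}_R)^2)/((1-r^2)\rho)$. With $w\le3$ this is precisely (c3). Then $-\varepsilon x+(\text{cubic part})\le\varepsilon x=\varepsilon|x|$, and the claimed bound follows.

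\emph{Monotonicity.} I would verify this termwise. $Z^{(3)}_R=2\sqrt{3vN}/(N+1)$ decreases for $N\ge1$. $Z^{(3)}_L$ adds $2r/(N+1)$, which also decreases. Hence (c1) and (c2) persist for larger $N$. The right side of (c3$'$) increases as $Z^{(3)}_R$ decreases, so (c3$'$) persists too. Finally (c3$'$) implies (c3) because $\rho=N/(N+1)<1$, so dividing by $\rho$ only enlarges the right side.

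The step most likely to need care is the cubic absorption on the right: I must recheck the algebra $\frac{8ru^3}{3(N+1)^2D_R}\le\frac{2ru}{v(N+1)}\iff u^2\le\frac{3(N+1)vD_R}{4}$. With $u^2=vNw$, this becomes $w\le\frac{3(N+1)D_R}{4N}\cdot\frac{1}{v}\cdots$, which should match the stated form of (c3) after substituting $v=(1-r^2)/4$ and $D_R=(1-(r+Z^{(3)}_R)^2)(1-r^2)$.
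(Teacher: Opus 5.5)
Your proof is correct and is the paper's argument: you rerun Lemma~\ref{fib:lem:expo}(b) with $|u|\le\sqrt3\,s_N$, using (c1) for Lemma~\ref{fib:lem:gauss}(a), (c2) for Lemma~\ref{fib:lem:gauss}(b) and (c3) to absorb the cubic term, and you check the monotonicity termwise. The only difference is the support: the paper checks $0\le J\le N$ directly from $q\in[\tfrac12,\tfrac34]$ and $N\ge9$, while you derive it from $|r+z|<1$ via (c1) and (c2), which is valid and also gives $J\ge0$. One slip in your final recheck: the $v$ belongs in the denominator, $u^2\le 3(N+1)D_R/(4v)$; with $u^2=vNw$ and $4v=1-r^2$ this becomes exactly $w\le 12(1-(r+Z^{(3)}_R)^2)/((1-r^2)\rho)$, the condition you correctly state in the main argument.
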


\begin{proof}
The proof of Lemma~\ref{fib:lem:expo}(b), with $|u|\le\sqrt3s_N$ in place of $|u|\le s_N$: on the side $z\le0$, $|z|\le Z^{(3)}_L$, and
Lemma~\ref{fib:lem:gauss}(a) applies by (c1); on the side $z>0$, $z\le Z^{(3)}_R$, Lemma~\ref{fib:lem:gauss}(b) applies by (c2), and the cubic term is
absorbed for $w\le3$ by (c3). Support: $q(N-1)+\frac12-\sqrt3s_N\ge0$ for $q\ge\frac12$, $N\ge4$, and $(1-q)N+q-\frac12\ge\sqrt{3q(1-q)N}$ for $q\le\frac34$,
$N\ge9$.
\end{proof}

\begin{lemma}[the offset-window minorant]\label{fib:lem:Wpiece}
Under the hypotheses of Lemma~\ref{fib:lem:Wshape}, let $W'=\min(3,\gamma)$, $\rho_L=\rho+\varepsilon/\sqrt3$,
$e_L=c_0+E^{(3)}_4+\varepsilon(\frac{\sqrt3}2+\frac1{2\sqrt3})$ and $\omega_L=\oT_c+\frac{\rho_L}2\Delta+e_L$. On $0\le w\le W'$,
$\Ff{}\ge H_L(w)=P_L(\gamma-w)e^{-\rho_Lw/2}+Bw$, and $\min_{[0,W']}H_L\ge B\,X_L$, where $X_L=W'$ if the \emph{offset-window condition}
$\omega_L\le y+\log(1+y)$, $y=\frac{\rho_L}2(\gamma-W')$, holds, and $X_L=\max(X_{\rm split},X_{\log})$ otherwise, with
\[
X_{\rm split}=1-\frac2\rho G(\oT_c)+\frac{\Delta}{1+s(\omega_L)_+}-\frac{2e_L}\rho,\qquad X_{\log}=1-\frac2{\rho_L}\big(\oT_c+e_L\big).
\]
\end{lemma}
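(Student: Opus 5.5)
The plan follows the proof of Proposition~\ref{fib:prop:pieces}(L) and Proposition~\ref{fib:prop:L}, with the window $w\le1$ enlarged to $w\le W'$.

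\emph{Step 1: the minorant.} On $0\le w\le W'\le\gamma$ the factor $\gamma-w$ is nonnegative, so a lower bound for $B_N(J)$ suffices in \eqref{fib:eq:Fform}. Lemma~\ref{fib:lem:Wshape} gives, for $w\le3$,
\[
B_N(J)\ \ge\ \frac{c_N}{\sqrt{1-r^2}}\exp\Bigl(-\tfrac\rho2w-\varepsilon|x|-c_0-E^{(3)}_4\Bigr).
\]
I would then bound $|x|$ by the tangent of $\sqrt w$ at $w=3$, namely $|x|\le\frac{w+3}{2\sqrt3}$. This gives
\[
\varepsilon|x|\ \le\ \frac{\varepsilon}{2\sqrt3}\,w+\frac{\sqrt3}{2}\,\varepsilon ,
\]
so the exponent of $w$ becomes $-\rho_Lw/2$ with $\rho_L=\rho+\varepsilon/\sqrt3$. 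Substituting into \eqref{fib:eq:Fform} yields $\Ff{}\ge H_L(w)$ with
\[
P_L=P^{\mathrm T}_c\,e^{-c_0-E^{(3)}_4-\sqrt3\varepsilon/2}.
\]
A direct computation then gives
\[
\log\frac{Be^{\rho_L\gamma/2}}{P_L}=\oT_c+\frac{\rho_L}2\Delta+\frac{\rho_L-\rho}{2}+c_0+E^{(3)}_4+\frac{\sqrt3}2\varepsilon=\omega_L ,
\]
because $(\rho_L-\rho)/2=\varepsilon/(2\sqrt3)$. This identity explains the form of $e_L$ in the statement.

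\emph{Step 2: the minimum of $H_L$.} By Corollary~\ref{fib:lem:min}, $H_L$ decreases up to
\[
w_*=\gamma-\frac{2s(\omega_L)}{\rho_L}
\]
and increases afterwards. The offset-window condition $\omega_L\le y+\log(1+y)$ is equivalent to $s(\omega_L)\le y$, by the monotonicity of the root map in Lemma~\ref{fib:lem:G}(a), and hence to $w_*\ge W'$. In that case the minimum over $[0,W']$ is $H_L(W')\ge BW'$, since $\gamma-W'\ge0$; this gives $X_L=W'$.

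Otherwise I would use $\min_{[0,W']}H_L\ge\min_{\mathbb R}H_L=B\bigl(\gamma-\frac2{\rho_L}G(\omega_L)\bigr)$. Two estimates bound this from below.

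\emph{The log estimate.} Lemma~\ref{fib:lem:G}(d) gives $G(\omega_L)\le(\omega_L)_+$. Then
\[
\gamma-\frac2{\rho_L}\omega_L=1-\frac{2}{\rho_L}(\oT_c+e_L),
\]
since the $\Delta$ terms cancel. This is $X_{\log}$. If $\omega_L\le0$, then $G=0$ and $\gamma\ge1\ge X_{\log}$, because $\oT_c\ge\omega_t$; here I still need to check that $\oT_c+e_L\ge0$ is not required. The clean route is to note that if $\omega_L\le0$ the offset-window condition holds trivially, so this case never reaches the second branch.

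\emph{The split estimate.} Lemma~\ref{fib:lem:split} is applied with $\omega_0=\oT_c$, $h=\frac{\rho_L}2\Delta+e_L$ and $\sigma=G'(\omega_L)=s/(1+s)$. It gives
\[
\gamma-\frac2{\rho_L}G(\omega_L)\ \ge\ 1-\frac2{\rho_L}G(\oT_c)+\Delta(1-\sigma)-\frac2{\rho_L}e_L\sigma .
\]
To reach $X_{\rm split}$ I would use three facts: $1-\sigma=1/(1+s)$, $\sigma<1$, and $\rho_L\ge\rho$. The last turns $\frac2{\rho_L}$ into $\frac2\rho$ on both negative terms, using $G\ge0$ and $e_L\ge0$.

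\emph{Main obstacle.} I expect the delicate point to be the tangent bound on $|x|$ and the exact bookkeeping of $e_L$, $\rho_L$ and the shift $\varepsilon/2$ that produces $\omega_L$ exactly. A secondary point is the sign case $s(\omega_L)<0$ in the split branch. It is excluded because then $w_*>\gamma\ge W'$, so the offset-window condition would already hold.
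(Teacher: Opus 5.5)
Your proposal is correct and follows the paper's proof essentially step for step. You use the tangent bound $|x|\le\frac{w+3}{2\sqrt3}$ to obtain $\rho_L$ and $P_L$ with $\log(Be^{\rho_L\gamma/2}/P_L)=\omega_L$ exactly, and Corollary~\ref{fib:lem:min} to identify the offset-window condition with $w_*\ge W'$. In the other branch, where $s(\omega_L)>y\ge0$ forces $\omega_L>0$, you apply Lemma~\ref{fib:lem:split} at $\omega_0=\oT_c$ for $X_{\rm split}$ and Lemma~\ref{fib:lem:G}(d) for $X_{\log}$. The paper uses $G(\omega)\le\omega-\log(1+\omega)$ and then drops the logarithm, while you use $G(\omega)\le\omega$ directly; these amount to the same thing.

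Your aside that $G(\omega_L)=0$ when $\omega_L\le0$ is false, since $G>0$ for $\omega<0$. It is harmless, because, as you then note, that case never reaches the second branch.
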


\begin{proof}
The shape bound with the tangent $|x|\le\frac{w+3}{2\sqrt3}$ gives $H_L$ with $P_L=P^{\mathrm T}_ce^{-c_0-E^{(3)}_4-\varepsilon\sqrt3/2}$, hence
$\log(Be^{\rho_L\gamma/2}/P_L)=\oT_c+\frac{\rho_L}2\Delta+e_L=\omega_L$ exactly. By Corollary~\ref{fib:lem:min}, $H_L$ decreases up to
$w_*=\gamma-\frac{2s(\omega_L)}{\rho_L}$, and $w_*\ge W'$ is the offset-window condition; then $\min_{[0,W']}H_L=H_L(W')\ge BW'$. Otherwise $s(\omega_L)>y\ge0$,
so $\omega_L>0$, and $\min H_L\ge B(\gamma-\frac2{\rho_L}G(\omega_L))$. Lemma~\ref{fib:lem:split} at $\omega_0=\oT_c$ gives $X_{\rm split}$ (using $e_L\ge0$,
$\rho_L\ge\rho$, $G'<1$ and $1-G'(\omega_L)=\frac1{1+s(\omega_L)}$). And $G(\omega)\le\omega-\log(1+\omega)$ for $\omega\ge0$ (Lemma~\ref{fib:lem:G}(d))
gives $\gamma-\frac2{\rho_L}G(\omega_L)\ge1-\frac2{\rho_L}(\oT_c+e_L)+\frac2{\rho_L}\log(1+\omega_L)\ge X_{\log}$.
\end{proof}

The bound $X_{\log}$ is what makes the offset-window minorant work at $t\in[0.5,0.7]$, where $\oT_c<0<\omega_L$ and the split form, which
expands $G$ around the negative point $\oT_c$, loses.

\begin{lemma}[the derivative bound for the range (U)]\label{fib:lem:Umono}
Let $N\ge10$, $0<r\le\frac12$, $\hat a=\artanh r-r$, $m=N+1$, $R=\sqrt{N+r^2(N-1)^2}$, and $\Phi_r(N)=\frac{3.55}{N+1}+\frac{\rho_U}2\Delta-e_U$. Then
$\omega_U\ge\omega_t+\Phi_r(N)$,
\[
\Phi_r=\frac{3.55}m+\frac{r^2}2\Big(m-4+\frac4m\Big)-\Big(\frac{2r}m+\hat a\Big)R-mc_r+\frac{2r^2}m+2r\hat a+\frac{\hat a^2m}2 ,
\]
and, for every $m>2$,
\[
\frac{d\Phi_r}{dm}\ \ge\ \frac{r^2}2-\hat ar-c_r-\frac{2r^2}m-\frac{4r^2}{m^2}-\frac1{m(m-2)}-\frac{\hat a}{2r(m-2)}-\frac{3.55}{m^2}.
\]
Every subtracted term is nonincreasing in $m$ ($\hat ar$ and $c_r$ do not depend on $m$, the others decrease); so if the right side is positive at $m_0$, then $\Phi_r$ is increasing on $[m_0,\infty)$.
\end{lemma}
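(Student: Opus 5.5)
The plan has three steps. First derive the inequality $\omega_U\ge\omega_t+\Phi_r(N)$ from the definitions. Then expand $\Phi_r$ into the displayed closed form. Finally differentiate in $m$ and bound the derivative termwise.

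\emph{Step 1: the lower bound and the closed form.} By Proposition~\ref{fib:prop:pieces}(U), $\omega_U=\oT_U+\frac{\rho_U}2\Delta-e_U$. Lemma~\ref{fib:lem:k} gives $\oT_U\ge\omega_t+\frac{3.55}{N+1}$, since $k_U\ge3.55$. This yields $\omega_U\ge\omega_t+\Phi_r(N)$.

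For the closed form, substitute the definitions with $m=N+1$:
\[
\rho=\tfrac{N}{N+1},\qquad \gamma=\tfrac{R^2}{(1-r^2)N},\qquad s_N=\sqrt{vN}=\tfrac12\sqrt{(1-r^2)N},
\]
\[
\tilde y=r+\tfrac{\hat a m}2,\qquad \varepsilon'=\tfrac{4\tilde y s_N}{m},\qquad c_0'=\tfrac{2\tilde y^2}{m}.
\]
Then $\frac{\rho_U}2\gamma=\frac{(1-r^2)\rho\gamma}2-\frac{\varepsilon'\sqrt\gamma}2$, and the second piece cancels against the $\frac{\varepsilon'}2\sqrt\gamma$ in $e_U$. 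The $\frac{\varepsilon'}{2\sqrt\gamma}$ terms also cancel between the two. What remains is
\[
\Phi_r=\tfrac{3.55}m+\tfrac{(1-r^2)\rho}2(\gamma-1)-\tfrac{r^2\rho}2-\varepsilon'\sqrt\gamma-mc_r+c_0'.
\]
In this expression:
\begin{itemize}
\item $(1-r^2)\rho\gamma=R^2/m$;
\item $\varepsilon'\sqrt\gamma=\frac{2\tilde y R}{m}=\left(\frac{2r}m+\hat a\right)R$;
\item $c_0'=\frac{2r^2}m+2r\hat a+\frac{\hat a^2m}2$.
\end{itemize}
Combining the constant and $\rho$-terms with $R^2=N+r^2(N-1)^2=m-1+r^2(m-2)^2$ gives
\[
\tfrac{R^2-(1-r^2)N-r^2N}{2m}=\tfrac{r^2(m-2)^2}{2m}=\tfrac{r^2}2\left(m-4+\tfrac4m\right).
\]
This matches the displayed formula. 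I expect this step to be bookkeeping only.

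\emph{Step 2: differentiating in $m$.} Compute the derivative of each term of the closed form:
\begin{itemize}
\item $-\frac{3.55}{m^2}$ from the first term;
\item $\frac{r^2}2\left(1-\frac4{m^2}\right)$ from the quadratic term;
\item $-mc_r$ gives $-c_r$;
\item $\frac{2r^2}m$ gives $-\frac{2r^2}{m^2}$;
\item $\frac{\hat a^2m}2$ gives $+\frac{\hat a^2}2\ge0$, which can be dropped.
\end{itemize}
The main obstacle is the term $-\left(\frac{2r}m+\hat a\right)R$, because $R$ grows roughly like $r(m-2)$. Its derivative is
\[
\tfrac{2r}{m^2}R-\left(\tfrac{2r}m+\hat a\right)R',\qquad R'=\tfrac{1+2r^2(m-2)}{2R}.
\]
The plan for this term has three parts:
\begin{itemize}
\item Drop $\frac{2r}{m^2}R\ge0$.
\item Use $R\ge r(m-2)$ to get $R'\le\frac{1}{2r(m-2)}+\frac{r^2(m-2)}{R}\le\frac1{2r(m-2)}+r$.
\item Multiply out, with $-\frac{2r}{m}\cdot\frac1{2r(m-2)}=-\frac1{m(m-2)}$, $-\frac{2r}m\cdot r=-\frac{2r^2}m$, $-\hat a r$ and $-\frac{\hat a}{2r(m-2)}$.
\end{itemize}
Collecting, the $-\frac{2r^2}{m^2}$ from the third item merges with $-\frac{2r^2}{m^2}$ from $\frac{r^2}2\left(1-\frac4{m^2}\right)$ to give $-\frac{4r^2}{m^2}$. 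This reproduces exactly the stated lower bound.

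\emph{Step 3: monotonicity.} Each subtracted term is either independent of $m$ or decreasing for $m>2$, so the right-hand side is nondecreasing in $m$. If it is positive at $m_0$, it stays positive, and $\Phi_r$ increases on $[m_0,\infty)$. I will also double-check that $\frac{r^2(m-2)}{R}\le r$ holds whenever $r>0$, which is needed for the bound on $R'$.
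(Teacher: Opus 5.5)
Your proposal is correct and follows the paper's proof essentially line by line. It uses the bound $\oT_U\ge\omega_t+\frac{3.55}{N+1}$ from Lemma~\ref{fib:lem:k}, combines the $\varepsilon'$ terms into $-\varepsilon'\sqrt\gamma=-\bigl(\frac{2r}m+\hat a\bigr)R$ (the two $\frac{\varepsilon'\sqrt\gamma}{2}$ pieces add rather than cancel, as your displayed expression for $\Phi_r$ already correctly reflects), and obtains the derivative bound from $R\ge r(m-2)$ after dropping $\frac{2r}{m^2}R$ and $\frac{\hat a^2}{2}$.
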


\begin{proof}
$\oT_U\ge\omega_t+\frac{3.55}{M+3}$ (Lemma~\ref{fib:lem:k}) and $M+3=N+1$. In $\frac{\rho_U}2\Delta-e_U$ the $\varepsilon'$ terms combine, by $\Delta=\gamma-1$,
to $-\varepsilon'\sqrt\gamma=-\frac{2\tilde y}{N+1}R$; also $\frac\rho2((1-r^2)\Delta-r^2)=\frac{r^2(N-1)^2}{2(N+1)}$ and $c_0'=\frac{2\tilde y^2}{N+1}$. With
$\tilde y=r+\frac{\hat am}2$ this gives the displayed $\Phi_r$. Since $R\ge r(m-2)$, $R'=\frac{1+2r^2(m-2)}{2R}\le r+\frac1{2r(m-2)}$; dropping
$\frac{2r}{m^2}R\ge0$ and $\frac{\hat a^2}2\ge0$ from
$\Phi_r'=-\frac{3.55}{m^2}+\frac{r^2}2(1-\frac4{m^2})+\frac{2r}{m^2}R-(\frac{2r}m+\hat a)R'-c_r-\frac{2r^2}{m^2}+\frac{\hat a^2}2$ gives the bound.
\end{proof}

\begin{lemma}[the range (U) in exact form]\label{fib:lem:Uexact}
Let $0<r_a\le r\le r_b\le\frac12$ and let $N_1\ge10$ be such that the right side of Lemma~\ref{fib:lem:Umono} is positive at $m=N_1+1$ for every
$r\in[r_a,r_b]$. Then $\omega_U\ge\omega_t+\Phi_r(N)\ge\omega_t+\Phi_r(N_0)$ for $N\ge N_0\ge N_1$, and
\[
\min_{w\ge\gamma}H_U\ \ge\ B\,X_U,\qquad X_U=1+\Delta(r_a,N_0)-\frac2{\rho_*(N_0,r_b)}\,G\big(\min(0,\omega_t+\Phi_{[r_a,r_b]}(N_0))\big),
\]
where $\Phi_{[r_a,r_b]}(N_0)$ is a lower bound of $\Phi_r(N_0)$ over $r\in[r_a,r_b]$ (the $r$-dependent quantities $\artanh r-r$, $c_r$, $\gamma$ and $s_N$ are
monotone in $r$ and are evaluated at the ends; the term $-c_0'\le0$ of $e_U$ may be dropped, which only lowers $\Phi_r$).
\end{lemma}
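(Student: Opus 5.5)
The proof has three parts: the monotonicity of $\Phi_r$ from Lemma~\ref{fib:lem:Umono}, a bonus-keeping version of Corollary~\ref{fib:lem:min} on $[\gamma,\infty)$, and a reduction of every $r$-dependent quantity to one end of $[r_a,r_b]$.

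\emph{Step 1: the lower bound on $\omega_U$.} Lemma~\ref{fib:lem:Umono} gives $\omega_U\ge\omega_t+\Phi_r(N)$ at once. By hypothesis the right side of the derivative bound is positive at $m=N_1+1$. Every subtracted term is nonincreasing in $m$, so the bound stays positive for all $m\ge N_1+1$. Hence $\Phi_r$ is increasing in $N$ on $[N_1,\infty)$. Since $N\ge N_0\ge N_1$, this gives $\Phi_r(N)\ge\Phi_r(N_0)\ge\Phi_{[r_a,r_b]}(N_0)$. Dropping $-c_0'\le0$ only lowers $\Phi_r$, so the bound remains valid.

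\emph{Step 2: the minimum on the range (U).} Proposition~\ref{fib:prop:pieces}(U) gives $\Ff\ge H_U$ on $w\ge\gamma$. By Corollary~\ref{fib:lem:min}, $H_U$ is unimodal with minimizer $w_*=\gamma-2s(\omega_U)/\rho_U$.
\begin{itemize}
\item If $\omega_U\ge0$, then $w_*\le\gamma$, so the minimum over $[\gamma,\infty)$ is $H_U(\gamma)=B\gamma$.
\item If $\omega_U<0$, the minimum is at least $B(\gamma-\frac2{\rho_U}G(\omega_U))$.
\end{itemize}
Both cases are covered by $B(\gamma-\frac2{\rho_U}G(\min(0,\omega_U)))$, because $G(0)=0$.

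By Step 1, $\min(0,\omega_U)\ge\min(0,\omega_t+\Phi_{[r_a,r_b]}(N_0))$. Since $G$ decreases on $(-\infty,0]$ (Lemma~\ref{fib:lem:G}(b)), $G(\min(0,\omega_U))\le G(\min(0,\omega_t+\Phi_{[r_a,r_b]}(N_0)))$.

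\emph{Step 3: uniformity in $r$ and $N$.}
\begin{itemize}
\item \emph{The bonus.} $\Delta=r^2(N^2-N+1)/((1-r^2)N)$ is increasing in $r$ and in $N$. So $\gamma\ge1+\Delta(r_a,N_0)$.
\item \emph{The factor $\rho_U$.} Use $\rho_U\ge\rho_*(N,r_m)$ from Lemma~\ref{fib:lem:coef}(b) with $r_m=r_b$, and note that $\rho_*$ is nondecreasing in $N$. This gives $1/\rho_U\le1/\rho_*(N_0,r_b)$. Here I must check that Lemma~\ref{fib:lem:coef}(b) needs only $r\le r_m\le\frac12$ and $N\ge10$, and that $\rho_*(N_0,r_b)>0$ is part of the data.
\item \emph{The infimum $\Phi_{[r_a,r_b]}$.} Justify it by the stated monotonicity of each $r$-dependent ingredient: $\hat a$ and $c_r$ increase in $r$, $\gamma$ increases in $r$, and $s_N=\sqrt{vN}$ decreases in $r$. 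Each ingredient is evaluated at the end of $[r_a,r_b]$ that lowers $\Phi_r$, term by term.
\end{itemize}
Combining these bounds gives $\min H_U\ge B X_U$.

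\emph{Main obstacle.} The delicate point is the sign bookkeeping in the termwise interval lower bound of $\Phi_r$. The product $(\frac{2r}m+\hat a)R$ must be bounded above using the upper ends of both $r$-ranges. The positive term $\frac{r^2}2(m-4+\frac4m)$ uses $r_a$, since $m-4+\frac4m>0$ for $m\ge11$. The two remaining positive terms, $\frac{2r^2}{m}$ and $2r\hat a$, likewise use $r_a$ (both increase in $r$). The rest of the argument is routine.
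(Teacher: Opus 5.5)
Your proof is correct and follows the paper's own argument: you use Lemma~\ref{fib:lem:Umono} for $\omega_U\ge\omega_t+\Phi_r(N)$ and for the monotonicity of $\Phi_r$ from $N_1$, Corollary~\ref{fib:lem:min} for the two cases $\omega_U\ge0$ and $\omega_U<0$, the decrease of $G$ on $(-\infty,0]$, and finally $\rho_U\ge\rho_*$ with $\rho_*$ nondecreasing in $N$ and $\Delta$ increasing in $r$ and $N$. The loose end you flag, $\rho_*(N_0,r_b)>0$, holds automatically because $N_0\ge10$ and $r_b\le\frac12$ give $\rho_*(N_0,r_b)\ge0.37$; and your termwise discussion of $\Phi_{[r_a,r_b]}$ is not needed, since the lemma takes that lower bound as given.
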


\begin{proof}
By Proposition~\ref{fib:prop:pieces}(U) and Corollary~\ref{fib:lem:min}, $\min_{w\ge\gamma}H_U=B\gamma$ if $\omega_U\ge0$ and $B(\gamma-\frac2{\rho_U}G(\omega_U))$
otherwise. By Lemma~\ref{fib:lem:Umono}, $\Phi_r$ is nondecreasing on $[N_1,\infty)$. In the second case $\omega_t+\Phi_{[r_a,r_b]}(N_0)\le\omega_U<0$, and $G$
decreases on $(-\infty,0]$; and $\rho_U\ge\rho_*$ (Lemma~\ref{fib:lem:coef}(b)), $\Delta$ increases in $r$ and $N$, and $\rho_*$ increases in $N$.
\end{proof}

\begin{theorem}[the offset-window theorem]\label{fib:thm:window}
For each class of Data~\ref{fib:tab:classes}, every $q$ with $|2q-1|\in[r_a,r_b]$, every real $\mu\ge\mu_D$, every integer $M$ with $t=M/\mu\ge\tau$
and every $j$: $\Ff(M,j)\ge\psi(t)+b+\beta'(t-1)$.
\end{theorem}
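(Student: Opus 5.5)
By the symmetry of Lemma~\ref{fib:lem:sym} I take $q\ge\frac12$, so $r=2q-1\in[r_a,r_b]\subseteq[0.1666,0.5]$. Every fiber has $M\ge\tau\mu_D\ge0.30\cdot40=12$, hence $N\ge14$. I split the fibers by $t$ into three regimes.

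\emph{Large $t$.} On $t\ge t_{\rm hi}$ the target $\psi+g$ is negative, because $\psi(t)$ falls like $-\frac92(t-1)^2$ while $g$ grows only linearly. There I would use the argument of Lemma~\ref{fib:lem:larget} with $r_m=r_b$ and $N_*=t_{\rm hi}\mu_D+2$. It gives $\Ff\ge0$, and $0\ge\psi+g$ there; this last inequality is a one-variable check in $t$ for each class.

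\emph{The window $\tau\le t\le t_{\rm hi}$.} Here I work with the three offset ranges. In the range $w\le W'=\min(3,\gamma)$, Lemma~\ref{fib:lem:Wpiece} gives $\Ff\ge BX_L$. For this I need (c1), (c2), (c3$'$) at $N_0=\tau\mu_D+2$, which Lemma~\ref{fib:lem:Wshape} then propagates to all larger $N$. I also need $\oT_c\in[\omega_t,\omega_t+\bar h]$, which is Lemma~\ref{fib:lem:lamc}. On $W'\le w\le\gamma$ (nonempty only if $\gamma>3$), the bound $\Ff\ge Bw\ge3B$ holds as in (M). In the range $w\ge\gamma$, Lemma~\ref{fib:lem:Uexact} gives $\Ff\ge BX_U$, once the derivative bound of Lemma~\ref{fib:lem:Umono} is checked at the ends of $[r_a,r_b]$.

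It then suffices that $\mu B\,X-\mu(\psi+g)\ge0$. Since $\mu B=N=M+2$ and $\mu\psi(t)=M(1-\frac92(t-1)^2/t)$, this reduces to an explicit inequality in $(t,M)$ whose $r$-dependence is monotone through $\Delta,\varepsilon,c_0,E^{(3)}_4,\hat a,c_r$. Those quantities are enclosed at the ends of $[r_a,r_b]$. I would cover $[\tau,t_{\rm hi}]$ by finitely many $t$-cells and $M$ by blocks $M\ge M_0=\mu_Dt_a$, using the monotonicity in $N$ already recorded: $\rho_*$ nondecreasing, $\Delta$ increasing, and $\Phi_r$ increasing. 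For large $N$ the bonus $N\Delta\ge(N-1)r_a^2N$ is quadratic in $N$ and dominates all first-order remainders, so only a bounded range of $N$ per cell needs explicit evaluation. These cell checks are finite interval evaluations, in the spirit of Lemmas~\ref{fib:chk:A2} and~\ref{fib:chk:C1}.

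\emph{Main obstacle.} I expect the hardest part to be the range (L) for $t$ near $0.5$--$0.7$ at small $N$. There $\psi+g$ is close to $B$, while $\omega_L$ is positive only because of the off-centre error $e_L$ and the shift $\frac{\rho_L}{2}\Delta$. I would try $X_{\log}$ first: $\oT_c<0$ there makes $1-\frac2{\rho_L}(\oT_c+e_L)$ exceed $1$ by roughly $\frac{5}{\rho_L}|\log t|$, which should beat $g=b\le\frac25$ once $\oT_c+e_L\le -b\rho_L/2$. Near $t=1$ I would switch to $X_{\rm split}$, where $\Delta/(1+s)$ pays for $e_L$. If neither works on some cell, I would refine the $t$-cells and raise the starting $N$ of the offending class; this is what the class-dependent $\mu_D$ is for.
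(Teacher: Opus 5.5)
Your plan is correct and follows the paper's proof. The steps coincide: symmetry; Lemma~\ref{fib:lem:larget} for $t\ge t_{\rm hi}$, where $\psi+g<0$; and on $[\tau,t_{\rm hi}]$ the bound $\Ff\ge B\min(X_L,3,X_U)$ from Lemmas~\ref{fib:lem:Wshape}, \ref{fib:lem:Wpiece}, \ref{fib:lem:lamc}, \ref{fib:lem:Umono} and~\ref{fib:lem:Uexact}, closed by finite interval checks on $t$-cells. The differences are only organizational. The paper treats large $t$ uniformly ($r_m=\frac12$, $N_*=66$, from $t\ge1.6$ and $\mu_D\ge40$). On each $t$-cell it needs a single evaluation at $N_0=\mu_Dt_a+2$, because every term of $X_L$ and $X_U$, and the offset-window condition in its two stated forms, moves the right way in $N$, while $B\in[t,tN_0/(N_0-2)]$ handles the two signs of $\psi+g$; so your $N$-blocks and the large-$N$ domination argument are not needed.
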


\begin{proof}
By Lemma~\ref{fib:lem:sym} take $q\ge\frac12$. Here $N\ge\mu_D\tau+2\ge17$.

\emph{Fibers with $t\ge t_{\rm hi}$.} By Lemma~\ref{fib:chk:D0}, $\psi+g<0$ at $t_{\rm hi}$, and $\psi'+\beta'<0$ for $t\ge t_{\rm hi}$, so $\psi+g<0$ on
$[t_{\rm hi},\infty)$. Since $\mu_D\ge40$, $N\ge1.6\cdot40+2=66$, and Lemma~\ref{fib:lem:larget} with $r_m=\frac12$, $N_*=66$ (Lemma~\ref{fib:chk:D4})
gives $\Ff{}\ge0$.

\emph{Fibers with $\tau\le t\le t_{\rm hi}$.} By Lemma~\ref{fib:chk:D1} and Lemma~\ref{fib:chk:D1p}, (c1), (c2), (c3$'$) hold at $N_0=\mu_D\tau+2$, hence
(c1), (c2), (c3) hold for every $N$ of the class's fibers (Lemma~\ref{fib:lem:Wshape}); and by Lemma~\ref{fib:chk:D2} the hypothesis of
Lemma~\ref{fib:lem:Uexact} holds with $N_1=\mu_D\tau+2$. The ranges $[0,W']$, $[3,\gamma]$ (if $\gamma>3$) and $[\gamma,\infty)$ of $w$ give
$\Ff{}\ge B\min(X_L,3,X_U)$ (Lemmas~\ref{fib:lem:Wpiece} and~\ref{fib:lem:Uexact}; on $[3,\gamma]$, $\Ff{}\ge Bw\ge3B$). On a $t$-cell $[t_a,t_b]$ of
Lemma~\ref{fib:chk:D3} put $N_0=\mu_Dt_a+2$. Every term of $X_L$ moves the right way in $N$, with the bounds
$G(\oT_c)\le\max(G(\omega_t),G(\omega_t+\bar h))$ (Lemma~\ref{fib:lem:lamc} and the convexity of $G$), $s(\omega_L)\le s(\omega_t+\bar h+e_L)_++\bar\rho_L\Delta/2$
and $\rho_L\le\bar\rho_L=1+\varepsilon(N_0)/\sqrt3$; and so does $X_U$. The offset-window condition, once it holds at $N_0$ in the form
``$\Delta\ge2$ and $\omega_t+\bar h+e_L+\bar\rho_L\le\log(1+\rho(\Delta-2)/2)$'' or in the form ``$\omega_t+\bar h+e_L+\bar\rho_L\le0$'', holds for every $N\ge N_0$
(for $\Delta\ge2$, $W'=3$ and the condition reads $\oT_c+e_L+\rho_L\le\log(1+\rho_L(\Delta-2)/2)$, in which $\Delta$ cancels from $\omega_L$; for $\Delta<2$,
$W'=\gamma$ and it reads $\omega_L\le0$; and $e_L$, $\bar h$, $\varepsilon$ decrease while $\Delta$ and $\rho$ increase in $N$). Finally
$B=\frac{tN}{N-2}\in[t,\frac{tN_0}{N_0-2}]$, so $\Ff{}\ge\psi+g$ follows from $\min(X_L,3,X_U)\ge\frac{\psi+g}t$ where $\psi+g>0$, and from
$\min(X_L,3,X_U)\ge\frac{(\psi+g)(N_0-2)}{tN_0}$ where $\psi+g\le0$. This is Lemma~\ref{fib:chk:D3}.
\end{proof}

\begin{corollary}[no valley on the outer subintervals]\label{fib:cor:outer}
For $\iota\in\{0,\dots,5\}\cup\{40,\dots,54\}$, the caps $\theta(\iota)$ of Data~\ref{p2:tab:theta} and every real $\mu\ge\mu_0(\iota)$ of Data~\ref{fib:tab:mu0}, no probability mixture with
the inputs of $\iota$ (zero-tilt constraint only) has a valley.
\end{corollary}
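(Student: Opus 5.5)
The plan is to mirror the proof of Theorem~\ref{fib:thm:wing}, with the offset-window theorem in the role of the fiber inequality above the cut. Fix $\iota$ in one of the listed ranges and let $\mathcal C$ be the class of Data~\ref{fib:tab:classes} containing $\iota$. It carries $[r_a,r_b]$, the target line $g(t)=b+\beta'(t-1)$ with $b\in[0,\frac25]$ and $\beta'\ge0$, the cut $\tau$ and the threshold $\mu_D$. The membership facts come from the supplementary assertions: $|2q-1|\in[r_a,r_b]$ for every $q$ with $q/(1-q)\in\Ihat_\iota$, and $\ell_0(\iota)/L_b(\iota)>\tau$. I then apply Lemma~\ref{fib:lem:spike} with this $b$, $\beta'$ and $t_s=\tau$. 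Its hypothesis $\Ff(M,j)\ge\psi(t)+g(t)$ for $M\ge\tau\mu$ is exactly Theorem~\ref{fib:thm:window}, once I know $\mu\ge\mu_0(\iota)\ge\mu_D$. I would read this inequality off Data~\ref{fib:tab:mu0} and expect the starting means of the outer subintervals to be chosen at least $\mu_D$.

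Next comes the spike term. Since $\beta'\ge0$ and $\tau<1$, we have $\Def_g\le\Def+b$ on the spike fibers, so $Z_s(\mu)\le\max(Z_a,Z_b,Z_c)$ with three pieces. The piece $Z_a$ covers $M\le7$ via Lemma~\ref{fib:lem:KM}, used with this $b$; its provisos $\ell_0\mu_0\ge\frac32$ and the corresponding inequality for $\ell_0$ are checked at $\mu_0(\iota)$. The piece $Z_c$ covers $M\ge8$, $t<0.22$ via Lemma~\ref{fib:lem:Zc}, which already carries $b$. The piece $Z_b$ covers $M\ge8$ with $0.21\le t<\tau$ through Lemma~\ref{fib:lem:Phi}, giving $\Def\le\Phi(t;M)$, and adds $b$. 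Here I would take $(r_m,N_{\min})$ from Lemma~\ref{fib:lem:lamU} to match the class: $(0.3847,12)$ and $(0.4,12)$ for $\mathcal C_7,\mathcal C_8$, $(0.3104,10)$ and $(0.3444,10)$ for $\mathcal C_5,\mathcal C_6$, and $(0.28,10)$ or $(\frac14,10)$ where applicable.

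$\mathcal C_1,\mathcal C_2$ have $r$ up to $\frac12$, outside those pairs. For them I would take $\tau=0.60,0.69$ and hope the listed cells of Lemma~\ref{fib:chk:B3} for the outer lists supply the needed bound, or else bound $Z_b$ crudely by $\frac{\mu^{3/2}}{16\sqrt{v_{\min}}\aG}+b$ times the weight $e^{-(\ell_0-\tau L_b)\mu}$, as in Lemma~\ref{fib:lem:Zc}, extended to $t<\tau$. The no-deficit hypothesis on the range (L) comes from Lemma~\ref{fib:lem:defL} and the outer entries of Lemma~\ref{fib:chk:B2}. On each cell and $M$-block the resulting bound is multiplied by the weight
\[
(1-q)^{-M}e^{-\ell_0\mu}\le\exp\bigl(-(\ell_0-t_bL_b)\mu_0\bigr),
\]
which decays in $\mu$ because $\ell_0/L_b>\tau\ge t_b$.

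Finally, every bound on $Z_s$ is nonincreasing in $\mu\ge\mu_0(\iota)$. The remaining terms of \eqref{fib:eq:row}, namely $-\frac{9D}{2\mu}$ and $-\frac{r_{\rm hi}^2}{(1-r_{\rm hi}^2)\mu}$, increase in $\mu$. So it suffices to verify \eqref{fib:eq:row} at $\mu=\mu_0(\iota)$ with $\theta=\theta(\iota)$, $D=D(\iota)$ and $1+b$ in place of $1$; I take this to be the corresponding row of Lemma~\ref{fib:chk:rows}. Lemma~\ref{fib:lem:spike} then excludes a valley, and because it uses only the zero-tilt constraint, the conclusion holds for mixtures with the inputs of $\iota$.

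The main obstacle is the spike bound for $\mathcal C_1,\mathcal C_2$ with $r$ near $\frac12$, where Lemma~\ref{fib:lem:lamU} is unavailable. There the margin depends on the bonus $b=\frac25$ offsetting the large caps $\theta=1$. I would first try the crude weight-decay bound and fall back to explicit cell checks if it fails.
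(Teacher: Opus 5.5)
Your skeleton matches the paper's proof: Lemma~\ref{fib:lem:spike} with the class's $b$, $\beta'$ and $t_s=\tau$, Theorem~\ref{fib:thm:window} for $t\ge\tau$, $Z_a$ and $Z_c$ from Lemmas~\ref{fib:lem:KM} and~\ref{fib:lem:Zc} with $b$, and the margin at $\mu_0(\iota)$. The gap is in $Z_b$ for $\mathcal C_1$.

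First, note that $\mathcal C_2$ is not a problem. Its $r_b=0.3794\le0.3847$, so the pair $(0.3847,12)$ of Lemma~\ref{fib:lem:lamU} applies, and its spike fibers have $N\ge50\cdot0.21+2>12$.

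For $\mathcal C_1$ ($r_b=\frac12$), Lemma~\ref{fib:lem:Phi} needs $\omega_U\ge\omega_t$, and neither of your fallbacks supplies it. The cells of Lemma~\ref{fib:chk:B3} only bound $\Phi(t;M)$. They say nothing about $\Def$ unless that hypothesis already holds, so appealing to them is circular. The crude bound fails outright. At $\iota=0$, $\mu=45$, $v_{\min}=\frac3{16}$, the crude deficit $\mu^{3/2}/(16\sqrt{v_{\min}}\aG)$ is about $180$. The isolated-vertex bound forces $\ell_0\le\log\frac43\le-\log(1-q)$, so for fibers with $t$ just below $0.6$ the weight $(1-q)^{-M}e^{-\ell_0\mu}$ is at least about $e^{-0.4\log(4/3)\cdot45}\approx0.0056$. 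The product is roughly $1$, whereas the whole budget $1+b-\theta-\frac{9D}{2\mu}-\frac{r_{\rm hi}^2}{(1-r_{\rm hi}^2)\mu}$ is about $0.27$ (the certified $Z_b$ there is $0.231$).

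The missing idea is to prove $\omega_U\ge\omega_t$ for $\mathcal C_1$ directly from the exact lower bound $\omega_U\ge\omega_t+\Phi_r(N)$ of Lemma~\ref{fib:lem:Umono}:
\begin{itemize}
\item Check $\Phi_r(N)\ge0$ on $(r,N)$-blocks covering $[0.3793,0.5]\times[11.45,29.1]$, with each term at its worst block end (Lemma~\ref{fib:chk:A4d}). This relies on $\tilde yR/(N+1)$ being nondecreasing in $N$ (Lemma~\ref{fib:chk:A4m}).
\item For $N\ge29$, use that $\Phi_r$ is nondecreasing from $N_1=\mu_D\tau+2=29$ (Lemmas~\ref{fib:lem:Uexact} and~\ref{fib:chk:D2}).
\end{itemize}

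Second, you cannot simply cite Lemma~\ref{fib:chk:rows} for the margin with your spike bounds. Those margins are computed with the certified values $L$ of Lemma~\ref{fib:chk:B3}. On the $14$ class cells marked for the offset window in Data~\ref{fib:tab:b3} (every class except $\mathcal C_3$), $L$ is only certified to dominate $\min(L_\Phi,L_W)$. Here $L_W=(\psi+g+B_{\max}(-X)_+)_+$, with $X=\min(X_L,3,X_U)$ evaluated as in Theorem~\ref{fib:thm:window} at $N_0=\mu_Dt_a+2$, is a bound for $\Def_g$ itself below the cut. It is valid where (c1), (c2), (c3), (c3$'$) and the positivity of the derivative bound of Lemma~\ref{fib:lem:Umono} hold at $N_0$ (Lemmas~\ref{fib:chk:D1p} and~\ref{fib:chk:B3}(b)). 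With only $\Def\le\Phi$ you get $\Def_g\le b+L_\Phi$, which does not justify $\Def_g\le b+L$ on those cells. You also have no check that the margins survive with $L_\Phi$ instead; some are very thin, for example $0.00144$ at $\iota=53$, where $\mathcal C_7$ uses the offset window on three cells.

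Two smaller points. The rows use the two-exponent weight bound of Theorem~\ref{fib:thm:central}, of which your $e^{-(\ell_0-t_bL_b)\mu_0}$ is only the weaker half. And the last $M$-block of Lemma~\ref{fib:lem:Phi} needs $\beta_e$ nondecreasing from $16M_a+2$ (Lemma~\ref{fib:chk:A3p}).
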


\begin{proof}
Apply Lemma~\ref{fib:lem:spike} with the class's $b$, $\beta'$ and $t_s=\tau$; note $\mu_0(\iota)\ge\mu_D$. Theorem~\ref{fib:thm:window} covers $t\ge\tau$. On
$t<\tau$ ($<1$), $\Def_g\le\Def+b$, and the spike term is bounded as in Theorem~\ref{fib:thm:central}, with the deficits raised by $b$
(Lemmas~\ref{fib:lem:KM} and~\ref{fib:lem:Zc} with this $b$), on the class cells of Lemma~\ref{fib:chk:B3}: on each cell and $M$-block, $\Def_g\le b+L$ with the value $L$ of
Lemma~\ref{fib:chk:B3}, which is, rounded up, the first of the following two bounds (the margins of Lemma~\ref{fib:chk:rows} use
these certified bounds), $L_\Phi$, or, on the cells marked for the offset-window bound in Data~\ref{fib:tab:b3}, the smaller of $L_\Phi$
and the second, $L_W$:
The first bound uses $\Phi$ of Lemma~\ref{fib:lem:Phi}, with $r_m=r_b$ and $M_a=\max(8,\mu_Dt_a)$; the range (L) contributes nothing by Lemma~\ref{fib:chk:B2} (class
entry); $\omega_U\ge\omega_t$ holds by Lemma~\ref{fib:lem:lamU} for the classes with $r_b\le0.4$, with the pair $(r_m,\allowbreak N_{\min})$ of Lemma~\ref{fib:chk:A4p} that
covers $r_b$ (for $\mathcal C_2$, $\mathcal C_7$, $\mathcal C_8$ it has $N_{\min}=12$, and their spike fibers have $N\ge50\cdot0.21+2>12$), and for $\mathcal C_1$ by Lemma~\ref{fib:chk:A4d} on the $(r,\allowbreak N)$-blocks of $[0.3793,0.5]\times[11.45,29.1]$ followed by
Lemmas~\ref{fib:lem:Uexact} and~\ref{fib:chk:D2} for $N\ge29$; the last $M$-block uses $\beta_e$ nondecreasing from $N=16M_a+2$
(Lemma~\ref{fib:chk:A3p}); this gives $\Def\le\Phi\le L_\Phi$, hence $\Def_g\le\Def+b\le b+L_\Phi$.

The second is the offset-window bound $(\psi+g+B_{\max}(-X)_+)_+$, $X=\min(X_L,3,X_U)$ evaluated as in Theorem~\ref{fib:thm:window} on the cell with
$N_0=\mu_Dt_a+2$ and $B_{\max}=t_bN_0/(N_0-2)$; it bounds $\Def_g$ directly and is used only on cells where (c1), (c2), (c3) and the positivity
of the derivative bound of Lemma~\ref{fib:lem:Umono} hold at $N_0$, and (c3$'$) holds at $N_0$ (Lemma~\ref{fib:chk:D1p}).
Hence $\Def_g\le\min(b+L_\Phi,L_W)\le b+L$ (on the cells where only $L_\Phi$ is used, read $L_W=\infty$). Since the
offset-window bound already bounds $\Def_g$ (it includes the excess $b$), the bound $b+\min(\Phi,\text{offset-window bound})$
used here is conservative; the smaller $\min(b+\Phi,\text{offset-window bound})$ would also be valid.
In the use of Lemma~\ref{fib:chk:A4d}, each term of $\Phi_r(N)$ is taken at its worst end on an $(r,\allowbreak N)$-block $[r_a,r_b]\times[N_{\rm lo},N_{\rm hi}]$:
$\rho_U\ge\rho_*(N_{\rm lo},r_b)$; $\Delta$ at $(r_a,\allowbreak N_{\rm lo})$; $\frac{r^2\rho}2$ and $(N+1)c_r$ at $(r_b,\allowbreak N_{\rm hi})$; $\frac{\varepsilon'}{2\sqrt\gamma}$ by its all-$\zeta$ bound
$\frac1{(N+1)\sqrt c}+\frac{r^2}{6(1-r^2)\sqrt c}$ (from Lemma~\ref{fib:lem:mono}(b) with $N_a=N$, admissible as $\zeta^2\le N/4$, and the two
elementary bounds of the proof of Lemma~\ref{fib:lem:coef}) at $(N_{\rm lo},\allowbreak r_b)$; and $\frac{\varepsilon'\sqrt\gamma}2=\frac{\tilde yR}{N+1}$ at $(r_b,\allowbreak N_{\rm hi})$. The last step uses that
$\frac{\tilde yR}{N+1}$ is nondecreasing in $r$ and in $N$: its $N$-derivative is $\frac h{4R}$ with
$h=\hat a(1+2r^2(N-1))-\frac{2r(1-4r^2)(N-1)}{(N+1)^2}$, $\hat a=\artanh r-r$, and $\partial_Nh=2\hat ar^2+\frac{2r(1-4r^2)(N-3)}{(N+1)^3}\ge0$ for $N\ge3$
and $r\le\frac12$, while $h(r,11.45)>0$ on $[0.3793,0.5]$ (Lemma~\ref{fib:chk:A4m}). The margin \eqref{fib:eq:row} with $\alpha=1+b$ at $\mu_0(\iota)$ is
Lemma~\ref{fib:chk:rows}.
\end{proof}

\begin{proof}[Proof of Theorem~\ref{fib:thm:intro-novalley}]
Theorem~\ref{fib:thm:central} ($\iota=6,\dots,30$), Theorem~\ref{fib:thm:wing} ($\iota=31,\dots,39$) and Corollary~\ref{fib:cor:outer} ($\iota=0,\dots,5$ and
$40,\dots,54$), with the starting means of Data~\ref{fib:tab:mu0}, all between $19$ and $50$.
\end{proof}

\section{Forests with at most 24 vertices}\label{p1:sec:overview}

This section proves Theorem~\ref{p1:thm:main}, the case $n\le24$ of Theorem~\ref{thm:main}, by exact counting relative
to a maximum independent set; together with Theorem~\ref{p2:thm:main} it proves Theorem~\ref{thm:main}.

\begin{theorem}\label{p1:thm:main}
Every forest with at most $24$ vertices has a unimodal independence sequence.
\end{theorem}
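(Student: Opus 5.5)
By Corollary~\ref{p1:cor:window} it suffices to exclude a valley at every rank $k\in W(F)$, that is, with $\lceil n/4\rceil<k<h_B(F)$. The plan is to shrink this window with Proposition~\ref{p1:prop:inputs} until only a few small cases remain, and then to exclude a valley at those ranks by exact counting through the decomposition of Lemma~\ref{p1:lem:fiber} relative to a maximum independent set $A$.

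\emph{Step 1: bound the window.} By Proposition~\ref{p1:prop:inputs}(c), $h_B\le\lceil(3n-2\nu)/6\rceil$. Since $\nu=n-\alpha(F)$ for a forest, a nonempty window needs $\lceil n/4\rceil+1<\lceil(3n-2\nu)/6\rceil$, which bounds $\nu$ from above. For $n\le24$ this leaves at most two or three ranks $k$ in the window, and only forests with $\alpha(F)$ close to $n$, that is, many leaves. I would tabulate the triples $(n,\nu,k)$ that survive. The paper's ``$43$ parameter triples'' suggests this is exactly the list.

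\emph{Step 2: exclude a valley by counting.} Take $A$ a maximum independent set, so $|A|=\alpha=n-\nu$ and $|C|=\nu$. By Lemma~\ref{p1:lem:fiber},
\[
i_k=\sum_{y,m}N(y,m)\binom{m}{k-y}.
\]
Since $C$ is small, each $J\subseteq C$ with $|J|=y$ has $m(J)\ge\alpha-\sum_{c\in J}\deg_A(c)$, and $\sum_{c\in C}\deg_A(c)\le n-1$. The $J=\emptyset$ fiber gives a pure binomial $\binom{\alpha}{k}$, which is strictly increasing for $k<\alpha/2$. For $k$ in the window I expect $k<\alpha/2$ or close to it, so it suffices to show
\[
i_{k+1}-i_k\ge\binom{\alpha}{k+1}-\binom{\alpha}{k}-\sum_{y\ge1}\sum_m N(y,m)\Bigl|\binom{m}{k+1-y}-\binom{m}{k-y}\Bigr|>0 .
\]
The negative contributions come only from fibers with $m<2(k-y)+1$, and these are controlled by counting blocked vertices: a set $J$ blocks at least $|J|$ vertices of $A$ (Hall's condition for a maximum independent set $A$, via the matching of $C$ into $A$, from König's theorem in a bipartite graph). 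This yields an explicit lower bound for $i_{k+1}-i_k$ in terms of $(n,\nu,k)$ alone, using only $N(y,m)\le\binom{\nu}{y}$ and $m\le\alpha-y$. When this bound does not suffice, I would instead exclude the valley on the other side, $i_{k-1}>i_k$.

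\emph{Step 3: evaluate the remaining cases.} The bound from Step 2 is an explicit rational formula in $(n,\nu,k)$. I would evaluate it exactly at each surviving triple; this is the role I would give the finitely many exact rational evaluations.

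\emph{Main obstacle.} The hard step is making the worst-case fiber bound of Step 2 sharp enough near the top of the window, where $k$ approaches $\alpha/2$ and the binomial rise becomes small. My first attempt there is to use the finer constraint that the fibers with $m(J)=m$ satisfy $\sum_J (\alpha-m(J))\le\sum_{c\in C}\deg_A(c)\le n-1$. Then only a few fibers can lose much, and the worst configuration is a star-like concentration of blocking.
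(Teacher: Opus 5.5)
Your proposal has two genuine gaps: Step~2 is aimed at inequalities that do not exclude a valley, and even after repair the bound it describes is too weak. A valley at $k$ means $i_{k-1}\ge i_k\le i_{k+1}$, so it is excluded only by $i_k>i_{k-1}$ or $i_k>i_{k+1}$. Your target $i_{k+1}>i_k$ and your fallback $i_{k-1}>i_k$ are both compatible with a valley, and both can fail. For the path $P_{24}$ one has $\nu=12$, $B_F=(1+2x)^{12}$ and $h_B=8$, so $W=\{7\}$, while $i_6=\binom{19}{6}=27132<i_7=\binom{18}{7}=31824>i_8=\binom{17}{8}=24310$.

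Suppose you repair this to a rise into $k$. Your estimate uses only $N(y,m)\le\binom{\nu}{y}$, $m\le\alpha-y$, and the negative parts of $\binom{m}{k-y}-\binom{m}{k-y-1}$, whose least value is $-\mathrm{Cat}_{k-y-1}$. That is the paper's subset bound (Lemma~\ref{fl:lem:subset}) at $w_{\mathrm L}=1$. For $a\ge2k-2$ it coincides with the rising bound $\mathsf R(a,c,k)$ of Corollary~\ref{fl:cor:rise}, which the paper records as negative at $(24,15,7)$ ($\mathsf R=-334$) and at $(22,16,8)$ ($\mathsf R=1430-1790$).

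Step~1 also misplaces the difficulty. The window does not force $\alpha$ close to $n$: $\cD_{24}$ has $249$ triples, and the $43$ of the abstract are only those needing rational evaluations. It includes triples with $\alpha=n/2$, such as $(24,12,7)$, of which $P_{24}$ is an instance. There $2k>a$, so the empty fiber itself falls, $\binom{12}{7}-\binom{12}{6}=-132$. No bound that keeps only the gain of $J=\emptyset$ and subtracts losses of the other fibers can then be positive. The positive contributions of the nonempty fibers must be counted, and that needs lower bounds on the $N_h$, not just $N_h\le\binom ch$.

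The constraint you propose for the main obstacle is correct only for $|J|=1$, where it reads $e(A,C)\le n-1$. At level $h$, $\sum_{|J|=h}(\alpha-m(J))\ge hN_h$ because $|\NA(J)|\ge|J|$; for $F=12K_2$ and $h=2$ this gives $2\binom{12}{2}=132>23$. The relevant quantity is the excess $x(J)=|\NA(J)|-|J|$.

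The missing ingredient is the paper's matching-excess inequality $\mathrm{ex}_h\le(e(A,C)-c)\binom{c-2}{h-1}+d\binom{c-2}{h-2}$ (Theorem~\ref{p1:thm:matching}). It is proved by charging each excess blocker injectively to a cross edge outside a matching of $C$ into $A$ whose unmatched vertices are leaves (Lemma~\ref{p1:lem:exposed}). The paper combines it with:
\begin{itemize}
\item linear minorants of the fiber contribution in the excess (Catalan lines, chords), so that losses are paid by excess;
\item two-sided bounds $\underline{N}_h(e_C)\le N_h\le\overline{N}_h(e_C)$ from the subset, edge-union and pair-extension inequalities, with $e_C=e(F[C])$;
\item convexity of the resulting bounds in $e_C$, so that two or three exact evaluations cover every forest of a triple (Lemma~\ref{fl:lem:anchored}, Proposition~\ref{flx:prop:catbound});
\item for the $35$ hardest triples, the mixed functional $\mathsf V(k)=w_{\mathrm L}(i_k-i_{k-1})+w_{\mathrm R}(i_k-i_{k+1})$ with $w_{\mathrm L}=\frac12$ or $\frac5{12}$, instead of a one-sided inequality.
\end{itemize}
Switching sides per triple, as in your fallback, would require one side to hold for every forest of that triple; a fixed mixed weight avoids having to make that choice.
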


Throughout the section $F$ is a forest with $n$ vertices, $A$ an independent set of maximum cardinality $a=\alpha(F)$,
$C=V(F)\setminus A$, $c=|C|=n-a$ and $d=a-c$; $\NA(J)$, $m(J)$ and the fiber numbers $N(y,m)$ of $A$ are as in
Section~\ref{p1:subsec:fiber}, so that $i_k=\sum_{y,m}N(y,m)\binom{m}{k-y}$ (Lemma~\ref{p1:lem:fiber}); this
decomposition and the fiber numbers are the framework of Zhang and Li~\cite{ZL}. No forest is enumerated.

%% ---------------------------------------------------------------------------------------------
\subsection{Maximum independent sets of forests}\label{p1:sec:structure}

This subsection proves that $C$ is matched into $A$ (Lemma~\ref{p1:lem:matching}) by a matching that leaves only vertices
of degree at most one unmatched (Lemma~\ref{p1:lem:exposed}); Section~\ref{p1:sec:rows} uses both, for the support
conditions and for the matching-excess inequality.

\begin{lemma}[a matching saturating $C$]\label{p1:lem:matching}
There is an injective map $\sigma:C\to A$ such that $v\sigma(v)$ is an edge for every $v\in C$. Consequently $c\le a$ and
$\nu(F)=c=n-a$.
\end{lemma}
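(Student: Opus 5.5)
We will apply Hall's theorem to the bipartite subgraph of $F$ between $C$ and $A$. Every edge of $F$ has at least one endpoint in $C$, since $A$ is independent. For $X\subseteq C$ let $N_A(X)$ denote the set of vertices of $A$ with a neighbour in $X$. It suffices to show $|N_A(X)|\ge|X|$ for every $X\subseteq C$; Hall's theorem then gives an injective $\sigma:C\to A$ with $v\sigma(v)\in E(F)$.

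To prove Hall's condition, suppose $|N_A(X)|<|X|$ and take such an $X$ of minimum size. We use the maximality of $A$ through an exchange: the set $A'=(A\setminus N_A(X))\cup X'$, where $X'\subseteq X$ is independent, is again independent. Indeed, $X'$ has no neighbour in $A\setminus N_A(X)$, and $X'$ is itself independent. So it suffices to find an independent $X'\subseteq X$ with $|X'|>|N_A(X)|$, which contradicts $|A'|\le a$.

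Here we use acyclicity. Consider the forest $F[X\cup N_A(X)]$. Since it is acyclic, it has at most $|X|+|N_A(X)|-1$ edges. Minimality of $X$ gives more: every proper nonempty $Y\subsetneq X$ satisfies $|N_A(Y)|\ge|Y|$, so, deleting one vertex of $X$ in the standard Hall deficiency argument, we get $|N_A(X)|=|X|-1$ and a matching of $X-x$ into $N_A(X)$. The simplest route is then a direct independent set. The induced forest $F[X]$ is bipartite, so it has an independent subset of size at least $|X|/2$. This alone is too weak, so we refine it.

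The refinement is to choose, among all counterexamples, $X$ with $F[X]$ having the fewest edges, or, more cleanly, to argue by induction on $n$ using a leaf. Let $u$ be a leaf of $F$ (isolated vertices lie in $A$ by maximality and are trivial), with neighbour $w$. By maximality of $A$ we may assume $u\in A$: if $u\notin A$, then $w\in A$, and $(A-w)+u$ is a maximum independent set; we accept this replacement at the cost of proving the statement for some maximum $A$ first. Then we handle arbitrary $A$ by the Hall exchange with the edge count. We expect this last point, that Hall holds for every maximum $A$ and not just a convenient one, to be the main obstacle. For it we use the edge count: $F[X\cup N_A(X)]$ is a forest in which every vertex of $X$ has all its $F$-neighbours inside $A$ or $C$. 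Applying König's theorem to this bipartite forest, its maximum matching $\nu'$ equals its minimum vertex cover, and $\alpha(F[X\cup N_A(X)])=|X|+|N_A(X)|-\nu'$. If $\nu'\le|N_A(X)|<|X|$, then some independent set $T$ of this forest has $|T|\ge|X|$. Replacing $A\cap(X\cup N_A(X))=N_A(X)$ by $T$ keeps independence, because vertices of $T\cap X$ have no $A$-neighbours outside $N_A(X)$, and it strictly increases $|A|$. This contradicts maximality.

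Finally, $c\le a$ follows from injectivity of $\sigma$. The $c$ edges $v\sigma(v)$ form a matching, so $\nu\ge c$. Conversely, every edge meets $C$, so $C$ is a vertex cover and $\nu\le c$ (König, or directly, since each matching edge uses a distinct vertex of $C$). Hence $\nu(F)=c=n-a$.
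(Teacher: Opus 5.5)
Your final argument does not close. The decisive step is conditional, ``if $\nu'\le|N_A(X)|<|X|$'', and you never prove $\nu'\le|N_A(X)|$. It does not follow from the setup. The forest $H=F[X\cup N_A(X)]$ also contains the edges of $F[X]$, and these can be matched as well. For a general independent $A$, a single edge of $F[X]$ with no neighbour in $A$ already gives $\nu'=1>0=|N_A(X)|$. So the inequality could only come from the maximality you are trying to contradict. The earlier detours do not supply this bound either: the minimum counterexample with $|N_A(X)|=|X|-1$, the leaf induction ``at the cost of proving the statement for some maximum $A$ first'', and the edge count $|X|+|N_A(X)|-1$ are all left unfinished.

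What you actually need is $\alpha(H)>|N_A(X)|$, and it is one line away. A matching uses distinct vertices, so $\nu'\le\lfloor(|X|+|N_A(X)|)/2\rfloor\le|X|-1$ when $|N_A(X)|<|X|$. Then $\alpha(H)=|X|+|N_A(X)|-\nu'\ge|N_A(X)|+1$.

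You do not even need K\"onig. The larger colour class $T$ of the bipartite graph $H$ has $|T|\ge(|X|+|N_A(X)|)/2>|N_A(X)|$, and your exchange $(A\setminus N_A(X))\cup T$ gives the contradiction. This is the observation you dismissed as ``too weak'': it is too weak for $F[X]$, but not for $F[X\cup N_A(X)]$.

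The paper's proof is the same exchange, written on $X$. Take a proper $2$-colouring with classes $V_0,V_1$ and put $X_i=X\cap V_i$. Then $N_A(X_0)\subseteq V_1$ and $N_A(X_1)\subseteq V_0$ are disjoint, so $|N_A(X_0)|+|N_A(X_1)|\le|N_A(X)|<|X_0|+|X_1|$. Hence some $X_i$ has $|N_A(X_i)|<|X_i|$, and $(A\setminus N_A(X_i))\cup X_i$ is a larger independent set. Your colour class $T$ on side $i$ is exactly $X_i\cup N_A(X_{1-i})$. Only bipartiteness is used, not acyclicity as such. Your last paragraph ($c\le a$, and $\nu(F)=c$ because $C$ is a vertex cover) is correct and agrees with the paper.
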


\begin{proof}
Fix a proper $2$-colouring of $F$ with classes $V_0,V_1$. Let $C'\subseteq C$ and $C'_i=C'\cap V_i$. The set
$(A\setminus\NA(C'_i))\cup C'_i$ is independent, being the union of two independent sets with no edge between them,
and its two parts are disjoint; maximality of $|A|$ gives $a-|\NA(C'_i)|+|C'_i|\le a$, that is,
$|\NA(C'_i)|\ge|C'_i|$. Since $\NA(C'_0)\subseteq V_1$ and $\NA(C'_1)\subseteq V_0$ are disjoint subsets of
$\NA(C')$, we get $|\NA(C')|\ge|C'|$, and Hall's theorem gives $\sigma$. Its $c$ edges form a matching, so $c\le a$ and $\nu(F)\ge c$. Conversely every edge meets
$C$, because $A$ is independent, so a matching has at most $c$ edges.
\end{proof}

The $d=a-c\ge0$ vertices of $A$ outside $\sigma(C)$ are called \emph{unmatched}.

\begin{lemma}[exposed leaves]\label{p1:lem:exposed}
The map $\sigma$ of Lemma~\ref{p1:lem:matching} can be chosen so that every unmatched vertex of $A$ has degree at most
$1$.
\end{lemma}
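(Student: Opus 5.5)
Among all injective maps $\sigma:C\to A$ with $v\sigma(v)\in E(F)$ (Lemma~\ref{p1:lem:matching} guarantees at least one), I would pick one minimising the total degree $\sum_{u\in A\setminus\sigma(C)}d_F(u)$ of the unmatched vertices. I then argue by contradiction. Suppose some unmatched $u\in A$ has $d_F(u)\ge2$. I will produce another saturating matching in which $u$ is matched and some vertex of smaller degree is unmatched instead. The tool is alternating paths: starting from $u$, I look for a path $u=a_0,c_1,a_1,c_2,\dots$ with $c_i\in C$, $a_i=\sigma(c_i)$, and $c_{i}$ adjacent to $a_{i-1}$. Shifting $\sigma$ along such a path, so that $c_i\mapsto a_{i-1}$, frees the endpoint $a_k$ and matches $u$. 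The number of unmatched vertices and the saturation of $C$ are unchanged.

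The key step is to show that some alternating path from $u$ ends at a vertex $a_k$ of degree at most $1$, or at least of degree $<d_F(u)$. For this I would use maximality of $|A|$ together with acyclicity. Let $T$ be the set of vertices reachable from $u$ by alternating paths, and let $X=T\cap C$ and $Y=T\cap A$. Every neighbour of $u$ lies in $C$ and is matched, so it belongs to $X$, and $\sigma$ maps $X$ bijectively onto $Y\setminus\{u\}$. Hence $|Y|=|X|+1$. Moreover $N_F(Y)\subseteq X$ by the closure of the search, and $F[X\cup Y]$ is a forest.

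Now suppose every vertex of $Y\setminus\{u\}$ had degree at least $2$ in $F$. Each such vertex then has a neighbour in $X$ other than its partner. Counting edges of the forest $F[X\cup Y]$ gives
\[
\sum_{y\in Y}d(y)\ \ge\ 2(|Y|-1)+2 \;=\; 2|Y| \;=\; |X|+|Y|+1 .
\]
But a forest on $|X|+|Y|$ vertices has at most $|X|+|Y|-1$ edges, which is a contradiction. So some $y\in Y\setminus\{u\}$ has degree at most $1$. Shifting along the alternating path to $y$ lowers the total degree of the unmatched set, which contradicts minimality. In fact the argument gives directly that the unmatched vertices all have degree at most $1$, once applied to each unmatched vertex in turn.

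The main obstacle is making the closure claim $N_F(Y)\subseteq X$ precise and handling the degree-$0$ versus degree-$1$ endpoint cleanly. I would define $T$ as the component of $u$ in the union of the $\sigma$-edges and the edges from $A$ to $C$. Because every edge of $F$ meets $C$ and every vertex of $C$ is matched, this union is exactly $F$ restricted to the component of $u$. Then $T$ is a whole tree component $K$ of $F$ containing $u$. The count becomes: $K$ is a tree in which $C\cap K$ is perfectly matched into $A\cap K$ with $u$ left over. If every vertex of $A\cap K$ other than $u$ had degree at least $2$, and $u$ itself too, then $K$ would have at least $|A\cap K|\cdot2$ edges, since all edges go between $A$ and $C$. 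That number exceeds $|K|-1$, which is impossible.

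This yields a leaf $y\in A\cap K$ distinct from $u$. I then take the unique path from $u$ to $y$ in $K$. Its vertices alternate between $A$ and $C$, and I need its $C$-vertices to be matched forward along the path. This last point may need choosing $y$ more carefully, for instance a leaf whose path to $u$ is alternating. I would first try to show this by induction on $|K|$: remove the leaf $y$ and its partner $\sigma^{-1}(y)$ and apply the count again.
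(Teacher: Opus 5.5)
Your second and third paragraphs already form a complete and correct proof. The ``obstacle'' you raise afterwards is not one. Let $P=(u=a_0,c_1,a_1,\dots,a_k=y)$ be an alternating path and $c\in N_F(y)$. Either $c$ lies on $P$, or $P$ extended by $c$ is alternating; so $c\in X$. An alternating path ending at $c\in X$ extends by $\sigma(c)$, which is a new vertex because $\sigma(c)\ne u$ and $\sigma$ is injective. Hence $N_F(Y)\subseteq X$ and $\sigma\colon X\to Y\setminus\{u\}$ is a bijection.

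Now suppose every vertex of $Y$ has degree at least $2$. Since $Y\subseteq A$ is independent, the at least $2|Y|=|X|+|Y|+1$ edges at $Y$ are distinct edges of the forest $F[X\cup Y]$, which is impossible. So some $y\in Y\setminus\{u\}$ has degree at most $1$. It equals $\sigma(c)$ for some $c$, so its degree is exactly $1$, and it ends an alternating path along which the shift lowers your potential.

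You should delete the last two paragraphs, which replace this argument by a weaker one. The graph of all $A$--$C$ edges omits the edges of $F[C]$, so its component of $u$ is not in general the component of $F$. More seriously, that component can contain unmatched vertices other than $u$. Take the tree with edges $uc_1,uc_2,c_1a_1,c_1a_1',c_2a_2$. Here $A=\{u,a_1,a_1',a_2\}$ is maximum, with $\sigma(c_1)=a_1$ and $\sigma(c_2)=a_2$. Your count may return the unmatched leaf $a_1'$, whose path $u,c_1,a_1'$ is not alternating. That is exactly the gap you noticed, and defining $T$ by alternating reachability, as you first did, is what closes it.

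Your route differs from the paper's, which uses no extremal choice and no count. From an unmatched $u_0$ of degree at least $2$ it grows the walk $u_0,v_1,u_1=\sigma(v_1),v_2,u_2=\sigma(v_2),\dots$, leaving each $u_i$ with $i\ge1$ by any edge other than $u_iv_i$. Since $u_1\ne u_0$ ($u_0$ is unmatched) and $u_{i+1}\ne u_i$ (injectivity), the walk never reverses a step. In a forest it is therefore a path, and it halts at some $u_j$ whose only neighbour is $v_j$. Setting $\sigma(v_i)=u_{i-1}$ lowers the number of unmatched vertices of degree at least $2$ by one, and one iterates.

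Your argument is instead a Hall-type deficiency count on the alternating-closed pair $(X,Y)$ with $|Y|=|X|+1$ and $N_F(Y)\subseteq X$. It uses acyclicity only through the bound $e\le|V|-1$ and shows that some matched leaf is reachable. The paper's proves the sharper local fact that every maximal non-backtracking alternating walk ends at one. This makes its argument shorter and lets it construct the new matching greedily, without minimising anything.
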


\begin{proof}
Suppose an unmatched $u_0\in A$ has degree at least $2$. Build a walk $u_0,v_1,u_1,v_2,\dots$: $v_1$ is a neighbour
of $u_0$ (in $C$, as $A$ is independent) and $u_1=\sigma(v_1)$; while $u_i$ has a neighbour $v_{i+1}\ne v_i$, continue
with $u_{i+1}=\sigma(v_{i+1})$. We have $u_1\ne u_0$ as $u_0$ is unmatched, and $u_{i+1}\ne u_i$ as $\sigma$ is
injective, so no step reverses the previous one; in a forest the walk is a path, and it stops at some $u_j$ whose only
neighbour is $v_j$. Redefine $\sigma(v_i)=u_{i-1}$ for $1\le i\le j$. The new $\sigma$ is injective along edges, $u_0$
becomes matched, $u_j$ (of degree $1$) becomes unmatched, and nothing else changes. The number of unmatched vertices of degree
at least $2$ drops by one; iterate.
\end{proof}

From now on $\sigma$ is as in Lemma~\ref{p1:lem:exposed}. A \emph{cross edge} is a pair $(v,u)$, $v\in C$, $u\in A$,
$vu\in E(F)$; their number is $e(A,C)$, the number of edges between $A$ and $C$. Let $E^*$ be the set of cross edges
with $u\ne\sigma(v)$, so $|E^*|=e(A,C)-c$, and $E^*_{\mathrm u}\subseteq E^*$ the set of those with $u$ unmatched. An
unmatched vertex has at most one neighbour, so
\begin{equation}\label{p1:eq:U}
|E^*_{\mathrm u}|\le d.
\end{equation}

%% ---------------------------------------------------------------------------------------------
\subsection{Linear inequalities for the fiber numbers}\label{p1:sec:rows}

This subsection proves the linear inequalities satisfied by the fiber numbers of $A$
(Lemmas~\ref{p1:prop:counting} and~\ref{p1:prop:support}, Theorem~\ref{p1:thm:matching}), on which the bounds of
Section~\ref{fl:sec} rest. We use the following quantities, affine in $N$ with coefficients depending only
on $(n,a)$:
\begin{align*}
N_h&=\textstyle\sum_mN(h,m) &&\text{(the number of independent $h$-subsets of $C$)},\\
e(A,C)&=\textstyle\sum_m(a-m)\,N(1,m) &&\text{(the number of cross edges)},\\
e_C=e(F[C])&=\textstyle\binom c2-N_2 &&\text{(the number of edges of $F[C]$)},\\
\mathrm{ex}_h&=\textstyle\sum_m(a-m-h)\,N(h,m) &&\textstyle\bigl(=\sum_J x(J)\text{ over independent $J\subseteq C$, $|J|=h$}\bigr),
\end{align*}
and the $i_k$, by \eqref{p1:eq:fiber}; here $x(J)=|\NA(J)|-|J|=a-|J|-m(J)$ is the \emph{excess} of an independent
$J\subseteq C$, $a-m(\{v\})$ is the number of neighbours of $v\in C$ in $A$, and $\mathrm{ex}_1=e(A,C)-c$.

\begin{lemma}[four counting inequalities]\label{p1:prop:counting}
The sums in (iii) and (iv) run over the $h$-subsets $J\subseteq C$ that are not independent.

\textup{(i)} \textup{(the edge inequality)} For $n\ge1$, $S^e:=i_2-\bigl[\binom n2-(n-1)\bigr]=n-1-e(F)=\operatorname{cc}(F)-1\ge0$.
\textup{(ii)} \textup{(the subset inequalities)} For $2\le h\le c$: $S^0_h:=\binom ch-N_h\ge0$, the number of dependent
$h$-subsets of $C$.
\textup{(iii)} \textup{(the edge-union inequalities)} For $3\le h\le c$:
$S^-_h:=N_h-\binom ch+e(F[C])\binom{c-2}{h-2}=\sum_{J}\bigl(e(F[J])-1\bigr)\ge0$.
\textup{(iv)} \textup{(the pair-extension inequalities)} For $3\le h\le c$:
$S^+_h:=\binom{c-2}{h-2}N_2-\binom h2N_h=\sum_{J}\bigl(\binom h2-e(F[J])\bigr)\ge0$.

\end{lemma}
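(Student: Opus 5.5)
Each of the four parts is an exact identity followed by an obvious sign, so the plan is to verify each identity by double counting over subsets of $V(F)$ or of $C$.

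For (i), independent $2$-sets are the non-edges, so $i_2=\binom n2-e(F)$. For a forest, $e(F)=n-\operatorname{cc}(F)$, hence
\[
S^e=n-1-e(F)=\operatorname{cc}(F)-1,
\]
which is $\ge0$ when $n\ge1$.

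For (ii), $N_h$ counts the independent $h$-subsets of $C$, so $\binom ch-N_h$ is the number of dependent ones.

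For (iii), write $\mathcal D_h$ for the set of dependent $h$-subsets of $C$; then $N_h=\binom ch-|\mathcal D_h|$.
\begin{itemize}
\item Each edge of $F[C]$ lies in exactly $\binom{c-2}{h-2}$ of the $h$-subsets of $C$, so $e(F[C])\binom{c-2}{h-2}=\sum_{J\subseteq C,|J|=h}e(F[J])$.
\item Independent $J$ contribute $0$ to this sum, so it equals $\sum_{J\in\mathcal D_h}e(F[J])$.
\item Therefore $S^-_h=\sum_{J\in\mathcal D_h}\bigl(e(F[J])-1\bigr)$, and every term is $\ge0$ because a dependent $J$ spans at least one edge.
\end{itemize}

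For (iv), count pairs $(P,J)$ with $P$ an independent $2$-subset (non-edge) of $C$ and $P\subseteq J$, where $|J|=h$.
\begin{itemize}
\item Counting by $P$ first gives $\binom{c-2}{h-2}N_2$, since $N_2$ counts the independent pairs of $C$.
\item Counting by $J$ first, each $J$ contains $\binom h2-e(F[J])$ non-edges, so the total is $\sum_J\bigl(\binom h2-e(F[J])\bigr)$.
\item The independent $J$ contribute $\binom h2$ each, hence $\binom h2N_h$ in total. Subtracting this leaves $S^+_h$ equal to the sum over dependent $J$.
\item Each term $\binom h2-e(F[J])$ is $\ge0$ trivially. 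In fact it is $\ge\binom h2-(h-1)>0$ for $h\ge3$, because $F[J]$ is a forest on $h$ vertices.
\end{itemize}

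I do not expect any of these steps to be an obstacle. The only point needing care is the bookkeeping that moves the independent-$J$ contributions into the $N_h$ terms in (iii) and (iv). The ranges of $h$ play no role beyond making the binomial coefficients meaningful.
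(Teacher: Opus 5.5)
Your proposal is correct and follows the paper's proof essentially step for step. For (i) you use $e(F)=n-\operatorname{cc}(F)$, for (ii) the definition of $N_h$, for (iii) the fact that each edge of $F[C]$ lies in $\binom{c-2}{h-2}$ of the $h$-subsets, and for (iv) a double count of pairs (independent $2$-subset, $h$-subset containing it); your extra remark that each term in (iv) is at least $\binom h2-(h-1)>0$ is true but not needed.
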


\begin{proof}
(i) The independent $2$-sets are the nonadjacent pairs, so $i_2=\binom n2-e(F)$, and $e(F)=n-\operatorname{cc}(F)$ for a forest.
(ii) is clear. (iii) Each edge of $F[C]$ lies in $\binom{c-2}{h-2}$ of the $h$-subsets of $C$, so
$\sum_{|J|=h}e(F[J])=e(F[C])\binom{c-2}{h-2}$; independent $J$ contribute $0$, and the $\binom ch-N_h$ dependent ones
contain an edge each. (iv) The pairs $(J_2,J)$ with $J_2$ an independent $2$-subset of $C$ and $J_2\subseteq J\subseteq C$,
$|J|=h$, number $N_2\binom{c-2}{h-2}$, and also
$\sum_{|J|=h}\bigl(\binom h2-e(F[J])\bigr)=\binom h2N_h+\sum_{J\text{ dependent}}\bigl(\binom h2-e(F[J])\bigr)$.
\end{proof}

With $e_C=\binom c2-N_2$, parts (ii) to (iv) give, for $3\le h\le c$,
\begin{equation}\label{p1:eq:Nh}
\binom ch-e_C\binom{c-2}{h-2}\ \le\ N_h\ \le\ \min\Bigl\{\binom ch,\ \binom{c-2}{h-2}N_2\Big/\binom h2\Bigr\}.
\end{equation}
\emph{Credit.} (i) is \cite[(6)]{ZL} with its index equal to $2$, as $\binom n2-(n-1)=\binom{n-1}2$; (ii) is
\cite[(5)]{ZL}; (iii) is \cite[(28)]{ZL} rewritten with $e(F[C])=\binom c2-N_2$ \cite[(12)]{ZL}; (iv) follows from
\cite[(10)]{ZL} with $l=0$, $(r+1)N_{r+1}\le(c-r)N_r$, for $r=2,\dots,h-1$. The matching-excess inequality (Theorem~\ref{p1:thm:matching}) and
the exposed-leaf matching (Lemma~\ref{p1:lem:exposed}) were not found in~\cite{ZL}.

\begin{theorem}[the matching-excess inequality]\label{p1:thm:matching}
Let $F$ be a forest and $A$ an independent set of maximum cardinality, with $c=n-a\ge2$ and $d=a-c$. For every
$2\le h\le c$,
\begin{equation}\label{p1:eq:SM}
S^M_h:=(e(A,C)-c)\binom{c-2}{h-1}+d\binom{c-2}{h-2}-\mathrm{ex}_h\ \ge\ 0 .
\end{equation}
\end{theorem}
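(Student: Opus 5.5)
The plan is a double count that charges the excess of each independent $J\subseteq C$ to cross edges in $E^*$. It uses only the matching $\sigma$ of Lemma~\ref{p1:lem:exposed} and the bound \eqref{p1:eq:U}.

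\emph{Step 1: a pointwise bound.} Fix an independent $J\subseteq C$ with $|J|=h$. Since $\sigma$ is injective and $v\sigma(v)$ is an edge, $\sigma(J)$ is a set of $|J|$ vertices contained in $\NA(J)$. Hence
\[
x(J)=|\NA(J)|-|J|=|\NA(J)\setminus\sigma(J)|.
\]
Each $u\in\NA(J)\setminus\sigma(J)$ has a neighbour $v\in J$. The pair $(v,u)$ is a cross edge with $u\ne\sigma(v)$, so it lies in $E^*$, and moreover $u\notin\sigma(J)$. Therefore
\[
x(J)\le\#\{(v,u)\in E^*:\ v\in J,\ u\notin\sigma(J)\}.
\]
Call such a pair \emph{charged} by $J$.

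\emph{Step 2: sum over $J$ and swap the order of summation.} Sum the bound of Step 1 over the independent $h$-subsets $J$. Then enlarge the sum to all $h$-subsets of $C$, which is allowed because each term is nonnegative. Now fix $(v,u)\in E^*$ and count the $h$-subsets $J\subseteq C$ that charge it.
\begin{itemize}
\item If $u$ is matched, let $v'=\sigma^{-1}(u)$. Then $v'\ne v$, and $J$ charges $(v,u)$ exactly when $v\in J$ and $v'\notin J$. There are $\binom{c-2}{h-1}$ such $J$.
\item If $u$ is unmatched, the only condition is $v\in J$. There are $\binom{c-1}{h-1}=\binom{c-2}{h-1}+\binom{c-2}{h-2}$ such $J$.
\end{itemize}

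\emph{Step 3: assemble.} Adding the two cases gives
\[
\mathrm{ex}_h\le|E^*|\binom{c-2}{h-1}+|E^*_{\mathrm u}|\binom{c-2}{h-2}.
\]
Substitute $|E^*|=e(A,C)-c$ and use $|E^*_{\mathrm u}|\le d$ from \eqref{p1:eq:U}; this is exactly \eqref{p1:eq:SM}. The hypotheses $c\ge2$ and $2\le h\le c$ only keep the binomial coefficients in their standard range, so no extra case analysis is needed.

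The one delicate point is in Step 1: the vertex $u$ must lie outside all of $\sigma(J)$, not merely differ from $\sigma(v)$. This is what makes a matched $u$ exclude its partner $v'$ from $J$, and so produces the coefficient $\binom{c-2}{h-1}$ instead of $\binom{c-1}{h-1}$. Only the edges ending at unmatched vertices pay the extra $\binom{c-2}{h-2}$. Lemma~\ref{p1:lem:exposed}, through \eqref{p1:eq:U}, bounds the number of those edges by $d$.
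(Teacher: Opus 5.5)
Your proof is correct and is essentially the paper's argument: the same injective charge of each excess blocker $u\in\NA(J)\setminus\sigma(J)$ to an edge of $E^*$ avoiding all of $\sigma(J)$, the same count of $\binom{c-2}{h-1}$ versus $\binom{c-1}{h-1}$ subsets per edge combined by Pascal's rule, and the same final use of \eqref{p1:eq:U}. The only difference is presentation: the paper writes your chain of inequalities as an exact decomposition of $S^M_h$ into three nonnegative sums.
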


\emph{Idea.} Each blocker of an independent $J$ beyond the $h$ matched ones is charged to a cross edge outside the
matching (Step~1), and these edges are counted over all $h$-subsets of $C$ (Step~2).

\begin{proof}
Let $\sigma$, $E^*$, $E^*_{\mathrm u}$ be as in Section~\ref{p1:sec:structure}. For an $h$-subset $J\subseteq C$,
independent or not, call $(v,u)\in E^*$ \emph{eligible} for $J$ if $v\in J$ and $u\notin\sigma(J)$ (so $u$ is
unmatched or $\sigma^{-1}(u)\notin J$); let $E^*(J)$ be the set of these edges.

\emph{Step 1 (injective charge).} Let $J$ be independent. Then $\sigma(J)\subseteq\NA(J)$ and $|\sigma(J)|=h$. For
each \emph{excess blocker} $u\in\NA(J)\setminus\sigma(J)$ choose a neighbour $v\in J$ of $u$ (the least in a fixed
order). Then $u\ne\sigma(v)$, so $(v,u)\in E^*$ is eligible for $J$, and distinct $u$ give distinct edges:
\begin{equation}\label{p1:eq:charge}
|E^*(J)|\ \ge\ |\NA(J)\setminus\sigma(J)|=|\NA(J)|-h=a-m(J)-h .
\end{equation}

\emph{Step 2 (total eligibility).} An edge $(v,u)\in E^*$ with $u$ unmatched is eligible for the $\binom{c-1}{h-1}$
subsets containing $v$; one with $u=\sigma(v')$, $v'\ne v$, for the $\binom{c-2}{h-1}$ subsets containing $v$ but not
$v'$. By Pascal's rule
\[
\sum_{|J|=h}|E^*(J)|=|E^*_{\mathrm u}|\tbinom{c-1}{h-1}+\bigl(e(A,C)-c-|E^*_{\mathrm u}|\bigr)\tbinom{c-2}{h-1}
=\bigl(e(A,C)-c\bigr)\tbinom{c-2}{h-1}+|E^*_{\mathrm u}|\tbinom{c-2}{h-2}.
\]

\emph{Conclusion.} As $\mathrm{ex}_h=\sum_{J\text{ indep.}}|\NA(J)\setminus\sigma(J)|$, Step 2 gives the exact
decomposition
\begin{equation}\label{p1:eq:decomp}
S^M_h=\bigl(d-|E^*_{\mathrm u}|\bigr)\binom{c-2}{h-2}+\sum_{\substack{|J|=h\\ J\text{ not indep.}}}|E^*(J)|+\sum_{\substack{|J|=h\\ J\text{ indep.}}}\Bigl(|E^*(J)|-|\NA(J)\setminus\sigma(J)|\Bigr),
\end{equation}
whose terms are nonnegative by \eqref{p1:eq:U}, trivially, and by \eqref{p1:eq:charge}.
\end{proof}

\begin{lemma}[support and normalizations]\label{p1:prop:support}
If $a<n$, then $N(y,m)$ is a nonnegative integer, $N(y,m)=0$ unless $(y,m)=(0,a)$ or $1\le y\le c$, $0\le m\le a-y$
(so $x(J)\ge0$ for every independent $J\subseteq C$), and
\begin{equation}\label{p1:eq:normal}
N(0,a)=1,\qquad \sum_mN(1,m)=c .
\end{equation}
Moreover $\lceil n/2\rceil\le a$, $e(A,C)+e_C\le n-1$, $e(A,C)\ge c$ and $e_C\le c-1$; consequently
\begin{equation}\label{p1:eq:ex1}
0\le\mathrm{ex}_1\le a-1-e_C .
\end{equation}
\end{lemma}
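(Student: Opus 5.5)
The plan is to read off each claim from the definitions, using Lemma~\ref{p1:lem:matching} and Lemma~\ref{p1:lem:exposed}. We assume $a<n$, so $C\ne\emptyset$.

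\emph{Support and normalizations.} $N(y,m)$ counts sets, so it is a nonnegative integer. The only independent $J\subseteq C$ with $|J|=0$ is $\emptyset$, which has $m(\emptyset)=a$; this gives $N(0,a)=1$ and $N(0,m)=0$ for $m\ne a$. For $y\ge1$ we have $y\le c$ trivially. The bound $m\le a-y$ is the statement $x(J)\ge0$, that is, $|\NA(J)|\ge|J|$. This follows from Lemma~\ref{p1:lem:matching}: $\sigma(J)\subseteq\NA(J)$ and $\sigma$ is injective. (The Hall argument in that lemma's proof gives the same inequality directly.) Every singleton $\{v\}$ with $v\in C$ is independent, so $\sum_mN(1,m)=c$.

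\emph{The inequalities.}
\begin{itemize}
\item $\lceil n/2\rceil\le a$ comes from $c\le a$ (Lemma~\ref{p1:lem:matching}) and $a+c=n$.
\item Every edge of $F$ has an endpoint in $C$, because $A$ is independent. So $e(F)=e(A,C)+e_C$, and for a forest $e(F)\le n-1$, which gives $e(A,C)+e_C\le n-1$.
\item The $c$ matching edges $v\sigma(v)$ are distinct cross edges, so $e(A,C)\ge c$; that is, $\mathrm{ex}_1=e(A,C)-c\ge0$.
\end{itemize}

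\emph{The bound $e_C\le c-1$.} $F[C]$ is a forest on $c\ge1$ vertices, so it has at most $c-1$ edges.

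\emph{The upper bound in \eqref{p1:eq:ex1}.} Combine $e(A,C)\le n-1-e_C$ with $n-c=a$:
\[
\mathrm{ex}_1=e(A,C)-c\le n-1-e_C-c=a-1-e_C .
\]

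The only step needing care is $x(J)\ge0$, and it is immediate from the injective map $\sigma$. No real obstacle is expected.
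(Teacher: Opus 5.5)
Your proof is correct and follows the paper's argument essentially step by step: the support and the normalizations come from the definitions, then $e(F)=e(A,C)+e_C\le n-1$, then $e_C\le c-1$ because $F[C]$ is a forest, and then the final subtraction. The one difference is in how you get $x(J)\ge0$. You use the injective matching $\sigma$ of Lemma~\ref{p1:lem:matching}, whereas the paper argues directly that $J\cup(A\setminus\NA(J))$ is independent of size $|J|+m(J)\le a$; for independent $J$ this exchange step (the same one that opens the Hall argument) needs neither Hall's theorem nor the $2$-colouring, and the paper then obtains $e(A,C)\ge c$ as $\sum_{v\in C}\bigl(x(\{v\})+1\bigr)$ instead of counting the matching edges.
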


\begin{proof}
For independent $J\subseteq C$ the set $J\cup(A\setminus\NA(J))$ is independent with $|J|+m(J)$ elements, so
$|J|+m(J)\le a$. The only $J$ with $|J|=0$ is $\emptyset$, with $m(\emptyset)=a$; every singleton of $C$ is
independent; and $c\le a$ by Lemma~\ref{p1:lem:matching}. As $A$ is independent, $e(F)=e(A,C)+e_C$, and $e(F)\le n-1$
by Lemma~\ref{p1:prop:counting}(i); $e(A,C)=\sum_{v\in C}\bigl(x(\{v\})+1\bigr)\ge c$; $F[C]$ is a forest on $c\ge1$
vertices, so $e_C\le c-1$. Hence $0\le\mathrm{ex}_1=e(A,C)-c\le n-1-c-e_C=a-1-e_C$.
\end{proof}

Nonnegativity, the normalization $\sum_mN(1,m)=c$ and Lemma~\ref{p1:prop:counting}(ii) are the basic constraints
\cite[(5)]{Z}; the support condition and $N(0,a)=1$ are stated in \cite[Section~2.1]{Z} with its identity (2) (the same
numbering holds in \cite{ZL}).

%% ---------------------------------------------------------------------------------------------
\subsection{The parameter set and the valley functional}\label{p1:sec:param}

This subsection reduces the ranks of the window to a finite set of triples $(n,a,k)$ and defines the valley
functional, which Sections~\ref{fl:sec} and~\ref{flx:sec} bound below.

From here on $F$ has an edge, that is, $a<n$. For a pair $(n,a)$ let $b_s$ be the coefficients of
$(1+2x)^{n-a}(1+x)^{2a-n}$ and $h_B(n,a)$ their first mode; by Lemma~\ref{p1:lem:matching}, $\nu(F)=c=n-a$, so this is the
polynomial $B_F$ of~\eqref{p1:eq:brow}, $(1+2x)^c(1+x)^d$, and $h_B(F)=h_B(n,a)$.
Put $H(n,a)=\lceil (n+2a)/6\rceil$. Proposition~\ref{p1:prop:inputs}(c) gives $h_B(n,a)\le\lceil(3n-2c)/6\rceil=H(n,a)$,
and $H(n,a)\le a$ as $n\le2a$ (Lemma~\ref{p1:prop:support}). Hence, if $n\le24$, every rank $k\in W(F)$ gives a triple
$(n,a,k)$ of the \emph{parameter set}
\begin{equation}\label{p1:eq:domain}
\cD_{24}=\bigl\{(n,a,k):\ 1\le n\le 24,\ \lceil n/2\rceil\le a\le n-1,\ \lceil n/4\rceil<k<\min(a,H(n,a))\bigr\},
\end{equation}
in which the minimum always equals $H(n,a)$. It has $249$ elements, $130$ with $n\le20$ and none with $n\le6$.

For a rank $k\ge1$, a weight $w_{\mathrm L}\in[0,1]$, $w_{\mathrm R}=1-w_{\mathrm L}$, and integers $y,m\ge0$ put
\begin{equation}\label{p1:eq:dLdR}
d_L(y,m)=\binom{m}{k-y}-\binom{m}{k-1-y},\qquad d_R(y,m)=\binom{m}{k-y}-\binom{m}{k+1-y},
\end{equation}
$d_{\mathrm{LR}}=w_{\mathrm L}d_L+w_{\mathrm R}d_R$, that is,
$d_{\mathrm{LR}}(y,m)=\binom{m}{k-y}-w_{\mathrm L}\binom{m}{k-1-y}-w_{\mathrm R}\binom{m}{k+1-y}$, and
$\mathsf V(k)=w_{\mathrm L}(i_k-i_{k-1})+w_{\mathrm R}(i_k-i_{k+1})$. By Lemma~\ref{p1:lem:fiber},
\begin{equation}\label{fl:eq:sum}
\mathsf V(k)=\sum_{y,m}d_{\mathrm{LR}}(y,m)N(y,m)=\sum_Jd_{\mathrm{LR}}\bigl(|J|,m(J)\bigr),
\end{equation}
the last sum running over the independent subsets $J$ of $C$. A valley at $k$ makes both $i_k-i_{k-1}$ and
$i_k-i_{k+1}$ nonpositive, so $\mathsf V(k)>0$ for one weight excludes a valley at $k$.

\subsection{Catalan bounds and the anchored bound}\label{fl:sec}

This subsection proves the lower bounds by which Section~\ref{flx:sec} settles the parameter triples: the rising bound
(Corollary~\ref{fl:cor:rise}), the anchored bound (Lemma~\ref{fl:lem:anchored}) and the Catalan line bound
(Proposition~\ref{flx:prop:catbound}). The Catalan numbers are $\mathrm{Cat}_j=\frac1{j+1}\binom{2j}{j}$ for $j\ge0$.

\begin{lemma}[Catalan maximum]\label{fl:lem:catalan}
For every $j\ge1$, $\displaystyle\max_{m\ge0}\Bigl[\binom{m}{j-1}-\binom mj\Bigr]=\mathrm{Cat}_{j-1}$.
\end{lemma}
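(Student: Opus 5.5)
The plan is to study $g(m)=\binom{m}{j-1}-\binom mj$ directly as a function of $m$, locate where it is maximal through its forward differences, and then evaluate it at the maximizer.

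\emph{Step 1 (differences).} By Pascal's rule,
\[
g(m+1)-g(m)=\binom m{j-2}-\binom m{j-1}.
\]
For $j=1$ this equals $-\binom m0=-1$, so $g$ decreases and its maximum is $g(0)=1=\mathrm{Cat}_0$. For $j\ge2$ the difference $\binom m{j-2}-\binom m{j-1}$ is nonnegative exactly when $\binom m{j-1}\le\binom m{j-2}$. For $m\ge j-2$ this happens iff $(m-j+2)/(j-1)\le1$, that is, iff $m\le 2j-3$. For $m<j-2$ both binomials vanish and the difference is $0$.

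So $g$ is nondecreasing on $0\le m\le 2j-2$ and strictly decreasing from $m=2j-2$ on. Hence the maximum over $m\ge0$ is attained at $m=2j-2$; it is also attained at $m=2j-3$, since the difference there is $0$.

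\emph{Step 2 (evaluation).} At $m=2j-2$,
\[
g(2j-2)=\binom{2j-2}{j-1}-\binom{2j-2}{j}=\binom{2j-2}{j-1}\Bigl(1-\frac{j-1}{j}\Bigr)=\frac1j\binom{2j-2}{j-1}=\mathrm{Cat}_{j-1}.
\]
This is the classical ballot-number identity, and it finishes the proof.

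\emph{Main obstacle.} The only delicate point is the sign analysis near the boundary of the support. I would check the cases $m<j-1$ explicitly: there $g=\binom m{j-1}=0$ except at $m=j-1$, where $g=1$. I would also keep the case $j=1$ separate, since for it $\binom m{j-2}$ vanishes identically. Neither case is hard.
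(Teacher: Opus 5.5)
Your proof is correct and follows essentially the same route as the paper: you show that $g(m)=\binom m{j-1}-\binom mj$ is nondecreasing up to $m=2j-2$ and decreasing afterwards, then evaluate $g(2j-2)=\frac1j\binom{2j-2}{j-1}=\mathrm{Cat}_{j-1}$. The only difference is how monotonicity is obtained: you use the Pascal difference $g(m+1)-g(m)=\binom m{j-2}-\binom m{j-1}$, which reduces to comparing adjacent binomial coefficients, whereas the paper factors $g(m)=\binom m{j-1}\frac{2j-1-m}{j}$ and compares the consecutive ratios $g(m+1)/g(m)$ on the range where $g>0$.
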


\begin{proof}
Put $g(m)=\binom m{j-1}-\binom mj=\binom m{j-1}\frac{2j-1-m}{j}$. Then $g(m)=0$ for $m<j-1$ and
$g(m)\le0$ for $m\ge2j-1$. For $j-1\le m\le 2j-3$ both $g(m)$ and $g(m+1)$ are positive and
\[
\frac{g(m+1)}{g(m)}=\frac{(m+1)(2j-2-m)}{(m-j+2)(2j-1-m)},
\]
and the numerator minus the denominator equals $j\,(2j-3-m)\ge0$. So $g$ is nondecreasing on
$\{j-1,\dots,2j-2\}$, and its maximum is $g(2j-2)=\frac1j\binom{2j-2}{j-1}=\mathrm{Cat}_{j-1}$.
\end{proof}

\begin{lemma}[subset bound]\label{fl:lem:subset}
For every $w_{\mathrm L}\in[0,1]$ and $k\ge1$,
\[
\mathsf V(k)\ \ge\ d_{\mathrm{LR}}(0,a)+\sum_{h=1}^{c}\binom ch\,\min\Bigl\{0,\ \min_{0\le m\le a-h}d_{\mathrm{LR}}(h,m)\Bigr\}.
\]
\end{lemma}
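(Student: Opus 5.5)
The plan is to start from the fiber expansion \eqref{fl:eq:sum}, $\mathsf V(k)=\sum_{y,m}d_{\mathrm{LR}}(y,m)N(y,m)$, and sort the terms by $y$. By Lemma~\ref{p1:prop:support} the support of $N$ is $(0,a)$ together with $1\le y\le c$, $0\le m\le a-y$. It also gives $N(0,a)=1$, and $N(y,m)\ge0$ throughout. The $y=0$ term is therefore exactly $d_{\mathrm{LR}}(0,a)$, and it is kept as it stands. (If $a=n$ there is no edge, $C=\emptyset$, the sum over $h$ is empty, and the bound is the equality $\mathsf V(k)=d_{\mathrm{LR}}(0,a)$, so we may assume $a<n$.)

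For each fixed $h=y\in\{1,\dots,c\}$, bound the row sum from below. Put $\underline d_h=\min\{0,\min_{0\le m\le a-h}d_{\mathrm{LR}}(h,m)\}\le0$. Every $m$ in the support of row $h$ satisfies $0\le m\le a-h$, so $d_{\mathrm{LR}}(h,m)\ge\underline d_h$ there. Since each $N(h,m)\ge0$, this gives
\[
\sum_m d_{\mathrm{LR}}(h,m)N(h,m)\ \ge\ \underline d_h\sum_mN(h,m)=\underline d_h\,N_h .
\]
Because $\underline d_h\le0$, the right-hand side decreases as $N_h$ grows, so it suffices to use an upper bound on $N_h$. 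For this take $N_h\le\binom ch$: for $h\ge2$ this is Lemma~\ref{p1:prop:counting}(ii), and for $h=1$ it holds with equality by \eqref{p1:eq:normal}. Hence each row contributes at least $\binom ch\underline d_h$.

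Summing the $y=0$ term and the rows $h=1,\dots,c$ gives the stated inequality. The argument holds for every $w_{\mathrm L}\in[0,1]$ and every $k\ge1$, since only nonnegativity of the fiber numbers and the support constraints were used. There is no real obstacle. The one point needing care is the truncation by $\min\{0,\cdot\}$: it is what allows the replacement of $N_h$ by its upper bound $\binom ch$ with a nonpositive coefficient, and it also covers rows where $d_{\mathrm{LR}}$ is positive on the whole support.
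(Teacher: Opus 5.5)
Your proof is correct and matches the paper's: the empty set contributes $d_{\mathrm{LR}}(0,a)$, each independent $J$ of size $h\ge1$ has $m(J)\le a-h$ and so contributes at least the nonpositive truncated minimum, and $N_h\le\binom ch$ finishes. The only differences are presentational. You phrase the argument through the fiber numbers $N(h,m)$ rather than through the sets $J$, and you treat the edgeless case $a=n$ separately, which the paper excludes by standing assumption.
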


\begin{proof}
In \eqref{fl:eq:sum} the empty set contributes $d_{\mathrm{LR}}(0,a)$. A set $J$ with $|J|=h\ge1$ has
$m(J)\le a-h$ by Lemma~\ref{p1:prop:support}, so it contributes at least the $h$-th minimum
in the display, which is nonpositive; there are $N_h\le\binom ch$ such sets.
\end{proof}

\begin{corollary}[rising bound]\label{fl:cor:rise}
For every $k\ge1$,
\[
i_k-i_{k-1}\ \ge\ \mathsf R(a,c,k):=\binom ak-\binom a{k-1}-\sum_{h=1}^{\min(c,k-1)}\binom ch\,\mathrm{Cat}_{k-1-h}.
\]
\end{corollary}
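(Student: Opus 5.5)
The plan is to specialize Lemma~\ref{fl:lem:subset} to $w_{\mathrm L}=1$, $w_{\mathrm R}=0$. Then $\mathsf V(k)=i_k-i_{k-1}$ and $d_{\mathrm{LR}}(y,m)=d_L(y,m)=\binom m{k-y}-\binom m{k-1-y}$. The empty set contributes $d_L(0,a)=\binom ak-\binom a{k-1}$, which is the leading term of $\mathsf R(a,c,k)$. What remains is to bound each fiber minimum $\min\{0,\min_{0\le m\le a-h}d_L(h,m)\}$ from below by $-\mathrm{Cat}_{k-1-h}$ when $1\le h\le k-1$, and to show it equals $0$ when $h\ge k$.

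\emph{The case $h\ge k$.} Here $k-h\le0$ and $k-1-h\le-1$, so $\binom m{k-1-h}=0$ and $\binom m{k-h}\ge0$ by the convention on binomial coefficients. Hence $d_L(h,m)\ge0$ and the minimum with $0$ is $0$. This is why the sum is truncated at $\min(c,k-1)$.

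\emph{The case $1\le h\le k-1$.} Put $j=k-h\ge1$. Then
\[
-d_L(h,m)=\binom m{j-1}-\binom mj .
\]
Lemma~\ref{fl:lem:catalan} says this quantity is at most $\mathrm{Cat}_{j-1}=\mathrm{Cat}_{k-1-h}$ for every $m\ge0$, so in particular on the range $0\le m\le a-h$. It follows that
\[
\min\Bigl\{0,\min_{m}d_L(h,m)\Bigr\}\ge-\mathrm{Cat}_{k-1-h},
\]
since $\mathrm{Cat}\ge0$. Multiplying by $\binom ch\ge0$ and summing over $h$ gives exactly $\mathsf R(a,c,k)$.

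There is no real obstacle: the only care needed is the index bookkeeping, namely checking that the shift $j=k-h$ matches the form of Lemma~\ref{fl:lem:catalan} and that the terms with $h\ge k$ vanish. The argument also needs $a<n$ so that Lemma~\ref{fl:lem:subset} (through Lemma~\ref{p1:prop:support}) applies, which is the standing assumption of this subsection. In the edgeless case $c=0$, the sum is empty and $i_k=\binom ak$, so the bound holds trivially.
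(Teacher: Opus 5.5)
Your proposal is correct and follows essentially the same argument as the paper. Both specialize the subset bound to $w_{\mathrm L}=1$, apply the Catalan maximum lemma with $j=k-h$ for $1\le h\le k-1$, and note that $d_L(h,m)\ge 0$ for $h\ge k$. Your edgeless-case remark is a harmless addition: there $c=0$ and the bound holds with equality.
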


\begin{proof}
Take $w_{\mathrm L}=1$, so that $d_L(h,m)=\binom m{k-h}-\binom m{k-h-1}$. For $h\ge k$ this is $0$ or $1$.
For $1\le h\le k-1$, Lemma~\ref{fl:lem:catalan} with $j=k-h$ gives $d_L(h,m)\ge-\mathrm{Cat}_{k-h-1}$
for all $m$. Lemma~\ref{fl:lem:subset} gives the claim.
\end{proof}

\begin{lemma}[monotonicity]\label{fl:lem:mono}
$\mathsf R(a,c,k)$ is nonincreasing in $c$, and $\mathsf R(a+1,c,k)\ge\mathsf R(a,c,k)$ whenever $a\ge 2k-3$.
\end{lemma}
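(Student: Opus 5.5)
The plan is to compare the two definitions of $\mathsf R$ term by term. For the $c$-monotonicity there is almost nothing to do. In
\[
\mathsf R(a,c,k)=\binom ak-\binom a{k-1}-\sum_{h=1}^{\min(c,k-1)}\binom ch\,\mathrm{Cat}_{k-1-h},
\]
the leading difference does not involve $c$. Each subtracted term $\binom ch\mathrm{Cat}_{k-1-h}$ is nonnegative and nondecreasing in $c$, since $\binom{c+1}h\ge\binom ch$. Passing from $c$ to $c+1$ can also only add terms to the sum, because $\min(c+1,k-1)\ge\min(c,k-1)$ and every added term is nonnegative. Hence $\mathsf R(a,c+1,k)\le\mathsf R(a,c,k)$.

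For the monotonicity in $a$, the sum does not depend on $a$, so $\mathsf R(a+1,c,k)-\mathsf R(a,c,k)$ reduces to
\[
\Bigl[\binom{a+1}k-\binom ak\Bigr]-\Bigl[\binom{a+1}{k-1}-\binom a{k-1}\Bigr]=\binom a{k-1}-\binom a{k-2}
\]
by Pascal's rule. This is the function $g$ from the proof of Lemma~\ref{fl:lem:catalan}, written with $j=k-1$, and the task is to show it is nonnegative under the hypothesis $a\ge 2k-3$. For $k\ge2$ the identity $\binom a{k-1}=\binom a{k-2}\frac{a-k+2}{k-1}$ gives
\[
\binom a{k-1}-\binom a{k-2}=\binom a{k-2}\,\frac{a-2k+3}{k-1},
\]
which is nonnegative exactly when $a\ge 2k-3$ (or when $\binom a{k-2}=0$). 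For $k=1$ the difference is $\binom a0-0=1\ge0$. Here $\binom a{-1}=0$ follows from the convention $\binom rs=0$ outside $0\le s\le r$, and this convention also handles the boundary cases of Pascal's rule.

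I see no real obstacle. The only care needed is to check that Pascal's rule and the ratio identity remain valid at the edges of the convention, namely for $a<k-2$ and for $k=1$, and to confirm that the range $h\le\min(c,k-1)$ in the sum causes no trouble.
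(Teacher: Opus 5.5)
Your proposal is correct and follows the paper's proof: termwise monotonicity of $\binom ch\,\mathrm{Cat}_{k-1-h}$ in $c$, Pascal's rule reducing the $a$-difference to $\binom a{k-1}-\binom a{k-2}$, and the ratio $(a-k+2)/(k-1)$. Your factored form $\binom a{k-2}\frac{a-2k+3}{k-1}$ is the same computation, and your $k=1$ case matches the paper's. One small slip in an aside that your argument does not use: in the notation of Lemma~\ref{fl:lem:catalan} with $j=k-1$, the difference is $-g(a)$, not $g(a)$.
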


\begin{proof}
The first statement holds because $\binom ch$ is nondecreasing in $c$ and $\mathrm{Cat}_j\ge0$. By
Pascal's rule $\mathsf R(a+1,c,k)-\mathsf R(a,c,k)=\binom a{k-1}-\binom a{k-2}$. For $k=1$ this equals $1$; for $k\ge2$
and $a\ge2k-3$ we have $a\ge k-2$ and $\binom a{k-1}\big/\binom a{k-2}=(a-k+2)/(k-1)\ge1$.
\end{proof}

At $a=2k+1$ and $a=2k$ the leading term of $\mathsf R$ is a Catalan number:
\begin{equation}\label{fl:eq:catlead}
\binom{2k+1}{k}-\binom{2k+1}{k-1}=\mathrm{Cat}_{k+1},\qquad \binom{2k}{k}-\binom{2k}{k-1}=\mathrm{Cat}_{k}.
\end{equation}

\paragraph{The anchored bound.}\label{fl:sec:anchored}
Fix $w_{\mathrm L}\in[0,1]$ and $k\ge1$. For integers $j$ and $m\ge0$ put
$G_j(m)=\binom mj-w_{\mathrm L}\binom m{j-1}-w_{\mathrm R}\binom m{j+1}$, so that $d_{\mathrm{LR}}(h,m)=G_{k-h}(m)$; for
$j\le-2$ the function $G_j$ vanishes, $G_{-1}=-w_{\mathrm R}$ is constant and $G_0(m)=1-w_{\mathrm R}m$ is
nonincreasing. An independent set $J\subseteq C$ with $|J|=h$ has $m(J)=a-h-x(J)$, so it contributes
$G_{k-h}\bigl(a-h-x(J)\bigr)$ to \eqref{fl:eq:sum}. A \emph{line set} is a family of numbers $p^\sharp_h$ and
$q^\sharp_h\ge0$, $1\le h\le c$, with
\begin{equation}\label{fl:eq:line}
G_{k-h}(m)\ \ge\ p^\sharp_h-q^\sharp_h\,(a-h-m)\qquad\text{for every integer }0\le m\le a-h.
\end{equation}
The \emph{chords} form a line set: $p^\circ_h=G_{k-h}(a-h)$ and
\[
q^\circ_h=\max\Bigl\{0,\ \max_{0\le m<a-h}\frac{G_{k-h}(a-h)-G_{k-h}(m)}{a-h-m}\Bigr\},
\]
the largest slope of a chord of $G_{k-h}$ from the point $a-h$ to a smaller argument. They depend on
$(a,k,w_{\mathrm L},h)$ only; by the closed forms of $G_0$ and $G_{-1}$, $q^\circ_h=0$ for $h\ge k$, and
$p^\circ_h=q^\circ_h=0$ for $h\ge k+2$. For integers $0\le e_C\le c-1$ let
$\underline{N}_2(e_C)=\overline{N}_2(e_C)=\binom c2-e_C$, and for $3\le h\le c$
\[
\underline{N}_h(e_C)=\max\Bigl\{0,\ \binom ch-e_C\binom{c-2}{h-2}\Bigr\},\qquad
\overline{N}_h(e_C)=\min\Bigl\{\binom ch,\ \binom{c-2}{h-2}\Bigl(\binom c2-e_C\Bigr)\Big/\binom h2\Bigr\};
\]
let $\mathsf B_h(e_C)=(a-1-e_C)\binom{c-2}{h-1}+d\binom{c-2}{h-2}$ for $2\le h\le c$, and, for a line set,
\begin{multline*}
\mathcal A(e_C)=d_{\mathrm{LR}}(0,a)+c\,p^\sharp_1-q^\sharp_1\,(a-1-e_C)\\
+\sum_{h=2}^{c}\Bigl(\min\bigl\{\underline{N}_h(e_C)\,p^\sharp_h,\ \overline{N}_h(e_C)\,p^\sharp_h\bigr\}-q^\sharp_h\,\mathsf B_h(e_C)\Bigr).
\end{multline*}

For a given forest $e_C$ denotes $e(F[C])$; as the argument of $\mathcal A$, and of $\mathsf R^\ast$ below, it is a real
variable in $[0,c-1]$. (Likewise $x$ is the variable of a polynomial and $x(J)$, a function, the excess.)

\begin{lemma}[anchored bound]\label{fl:lem:anchored}
Fix $w_{\mathrm L}\in[0,1]$, $k\ge1$ and a line set. Then:

\textup{(a)} $\mathsf V(k)\ge\mathcal A\bigl(e(F[C])\bigr)\ge\min_{0\le e_C\le c-1}\mathcal A(e_C)$;
\textup{(b)} the formula for $\mathcal A$ defines a convex function of the real variable $e_C\in[0,c-1]$;
\textup{(c)} if $e^{\min}_C\in\{0,\dots,c-1\}$ satisfies $\mathcal A(e^{\min}_C-1)\ge\mathcal A(e^{\min}_C)$
when $e^{\min}_C\ge1$, and $\mathcal A(e^{\min}_C+1)\ge\mathcal A(e^{\min}_C)$ when $e^{\min}_C\le c-2$, then
$\mathcal A(e_C)\ge\mathcal A(e^{\min}_C)$ for every integer $0\le e_C\le c-1$. In particular, if
$\mathcal A(1)\ge\mathcal A(0)$, then $\mathcal A(e_C)\ge\mathcal A(0)$ for every such $e_C$.

\end{lemma}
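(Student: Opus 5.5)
For (a) I start from the fiber sum \eqref{fl:eq:sum}, grouped by $h=|J|$. The empty set contributes $d_{\mathrm{LR}}(0,a)$. An independent $J$ with $|J|=h\ge1$ contributes $G_{k-h}(a-h-x(J))$, where $0\le x(J)\le a-h$ by Lemma~\ref{p1:prop:support}. The line inequality \eqref{fl:eq:line} then gives that this contribution is at least $p^\sharp_h-q^\sharp_h x(J)$. Summing over the independent $h$-sets gives at least $N_hp^\sharp_h-q^\sharp_h\,\mathrm{ex}_h$, since $\mathrm{ex}_h=\sum_J x(J)$.

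The two families of terms are handled separately.

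\emph{The $h=1$ term.} Here $N_1=c$ by \eqref{p1:eq:normal}, and $\mathrm{ex}_1\le a-1-e_C$ by \eqref{p1:eq:ex1}. Since $q^\sharp_1\ge0$, the $h=1$ contribution is at least $c\,p^\sharp_1-q^\sharp_1(a-1-e_C)$.

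\emph{The terms $h\ge2$.} I use the matching-excess inequality (Theorem~\ref{p1:thm:matching}) with $e(A,C)-c=\mathrm{ex}_1\le a-1-e_C$. This gives $\mathrm{ex}_h\le\mathsf B_h(e_C)$, and then $-q^\sharp_h\mathrm{ex}_h\ge-q^\sharp_h\mathsf B_h(e_C)$. For $N_h$, the case $h=2$ is exact, $N_2=\binom c2-e_C$, while for $h\ge3$ \eqref{p1:eq:Nh} places $N_h$ in $[\underline N_h,\overline N_h]$ (the floor at $0$ is harmless). Because $N_hp^\sharp_h$ is linear in $N_h$, it is at least $\min\{\underline N_hp^\sharp_h,\overline N_hp^\sharp_h\}$ whatever the sign of $p^\sharp_h$.

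Summing both families gives $\mathsf V(k)\ge\mathcal A(e(F[C]))$. The second inequality in (a) holds because $e(F[C])$ is an integer in $[0,c-1]$ by Lemma~\ref{p1:prop:support}.

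For (b) I check the convexity of $\mathcal A$ in $e_C$ term by term.
\begin{itemize}
\item The $h=1$ term and every $-q^\sharp_h\mathsf B_h$ are affine in $e_C$.
\item $\underline N_h$ is the maximum of $0$ and an affine function, so it is convex. $\overline N_h$ is the minimum of a constant and an affine function, so it is concave.
\item $\min\{\underline N_hp^\sharp_h,\overline N_hp^\sharp_h\}$ equals $p^\sharp_h\underline N_h$ when $p^\sharp_h\ge0$, which is convex. It equals $p^\sharp_h\overline N_h$ when $p^\sharp_h<0$, a negative multiple of a concave function, hence convex. This holds because $\underline N_h\le\overline N_h$, which I must verify. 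If the inequality failed for some $e_C$, the min would pick the other branch and convexity could break there.
\end{itemize}
That comparison of the bounds is the one delicate point. I would prove $\underline N_h\le\overline N_h$ by noting that both bounds come from the same count of $h$-subsets of a $c$-set containing $e_C$ edges of a forest. Alternatively, I would argue directly that $\binom ch-e_C\binom{c-2}{h-2}\le\binom{c-2}{h-2}(\binom c2-e_C)/\binom h2$ for $0\le e_C\le c-1$. The difference is affine in $e_C$, so it suffices to check the endpoints: at $e_C=0$ both sides equal $\binom ch$, and at $e_C=c-1$ a short binomial computation settles it.

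Part (c) is standard discrete convexity. Restricted to the integers, a convex function has nondecreasing forward differences. The two stated comparisons say that $e^{\min}_C$ is a local minimum among the integers, so it is a global minimum on $\{0,\dots,c-1\}$. The special case is $e^{\min}_C=0$.
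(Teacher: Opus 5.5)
Your proposal is correct and follows the paper's proof essentially step by step: for (a) you bound level by level with the line set, using $N_1=c$, \eqref{p1:eq:ex1}, the matching-excess inequality and \eqref{p1:eq:Nh}; for (b) you isolate $\underline N_h\le\overline N_h$ as what makes each $\min$ term convex; and (c) is discrete convexity. In (b), your comparison of the two affine pieces must be supplemented by $\overline N_h\ge0$ on $[0,c-1]$, which follows from $e_C\le c-1\le\binom c2$, to yield $\underline N_h\le\overline N_h$. The paper gets the affine comparison for all $e_C\ge0$ at once from $\binom ch\binom h2=\binom c2\binom{c-2}{h-2}$ instead of checking endpoints.
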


\begin{proof}
(a) Let $e_C=e(F[C])$; it lies in $\{0,\dots,c-1\}$ by Lemma~\ref{p1:prop:support}. By
\eqref{fl:eq:sum}, the support condition of Lemma~\ref{p1:prop:support} and~\eqref{p1:eq:normal},
\[
\mathsf V(k)=d_{\mathrm{LR}}(0,a)+\sum_{v\in C}G_{k-1}\bigl(a-1-x(\{v\})\bigr)
+\sum_{h=2}^{c}\ \sum_{|J|=h}G_{k-h}\bigl(a-h-x(J)\bigr),
\]
the inner sums over independent sets. By \eqref{fl:eq:line}, $q^\sharp_1\ge0$ and \eqref{p1:eq:ex1}, the singletons
contribute at least $c\,p^\sharp_1-q^\sharp_1\mathrm{ex}_1\ge c\,p^\sharp_1-q^\sharp_1(a-1-e_C)$. For $h\ge2$ the level
$h$ contributes at least $N_h\,p^\sharp_h-q^\sharp_h\,\mathrm{ex}_h$; here
$N_h\,p^\sharp_h\ge\min\{\underline{N}_h(e_C)p^\sharp_h,\overline{N}_h(e_C)p^\sharp_h\}$ by $N_2=\binom c2-e_C$
and~\eqref{p1:eq:Nh}, while $\mathrm{ex}_h\le\mathrm{ex}_1\binom{c-2}{h-1}+d\binom{c-2}{h-2}\le \mathsf B_h(e_C)$
by~\eqref{p1:eq:SM}, where $e(A,C)-c=\mathrm{ex}_1$, and~\eqref{p1:eq:ex1}. Summing gives
$\mathsf V(k)\ge\mathcal A(e_C)$.

(b) On $[0,c-1]$ we have $\underline{N}_h\le\overline{N}_h$: the identity $\binom ch\binom h2=\binom c2\binom{c-2}{h-2}$ gives
$\binom ch-e_C\binom{c-2}{h-2}\le\binom{c-2}{h-2}(\binom c2-e_C)/\binom h2$ for $e_C\ge0$, and $\overline{N}_h\ge0$
because $e_C\le c-1\le\binom c2$. The terms $-q^\sharp_1(a-1-e_C)$, $-q^\sharp_h\mathsf B_h(e_C)$ and
$p^\sharp_2(\binom c2-e_C)$ are affine in $e_C$. For $h\ge3$ the term
$\min\{\underline{N}_h\,p^\sharp_h,\overline{N}_h\,p^\sharp_h\}$ equals $p^\sharp_h\underline{N}_h(e_C)$ if
$p^\sharp_h\ge0$, the product of a nonnegative number and a maximum of affine functions, and equals
$p^\sharp_h\overline{N}_h(e_C)=\max\{p^\sharp_h\binom ch,\ p^\sharp_h\binom{c-2}{h-2}(\binom c2-e_C)/\binom h2\}$ if
$p^\sharp_h<0$. Both are convex.

(c) For an integer $e_C\ge e^{\min}_C+1$, convexity gives
$\mathcal A(e_C)\ge\mathcal A(e^{\min}_C+1)+(e_C-e^{\min}_C-1)\bigl(\mathcal A(e^{\min}_C+1)-\mathcal A(e^{\min}_C)\bigr)\ge\mathcal A(e^{\min}_C)$,
and symmetrically for $e_C\le e^{\min}_C-1$.
\end{proof}

At $e_C=0$ every $h$-subset of $C$ is counted, since $\underline{N}_h(0)=\overline{N}_h(0)=\binom ch$, and
\begin{equation}\label{fl:eq:B0}
\mathcal A(0)=d_{\mathrm{LR}}(0,a)+\sum_{h=1}^{c}\binom ch\,p^\sharp_h-q^\sharp_1(a-1)
-\sum_{h=2}^{c}q^\sharp_h\Bigl((a-1)\binom{c-2}{h-1}+d\binom{c-2}{h-2}\Bigr).
\end{equation}

\paragraph{The Catalan line bound.}\label{flx:sec:bounds}
For $w_{\mathrm L}=1$ two lines replace the constant lower bound of Lemma~\ref{fl:lem:catalan}.

\begin{lemma}[Catalan lines]\label{flx:lem:catline}
For every $j\ge1$ and every integer $m\ge0$,
\[
\binom mj-\binom m{j-1}\ \ge\ \mathrm{Cat}_{j-1}\,\max\{-1,\ m-2j+1\},\qquad
\binom mj-\binom m{j-1}\ \ge\ \mathrm{Cat}_{j}\,(m-2j+1).
\]
\end{lemma}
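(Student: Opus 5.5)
Both inequalities will come from the function $g(m)=\binom m{j-1}-\binom mj=\binom m{j-1}\frac{2j-1-m}{j}$ of the proof of Lemma~\ref{fl:lem:catalan}. The quantity to bound below is $-g(m)$, so both claims are upper bounds on $g$: first $g(m)\le\mathrm{Cat}_{j-1}\min\{1,\,2j-1-m\}$, and second $g(m)\le\mathrm{Cat}_j(2j-1-m)$.

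\emph{First line.} Lemma~\ref{fl:lem:catalan} already gives $g(m)\le\mathrm{Cat}_{j-1}$ for all $m$, which handles the constant part. It remains to show $g(m)\le\mathrm{Cat}_{j-1}(2j-1-m)$. I split into three ranges.
\begin{itemize}
\item For $m\le 2j-2$ the right side is at least $\mathrm{Cat}_{j-1}\ge g(m)$.
\item At $m=2j-1$ both sides vanish.
\item For $m\ge 2j-1$, set $s=m-2j+1\ge0$. Then $-g(m)=\binom m{j-1}\frac{s}{j}$, and the claim becomes $\binom m{j-1}/j\ge\mathrm{Cat}_{j-1}=\binom{2j-2}{j-1}/j$. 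This holds because $\binom m{j-1}$ is nondecreasing in $m$ and $m\ge 2j-2$.
\end{itemize}

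\emph{Second line.} The claim is $g(m)\le\mathrm{Cat}_j(2j-1-m)$.
\begin{itemize}
\item For $m\ge 2j-1$, again write $s=m-2j+1$. The claim reads $\binom m{j-1}s/j\ge \mathrm{Cat}_j s$, that is, $\binom m{j-1}\ge\frac{j}{j+1}\binom{2j}{j}=\binom{2j}{j-1}$. This holds when $m\ge 2j$. At $m=2j-1$ we have $s=0$, so both sides are zero.
\item For $m<j-1$, $g(m)=0$ while the right side is positive.
\item For $j-1\le m\le 2j-2$, $g(m)>0$. I divide by the positive factor $2j-1-m$, which reduces the claim to $\binom m{j-1}/j\le \mathrm{Cat}_j$. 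Since $\binom m{j-1}\le\binom{2j-2}{j-1}$ on this range, it suffices that $\mathrm{Cat}_{j-1}\le\mathrm{Cat}_j$. This is standard for $j\ge1$ ($\mathrm{Cat}_0=\mathrm{Cat}_1=1$, and the sequence increases thereafter, e.g.\ via $\mathrm{Cat}_j/\mathrm{Cat}_{j-1}=\frac{2(2j-1)}{j+1}\ge1$).
\end{itemize}

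The only step requiring care is the boundary bookkeeping, namely the case $m=2j-1$ and the case $m<j-1$ where binomials vanish. The rest consists of monotonicity of $\binom m{j-1}$ in $m$ and the Catalan ratio, so I expect no real obstacle.
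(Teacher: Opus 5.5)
Your proof is correct and follows essentially the paper's route: split at $m=2j-1$, use Lemma~\ref{fl:lem:catalan} below that point, and above it use the factorization $\binom mj-\binom m{j-1}=\frac1j\binom m{j-1}(m-2j+1)$ together with the monotonicity of $\binom m{j-1}$ in $m$. The only difference is cosmetic: for the second inequality on $j-1\le m\le 2j-2$ you bound $g$ directly, whereas the paper chains $g(m)\le\mathrm{Cat}_{j-1}\le\mathrm{Cat}_j\le\mathrm{Cat}_j(2j-1-m)$.
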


\begin{proof}
If $m\le 2j-2$, the right sides are $-\mathrm{Cat}_{j-1}$ and at most $-\mathrm{Cat}_j\le-\mathrm{Cat}_{j-1}$, and both claims
follow from Lemma~\ref{fl:lem:catalan}. If $m\ge2j-1$, then $\binom mj-\binom m{j-1}=\frac1j\binom m{j-1}(m-2j+1)$ with
$m-2j+1\ge0$; both claims hold with equality at $m=2j-1$, and for $m\ge2j$ they follow from
$\frac1j\binom m{j-1}\ge\frac1j\binom{2j}{j-1}=\mathrm{Cat}_j\ge\mathrm{Cat}_{j-1}$, as $\binom m{j-1}$ is nondecreasing
in $m$.
\end{proof}

\begin{proposition}[Catalan line bound]\label{flx:prop:catbound}
Let $k\ge2$, and let $\underline{N}_h,\overline{N}_h,\mathsf B_h$ be as in the anchored bound; put $\overline{N}_1(e_C)=c$. For real $e_C\in[0,c-1]$
put $\mathsf Z_1(e_C)=(a-2k+2)\,c-(a-1-e_C)$ and, for $2\le h\le c$,
\[
\mathsf Z_h(e_C)=\min\bigl\{(a-2k+h+1)\,\underline{N}_h(e_C),\ (a-2k+h+1)\,\overline{N}_h(e_C)\bigr\}-\mathsf B_h(e_C).
\]
For $1\le h\le\min(c,k-1)$ put
\[
\mathsf R^\ast_h(e_C)=\max\bigl\{\mathrm{Cat}_{k-h-1}\max\{-\overline{N}_h(e_C),\mathsf Z_h(e_C)\},\ \mathrm{Cat}_{k-h}\,\mathsf Z_h(e_C)\bigr\},
\]
that is, $\mathrm{Cat}_{k-h}\,\mathsf Z_h(e_C)$ if $\mathsf Z_h(e_C)\ge0$ and $\mathrm{Cat}_{k-h-1}\max\{-\overline{N}_h(e_C),\mathsf Z_h(e_C)\}$
otherwise, and
\[
\mathsf R^\ast(e_C)=\binom ak-\binom a{k-1}+\sum_{h=1}^{\min(c,k-1)}\mathsf R^\ast_h(e_C).
\]
Then $i_k-i_{k-1}\ge\mathsf R^\ast\bigl(e(F[C])\bigr)$, and $\mathsf R^\ast$ is convex on $[0,c-1]$.
\end{proposition}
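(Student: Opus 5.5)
The plan is to run the argument of the anchored bound (Lemma~\ref{fl:lem:anchored}) with $w_{\mathrm L}=1$, but with a level-dependent choice between the two lines of Lemma~\ref{flx:lem:catline} instead of a fixed line set. With $w_{\mathrm L}=1$, \eqref{fl:eq:sum} gives $i_k-i_{k-1}=\sum_J\bigl(\binom{m(J)}{k-|J|}-\binom{m(J)}{k-1-|J|}\bigr)$. The empty set contributes $\binom ak-\binom a{k-1}$. A level $h\ge k$ contributes nonnegatively: $d_L(h,m)\in\{0,1\}$ there, as in Corollary~\ref{fl:cor:rise}. For $1\le h\le\min(c,k-1)$, write $m(J)=a-h-x(J)$ and apply Lemma~\ref{flx:lem:catline} with $j=k-h$, so that $m-2j+1=(a-2k+h+1)-x(J)$.

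Summing over the $N_h$ independent $h$-sets, the first line gives the bound $\mathrm{Cat}_{k-h-1}\max\{-N_h,\ (a-2k+h+1)N_h-\mathrm{ex}_h\}$. Here I would use $\sum\max\ge\max\sum$. The second line gives $\mathrm{Cat}_{k-h}\bigl((a-2k+h+1)N_h-\mathrm{ex}_h\bigr)$. The level-$h$ contribution is therefore at least the larger of the two. It remains to bound $(a-2k+h+1)N_h-\mathrm{ex}_h$ below by $\mathsf Z_h(e_C)$, and $-N_h$ below by $-\overline N_h(e_C)$.

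The bounds on $N_h$ come from three sources. For $h=1$, $N_1=c$ by \eqref{p1:eq:normal}, and $\mathrm{ex}_1\le a-1-e_C$ by \eqref{p1:eq:ex1}, which gives exactly $\mathsf Z_1$. For $h=2$, $N_2=\binom c2-e_C$. For $h\ge3$, \eqref{p1:eq:Nh} gives $\underline N_h\le N_h\le\overline N_h$, so $(a-2k+h+1)N_h$ is at least the minimum over the two ends whatever the sign of the coefficient. The excess is handled by \eqref{p1:eq:SM} together with \eqref{p1:eq:ex1}, which give $\mathrm{ex}_h\le\mathsf B_h(e_C)$. Since $\mathrm{Cat}\ge0$ and $\max$ is monotone in its arguments, each level is at least $\mathsf R^\ast_h(e(F[C]))$, and summing gives the inequality.

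For the equivalent description of $\mathsf R^\ast_h$, use $\mathrm{Cat}_{k-h}\ge\mathrm{Cat}_{k-h-1}$. When $\mathsf Z_h\ge0$, the product $\mathrm{Cat}_{k-h}\mathsf Z_h$ dominates the first term, because $-\overline N_h\le0\le\mathsf Z_h$. When $\mathsf Z_h<0$, the first term dominates.

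Convexity is the step that needs care. Each $\mathsf Z_h$ is convex on $[0,c-1]$: $\mathsf B_h$ is affine, and the min-term is convex by the argument of Lemma~\ref{fl:lem:anchored}(b) applied with $p^\sharp_h=a-2k+h+1$. The function $-\overline N_h$ is also convex, being minus a minimum of affine functions. Hence each $\mathsf R^\ast_h$ is a maximum of nonnegative multiples of maxima of convex functions, so it is convex, and so is the sum. I expect this to go through, since it is the same mechanism as in Lemma~\ref{fl:lem:anchored}(b).
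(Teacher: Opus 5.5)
Your proof is correct and follows essentially the same route as the paper: you take $w_{\mathrm L}=1$ and apply both Catalan lines of Lemma~\ref{flx:lem:catline} level by level, summing the first via $\sum\max\ge\max\sum$. You then bound $\mathsf T_h=(a-2k+h+1)N_h-\mathrm{ex}_h$ and $-N_h$ using \eqref{p1:eq:normal}, \eqref{p1:eq:Nh}, \eqref{p1:eq:SM} and \eqref{p1:eq:ex1}, and get convexity from the argument of Lemma~\ref{fl:lem:anchored}(b) together with the concavity of $\overline{N}_h$.
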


\begin{proof}
Take $w_{\mathrm L}=1$ in~\eqref{fl:eq:sum}, so that $d_L(h,m)=\binom m{k-h}-\binom m{k-h-1}$. The empty set contributes
$\binom ak-\binom a{k-1}$. For $h=k$ we have $d_L(k,m)=1$, and $d_L(h,m)=0$ for $h>k$; these levels contribute
at least $0$. Let $1\le h\le\min(c,k-1)$ and $j=k-h$. An independent $J\subseteq C$ with $|J|=h$ has
$m(J)=a-h-x(J)$ by the definition of $x(J)$, and $m(J)-2j+1=a-2k+h+1-x(J)$. Put
$\mathsf T_h=(a-2k+h+1)N_h-\mathrm{ex}_h$. Lemma~\ref{flx:lem:catline}, applied at $m(J)$ and summed over the $N_h$
such sets, gives
\[
\sum_{|J|=h}d_L\bigl(h,m(J)\bigr)\ \ge\ \max\bigl\{\mathrm{Cat}_{j-1}\max\{-N_h,\ \mathsf T_h\},\ \mathrm{Cat}_j\,\mathsf T_h\bigr\}.
\]
Let $e_C=e(F[C])$. For $h=1$ we have $N_1=c$ by~\eqref{p1:eq:normal} and $\mathrm{ex}_1\le a-1-e_C$
by~\eqref{p1:eq:ex1}, so $\mathsf T_1\ge\mathsf Z_1(e_C)$. For $h\ge2$ we have
$\underline{N}_h(e_C)\le N_h\le\overline{N}_h(e_C)$ by~\eqref{p1:eq:Nh} (and $N_2=\binom c2-e_C$) and $\mathrm{ex}_h\le \mathsf B_h(e_C)$
by~\eqref{p1:eq:SM} and~\eqref{p1:eq:ex1}, so $\mathsf T_h\ge\mathsf Z_h(e_C)$ and $-N_h\ge-\overline{N}_h(e_C)$. Hence, as the right side of the display is
nondecreasing in $\mathsf T_h$ and in $-N_h$, level $h$ contributes at least $\mathsf R^\ast_h(e_C)$, and summing gives $i_k-i_{k-1}\ge\mathsf R^\ast(e_C)$. The two forms of
$\mathsf R^\ast_h$ agree because $\mathrm{Cat}_{j}\ge\mathrm{Cat}_{j-1}\ge0$. Convexity: $\mathsf Z_1$ is affine; for $h\ge2$,
$\mathsf Z_h$ equals $(a-2k+h+1)\underline{N}_h-\mathsf B_h$ if $a-2k+h+1\ge0$ and $(a-2k+h+1)\overline{N}_h-\mathsf B_h$ otherwise, since $\underline{N}_h\le\overline{N}_h$ on
$[0,c-1]$ (proof of Lemma~\ref{fl:lem:anchored}(b)), and both are convex; $-\overline{N}_h$ is convex because $\overline{N}_h$ is a minimum of
affine functions. A maximum of convex functions is convex, and so is a sum.
\end{proof}

\begin{example}\label{flx:ex:catline}
Let $(n,a,k)=(22,16,8)$, so $c=6$, $d=10$, $a=2k$ and $a-2k+h+1=h+1$. At $e_C=0$ every $h$-subset of $C$ is counted,
$\underline{N}_h(0)=\overline{N}_h(0)=\binom6h$, and $\mathsf B_h(0)=15\binom4{h-1}+10\binom4{h-2}$, that is $70,130,120,55,10$ for $h=2,\dots,6$.
The Catalan numbers are $\mathrm{Cat}_6,\dots,\mathrm{Cat}_0=132,42,14,5,2,1,1$, and $\binom{16}8-\binom{16}7=\mathrm{Cat}_8=1430$.
The singletons give $132\max\{-6,\,12-15\}=-396$. For $h=2,\dots,6$ the second entry of the maximum,
$(h+1)\binom6h-\mathsf B_h(0)=-25,-50,-45,-19,-3$, is below the first, $-\binom6h$, so these levels give
$-42\cdot15-14\cdot20-5\cdot15-2\cdot6-1=-998$. Hence $\mathsf R^\ast(0)=1430-396-998=36>0$. Also
$\mathsf R^\ast(1)=3518/15\ge\mathsf R^\ast(0)$, so by convexity $\mathsf R^\ast(e_C)\ge36$ for $0\le e_C\le5$. Corollary~\ref{fl:cor:rise}
alone gives $\mathsf R(16,6,8)=1430-1790<0$: the singletons are charged $6\cdot132$ there and $3\cdot132$ here.
\end{example}

% ---------------------------------------------------------------------------------------------------------------
\subsection{The four classes of parameter triples}\label{flx:sec}\label{fl:sec:twenty}

This subsection splits the $249$ triples of $\cD_{24}$ into four classes, settles each with the bounds of
Section~\ref{fl:sec}, and proves Theorem~\ref{p1:thm:main}. The first class, $\mathcal P_1$, consists of the $145$ triples with $2k\le a-1$; $144$ are settled by the rising bound
(Proposition~\ref{fl:prop:class1}) and one by the Catalan line bound (Proposition~\ref{flx:prop:line}).

The second class, $\mathcal P_2$, consists of the $34$ triples with $a=2k$ and $c\le c_2(k)$, where $c_2(k)$ is given in the second group of
Data~\ref{fl:tab:corners}. Its triples are specified below and are settled by the rising bound
(Proposition~\ref{fl:prop:class2}).

The third class, $\mathcal P_3$, consists of the $28$ other triples with $k\ge h_B(n,a)$, which are the triples of Data~\ref{fl:tab:tail} and
lie in no window (Proposition~\ref{fl:prop:class3}).

The fourth class, $\mathcal P_4$, consists of the remaining $42$ triples. Of these, the seven nonexceptional cases of Data~\ref{flx:tab:line} are settled by the Catalan line bound (Proposition~\ref{flx:prop:line}), and the $35$ cases of Data~\ref{fl:tab:values} are settled by the anchored bound (Proposition~\ref{fl:prop:class4}).

These four classes partition $\cD_{24}$, as is checked by listing the $249$ triples. Thus $206$ triples are settled by
hand, by $46$ comparisons of integers built from Catalan numbers, binomial coefficients and powers of $2$ ($18$ in
Data~\ref{fl:tab:corners} and $28$ in Data~\ref{fl:tab:tail}); $8$ by exact rational evaluation of the Catalan line
bound ($21$ values, from Catalan numbers and binomial coefficients only); and $35$ by exact rational evaluation of the
anchored bound from the printed evaluations in Appendix~\ref{apx:p1tables}, each of which can be checked by hand. All
these comparisons and evaluations, the partition of $\cD_{24}$ and its counts are verified in exact rational arithmetic by the programs of
family~\ref{chk:floor} (Section~\ref{sec:computer}).

\datablock{Catalan inequalities}{fl:tab:corners}
For Proposition~\ref{fl:prop:class1}, we specify $c_1(k)$ and compare $\mathrm{Cat}_{k+1}$ with the sum
\[
 S_1(k)=\sum_{h=1}^{\min(c_1(k),k-1)}\binom{c_1(k)}h\mathrm{Cat}_{k-1-h}.
\]
% exact-data:FlxCornersOne:begin
For $k=3$, we have $(2k+1,c_1(k))=(7,1)$ and $\mathrm{Cat}_{k+1}=14>S_1(k)=1$.

For $k=4$, we have $(2k+1,c_1(k))=(9,3)$ and $\mathrm{Cat}_{k+1}=42>S_1(k)=10$.

For $k=5$, we have $(2k+1,c_1(k))=(11,5)$ and $\mathrm{Cat}_{k+1}=132>S_1(k)=60$.

For $k=6$, we have $(2k+1,c_1(k))=(13,7)$ and $\mathrm{Cat}_{k+1}=429>S_1(k)=329$.

For $k=7$, we have $(2k+1,c_1(k))=(15,8)$ and $\mathrm{Cat}_{k+1}=1430>S_1(k)=1232$.

For $k=8$, we have $(2k+1,c_1(k))=(17,8)$ and $\mathrm{Cat}_{k+1}=4862>S_1(k)=3514$.

For $k=9$, we have $(2k+1,c_1(k))=(19,7)$ and $\mathrm{Cat}_{k+1}=16796>S_1(k)=7855$.

For $k=10$, we have $(2k+1,c_1(k))=(21,5)$ and $\mathrm{Cat}_{k+1}=58786>S_1(k)=12984$.

For $k=11$, we have $(2k+1,c_1(k))=(23,3)$ and $\mathrm{Cat}_{k+1}=208012>S_1(k)=19305$.
% exact-data:FlxCornersOne:end

For Proposition~\ref{fl:prop:class2}, we specify $c_2(k)$ and compare $\mathrm{Cat}_{k}$ with the sum
\[
 S_2(k)=\sum_{h=1}^{\min(c_2(k),k-1)}\binom{c_2(k)}h\mathrm{Cat}_{k-1-h}.
\]
% exact-data:FlxCornersTwo:begin
For $k=3$, we have $(2k,c_2(k))=(6,2)$ and $\mathrm{Cat}_{k}=5>S_2(k)=3$.

For $k=4$, we have $(2k,c_2(k))=(8,3)$ and $\mathrm{Cat}_{k}=14>S_2(k)=10$.

For $k=5$, we have $(2k,c_2(k))=(10,4)$ and $\mathrm{Cat}_{k}=42>S_2(k)=37$.

For $k=6$, we have $(2k,c_2(k))=(12,4)$ and $\mathrm{Cat}_{k}=132>S_2(k)=95$.

For $k=7$, we have $(2k,c_2(k))=(14,5)$ and $\mathrm{Cat}_{k}=429>S_2(k)=411$.

For $k=8$, we have $(2k,c_2(k))=(16,5)$ and $\mathrm{Cat}_{k}=1430>S_2(k)=1247$.

For $k=9$, we have $(2k,c_2(k))=(18,5)$ and $\mathrm{Cat}_{k}=4862>S_2(k)=3960$.

For $k=10$, we have $(2k,c_2(k))=(20,5)$ and $\mathrm{Cat}_{k}=16796>S_2(k)=12984$.

For $k=11$, we have $(2k,c_2(k))=(22,4)$ and $\mathrm{Cat}_{k}=58786>S_2(k)=29876$.
% exact-data:FlxCornersTwo:end

The $34$ triples of $\mathcal P_2$ can be specified by their last coordinate. For $k=3$, the members of $\mathcal P_2$ are $(7,6,3)$ and $(8,6,3)$. For $k=4$, the members of $\mathcal P_2$ are $(9,8,4)$, $(10,8,4)$ and $(11,8,4)$. For $k=5$, the members of $\mathcal P_2$ are $(11,10,5)$, $(12,10,5)$, $(13,10,5)$ and $(14,10,5)$.

For $k=6$, the members of $\mathcal P_2$ are $(13,12,6)$, $(14,12,6)$, $(15,12,6)$ and $(16,12,6)$. For $k=7$, the members of $\mathcal P_2$ are $(15,14,7)$, $(16,14,7)$, $(17,14,7)$, $(18,14,7)$ and $(19,14,7)$. For $k=8$, the members of $\mathcal P_2$ are $(17,16,8)$, $(18,16,8)$, $(19,16,8)$, $(20,16,8)$ and $(21,16,8)$.

For $k=9$, the members of $\mathcal P_2$ are $(19,18,9)$, $(20,18,9)$, $(21,18,9)$, $(22,18,9)$ and $(23,18,9)$. For $k=10$, the members of $\mathcal P_2$ are $(21,20,10)$, $(22,20,10)$, $(23,20,10)$ and $(24,20,10)$. For $k=11$, the members of $\mathcal P_2$ are $(23,22,11)$ and $(24,22,11)$.

\begin{proposition}\label{fl:prop:class1}
Let $c_1(k)$ be as in the first group of Data~\ref{fl:tab:corners}. Every triple $(n,a,k)\in\mathcal P_1$ has
$3\le k\le11$ and $c\le c_1(k)$, except the triple
\begin{equation}\label{flx:eq:exc}
(n,a,k)=(24,15,7),
\end{equation}
which has $c=9$. For every other $(n,a,k)\in\mathcal P_1$, every forest with $n$ vertices and independence number $a$
satisfies $i_k>i_{k-1}$.
\end{proposition}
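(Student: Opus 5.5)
The plan has two parts: a finite enumeration of $\mathcal P_1$, and then, for each triple other than \eqref{flx:eq:exc}, one chain of inequalities ending in a comparison from Data~\ref{fl:tab:corners}. Everything rests on Corollary~\ref{fl:cor:rise}, Lemma~\ref{fl:lem:mono}, \eqref{fl:eq:catlead} and the first group of Data~\ref{fl:tab:corners}.

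\emph{Step 1 (the shape of $\mathcal P_1$).} A triple of $\cD_{24}$ has $\lceil n/4\rceil<k<H(n,a)=\lceil(n+2a)/6\rceil$ and $n\le24$, $a\le n-1$. Membership in $\mathcal P_1$ adds $2k\le a-1$. From $k\ge\lceil n/4\rceil+1$, $n\ge7$ and $a\le23$ we get $3\le k\le11$. For each $k$ I would list the admissible pairs $(n,a)$ and compute $c=n-a$. Since $\lceil n/4\rceil<k$ gives $n\le4k-1$ (and $n\le24$), and $a\ge2k+1$, we have $c\le\min(4k-1,24)-(2k+1)$. I would then record the maximum of $c$ for each $k$ and check that it is $c_1(k)$. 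The one exception is $k=7$, where $(24,15,7)$ has $c=9>c_1(7)=8$. This is a finite check over the $145$ triples, done by listing them (family~\ref{chk:floor}).

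\emph{Step 2 (the rising bound).} For $(n,a,k)\ne(24,15,7)$ in $\mathcal P_1$, Corollary~\ref{fl:cor:rise} gives $i_k-i_{k-1}\ge\mathsf R(a,c,k)$. By the first part of Lemma~\ref{fl:lem:mono}, $\mathsf R$ is nonincreasing in $c$, so $\mathsf R(a,c,k)\ge\mathsf R(a,c_1(k),k)$. By the second part, $\mathsf R$ is nondecreasing in $a$ for $a\ge2k-3$. Because $a\ge2k+1$, we can lower $a$ step by step to $2k+1$ and get $\mathsf R(a,c_1(k),k)\ge\mathsf R(2k+1,c_1(k),k)$. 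By \eqref{fl:eq:catlead}, $\mathsf R(2k+1,c_1(k),k)=\mathrm{Cat}_{k+1}-S_1(k)$, and this is positive by the nine integer comparisons of Data~\ref{fl:tab:corners}. Hence $i_k>i_{k-1}$.

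\emph{Main obstacle.} The monotone reduction in Step 2 is routine. The real difficulty is making the enumeration in Step 1 airtight: the bound $c\le c_1(k)$ has to be tight, and we must confirm that no triple other than $(24,15,7)$ has $c>c_1(k)$. The constraint $k<H(n,a)$ cuts the crude bound $\min(4k-1,24)-2k-1$ down for $k\ge8$, because there $n\le24$ binds and $a$ must be large. I would handle this case by case in $k$, using $n\le24$ and $k<\lceil(n+2a)/6\rceil$. Any slip here would surface as a triple escaping the class; the exceptional triple itself is left to Proposition~\ref{flx:prop:line}.
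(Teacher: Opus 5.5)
Your proposal follows the paper's proof. Step~2 is the same chain: Corollary~\ref{fl:cor:rise}, the two monotonicities of Lemma~\ref{fl:lem:mono}, then \eqref{fl:eq:catlead} and Data~\ref{fl:tab:corners}. You apply the two monotonicities in the opposite order, which is harmless: the difference in $c$ does not involve $a$, and the difference in $a$, $\binom a{k-1}-\binom a{k-2}$, does not involve $c$.

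Step~1, however, has an arithmetic slip that creates an obstacle which does not exist. The condition $\lceil n/4\rceil<k$ is equivalent to $\lceil n/4\rceil\le k-1$, that is $n\le4k-4$, not merely $n\le4k-1$. With the sharp bound, $c\le\min(24,4k-4)-(2k+1)$ gives $1,3,5,7,9,7,5,3,1$ for $k=3,\dots,11$. Compare this with $c_1(k)=1,3,5,7,8,8,7,5,3$. The only excess is at $k=7$, where $c=9$ forces $n=24$ and $a=15$. So Step~1 is a two-line hand argument, needs no enumeration, and never uses $H(n,a)$.

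Your ``main obstacle'' is misdiagnosed. With your weak bound the crude estimate fails for $k\le6$, where it gives $4,6,8,10$, not for $k\ge8$, where $24-(2k+1)=7,5,3,1$ is already below $c_1(k)$. The constraint $k<H(n,a)$ could not repair the small-$k$ cases anyway: for $k=3$ it reads $n+2a>18$, which is automatic once $a\ge7$. What actually cuts is the lower end of the window.

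Also, $c_1(k)$ is not the maximum of $c$ for $k\ge8$. So ``check that it is $c_1(k)$'' should read ``check that it is at most $c_1(k)$''; only the inequality is needed.

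Since you fall back on listing the triples of $\mathcal P_1$ from the definition of $\cD_{24}$, a correct enumeration would still give the right answer. These points are therefore slips rather than a gap.
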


\begin{proof}
Since $n\ge7$ on $\cD_{24}$ and $k>\lceil n/4\rceil$, $k\ge3$ and $n\le\min(24,4k-4)$; and $2k+1\le a\le n-1\le23$, so
$k\le11$ and $c\le\min(24,4k-4)-(2k+1)$. For $k=3,\dots,11$ this number is $1,3,5,7,9,7,5,3,1$, against
$c_1(k)=1,3,5,7,8,8,7,5,3$: it is at most $c_1(k)$ except at $k=7$, where it is $9$ and $c_1(7)=8$; at $k=7$ the only triple of $\mathcal P_1$ with $c=9$ is $(24,15,7)$ (as $n\le24$ and $a\ge15$). For the
other triples, Lemma~\ref{fl:lem:mono} and~\eqref{fl:eq:catlead} give
$\mathsf R(a,c,k)\ge\mathsf R(2k+1,c,k)\ge\mathsf R(2k+1,c_1(k),k)=\mathrm{Cat}_{k+1}-\sum_{h=1}^{\min(c_1,k-1)}\binom{c_1}{h}\mathrm{Cat}_{k-1-h}$,
which is positive by Data~\ref{fl:tab:corners}, and Corollary~\ref{fl:cor:rise} concludes.
\end{proof}

The rising bound of the triple~\eqref{flx:eq:exc} itself is $\mathsf R(15,9,7)=-334$, so Corollary~\ref{fl:cor:rise}
does not settle it; Proposition~\ref{flx:prop:line} does.

\begin{proposition}\label{fl:prop:class2}
Let $(n,a,k)\in\mathcal P_2$. Every forest with $n$ vertices and independence number $a$ satisfies $i_k>i_{k-1}$.
\end{proposition}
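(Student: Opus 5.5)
The plan mirrors the proof of Proposition~\ref{fl:prop:class1}: reduce every triple of $\mathcal P_2$ to the corner value $c=c_2(k)$ and the explicit Catalan comparison of the second group of Data~\ref{fl:tab:corners}, then conclude with the rising bound of Corollary~\ref{fl:cor:rise}.

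\textbf{Step 1 (range of $k$).} A triple $(n,a,k)\in\mathcal P_2$ has $a=2k$ and $c=n-a\le c_2(k)$ by definition. Since $(n,a,k)\in\cD_{24}$ we also have $a\le n-1\le 23$ and $k>\lceil n/4\rceil\ge2$ (as $n\ge7$), so $3\le k\le11$. These are exactly the $k$ listed in the data. The explicit list of the $34$ triples serves only as a description, to be rechecked against these constraints.

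\textbf{Step 2 (monotonicity in $c$).} Lemma~\ref{fl:lem:mono} says $\mathsf R(a,c,k)$ is nonincreasing in $c$. Hence
\[
\mathsf R(2k,c,k)\ \ge\ \mathsf R(2k,c_2(k),k).
\]
Here $a$ is already fixed at $2k$, so the monotonicity in $a$ is not needed.

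\textbf{Step 3 (leading term).} By \eqref{fl:eq:catlead}, $\binom{2k}{k}-\binom{2k}{k-1}=\mathrm{Cat}_k$. Therefore
\[
\mathsf R(2k,c_2(k),k)=\mathrm{Cat}_k-\sum_{h=1}^{\min(c_2(k),k-1)}\binom{c_2(k)}h\mathrm{Cat}_{k-1-h}=\mathrm{Cat}_k-S_2(k).
\]
This is positive for each $k=3,\dots,11$ by the nine integer comparisons in Data~\ref{fl:tab:corners}.

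\textbf{Step 4 (conclusion).} Corollary~\ref{fl:cor:rise} gives $i_k-i_{k-1}\ge\mathsf R(a,c,k)>0$ for every forest with $n$ vertices and independence number $a$.

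The only point needing care is that the monotonicity in $c$ applies over the full range $1\le c\le c_2(k)$. It does, because every term $\binom ch\mathrm{Cat}_{k-1-h}$ is nonnegative and nondecreasing in $c$, and the upper limit $\min(c,k-1)$ only grows with $c$. Beyond that, the step most likely to contain an error is the arithmetic of the sums $S_2(k)$, which should be spot-checked (for example, $k=5$, $c_2=4$: $4\cdot5+6\cdot2+4\cdot1+1\cdot1=37<42$).
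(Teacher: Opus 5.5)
Your proposal is correct and is essentially the paper's proof. You reduce to the corner $c=c_2(k)$ by the monotonicity of $\mathsf R$ in $c$ (Lemma~\ref{fl:lem:mono}), identify the leading term as $\mathrm{Cat}_k$ via \eqref{fl:eq:catlead}, take positivity from the second group of Data~\ref{fl:tab:corners}, and conclude with Corollary~\ref{fl:cor:rise}.
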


\begin{proof}
Here $a=2k$ and $c\le c_2(k)$. By Lemma~\ref{fl:lem:mono} and \eqref{fl:eq:catlead},
$\mathsf R(2k,c,k)\ge\mathsf R(2k,c_2(k),k)=\mathrm{Cat}_k-\sum_{h=1}^{\min(c_2,k-1)}\binom{c_2}{h}\mathrm{Cat}_{k-1-h}$, which is
positive by Data~\ref{fl:tab:corners}. Corollary~\ref{fl:cor:rise} concludes.
\end{proof}

\begin{proposition}\label{fl:prop:class3}
Let $(n,a,k)\in\mathcal P_3$. Then $k\ge h_B(n,a)$; in particular $k\notin W(F)$ for every forest $F$
with $n$ vertices and independence number $a$.
\end{proposition}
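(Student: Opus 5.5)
The claim is that each of the $28$ triples of $\mathcal P_3$, listed in Data~\ref{fl:tab:tail}, satisfies $k\ge h_B(n,a)$. Here $h_B(n,a)$ is the first mode of the coefficients $b_s$ of $(1+2x)^{c}(1+x)^{d}$, with $c=n-a$ and $d=2a-n$. Once this is shown, the second assertion follows from the definitions. For a forest $F$ with $n$ vertices and independence number $a$, Lemma~\ref{p1:lem:matching} gives $\nu(F)=c$. Hence $B_F=(1+2x)^c(1+x)^d$ and $h_B(F)=h_B(n,a)$. Since $W(F)$ consists of the ranks $k$ with $k<h_B(F)$, the inequality $k\ge h_B(n,a)$ excludes $k$ from $W(F)$.

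To show $k\ge h_B(n,a)$ for a given triple, it suffices to exhibit an index $s\le k$ with $b_s$ maximal. The cleanest form of this is the single comparison $b_{k-1}<b_k$ together with $b_k\ge b_{k+1}$ plus log-concavity. The coefficients of $(1+2x)^c(1+x)^d$ form a log-concave sequence with no internal zeros, since this is a product of linear factors with positive real roots. So the sequence $(b_s)$ is unimodal, and $b_k\ge b_{k+1}$ alone already implies that the maximum is attained at some index $\le k$. That is exactly $h_B\le k$.

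Hence for each triple I would compute the two integers
\[
b_k=\sum_j\binom cj2^j\binom d{k-j},\qquad b_{k+1}=\sum_j\binom cj2^j\binom d{k+1-j},
\]
and check $b_k\ge b_{k+1}$. These $28$ comparisons of integers built from binomial coefficients and powers of $2$ are the ones the text says are recorded in Data~\ref{fl:tab:tail}.

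Alternatively, many of the triples may follow uniformly from Lemma~\ref{apC:lem:B7} with $\nu'=c$ and $\nu''=d$. That lemma gives $b_{s+1}\le b_s$ whenever $6s\ge4c+3d=n+2a$, so $h_B\le\lceil(n+2a)/6\rceil=H(n,a)$. However, a triple of $\cD_{24}$ has $k<H(n,a)$ by definition, so this lemma never settles a $\mathcal P_3$ triple alone. The finer exact comparison is needed for each triple.

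The main obstacle is not conceptual but bookkeeping. First, the list in Data~\ref{fl:tab:tail} must coincide with $\mathcal P_3$, meaning the triples of $\cD_{24}$ outside $\mathcal P_1\cup\mathcal P_2$ with $k\ge h_B$; that partition is asserted to be checked by listing. Second, each of the $28$ inequalities $b_k\ge b_{k+1}$ must actually hold. I would verify these exactly, organised by $(c,d)$, using the recurrence $b_{s+1}(c,d)=b_{s+1}(c-1,d)+2b_s(c-1,d)$ to build the rows.
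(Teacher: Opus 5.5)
Your proposal is correct and follows the paper's proof: real-rootedness of $(1+2x)^c(1+x)^d$ makes $(b_s)$ log-concave with no internal zeros, so the single comparison $b_k\ge b_{k+1}$ recorded in Data~\ref{fl:tab:tail} forces the first mode to be at most $k$, and $h_B(F)=h_B(n,a)$ (via $\nu(F)=c$, Lemma~\ref{p1:lem:matching}) then excludes $k$ from $W(F)$. Two minor slips: the roots $-\tfrac12$ and $-1$ are negative, not positive; and the comparison $b_{k-1}<b_k$ you mention first is unnecessary, as you note yourself in the next sentence.
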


\begin{proof}
The polynomial $(1+2x)^c(1+x)^d$ has only real roots, so its coefficient sequence is log-concave without
internal zeros, and $b_{r+1}/b_r$ is nonincreasing. Data~\ref{fl:tab:tail} gives $b_{k+1}\le b_k$, hence
$b_r$ is nonincreasing for $r\ge k$ and the first mode is at most $k$. By
Section~\ref{p1:sec:param}, $h_B(F)=h_B(n,a)\le k$.
\end{proof}

\datablock{The triples outside the window}{fl:tab:tail}
For each of the $28$ triples of $\mathcal P_3$, we give $c=n-a$, $d=2a-n$ and the coefficients $b_k$ and $b_{k+1}$ of $(1+2x)^c(1+x)^d$. Each comparison verifies $b_k\ge b_{k+1}$.
% exact-data:PTailTwoCol:begin
For $(n,a,k)=(11,7,4)$, we have $c=4$ and $d=3$, and the coefficients satisfy $b_k=192$ and $b_{k+1}=168$.

For $(n,a,k)=(12,7,4)$, we have $c=5$ and $d=2$, and the coefficients satisfy $b_k=280$ and $b_{k+1}=272$.

For $(n,a,k)=(13,9,5)$, we have $c=4$ and $d=5$, and the coefficients satisfy $b_k=681$ and $b_{k+1}=608$.

For $(n,a,k)=(14,9,5)$, we have $c=5$ and $d=4$, and the coefficients satisfy $b_k=1002$ and $b_{k+1}=968$.

For $(n,a,k)=(15,8,5)$, we have $c=7$ and $d=1$, and the coefficients satisfy $b_k=1232$ and $b_{k+1}=1120$.

For $(n,a,k)=(15,11,6)$, we have $c=4$ and $d=7$, and the coefficients satisfy $b_k=2471$ and $b_{k+1}=2241$.

For $(n,a,k)=(16,8,5)$, we have $c=8$ and $d=0$, and the coefficients satisfy $b_k=1792$ and $b_{k+1}=1792$.

For $(n,a,k)=(16,11,6)$, we have $c=5$ and $d=6$, and the coefficients satisfy $b_k=3653$ and $b_{k+1}=3530$.

For $(n,a,k)=(17,10,6)$, we have $c=7$ and $d=3$, and the coefficients satisfy $b_k=4424$ and $b_{k+1}=4048$.

For $(n,a,k)=(17,13,7)$, we have $c=4$ and $d=9$, and the coefficients satisfy $b_k=9108$ and $b_{k+1}=8361$.

For $(n,a,k)=(18,10,6)$, we have $c=8$ and $d=2$, and the coefficients satisfy $b_k=6496$ and $b_{k+1}=6400$.

For $(n,a,k)=(18,13,7)$, we have $c=5$ and $d=8$, and the coefficients satisfy $b_k=13504$ and $b_{k+1}=13073$.

For $(n,a,k)=(19,12,7)$, we have $c=7$ and $d=5$, and the coefficients satisfy $b_k=16172$ and $b_{k+1}=14920$.

For $(n,a,k)=(19,15,8)$, we have $c=4$ and $d=11$, and the coefficients satisfy $b_k=33957$ and $b_{k+1}=31471$.

For $(n,a,k)=(20,12,7)$, we have $c=8$ and $d=4$, and the coefficients satisfy $b_k=23872$ and $b_{k+1}=23392$.

For $(n,a,k)=(20,15,8)$, we have $c=5$ and $d=10$, and the coefficients satisfy $b_k=50445$ and $b_{k+1}=48940$.

For $(n,a,k)=(21,11,7)$, we have $c=10$ and $d=1$, and the coefficients satisfy $b_k=28800$ and $b_{k+1}=26880$.

For $(n,a,k)=(21,14,8)$, we have $c=7$ and $d=7$, and the coefficients satisfy $b_k=59906$ and $b_{k+1}=55692$.

For $(n,a,k)=(21,17,9)$, we have $c=4$ and $d=13$, and the coefficients satisfy $b_k=127699$ and $b_{k+1}=119262$.

For $(n,a,k)=(22,11,7)$, we have $c=11$ and $d=0$, and the coefficients satisfy $b_k=42240$ and $b_{k+1}=42240$.

For $(n,a,k)=(22,14,8)$, we have $c=8$ and $d=6$, and the coefficients satisfy $b_k=88720$ and $b_{k+1}=86784$.

For $(n,a,k)=(22,17,9)$, we have $c=5$ and $d=12$, and the coefficients satisfy $b_k=189970$ and $b_{k+1}=184690$.

For $(n,a,k)=(23,13,8)$, we have $c=10$ and $d=3$, and the coefficients satisfy $b_k=105984$ and $b_{k+1}=99200$.

For $(n,a,k)=(23,16,9)$, we have $c=7$ and $d=9$, and the coefficients satisfy $b_k=224143$ and $b_{k+1}=209762$.

For $(n,a,k)=(23,19,10)$, we have $c=4$ and $d=15$, and the coefficients satisfy $b_k=483483$ and $b_{k+1}=454389$.

For $(n,a,k)=(24,13,8)$, we have $c=11$ and $d=2$, and the coefficients satisfy $b_k=156288$ and $b_{k+1}=154880$.

For $(n,a,k)=(24,16,9)$, we have $c=8$ and $d=8$, and the coefficients satisfy $b_k=332688$ and $b_{k+1}=325360$.

For $(n,a,k)=(24,19,10)$, we have $c=5$ and $d=14$, and the coefficients satisfy $b_k=720005$ and $b_{k+1}=701350$.
% exact-data:PTailTwoCol:end

\begin{proposition}[exact rational checks on $8$ triples, $21$ values of $\mathsf R^\ast$]\label{flx:prop:line}
Let $(n,a,k)$ be the triple~\eqref{flx:eq:exc} or one of the $7$ triples of $\mathcal P_4$ in
Data~\ref{flx:tab:line}, and let $e^{\min}_C$ be as in those records. Then $\mathsf R^\ast(e^{\min}_C)>0$, and
$\mathsf R^\ast(e^{\min}_C\pm1)\ge\mathsf R^\ast(e^{\min}_C)$ for the neighbours in $[0,c-1]$, with the values of
Data~\ref{flx:tab:line}. Consequently every forest with $n$ vertices and independence number $a$ satisfies
$i_k>i_{k-1}$.
\end{proposition}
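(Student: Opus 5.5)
The plan is to derive the consequence from Proposition~\ref{flx:prop:catbound} together with convexity, and to reduce the numerical part to the finitely many exact evaluations recorded in Data~\ref{flx:tab:line}.

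Fix one of the $8$ triples $(n,a,k)$. Note $k\ge2$ (indeed $k\ge3$ on $\cD_{24}$), and put $c=n-a$, $d=2a-n$. Let $F$ be any forest with $n$ vertices and $\alpha(F)=a$, and let $A$ be a maximum independent set. Then $a<n$, and Lemma~\ref{p1:prop:support} gives $e(F[C])\in\{0,\dots,c-1\}$. Proposition~\ref{flx:prop:catbound} then gives
\[
i_k-i_{k-1}\ \ge\ \mathsf R^\ast\bigl(e(F[C])\bigr),
\]
and $\mathsf R^\ast$ is convex on $[0,c-1]$. So it suffices to show $\min_{0\le e_C\le c-1}\mathsf R^\ast(e_C)>0$ over the integers.

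I will get this minimum from the three recorded values, using the argument of Lemma~\ref{fl:lem:anchored}(c), which relies only on convexity. Suppose $e_C\ge e^{\min}_C+1$. Convexity gives
\[
\mathsf R^\ast(e_C)\ \ge\ \mathsf R^\ast(e^{\min}_C+1)+(e_C-e^{\min}_C-1)\bigl(\mathsf R^\ast(e^{\min}_C+1)-\mathsf R^\ast(e^{\min}_C)\bigr)\ \ge\ \mathsf R^\ast(e^{\min}_C),
\]
and the case $e_C\le e^{\min}_C-1$ is symmetric. Hence $\mathsf R^\ast(e_C)\ge\mathsf R^\ast(e^{\min}_C)>0$ for every admissible $e_C$, and therefore $i_k>i_{k-1}$. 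No valley can then occur at $k$.

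The substantive step is the numerical one: checking $\mathsf R^\ast(e^{\min}_C)>0$ and the two neighbour inequalities for each triple. That is at most $8\times3$ evaluations; the statement says $21$, because some $e^{\min}_C$ sit at an endpoint. Each value is an exact rational assembled from Catalan numbers, binomial coefficients, $\underline N_h$, $\overline N_h$ and $\mathsf B_h$. At $e_C=0$ the terms simplify, since $\underline N_h=\overline N_h=\binom ch$. At $e_C\ge1$ the bound $\overline N_h$ involves division by $\binom h2$, which is why the values are rational rather than integral; Example~\ref{flx:ex:catline} shows a case of this kind.

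For each $h$ I would choose the branch of $\mathsf R^\ast_h$ by the sign of $\mathsf Z_h$ and sum. These evaluations are routine but must be carried out exactly; I would defer to the values listed in Data~\ref{flx:tab:line}, which are rechecked in exact arithmetic by family~\ref{chk:floor}. The main obstacle is the exceptional triple $(24,15,7)$ with $c=9$. There the margin is likely thinnest, since the rising bound fails at $-334$, and the branch selection at several levels $h$ needs care.
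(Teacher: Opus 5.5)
Your proposal is correct and follows the paper's proof. You use Proposition~\ref{flx:prop:catbound} for the bound $i_k-i_{k-1}\ge\mathsf R^\ast\bigl(e(F[C])\bigr)$ and the convexity of $\mathsf R^\ast$, the argument of Lemma~\ref{fl:lem:anchored}(c) to pass from the recorded values to every integer in $[0,c-1]$, and the exact values of Data~\ref{flx:tab:line}; your count of $21$ is right, but your guess about the margins is not, since $\mathsf R^\ast(3)=11529/2$ at $(24,15,7)$ and the smallest recorded value is $\mathsf R^\ast(0)=36$ at $(22,16,8)$.
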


\begin{proof}
The values follow from the definition of $\mathsf R^\ast$, as in Example~\ref{flx:ex:catline}. By the convexity in
Proposition~\ref{flx:prop:catbound} and the argument of Lemma~\ref{fl:lem:anchored}(c), $\mathsf R^\ast(e_C)\ge\mathsf R^\ast(e^{\min}_C)>0$
for every integer $0\le e_C\le c-1$, and Proposition~\ref{flx:prop:catbound} gives $i_k-i_{k-1}>0$.
\end{proof}

\begin{proposition}[exact rational checks on $35$ triples]\label{fl:prop:class4}\label{flx:prop:class4}
Let $(n,a,k)$ be one of the $35$ triples of Data~\ref{fl:tab:values} ($18$ with $n\le20$ and $17$ with $21\le n\le24$),
and let $w_{\mathrm L}$ and $e^{\min}_C$ be as there. Put $(p^\sharp_h,q^\sharp_h)$ equal to the numbers of the line set
of Data~\ref{fl:tab:level} with the same $(a,k)$ and $w_{\mathrm L}$ for $1\le h\le\min(c,a-1,k+1)$ (every printed
$q^\sharp_h$ is nonnegative), and $p^\sharp_h=q^\sharp_h=0$ for larger $h$. Then~\eqref{fl:eq:line} holds, $\mathcal A(e^{\min}_C)>0$ and
$\mathcal A(e^{\min}_C\pm1)\ge\mathcal A(e^{\min}_C)$ for the neighbours in $[0,c-1]$, with the values of
Data~\ref{fl:tab:values}. Consequently every forest with $n$ vertices and independence number $a$ satisfies
$\mathsf V(k)>0$ and has no valley at $k$.
\end{proposition}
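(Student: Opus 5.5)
The proof will follow the template of Proposition~\ref{flx:prop:line}, with the anchored bound of Lemma~\ref{fl:lem:anchored} in place of the Catalan line bound. Fix one of the $35$ triples $(n,a,k)$ with its recorded weight $w_{\mathrm L}$ and anchor $e^{\min}_C$. Put $c=n-a$, $d=a-c$ and $w_{\mathrm R}=1-w_{\mathrm L}$. There are three things to establish: (i) the printed numbers $(p^\sharp_h,q^\sharp_h)$ form a line set, meaning that \eqref{fl:eq:line} holds for every $1\le h\le c$; (ii) the inequalities $\mathcal A(e^{\min}_C)>0$ and $\mathcal A(e^{\min}_C\pm1)\ge\mathcal A(e^{\min}_C)$ hold; (iii) these imply the conclusion. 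Step (iii) is immediate once (i) and (ii) are in hand. Lemma~\ref{fl:lem:anchored}(a) gives $\mathsf V(k)\ge\mathcal A(e(F[C]))$. Part (b) of that lemma gives convexity, and part (c) then gives $\mathcal A(e_C)\ge\mathcal A(e^{\min}_C)>0$ for every integer $e_C\in[0,c-1]$. Finally, $\mathsf V(k)>0$ excludes a valley at $k$, as noted after \eqref{fl:eq:sum}.

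For (i), the indices split into three ranges.

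For $h\ge k+2$ we have $G_{k-h}\equiv0$, so the choice $p^\sharp_h=q^\sharp_h=0$ works.

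For $h\ge a$ the range $0\le m\le a-h$ in \eqref{fl:eq:line} is empty when $h>a$. When $h=a$ it reduces to $m=0$, and then the zero entries need $G_{k-a}(0)\ge0$. This holds because $k<a$, so $k-a\le-1$; the case $k-a=-1$ gives $-w_{\mathrm R}$, which is negative. I expect instead to use the support fact that $c\le a-1$ on these triples, which makes this case vacuous, and I will check that it holds.

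For $1\le h\le\min(c,a-1,k+1)$, \eqref{fl:eq:line} is a finite list of inequalities, one for each integer $m\in[0,a-h]$. Each involves only binomial coefficients at $m\le 23$, the rational weight $w_{\mathrm L}$, and the printed rationals, so it can be checked exactly. I would shorten the check by noting that it suffices that $(p^\sharp_h,q^\sharp_h)$ lies below the chord data. Concretely, if $p^\sharp_h\le p^\circ_h$ and the line through $(a-h,p^\sharp_h)$ with slope $q^\sharp_h$ lies under the graph of $G_{k-h}$, then \eqref{fl:eq:line} holds. Where the printed line is exactly the chord, this is the definition of $q^\circ_h$.

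For (ii), I would evaluate $\mathcal A$ from its defining formula at the two or three integers $e^{\min}_C-1,e^{\min}_C,e^{\min}_C+1$ that lie in $[0,c-1]$. This requires $\underline N_h$, $\overline N_h$ and $\mathsf B_h$ from their closed forms, and the minimum $\min\{\underline N_hp^\sharp_h,\overline N_hp^\sharp_h\}$ taken according to the sign of $p^\sharp_h$. When $e^{\min}_C=0$, formula \eqref{fl:eq:B0} shortens the computation. The results are then compared with the values recorded in Data~\ref{fl:tab:values}.

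These are exact rational computations, done in the same way as Example~\ref{flx:ex:catline}. The main obstacle is not conceptual but verificational. It lies in making sure that the line-set inequalities (i) really hold at every integer $m$ for each of the $35$ records. In particular, near the minimum of $G_{k-h}$ the Catalan-type dip, which is of size about $\mathrm{Cat}_{k-h-1}$, must stay above the printed line. I would organize this check level by level. At level $h$ I would locate the interior minimum of $G_{k-h}$ using the sign-change analysis of Lemma~\ref{fl:lem:catalan}. After that, only the endpoint $m=0$ and the points adjacent to the minimum need to be compared, because the lower envelope of the discrete convex hull of the graph is attained at such points.
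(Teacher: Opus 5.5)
Your plan is the paper's: zero lines at the levels $h\ge k+2$, the printed lines elsewhere (chords hold by construction, asterisked lines by comparison with $G_{k-h}$), exact evaluation of $\mathcal A$ at $e^{\min}_C$ and its neighbours, then Lemma~\ref{fl:lem:anchored}(a)--(c). However, two concrete claims in your proposal are wrong.

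\emph{The levels $h\ge a$.} You propose to make this case vacuous with the ``support fact'' $c\le a-1$. That fact is false for two of the $35$ triples, $(20,10,6)$ and $(24,12,7)$, where $d=0$ and $c=a$. The fact you actually need, and the one the paper uses, is $a\ge k+2$, which holds for all $35$ triples. With it, $\min(c,a-1,k+1)=\min(c,k+1)$, so every level that receives the zero line has $h\ge k+2$ and $G_{k-h}\equiv0$. The bad case $h=a=k+1$, where $G_{-1}(0)=-w_{\mathrm R}<0$, therefore never occurs. For example, at $(20,10,6)$ the level $h=a=10$ carries $G_{-4}\equiv0$. Also record $q^\sharp_h\ge0$, which is part of the definition of a line set.

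\emph{The shortcut for \eqref{fl:eq:line}.} It is unsound. The lower convex hull of the points $(m,G_{k-h}(m))$ need not have its vertices only at $m=0$, next to the interior minimum and at $a-h$. Take $(a,k)=(15,8)$, $w_{\mathrm L}=\frac12$, $h=1$. The asterisked line $\frac{693}2-132(14-m)$ meets $G_7$ with equality at $m=12$ and $m=13$, where $G_7=\frac{165}2$ and $\frac{429}2$. At the dip $m=9$ the line lies about $300$ below $G_7(9)=-\frac{21}2$. Since $G_7$ is convex on $\{9,\dots,14\}$, every one of these points is a hull vertex. Your reduced check would pass without certifying $m=11,12,13$, which include the only points where the inequality is tight. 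So the Catalan dip is not the difficulty: the binding points lie on the steep rise toward $m=a-h$. You must compare each line with all values $G_{k-h}(m)$, $0\le m\le a-h$ (or with all hull vertices). The paper does this by reading the values from Data~\ref{flx:tab:g} and~\ref{fl:tab:g}. With these two corrections your argument coincides with the paper's.
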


\begin{proof}
Each of these triples has $a\ge k+2$, so $\min(a-1,k+1)=k+1$, and a level $k+2\le h\le c$ has
$G_{k-h}=0$, so that~\eqref{fl:eq:line} holds with $p^\sharp_h=q^\sharp_h=0$. For $h\le k+1$ the values $G_{k-h}(m)$, $0\le m\le a-h$, are read off from Data~\ref{flx:tab:g} ($w_{\mathrm L}=\frac12$)
and~\ref{fl:tab:g} ($w_{\mathrm L}=\frac5{12}$) for $k-h\ge1$, and from the closed forms $G_{-1}=-w_{\mathrm R}$ and
$G_0(m)=1-w_{\mathrm R}m$ otherwise. At a level without an asterisk the line is the chord,
$p^\sharp_h=p^\circ_h=G_{k-h}(a-h)$ and $q^\sharp_h=q^\circ_h$, the largest slope of a chord of $G_{k-h}$ from $a-h$ to
a smaller argument, so that \eqref{fl:eq:line} holds by construction; at a level with an asterisk
\eqref{fl:eq:line} is a comparison of the line with these values. Substituting the lines into the definition of $\mathcal A$ (into \eqref{fl:eq:B0} at $e_C=0$) gives the
values of Data~\ref{fl:tab:values}. Lemma~\ref{fl:lem:anchored} gives
$\mathsf V(k)\ge\mathcal A(e(F[C]))\ge\mathcal A(e^{\min}_C)>0$, and a valley at $k$ would make $\mathsf V(k)\le0$.
\end{proof}

With the weights $w_{\mathrm L}\in\{0,\frac1{24},\dots,1\}$ the chords alone do not suffice: for $(23,14,8)$,
$(24,14,8)$, $(24,15,8)$ and $(24,17,9)$ they give $\min_{0\le e_C\le c-1}\mathcal A(e_C)\le0$ for each such weight, as
the programs of family~\ref{chk:floor} verify.

\begin{example}\label{fl:ex:1284}
For $(n,a,k)=(12,8,4)$ and $w_{\mathrm L}=\tfrac12$ we have $c=d=4$ and $d_{\mathrm{LR}}(0,8)=G_4(8)=14$. From
Data~\ref{flx:tab:g}, $p^\circ_1=G_3(7)=7$, $p^\circ_2=G_2(6)=2$, $p^\circ_3=G_1(5)=-\tfrac12$ and $p^\circ_4=G_0(4)=-1$. The
steepest chords are $q^\circ_1=\bigl(G_3(7)-G_3(5)\bigr)/2=\tfrac94$ and $q^\circ_2=\bigl(G_2(6)-G_2(2)\bigr)/4=\tfrac12$,
while $q^\circ_3=q^\circ_4=0$. With the chords as the line set, \eqref{fl:eq:B0} reads
\[
\mathcal A(0)=14+\bigl(4\cdot7+6\cdot2-4\cdot\tfrac12-1\bigr)-\tfrac94\cdot7-\tfrac12\,(7\cdot2+4\cdot1)=\tfrac{105}{4}>0,
\]
and $\mathcal A(1)=28\ge\mathcal A(0)$.
\end{example}

\begin{proof}[Proof of Theorem~\ref{p1:thm:main}]
Let $F$ be a forest with $n\le24$ vertices. If $n=0$, $I(F;x)=1$; if $F$ has no edge, $I(F;x)=(1+x)^n$, and there is
nothing to prove. Otherwise let $A$ be a maximum independent set and $k\in W(F)$. By Section~\ref{p1:sec:param},
$(n,a,k)\in\cD_{24}$, and $(n,a,k)\notin\mathcal P_3$ by Proposition~\ref{fl:prop:class3}. By
Propositions~\ref{fl:prop:class1}, \ref{fl:prop:class2}, \ref{flx:prop:line} and~\ref{fl:prop:class4}, $F$ has no
valley at $k$, and Corollary~\ref{p1:cor:window} concludes.
\end{proof}

% ---------------------------------------------------------------------------------------------------------------
\section{The computer-assisted steps}\label{sec:computer}\label{sec:computer:families}

This section lists every finite computation on which the proof depends (Data~\ref{tab:computer:summary}), in the manner
of validated numerics~\cite{Tucker}; all other steps are proved in the text. The family labels retain the numbering used
in the archived computational materials described in Section~\ref{sec:computer:data}: family F7, the coverings of
bands of activities from mean $50$ to $400$, is not needed here,
and families F9 and F10, the finite inequalities of the fiber inequality (Appendix~\ref{apx:fiber}), are new.

\paragraph{Kinds of computation.}\label{sec:computer:arith}
\emph{Exact}: integers and rationals, exact comparisons. \emph{Interval}: every real quantity is enclosed in an interval
and every operation rounds outward~\cite[Chs.~1 and~2]{Tucker}: in the formal proof with fixed-point ends of $64$
fractional bits, outside it with rationals rounded outward to a multiple of $2^{-48}$, $2^{-72}$ or $2^{-200}$, $128$-bit or
double-precision intervals, or balls at $96$ to $256$ bits, in which elementary functions are enclosed by series with explicit
remainders and a polynomial inequality on an interval is proved from the signs of its Bernstein coefficients; and, for
families F9 and F10, interval arithmetic at $30$ decimal digits (the library \texttt{mpmath.iv}; Appendix~\ref{apx:fiber})
and, in their independent verification, ball arithmetic at $212$ bits (the library Arb). \emph{Kernel evaluation}: a decidable statement about
explicit data is evaluated by the Lean kernel (trusted base: the kernel and the three standard axioms). \emph{Compiled
evaluation}: a checker is run on explicit data by compiled code and the kernel accepts the result on the authority of the
compiler (adding the compiler and, for the checkers of families~F3, F4 and~F5, the C toolchain to the trusted base); each checker
has a soundness theorem, proved for arbitrary data. Floating-point searches that proposed constants are part of no proof.

\makeatletter\newcommand{\famlab}[2]{\def\@currentlabel{#1}\phantomsection\label{#2}}\makeatother
\datablock{The finite computations of the proof}{tab:computer:summary}

\paragraph{Family F1.}\famlab{F1}{chk:floor} This family treats small forests through Propositions~\ref{flx:prop:line} and~\ref{fl:prop:class4}. It lists and partitions the $249$ triples of $\cD_{24}$ and evaluates $21$ values of the Catalan line bound at $8$ triples and $74$ values of the anchored bound at $35$. These computations use exact arithmetic. They are not in Lean (the formal proof treats $n\le24$ by $249$ certificates checked by the kernel). Verification is provided by the two programs of the earlier version ($n\le20$; $21\le n\le26$), whose outputs contain the printed evaluations (restricted here to $n\le24$, the corner evaluations to $k\le11$, and the values of $G_j$ to the $j$ and $m$ used); by an independent recomputation of every printed number, line set, the partition and the counts for $n\le26$; and by a separate exact program, written by an independent AI instance from the statements of the previous version, recomputing the partition and every printed number, and rerun, adapted, on the data and definitions of this version ($1{,}373$ checks, $0$ failures).

\paragraph{Family F2.}\famlab{F2}{chk:act} This family treats the integer cases of Lemma~\ref{apC:lem:cases}, comprising $181+247=428$ inequalities in exact arithmetic. The inequalities are verified by the Lean kernel and by a program written from the text, which finds least values $5592/25$ and $13693/20$, at $(2,2)$.

\paragraph{Family F3.}\famlab{F3}{chk:var} This family verifies the variance bound of Theorem~\ref{tgt:thm} through $4{,}321$ checks: $2{,}664$ comparison, $1{,}644$ polygon and $13$ base checks (Data~\ref{mc:tab:curve}). The checks use interval arithmetic at $2^{-48}$ and $64$ bits in Lean. Lean verifies them in $18$ compiled evaluations, and they are also verified by the two programs that produced the checks (mid; the other five segments). An independent verifier on mid passes all $1{,}283$ comparison checks with the halves (least $6.39\cdot10^{-4}$), the $6$ base checks (least $7.14\cdot10^{-5}$), the $19$ polygons with $911$ subdivisions of its own and the $38$ feasibility cuts. With one evaluation per check instead of the halves it found $226$ failures, which is why the text states the halves; its rerun on the final data has no failure. A second independent verifier at $2^{-72}$ passes all $1{,}381$ comparison checks of the other segments without refinement, their $7$ base checks (least $5.8\cdot10^{-3}$) and all $26$ polygons with its own subdivision ($1{,}756$ pieces).

\paragraph{Family F4.}\famlab{F4}{chk:O2box} This family verifies the variance ratio in Lemma~\ref{apB:lem:O2box} on $17$ intervals with $12$ root boxes each, comprising $37{,}399+1{,}554=38{,}953$ cells. The checks use interval arithmetic at $64$ bits and are verified in Lean by $204$ compiled evaluations. All $84{,}156$ pieces of the certificates of the $14$ intervals, in the format of the program that produced them, were replayed by separate code in $128$-bit outward-rounded interval arithmetic ($0$ failures; least $\Phi\ge2.5\cdot10^{-4}$, on $[23/20,13/10]$). The three additional intervals were replayed by the program that produced them and by separate $128$-bit code. For $[1/3,3/5]$ the formal check uses a regenerated cover of the same parameter data ($4{,}221$ cells), not the replayed certificate ($50{,}978$ pieces).

\paragraph{Family F5.}\famlab{F5}{chk:O3box} This family verifies the Laplace bounds of Lemma~\ref{apC:lem:O3box}(a), (b), comprising $150$ band cases in $30$ evaluations and $26$ additional cases. It uses interval arithmetic at $64$ bits. Lean verifies the bounds in $56$ compiled evaluations, and the $150$ band certificates were replayed by the program that produced them. A program written from the text verifies all $176$ cases on its own cover ($376{,}180$ cells, balls at $96$ bits, the regions $Y_A\le2^{-10}$ and $Y_A\ge32$ closed by explicit inequalities) and certifies $U\ge1.1\cdot10^{-3}$ in every case.

\paragraph{Family F6.}\famlab{F6}{chk:strips} This family verifies the coverings below $50$ in Lemma~\ref{p2:lem:strips}. It comprises $110$ coverings and $779$ boxes, of which $693$ are used (Theorem~\ref{p2:thm:O4}), and uses exact arithmetic. Lean verifies all $110$ coverings in $112$ compiled evaluations, subintervals $16$ and $18$ being checked in two parts each. For the $36$ subintervals $0$ to $14$, $20$ to $23$, $25$, $26$, $28$ to $37$, $40$ to $42$, $45$, $46$, it uses the coverings below $m_\ast(\iota)$ of the earlier formal proof, comprising $420$ boxes, at caps at least $D(\iota)$. Those $420$ boxes were also replayed by the program that produced them and by an independent checker. The $33$ boxes rechecked at a larger cap and the $97+229$ new boxes were also verified by two independently written exact programs and again by an independent AI instance with its own exact checker (least margin of the $229$: $2.4\cdot10^{-5}$). The $693$ used boxes were counted in exact arithmetic by the supplementary code of family~F9 (Appendix~\ref{apx:fiber}).

\paragraph{Family F8.}\famlab{F8}{chk:kernel} This family verifies the kernel facts of Lemmas~\ref{cs:lem:O2side}, \ref{cs:lem:tables} and~\ref{apC:lem:O3box}(c), comprising $17$, $55$ and $176$ cases, respectively. It uses exact arithmetic. The facts are verified by the Lean kernel and by programs written from the text, with least margins $3.83\cdot10^{-4}$ for Lemma~\ref{cs:lem:O2side}, and $0.0234$ and $0.168$ for Lemma~\ref{apC:lem:O3box}(c).

\paragraph{Family F9.}\famlab{F9}{chk:fibcells} This family verifies the fiber inequality through Lemmas~\ref{fib:chk:A0} to~\ref{fib:chk:D3} (Appendix~\ref{apx:fiber}), comprising $218$ cells ($85$ statements) in $20$ lemmas. It uses interval arithmetic at $30$ digits and exact arithmetic for Lemmas~\ref{fib:chk:A0} and~\ref{fib:chk:D0}. Together with family F10, these inequalities are formalized in Lean using $21$ compiled evaluations (Section~\ref{sec:verification}). The supplementary code comprises five checkers, taking about $15$ to $25$ seconds in all and producing one log line per cell, every line passing; it was rerun independently with byte-identical logs. A program written independently from the statements alone, with a different arithmetic library (ball arithmetic at $212$ bits, and exact rationals) and the root map computed by a rigorous Lambert $W$ function, verifies all $218$ cells on their whole stated ranges and all $220$ bounds of Data~\ref{fib:tab:b3}.

\paragraph{Family F10.}\famlab{F10}{chk:fibrows} This family verifies the averaging margins of Lemma~\ref{fib:chk:rows}, comprising $55$ inequalities, one per subinterval, using interval arithmetic at $30$ digits. These inequalities are formalized in Lean together with family F9; the formal proof establishes positivity of the $55$ margins, rather than their printed numerical values. The supplementary code was rerun independently with byte-identical logs. The independent program of family F9 finds all $55$ margins positive with its own spike bounds (least $0.00137$, at $\iota=36$, as in Data~\ref{fib:tab:mu0}).

\paragraph{Inherited coverings.} The formal proof reuses the coverings below $m_\ast(\iota)$ of $36$ subintervals from
the earlier formal proof; it proves, by kernel evaluation of the rational comparisons, that they were verified at a cap
at least $D(\iota)$ on a rectangle whose range of $\muM$ starts at or below $m_{25}(\iota)$ and ends at or above $m_\ast(\iota)$ (for these $36$, at it). The same
comparison was run outside Lean by the programs that built this route and rerun by an independent AI instance.

\paragraph{Explicit numerical inequalities.}\label{sec:computer:numeric}
Rechecked in ball arithmetic at $200$ bits: $\log2^{18}\le25/2$ and $e^9\le8104$ (Lemma~\ref{apB:lem:small}); both are also
proved in the formal proof. The hand proofs of Section~\ref{fib:sec} use further elementary numerical facts (values of
logarithms, exponentials and square roots at explicit points), each stated with its value where it is used; they were
recomputed in the independent checks of that section (Section~\ref{sec:ack}).

\paragraph{The supplementary data.}\label{sec:computer:data}
The data that the families use but no proof by hand needs are available as \texttt{twin\_v2.2\_data.txt}, \texttt{supplementary\_data.zip}, and \texttt{supplementary\_code.zip} in the \href{https://github.com/zhangzenozhang-jpg/forest-unimodality-v2.2-supplement/tree/9f4f1ecfc439fdf43f8cfad0e888e54fe778d428}{supplementary-material repository}, at commit \texttt{9f4f1ecfc439fdf43f8cfad0e888e54fe778d428}. This section records their role in the proof; the repository README and the READMEs inside the archives give the prerequisites and reproduction instructions. The complete archives are hosted in that repository; the arXiv ancillary file supplies the same fixed link and checksums.
The text file holds the end points of the $55$ subintervals and the $30$ bands with the constants of each subinterval
($\ell_0(\iota)$, $m_{25}(\iota)$, $m_\ast(\iota)$, box and slab counts), the rates $\ell_r$ and constants $a_{\mathrm U}$ of the
bands and of the $26$ additional intervals, the $47$ activity sub-intervals and $45$ child polygons of family~F3, the $31$
coverings above $50$ retained as historical family F7, which neither the present proof nor its formal proof uses, with
their entry counts and where each is verified, and where every data set is. Its notes on the formal proof were written
for an earlier draft and describe the earlier formal proof on branch \texttt{one-route-v21} (Section~\ref{sec:verification}). The first archive holds the data sets of
families F1, F3 and F6 and of family F7 of the earlier draft, the programs that produced them and the independent checkers of all
families except F9 and F10, with a description and a manifest of SHA-256 sums of its files; the data of the families verified
in the formal proof are in the public Lean repository identified in Section~\ref{sec:verification}. The second archive holds the supplementary code of families F9 and F10
(Appendix~\ref{apx:fiber}): the checkers, their logs, the count of the boxes of family F6 that are used, and a manifest of
SHA-256 sums of its files. Data~\ref{tab:computer:sha} gives the SHA-256 sums of the text file, of the first archive's
manifest, and of the second archive and its manifest.
\datablock{SHA-256 sums of the supplementary data}{tab:computer:sha}

The SHA-256 value for the plain text data file is\par
\noindent\texttt{6ac570f5ec8423f44ebf43fdf0d20aba8c4f4d748d8ff7df332e9013a6d12314}.

The SHA-256 value for the manifest of the first archive is\par
\noindent\texttt{9b9a8da401237788dbe6829f0470106af25da6d0ddd6e0daa2b81d45c348bd4b}.

The SHA-256 value for the second archive (\texttt{supplementary\_code.zip}) is\par
\noindent\texttt{c7e637c94d1aff841a903fa20b1b044a983aa52089e127255385bca46f772819}.

The SHA-256 value for the manifest of the second archive (\texttt{SHA256SUMS.txt}) is\par
\noindent\texttt{8f586be2d9795967844685a3263980671e8541614ab6d4a538f9f1bafb00770e}.

\section{Formal verification}\label{sec:verification}

{\sloppy\paragraph{The formal statement.} The project provides a formal proof of the theorem that every finite forest has a unimodal independence
sequence in the Lean~4 proof assistant~\cite{Lean4} with the library Mathlib~\cite{Mathlib}
(toolchain and Mathlib \texttt{v4.28.0}). Its final theorem, in the file \texttt{Erdos993Lean/Analytic/V22/Final.lean},
is \texttt{erdos993\_v22\_final : Erdos993Statement}
in the namespace \texttt{Erdos993Lean.Analytic.V22}; it has no premise. Below, \texttt{V22} abbreviates the namespace \texttt{Erdos993Lean.Analytic.V22}, and names of the
formal proof are given relative to the namespace \texttt{Erdos993Lean}. A \texttt{FiniteForest} is an acyclic
\texttt{SimpleGraph (Fin n)}, \texttt{independenceCount F k} counts the independent sets of size $k$, and
\texttt{Erdos993Statement} is the weak unimodality of $i_0,\dots,i_{\alpha(F)}$ for every finite forest $F$; acyclicity,
independent sets and the independence number are those of Mathlib (file \texttt{Erdos993Lean/Statement.lean}). The final
theorem joins, by \texttt{erdos993\_of\_floor\_of\_ceiling}, the formal versions of Theorem~\ref{p1:thm:main},
\texttt{Fiber.floorStatement\_25\_fiber} (every forest with fewer than $25$ vertices), and of
Theorem~\ref{p2:thm:main}, \texttt{V22.ceilingStatement\_25} applied to \texttt{V22.parameterNoValley\_checked} (every
forest with at least $25$ vertices).\par}

{\sloppy\paragraph{Forests with at least $25$ vertices.} Here the formal proof follows
Sections~\ref{p1:sec:inventory} to~\ref{p2:sec:atlas} and Appendix~\ref{apx:fiber} statement by statement: every numbered
result of these sections has a formal counterpart with the same statement or a more general one, with the following
exceptions. Remarks~\ref{apB:rem:O2not}, \ref{fib:rem:P58}, \ref{fib:rem:Aii} and~\ref{fib:rem:diag} have none and need
none (Remark~\ref{fib:rem:Aii} states a variant that the proof does not use). Lemma~\ref{p1:lem:ranklogic} is formalized
for sequences of natural numbers, the case that Corollary~\ref{p1:cor:window} uses, and Definition~\ref{p1:def:valley}
without its notion of a strict valley, which the proof does not use. A few results are formalized through their
constituents, not as single combined statements: Lemmas~\ref{tgt:lem:dY}, \ref{tgt:lem:tr}, \ref{tgt:lem:dmu}
and~\ref{tgt:lem:hand} through their individual statements; Lemmas~\ref{apC:lem:valley}, \ref{cs:lem:tables}, \ref{apB:lem:records},
\ref{cs:lem:O2side} and~\ref{apC:lem:atoms} through separate statements of their parts; and
Theorem~\ref{fib:thm:intro-novalley} through three declarations, \texttt{V22.Analysis.source\_central\_noValley},
\texttt{V22.Analysis.source\_wing\_noValley} and \texttt{V22.Analysis.source\_outer\_noValley}, the no-valley
statements of Theorems~\ref{fib:thm:central} and~\ref{fib:thm:wing} and Corollary~\ref{fib:cor:outer}, which together
cover the $55$ subintervals. Theorem~\ref{p2:thm:O4} is \texttt{V22.parameterNoValley} applied to
\texttt{V22.parameterNoValley\_checked}; the latter is case~(b) of the proof of Theorem~\ref{p2:thm:O4}, and it applies
these three declarations with the finite inequalities of Appendix~\ref{apx:fiber}. In case~(a), below the starting
means, the formal proof checks all $779$ boxes of Lemma~\ref{p2:lem:strips}; they include the $693$ boxes that the proof
of Theorem~\ref{p2:thm:O4} uses (Section~\ref{apC:sec:families}). The lemmas of Appendix~\ref{apx:fiber} are formalized
as the inequalities they state, without the margins and values that they print.\par}

\paragraph{Forests with at most $24$ vertices.} Here the formal proof does not follow Section~\ref{p1:sec:overview}.
It proves Theorem~\ref{p1:thm:main} by a different method: $249$ exact rational certificates of a linear relaxation of
the fiber numbers, one for each triple of the parameter set~\eqref{p1:eq:domain} ($130$ with $n\le20$ and $119$ with
$21\le n\le24$), checked by kernel reduction. Like Section~\ref{p1:sec:overview}, it enumerates no forest. The linear relaxation is built from the first results of Section~\ref{p1:sec:overview}:
Lemmas~\ref{p1:lem:matching} and~\ref{p1:lem:exposed} and Theorem~\ref{p1:thm:matching} have formal counterparts. The
rest of the argument of Section~\ref{p1:sec:overview}, its results from Lemma~\ref{fl:lem:catalan} to
Example~\ref{fl:ex:1284}, has no formal counterpart.

\paragraph{Axioms and compiled evaluations.} The project's recorded \texttt{\#print axioms} audits give, for the final theorem, exactly
\texttt{propext}, \texttt{Classical.choice}, \texttt{Quot.sound}, \texttt{Lean.ofReduceBool} and
\texttt{Lean.trustCompiler}, and report no \texttt{sorry}. The last two axioms arise from a reported $411$ compiled
evaluations (\texttt{native\_decide}) of finite checks, so the trusted base includes the Lean compiler. All of these
evaluations lie in the part for at least $25$ vertices: $18$ for the variance bound (family~\ref{chk:var} of
Section~\ref{sec:computer}), $204$ for the variance-ratio bound (family~\ref{chk:O2box}), $56$ for the
Laplace-transform bounds (family~\ref{chk:O3box}), $112$ for the coverings of Lemma~\ref{p2:lem:strips}
(family~\ref{chk:strips}), and $21$ for the finite inequalities of Appendix~\ref{apx:fiber}
(families~\ref{chk:fibcells} and~\ref{chk:fibrows}). The recorded audits report that the part for at most $24$
vertices uses only the first three axioms.

\paragraph{Builds and review scope.} The project records report a fresh-clone rebuild on 3 October 2026 with an
identical dependency closure, and a second fresh build on another machine completed on 5 October 2026. The second build
is reported to have compiled the import closure of the final theorem ($1{,}352$ project modules) with the source option
settings, a memory limit of $8{,}192$ MB and a default heartbeat limit raised to $400{,}000$. These limits bound memory
and time; they do not change the proposition being proved. Both builds are reported to have printed exactly the five
axioms above.

{\sloppy\paragraph{Where the proof is.} The public formal-proof repository is
\url{https://github.com/selfreferencing/erdos993-forest-unimodality-lean}, at commit \texttt{d174e4b} of 6 October 2026
(full identifier \nolinkurl{d174e4b6f8963abb349995606783200879aae74e}). Source inspection confirms that the repository
contains the final theorem and the axiom-check file \texttt{Audit/AxiomsV22.lean}, together with the README and build
files. Its $1{,}352$ project modules match the repository's recorded import-closure list. Git blob comparisons confirm
that all $1{,}352$ source files are byte-identical to the same files at development commit \texttt{63040a09}
(branch \texttt{one-route-v22-final}). The project records additionally identify commit \texttt{d3cf284} on branch
\texttt{one-route-v22-final-clean-rebuild} with the rebuild of 3 October.\par}

\paragraph{Earlier formal proofs.}
The public version~1.0 formal proof (\texttt{Erdos993Lean.Analytic.erdos993}; commit \texttt{c524aa2} of the
repository \url{https://github.com/selfreferencing/erdos993-lean}, public since 29 September 2026) proves the same
statement, with the same five axioms and $271$ \texttt{native\_decide} evaluations, all in its argument for
forests with at least $61$ vertices. Its finite part, for at most $60$ vertices, uses the certificates of Zhang and
Li~\cite{ZL}, checked by kernel reduction.

An earlier formal proof (branch \texttt{fiber-floor-21}, commit \texttt{aa8ba75}) follows an earlier route of this
proof, of 29 September 2026. It splits at $21$ vertices and uses a variance bound certified by coverings
($2{,}142{,}769$ cells), three density certificates, one of them with the constants of~\cite[(FK14)]{ZL}, the earlier
box lists and coverings, and a second criterion that prices the distribution function of $M$. The formal proof
described above shares its general theory, its checks of the variance-ratio and Laplace-transform bounds and part
of the earlier box lists (the coverings below $m_\ast(\iota)$ of $36$ subintervals and $33$ rechecked boxes;
Section~\ref{sec:computer}), and replaces the other parts.

An intermediate formal proof (branch \texttt{one-route-v21}, commit \texttt{2d2ccf6}) follows the route of the formal
proof described above except in the proof of Theorem~\ref{p2:thm:O4} from the starting means on, and in an additional hypothesis of
that theorem, the Laplace-transform bounds at the rates of the bands of activities, which it proves as a further clause
of Theorem~\ref{p2:thm:O3}. Instead of Theorem~\ref{fib:thm:intro-novalley} it uses the boxes of
Lemma~\ref{p2:lem:strips} up to mean $50$; coverings of the bands from mean $50$ to $400$, checked in a floor variant of
the box test that rests on a lower bound for the valley kernel by binomial Fourier estimates; and above mean $400$ a
large-mean theorem proved by the same estimates. The project reports that a traversal of the dependency closure of the final theorem described
above ($56{,}616$ constants) finds no large-mean theorem, no Fourier estimate, no floor variant of the box test and none
of the $31$ coverings of bands.

% ---------------------------------------------------------------------------------------------------------------
\section{Author contributions and use of AI}\label{sec:ack}

\paragraph{Scope of this account.} The historical development, contributions, checks, and review status described
below are reported in the project account supplied by K.~Vallier on 6 October 2026. The account distinguishes human
contributions from work and reviews performed by AI systems.

\paragraph{Author contributions.} T.~Zhang and W.~Li gave the earlier proof of the theorem of this paper~\cite{ZL}
(see also~\cite{Z}). The present proof starts from their work: the decomposition of the independent sets of a forest
relative to a fixed independent set (Lemma~\ref{p1:lem:fiber}), the conditional binomial mixture over an independent set
(Proposition~\ref{p2:prop:mixture}), and their Lemma~3.1, the bounds~(a) and~(b) of
Proposition~\ref{p1:prop:inputs}, which confine a valley of the independence sequence to the window of
Section~\ref{p1:subsec:window}. Zhang and Li credit the initial segment of that lemma to Fang, Lu, Nevo, Yao and
Zheng~\cite[Prop.~8.2]{FLNYZ}. The proof also uses several of their counting inequalities
(Section~\ref{p1:sec:overview}), the forms of their analytic bounds and of their no-valley certificates, and their
innovation identity and competitor construction for the variance-ratio bound; the specific credits are given where
each is used. The supplied account credits T.~Zhang with the introductory text through
Theorem~\ref{thm:main}, W.~Li with reviewing that text, and K.~Vallier with directing the project described below.
This attribution does not assert that every author has reviewed the complete integrated manuscript.

\paragraph{Use of AI.} The project account attributes all of the new mathematics of this paper and all of its formal
proofs to AI systems in a project directed by K.~Vallier. The systems were Claude agents, running Anthropic's models
Claude Opus~5.5 and Claude Fable~5.1; two OpenAI Codex agents, called Sol and Astra in the project, which over the period
ran the OpenAI models GPT-5.6 Sol, GPT-6 Astra in Codex mode, GPT-6 Sol and GPT-6.1 Sol; and ChatGPT conversations with
the OpenAI model GPT-6 Astra in Pro mode, which the project's records call GPT-6 Pro.

\emph{The text.} Apart from the supplied introductory text, the account attributes most of the manuscript's prose to
AI systems, chiefly Claude agents working in the project. The edits of 1 and 2 October 2026 for the route of this proof
and for the fiber inequality are described below. For preparation of the source supplied on 6 October 2026, the account
reports that Claude agents running Claude Opus~5.5 incorporated T.~Zhang's introduction, revised its continuation,
Remark~\ref{fib:rem:P58} and Sections~\ref{sec:computer} to~\ref{sec:ack}, and added a provisional note, a sentence about
formalization to the abstract, author information and new references. These statements concern that supplied source;
the present integration incorporates selected material from it. The source states that AI assistance with the text
was used to enable community examination of the results while rewriting continues, and that the authors intend to
complete that rewriting before journal submission.

\emph{The proofs by hand} were produced on 28 to 30 September 2026: the matching lemma, the exposed-leaf matching and
the matching-excess inequality (Lemmas~\ref{p1:lem:matching} and~\ref{p1:lem:exposed},
Theorem~\ref{p1:thm:matching}) on 29 September in a ChatGPT conversation (model recorded as GPT-6 Pro), with the
certificates that the formal proof uses for forests with at most $24$ vertices; the partition of the small forests,
the Catalan bounds, the anchored bound and the Catalan line bound (Sections~\ref{fl:sec} and~\ref{flx:sec}), and the
step certificates along the curve of multipliers with their transport in the activity (Section~\ref{apB:sec:O1}), by
Claude agents. The project account reports that each of these sections was checked before use by an independent
recomputation of all its finite checks, written from an earlier draft of it, which this paper condenses.
Section~\ref{p1:sec:overview} was also recomputed by a separate program, written from the statements of an earlier
draft and rerun, adapted, on the numerical data and definitions of this paper.

\emph{The moment argument} was developed on 27 to 29 September 2026, except the step certificates of the variance
bound by hand (30 September) and the changes of 1 October: the Laplace-transform bounds, the no-valley inequality on
the parameter domain, the lower bound for the mean free count, the activity selection and the first covering of the
parameter domain by rational boxes, by Sol; the reserve induction from a step certificate, used in the formal proof,
and its computer-certified step certificates, used in the earlier formal proof on branch \texttt{fiber-floor-21}
(Section~\ref{sec:verification}), by Astra; the local payment argument for the variance-ratio bound in a ChatGPT
conversation (model GPT-6 Pro), prompted by Astra and assembled by Sol, with the certificates for the intervals that
argument left uncovered generated by a Claude agent; the MGF criterion, the criterion that prices the distribution
function of $M$ (used in that earlier formal proof) and the extension of the argument to smaller orders by Claude
agents (models Opus~5.5 and Fable~5.1). The account reports checks of each of these results by Claude agents, including
written reviews and independent replays of the certificates.

\emph{The route of this proof, except above the starting means,} was fixed on 1 October 2026 by Claude agents: the
tests showing that the variance bound proved by hand suffices once the moment argument starts at $25$ vertices and
the MGF criterion is used higher on $14$ subintervals, the new and rechecked boxes, the reduction to the MGF criterion
alone (with $229$ new boxes), and the edit of the paper for that route. The account reports that each new or rechecked
box was verified by two independently written programs, and that for each test an independent AI instance reran
every new or rechecked box with its own exact checker.

\emph{The fiber inequality} (Section~\ref{fib:sec}) was produced on 1 and 2 October 2026 by AI agents running Claude
Opus~5.5 in the project: the activity-one bound on 1 October; the constants by hand and their closed form on 2 October;
and the assembly into one argument, with the edit of this paper, on 2 October. Its development used two tools from
exploratory analyses with GPT-6 that K.~Vallier ran on 1 October: the weighted kernel identity
(Lemma~\ref{fib:lem:T1}) and a Catalan minorant for the curvature of the symmetric binomial law. The supplied account
reports checks by independent AI reviewers (Claude Opus~5.5): the three sources of the argument (the activity-one
bound, the fiber inequality with explicit constants, and its closed form) were reviewed by agents who reran all the
code and recomputed the thinnest margins with code of their own; the assembled argument was reviewed against those
sources and the logs by a further AI reviewer, who reran all the checkers.

\emph{The formal proofs} of Section~\ref{sec:verification} were written by Claude agents and, from 1 to 3 October 2026,
by OpenAI Codex agents running the Codex models named above. The account reports a rebuild of the formal proof from a
fresh clone on 3 October 2026 by an OpenAI Codex agent. It reports Lean-kernel checking together with compiled code
for $411$ compiled evaluations in the formal proof described for this paper. The $249$ certificates for forests with
at most $24$ vertices ($130$ with $n\le20$, $119$ with $21\le n\le24$) were produced on 29 September 2026 in a ChatGPT
conversation (model recorded as GPT-6 Pro) from a prompt prepared in the project; according to the account, all were
verified before use by a replayed checker and by independent code.

\paragraph{Review.} At the time the supplied account was prepared on 6 October 2026, it reported that no external
referee had reviewed any part of the paper, no human had reviewed the formalization, the proofs by hand or the fiber
inequality, and no human reader had reviewed the moment argument in full. It reported that T.~Zhang and W.~Li had not
reviewed that source beyond the introductory text credited to T.~Zhang. These statements describe the reported status
of that source at that time and do not establish the extent of any subsequent human review.

Within the project, the account reports that the written proofs of the analytic bounds, the criterion and the
soundness theorems for boxes were reviewed by AI agents before use, with the MGF criterion checked step by step
against its source. It reports that every family of finite checks of Section~\ref{sec:computer} verified by the formal
proof was also replayed outside it by a program written from the text, with every item passing. For the interval
$[1/3,3/5]$ of the variance-ratio bound, the replay used a different cover of the same parameter interval; for
families~\ref{chk:fibcells} and~\ref{chk:fibrows}, the program was written independently from their statements alone,
with a different arithmetic library (ball arithmetic at $212$ bits, and exact rationals). These AI reviews and
computational replays are distinct from external peer review and human examination of the proofs.

\paragraph{Editorial assistance.} OpenAI Codex assistants helped integrate the introduction supplied by T.~Zhang,
check bibliographic metadata and place citations, convert tables and lists to prose, and prepare the manuscript.
They also helped incorporate explanatory material from K.~Vallier's source and add the paragraph describing the
organization of the paper. The authors' and AI systems' mathematical contributions are described above.

\appendix
\section{Exact evaluations for small forests}\label{apx:p1tables}

This appendix gives the exact rational evaluations of Section~\ref{flx:sec}: the values of the Catalan
line bound at the $8$ triples of Proposition~\ref{flx:prop:line}, and the values, the line sets and the functions $G_j$
of the anchored bound for the $35$ triples of Proposition~\ref{fl:prop:class4}. Every number was computed in
exact rational arithmetic and recomputed independently (Section~\ref{sec:computer}).

\datablock{Catalan line evaluations}{flx:tab:line}
We first evaluate the exceptional triple~\eqref{flx:eq:exc}, followed by the seven triples belonging to $\mathcal P_4$. In each case we specify $c$, $d$, $2k-a$ and the minimizing argument $e^{\min}_C$, and give the bound there and at every adjacent integer in $[0,c-1]$.

% exact-data:FlxLine:begin
For $(n,\allowbreak a,\allowbreak k)=(24,\allowbreak 15,\allowbreak 7)$, we have $(c,\allowbreak d,\allowbreak 2k-a,\allowbreak e^{\min}_C)=(9,\allowbreak 6,\allowbreak -1,\allowbreak 3)$ and $\mathsf R^\ast(e^{\min}_C)=11529/2$. The adjacent values are $\mathsf R^\ast(e^{\min}_C-1)=17531/3$ and $\mathsf R^\ast(e^{\min}_C+1)=17518/3$.

For $(n,\allowbreak a,\allowbreak k)=(21,\allowbreak 14,\allowbreak 7)$, we have $(c,\allowbreak d,\allowbreak 2k-a,\allowbreak e^{\min}_C)=(7,\allowbreak 7,\allowbreak 0,\allowbreak 0)$ and $\mathsf R^\ast(e^{\min}_C)=252$. The sole admissible neighbouring value is $\mathsf R^\ast(e^{\min}_C+1)=335$.

For $(n,\allowbreak a,\allowbreak k)=(22,\allowbreak 14,\allowbreak 7)$, we have $(c,\allowbreak d,\allowbreak 2k-a,\allowbreak e^{\min}_C)=(8,\allowbreak 6,\allowbreak 0,\allowbreak 1)$ and $\mathsf R^\ast(e^{\min}_C)=812$. The adjacent values are $\mathsf R^\ast(e^{\min}_C-1)=860$ and $\mathsf R^\ast(e^{\min}_C+1)=1008$.

For $(n,\allowbreak a,\allowbreak k)=(22,\allowbreak 16,\allowbreak 8)$, we have $(c,\allowbreak d,\allowbreak 2k-a,\allowbreak e^{\min}_C)=(6,\allowbreak 10,\allowbreak 0,\allowbreak 0)$ and $\mathsf R^\ast(e^{\min}_C)=36$. The sole admissible neighbouring value is $\mathsf R^\ast(e^{\min}_C+1)=3518/15$.

For $(n,\allowbreak a,\allowbreak k)=(23,\allowbreak 14,\allowbreak 7)$, we have $(c,\allowbreak d,\allowbreak 2k-a,\allowbreak e^{\min}_C)=(9,\allowbreak 5,\allowbreak 0,\allowbreak 2)$ and $\mathsf R^\ast(e^{\min}_C)=6152/3$. The adjacent values are $\mathsf R^\ast(e^{\min}_C-1)=6169/3$ and $\mathsf R^\ast(e^{\min}_C+1)=4335/2$.

For $(n,\allowbreak a,\allowbreak k)=(24,\allowbreak 13,\allowbreak 7)$, we have $(c,\allowbreak d,\allowbreak 2k-a,\allowbreak e^{\min}_C)=(11,\allowbreak 2,\allowbreak 1,\allowbreak 3)$ and $\mathsf R^\ast(e^{\min}_C)=8826/5$. The adjacent values are $\mathsf R^\ast(e^{\min}_C-1)=2034$ and $\mathsf R^\ast(e^{\min}_C+1)=9246/5$.

For $(n,\allowbreak a,\allowbreak k)=(24,\allowbreak 14,\allowbreak 7)$, we have $(c,\allowbreak d,\allowbreak 2k-a,\allowbreak e^{\min}_C)=(10,\allowbreak 4,\allowbreak 0,\allowbreak 3)$ and $\mathsf R^\ast(e^{\min}_C)=20029/5$. The adjacent values are $\mathsf R^\ast(e^{\min}_C-1)=12271/3$ and $\mathsf R^\ast(e^{\min}_C+1)=62011/15$.

For $(n,\allowbreak a,\allowbreak k)=(24,\allowbreak 16,\allowbreak 8)$, we have $(c,\allowbreak d,\allowbreak 2k-a,\allowbreak e^{\min}_C)=(8,\allowbreak 8,\allowbreak 0,\allowbreak 0)$ and $\mathsf R^\ast(e^{\min}_C)=124$. The sole admissible neighbouring value is $\mathsf R^\ast(e^{\min}_C+1)=8257/14$.
% exact-data:FlxLine:end

\datablock{Anchored-bound evaluations}{fl:tab:values}
The following evaluations settle $35$ triples of $\mathcal P_4$ by the anchored bound:
the first $18$ have $n\le20$, and the remaining $17$ have $21\le n\le24$.
Each evaluation uses the line set of Data~\ref{fl:tab:level} with the same $(a,k)$ and $w_{\mathrm L}$. We specify $c$, $d$, the left weight $w_{\mathrm L}$ and the minimizing argument $e^{\min}_C$, and give the bound there and at every adjacent integer in $[0,c-1]$.

% exact-data:PValues:begin
For $(n,\allowbreak a,\allowbreak k)=(12,\allowbreak 8,\allowbreak 4)$, we have $(c,\allowbreak d,\allowbreak w_{\mathrm L},\allowbreak e^{\min}_C)=(4,\allowbreak 4,\allowbreak 1/2,\allowbreak 0)$ and $\mathcal A(e^{\min}_C)=105/4$. The sole admissible neighbouring value is $\mathcal A(e^{\min}_C+1)=28$.

For $(n,\allowbreak a,\allowbreak k)=(15,\allowbreak 9,\allowbreak 5)$, we have $(c,\allowbreak d,\allowbreak w_{\mathrm L},\allowbreak e^{\min}_C)=(6,\allowbreak 3,\allowbreak 1/2,\allowbreak 0)$ and $\mathcal A(e^{\min}_C)=285/4$. The sole admissible neighbouring value is $\mathcal A(e^{\min}_C+1)=4571/60$.

For $(n,\allowbreak a,\allowbreak k)=(15,\allowbreak 10,\allowbreak 5)$, we have $(c,\allowbreak d,\allowbreak w_{\mathrm L},\allowbreak e^{\min}_C)=(5,\allowbreak 5,\allowbreak 1/2,\allowbreak 0)$ and $\mathcal A(e^{\min}_C)=87$. The sole admissible neighbouring value is $\mathcal A(e^{\min}_C+1)=3731/40$.

For $(n,\allowbreak a,\allowbreak k)=(16,\allowbreak 9,\allowbreak 5)$, we have $(c,\allowbreak d,\allowbreak w_{\mathrm L},\allowbreak e^{\min}_C)=(7,\allowbreak 2,\allowbreak 1/2,\allowbreak 0)$ and $\mathcal A(e^{\min}_C)=197/2$. The sole admissible neighbouring value is $\mathcal A(e^{\min}_C+1)=427/4$.

For $(n,\allowbreak a,\allowbreak k)=(16,\allowbreak 10,\allowbreak 5)$, we have $(c,\allowbreak d,\allowbreak w_{\mathrm L},\allowbreak e^{\min}_C)=(6,\allowbreak 4,\allowbreak 1/2,\allowbreak 0)$ and $\mathcal A(e^{\min}_C)=299/3$. The sole admissible neighbouring value is $\mathcal A(e^{\min}_C+1)=1093/10$.

For $(n,\allowbreak a,\allowbreak k)=(17,\allowbreak 11,\allowbreak 6)$, we have $(c,\allowbreak d,\allowbreak w_{\mathrm L},\allowbreak e^{\min}_C)=(6,\allowbreak 5,\allowbreak 1/2,\allowbreak 0)$ and $\mathcal A(e^{\min}_C)=296/3$. The sole admissible neighbouring value is $\mathcal A(e^{\min}_C+1)=2193/20$.

For $(n,\allowbreak a,\allowbreak k)=(17,\allowbreak 12,\allowbreak 6)$, we have $(c,\allowbreak d,\allowbreak w_{\mathrm L},\allowbreak e^{\min}_C)=(5,\allowbreak 7,\allowbreak 1/2,\allowbreak 0)$ and $\mathcal A(e^{\min}_C)=1115/6$. The sole admissible neighbouring value is $\mathcal A(e^{\min}_C+1)=11879/60$.

For $(n,\allowbreak a,\allowbreak k)=(18,\allowbreak 11,\allowbreak 6)$, we have $(c,\allowbreak d,\allowbreak w_{\mathrm L},\allowbreak e^{\min}_C)=(7,\allowbreak 4,\allowbreak 1/2,\allowbreak 0)$ and $\mathcal A(e^{\min}_C)=595/3$. The sole admissible neighbouring value is $\mathcal A(e^{\min}_C+1)=18307/84$.

For $(n,\allowbreak a,\allowbreak k)=(18,\allowbreak 12,\allowbreak 6)$, we have $(c,\allowbreak d,\allowbreak w_{\mathrm L},\allowbreak e^{\min}_C)=(6,\allowbreak 6,\allowbreak 1/2,\allowbreak 0)$ and $\mathcal A(e^{\min}_C)=1573/6$. The sole admissible neighbouring value is $\mathcal A(e^{\min}_C+1)=4231/15$.

For $(n,\allowbreak a,\allowbreak k)=(19,\allowbreak 10,\allowbreak 6)$, we have $(c,\allowbreak d,\allowbreak w_{\mathrm L},\allowbreak e^{\min}_C)=(9,\allowbreak 1,\allowbreak 1/2,\allowbreak 0)$ and $\mathcal A(e^{\min}_C)=209$. The sole admissible neighbouring value is $\mathcal A(e^{\min}_C+1)=709/3$.

For $(n,\allowbreak a,\allowbreak k)=(19,\allowbreak 11,\allowbreak 6)$, we have $(c,\allowbreak d,\allowbreak w_{\mathrm L},\allowbreak e^{\min}_C)=(8,\allowbreak 3,\allowbreak 1/2,\allowbreak 0)$ and $\mathcal A(e^{\min}_C)=3355/12$. The sole admissible neighbouring value is $\mathcal A(e^{\min}_C+1)=17431/56$.

For $(n,\allowbreak a,\allowbreak k)=(19,\allowbreak 12,\allowbreak 6)$, we have $(c,\allowbreak d,\allowbreak w_{\mathrm L},\allowbreak e^{\min}_C)=(7,\allowbreak 5,\allowbreak 1/2,\allowbreak 0)$ and $\mathcal A(e^{\min}_C)=1127/4$. The sole admissible neighbouring value is $\mathcal A(e^{\min}_C+1)=26231/84$.

For $(n,\allowbreak a,\allowbreak k)=(19,\allowbreak 13,\allowbreak 7)$, we have $(c,\allowbreak d,\allowbreak w_{\mathrm L},\allowbreak e^{\min}_C)=(6,\allowbreak 7,\allowbreak 5/12,\allowbreak 0)$ and $\mathcal A(e^{\min}_C)=757/3$. The sole admissible neighbouring value is $\mathcal A(e^{\min}_C+1)=12812/45$.

For $(n,\allowbreak a,\allowbreak k)=(20,\allowbreak 10,\allowbreak 6)$, we have $(c,\allowbreak d,\allowbreak w_{\mathrm L},\allowbreak e^{\min}_C)=(10,\allowbreak 0,\allowbreak 1/2,\allowbreak 0)$ and $\mathcal A(e^{\min}_C)=279$. The sole admissible neighbouring value is $\mathcal A(e^{\min}_C+1)=1614/5$.

For $(n,\allowbreak a,\allowbreak k)=(20,\allowbreak 11,\allowbreak 6)$, we have $(c,\allowbreak d,\allowbreak w_{\mathrm L},\allowbreak e^{\min}_C)=(9,\allowbreak 2,\allowbreak 1/2,\allowbreak 0)$ and $\mathcal A(e^{\min}_C)=844/3$. The sole admissible neighbouring value is $\mathcal A(e^{\min}_C+1)=2635/8$.

For $(n,\allowbreak a,\allowbreak k)=(20,\allowbreak 12,\allowbreak 6)$, we have $(c,\allowbreak d,\allowbreak w_{\mathrm L},\allowbreak e^{\min}_C)=(8,\allowbreak 4,\allowbreak 1/2,\allowbreak 0)$ and $\mathcal A(e^{\min}_C)=1981/12$. The sole admissible neighbouring value is $\mathcal A(e^{\min}_C+1)=8819/42$.

For $(n,\allowbreak a,\allowbreak k)=(20,\allowbreak 13,\allowbreak 7)$, we have $(c,\allowbreak d,\allowbreak w_{\mathrm L},\allowbreak e^{\min}_C)=(7,\allowbreak 6,\allowbreak 1/2,\allowbreak 0)$ and $\mathcal A(e^{\min}_C)=49/2$. The sole admissible neighbouring value is $\mathcal A(e^{\min}_C+1)=437/7$.

For $(n,\allowbreak a,\allowbreak k)=(20,\allowbreak 14,\allowbreak 7)$, we have $(c,\allowbreak d,\allowbreak w_{\mathrm L},\allowbreak e^{\min}_C)=(6,\allowbreak 8,\allowbreak 1/2,\allowbreak 0)$ and $\mathcal A(e^{\min}_C)=480$. The sole admissible neighbouring value is $\mathcal A(e^{\min}_C+1)=7883/15$.

For $(n,\allowbreak a,\allowbreak k)=(21,\allowbreak 12,\allowbreak 7)$, we have $(c,\allowbreak d,\allowbreak w_{\mathrm L},\allowbreak e^{\min}_C)=(9,\allowbreak 3,\allowbreak 1/2,\allowbreak 0)$ and $\mathcal A(e^{\min}_C)=1447/6$. The sole admissible neighbouring value is $\mathcal A(e^{\min}_C+1)=1061/4$.

For $(n,\allowbreak a,\allowbreak k)=(22,\allowbreak 12,\allowbreak 7)$, we have $(c,\allowbreak d,\allowbreak w_{\mathrm L},\allowbreak e^{\min}_C)=(10,\allowbreak 2,\allowbreak 1/2,\allowbreak 0)$ and $\mathcal A(e^{\min}_C)=1059/2$. The sole admissible neighbouring value is $\mathcal A(e^{\min}_C+1)=3547/6$.

For $(n,\allowbreak a,\allowbreak k)=(23,\allowbreak 12,\allowbreak 7)$, we have $(c,\allowbreak d,\allowbreak w_{\mathrm L},\allowbreak e^{\min}_C)=(11,\allowbreak 1,\allowbreak 1/2,\allowbreak 0)$ and $\mathcal A(e^{\min}_C)=1255/2$. The sole admissible neighbouring value is $\mathcal A(e^{\min}_C+1)=3729/5$.

For $(n,\allowbreak a,\allowbreak k)=(24,\allowbreak 12,\allowbreak 7)$, we have $(c,\allowbreak d,\allowbreak w_{\mathrm L},\allowbreak e^{\min}_C)=(12,\allowbreak 0,\allowbreak 1/2,\allowbreak 0)$ and $\mathcal A(e^{\min}_C)=363/2$. The sole admissible neighbouring value is $\mathcal A(e^{\min}_C+1)=1523/4$.

For $(n,\allowbreak a,\allowbreak k)=(21,\allowbreak 13,\allowbreak 7)$, we have $(c,\allowbreak d,\allowbreak w_{\mathrm L},\allowbreak e^{\min}_C)=(8,\allowbreak 5,\allowbreak 1/2,\allowbreak 0)$ and $\mathcal A(e^{\min}_C)=1225/4$. The sole admissible neighbouring value is $\mathcal A(e^{\min}_C+1)=62245/168$.

For $(n,\allowbreak a,\allowbreak k)=(22,\allowbreak 13,\allowbreak 7)$, we have $(c,\allowbreak d,\allowbreak w_{\mathrm L},\allowbreak e^{\min}_C)=(9,\allowbreak 4,\allowbreak 1/2,\allowbreak 0)$ and $\mathcal A(e^{\min}_C)=1414/3$. The sole admissible neighbouring value is $\mathcal A(e^{\min}_C+1)=13757/24$.

For $(n,\allowbreak a,\allowbreak k)=(23,\allowbreak 13,\allowbreak 7)$, we have $(c,\allowbreak d,\allowbreak w_{\mathrm L},\allowbreak e^{\min}_C)=(10,\allowbreak 3,\allowbreak 1/2,\allowbreak 0)$ and $\mathcal A(e^{\min}_C)=255$. The sole admissible neighbouring value is $\mathcal A(e^{\min}_C+1)=12277/30$.

For $(n,\allowbreak a,\allowbreak k)=(23,\allowbreak 14,\allowbreak 8)$, we have $(c,\allowbreak d,\allowbreak w_{\mathrm L},\allowbreak e^{\min}_C)=(9,\allowbreak 5,\allowbreak 1/2,\allowbreak 0)$ and $\mathcal A(e^{\min}_C)=353/2$. The sole admissible neighbouring value is $\mathcal A(e^{\min}_C+1)=15139/72$.

For $(n,\allowbreak a,\allowbreak k)=(24,\allowbreak 14,\allowbreak 8)$, we have $(c,\allowbreak d,\allowbreak w_{\mathrm L},\allowbreak e^{\min}_C)=(10,\allowbreak 4,\allowbreak 1/2,\allowbreak 0)$ and $\mathcal A(e^{\min}_C)=1453/2$. The sole admissible neighbouring value is $\mathcal A(e^{\min}_C+1)=6901/9$.

For $(n,\allowbreak a,\allowbreak k)=(21,\allowbreak 15,\allowbreak 8)$, we have $(c,\allowbreak d,\allowbreak w_{\mathrm L},\allowbreak e^{\min}_C)=(6,\allowbreak 9,\allowbreak 1/2,\allowbreak 3)$ and $\mathcal A(e^{\min}_C)=2552/5$. The adjacent values are $\mathcal A(e^{\min}_C-1)=2568/5$ and $\mathcal A(e^{\min}_C+1)=5567/10$.

For $(n,\allowbreak a,\allowbreak k)=(22,\allowbreak 15,\allowbreak 8)$, we have $(c,\allowbreak d,\allowbreak w_{\mathrm L},\allowbreak e^{\min}_C)=(7,\allowbreak 8,\allowbreak 1/2,\allowbreak 3)$ and $\mathcal A(e^{\min}_C)=6366/7$. The adjacent values are $\mathcal A(e^{\min}_C-1)=41543/42$ and $\mathcal A(e^{\min}_C+1)=19157/21$.

For $(n,\allowbreak a,\allowbreak k)=(23,\allowbreak 15,\allowbreak 8)$, we have $(c,\allowbreak d,\allowbreak w_{\mathrm L},\allowbreak e^{\min}_C)=(8,\allowbreak 7,\allowbreak 1/2,\allowbreak 5)$ and $\mathcal A(e^{\min}_C)=69005/56$. The adjacent values are $\mathcal A(e^{\min}_C-1)=17441/14$ and $\mathcal A(e^{\min}_C+1)=38743/28$.

For $(n,\allowbreak a,\allowbreak k)=(24,\allowbreak 15,\allowbreak 8)$, we have $(c,\allowbreak d,\allowbreak w_{\mathrm L},\allowbreak e^{\min}_C)=(9,\allowbreak 6,\allowbreak 1/2,\allowbreak 6)$ and $\mathcal A(e^{\min}_C)=35143/30$. The adjacent values are $\mathcal A(e^{\min}_C-1)=113809/90$ and $\mathcal A(e^{\min}_C+1)=64117/45$.

For $(n,\allowbreak a,\allowbreak k)=(23,\allowbreak 16,\allowbreak 8)$, we have $(c,\allowbreak d,\allowbreak w_{\mathrm L},\allowbreak e^{\min}_C)=(7,\allowbreak 9,\allowbreak 1/2,\allowbreak 0)$ and $\mathcal A(e^{\min}_C)=558$. The sole admissible neighbouring value is $\mathcal A(e^{\min}_C+1)=32233/42$.

For $(n,\allowbreak a,\allowbreak k)=(23,\allowbreak 17,\allowbreak 9)$, we have $(c,\allowbreak d,\allowbreak w_{\mathrm L},\allowbreak e^{\min}_C)=(6,\allowbreak 11,\allowbreak 1/2,\allowbreak 0)$ and $\mathcal A(e^{\min}_C)=2343$. The sole admissible neighbouring value is $\mathcal A(e^{\min}_C+1)=76637/30$.

For $(n,\allowbreak a,\allowbreak k)=(24,\allowbreak 17,\allowbreak 9)$, we have $(c,\allowbreak d,\allowbreak w_{\mathrm L},\allowbreak e^{\min}_C)=(7,\allowbreak 10,\allowbreak 1/2,\allowbreak 0)$ and $\mathcal A(e^{\min}_C)=3301$. The sole admissible neighbouring value is $\mathcal A(e^{\min}_C+1)=151037/42$.

For $(n,\allowbreak a,\allowbreak k)=(24,\allowbreak 18,\allowbreak 9)$, we have $(c,\allowbreak d,\allowbreak w_{\mathrm L},\allowbreak e^{\min}_C)=(6,\allowbreak 12,\allowbreak 1/2,\allowbreak 0)$ and $\mathcal A(e^{\min}_C)=1464$. The sole admissible neighbouring value is $\mathcal A(e^{\min}_C+1)=31022/15$.
% exact-data:PValues:end

\datablock{Values of $G_j$ at weight $1/2$}{flx:tab:g}
For $w_{\mathrm L}=\frac12$, the function
\[
 G_j(m)=\binom mj-w_{\mathrm L}\binom m{j-1}-w_{\mathrm R}\binom m{j+1}
       =2\binom mj-\frac12\binom{m+2}{j+1}
\]
has the following values; the second equality is Pascal's rule.
For each $j$, the vectors below list $G_j(m)$ in increasing order of $m$:
\[
 U_j=(G_j(0),\ldots,G_j(8)),\qquad V_j=(G_j(9),\ldots,G_j(17)).
\]
% exact-data:GHalf:begin
For $j=1$, the first vector is $U_j=(-1/2,\allowbreak 1/2,\allowbreak 1,\allowbreak 1,\allowbreak 1/2,\allowbreak -1/2,\allowbreak -2,\allowbreak -4,\allowbreak -13/2)$, and the second is $V_j=(-19/2,\allowbreak -13,\allowbreak -17,\allowbreak -43/2,\allowbreak -53/2,\allowbreak -32,\allowbreak -38,\allowbreak -89/2,\allowbreak -103/2)$.

For $j=2$, the first vector is $U_j=(0,\allowbreak -1/2,\allowbreak 0,\allowbreak 1,\allowbreak 2,\allowbreak 5/2,\allowbreak 2,\allowbreak 0,\allowbreak -4)$, and the second is $V_j=(-21/2,\allowbreak -20,\allowbreak -33,\allowbreak -50,\allowbreak -143/2,\allowbreak -98,\allowbreak -130,\allowbreak -168,\allowbreak -425/2)$.

For $j=3$, the first vector is $U_j=(0,\allowbreak 0,\allowbreak -1/2,\allowbreak -1/2,\allowbreak 1/2,\allowbreak 5/2,\allowbreak 5,\allowbreak 7,\allowbreak 7)$, and the second is $V_j=(3,\allowbreak -15/2,\allowbreak -55/2,\allowbreak -121/2,\allowbreak -221/2,\allowbreak -182,\allowbreak -280,\allowbreak -410,\allowbreak -578)$.

For $j=4$, the first vector is $U_j=(0,\allowbreak 0,\allowbreak 0,\allowbreak -1/2,\allowbreak -1,\allowbreak -1/2,\allowbreak 2,\allowbreak 7,\allowbreak 14)$, and the second is $V_j=(21,\allowbreak 24,\allowbreak 33/2,\allowbreak -11,\allowbreak -143/2,\allowbreak -182,\allowbreak -364,\allowbreak -644,\allowbreak -1054)$.

For $j=5$, the first vector is $U_j=(0,\allowbreak 0,\allowbreak 0,\allowbreak 0,\allowbreak -1/2,\allowbreak -3/2,\allowbreak -2,\allowbreak 0,\allowbreak 7)$, and the second is $V_j=(21,\allowbreak 42,\allowbreak 66,\allowbreak 165/2,\allowbreak 143/2,\allowbreak 0,\allowbreak -182,\allowbreak -546,\allowbreak -1190)$.

For $j=6$, the first vector is $U_j=(0,\allowbreak 0,\allowbreak 0,\allowbreak 0,\allowbreak 0,\allowbreak -1/2,\allowbreak -2,\allowbreak -4,\allowbreak -4)$, and the second is $V_j=(3,\allowbreak 24,\allowbreak 66,\allowbreak 132,\allowbreak 429/2,\allowbreak 286,\allowbreak 286,\allowbreak 104,\allowbreak -442)$.

For $j=7$, the first vector is $U_j=(0,\allowbreak 0,\allowbreak 0,\allowbreak 0,\allowbreak 0,\allowbreak 0,\allowbreak -1/2,\allowbreak -5/2,\allowbreak -13/2)$, and the second is $V_j=(-21/2,\allowbreak -15/2,\allowbreak 33/2,\allowbreak 165/2,\allowbreak 429/2,\allowbreak 429,\allowbreak 715,\allowbreak 1001,\allowbreak 1105)$.

For $j=8$, the first vector is $U_j=(0,\allowbreak 0,\allowbreak 0,\allowbreak 0,\allowbreak 0,\allowbreak 0,\allowbreak 0,\allowbreak -1/2,\allowbreak -3)$, and the second is $V_j=(-19/2,\allowbreak -20,\allowbreak -55/2,\allowbreak -11,\allowbreak 143/2,\allowbreak 286,\allowbreak 715,\allowbreak 1430,\allowbreak 2431)$.
% exact-data:GHalf:end

\datablock{Line sets for the anchored bound}{fl:tab:level}
The following $15$ cases give the line sets $(p^\sharp_h,q^\sharp_h)$ for $1\le h\le k-1$.
In each case the entries of the $p$- and $q$-vectors are indexed by consecutive levels $h=1,2,\ldots$.
The nine sets used by triples with $n\le20$ also include $h=k$ and $h=k+1$.
Levels without an asterisk use the chords $(p^\circ_h,q^\circ_h)$.
An asterisk on $p^\sharp_h$ marks a level whose line is another edge of the lower convex hull
of the points $(m,G_{k-h}(m))$, $0\le m\le a-h$, or the horizontal line through its lowest point.
At $h=k$ and $h=k+1$, every line set uses the chords, which the closed forms of $G_0$ and $G_{-1}$ give:
\[
 (p^\sharp_k,q^\sharp_k)=\bigl(1-w_{\mathrm R}(a-k),0\bigr),\qquad
 (p^\sharp_{k+1},q^\sharp_{k+1})=\bigl(-w_{\mathrm R},0\bigr).
\]
The parameter vectors are $P=(p^\sharp_1,p^\sharp_2,\ldots)$ and
$Q=(q^\sharp_1,q^\sharp_2,\ldots)$, truncated at the last printed level.
% exact-data:PLineSets:begin
For $(a,k)=(8,4)$ and $w_{\mathrm L}=1/2$, the line parameters are $P=(7,\allowbreak 2,\allowbreak -1/2,\allowbreak -1,\allowbreak -1/2)$ and $Q=(9/4,\allowbreak 1/2,\allowbreak 0,\allowbreak 0,\allowbreak 0)$.

For $(a,k)=(9,5)$ and $w_{\mathrm L}=1/2$, the line parameters are $P=(14,\allowbreak 7,\allowbreak 2,\allowbreak -1/2,\allowbreak -1,\allowbreak -1/2)$ and $Q=(7,\allowbreak 9/4,\allowbreak 1/2,\allowbreak 0,\allowbreak 0,\allowbreak 0)$.

For $(a,k)=(10,5)$ and $w_{\mathrm L}=1/2$, the line parameters are $P=(21,\allowbreak 7,\allowbreak 0,\allowbreak -2,\allowbreak -3/2,\allowbreak -1/2)$ and $Q=(7,\allowbreak 13/8,\allowbreak 1/12,\allowbreak 0,\allowbreak 0,\allowbreak 0)$.

For $(a,k)=(10,6)$ and $w_{\mathrm L}=1/2$, the line parameters are $P=(21,\allowbreak 14,\allowbreak 7,\allowbreak 2,\allowbreak -1/2,\allowbreak -1,\allowbreak -1/2)$ and $Q=(14,\allowbreak 7,\allowbreak 9/4,\allowbreak 1/2,\allowbreak 0,\allowbreak 0,\allowbreak 0)$.

For $(a,k)=(11,6)$ and $w_{\mathrm L}=1/2$, the line parameters are $P=(42,\allowbreak 21,\allowbreak 7,\allowbreak 0,\allowbreak -2,\allowbreak -3/2,\allowbreak -1/2)$ and $Q=(21,\allowbreak 7,\allowbreak 13/8,\allowbreak 1/12,\allowbreak 0,\allowbreak 0,\allowbreak 0)$.

For $(a,k)=(12,6)$ and $w_{\mathrm L}=1/2$, the line parameters are $P=(66,\allowbreak 24,\allowbreak 3,\allowbreak -4,\allowbreak -4,\allowbreak -2,\allowbreak -1/2)$ and $Q=(24,\allowbreak 17/3,\allowbreak 7/12,\allowbreak 0,\allowbreak 0,\allowbreak 0,\allowbreak 0)$.

For $(a,k)=(12,7)$ and $w_{\mathrm L}=1/2$, the line parameters are $P=(66,\allowbreak 35^*,\allowbreak 21,\allowbreak 7,\allowbreak 0,\allowbreak -2)$ and $Q=(42,\allowbreak 14,\allowbreak 7,\allowbreak 13/8,\allowbreak 1/12,\allowbreak 0)$.

For $(a,k)=(13,7)$ and $w_{\mathrm L}=1/2$, the line parameters are $P=(132,\allowbreak 66,\allowbreak 24,\allowbreak 3,\allowbreak -4,\allowbreak -4,\allowbreak -2,\allowbreak -1/2)$ and $Q=(66,\allowbreak 24,\allowbreak 17/3,\allowbreak 7/12,\allowbreak 0,\allowbreak 0,\allowbreak 0,\allowbreak 0)$.

For $(a,k)=(13,7)$ and $w_{\mathrm L}=5/12$, the line parameters are $P=(132,\allowbreak 55,\allowbreak 13,\allowbreak -9/2,\allowbreak -8,\allowbreak -17/3,\allowbreak -5/2,\allowbreak -7/12)$ and $Q=(55,\allowbreak 61/4,\allowbreak 51/20,\allowbreak 0,\allowbreak 0,\allowbreak 0,\allowbreak 0,\allowbreak 0)$.

For $(a,k)=(14,7)$ and $w_{\mathrm L}=1/2$, the line parameters are $P=(429/2,\allowbreak 165/2,\allowbreak 33/2,\allowbreak -15/2,\allowbreak -21/2,\allowbreak -13/2,\allowbreak -5/2,\allowbreak -1/2)$ and $Q=(165/2,\allowbreak 41/2,\allowbreak 29/10,\allowbreak 0,\allowbreak 0,\allowbreak 0,\allowbreak 0,\allowbreak 0)$.

For $(a,k)=(14,8)$ and $w_{\mathrm L}=1/2$, the line parameters are $P=(429/2,\allowbreak 108^*,\allowbreak 28^*,\allowbreak 12^*,\allowbreak 3,\allowbreak -4,\allowbreak -4)$ and $Q=(132,\allowbreak 42,\allowbreak 7,\allowbreak 5/2,\allowbreak 7/12,\allowbreak 0,\allowbreak 0)$.

For $(a,k)=(15,8)$ and $w_{\mathrm L}=1/2$, the line parameters are $P=(693/2^*,\allowbreak 87^*,\allowbreak 35^*,\allowbreak 33/2,\allowbreak -15/2,\allowbreak -21/2,\allowbreak -13/2)$ and $Q=(132,\allowbreak 21,\allowbreak 7,\allowbreak 29/10,\allowbreak 0,\allowbreak 0,\allowbreak 0)$.

For $(a,k)=(16,8)$ and $w_{\mathrm L}=1/2$, the line parameters are $P=(715,\allowbreak 286,\allowbreak 143/2,\allowbreak -11,\allowbreak -55/2,\allowbreak -20,\allowbreak -19/2)$ and $Q=(286,\allowbreak 77,\allowbreak 129/10,\allowbreak 0,\allowbreak 0,\allowbreak 0,\allowbreak 0)$.

For $(a,k)=(17,9)$ and $w_{\mathrm L}=1/2$, the line parameters are $P=(715^*,\allowbreak 561/2^*,\allowbreak -4^*,\allowbreak -2^*,\allowbreak -11,\allowbreak -55/2,\allowbreak -20,\allowbreak -19/2)$ and $Q=(429/2,\allowbreak 66,\allowbreak 0,\allowbreak 0,\allowbreak 0,\allowbreak 0,\allowbreak 0,\allowbreak 0)$.

For $(a,k)=(18,9)$ and $w_{\mathrm L}=1/2$, the line parameters are $P=(2431,\allowbreak 273/2^*,\allowbreak 129^*,\allowbreak 0,\allowbreak -143/2,\allowbreak -121/2,\allowbreak -33,\allowbreak -13)$ and $Q=(1001,\allowbreak 24,\allowbreak 21,\allowbreak 1/4,\allowbreak 0,\allowbreak 0,\allowbreak 0,\allowbreak 0)$.
% exact-data:PLineSets:end

\datablock{Values of $G_j$ at weight $5/12$}{fl:tab:g}
For $w_{\mathrm L}=\frac5{12}$ and $j=1,\ldots,6$, the following two vectors give all listed
values of $G_j(m)$ for $0\le m\le12$, in increasing order of $m$:
\[
 U_j=(G_j(0),\ldots,G_j(6)),\qquad V_j=(G_j(7),\ldots,G_j(12)).
\]
% exact-data:GFive:begin
For $j=1$, the first vector is $U_j=(-5/12,\allowbreak 7/12,\allowbreak 1,\allowbreak 5/6,\allowbreak 1/12,\allowbreak -5/4,\allowbreak -19/6)$, and the second is $V_j=(-17/3,\allowbreak -35/4,\allowbreak -149/12,\allowbreak -50/3,\allowbreak -43/2,\allowbreak -323/12)$.

For $j=2$, the first vector is $U_j=(0,\allowbreak -5/12,\allowbreak 1/6,\allowbreak 7/6,\allowbreak 2,\allowbreak 25/12,\allowbreak 5/6)$, and the second is $V_j=(-7/3,\allowbreak -8,\allowbreak -67/4,\allowbreak -175/6,\allowbreak -275/6,\allowbreak -202/3)$.

For $j=3$, the first vector is $U_j=(0,\allowbreak 0,\allowbreak -5/12,\allowbreak -1/4,\allowbreak 11/12,\allowbreak 35/12,\allowbreak 5)$, and the second is $V_j=(35/6,\allowbreak 7/2,\allowbreak -9/2,\allowbreak -85/4,\allowbreak -605/12,\allowbreak -385/4)$.

For $j=4$, the first vector is $U_j=(0,\allowbreak 0,\allowbreak 0,\allowbreak -5/12,\allowbreak -2/3,\allowbreak 1/4,\allowbreak 19/6)$, and the second is $V_j=(49/6,\allowbreak 14,\allowbreak 35/2,\allowbreak 13,\allowbreak -33/4,\allowbreak -176/3)$.

For $j=5$, the first vector is $U_j=(0,\allowbreak 0,\allowbreak 0,\allowbreak 0,\allowbreak -5/12,\allowbreak -13/12,\allowbreak -5/6)$, and the second is $V_j=(7/3,\allowbreak 21/2,\allowbreak 49/2,\allowbreak 42,\allowbreak 55,\allowbreak 187/4)$.

For $j=6$, the first vector is $U_j=(0,\allowbreak 0,\allowbreak 0,\allowbreak 0,\allowbreak 0,\allowbreak -5/12,\allowbreak -3/2)$, and the second is $V_j=(-7/3,\allowbreak 0,\allowbreak 21/2,\allowbreak 35,\allowbreak 77,\allowbreak 132)$.
% exact-data:GFive:end

% ---------------------------------------------------------------------------------------------------------------
\section{The finite inequalities of the fiber inequality}\label{apx:fiber}

This appendix states every finite computation that Section~\ref{fib:sec} uses (families F9 and F10 of
Section~\ref{sec:computer}), each as a lemma with its finite range, its arithmetic, its number of cells and its least
margin, and it lists the starting means. All other steps of Section~\ref{fib:sec} are proved in the text. The notation,
including the local letters listed at the start of Section~\ref{fib:sec}, is that of Section~\ref{fib:sec}.

\paragraph{The inequalities.} They are explicit inequalities in one or two variables, built from $\log$, $\exp$, $\artanh$,
square roots and the root map $s(\omega)$ of $s+\log(1+s)=\omega$ (enclosed by two certified point bounds), with rational
constants (the constants $\frac92$, $3.55$, $3.85$, $10$, $\frac{18}{25}$, the class values in Data~\ref{fib:tab:classes} and of
Theorem~\ref{fib:thm:wing}, and the rational inputs of Section~\ref{p2:sec:inputs}) on stated ranges. Most are in one variable
($s$, $a$, $t$, $\zeta$, $N$, $r$ or $q$). The exceptions are: Lemma~\ref{fib:chk:D3} (in $t$, with $r\in[r_a,r_b]$ carried as an
interval and $N$ fixed at $N_0$); Lemma~\ref{fib:chk:A4d} (on two-dimensional blocks in $(r,N)$); Lemma~\ref{fib:chk:B3}
($t$-cells times $M$-blocks, with an inner closed-form supremum over $\zeta$ and the root map); and Lemma~\ref{fib:chk:C1} (in
$t$, with an explicit minimisation over $N$).

\paragraph{Arithmetic.} \emph{Interval} means interval arithmetic with outward rounding (the \texttt{mpmath.iv} library at $30$
decimal digits): on a cell every quantity is enclosed in an interval, using the monotonicity of a term where stated.
\emph{Exact} means rational arithmetic. The cell end points are given as decimals or exact rationals; the code encloses each
outer end point outward, so that the cells of every family cover its stated range, end points included
(\texttt{fc\_edge\_coverage.py}). Adjacent cells share their edges, and the outer ends overlap the neighbouring ranges (for
instance Lemma~\ref{fib:chk:A2} starts at $t(-0.528)=0.59958\ldots<0.6$), so there are no gaps. The rational inputs $\theta(\iota)$, $D(\iota)$, $\ell_0(\iota)$ and the ends of $\Ihat_\iota$ are those of
Part~1 of the supplementary data file (Section~\ref{sec:computer:data}); $q$ is enclosed outward.

\paragraph{The code.} The supplementary code of families F9 and F10 (Section~\ref{sec:computer:data}) consists of the checkers
\texttt{fc\_check\_A.py}, \texttt{fc\_check\_B.py}, \texttt{fc\_check\_C.py}, \texttt{fc\_check\_D.py} and \texttt{fc\_check\_E.py}, the
modules \texttt{fc\_core.py}, \texttt{fc\_spike.py} and \texttt{fc\_window.py}, the count of the boxes of family F6
\texttt{fc\_ledger\_f6.py}, the check of the cell end points \texttt{fc\_edge\_coverage.py}, and the data generator
\texttt{fc\_make\_tables.py}, which also writes the original tabular source (\texttt{table\_b3.tex}) of the bounds in Data~\ref{fib:tab:b3} from the logs. The checker \texttt{fc\_check\_X.py} writes the log
\texttt{fc\_check\_X.log}, one line per cell, beginning with a tag (A0, A1, \dots) and the word PASS or FAIL; the code and its
documentation identify each lemma below by its tag (Data~\ref{fib:tab:checks}). The checkers read the subinterval data from the
module \texttt{fth\_data.py} of the supplementary code, which differs from the copy used for the original runs only in the
path of the data file and reads the $55$ entries from the data file \texttt{twin\_v2.1\_data.txt}; the $55$ entries agree
with Part~1 of the supplementary data file, and their checksums are as follows.
The SHA-256 checksum of \texttt{fth\_data.py} is\par
\noindent\texttt{26c11e1606a4e635fc0f07c605d74c238ba95041ca26c28b2f9688945fe04d4e}.

The SHA-256 checksum of \texttt{twin\_v2.1\_data.txt} is\par
\noindent\texttt{8e7767b93733a52fd0e530be85a0279f453301a3bba3b297b250f3279959c98b}.

The five checkers take well under a minute together (about 15 to 25 seconds); every line passes. The
checkers also assert that each wing and offset-window class contains the values of $|2q-1|$ of its subintervals and that
$\ell_0(\iota)/L_b(\iota)>\tau$ on each member. The counts below are counted from the logs: $218$ cells and $55$ averaging
inequalities. Margins quoted from the logs are rounded down. The statement counts in Data~\ref{fib:tab:checks} count, by hand, one displayed inequality with its parameters (a class, a fiber $M$, a pair $(r_m,N_{\min})$); they add up to $85$. The
variant of Remark~\ref{fib:rem:Aii} is checked by \texttt{fc\_check\_A.py} with the arguments $30$ and $0.28$ (log
\texttt{fc\_check\_A\_rm028\_mu30.log}, $23$ lines, all passing); these lines are not counted, except its line of
Lemma~\ref{fib:chk:A3p}, which is in \texttt{fc\_check\_E.log}.

% ---------------------------------------------------------------------------------------------------------------
\subsection{The finite inequalities}\label{fib:apx:checks}

Throughout, $\rho_*$, $\kappa_A$, $\alpha_1$, $\alpha_2$, $E$, $\bar D_L$, $\bar D_U$, $\beta_L$, $\beta_e$ are those of Lemmas~\ref{fib:lem:coef} and~\ref{fib:lem:amgm},
evaluated at the stated $N$ and $r_m$, with $\zeta_m=\min(6,r_m\sqrt N)$, and $e_2=\frac12$, $e_3=\frac5{6(1-r_m^2)}$, $e_4(N)=\frac{N+1}{12N(1-r_m^2)^2}$,
$e_5=\frac1{12(1-r_m^2)^2}$ are the coefficients of $E$.

\begin{lemma}\label{fib:chk:A0}
$\varphi(0)=\frac{22}{25}$ and $\varphi(\frac{67}{100})\ge0.798$; hence $\varphi\ge0.798$ on $[0,0.67]$ ($\varphi$ is concave). Exact; $1$ cell.
\end{lemma}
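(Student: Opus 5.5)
The function is $\varphi(s)=\frac{18}{25}(2-\frac s2)^2-\frac2{1+s}$ of Lemma~\ref{fib:lem:S}, and the claim has three parts, which I would take in order.

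\emph{The value at $0$.} Substituting $s=0$ gives $\varphi(0)=\frac{18}{25}\cdot4-2=\frac{72-50}{25}=\frac{22}{25}$.

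\emph{The value at $\frac{67}{100}$.} I would evaluate $\varphi(\frac{67}{100})$ in exact rational arithmetic. Here $2-\frac{67}{200}=\frac{333}{200}$, so the first term is $\frac{18}{25}\cdot\frac{110889}{40000}=\frac{998001}{500000}$. The second term is $\frac{2}{167/100}=\frac{200}{167}$. Hence
\[
\varphi(\tfrac{67}{100})=\frac{998001\cdot167-200\cdot500000}{500000\cdot167}.
\]
The numerator is $166666167-100000000=66666167$ and the denominator is $83500000$. To show this quotient is at least $0.798$, it suffices to check $66666167\ge0.798\cdot83500000=66633000$, which holds; the quotient is $0.79839\ldots$. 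This is a single exact comparison of integers, so it can be done by hand or by kernel evaluation. Getting the numerator right is the only place an error could plausibly enter, so I would recompute it independently.

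\emph{The bound on the interval.} On $[0,0.67]$ the second derivative is $\varphi''(s)=\frac{18}{25}\cdot\frac12-\frac{4}{(1+s)^3}=0.36-\frac4{(1+s)^3}$. Since $(1+s)^3\le1.67^3<4.66$ there, $\varphi''<0.36-0.85<0$, so $\varphi$ is concave on the interval. A concave function on a closed interval attains its minimum at an endpoint, so $\varphi\ge\min(\varphi(0),\varphi(0.67))=\min(0.88,0.798\ldots)\ge0.798$.
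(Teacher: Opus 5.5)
Your proof is correct and takes the paper's approach: an exact rational evaluation of $\varphi$ at the two endpoints (your value $66666167/83500000=0.79839\ldots$ is right), combined with concavity from $\varphi''(s)=0.36-4/(1+s)^3<0$ on $[0,0.67]$. The paper gives the same concavity argument in the proof of Lemma~\ref{fib:lem:S}, where this check is used.
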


\begin{lemma}\label{fib:chk:A1}
$Q(a)\ge0.0053$ for $a\in[0,0.528]$, where $\Lambda(a)/a$ is the polynomial $2+\frac a2+\frac{a^2}3+\frac{a^3}4+\frac{a^4}5+\frac{a^5}6$, the continuous
extension of the quotient to $a=0$ (so $Q(0)=\frac{22}{25}$), on the six cells with edges $0$, $0.303$, $0.423$, $0.479$, $0.507$, $0.522$, $0.528$ ($\Lambda(a)/a$ and $\Lambda(a)$
increase and $1-a$ decreases, so each term is taken at its worst end). Interval; $6$ cells.
\end{lemma}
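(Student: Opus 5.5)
The statement is a finite check on $Q(a)=\frac{18}{25}(\Lambda(a)/a)^2(1+\Lambda(a)^2/75)(1-a)-2$ over $[0,0.528]$. The plan is to split the range into the six stated cells and, on each cell $[a_-,a_+]$, bound every factor by its value at the unfavourable end. The factors are all monotone on $[0,0.528]$. The polynomial $\Lambda(a)/a=2+\frac a2+\frac{a^2}3+\frac{a^3}4+\frac{a^4}5+\frac{a^5}6$ has nonnegative coefficients, so it increases and is positive. Likewise $\Lambda(a)=a\cdot(\Lambda(a)/a)$ increases and is nonnegative, so $(\Lambda/a)^2$ and $1+\Lambda^2/75$ increase, while $1-a$ decreases and stays positive. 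This gives, on the cell,
\[
Q(a)\ \ge\ \tfrac{18}{25}\bigl(\Lambda(a_-)/a_-\bigr)^2\bigl(1+\Lambda(a_-)^2/75\bigr)(1-a_+)-2 ,
\]
with the continuous extension $\Lambda(0)/0=2$ on the first cell. The right side is a rational number computable exactly, or enclosed outward in interval arithmetic as the paper does.

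The remaining step is to evaluate these six lower bounds and confirm that each is at least $0.0053$. The cell edges $0,0.303,0.423,0.479,0.507,0.522,0.528$ are chosen so that the losses from freezing the increasing factors at $a_-$ and the decreasing factor at $a_+$ stay below the margin. The cells shrink toward $0.528$, where $Q$ is closest to zero.

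As a sanity check on the first cell: at $a_-=0$ the first two factors give $\frac{18}{25}\cdot4\cdot1=2.88$, and $1-a_+=0.697$. The product is about $2.007$, so the bound is only about $0.007$. This shows the edge $0.303$ is already near the limit of the one-step bound.

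The only real obstacle is arithmetic: verifying the six rational inequalities with enough precision near $a=0.528$, where the true margin is smallest. That is mechanical once the monotonicity reduction above is in place. If a cell failed, I would bisect it further, since the monotone bound converges to $Q$ as the cell width tends to zero. Finally, the bound $\Lambda(a)\le|\omega_t|$ is not used here; it is used only in Lemma~\ref{fib:lem:S}. This lemma concerns the explicit polynomial expression alone.
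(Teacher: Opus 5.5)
Your proposal is correct and is the paper's own argument: on each of the six cells you freeze $(\Lambda/a)^2$ and $1+\Lambda^2/75$ at the left end and $1-a$ at the right end. That is exactly the ``worst end'' evaluation the lemma describes, and carrying out the six evaluations gives roughly $0.0074$, $0.0063$, $0.0072$, $0.0075$, $0.0053$ and $0.013$, the tightest being the cell $[0.507,0.522]$ at about $0.00532$ (the paper's stated least margin), so no further bisection is needed.
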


\begin{lemma}\label{fib:chk:A2}
Let $\mu_0=19$ and $r_m=\frac14$. On each of the nine cells of $s$ with edges $-0.528$, $-0.373$, $-0.104$, $0$, $0.155$, $0.235$, $0.317$, $0.424$, $0.585$,
$0.67$, with $M_0=19\,t(s_a)$, $N_0=M_0+2$ and $\zeta_m=r_m\sqrt{N_0}$: $\beta_L>0$, $\bar e<1$, and, for $s<0$, $\rho_*>0$, $\beta_e>0$ and
$N_0-1-\kappa_A\Ghat/\rho_*>0$, where $\Ghat$ is bounded by $\max\big(G_0,\frac14((\omega_t+\bar h)_+)^2\big)$ (Lemma~\ref{fib:lem:G}(d)); and
$\mathcal E(s)\ge0$ for every $s$ in the cell and both values of $\mathsf k$ (the function $\mathcal E$ of the proof of Theorem~\ref{fib:thm:env}, with
$\sigma_0\le(s+\bar h)_+$). Interval in $s$; $9$ cells; least margin $0.03136$ (cell $[0.235,0.317]$).
\end{lemma}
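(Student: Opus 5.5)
The lemma is a finite statement about nine explicit cells, so I would prove it by a direct interval-arithmetic evaluation, using $s$ itself as the coordinate (Definition~\ref{fib:def:S}) so that the root map never has to be inverted. On a cell $[s_a,s_b]$ every $t$-dependent quantity of the proof of Theorem~\ref{fib:thm:env} has a closed form in $s$:
\[
t(s)=(e^s(1+s))^{2/5},\qquad \omega_t=s+\log(1+s),\qquad G_0=\tfrac{s^2}{1+s},\qquad G_1=\tfrac{s}{1+s},\qquad G_2=\tfrac{1}{(1+s)(2+s)}.
\]
Each of these is monotone in $s$ on $(-1,\infty)$ or has an explicit sign structure. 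I would therefore enclose them by their values at the two ends of the cell, or at $s=0$ for $G_0$, rounding outward.

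The cell constants come first. Put $M_0=19\,t(s_a)$, $N_0=M_0+2$ and $\zeta_m=\tfrac14\sqrt{N_0}$. With $r_m=\tfrac14$ and $N=N_0$, evaluate the coefficients of Lemma~\ref{fib:lem:coef}: $\alpha_1$, $\alpha_2$, $\rho_*$, $\kappa_A$ and the coefficients of $E$. Also evaluate $\bar h=(3.85+10/M_0)/(M_0+3)$. These are single rational-input expressions in $\log$, square roots and $\artanh$ of fixed numbers, so they give tight enclosures. From them, check the side conditions of Lemma~\ref{fib:lem:amgm}:
\begin{itemize}
\item[] $\bar e<1$ and $\beta_L>0$, using $\sigma_0\le(s+\bar h)_+$, which by Lemma~\ref{fib:lem:G}(a) bounds $s(\omega_t+\bar h)_+$, enclosed at the cell's right end;
\item[] for $s<0$ additionally $\rho_*>0$, $\beta_e>0$ and $N_0-1-\kappa_A\Ghat/\rho_*>0$.
\end{itemize}
For the last condition, $\Ghat$ is replaced by the stated upper bound $\max\bigl(G_0,\tfrac14((\omega_t+\bar h)_+)^2\bigr)$, valid by convexity of $G$ and Lemma~\ref{fib:lem:G}(d). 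This upper bound avoids any call to the root map.

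Next I would enclose $\bar D_L=(\sigma_0\alpha_1)^2/\beta_L$ and, on the three negative cells, $\bar D_U$ of Lemma~\ref{fib:lem:amgm}(U) with $g_1=|G_1|$. Then I would evaluate
\[
\mathcal E(s)=M_0\,s^2\varphi(s)[s\ge0]+2-\frac{6s^2}{1+s}-\frac{2\mathsf ks}{1+s}-\frac{\mathsf k^2}{(1+s)(2+s)(M_0+3)}-\max(\bar D_L,\bar D_U)
\]
for both values $\mathsf k\in\{3.55,\,3.85+10/M_0\}$, and verify that the lower end of the enclosure is positive.

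The main obstacle is the dependency effect, which is sharpest on the thinnest cell, where the quoted margin is only $0.031$. There $M_0s^2\varphi(s)$ must beat $\bar D_L$ and the $\mathsf k$-terms, and naive interval evaluation on the whole cell may lose the margin. I would handle this in two ways.
\begin{itemize}
\item[] I would exploit monotonicity termwise: $-2\mathsf ks/(1+s)$ and the $\bar D$ terms are monotone in $s$, and $\varphi$ is concave (Lemma~\ref{fib:chk:A0}). Each term can then be taken at its worst end, as in Lemma~\ref{fib:chk:A1}, rather than being evaluated as one interval expression.
\item[] If a cell still fails, I would bisect it in $s$ internally while keeping $M_0$ fixed at the cell's left end. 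This is legitimate because $T$ and the bounds $\bar D$ only improve with $M$. Refining in this way leaves the nine stated cells, and so the statement, unchanged.
\end{itemize}
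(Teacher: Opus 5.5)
Your plan is essentially the paper's: one interval evaluation in $s$ per cell, with $M_0$ frozen at $19\,t(s_a)$ and the constants of Lemmas~\ref{fib:lem:coef} and~\ref{fib:lem:amgm} evaluated at $N_0$. Like the worked line for $[0.155,0.235]$, it bounds $\sigma_0\le s_b+\bar h$ and replaces $\Ghat$ by the stated bound. The paper's termwise enclosures are even cruder than yours (for instance $G_0\le s_b^2/(1+s_a)$ and $\varphi\ge\frac{18}{25}(2-\frac{s_b}{2})^2-\frac{2}{1+s_a}$), and they already give the margin $0.03136$ on $[0.235,0.317]$, so your fallback bisection is never needed.
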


\begin{lemma}\label{fib:chk:A3}
For $r_m=\frac14$ and $N\in[13.4,576]$:
$\beta_e'(N)\ge1-\frac2{\rho_*(13.4)}\big(\frac{e_3r_m}{2\sqrt{13.4}}+e_4(13.4)r_m^2+\frac32e_5r_m^3\sqrt{576}\big)\ge0.744$. (This lower bound of $\beta_e'$ is valid on
$[N_{\rm lo},(6/r_m)^2]$ because $\rho_*$ is nondecreasing, $e_4$ decreases, the term $e_4'\zeta_m^2\le0$ is dropped and $\zeta_m=r_m\sqrt N$; beyond
$(6/r_m)^2$, $\zeta_m=6$ and $\beta_e$ is nondecreasing termwise.) Interval; $1$ cell.
\end{lemma}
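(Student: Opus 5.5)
This is a one-variable monotonicity statement, and I would prove it by differentiating $\beta_e$ termwise in $N$ and then bounding each negative contribution by its worst end on $[13.4,576]$. Recall
\[
\beta_e(N)=N-1-\frac2{\rho_*}\Bigl(e_2+e_3\zeta_m+e_4(N)\zeta_m^2+e_5\zeta_m^3\Bigr),\qquad \zeta_m=r_m\sqrt N,
\]
valid while $r_m\sqrt N\le6$, that is for $N\le(6/r_m)^2=576$. Write $E_2(N)$ for the bracket. Since $\rho_*(N)$ is nondecreasing in $N$ (Lemma~\ref{fib:lem:coef}) and, as I will check, $\rho_*>0$ on the range, the map $N\mapsto-2E_2/\rho_*$ has derivative at least $-2E_2'/\rho_*$ wherever $E_2'\ge0$. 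This is because $-E_2\,d(1/\rho_*)/dN\ge0$ when $E_2\ge0$. So $\beta_e'\ge1-2E_2'/\rho_*(N)\ge1-2E_2'/\rho_*(13.4)$, provided $E_2'\ge0$.

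The next step is to bound $E_2'$ term by term.
\begin{itemize}
\item The term $e_2$ is constant.
\item The term $e_3\zeta_m$ has derivative $e_3r_m/(2\sqrt N)$, which is at most its value at $N=13.4$.
\item The term $e_4(N)\zeta_m^2=e_4(N)r_m^2N$ has derivative $e_4r_m^2+e_4'r_m^2N$. Here $e_4'\le0$, so I drop that part, and $e_4$ decreases, so $e_4r_m^2\le e_4(13.4)r_m^2$.
\item The term $e_5\zeta_m^3$ has derivative $\tfrac32e_5r_m^3\sqrt N$, which is at most $\tfrac32e_5r_m^3\sqrt{576}$.
\end{itemize}
Summing these gives exactly the displayed bracket. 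To justify the reduction of the previous paragraph even if $E_2'$ were small, note that every term of $E_2'$ except the dropped $e_4'$ part is nonnegative. I would keep the estimate in the form $\beta_e'\ge1-\tfrac2{\rho_*}(\text{upper bound of }E_2')$ with $\rho_*$ lower-bounded; that is enough because the bound is positive.

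The remaining task is numerical: evaluate, in interval arithmetic at $r_m=\tfrac14$, first $\rho_*(13.4)$ from Lemma~\ref{fib:lem:coef}(b), and then the right-hand side, confirming that it is at least $0.744$. The checks are $e_3=5/(6\cdot\tfrac{15}{16})$, $e_5=1/(12(15/16)^2)$, $e_4(13.4)=14.4/(12\cdot13.4\,(15/16)^2)$, and $\sqrt{576}=24$. The dominant term is the cubic one, $\tfrac32e_5r_m^3\cdot24\approx0.053$. With $\rho_*(13.4)\approx0.73$ this gives a total of about $1-2(0.0566+0.0051+0.055)/0.73$, which should come out at roughly $0.744$, matching the claim.

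I expect the main obstacle to be justifying the replacement of $\rho_*(N)$ by $\rho_*(13.4)$, which requires both $\rho_*>0$ and $E_2\ge0$ so that the sign of the $1/\rho_*$ derivative term helps. For $N\ge576$ the parenthetical of the statement applies: there $\zeta_m=6$ is constant, $e_4$ and $1/\rho_*$ decrease, and $N-1$ increases, so $\beta_e$ is nondecreasing termwise.
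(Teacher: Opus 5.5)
This is the paper's argument, which is encoded in the lemma's parenthetical: on $[13.4,576]$, where $\zeta_m=r_m\sqrt N$, differentiate $\beta_e=N-1-2E_2/\rho_*$, drop the nonnegative term $2E_2\rho_*'/\rho_*^2$ and the negative $e_4'\zeta_m^2$ part, bound each remaining term of $E_2'$ at its worst end, use $\rho_*(N)\ge\rho_*(13.4)>0$, evaluate once, and note that $\beta_e$ is nondecreasing termwise beyond $576$. Your numerical sketch is off, however: the three bracket terms are about $0.0304$, $0.0064$ and $0.0533$ and $\rho_*(13.4)\approx0.7049$, giving $1-0.1801/0.7049\approx0.7445$, whereas your figures ($0.0566$, $0.0051$, $0.055$, $0.73$) would give only about $0.68$, not the $0.744$ you assert.
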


\begin{lemma}\label{fib:chk:A3p}
The lower bound of Lemma~\ref{fib:chk:A3}, with $N_{\rm lo}$ in place of $13.4$ and $(6/r_m)^2$ in place of $576$, is positive on each of the following
ranges, so that $\beta_e$ is nondecreasing there: the wing classes $\mathcal W_1,\mathcal W_2,\mathcal W_3$ ($r_m=0.0741,0.1112,0.1667$, from
$N_{\rm lo}=13.02,13.00,15.52$; margins $0.965$, $0.938$, $0.890$); the first cell of Lemma~\ref{fib:chk:A2} ($r_m=0.25$, from $N_{\rm lo}=19\,t(-0.528)+2=13.3921\ldots$;
$0.744$); the first cell of the variant in Remark~\ref{fib:rem:Aii} ($r_m=0.28$, from $N_{\rm lo}=30\,t(-0.528)+2=19.9876\ldots$; $0.718$); and the last $M$-block of every
class cell of Lemma~\ref{fib:chk:B3}, from $N_{\rm lo}=16M_a+2$ with $r_m=r_b$: for $\mathcal C_2,\dots,\mathcal C_8$ the least margins over the cells are
$0.545$, $0.873$, $0.810$, $0.742$, $0.658$, $0.525$, $0.462$, and for $\mathcal C_1$ every cell has $N_{\rm lo}\ge(6/r_b)^2+9.2$, so $\zeta_m=6$ and the claim is
termwise. Interval; $13$ cells; least margin $0.462$.
\end{lemma}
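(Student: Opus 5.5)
The statement is a finite list of claims of one shape: on each stated range $[N_{\rm lo},(6/r_m)^2]$ the lower bound of Lemma~\ref{fib:chk:A3}, namely
\[
1-\frac2{\rho_*(N_{\rm lo})}\Bigl(\frac{e_3r_m}{2\sqrt{N_{\rm lo}}}+e_4(N_{\rm lo})r_m^2+\tfrac32e_5r_m^3\,\tfrac6{r_m}\Bigr),
\]
is positive, with $\rho_*$ taken at $r_m$. The plan has two parts. First, record why this quantity bounds $\beta_e'(N)$ from below on the whole range; the argument is the one in Lemma~\ref{fib:chk:A3}. Second, evaluate it in interval arithmetic at each listed pair $(r_m,N_{\rm lo})$.

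For the first part, on $N\le(6/r_m)^2$ we have $\zeta_m=r_m\sqrt N$. Write
\[
\beta_e=N-1-\frac2{\rho_*}\Bigl(e_2+e_3\zeta_m+e_4(N)\zeta_m^2+e_5\zeta_m^3\Bigr).
\]
Differentiating in $N$ gives $\beta_e'=1+\frac{2\rho_*'}{\rho_*^2}(\cdots)-\frac2{\rho_*}(\cdots)'$. The first correction term is nonnegative, because $\rho_*$ is nondecreasing in $N$ (Lemma~\ref{fib:lem:coef}) and the bracket is positive. In the second, the $\zeta_m$-derivatives are $e_3r_m/(2\sqrt N)$, $e_4r_m^2$ (from $\zeta_m^2=r_m^2N$) and $\tfrac32e_5r_m^3\sqrt N$, while $e_4'\zeta_m^2\le0$ may be dropped. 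Each of the retained terms is bounded at the worst end of the range:
\begin{itemize}
\item $1/\rho_*$ and $1/\sqrt N$ at $N_{\rm lo}$;
\item $e_4$, which decreases in $N$, at $N_{\rm lo}$;
\item $\sqrt N$ at $(6/r_m)^2$, giving the factor $6/r_m$.
\end{itemize}
Positivity therefore makes $\beta_e$ nondecreasing on $[N_{\rm lo},(6/r_m)^2]$. Beyond that point $\zeta_m=6$ is constant, $\beta_e=N-1-\frac{2}{\rho_*}(\text{const}+e_4(N)\cdot36)$, and each term is monotone in the right direction.

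For the second part, I would enumerate the cells in a fixed order:
\begin{itemize}
\item the three wing classes, at $(0.0741,13.02)$, $(0.1112,13.00)$ and $(0.1667,15.52)$;
\item the first cell of Lemma~\ref{fib:chk:A2}, at $(\tfrac14,\,19t(-0.528)+2)$;
\item the variant, at $(0.28,\,30t(-0.528)+2)$;
\item the last $M$-blocks of the class cells of Lemma~\ref{fib:chk:B3}, at $(r_b,16M_a+2)$.
\end{itemize}
For the two values $19t(-0.528)+2$ and $30t(-0.528)+2$, enclose $t(s)=(e^s(1+s))^{2/5}$ outward and use the lower end for $N_{\rm lo}$. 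For each pair, evaluate $\rho_*(N_{\rm lo},r_m)$ from Lemma~\ref{fib:lem:coef}(b), with $c=1-1/N_{\rm lo}$, together with $e_3$, $e_4$ and $e_5$, all in outward-rounded interval arithmetic. Then confirm that the lower end of the resulting enclosure is at least the quoted margin. Since the bound is monotone in $N_{\rm lo}$, it suffices, for the class cells of $\mathcal C_2,\dots,\mathcal C_8$, to evaluate at the least $N_{\rm lo}$ of each class with its $r_b$, which gives one cell per class.

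For $\mathcal C_1$ no evaluation is needed, only the inequality $16M_a+2\ge(6/r_b)^2+9.2$. Here $(6/0.5)^2=144$, and $M_a\ge8$ gives $16M_a+2\ge130$; this alone is not enough, so the actual $M_a=\max(8,\mu_Dt_a)$ with $\mu_D=45$ and the smallest $t_a$ among the class cells must be used. In this regime $\zeta_m=6$ and the claim is termwise.

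The step I expect to be the real obstacle is bookkeeping rather than analysis. One must confirm that the thirteen cells are exactly the ranges invoked in Theorem~\ref{fib:thm:env}, Theorem~\ref{fib:thm:wing}, Corollary~\ref{fib:cor:outer} and Remark~\ref{fib:rem:Aii}, and that the rounded $N_{\rm lo}$ values are below the true starting points. The inequality with the smallest margin is the one for $\mathcal C_8$ ($0.462$), where $r_m=0.4$ makes the cubic term $9e_5r_m^2/\rho_*$ largest. That case needs a careful lower enclosure of $\rho_*$, whose subtracted $\zeta_m^3$-part at $\zeta_m=r_m\sqrt{N_{\rm lo}}$ must be taken at its upper end.
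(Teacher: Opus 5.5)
Your proposal is correct and is essentially the paper's argument: the parenthetical of Lemma~\ref{fib:chk:A3} justifies the bound on $[N_{\rm lo},(6/r_m)^2]$ and the termwise monotonicity beyond it, and the paper then evaluates that bound in interval arithmetic at the least $N_{\rm lo}$ of each of the $13$ ranges. Your formula reproduces the quoted margins, for instance $0.965$ for $\mathcal W_1$ and $0.462$ for $\mathcal C_8$. One small slip: the $\rho_*$ of Lemma~\ref{fib:lem:coef}(b) that enters $\beta_e$ has no $\zeta_m^3$-part (its subtracted terms are $\frac2{(N+1)\sqrt c}$ and $\frac{r_m^2}{3(1-r_m^2)\sqrt c}$), so your closing remark about enclosing such a part describes $\rho_*^{\rm blk}$, not the quantity you correctly evaluate.
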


\begin{lemma}\label{fib:chk:A4}
For $r_m=\frac14$ and $N_{\min}=13.4$: $f(13.4)>0$ (margin $2.596$); $f'(N)\ge\frac{\rho_*(13.4)}2-E_2'>0$ on $[13.4,576]$, with $E_2'$ bounded termwise
(margin $0.262$); and at $N=576$, where $\zeta_m=6$, $N(N-1)\rho_*/2-NE_2-(N+1)/3.55>0$ together with its $N$-derivative, so it is positive and
increasing beyond (margin $493.7$). Here $f$ and $E_2$ are those of the proof of Lemma~\ref{fib:lem:lamU}. Interval; $3$ cells.
\end{lemma}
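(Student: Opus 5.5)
The lemma asserts three explicit one-variable facts about
\[
f(N)=\frac{(N-1)\rho_*}2-E_2-\frac{N+1}{3.55N}
\]
for $r_m=\frac14$. Here $\rho_*=\rho_*(N,r_m)$ is taken from Lemma~\ref{fib:lem:coef}(b), and
\[
E_2=e_2+e_3\zeta_m+e_4(N)\zeta_m^2+e_5\zeta_m^3,\qquad \zeta_m=\min(6,r_m\sqrt N).
\]
I will verify each part by interval evaluation on a single cell, after reducing it to monotone quantities. The break point is $N=576=(6/r_m)^2$, since $r_m\sqrt N\le6$ exactly for $N\le576$.

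\emph{First claim.} At $N=13.4$ everything is a closed form in rationals, $\log$, and square roots. I will evaluate it in interval arithmetic:
\[
\rho_*(13.4)=(1-r_m^2)\tfrac{N}{N+1}-\tfrac{2}{(N+1)\sqrt c}-\tfrac{r_m^2}{3(1-r_m^2)\sqrt c},\qquad c=1-\tfrac1N,
\]
together with $\zeta_m=\tfrac14\sqrt{13.4}$, and then $f(13.4)$ follows directly.

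\emph{Second claim, on $[13.4,576]$.} I will differentiate termwise. For the first term,
\[
\frac{d}{dN}\frac{(N-1)\rho_*}2=\frac{\rho_*}2+\frac{(N-1)\rho_*'}2\ge\frac{\rho_*(13.4)}2 ,
\]
because $\rho_*$ is nondecreasing in $N$ (stated in Lemma~\ref{fib:lem:coef}). The last term has derivative $\frac{1}{3.55N^2}>0$, so it can only help. For $E_2'$, with $\zeta_m=r_m\sqrt N$, I bound each piece by its worst end:
\begin{itemize}
\item $e_3\zeta_m'=e_3r_m/(2\sqrt N)$ is maximal at $N=13.4$;
\item $\frac{d}{dN}\bigl(e_4(N)r_m^2N\bigr)$ has the $e_4'$ part $\le0$, which I drop, leaving $e_4(N)r_m^2\le e_4(13.4)r_m^2$;
\item $\frac{d}{dN}\bigl(e_5r_m^3N^{3/2}\bigr)=\frac32e_5r_m^3\sqrt N\le\frac32e_5r_m^3\sqrt{576}$.
\end{itemize}
This gives one constant upper bound for $E_2'$ over the whole interval. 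Positivity of $\frac{\rho_*(13.4)}2-E_2'$ is then a single interval evaluation. It is the same computation as in Lemma~\ref{fib:chk:A3}, with $E_2$ in place of $\frac2{\rho_*}$ times the $E$-coefficients. Together with the first claim, this shows $f>0$ on $[13.4,576]$.

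\emph{Third claim, beyond $576$.} Here $\zeta_m=6$ is fixed and the condition is $3.55N^2q_2\ge N+1$. I will write
\[
g(N)=\frac{N(N-1)\rho_*}{2}-NE_2-\frac{N+1}{3.55},\qquad E_2=\tfrac12+6e_3+36e_4(N)+216e_5 .
\]
In this form $NE_2$ is affine in $N$ plus $36\,\frac{N+1}{12(1-r_m^2)^2}$, so $NE_2$ is affine. The term $\frac{N(N-1)\rho_*}2$ has nondecreasing nonnegative factors, so it is convex-like growing. I will check $g(576)>0$ and $g'(576)>0$ by interval evaluation. For the extension to all $N>576$, I need $g'$ nondecreasing. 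Since $NE_2$ and $\frac{N+1}{3.55}$ are affine, this reduces to showing that $\frac{N(N-1)\rho_*}2$ has nondecreasing derivative.

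\textbf{Main obstacle.} The hardest step is this last one: verifying that $\rho_*'\ge0$ and that $(N(N-1)\rho_*)''\ge0$. The explicit form of $\rho_*$ contains $\frac{N}{N+1}$ and $\frac{1}{(N+1)\sqrt{1-1/N}}$. I will expand $N(N-1)\rho_*$ as $(1-r_m^2)\frac{N^2(N-1)}{N+1}$ minus terms of order $\sqrt{N(N-1)}\frac{N}{N+1}$ and $\frac{N\sqrt{N(N-1)}}{\dots}$, and verify convexity directly by elementary calculus. If that is messy, the fallback is to use $g'(N)\ge g'(576)$, obtained from a lower bound on $g'$ that is monotone in each term.
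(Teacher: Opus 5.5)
Your proposal is correct. For the first two claims it is exactly the paper's computation. Your termwise bound $\frac{e_3r_m}{2\sqrt{13.4}}+e_4(13.4)r_m^2+\frac32e_5r_m^3\sqrt{576}\approx0.0900$, set against $\rho_*(13.4)/2\approx0.3525$, reproduces the stated margin $0.262$. Direct evaluation gives $f(13.4)\approx2.596$.

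For the third claim, the step you call the main obstacle is not needed. You already use $\rho_*'\ge0$ from Lemma~\ref{fib:lem:coef}, so you can drop the nonnegative term $\frac{N(N-1)\rho_*'}2$ from $g'$ and obtain
\[
g'(N)\ \ge\ \frac{(2N-1)\rho_*(N)}2-\Bigl(\tfrac12+6e_3+216e_5+\tfrac{3}{(1-r_m^2)^2}\Bigr)-\frac1{3.55}.
\]
The subtracted part is constant because $NE_2$ is affine once $\zeta_m=6$. Hence the right side is nondecreasing in $N$, and its positivity at $N=576$ gives $g'>0$ on $[576,\infty)$.

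At $N=576$ this minorant is about $523.80-29.73-0.28\approx493.79$, which is precisely the margin $493.7$ printed in the statement; by comparison $g(576)\approx1.33\cdot10^5$. So this is the check the paper performs, and it coincides with your fallback.

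The convexity of $N(N-1)\rho_*$ that you propose to prove does hold: its second derivative is close to $2(1-r_m^2)-\frac{2r_m^2}{3(1-r_m^2)}>0$. Your primary route would therefore also succeed, but it costs a second-derivative computation and gains nothing.

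Note also that you do not need $g'$ itself to be nondecreasing, and "$g'(N)\ge g'(576)$" is not what the fallback gives. What you need, and what the fallback supplies, is a nondecreasing minorant of $g'$ that is positive at $576$.
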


\begin{lemma}\label{fib:chk:A4p}
The three conditions of Lemma~\ref{fib:chk:A4} hold for each of the pairs $(r_m,N_{\min})=(0.25,10)$, $(0.28,10)$, $(0.3104,10)$, $(0.3444,10)$,
$(0.3847,12)$, $(0.4,12)$; the least of the three margins is $0.223$, $0.190$, $0.151$, $0.101$, $0.0570$, $0.0263$. Interval; $6$ cells.
\end{lemma}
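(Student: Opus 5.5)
The plan is to repeat, for each of the six pairs $(r_m,N_{\min})$ in turn, exactly the three-part verification that Lemma~\ref{fib:chk:A4} carries out for $(\tfrac14,13.4)$. The three parts are the following.
\begin{itemize}
\item[(i)] $f(N_{\min})>0$.
\item[(ii)] $f'(N)\ge\frac{\rho_*(N_{\min})}2-E_2'(N)>0$ on $[N_{\min},(6/r_m)^2]$.
\item[(iii)] At $N=(6/r_m)^2$, where $\zeta_m$ reaches $6$, the function $N(N-1)\rho_*/2-NE_2-(N+1)/3.55$ is positive and has positive $N$-derivative.
\end{itemize}
Here $f$, $E_2$ and $q_2$ are those of the proof of Lemma~\ref{fib:lem:lamU}, and $\rho_*$ is that of Lemma~\ref{fib:lem:coef}(b), evaluated at the current $r_m$.

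First I will justify the one-cell reductions; these justifications are valid for every $r_m\le\frac12$.
\begin{itemize}
\item On $[N_{\min},(6/r_m)^2]$ we have $\zeta_m=r_m\sqrt N$. The derivative of $(N-1)\rho_*/2$ is $\rho_*/2+(N-1)\rho_*'/2\ge\rho_*(N_{\min})/2$, because $\rho_*$ is nondecreasing (Lemma~\ref{fib:lem:coef}).
\item $E_2'$ is bounded above termwise. With $\zeta_m=r_m\sqrt N$, the terms $e_3\zeta_m$ and $e_5\zeta_m^3$ have derivatives $\frac{e_3r_m}{2\sqrt N}\le\frac{e_3r_m}{2\sqrt{N_{\min}}}$ and $\frac32e_5r_m^3\sqrt N\le\frac32 e_5r_m^3\cdot 6/r_m$. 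The term $e_4(N)\zeta_m^2=\frac{(N+1)r_m^2}{12N(1-r_m^2)^2}\cdot N$ has derivative $\frac{r_m^2}{12(1-r_m^2)^2}$.
\item The subtracted term $\frac{N+1}{3.55N}$ is decreasing, so it only helps.
\item Hence (ii) reduces to a single interval inequality between constants depending on $(r_m,N_{\min})$.
\item Beyond $(6/r_m)^2$, $\zeta_m=6$ is constant. Then $N(N-1)\rho_*/2$ is a convex-growing term with $\rho_*$ nondecreasing, while $NE_2$ grows at most linearly because $e_4$ decreases. So positivity together with a positive derivative at the start propagates. I will check (iii) by enclosing the derivative from below with $\rho_*'\ge0$ dropped.
\end{itemize}

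Next come the six evaluations, each in interval arithmetic with $r_m$ and $N_{\min}$ rational.
\begin{itemize}
\item For (i), $f(N_{\min})$ is an explicit expression in $\rho_*(N_{\min},r_m)$, which involves $\sqrt c$ with $c=1-1/N_{\min}$, and in $E_2$ at $\zeta_m=r_m\sqrt{N_{\min}}$. I evaluate it directly.
\item (ii) and (iii) are evaluated as described above.
\item I record the least of the three margins per pair and expect them to be $0.223,\dots,0.0263$.
\end{itemize}

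The main obstacle is the pair $(0.4,12)$, where the margin is only about $0.026$. The difficulty comes from $\rho_*=(1-r_m^2)\rho-\frac{2}{(N+1)\sqrt c}-\frac{r_m^2}{3(1-r_m^2)\sqrt c}$, which at $r_m=0.4$ and $N=12$ is only moderately positive, while $E_2$ carries the large factors $e_3=\frac5{6(1-r_m^2)}$ and $e_5$. Condition (i) at $N_{\min}=12$ is therefore the tight one. I will evaluate it with narrow enclosures of $\sqrt{N_{\min}}$ and $\sqrt c$, and avoid any further termwise loosening there. The raised value $N_{\min}=12$, rather than $10$, for $r_m\ge0.3847$ is presumably what keeps $f(N_{\min})$ positive. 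For (ii), the cubic term $\frac32e_5r_m^2\cdot 6$ is largest at $r_m=0.4$. If that bound fails, I would split $[12,225]$ into a few subintervals and use the sharper $\sqrt N$ upper end on each.
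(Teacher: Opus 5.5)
Your plan is correct and follows the paper's own route: Lemma~\ref{fib:chk:A4p} is just the three interval checks of Lemma~\ref{fib:chk:A4} rerun at each of the six pairs, with the same termwise bound of $E_2'$ and the same propagation of positivity beyond $(6/r_m)^2$. Two corrections to your forecast: in all six cases the binding margin is the derivative condition (ii), not (i) (at $(0.4,12)$ one has $f(12)\approx0.276$, while $\rho_*(12)/2$ minus the termwise bound of $E_2'$ is the printed $0.0263$); and the printed margins use the cruder term $e_4(N_{\min})r_m^2$ of Lemma~\ref{fib:chk:A3} for the $e_4\zeta_m^2$ part, so your exact derivative $r_m^2/(12(1-r_m^2)^2)$ gives slightly larger margins (about $0.028$ instead of $0.0263$ at $(0.4,12)$) rather than reproducing the listed values.
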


\begin{lemma}\label{fib:chk:A4d}
For the class $\mathcal C_1$: the lower bound of $\Phi_r(N)$ described in the proof of Corollary~\ref{fib:cor:outer} (each term at its worst end of the
block) is nonnegative on the seven blocks $[r_1,r_2]\times[11.45,20]$, with $r$-edges $0.3793$, $0.40$, $0.42$, $0.44$, $0.46$, $0.48$, $0.5$ (each split into
the five $N$-blocks with edges $11.45$, $12.5$, $14$, $16$, $18$, $20$), and $[0.3793,0.5]\times[20,29.1]$ (split at $23$ and $26$). Interval, two
variables; $7$ cells; least margin $0.0122$ (on $N\in[20,29.1]$).
\end{lemma}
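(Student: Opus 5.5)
The plan is to verify the statement as a finite interval computation. Each block reduces to one rational lower bound, evaluated from the worst-end enclosures listed in the proof of Corollary~\ref{fib:cor:outer}. On a block $[r_1,r_2]\times[N_{\rm lo},N_{\rm hi}]$ I write the lower bound of $\Phi_r(N)=\frac{3.55}{N+1}+\frac{\rho_U}2\Delta-e_U$ as a sum of terms and bound each from the appropriate corner.
\begin{itemize}
\item The leading term $\frac{3.55}{N+1}$ is taken at $N_{\rm hi}$.
\item $\rho_U$ is replaced by $\rho_*(N_{\rm lo},r_2)$ from Lemma~\ref{fib:lem:coef}(b), which is nondecreasing in $N$.
\item $\Delta=\frac{r^2(N^2-N+1)}{(1-r^2)N}$ is taken at $(r_1,N_{\rm lo})$, since it increases in both variables.
\item $\frac{r^2\rho}2$ and $(N+1)c_r$ are taken at $(r_2,N_{\rm hi})$, with $c_r=\frac12(\artanh^2 r+\log(1-r^2))$ enclosed directly.
\item The term $\frac{\varepsilon'}{2\sqrt\gamma}$ is replaced by its all-$\zeta$ bound $\frac1{(N+1)\sqrt c}+\frac{r^2}{6(1-r^2)\sqrt c}$ at $(N_{\rm lo},r_2)$.
\item The term $\frac{\varepsilon'\sqrt\gamma}2=\frac{\tilde yR}{N+1}$ is taken at $(r_2,N_{\rm hi})$.
\item The term $-c_0'$ of $e_U$ is dropped.
\end{itemize}
The resulting expression is explicit in $\artanh$, $\log$ and square roots at the two corners. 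I evaluate it in outward-rounded interval arithmetic (\texttt{mpmath.iv}, $30$ digits), with the rational block edges enclosed outward so that the blocks cover $[0.3793,0.5]\times[11.45,29.1]$.

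Before the numerics I confirm that every worst-end choice is legitimate. The monotonicity of $\rho_*$, $\Delta$, $c_r$ and of the $\varepsilon'$ bound is elementary. The one nontrivial ingredient is that $\frac{\tilde yR}{N+1}$ is nondecreasing in $N$. Its derivative is $\frac h{4R}$, and $\partial_Nh\ge0$ for $N\ge3$ and $r\le\frac12$, so it suffices that $h(r,11.45)>0$ on $[0.3793,0.5]$. That is a one-variable interval check, which I run first (Lemma~\ref{fib:chk:A4m}).

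The first six $r$-blocks are then each split at $N=11.45,12.5,14,16,18,20$, and the last block $[0.3793,0.5]\times[20,29.1]$ at $23$ and $26$. On every sub-block I check that the lower bound is positive. If some sub-block fails, I bisect it further in $N$ first, since the corner losses from $R$ and from $(N+1)c_r$ grow linearly in $N$. I record the least margin.

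The main obstacle is the wide last block $N\in[20,29.1]$. There the whole $r$-range $[0.3793,0.5]$ is handled at once, so $\Delta$ is taken at $r=0.3793$ while the loss terms are taken at $r=0.5$ and $N=29.1$. This mismatch of corners is large, and I expect the margin there to be small, of order $10^{-2}$. If it is negative, I would subdivide in $r$ as well on that block, keeping the worst-end scheme unchanged. The upper end $29.1$ only needs to overlap the point $N\ge29$ from which Lemmas~\ref{fib:lem:Uexact} and~\ref{fib:chk:D2} take over.
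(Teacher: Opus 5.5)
Your proposal is correct and is essentially the paper's argument: the same worst-end enclosures listed in the proof of Corollary~\ref{fib:cor:outer}, evaluated in outward-rounded interval arithmetic on the seven stated cells, with the monotonicity of $\tilde yR/(N+1)$ in $N$ reduced via $\partial_N h\ge0$ to the one-variable check $h(r,11.45)>0$ of Lemma~\ref{fib:chk:A4m}. Taking that term at $r_2$ also uses its monotonicity in $r$, which you did not state but which is immediate, since $\tilde y=r+\frac{\hat a}{2}(N+1)$ and $R$ both increase in $r$.
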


\begin{lemma}\label{fib:chk:A4m}
$h(r,11.45)\ge0.058$ for $r\in[0.3793,0.5]$, with $h$ as in the proof of Corollary~\ref{fib:cor:outer} ($24$ cells of $r$; $\hat a$ and $r^2$ increase in
$r$, and $r(1-4r^2)$ decreases for $r>12^{-1/2}$, so the left ends give a lower bound). Interval; $1$ cell.
\end{lemma}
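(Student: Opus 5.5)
The claim is $h(r,11.45)\ge0.058$ on $[0.3793,0.5]$, where
\[
h=\hat a\bigl(1+2r^2(N-1)\bigr)-\frac{2r(1-4r^2)(N-1)}{(N+1)^2},\qquad \hat a=\artanh r-r .
\]
I plan to prove it by a monotone-piece enclosure on a cover of the $r$-range, exactly as the statement indicates. First, fix $N=11.45$, so that $N-1=10.45$ and $(N+1)^2=12.45^2$ are explicit rationals. Then
\[
h(r)=\hat a(r)\,(1+20.9\,r^2)-c\,r(1-4r^2),\qquad c=\frac{20.9}{12.45^2}.
\]

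The first term is a product of two nonnegative increasing functions of $r$. Indeed $\hat a'=r^2/(1-r^2)>0$ and $\hat a(0)=0$, so $\hat a\ge0$ and $\hat a$ is increasing, while $1+20.9r^2$ is obviously increasing. Hence on a cell $[r_1,r_2]$ the first term is at least its value at $r_1$.

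For the subtracted term, $g(r)=r(1-4r^2)$ has $g'(r)=1-12r^2<0$ for $r>12^{-1/2}\approx0.2887$. Since $0.3793>12^{-1/2}$, $g$ decreases on the whole range, so $-c\,g(r)\ge-c\,g(r_1)$. Combining the two bounds, $h\ge h(r_1)$ on each cell, and in fact $h$ is increasing on the whole interval. A single evaluation at $r=0.3793$ would therefore suffice mathematically; the $24$ cells are just the checker's subdivision.

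I would then evaluate at $r_1=0.3793$ in outward-rounded interval arithmetic, enclosing $\artanh$ by $\tfrac12\log\frac{1+r}{1-r}$. Rough values: $\artanh(0.3793)\approx0.3994$, so $\hat a\approx0.0201$; $1+20.9r^2\approx4.007$, so the first term is about $0.0805$. For the second term, $c\approx0.1348$ and $g(0.3793)\approx0.3793\cdot0.4245\approx0.1610$, so $c\,g\approx0.0217$. This gives $h\approx0.0588\ge0.058$.

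The main obstacle is that the margin is thin, about $8\cdot10^{-4}$ relative to terms of size $0.08$. The enclosure of $\artanh(0.3793)-0.3793$ therefore needs about four correct digits, since this quantity is a small difference of nearby numbers. I would compute it from the series $\sum_{i\ge1}r^{2i+1}/(2i+1)$ with a geometric tail bound, which avoids cancellation. That makes the certified lower bound of $0.058$ robust.
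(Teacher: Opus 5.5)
Your proposal is correct and is the paper's argument. Since $0.3793>12^{-1/2}$, both $\hat a\,(1+20.9r^2)$ and $-c\,r(1-4r^2)$ are nondecreasing on $[0.3793,0.5]$, so $h(r,11.45)\ge h(0.3793,11.45)$; you are right that the $24$ cells are only the checker's subdivision. One correction to your rough numbers: $\artanh(0.3793)\approx0.39924$, so $\hat a\approx0.019942$, the first term is $\approx0.07990$ and $h(0.3793,11.45)\approx0.05819$, matching the paper's least margin $0.0581$; the slack over $0.058$ is therefore about $1.9\cdot10^{-4}$ rather than $8\cdot10^{-4}$, which your series enclosure of $\hat a$ still certifies comfortably.
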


\begin{lemma}\label{fib:chk:A5}
The hypotheses of Lemma~\ref{fib:lem:larget} hold for $r_m=\frac14$ and $N_*=32.4$: $\bar X(\zeta,32.4)\le\log(2.57\rho_*(32.4))$ for $\zeta\in[0,r_m\sqrt{N_*}]$ ($4$ cells of
$\zeta$; margin $1.154$); $\bar X(\zeta,\zeta^2/r_m^2)\le\log(2.57\rho_*(\zeta^2/r_m^2))$ for $\zeta\in[r_m\sqrt{N_*},6]$ ($8$ cells of $\zeta$, $N$ taken at the left end of the
cell; margin $1.697$); and $2e^{-1/2}1.6^{-3/2}\exp(\frac3{4N}+\frac{1.1}{N^2})\le0.6225$ for $N\ge32.4$, which bounds $2a_U(N)1.6^{-3/2}e^{-1/2}$ by
Lemma~\ref{fib:lem:k}(b) (margin $0.0084$). The code runs the first $\zeta$-range up to the upper rounding of $r_m\sqrt{N_*}$, less
than $2\cdot10^{-30}$ above it, and starts the second there; the sliver between is covered by the first range, through the
monotonicity in $N$ in the proof of Lemma~\ref{fib:lem:larget}. Interval; $3$ cells.
\end{lemma}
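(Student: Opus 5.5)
The lemma consists of three explicit inequalities in at most one variable, and I would check each one separately in outward-rounded interval arithmetic. On every cell I would bound each term by its value at the worst end of the cell, using monotonicity, so that no interval has to be propagated through a nonmonotone expression. First I fix the numbers that appear everywhere. These are $r_m=\frac14$, $1-r_m^2=\frac{15}{16}$, $N_*=32.4$ and $c=1-1/N$, together with $\rho_*(N,\frac14)$ from Lemma~\ref{fib:lem:coef}(b). That lemma states that $\rho_*$ is nondecreasing in $N$. Since $\rho_*(32.4)$ is about $0.85$, the right side $\log(2.57\rho_*)$ is a positive constant, about $0.78$, at $N=32.4$, and it can only increase with $N$.

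For the first range, $\zeta\in[0,r_m\sqrt{N_*}]=[0,1.423\ldots]$ with $N=32.4$ fixed, I split $\bar X$ into its monotone parts. The quartic term and the term $\frac{\zeta\sqrt{1+\zeta^2}}{N+1}\bigl(1+\cdots\bigr)$ both increase in $\zeta$, and the term $-\frac{\rho_*}2(1+\zeta^2c)$ decreases. On a $\zeta$-cell $[\zeta_a,\zeta_b]$ I therefore take the two increasing terms at $\zeta_b$ and the decreasing term at $\zeta_a$, and I compare the result with $\log(2.57\rho_*(32.4))$. The margin should be comfortable, because for small $\zeta$ the term $\bar X$ is about $-\rho_*/2<0$, so four cells should suffice.

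For the second range, $\zeta\in[r_m\sqrt{N_*},6]$ with $N=\zeta^2/r_m^2=16\zeta^2$, I would use the monotonicity already noted in the proof of Lemma~\ref{fib:lem:larget}. Every term of $\bar X(\zeta,N)$ is nonincreasing in $N$, and $\log(2.57\rho_*(N))$ is nondecreasing in $N$. So on a cell I may replace $N$ by its value $16\zeta_a^2$ at the left end, which is the smallest $N$ on the cell and hence the worst case. I then treat $\zeta$ as above: increasing terms at $\zeta_b$, the decreasing term at $\zeta_a$. The seam between the two ranges needs care. The first range will be run up to an outward rounding of $r_m\sqrt{N_*}$, the second range will start there, and the thin sliver in between is covered by the first range through the same monotonicity in $N$.

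For the third inequality, Lemma~\ref{fib:lem:k}(b) gives $\log a_U(N)\le\frac3{4N}+\frac{1.1}{N^2}$, and this bound decreases in $N$, so it suffices to evaluate at $N=32.4$. There $2e^{-1/2}1.6^{-3/2}\approx0.5994$ and $\exp(0.0231+0.0010)\approx1.024$, which gives about $0.614\le0.6225$, a margin near $0.008$.

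I expect the main obstacle to be the upper part of the second range, near $\zeta=6$. There the quartic term $\frac{(N+1)\zeta^4}{12N^2(1-r_m^2)^2}$ and the cubic-like middle term grow, while $\frac{\rho_*}2\zeta^2c$ must still dominate them. The worst-end enclosure also loses the most there, because the cells are widest in $N$. If a cell fails, I would first refine the $\zeta$-cells geometrically toward $\zeta=6$ before trying any sharper bound.
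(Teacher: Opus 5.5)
Your proposal is correct and is essentially the paper's own check. On each cell you bound $\bar X$ above by its increasing terms at $\zeta_b$ and its decreasing term at $\zeta_a$. On the second range you freeze $N$ at $16\zeta_a^2$, which is valid because $\bar X$ is nonincreasing and $\log(2.57\rho_*)$ is nondecreasing in $N$, and the same monotonicity covers the sliver at the seam. You evaluate the third inequality at $N=32.4$. With four and eight equal cells this reproduces the stated margins $1.154$ and $1.697$.

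Two small corrections do not affect the argument. First, $\rho_*(32.4)\approx0.826$, so the right side is about $0.753$ rather than $0.78$. Second, the binding cell is the first cell of the second range, not near $\zeta=6$: along $N=16\zeta^2$ the positive terms of $\bar X$ grow only like $\zeta^2/60$, while $-\frac{\rho_*}2(1+\zeta^2c)$ falls by at least $0.4\,\zeta^2$.
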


\begin{lemma}\label{fib:chk:D4}
The hypotheses of Lemma~\ref{fib:lem:larget} hold for $r_m=\frac12$ and $N_*=66$: the same three inequalities, on $\zeta\in[0,r_m\sqrt{N_*}]$, where $r_m\sqrt{N_*}=\frac{\sqrt{66}}2$, at $N=66$ ($8$ cells;
margin $0.716$), on $\zeta\in[r_m\sqrt{N_*},6]$ at $N=\zeta^2/r_m^2$ ($12$ cells; margin $3.475$), and the constant for $N\ge66$ (margin $0.0161$). As in Lemma~\ref{fib:chk:A5}, the code splits at the upper rounding of
$r_m\sqrt{N_*}$, less than $2\cdot10^{-30}$ above it, and the sliver is covered by the first range through the monotonicity in $N$.
Interval;
$3$ cells.
\end{lemma}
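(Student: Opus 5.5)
The statement is Lemma~\ref{fib:chk:D4}: the three hypotheses of Lemma~\ref{fib:lem:larget} at $r_m=\frac12$, $N_*=66$. I would verify it exactly as Lemma~\ref{fib:chk:A5}, by interval arithmetic, using the explicit form of $\bar X(\zeta,N)$ and of $\rho_*(N,r_m)$ from Lemma~\ref{fib:lem:coef}(b). At $r_m=\frac12$ we have $1-r_m^2=\frac34$, and both $\rho_*$ and $\bar X$ are explicit rational-plus-radical expressions in $(\zeta,N)$. This makes every enclosure elementary: square roots and $\log$ only, with no root map.

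\emph{First inequality.} On $\zeta\in[0,\sqrt{66}/2]$, at the fixed $N=66$, compute $\log(2.57\rho_*(66))$ once as an interval. I will split the $\zeta$-range into $8$ cells. On each cell I bound $\bar X$ above termwise, each term at its worst end:
\begin{itemize}
\item the quartic term and the term $\zeta\sqrt{1+\zeta^2}(1+\cdots)/(N+1)$ increase in $\zeta$, so take them at the right end;
\item the subtracted $\frac{\rho_*}2(1+\zeta^2c)$ also increases in $\zeta$, so take it at the left end.
\end{itemize}
Then I check that the upper bound is below the constant.

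\emph{Second inequality.} On $\zeta\in[\sqrt{66}/2,6]$ the fiber has $N=\zeta^2/r_m^2=4\zeta^2$, which ranges up to $144$. I will use $12$ cells. On each cell, the proof of Lemma~\ref{fib:lem:larget} shows each term of $\bar X$ is nonincreasing in $N$ and $\log(2.57\rho_*)$ is nondecreasing in $N$. So I may evaluate both with $N$ at the left end of the cell, i.e. at its smallest value, which is conservative. The $\zeta$-dependence is then handled as in the first range.

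\emph{Third inequality.} For all $N\ge66$ I use Lemma~\ref{fib:lem:k}(b): $\log a_U(N)\le\frac3{4N}+\frac{1.1}{N^2}$. The right side decreases in $N$, so a single evaluation at $N=66$ suffices. It remains to check $2e^{-1/2}1.6^{-3/2}\exp(\frac3{264}+\frac{1.1}{4356})\le0.6225$.

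The one delicate point is the junction $\zeta=\sqrt{66}/2$, which is irrational. Its outward-rounded upper end $\hat\zeta$ exceeds the true value by a hair, so I would run the first range up to $\hat\zeta$ and start the second range at $\hat\zeta$. The resulting sliver $[\sqrt{66}/2,\hat\zeta]$ is then covered by the first range. That is legitimate because on the sliver the actual $N=4\zeta^2\ge66$, and by the monotonicity in $N$ the bound at $N=66$ dominates.

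I expect the main obstacle to be the margin near small $\zeta$ in the first range, where $\rho_*(66)$ at $r_m=\frac12$ is reduced by the factor $(1-r_m^2)$ and by the subtracted $\zeta$-terms. If a cell fails, I would refine the $\zeta$-cells there.
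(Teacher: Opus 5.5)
Your proposal is correct and follows essentially the same verification the paper records. You use termwise worst-end interval bounds for $\bar X$ on $8$ cells at $N=66$, and on $12$ cells with $N$ frozen at the cell's left end (justified by the monotonicity in $N$ from the proof of Lemma~\ref{fib:lem:larget}); you bound the constant through Lemma~\ref{fib:lem:k}(b) evaluated once at $N=66$; and you let the first range absorb the sliver above $\sqrt{66}/2$.
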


\begin{lemma}\label{fib:chk:B1}
For $M=0,\dots,7$, every $j$ with $-1\le j\le M+1$ and every $q\in[\frac14,\frac34]$: $\kappa_w(M,j)+K_M\ge m_M$, where
$(K_M)=(\frac{51}{200},\allowbreak \frac{17}{125},\allowbreak \frac{159}{2000},\allowbreak \frac{61}{1250},\allowbreak \frac{161}{5000},\allowbreak \frac{129}{5000},\allowbreak \frac{101}{5000},\allowbreak \frac{81}{5000})$ and the lower bounds
$(m_M)=(0.00406,\allowbreak \,0.00192,\allowbreak \,0.00133,\allowbreak \,0.000829,\allowbreak \,0.000459,\allowbreak \,0.000290,\allowbreak \,0.000343,\allowbreak \,0.0000978)$ (for other $j$, $\kappa_w=0$). Interval: $\kappa_w(M,j)$
is a polynomial in $q$, evaluated on $800$ cells of $q$; $8$ cells (one per $M$); least margin $9.78\cdot10^{-5}$ ($M=7$). The $m_M$ are
lower bounds for the minima of $\kappa_w(M,j)+K_M$, not the minima themselves (at $M=7$ the exact minimum is $5.75\cdot10^{-4}$).
\end{lemma}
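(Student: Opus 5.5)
Lemma~\ref{fib:chk:B1} is a finite family of one-variable polynomial inequalities. The plan is to make the polynomial explicit and then certify it on a fine cover of $[\frac14,\frac34]$.

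\emph{The explicit polynomial.} For fixed $M\le7$ and $-1\le j\le M+1$, write $\kappa_w(M,j)$ using the weighted kernel identity, Lemma~\ref{fib:lem:T1}:
\[
\kappa_w(M,j)=\frac{B_N(J)\bigl(N+r^2(N-1)^2-4u^2\bigr)}{N(N-1)},
\]
with $N=M+2$, $J=j+1$, $r=2q-1$ and $u=J-q(N-1)-\frac12$. This is a polynomial in $q$ of degree at most $N+2$ with rational coefficients, so exact rational arithmetic applies. Equivalently, one can expand $w_{\mathrm L}f_{\mathrm L}/q+w_{\mathrm R}f_{\mathrm R}/(1-q)$ directly; the divisions by $q$ and $1-q$ cancel because $w_{\mathrm L}$ carries $q^2$ and $w_{\mathrm R}$ carries $(1-q)^2$. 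For $j\le-2$ or $j\ge M+2$ we have $B_N(J)=0$, so $\kappa_w=0\ge-K_M$ trivially, as the statement notes. By the symmetry of Lemma~\ref{fib:lem:sym}, $(q,j)\mapsto(1-q,M-j)$, it suffices to check roughly half of the pairs, or alternatively $q\in[\frac12,\frac34]$ for all $j$. I would nonetheless check everything on the full range, since the cost is negligible.

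\emph{Certification on cells.} Partition $[\frac14,\frac34]$ into $800$ equal cells. On each cell, enclose $\kappa_w(M,j)+K_M$ from below. I would use a centred (mean-value) form: evaluate the polynomial exactly at the midpoint, and bound the derivative on the cell by interval evaluation of the derivative polynomial. A Bernstein-coefficient bound on each cell is an alternative. Then record the minimum over cells and over $j$ as $m_M$, rounded down, and confirm it is positive. With $800$ cells of width $\frac1{1600}$ and degree at most $11$, the overestimation of a first-order enclosure is about $\frac1{3200}\sup|\kappa_w'|$. Since $|\kappa_w'|$ is $O(1)$ here, this is of order $10^{-4}$ or less.

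\emph{Main obstacle.} The difficulty is the tightness at large $M$. At $M=7$ the true minimum of $\kappa_w+K_7$ is only $5.75\cdot10^{-4}$, and the enclosure must lose less than that. I would first try the centred form on the $800$ cells. If the loss at $M=7$ exceeds the slack, I would use a second-order Taylor bound with an interval bound on $\kappa_w''$, or locally subdivide only the cells near the minimiser. The constants $K_M$ were evidently chosen just above $-\min\kappa_w$, so the slack is thin. The other $M$, where the stated lower bounds $m_M$ are larger, should pass easily. The final step is to record the least certified margin, $9.78\cdot10^{-5}$ at $M=7$ as the statement gives.
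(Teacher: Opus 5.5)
Your plan is the paper's own check: $\kappa_w(M,j)$ is written as an explicit polynomial in $q$, via Lemma~\ref{fib:lem:T1} or the direct expansion in which the factors $q$ and $1-q$ cancel. It is then bounded below in interval arithmetic on $800$ cells of $[\frac14,\frac34]$, for each $M\le7$ and each $-1\le j\le M+1$. Your enclosures only need to certify values at least the printed $m_M$, not reproduce them; those are loose enclosure outputs below the true minima (for instance $\frac{51}{200}-\frac14=0.005$ at $M=0$ and $\frac{81}{5000}-\frac1{64}=5.75\cdot10^{-4}$ at $M=7$), so a tighter centred-form or Bernstein bound proves the lemma as stated.
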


\begin{lemma}\label{fib:chk:B2}
The range (L) contributes no deficit on the spike cells: $\frac92(1-t)^2-\frac{2t(\alpha_1\zeta_m+\alpha_2\zeta_m^2)}{N-2}>0$ with $\zeta_m=r_m\sqrt N$, which bounds
$\frac92(1-t)^2-\frac{2t}M(N+1)e_L$ from below for $r\le r_m$ (Lemma~\ref{fib:lem:coef}(a)); the subtracted term is nondecreasing in $t$ and nonincreasing in
$N$. Central entry: $r_m=\frac14$, at $t=0.6$, $N=10$ (margin $0.442$). Class entries: $r_m=r_b$, on every class cell of Lemma~\ref{fib:chk:B3} at $t=t_b$
and $N=\max(10,\mu_Dt_a+2)$ (least margins $0.329$, $0.199$, $1.309$, $1.375$, $1.481$, $1.692$, $1.854$, $2.032$ for $\mathcal C_1,\dots,\mathcal C_8$).
Interval; $9$ cells; least margin $0.199$.
\end{lemma}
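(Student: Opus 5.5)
The inequality in Lemma~\ref{fib:chk:B2} is a finite check, and I will prove it as one: first a monotonicity reduction to one corner per cell, then a single interval evaluation per corner. The reduction uses Lemma~\ref{fib:lem:coef}(a). For $r\le r_m$ and $N\ge10$ it gives
\[
(N+1)e_L\le\alpha_1\zeta+\alpha_2\zeta^2,\qquad \zeta=r\sqrt N\le r_m\sqrt N=\zeta_m .
\]
The coefficients $\alpha_1,\alpha_2\ge0$, so the right side is at most $\alpha_1\zeta_m+\alpha_2\zeta_m^2$. Since $M=N-2$,
\[
\tfrac{2t}M(N+1)e_L\le\frac{2t(\alpha_1\zeta_m+\alpha_2\zeta_m^2)}{N-2}.
\]
Hence the displayed quantity bounds $\tfrac92(1-t)^2-\tfrac{2t}M(N+1)e_L$ from below, and its positivity is exactly the hypothesis on the range (L) needed in Lemma~\ref{fib:lem:Phi} and Lemma~\ref{fib:lem:wing}.

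\textbf{Monotonicity in $t$.} On a spike cell we have $t\le t_b\le0.69<1$, so $\tfrac92(1-t)^2$ decreases in $t$ and the subtracted term increases linearly in $t$. The worst point of a cell is therefore $t=t_b$: the central entry is evaluated at $t=0.6$, the class entries at the right end $t_b$ of each cell of Lemma~\ref{fib:chk:B3}.

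\textbf{Monotonicity in $N$.} I need $\bigl(\alpha_1 r_m\sqrt N+\alpha_2 r_m^2N\bigr)/(N-2)$ to be nonincreasing in $N$. Lemma~\ref{fib:lem:coef} says $\alpha_1,\alpha_2$ are nonincreasing in $N$, so it suffices that $\sqrt N/(N-2)$ and $N/(N-2)$ decrease for $N>2$. Both do, by an elementary derivative computation. So the worst $N$ is the least one that occurs. For the central entry this is $N=10$, since Lemma~\ref{fib:lem:Phi} applies to $M\ge8$. For a class cell it is $N=\max(10,\mu_Dt_a+2)$, because every spike fiber of that cell has $M\ge\mu t_a\ge\mu_Dt_a$.

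\textbf{The evaluation.} After this reduction there are nine single-point inequalities: one central ($r_m=\tfrac14$) and one per class, each the minimum over that class's cells with $r_m=r_b$. I evaluate each in outward-rounded interval arithmetic from the explicit formulas for $\alpha_1,\alpha_2$ (through $K_L,K_R,\varphi_m,Z_0$) and record the least margins. The main obstacle is not analytic; it is making sure the reduction is applied at the right corner for every class cell. In particular one must confirm that $\mu_Dt_a+2\ge10$, or else use $10$, and that $t_b<1$ holds throughout. The thinnest margin ($0.199$, class $\mathcal C_2$, where $r_b$ is largest relative to $\tau$) must then be checked to survive the rounding.
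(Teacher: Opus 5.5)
Your proposal is correct and is essentially the paper's own argument: you bound $(N+1)e_L$ by Lemma~\ref{fib:lem:coef}(a) at $\zeta_m=r_m\sqrt N$, use that the subtracted term is nondecreasing in $t$ and nonincreasing in $N$ (so $t=t_b$ and the least admissible $N$ give the worst case), and then make one interval evaluation per entry. One small correction to your closing remark: the thin margin $0.199$ of $\mathcal C_2$ comes from its cell $[0.6,0.69]$, where $t_b=0.69$ leaves $\frac92(1-t)^2$ at only about $0.43$, and not from the size of $r_b$ ($\mathcal C_1$ has the larger $r_b=0.5$ but a larger margin).
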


\begin{lemma}\label{fib:chk:B3}
Let $L$ be the value listed in Data~\ref{fib:tab:b3} for a cell $[t_a,t_b]$ and an $M$-block, and let $L_\Phi$ be the bound for
$\Phi(t;M)$ given by Lemma~\ref{fib:lem:Phi} with the block forms stated there ($r_m=\frac14$ and $M_a=\max(8,19t_a)$ for the $18$
central cells, $r_m=r_b$ and $M_a=\max(8,\mu_Dt_a)$ for the $26$ class cells), valid for every $t$ in the cell and every $M$ in the
block. (a) On the $18$ central cells and on the $12$ class cells specified as using $\Phi$ alone, $L\ge L_\Phi$ ($L$ is $L_\Phi$ rounded
up), so $L$ bounds $\Phi(t;M)$. (b) On the $14$ class cells specified as using the offset window, $L\ge\min(L_\Phi,L_W)$ (rounded up likewise),
where $L_W$ is the offset-window bound of Corollary~\ref{fib:cor:outer}, which bounds the spike deficit $\Def_g(M,\mu)$ directly,
not $\Phi$, for every $q$ of the class, every $\mu\ge\mu_D$ and every $M$ with $M/\mu$ in the cell; on these cells the conditions
(c1), (c2), (c3) and the positivity of the derivative bound of Lemma~\ref{fib:lem:Umono} hold at the cell's least $N$. So on these
cells $\min(\Phi(t;M),\Def_g(M,M/t))\le L$ for every $q$ of the class, every $t$ in the cell and every $M$ in the block with
$M/t\ge\mu_D$ (the offset-window bound holds for every $N\ge\mu_Dt_a+2$); this is all that Corollary~\ref{fib:cor:outer} needs.
On a cell,
$(-\Sl{})_+$ is bounded by the mean-value form $\Sl(t)\ge \Sl(t_m)-\sup|\Sl'|\,\frac{t_b-t_a}2$, $\Sl'=\frac92(1-t^{-2})-5G_1/t$, $t_m$ the midpoint; all
other terms by interval enclosures. Interval; $44$ cells; these are upper bounds, used in the margins of Lemma~\ref{fib:chk:rows}.
\end{lemma}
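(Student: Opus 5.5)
The plan is to reduce the claim, cell by cell and block by block, to finitely many interval enclosures whose upper ends are compared with the listed values $L$ of Data~\ref{fib:tab:b3}. By Lemma~\ref{fib:lem:Phi}, $\Phi(t;M)$ is nonincreasing in $M$ along each ray of fixed $t$. On an $M$-block $[M_1,M_2]$ we may therefore replace every $M$-dependent ingredient by its block form:
\begin{itemize}
\item $\Ghat$ from the two ends $\omega_t+\frac{3.55}{M_2+3}$ and $\omega_t+\bar h(M_1)$;
\item $\zeta_m=\min(6,r_m\sqrt{M_2+2})$;
\item $\rho_*^{\rm blk}$ and $\kappa_A^{\rm blk}$ at $N_1=M_1+2$;
\item $g_1$ through the $q_2$-shifted argument, whenever $q_2>0$.
\end{itemize}
The bracket of $\Phi$ is then bounded above by a single expression that is valid for every $M$ in the block. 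For the last block $[16M_a,\infty)$ I use the all-$\zeta$ bound of Lemma~\ref{fib:lem:amgm} at $N=16M_a+2$. This is legitimate because $\beta_e$ is nondecreasing from that point (Lemma~\ref{fib:chk:A3p}), together with the termwise monotonicity of Lemma~\ref{fib:lem:coef}.

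The $t$-dependence is handled inside each cell $[t_a,t_b]$ as follows.
\begin{itemize}
\item The quantities $G_0,G_1,G_2$ are functions of $s_t=s(\omega_t)$, and $s$ is monotone in $t$. So I enclose $s_t$ by the certified root-map bounds at the two ends and evaluate $G_0,G_1,G_2$ by interval arithmetic on that range.
\item The term $t(-\Sl)_+$ needs more care, since a naive enclosure of $\Sl=\frac92\frac{(t-1)^2}{t}-2G_0$ loses the cancellation near $t=1$. There I use the mean-value form stated in the lemma: $\Sl(t)\ge\Sl(t_m)-\sup|\Sl'|\,\frac{t_b-t_a}{2}$, with $\Sl'=\frac92(1-t^{-2})-5G_1/t$ enclosed on the cell.
\item The factor $t$ and the factor $t/M$ (taken at $M_1$) are bounded by their values at $t_b$.
\end{itemize}
The resulting upper end is $L_\Phi$, and part (a) is the comparison $L\ge L_\Phi$ after rounding up.

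For the $14$ offset-window cells of part (b), I additionally evaluate $L_W=(\psi+g+B_{\max}(-X)_+)_+$. Here $X=\min(X_L,3,X_U)$ is computed exactly as in the proof of Theorem~\ref{fib:thm:window}, at $N_0=\mu_Dt_a+2$ and with $r\in[r_b\text{-range}]$ carried as an interval, and $B_{\max}=t_bN_0/(N_0-2)$. Before using $L_W$ I must certify its hypotheses at $N_0$:
\begin{itemize}
\item conditions (c1), (c2) and (c3$'$), which give (c3) for all larger $N$ by Lemma~\ref{fib:lem:Wshape};
\item positivity of the derivative bound of Lemma~\ref{fib:lem:Umono}.
\end{itemize}
By the monotonicity in $N$ established in that proof, the bound then holds for every $N\ge N_0$. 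The final check is $L\ge\min(L_\Phi,L_W)$.

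The main obstacle is tightness rather than logic. The listed $L$ feed the averaging margins of Lemma~\ref{fib:chk:rows}, so the enclosures must not be too loose. The delicate spots are the cells near $t=0.6$, where $\Sl$ changes sign and $\bar D_U$ with the factor $|G'(\omega_t)|$ is largest, and the first $M$-block, where $\rho_*^{\rm blk}$ is smallest. If a cell fails, the first remedy is to bisect it in $t$, or to split the first $M$-block further, and rerun the enclosure.
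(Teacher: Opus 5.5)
Your plan is essentially the paper's own certificate. It uses the block forms of Lemma~\ref{fib:lem:Phi} on each $M$-block, with the all-$\zeta$ form on the last block, the mean-value bound for $(-\Sl)_+$ and interval enclosures for everything else; on the $14$ offset-window cells it uses $(\psi+g+B_{\max}(-X)_+)_+$ at $N_0=\mu_Dt_a+2$ after certifying (c1), (c2), (c3$'$) and the positivity in Lemma~\ref{fib:lem:Umono}. Minor slips: on the central cells, the monotonicity of $\beta_e$ from $16M_a+2$ is Lemma~\ref{fib:chk:A3}, not Lemma~\ref{fib:chk:A3p}; $r$ ranges over $[r_a,r_b]$; and $|G'(\omega_t)|$, like the listed $L$, is largest on the small-$t$ cells near $0.21$, not near $0.6$, where the listed $L$ are smallest.
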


\begin{lemma}\label{fib:chk:C1}
For each wing class of Theorem~\ref{fib:thm:wing} and each of its cells (edges $0.58$, $0.59$, $0.6$ for $\mathcal W_1$, $\mu_W=19$; $0.55$, $0.56$, $0.57$,
$0.585$, $0.6$ for $\mathcal W_2$, $\mu_W=20$; $0.52$, $0.53$, $0.54$, $0.555$, $0.575$, $0.6$ for $\mathcal W_3$, $\mu_W=26$), with $M_0=\mu_Wt_a$: the
hypotheses $c(N_0)>0$, $r_a\sqrt{N_0}\ge b'/(2c(N_0))$, $\rho_*>0$, $\beta_e>0$ of Lemma~\ref{fib:lem:wing} hold, and $\min_{N\ge N_0}w(N)\ge0$, computed as
$w(N_0)$ if $w'(N_0)\ge0$ and otherwise through the tangent of $\sqrt N$ at the best of $N_1=N_0\cdot\{1,\allowbreak 1.5,\allowbreak 2,\allowbreak 3,\allowbreak 4,\allowbreak 6,\allowbreak 8\}$. Interval in $t$; $11$ cells;
least margin $0.220$ ($\mathcal W_3$, $t\in[0.52,0.53]$).
\end{lemma}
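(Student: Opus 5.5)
The plan is a direct certified evaluation, one cell at a time, in the interval arithmetic described at the start of this appendix. Fix a wing class $\mathcal W_i$ with its data $(r_a,r_b,\mu_W)$ and a cell $[t_a,t_b]$. Put $M_0=\mu_Wt_a$ and $N_0=M_0+2$. These are the worst values for the whole cell, since every fiber of the cell has $M\ge M_0$, and Lemma~\ref{fib:lem:wing} only needs constants evaluated at $N_0$.

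First I enclose the constants that do not depend on $t$. These are $\zeta_m=\min(6,r_b\sqrt{N_0})$, $\rho_*$ and $\kappa_A$ of Lemma~\ref{fib:lem:coef} at $(N_0,r_m=r_b)$, $\beta_e$ of Lemma~\ref{fib:lem:amgm}, and $\bar{\mathsf k}=3.85+10/M_0$ with $\bar h=\bar{\mathsf k}/(M_0+3)$. I then check $\rho_*>0$ and $\beta_e>0$; the monotonicity of $\beta_e$ from $N_0$ is Lemma~\ref{fib:chk:A3p} and is not repeated here.

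Next I enclose the $t$-dependent quantities on the cell. The quantity $\omega_t=\frac52\log t$ is monotone in $t$. The root map $s(\omega_t)$ is enclosed by its two certified point bounds at the ends, since $s$ is increasing. This gives intervals for $G_0,G_1,G_2$, and from them:
\begin{itemize}
\item $g_1=|G'(\omega_t)|$, largest at $t_a$;
\item $\Ghat=\max(G(\omega_t),G(\omega_t+\bar h))$, evaluated by the root map at both arguments;
\item $\kappa'=\kappa_A\Ghat/\rho_*$, $b'=4\Ghat/\rho_*$, $D_e=4/(\rho_*^2\beta_e)$ and $\tau_1$.
\end{itemize}
For the Gaussian slack $\Sl(t)$, which is negative on this range, I use the mean-value form of Lemma~\ref{fib:chk:B3} around the cell midpoint, $\Sl(t)\ge\Sl(t_m)-\sup|\Sl'|\,\frac{t_b-t_a}2$, to avoid the dependency blow-up of a naive enclosure. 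With these enclosures I verify $c(N_0)=\frac{N_0-1}{1-r_a^2}-\kappa'>0$ and $r_a\sqrt{N_0}\ge b'/(2c(N_0))$ at the worst ends.

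The core step is the bound $\min_{N\ge N_0}w(N)\ge0$, where
\[
w(N)=(N-2)\Sl+\tau_1+\Bigl(\tfrac{N-1}{1-r_a^2}-\kappa'\Bigr)r_a^2N-b'r_a\sqrt N-g_1D_e .
\]
Every coefficient is an interval, and for a lower bound I take $\Sl$, $\tau_1$ at their lower ends and $\kappa'$, $b'$, $g_1D_e$ at their upper ends. The function $w$ is convex in $N$ (Lemma~\ref{fib:lem:wing}). So I first enclose $w'(N_0)$. If its lower end is $\ge0$, the minimum is $w(N_0)$ and a single enclosure finishes the cell. Otherwise I replace $-b'r_a\sqrt N$ by its tangent minorant, using $\sqrt N\le\frac{\sqrt{N_1}}2+\frac N{2\sqrt{N_1}}$, for each $N_1\in N_0\cdot\{1,1.5,2,3,4,6,8\}$. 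This turns $w$ into a convex quadratic in $N$ whose minimum on $[N_0,\infty)$ is explicit: it sits at the vertex if the vertex exceeds $N_0$, and at $N_0$ otherwise. I keep the best of these lower bounds and check that it is nonnegative.

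I expect the main obstacle to be the tightness on the low-$t$ cells, above all $\mathcal W_3$ with $t\in[0.52,0.53]$. There $\Sl(t)$ is clearly negative, so $(N-2)\Sl$ decreases linearly. This must be paid by the quadratic bonus $r_a^2N^2/(1-r_a^2)$, which wins only beyond an interior minimizer, so the margin is small. If an enclosure fails, I would first subdivide the cell in $t$, since the width of the enclosures of $\Sl$ and $\Ghat$ drives the loss. Next I would refine the choice of $N_1$. Only then would I consider revising the cell edges listed in the statement.
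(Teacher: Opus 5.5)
Your plan is the paper's own verification: per cell you take $N_0=\mu_Wt_a+2$, evaluate the $t$-independent constants at $(N_0,r_b)$, enclose the $t$-dependent terms outward, and minimise over $N\ge N_0$ by the convexity test on $w'(N_0)$ or, failing that, by the tangent-line quadratics at $N_1\in N_0\cdot\{1,1.5,2,3,4,6,8\}$, exactly as the statement prescribes. One remark is harmless because you only ever use the lower end of $\Sl$: $\Sl$ is not negative on the whole wing range, since by Lemma~\ref{fib:lem:S} it is $\ge0$ for $t\ge t(-0.528)\approx0.5996$, which lies in the last cell of each class.
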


\begin{lemma}\label{fib:chk:D0}
For each class of Data~\ref{fib:tab:classes}: $\psi(t_{\rm hi})+b+\beta'(t_{\rm hi}-1)<0$, with margins $0.105$ ($\mathcal C_1$, $\mathcal C_2$), $0.40625$ ($\mathcal C_3$),
$0.69$ ($\mathcal C_4$), $0.205$ ($\mathcal C_5$, $\mathcal C_6$), $0.545$ ($\mathcal C_7$, $\mathcal C_8$). Exact; $8$ cells.
\end{lemma}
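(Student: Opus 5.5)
This is an exact check: for each of the eight classes of Data~\ref{fib:tab:classes} we evaluate the quadratic $\psi(t)+b+\beta'(t-1)=t-\tfrac92(t-1)^2+b+\beta'(t-1)$ at the rational point $t_{\rm hi}$ and show that the value is negative. All inputs are rationals with small denominators ($t_{\rm hi}\in\{1.7,1.75,1.8,1.9\}$, $b\in\{0,\tfrac3{20},\tfrac3{10},\tfrac25\}$, $\beta'\in\{0,\tfrac3{10},\tfrac12,1\}$), so every value is a rational number obtained in a few operations.

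The computation goes class by class, with $x=t_{\rm hi}-1$. For $\mathcal C_1,\mathcal C_2$ we have $x=0.7$ and $\psi=1.7-\tfrac92\cdot0.49=-0.505$; adding $b=0.4$ gives $-0.105$. For $\mathcal C_3$ we have $x=0.75$ and $\psi=1.75-2.53125=-0.78125$; adding $\tfrac12\cdot0.75$ gives $-0.40625$. For $\mathcal C_4$ we have $x=0.8$ and $\psi=1.8-2.88=-1.08$; adding $0.15+0.3\cdot0.8=0.39$ gives $-0.69$. For $\mathcal C_5,\mathcal C_6$ the value is $-0.505+0.3=-0.205$. For $\mathcal C_7,\mathcal C_8$ we have $x=0.9$ and $\psi=1.9-3.645=-1.745$; adding $0.3+0.9=1.2$ gives $-0.545$. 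These values reproduce the stated margins exactly, and since each margin is positive the statement follows.

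There is no real obstacle here. The only care needed is to read the correct $(b,\beta',t_{\rm hi})$ from Data~\ref{fib:tab:classes} for each class and to do the arithmetic in exact rationals, as the label ``Exact'' indicates. The companion fact used in Theorem~\ref{fib:thm:window}, that $\psi'+\beta'=1-9(t-1)+\beta'<0$ for $t\ge t_{\rm hi}$, follows in the same way because $9x>1+\beta'$ in every class. It is not part of this statement, though.
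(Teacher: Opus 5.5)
Your proposal is correct and is the paper's argument: the lemma is a direct exact evaluation of $\psi(t_{\rm hi})+b+\beta'(t_{\rm hi}-1)$ for the eight classes. Your values $-0.105$, $-0.40625$, $-0.69$, $-0.205$ and $-0.545$ agree with the parameters of Data~\ref{fib:tab:classes} and reproduce the stated margins. Your side remark that $\psi'+\beta'<0$ for $t\ge t_{\rm hi}$ is also correct, though, as you say, it is used in the proof of Theorem~\ref{fib:thm:window} rather than being part of this lemma.
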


\begin{lemma}\label{fib:chk:D1}
For each class, (c1), (c2), (c3) hold for $r\in[r_a,r_b]$ at $N_0=\mu_D\tau+2$ ($29$, $36.5$, $24$, $18.8$, $22$, $20$, $18.5$, $17$); least margins $0.212$,
$0.350$, $0.434$, $0.358$, $0.349$, $0.304$, $0.256$, $0.233$. Interval; $8$ cells.
\end{lemma}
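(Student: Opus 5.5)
The statement is Lemma~\ref{fib:chk:D1}. For each of the eight classes of Data~\ref{fib:tab:classes}, it asks that (c1) $Z^{(3)}_L<1$, (c2) $r+Z^{(3)}_R<1$ and (c3) $3\le 12(1-(r+Z^{(3)}_R)^2)/((1-r^2)\rho)$ hold for all $r\in[r_a,r_b]$ at the single value $N_0=\mu_D\tau+2$. My plan is to reduce each condition to an evaluation at one endpoint of $[r_a,r_b]$ by monotonicity in $r$, and then check the eight resulting scalar inequalities in outward-rounded interval arithmetic. With $N=N_0$ fixed, $\rho=N_0/(N_0+1)$ is a rational constant. Recall $s_N=\sqrt{vN}$ with $v=q(1-q)=(1-r^2)/4$, so $s_N=\tfrac12\sqrt{(1-r^2)N_0}$.

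\emph{Step 1 (monotonicity).} The quantity $Z^{(3)}_L=2(\sqrt3 s_N+r)/(N_0+1)$ has $r$-derivative proportional to $1-\sqrt3 r\sqrt{N_0}/(2\sqrt{1-r^2})$. That derivative can change sign on the class ranges, so I will not rely on a sign. Instead I bound $Z^{(3)}_L$ above by replacing $s_N$ by its value at $r_a$ (since $s_N$ decreases in $r$) and the explicit $r$ by $r_b$. For (c2), $r+Z^{(3)}_R=r+\sqrt{3(1-r^2)N_0}/(N_0+1)$ is bounded above the same way, with $s_N$ at $r_a$ and $r$ at $r_b$. For (c3), the right side is at least
\[
\frac{12\bigl(1-(r_b+Z^{(3)}_R(r_a))^2\bigr)}{(1-r_a^2)\rho}
\]
once $r_b+Z^{(3)}_R(r_a)<1$. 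The numerator is at least its value with the upper bound from (c2) inserted, and $1-r^2\le 1-r_a^2$. This decoupling loses a little but keeps one cell per class.

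\emph{Step 2 (evaluation).} For each class, I compute the three decoupled expressions at the data of Data~\ref{fib:tab:classes}, using $N_0=29,36.5,24,18.8,22,20,18.5,17$, in interval arithmetic with outward rounding. I then read off the least margin among $1-Z^{(3)}_L$, $1-(r+Z^{(3)}_R)$ and (c3) minus $3$. For example, for $\mathcal C_8$ ($r\in[0.3846,0.4]$, $N_0=17$) one gets $s_N\approx1.90$, $Z^{(3)}_R\approx0.366$ and $r+Z^{(3)}_R\approx0.77$. The (c3) ratio is then about $12\cdot0.41/(0.85\cdot0.944)\approx6.1>3$, so there is ample room, consistent with the stated margins.

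\emph{Expected obstacle.} The main risk is that the decoupling in Step 1 (using $r_a$ in $s_N$ and $r_b$ elsewhere) is too lossy for the widest class, $\mathcal C_1$ ($r\in[0.3793,0.5]$), where $1-r^2$ varies most. If a margin fails there, I would subdivide $[r_a,r_b]$ into a few $r$-cells and apply the same endpoint bounds on each. The lemma counts one cell per class, so I expect the decoupled bound to suffice. The remaining work is only careful outward rounding of the square roots.
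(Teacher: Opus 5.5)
Your proposal is correct and is essentially the paper's computation: one interval cell per class with $r$ enclosed as $[r_a,r_b]$, whose naive interval extension is exactly your endpoint decoupling ($s_N$ at $r_a$, the explicit $r$ at $r_b$, and $1-r^2$ at $r_a$ in the denominator of (c3)). Your decoupled bounds reproduce the printed least margins, all of which come from (c2); for example $1-\bigl(0.5+\sqrt{3\cdot 0.85613\cdot 29}/30\bigr)\approx 0.2123$ for $\mathcal C_1$ and $\approx 0.2337$ for $\mathcal C_8$, so no subdivision is needed.
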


\begin{lemma}\label{fib:chk:D1p}
(c3$'$) holds for $r\in[r_a,r_b]$ ($20$ cells of $r$) at $N_0=\mu_D\tau+2$ for each class (margins $3.453$, $5.259$, $5.667$, $4.679$, $4.721$, $4.106$, $3.408$,
$2.948$), and at the least $N$ of each of the $14$ class cells of Lemma~\ref{fib:chk:B3} that use the offset-window bound (one line per class: $\mathcal C_1$ one
cell, $\mathcal C_2$ two, $\mathcal C_4$ two, $\mathcal C_5$ two, $\mathcal C_6$ two, $\mathcal C_7$ three, $\mathcal C_8$ two; margins $2.345$, $4.332$, $4.431$, $4.318$,
$3.604$, $2.553$, $1.772$; $\mathcal C_3$ has none). Interval; $15$ cells; least margin $1.772$.
\end{lemma}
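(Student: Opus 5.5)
The plan is to reduce (c3$'$) to a one-variable interval inequality in $r$ at each fixed $N$, and then evaluate it on the stated cells. Write $s_N=\sqrt{vN}$ with $v=q(1-q)=(1-r^2)/4$, so $s_N=\tfrac12\sqrt{(1-r^2)N}$ and $Z^{(3)}_R=2\sqrt3 s_N/(N+1)=\sqrt{3(1-r^2)N}/(N+1)$. Condition (c3$'$) reads
\[
\frac{12\bigl(1-(r+Z^{(3)}_R)^2\bigr)}{1-r^2}-3\ \ge\ 0 .
\]
I would verify it as a lower bound on each $r$-cell, with $N$ fixed.

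\emph{The $r$-dependence.} The quantity $r+Z^{(3)}_R$ is increasing in $r$ on $[0,\frac12]$ when $N\ge17$. Its derivative is $1-r\sqrt{3N}/((N+1)\sqrt{1-r^2})$, which is positive because $r/\sqrt{1-r^2}\le 1/\sqrt3$ and $\sqrt N<N+1$. The factor $1/(1-r^2)$ is also increasing. So on a cell $[r_1,r_2]$ I bound the numerator from below with $r+Z^{(3)}_R$ evaluated at $r_2$, and the denominator with $1-r^2$ taken at its smallest end $r_2$ or its largest end $r_1$, according to the sign of the numerator. Where the numerator is positive, dividing by the largest denominator $1-r_1^2$ gives a valid lower bound. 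I would carry this out in interval arithmetic at $30$ digits, with $q$ and the cell ends enclosed outward. I would use $20$ cells of $r$ per class, subdividing $[r_a,r_b]$ uniformly, which matches the statement.

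\emph{The points $N$.} At $N_0=\mu_D\tau+2$ the values are $29,36.5,24,18.8,22,20,18.5,17$, with $r\in[r_a,r_b]$ of each class of Data~\ref{fib:tab:classes}. This gives the eight margins listed. For the $14$ offset-window cells of Lemma~\ref{fib:chk:B3}, the least $N$ of a cell $[t_a,t_b]$ is $N=\mu_Dt_a+2$, and the same evaluation is repeated there. Since (c3$'$) improves as $N$ grows (Lemma~\ref{fib:lem:Wshape}: $Z^{(3)}_R$ decreases), one line per class suffices if it is taken at the smallest such $N$ within the class. That gives seven lines ($\mathcal C_3$ has no such cell), fifteen cells in all.

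\emph{The main obstacle.} The only delicate point is the monotonicity argument that justifies taking endpoints. Near $r=\frac12$ and for small $N$ (class $\mathcal C_8$, $N_0=17$), the term $Z^{(3)}_R\approx0.41$ makes $(r+Z^{(3)}_R)^2$ approach $0.8$, and the margin shrinks. I would check the derivative claim for $N\ge17$ directly, and confirm that even the crude endpoint enclosure leaves a positive margin. The reported least margin is $1.772$ (class $\mathcal C_8$ offset window), which is comfortably positive, so no further subdivision should be needed.
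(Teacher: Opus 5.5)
Your proposal is correct and is essentially the paper's check: (c3$'$) is evaluated in outward-rounded interval arithmetic on $20$ cells of $r$ per class, once at $N_0=\mu_D\tau+2$ and once at the least $N=\mu_Dt_a+2$ among the class's offset-window cells, and the monotonicity in $N$ from Lemma~\ref{fib:lem:Wshape} covers the larger $N$.

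Two minor remarks, neither of which affects the lemma. First, your derivative bound for $r+Z^{(3)}_R$ holds for every $N>0$, and you need that generality, because the offset-window lines of $\mathcal C_7$ and $\mathcal C_8$ sit at $N=14.5$ and $N=12.5$, below your stated threshold $17$. Second, the printed margins come from plain interval evaluation on each cell, which pairs the upper end of $r$ with $Z^{(3)}_R$ at the lower end. Your endpoint enclosure is tighter, so it would give slightly larger margins, for instance about $3.48$ instead of $3.453$ for $\mathcal C_1$.
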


\begin{lemma}\label{fib:chk:D2}
For each class, the right side of the derivative bound of Lemma~\ref{fib:lem:Umono} is positive at $m=\mu_D\tau+3$ for every $r\in[r_a,r_b]$ (termwise:
$\frac{r^2}2$ at $r_a$, the other terms at $r_b$, as $\hat a$, $c_r$ and $\hat a/r$ increase in $r$); margins $0.0155$, $0.00954$, $0.000222$, $0.00327$, $0.0143$,
$0.0176$, $0.0184$, $0.0259$. Interval; $8$ cells; least margin $0.000222$ ($\mathcal C_3$).
\end{lemma}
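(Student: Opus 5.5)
The statement to prove is Lemma~\ref{fib:chk:D2}: for each class of Data~\ref{fib:tab:classes}, the right side of the derivative bound of Lemma~\ref{fib:lem:Umono} is positive at $m=\mu_D\tau+3$ for every $r\in[r_a,r_b]$. This is a finite check in one variable $r$ per class, eight classes in all. The plan is to write the right side as
\[
\mathcal R(r,m)=\frac{r^2}2-\hat a r-c_r-\frac{2r^2}m-\frac{4r^2}{m^2}-\frac1{m(m-2)}-\frac{\hat a}{2r(m-2)}-\frac{3.55}{m^2},
\]
with $\hat a=\artanh r-r$ and $c_r=\frac12(\artanh^2r+\log(1-r^2))$. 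Here $m=\mu_D\tau+3$ is a fixed rational for each class: $30$, $37.5$, $25$, $19.8$, $23$, $21$, $19.5$, $18$ from the values of $N_0$ in Lemma~\ref{fib:chk:D1}.

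\textbf{Monotonicity.} First establish that on $r\in[0,\frac12]$ the quantities $\hat a$, $\hat a r$, $c_r$ and $\hat a/r$ are nondecreasing in $r$.
\begin{itemize}
\item $\hat a=\sum_{i\ge1}r^{2i+1}/(2i+1)$ has nonnegative coefficients, so $\hat a$, $\hat a r$ and $\hat a/r=\sum_{i\ge1} r^{2i}/(2i+1)$ are increasing.
\item $c_r'=\hat a/(1-r^2)\ge0$, as noted after the definition of $c_r$.
\item The terms $-\frac{2r^2}m$ and $-\frac{4r^2}{m^2}$ decrease in $r$.
\end{itemize}
Hence every subtracted term is maximized at $r_b$, and the single positive term $\frac{r^2}2$ is minimized at $r_a$. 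This gives the termwise lower bound
\[
\mathcal R(r,m)\ \ge\ \frac{r_a^2}2-\hat a(r_b)r_b-c_{r_b}-\frac{2r_b^2}m-\frac{4r_b^2}{m^2}-\frac1{m(m-2)}-\frac{\hat a(r_b)}{2r_b(m-2)}-\frac{3.55}{m^2},
\]
valid for all $r\in[r_a,r_b]$. This is exactly the evaluation scheme stated in the lemma.

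\textbf{Evaluation.} Compute this bound for each class in outward-rounded interval arithmetic (\texttt{mpmath.iv}), enclosing $\artanh$ and $\log$ at the rational endpoints $r_a,r_b$. Check that the lower end of each enclosure is positive; the target margins are the printed ones, $0.0155,\dots,0.0259$.

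\textbf{Main obstacle.} The difficulty is class $\mathcal C_3$, with $[r_a,r_b]=[0.1666,0.2309]$ and $m=25$, where the printed margin is only $0.000222$. There $\frac{r_a^2}2\approx0.01388$ must beat a sum of roughly equal size. The width of the $r$-interval costs about $\frac{r_b^2-r_a^2}2\approx0.0128$ just from decoupling the endpoints, so a crude enclosure could fail. If the single termwise evaluation turns out negative, subdivide $[r_a,r_b]$ into cells and apply the same worst-end bound on each cell. Since every term is monotone, the bound converges to the pointwise value, and the pointwise value is visibly positive near $r_a$. A sanity estimate confirms this: at $r=0.2309$ one has $\hat a\approx0.0042$ and $c_r\approx0.00036$, and $\frac1{m(m-2)}+\frac{3.55}{m^2}\approx0.0074$. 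The other classes have larger margins and need only the one-cell evaluation.
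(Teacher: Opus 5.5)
Your proposal is correct and is the paper's check: one termwise enclosure per class at $m=\mu_D\tau+3$, with $\frac{r^2}{2}$ taken at $r_a$ and the monotone subtracted terms at $r_b$, evaluated in interval arithmetic. The subdivision fallback is never needed: for $\mathcal C_3$ this single evaluation already gives $\frac{r_a^2}{2}\approx0.013878$ against subtracted terms totalling about $0.013655$ (with $c_{r_b}\approx0.000251$, not $0.00036$), and that difference is the printed margin $0.000222$.
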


\begin{lemma}\label{fib:chk:D3}
For each class and each of its cells of $t$ (edges: $\mathcal C_1$: $0.600$, $0.749$, $0.835$, $1.005$, $1.263$, $1.597$, $1.700$; $\mathcal C_2$: $0.690$, $0.797$,
$0.881$, $1.052$, $1.254$, $1.514$, $1.700$; $\mathcal C_3$: $0.440$, $0.485$, $0.591$, $0.836$, $1.069$, $1.272$, $1.480$, $1.750$; $\mathcal C_4$: $0.420$, $0.450$,
$0.530$, $0.752$, $0.890$, $1.134$, $1.352$, $1.625$, $1.800$; $\mathcal C_5$: $0.400$, $0.472$, $0.728$, $0.847$, $1.148$, $1.472$, $1.700$; $\mathcal C_6$: $0.360$, $0.391$,
$0.519$, $0.765$, $0.959$, $1.307$, $1.700$; $\mathcal C_7$: $0.330$, $0.362$, $0.494$, $0.787$, $1.027$, $1.279$, $1.533$, $1.881$, $1.900$; $\mathcal C_8$: $0.300$,
$0.322$, $0.415$, $0.763$, $1.005$, $1.285$, $1.565$, $1.900$), with $N_0=\mu_Dt_a+2$: $\min(X_L,3,X_U)$ is at least $\frac{\psi+g}t$ where $\psi+g>0$, at
least $\frac{(\psi+g)(N_0-2)}{tN_0}$ where $\psi+g\le0$, and at least $\max(\frac{\psi+g}t,0)$ where the sign of $\psi+g$ changes on the cell; here $X_L$ is
bounded below by $W_{\rm lo}=\min(3,1+\Delta(r_a,N_0))$ if the offset-window condition holds at $N_0$ in one of the two forms of
Theorem~\ref{fib:thm:window}, and by $\min(W_{\rm lo},\max(X_{\rm split},X_{\log}))$ otherwise. Interval in $t$, with $r\in[r_a,r_b]$; $54$ cells; least
margin $0.0200$.
\end{lemma}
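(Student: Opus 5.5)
This is a finite interval-arithmetic verification, and the plan mirrors the one the paper uses for its other offset-window checks. I fix a class $\mathcal C_i$ of Data~\ref{fib:tab:classes} and one of its $t$-cells $[t_a,t_b]$, and set $N_0=\mu_Dt_a+2$. I then enclose every $N$-dependent ingredient at $N_0$ as an interval over $r\in[r_a,r_b]$. These ingredients are $\rho$, $\Delta(r_a,N_0)$, $\varepsilon(N_0)$, $c_0$ and $E^{(3)}_4$ (which together give $e_L$), $\bar\rho_L=1+\varepsilon(N_0)/\sqrt3$, $\rho_*(N_0,r_b)$, and the lower bound $\Phi_{[r_a,r_b]}(N_0)$ of Lemma~\ref{fib:lem:Uexact}. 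This single evaluation at $N_0$ is legitimate because the monotonicity in $N$ was already established in the proof of Theorem~\ref{fib:thm:window}, using Lemmas~\ref{fib:chk:D1}, \ref{fib:chk:D1p} and~\ref{fib:chk:D2}. The only $t$-dependence left is through $\omega_t=\frac52\log t$, the root map $s(\omega_t)$ and $\bar h$, and through $\psi(t)+g(t)$, which is a quadratic polynomial.

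On the cell I bound each piece as follows.
\begin{itemize}
\item For $X_U$, I use $G(\min(0,\omega_t+\Phi))$. Since $G$ decreases on $(-\infty,0]$ and $\omega_t$ increases in $t$, this is monotone in $t$, so taking it at $t_a$ gives a lower bound for $X_U$.
\item For $X_L$, I first test the two sufficient forms of the offset-window condition at $N_0$ with $\omega_t$ at $t_b$, its worst end. If either holds, I use $X_L\ge W_{\rm lo}$. Otherwise I enclose $X_{\rm split}$ and $X_{\log}$. This uses $G(\oT_c)\le\max(G(\omega_t),G(\omega_t+\bar h))$ and the enclosure $s(\omega_L)\le s(\omega_t+\bar h+e_L)_++\bar\rho_L\Delta/2$, with the root map enclosed by its two certified point bounds.
\item On the right-hand side, I enclose $\psi+g$ over the cell. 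If the enclosure is positive, I divide by $t_a$ to get the worst case $(\psi+g)/t$. If it is negative, I use the factor $(N_0-2)/(tN_0)$ at the appropriate end. If the sign changes inside the cell, I compare against $\max((\psi+g)/t,0)$.
\end{itemize}
I then verify that $\min(X_L,3,X_U)$ minus the right side is positive, and I record the least margin.

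Where a cell's enclosure is too wide, I subdivide it in $t$, and in $r$ if needed, before concluding. The cell edges listed in the statement are presumably already chosen so that a single evaluation per cell suffices.

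The main obstacle is the cells where $\omega_L>0$ but $\oT_c<0$, roughly $t\in[0.5,0.8]$. There the split form $X_{\rm split}$ loses badly, and everything depends on $X_{\log}$ with a small $e_L$. In those cells, and near $t_{\rm hi}$ where the margin against $(\psi+g)(N_0-2)/(tN_0)$ is smallest (the class $\mathcal C_3$ with its tiny derivative margin), I expect to need the finest cells and outward-rounded arithmetic at high precision. The claimed least margin is $0.0200$.
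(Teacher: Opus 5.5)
Your plan matches the paper's. The lemma is simply the output of the interval checker (tag D3 in \texttt{fc\_check\_D.log}). That checker freezes $N$ at $N_0=\mu_Dt_a+2$ and carries $r\in[r_a,r_b]$ as one interval. It takes $X_U$ at its worst end $t_a$, tests the two sufficient forms of the offset-window condition at $t_b$, and compares against the three forms of the target. One step of your description is unsound as written, though.

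In a cell where the sufficient forms fail at $(t_b,N_0)$, you bound $X_L$ by $\max(X_{\rm split},X_{\log})$ alone. A failed test at $(t_b,N_0)$ does not exclude points of the cell, or fibers with $N>N_0$, at which the offset-window condition does hold. At such a point Lemma~\ref{fib:lem:Wpiece} only gives $\min_{[0,W']}H_L\ge BW'$, with $W'\ge W_{\rm lo}$, and $X_{\log}$ is not a lower bound there. Its derivation uses $G(\omega)\le\omega-\log(1+\omega)$ and then drops $\frac2{\rho_L}\log(1+\omega_L)$, which requires $\omega_L\ge0$. If $\omega_L<0$, then $X_{\log}>1+\Delta=\gamma\ge W'$, whereas for $W'=\gamma$ one has exactly $\min_{[0,W']}H_L=H_L(\gamma)=B\gamma$.

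So the valid cell bound is $\min\bigl(W_{\rm lo},\max(X_{\rm split},X_{\log})\bigr)$, which is exactly what the statement uses. Your check must therefore also compare $W_{\rm lo}$ with the target on those cells. That comparison is cheap: $W_{\rm lo}\ge1$, and $W_{\rm lo}=3$ once $\Delta(r_a,N_0)\ge2$. But without it your computation certifies a weaker inequality than the lemma asserts.
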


\datablock{The finite inequalities}{fib:tab:checks}

The following paragraphs identify each finite inequality by its lemma, log and tag. A cell count is the number of lines in the indicated log, and a statement count is the number of displayed inequalities with their specified parameters, counted by hand. The log letter $X$ denotes \texttt{fc\_check\_X.log}, for $X\in\{A,\allowbreak B,\allowbreak C,\allowbreak D,\allowbreak E\}$.

Lemma~\ref{fib:chk:A0} (log A with tag A0) establishes $\varphi\ge0$ on $[0,0.67]$. It uses exact arithmetic on $1$ cell for $1$ statement. Its least margin is $0.798$, and it is used in Lemma~\ref{fib:lem:S}.

Lemma~\ref{fib:chk:A1} (log A with tag A1) establishes $Q\ge0$, so $\Sl{}\ge0$ on $[0.6,1]$. It uses interval arithmetic on $6$ cells for $1$ statement. Its least margin is $0.00532$, and it is used in Lemma~\ref{fib:lem:S}.

Lemma~\ref{fib:chk:A2} (log A with tag A2) establishes the inequality $\mathcal E(s)\ge0$. It uses interval arithmetic on $9$ cells for $1$ statement. Its least margin is $0.03136$, and it is used in Theorem~\ref{fib:thm:env}.

Lemma~\ref{fib:chk:A3} (log A with tag A3) establishes $\beta_e$ nondecreasing, $r_m=\frac14$. It uses interval arithmetic on $1$ cell for $1$ statement. Its least margin is $0.744$, and it is used in Theorem~\ref{fib:thm:env}, \ref{fib:thm:central}.

Lemma~\ref{fib:chk:A3p} (log E with tag A3p) establishes $\beta_e$ nondecreasing, other ranges. It uses interval arithmetic on $13$ cells for $1$ statement. Its least margin is $0.462$, and it is used in Theorem~\ref{fib:thm:env}, \ref{fib:thm:wing}, Corollary~\ref{fib:cor:outer}.

Lemma~\ref{fib:chk:A4} (log A with tag A4) establishes $\omega_U\ge\omega_t$, $r\le\frac14$, $N\ge13.4$. It uses interval arithmetic on $3$ cells for $3$ statements. Its least margin is $0.262$, and it is used in Theorem~\ref{fib:thm:env}.

Lemma~\ref{fib:chk:A4p} (log D with tag A4p) establishes $\omega_U\ge\omega_t$ on the spike ranges. It uses interval arithmetic on $6$ cells for $6$ statements. Its least margin is $0.0263$, and it is used in Theorem~\ref{fib:thm:central}, \ref{fib:thm:wing}, Corollary~\ref{fib:cor:outer}.

Lemma~\ref{fib:chk:A4d} (log D with tag A4d) establishes $\Phi_r(N)\ge0$ for $\mathcal C_1$, $(r,N)$-blocks. It uses interval arithmetic on $7$ cells for $2$ statements. Its least margin is $0.0122$, and it is used in Corollary~\ref{fib:cor:outer}.

Lemma~\ref{fib:chk:A4m} (log E with tag A4m) establishes $\tilde yR/(N+1)$ nondecreasing in $N$. It uses interval arithmetic on $1$ cell for $1$ statement. Its least margin is $0.0581$, and it is used in Corollary~\ref{fib:cor:outer}.

Lemma~\ref{fib:chk:A5} (log A with tag A5) establishes large $t$, $r\le\frac14$. It uses interval arithmetic on $3$ cells for $3$ statements. Its least margin is $0.00843$, and it is used in Theorem~\ref{fib:thm:env}.

Lemma~\ref{fib:chk:D4} (log D with tag D4) establishes large $t$, $r\le\frac12$. It uses interval arithmetic on $3$ cells for $3$ statements. Its least margin is $0.0161$, and it is used in Theorem~\ref{fib:thm:window}.

Lemma~\ref{fib:chk:B1} (log B with tag B1) establishes $\kappa_w(M,\cdot)\ge-K_M$, $M\le7$. It uses interval arithmetic on $8$ cells for $8$ statements. Its least margin is $9.78\cdot10^{-5}$, and it is used in Lemma~\ref{fib:lem:KM}.

Lemma~\ref{fib:chk:B2} (logs B, D with tag B2) establishes no deficit from the range (L). It uses interval arithmetic on $9$ cells for $9$ statements. Its least margin is $0.199$, and it is used in Theorem~\ref{fib:thm:central}, \ref{fib:thm:wing}, Corollary~\ref{fib:cor:outer}.

Lemma~\ref{fib:chk:B3} (logs B, D with tag B3) establishes bounds of $\Phi$ or $\min(\Phi,\Def_g)$ on spike cells. It uses interval arithmetic on $44$ cells for $9$ statements. It records upper bounds, which are used in Lemma~\ref{fib:chk:rows}.

Lemma~\ref{fib:chk:C1} (log C with tag C1) establishes the wing inequality. It uses interval arithmetic on $11$ cells for $3$ statements. Its least margin is $0.220$, and it is used in Theorem~\ref{fib:thm:wing}.

Lemma~\ref{fib:chk:D0} (log D with tag D0) establishes $\psi+g<0$ at $t_{\rm hi}$. It uses exact arithmetic on $8$ cells for $8$ statements. Its least margin is $0.105$, and it is used in Theorem~\ref{fib:thm:window}.

Lemma~\ref{fib:chk:D1} (log D with tag D1) establishes offset-window shape conditions (c1) to (c3). It uses interval arithmetic on $8$ cells for $8$ statements. Its least margin is $0.212$, and it is used in Theorem~\ref{fib:thm:window}.

Lemma~\ref{fib:chk:D1p} (log E with tag D1p) establishes (c3$'$), the monotone form of (c3). It uses interval arithmetic on $15$ cells for $1$ statement. Its least margin is $1.77$, and it is used in Theorem~\ref{fib:thm:window}, Corollary~\ref{fib:cor:outer}.

Lemma~\ref{fib:chk:D2} (log D with tag D2) establishes derivative bound for the range (U). It uses interval arithmetic on $8$ cells for $8$ statements. Its least margin is $0.000222$, and it is used in Theorem~\ref{fib:thm:window}, Corollary~\ref{fib:cor:outer}.

Lemma~\ref{fib:chk:D3} (log D with tag D3) establishes the offset-window inequality. It uses interval arithmetic on $54$ cells for $8$ statements. Its least margin is $0.0200$, and it is used in Theorem~\ref{fib:thm:window}.

In total, these checks comprise $218$ cells and $85$ statements.

\datablock{Certified spike bounds}{fib:tab:b3}

For each cell $[t_a,t_b]$, write $\mathbf L=(L_1,\allowbreak L_2,\allowbreak L_4,\allowbreak L_8,\allowbreak L_{16})$. Its first four coordinates are the bounds $L$ of Lemma~\ref{fib:chk:B3} on the $M$-blocks $[mM_a,2mM_a]$ for $m=1,\allowbreak 2,\allowbreak 4,\allowbreak 8$, and its final coordinate is the bound on $[16M_a,\infty)$. All values below are certified upper bounds rounded up to four significant digits. When an offset-window bound is specified, each coordinate bounds $\min(\Phi(t;M),\Def_g(M,M/t))$ by taking the smaller of the bound for $\Phi$ and the offset-window bound for $\Def_g$ itself. Otherwise it bounds $\Phi(t;M)$. The margins of Lemma~\ref{fib:chk:rows} use the certified values computed by the code.

For the central cell $[0.21,0.25]$, $M_a=8.000$ and $\mathbf L=(12.24,\allowbreak \,9.376,\allowbreak \,7.824,\allowbreak \,6.274,\allowbreak \,5.772)$. These bounds use $\Phi$ alone. For the central cell $[0.25,0.28]$, $M_a=8.000$ and $\mathbf L=(7.454,\allowbreak \,5.518,\allowbreak \,4.315,\allowbreak \,3.459,\allowbreak \,3.378)$. These bounds use $\Phi$ alone. For the central cell $[0.28,0.30]$, $M_a=8.000$ and $\mathbf L=(5.149,\allowbreak \,3.674,\allowbreak \,2.637,\allowbreak \,2.314,\allowbreak \,2.298)$. These bounds use $\Phi$ alone.

For the central cell $[0.30,0.32]$, $M_a=8.000$ and $\mathbf L=(4.236,\allowbreak \,2.909,\allowbreak \,2.002,\allowbreak \,1.861,\allowbreak \,1.861)$. These bounds use $\Phi$ alone. For the central cell $[0.32,0.34]$, $M_a=8.000$ and $\mathbf L=(3.478,\allowbreak \,2.291,\allowbreak \,1.566,\allowbreak \,1.507,\allowbreak \,1.507)$. These bounds use $\Phi$ alone. For the central cell $[0.34,0.36]$, $M_a=8.000$ and $\mathbf L=(2.841,\allowbreak \,1.886,\allowbreak \,1.232,\allowbreak \,1.219,\allowbreak \,1.219)$. These bounds use $\Phi$ alone.

For the central cell $[0.36,0.38]$, $M_a=8.000$ and $\mathbf L=(2.3,\allowbreak \,1.283,\allowbreak \,0.9827,\allowbreak \,0.9827,\allowbreak \,0.9827)$. These bounds use $\Phi$ alone. For the central cell $[0.38,0.40]$, $M_a=8.000$ and $\mathbf L=(1.839,\allowbreak \,0.9176,\allowbreak \,0.7885,\allowbreak \,0.7885,\allowbreak \,0.7885)$. These bounds use $\Phi$ alone. For the central cell $[0.40,0.42]$, $M_a=8.000$ and $\mathbf L=(1.493,\allowbreak \,0.6683,\allowbreak \,0.6281,\allowbreak \,0.6281,\allowbreak \,0.6281)$. These bounds use $\Phi$ alone.

For the central cell $[0.42,0.44]$, $M_a=8.000$ and $\mathbf L=(1.229,\allowbreak \,0.4956,\allowbreak \,0.4956,\allowbreak \,0.4956,\allowbreak \,0.4956)$. These bounds use $\Phi$ alone. For the central cell $[0.44,0.46]$, $M_a=8.360$ and $\mathbf L=(0.7382,\allowbreak \,0.3859,\allowbreak \,0.3859,\allowbreak \,0.3859,\allowbreak \,0.3859)$. These bounds use $\Phi$ alone. For the central cell $[0.46,0.48]$, $M_a=8.740$ and $\mathbf L=(0.4389,\allowbreak \,0.2953,\allowbreak \,0.2953,\allowbreak \,0.2953,\allowbreak \,0.2953)$. These bounds use $\Phi$ alone.

For the central cell $[0.48,0.50]$, $M_a=9.120$ and $\mathbf L=(0.2488,\allowbreak \,0.2207,\allowbreak \,0.2207,\allowbreak \,0.2207,\allowbreak \,0.2207)$. These bounds use $\Phi$ alone. For the central cell $[0.50,0.52]$, $M_a=9.500$ and $\mathbf L=(0.1595,\allowbreak \,0.1595,\allowbreak \,0.1595,\allowbreak \,0.1595,\allowbreak \,0.1595)$. These bounds use $\Phi$ alone. For the central cell $[0.52,0.55]$, $M_a=9.880$ and $\mathbf L=(0.1214,\allowbreak \,0.1214,\allowbreak \,0.1214,\allowbreak \,0.1214,\allowbreak \,0.1214)$. These bounds use $\Phi$ alone.

For the central cell $[0.55,0.58]$, $M_a=10.450$ and $\mathbf L=(0.06144,\allowbreak \,0.06144,\allowbreak \,0.06144,\allowbreak \,0.06144,\allowbreak \,0.06144)$. These bounds use $\Phi$ alone. For the central cell $[0.58,0.59]$, $M_a=11.020$ and $\mathbf L=(0.006637,\allowbreak \,0.006637,\allowbreak \,0.006637,\allowbreak \,0.006637,\allowbreak \,0.006637)$. These bounds use $\Phi$ alone. For the central cell $[0.59,0.6]$, $M_a=11.210$ and $\mathbf L=(0,\allowbreak \,0,\allowbreak \,0,\allowbreak \,0,\allowbreak \,0)$. These bounds use $\Phi$ alone.

For class $\mathcal C_1$ on the cell $[0.21,0.34]$, $M_a=9.450$ and $\mathbf L=(143.7,\allowbreak \,140.2,\allowbreak \,195.6,\allowbreak \,310.7,\allowbreak \,11.84)$. These bounds use $\Phi$ alone. For class $\mathcal C_1$ on the cell $[0.34,0.42]$, $M_a=15.300$ and $\mathbf L=(43.1,\allowbreak \,53.53,\allowbreak \,105.8,\allowbreak \,19.08,\allowbreak \,1.794)$. These bounds use $\Phi$ alone. For class $\mathcal C_1$ on the cell $[0.42,0.6]$, $M_a=18.900$ and $\mathbf L=(2.655,\allowbreak \,2.655,\allowbreak \,2.655,\allowbreak \,1.737,\allowbreak \,1.715)$. These bounds use the offset window.

For class $\mathcal C_2$ on the cell $[0.21,0.46]$, $M_a=10.500$ and $\mathbf L=(58.71,\allowbreak \,40.34,\allowbreak \,33.56,\allowbreak \,26.69,\allowbreak \,25.84)$. These bounds use $\Phi$ alone. For class $\mathcal C_2$ on the cell $[0.46,0.6]$, $M_a=23.000$ and $\mathbf L=(0.9679,\allowbreak \,0.9009,\allowbreak \,0.873,\allowbreak \,0.8644,\allowbreak \,0.8626)$. These bounds use the offset window. For class $\mathcal C_2$ on the cell $[0.6,0.69]$, $M_a=30.000$ and $\mathbf L=(0.09308,\allowbreak \,0.09308,\allowbreak \,0.09308,\allowbreak \,0.09308,\allowbreak \,0.09308)$. These bounds use the offset window.

For class $\mathcal C_3$ on the cell $[0.21,0.36]$, $M_a=10.500$ and $\mathbf L=(21.72,\allowbreak \,18.1,\allowbreak \,15.81,\allowbreak \,13.89,\allowbreak \,13.57)$. These bounds use $\Phi$ alone. For class $\mathcal C_3$ on the cell $[0.36,0.42]$, $M_a=18.000$ and $\mathbf L=(1.567,\allowbreak \,1.302,\allowbreak \,1.26,\allowbreak \,1.253,\allowbreak \,1.253)$. These bounds use $\Phi$ alone. For class $\mathcal C_3$ on the cell $[0.42,0.44]$, $M_a=21.000$ and $\mathbf L=(0.4956,\allowbreak \,0.4956,\allowbreak \,0.4956,\allowbreak \,0.4956,\allowbreak \,0.4956)$. These bounds use $\Phi$ alone.

For class $\mathcal C_4$ on the cell $[0.21,0.32]$, $M_a=8.400$ and $\mathbf L=(20.68,\allowbreak \,15.89,\allowbreak \,13.29,\allowbreak \,10.74,\allowbreak \,10.08)$. These bounds use $\Phi$ alone. For class $\mathcal C_4$ on the cell $[0.32,0.36]$, $M_a=12.800$ and $\mathbf L=(3.176,\allowbreak \,1.965,\allowbreak \,1.715,\allowbreak \,1.685,\allowbreak \,1.683)$. These bounds use $\Phi$ alone. For class $\mathcal C_4$ on the cell $[0.36,0.38]$, $M_a=14.400$ and $\mathbf L=(1.628,\allowbreak \,0.9842,\allowbreak \,0.9827,\allowbreak \,0.9827,\allowbreak \,0.9827)$. These bounds use $\Phi$ alone.

For class $\mathcal C_4$ on the cell $[0.38,0.4]$, $M_a=15.200$ and $\mathbf L=(0.4763,\allowbreak \,0.4763,\allowbreak \,0.4763,\allowbreak \,0.4763,\allowbreak \,0.4763)$. These bounds use the offset window. For class $\mathcal C_4$ on the cell $[0.4,0.42]$, $M_a=16.000$ and $\mathbf L=(0.1663,\allowbreak \,0.1663,\allowbreak \,0.1663,\allowbreak \,0.1663,\allowbreak \,0.1663)$. These bounds use the offset window.

For class $\mathcal C_5$ on the cell $[0.21,0.34]$, $M_a=10.500$ and $\mathbf L=(23.27,\allowbreak \,18.34,\allowbreak \,15.12,\allowbreak \,12.08,\allowbreak \,11.71)$. These bounds use $\Phi$ alone. For class $\mathcal C_5$ on the cell $[0.34,0.38]$, $M_a=17.000$ and $\mathbf L=(0.9787,\allowbreak \,0.9787,\allowbreak \,0.9787,\allowbreak \,0.9787,\allowbreak \,0.9787)$. These bounds use the offset window. For class $\mathcal C_5$ on the cell $[0.38,0.4]$, $M_a=19.000$ and $\mathbf L=(0,\allowbreak \,0,\allowbreak \,0,\allowbreak \,0,\allowbreak \,0)$. These bounds use the offset window.

For class $\mathcal C_6$ on the cell $[0.21,0.3]$, $M_a=10.500$ and $\mathbf L=(21.55,\allowbreak \,15.59,\allowbreak \,12.29,\allowbreak \,8.923,\allowbreak \,8.558)$. These bounds use $\Phi$ alone. For class $\mathcal C_6$ on the cell $[0.3,0.34]$, $M_a=15.000$ and $\mathbf L=(1.928,\allowbreak \,1.928,\allowbreak \,1.928,\allowbreak \,1.928,\allowbreak \,1.928)$. These bounds use the offset window. For class $\mathcal C_6$ on the cell $[0.34,0.36]$, $M_a=17.000$ and $\mathbf L=(0.354,\allowbreak \,0.354,\allowbreak \,0.354,\allowbreak \,0.354,\allowbreak \,0.354)$. These bounds use the offset window.

For class $\mathcal C_7$ on the cell $[0.21,0.25]$, $M_a=10.500$ and $\mathbf L=(23.16,\allowbreak \,13.28,\allowbreak \,9.827,\allowbreak \,6.139,\allowbreak \,5.752)$. These bounds use $\Phi$ alone. For class $\mathcal C_7$ on the cell $[0.25,0.3]$, $M_a=12.500$ and $\mathbf L=(4.637,\allowbreak \,4.637,\allowbreak \,4.571,\allowbreak \,3.928,\allowbreak \,3.86)$. These bounds use the offset window. For class $\mathcal C_7$ on the cell $[0.3,0.32]$, $M_a=15.000$ and $\mathbf L=(0.8915,\allowbreak \,0.8915,\allowbreak \,0.8915,\allowbreak \,0.8915,\allowbreak \,0.8915)$. These bounds use the offset window.

For class $\mathcal C_7$ on the cell $[0.32,0.33]$, $M_a=16.000$ and $\mathbf L=(0.007463,\allowbreak \,0.007463,\allowbreak \,0.007463,\allowbreak \,0.007463,\allowbreak \,0.007463)$. These bounds use the offset window.

For class $\mathcal C_8$ on the cell $[0.21,0.28]$, $M_a=10.500$ and $\mathbf L=(9.14,\allowbreak \,9.14,\allowbreak \,9.14,\allowbreak \,7.879,\allowbreak \,7.289)$. These bounds use the offset window. For class $\mathcal C_8$ on the cell $[0.28,0.3]$, $M_a=14.000$ and $\mathbf L=(0.8525,\allowbreak \,0.8525,\allowbreak \,0.8525,\allowbreak \,0.8525,\allowbreak \,0.8525)$. These bounds use the offset window.

\paragraph{A worked line (the cell $s\in[0.155,0.235]$ of Lemma~\ref{fib:chk:A2}).} $t(0.155)=1.12709$, so $M_0=19\,t(s_a)=21.4147$,
$\bar{\mathsf k}=3.85+\frac{10}{M_0}=4.31697$ and $\bar h=\frac{\bar{\mathsf k}}{M_0+3}=0.17682$. On the cell, with interval enclosures: $G_0=\frac{s^2}{1+s}\le0.047814$,
$G_1=\frac s{1+s}\le0.203464$, $G_2\le0.401764$, $\varphi(s)\ge0.819938$. So $M_0s^2\varphi\ge21.4147\cdot0.155^2\cdot0.819938=0.42184$, and
$2-6G_0-2\mathsf kG_1-\frac{\mathsf k^2G_2}{M_0+3}\ge0.06114$ ($\mathsf k=3.55$) and $\ge-0.35025$ ($\mathsf k=\bar{\mathsf k}$); the base is at least $0.42184-0.35025=0.07159$. For $\bar D_L$:
$N_0=23.4147$, $\alpha_1=2.10019$, $\alpha_2=0.099658$, $\sigma_0\le0.235+\bar h=0.41182$, $\bar e=0.11004$, $\beta_L=19.4622$, so
$\bar D_L\le(0.41182\cdot2.10019)^2/19.4622=0.03844$ ($\bar D_U$ is not needed, as $t\ge1$). Margin $0.07159-0.03844=0.03315$ (the log has $0.0331685$, with
outward rounding and without rounding the intermediate values). The script \texttt{fc\_example\_cell.py} prints these numbers.

% ---------------------------------------------------------------------------------------------------------------
\subsection{The averaging margins and the starting means}\label{fib:apx:rows}

\begin{lemma}\label{fib:chk:rows}
For each subinterval $\iota$, with the caps $\theta(\iota)$ of Data~\ref{p2:tab:theta} and the starting mean $\mu_0(\iota)$ in Data~\ref{fib:tab:mu0}, the margin \eqref{fib:eq:row}
at $\mu=\mu_0(\iota)$, with $b$ the excess of the class of $\iota$ ($b=0$ for $\iota=6,\dots,39$) and $Z_s(\mu)$ replaced by $\max(Z_a,Z_b,Z_c)$ of
Theorem~\ref{fib:thm:central} (with the cells and the cut of Theorem~\ref{fib:thm:wing} or Corollary~\ref{fib:cor:outer} where these apply), is positive,
together with the side conditions of Lemmas~\ref{fib:lem:KM} and~\ref{fib:lem:Zc} and $\ell_0/t_b>L_b$ on every cell used. The values are in
Data~\ref{fib:tab:mu0}. The spike terms are computed with the certified bounds of Lemma~\ref{fib:chk:B3}, not with the rounded-up
values of Data~\ref{fib:tab:b3}; with the rounded-up values all $55$ margins stay positive (the least is then $0.00126$, at
$\iota=36$). Interval (one evaluation per subinterval); $55$ inequalities; least margin $0.00137$ ($\iota=36$), then $0.00143$ ($\iota=39$)
and $0.00144$ ($\iota=53$). Logs \texttt{fc\_check\_B.log} ($\iota=6,\dots,30$), \texttt{fc\_check\_C.log} ($31,\dots,39$), \texttt{fc\_check\_D.log}
(the outer subintervals), tag ROW-v21.
\end{lemma}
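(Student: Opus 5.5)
The statement is a finite list of $55$ explicit inequalities, one per subinterval $\iota$, so the plan is to evaluate each margin once, in outward-rounded interval arithmetic, at $\mu=\mu_0(\iota)$ and with every rational input read from Part~1 of the supplementary data. For each $\iota$ I first fix the data that the margin \eqref{fib:eq:row} depends on. These are $\theta(\iota)$ from Data~\ref{p2:tab:theta}, $D(\iota)$, $\ell_0(\iota)$, the end points of $\Ihat_\iota$ (hence $q_a,q_b$, $L_b=-\log(1-q_b)$, $r_{\rm hi}$ and $v_{\min}$ enclosed outward), the excess $b$ of the class of $\iota$ ($b=0$ for $\iota=6,\dots,39$, otherwise the $b$ of Data~\ref{fib:tab:classes}), and the spike cut $t_s$ ($0.6$ centrally, $\tau$ of Theorem~\ref{fib:thm:wing} on the wing, $\tau$ of the class on the outer subintervals). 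The deterministic part $1+b-\theta-\frac{9D}{2\mu_0}-\frac{r_{\rm hi}^2}{(1-r_{\rm hi}^2)\mu_0}$ is then a single interval evaluation.

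The substance is an upper enclosure of $Z_s(\mu_0)$ by $\max(Z_a,Z_b,Z_c)$, computed in three parts.
\begin{itemize}
\item For $Z_a$ (fibers $M\le7$) I evaluate, for each $M$, the bound of Lemma~\ref{fib:lem:KM} with the constants $K_M$ of Lemma~\ref{fib:chk:B1}. I also check its proviso: either $\ell_0(A_M\mu_0^{3/2}+\psi(M/\mu_0)+b)\ge\frac32A_M\mu_0^{1/2}$, or the degenerate case with $\ell_0\mu_0\ge\frac32$.
\item For $Z_c$ (fibers $M\ge8$ with $t<0.22$) I evaluate the closed form of Lemma~\ref{fib:lem:Zc}, including the side condition $(\ell_0-0.22L_b)m_c\ge\frac32$.
\item For $Z_b$ I run over the cells $[t_a,t_b]$ of Data~\ref{fib:tab:b3} whose union covers $[0.21,t_s)$, namely the central list on the central and wing subintervals and the class list on the outer ones. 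On each cell I check $\ell_0/t_b>L_b$. On each of its five $M$-blocks I multiply $b$ plus the certified bound of Lemma~\ref{fib:chk:B3} (I use $b+L$ even where the offset-window bound already contains $b$, which is conservative) by the weight bound
\[
\exp\Bigl(-\max\bigl((\tfrac{\ell_0}{t_b}-L_b)\max(mM_a,8,t_a\mu_0),\ (\ell_0-t_bL_b)\mu_0\bigr)\Bigr)
\]
from the proof of Theorem~\ref{fib:thm:central}. I then take the maximum over cells and blocks.
\end{itemize}
Subtracting $\max(Z_a,Z_b,Z_c)$ from the deterministic part gives the margin, and I record its lower end.

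Since Theorems~\ref{fib:thm:central}, \ref{fib:thm:wing} and Corollary~\ref{fib:cor:outer} already show that every term of \eqref{fib:eq:row} is monotone in the right direction for $\mu\ge\mu_0(\iota)$, positivity at the single point $\mu_0(\iota)$ is all that is required. As a robustness check I would redo the computation with the rounded-up table values of Data~\ref{fib:tab:b3} in place of the certified bounds, expecting all margins to stay positive with the least near $0.00126$.

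The main obstacle is that the margins are extremely thin on a few subintervals ($\iota=36,39,53$, about $10^{-3}$). There the spike term is dominated by the first $M$-block of a low-$t$ cell, where $L$ is large and the weight exponent $(\ell_0/t_b-L_b)\max(mM_a,8,t_a\mu_0)$ is only moderately large. If a margin fails, I would first tighten the enclosure of that block's weight: interval evaluation of $q$ at the actual end points rather than via $q_b$, and the exact $\max$ of the two exponents. Next I would use the certified (unrounded) $L$. Only as a last resort would I raise $\mu_0(\iota)$, staying within $[19,50]$ so that Lemma~\ref{p2:lem:strips} still covers the gap below.
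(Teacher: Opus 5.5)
Your plan is the paper's own verification. You evaluate \eqref{fib:eq:row} once per subinterval, in outward-rounded interval arithmetic at $\mu=\mu_0(\iota)$, and replace $Z_s$ by $\max(Z_a,Z_b,Z_c)$, taken from Lemma~\ref{fib:lem:KM}, Lemma~\ref{fib:lem:Zc}, and $b$ plus the certified bounds of Lemma~\ref{fib:chk:B3} times the block weight bound; you also check the side conditions and $\ell_0/t_b>L_b$ on each cell used. Your fallback of raising $\mu_0(\iota)$ would prove a different statement, since Data~\ref{fib:tab:mu0} fixes $\mu_0(\iota)$, but the main plan does not rely on it.
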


The starting mean $\mu_0(\iota)$ is the least value of the scan $19,\allowbreak 20,\allowbreak \dots,\allowbreak 40,\allowbreak 45,\allowbreak 50$, not below $\mu_W$ on the wing and not
below $\mu_D$ on the outer subintervals, at which the averaging margin \eqref{fib:eq:row} of Lemma~\ref{fib:chk:rows} is
positive (each term of \eqref{fib:eq:row} is valid for every $\mu\ge\mu_0$). Data~\ref{fib:tab:mu0} lists it with the margins.

\datablock{Starting means and averaging margins}{fib:tab:mu0}\label{fib:tab:margins}

The following paragraphs specify the subinterval $\iota$, its band, the applicable argument, the cap $\theta(\iota)$, the starting mean $\mu_0(\iota)$, the triple $(Z_a,Z_b,Z_c)$ and the margin in \eqref{fib:eq:row} at $\mu_0(\iota)$. The central argument is Theorem~\ref{fib:thm:central}; a wing class uses Theorem~\ref{fib:thm:wing}; an offset-window class uses Corollary~\ref{fib:cor:outer}. The spike terms are rounded to three digits and the margins are rounded down. The margins use the certified bounds of Lemma~\ref{fib:chk:B3}, rather than their rounded values in Data~\ref{fib:tab:b3}.

For $\iota=0$ in band $0$, class $\mathcal C_1$ applies with $\theta=1$ and $\mu_0=45$. The spike terms are $(Z_a,\allowbreak Z_b,\allowbreak Z_c)=(0.0026,\allowbreak 0.231,\allowbreak 0.0549)$, and the margin is $0.0420$.

For $\iota=1$ in band $1$, class $\mathcal C_1$ applies with $\theta=1$ and $\mu_0=45$. The spike terms are $(Z_a,\allowbreak Z_b,\allowbreak Z_c)=(0.00135,\allowbreak 0.206,\allowbreak 0.0379)$, and the margin is $0.0676$.

For $\iota=2$ in band $2$, class $\mathcal C_1$ applies with $\theta=1$ and $\mu_0=45$. The spike terms are $(Z_a,\allowbreak Z_b,\allowbreak Z_c)=(0.000598,\allowbreak 0.242,\allowbreak 0.0238)$, and the margin is $0.0331$.

For $\iota=3$ in band $3$, class $\mathcal C_2$ applies with $\theta=1$ and $\mu_0=50$. The spike terms are $(Z_a,\allowbreak Z_b,\allowbreak Z_c)=(5.91\cdot10^{-5},\allowbreak 0.199,\allowbreak 0.00513)$, and the margin is $0.0897$.

For $\iota=4$ in band $4$, class $\mathcal C_2$ applies with $\theta=1$ and $\mu_0=50$. The spike terms are $(Z_a,\allowbreak Z_b,\allowbreak Z_c)=(2.19\cdot10^{-5},\allowbreak 0.19,\allowbreak 0.00251)$, and the margin is $0.0997$.

For $\iota=5$ in band $5$, class $\mathcal C_2$ applies with $\theta=1$ and $\mu_0=50$. The spike terms are $(Z_a,\allowbreak Z_b,\allowbreak Z_c)=(1.12\cdot10^{-5},\allowbreak 0.236,\allowbreak 0.00146)$, and the margin is $0.0536$.

For $\iota=6$ in band $6$, the central argument applies with $\theta=\frac{7}{10}$ and $\mu_0=23$. The spike terms are $(Z_a,\allowbreak Z_b,\allowbreak Z_c)=(0.0192,\allowbreak 0.058,\allowbreak 0.017)$, and the margin is $0.00435$.

For $\iota=7$ in band $7$, the central argument applies with $\theta=\frac{7}{10}$ and $\mu_0=23$. The spike terms are $(Z_a,\allowbreak Z_b,\allowbreak Z_c)=(0.0168,\allowbreak 0.0567,\allowbreak 0.0128)$, and the margin is $0.00647$.

For $\iota=8$ in band $8$, the central argument applies with $\theta=\frac{7}{10}$ and $\mu_0=23$. The spike terms are $(Z_a,\allowbreak Z_b,\allowbreak Z_c)=(0.0147,\allowbreak 0.0551,\allowbreak 0.00967)$, and the margin is $0.00873$.

For $\iota=9$ in band $9$, the central argument applies with $\theta=\frac{7}{10}$ and $\mu_0=23$. The spike terms are $(Z_a,\allowbreak Z_b,\allowbreak Z_c)=(0.0128,\allowbreak 0.0513,\allowbreak 0.00749)$, and the margin is $0.0130$. For $\iota=10$ in band $9$, the central argument applies with $\theta=\frac{13}{20}$ and $\mu_0=20$. The spike terms are $(Z_a,\allowbreak Z_b,\allowbreak Z_c)=(0.0254,\allowbreak 0.0613,\allowbreak 0.00445)$, and the margin is $0.0179$.

For $\iota=11$ in band $10$, the central argument applies with $\theta=\frac{3}{5}$ and $\mu_0=19$. The spike terms are $(Z_a,\allowbreak Z_b,\allowbreak Z_c)=(0.0435,\allowbreak 0.0899,\allowbreak 0.00582)$, and the margin is $0.0252$.

For $\iota=12$ in band $11$, the central argument applies with $\theta=\frac{3}{5}$ and $\mu_0=19$. The spike terms are $(Z_a,\allowbreak Z_b,\allowbreak Z_c)=(0.0409,\allowbreak 0.0862,\allowbreak 0.00491)$, and the margin is $0.0292$. For $\iota=13$ in band $11$, the central argument applies with $\theta=\frac{3}{5}$ and $\mu_0=19$. The spike terms are $(Z_a,\allowbreak Z_b,\allowbreak Z_c)=(0.0424,\allowbreak 0.0901,\allowbreak 0.00512)$, and the margin is $0.0253$. For $\iota=14$ in band $11$, the central argument applies with $\theta=\frac{3}{5}$ and $\mu_0=19$. The spike terms are $(Z_a,\allowbreak Z_b,\allowbreak Z_c)=(0.0333,\allowbreak 0.0714,\allowbreak 0.00314)$, and the margin is $0.0441$.

For $\iota=15$ in band $12$, the central argument applies with $\theta=\frac{3}{5}$ and $\mu_0=19$. The spike terms are $(Z_a,\allowbreak Z_b,\allowbreak Z_c)=(0.0399,\allowbreak 0.0862,\allowbreak 0.00432)$, and the margin is $0.0293$. For $\iota=16$ in band $12$, the central argument applies with $\theta=\frac{3}{5}$ and $\mu_0=19$. The spike terms are $(Z_a,\allowbreak Z_b,\allowbreak Z_c)=(0.0386,\allowbreak 0.0846,\allowbreak 0.00383)$, and the margin is $0.0309$. For $\iota=17$ in band $12$, the central argument applies with $\theta=\frac{3}{5}$ and $\mu_0=19$. The spike terms are $(Z_a,\allowbreak Z_b,\allowbreak Z_c)=(0.034,\allowbreak 0.0752,\allowbreak 0.00293)$, and the margin is $0.0404$.

For $\iota=18$ in band $13$, the central argument applies with $\theta=\frac{3}{5}$ and $\mu_0=19$. The spike terms are $(Z_a,\allowbreak Z_b,\allowbreak Z_c)=(0.0343,\allowbreak 0.0764,\allowbreak 0.00289)$, and the margin is $0.0393$. For $\iota=19$ in band $13$, the central argument applies with $\theta=\frac{3}{5}$ and $\mu_0=19$. The spike terms are $(Z_a,\allowbreak Z_b,\allowbreak Z_c)=(0.0376,\allowbreak 0.0844,\allowbreak 0.00336)$, and the margin is $0.0312$. For $\iota=20$ in band $13$, the central argument applies with $\theta=\frac{3}{5}$ and $\mu_0=19$. The spike terms are $(Z_a,\allowbreak Z_b,\allowbreak Z_c)=(0.0382,\allowbreak 0.0869,\allowbreak 0.00327)$, and the margin is $0.0288$.

For $\iota=21$ in band $14$, the central argument applies with $\theta=\frac{3}{5}$ and $\mu_0=19$. The spike terms are $(Z_a,\allowbreak Z_b,\allowbreak Z_c)=(0.0356,\allowbreak 0.0817,\allowbreak 0.00279)$, and the margin is $0.0341$. For $\iota=22$ in band $14$, the central argument applies with $\theta=\frac{3}{5}$ and $\mu_0=19$. The spike terms are $(Z_a,\allowbreak Z_b,\allowbreak Z_c)=(0.0393,\allowbreak 0.0914,\allowbreak 0.00319)$, and the margin is $0.0243$. For $\iota=23$ in band $14$, the central argument applies with $\theta=\frac{3}{5}$ and $\mu_0=19$. The spike terms are $(Z_a,\allowbreak Z_b,\allowbreak Z_c)=(0.037,\allowbreak 0.0867,\allowbreak 0.00277)$, and the margin is $0.0290$.

For $\iota=24$ in band $15$, the central argument applies with $\theta=\frac{3}{5}$ and $\mu_0=19$. The spike terms are $(Z_a,\allowbreak Z_b,\allowbreak Z_c)=(0.0443,\allowbreak 0.106,\allowbreak 0.00361)$, and the margin is $0.00983$. For $\iota=25$ in band $15$, the central argument applies with $\theta=\frac{3}{5}$ and $\mu_0=19$. The spike terms are $(Z_a,\allowbreak Z_b,\allowbreak Z_c)=(0.039,\allowbreak 0.0939,\allowbreak 0.00276)$, and the margin is $0.0218$.

For $\iota=26$ in band $16$, the central argument applies with $\theta=\frac{3}{5}$ and $\mu_0=19$. The spike terms are $(Z_a,\allowbreak Z_b,\allowbreak Z_c)=(0.0393,\allowbreak 0.0951,\allowbreak 0.00272)$, and the margin is $0.0206$. For $\iota=27$ in band $16$, the central argument applies with $\theta=\frac{3}{5}$ and $\mu_0=19$. The spike terms are $(Z_a,\allowbreak Z_b,\allowbreak Z_c)=(0.0433,\allowbreak 0.106,\allowbreak 0.0031)$, and the margin is $0.00961$. For $\iota=28$ in band $16$, the central argument applies with $\theta=\frac{3}{5}$ and $\mu_0=19$. The spike terms are $(Z_a,\allowbreak Z_b,\allowbreak Z_c)=(0.0411,\allowbreak 0.101,\allowbreak 0.00274)$, and the margin is $0.0142$.

For $\iota=29$ in band $17$, the central argument applies with $\theta=\frac{3}{5}$ and $\mu_0=19$. The spike terms are $(Z_a,\allowbreak Z_b,\allowbreak Z_c)=(0.0453,\allowbreak 0.113,\allowbreak 0.00312)$, and the margin is $0.00261$. For $\iota=30$ in band $17$, the central argument applies with $\theta=\frac{3}{5}$ and $\mu_0=20$. The spike terms are $(Z_a,\allowbreak Z_b,\allowbreak Z_c)=(0.0321,\allowbreak 0.105,\allowbreak 0.00319)$, and the margin is $0.0246$.

For $\iota=31$ in band $18$, class $\mathcal W_1$ applies with $\theta=\frac{3}{5}$ and $\mu_0=20$. The spike terms are $(Z_a,\allowbreak Z_b,\allowbreak Z_c)=(0.0382,\allowbreak 0.127,\allowbreak 0.00419)$, and the margin is $0.00251$. For $\iota=32$ in band $18$, class $\mathcal W_1$ applies with $\theta=\frac{3}{5}$ and $\mu_0=21$. The spike terms are $(Z_a,\allowbreak Z_b,\allowbreak Z_c)=(0.0281,\allowbreak 0.124,\allowbreak 0.00451)$, and the margin is $0.0187$.

For $\iota=33$ in band $19$, class $\mathcal W_2$ applies with $\theta=\frac{3}{5}$ and $\mu_0=21$. The spike terms are $(Z_a,\allowbreak Z_b,\allowbreak Z_c)=(0.0279,\allowbreak 0.125,\allowbreak 0.00428)$, and the margin is $0.0172$. For $\iota=34$ in band $19$, class $\mathcal W_2$ applies with $\theta=\frac{3}{5}$ and $\mu_0=21$. The spike terms are $(Z_a,\allowbreak Z_b,\allowbreak Z_c)=(0.0298,\allowbreak 0.136,\allowbreak 0.00461)$, and the margin is $0.00614$.

For $\iota=35$ in band $20$, class $\mathcal W_2$ applies with $\theta=\frac{3}{5}$ and $\mu_0=22$. The spike terms are $(Z_a,\allowbreak Z_b,\allowbreak Z_c)=(0.0222,\allowbreak 0.137,\allowbreak 0.00505)$, and the margin is $0.0171$. For $\iota=36$ in band $20$, class $\mathcal W_2$ applies with $\theta=\frac{3}{5}$ and $\mu_0=22$. The spike terms are $(Z_a,\allowbreak Z_b,\allowbreak Z_c)=(0.0241,\allowbreak 0.153,\allowbreak 0.00553)$, and the margin is $0.00137$.

For $\iota=37$ in band $21$, class $\mathcal W_3$ applies with $\theta=\frac{3}{5}$ and $\mu_0=26$. The spike terms are $(Z_a,\allowbreak Z_b,\allowbreak Z_c)=(0.00579,\allowbreak 0.123,\allowbreak 0.00581)$, and the margin is $0.0690$.

For $\iota=38$ in band $22$, class $\mathcal W_3$ applies with $\theta=\frac{7}{10}$ and $\mu_0=32$. The spike terms are $(Z_a,\allowbreak Z_b,\allowbreak Z_c)=(0.000577,\allowbreak 0.0965,\allowbreak 0.00579)$, and the margin is $0.00596$. For $\iota=39$ in band $22$, class $\mathcal W_3$ applies with $\theta=\frac{7}{10}$ and $\mu_0=36$. The spike terms are $(Z_a,\allowbreak Z_b,\allowbreak Z_c)=(0.000124,\allowbreak 0.123,\allowbreak 0.00628)$, and the margin is $0.00143$.

For $\iota=40$ in band $23$, class $\mathcal C_3$ applies with $\theta=\frac{7}{10}$ and $\mu_0=50$. The spike terms are $(Z_a,\allowbreak Z_b,\allowbreak Z_c)=(8.1\cdot10^{-7},\allowbreak 0.035,\allowbreak 0.000589)$, and the margin is $0.138$. For $\iota=41$ in band $23$, class $\mathcal C_3$ applies with $\theta=\frac{7}{10}$ and $\mu_0=50$. The spike terms are $(Z_a,\allowbreak Z_b,\allowbreak Z_c)=(8.73\cdot10^{-7},\allowbreak 0.0434,\allowbreak 0.00066)$, and the margin is $0.129$. For $\iota=42$ in band $23$, class $\mathcal C_3$ applies with $\theta=\frac{7}{10}$ and $\mu_0=50$. The spike terms are $(Z_a,\allowbreak Z_b,\allowbreak Z_c)=(1.01\cdot10^{-6},\allowbreak 0.0663,\allowbreak 0.000827)$, and the margin is $0.106$.

For $\iota=43$ in band $24$, class $\mathcal C_3$ applies with $\theta=\frac{7}{10}$ and $\mu_0=50$. The spike terms are $(Z_a,\allowbreak Z_b,\allowbreak Z_c)=(1.13\cdot10^{-6},\allowbreak 0.127,\allowbreak 0.00108)$, and the margin is $0.0281$.

For $\iota=44$ in band $25$, class $\mathcal C_4$ applies with $\theta=\frac{9}{10}$ and $\mu_0=40$. The spike terms are $(Z_a,\allowbreak Z_b,\allowbreak Z_c)=(3.84\cdot10^{-5},\allowbreak 0.038,\allowbreak 0.005)$, and the margin is $0.0303$. For $\iota=45$ in band $25$, class $\mathcal C_4$ applies with $\theta=\frac{19}{20}$ and $\mu_0=45$. The spike terms are $(Z_a,\allowbreak Z_b,\allowbreak Z_c)=(5.16\cdot10^{-6},\allowbreak 0.0293,\allowbreak 0.00197)$, and the margin is $0.00905$. For $\iota=46$ in band $25$, class $\mathcal C_4$ applies with $\theta=\frac{19}{20}$ and $\mu_0=50$. The spike terms are $(Z_a,\allowbreak Z_b,\allowbreak Z_c)=(6.69\cdot10^{-7},\allowbreak 0.0323,\allowbreak 0.000785)$, and the margin is $0.0220$.

For $\iota=47$ in band $26$, class $\mathcal C_5$ applies with $\theta=1$ and $\mu_0=50$. The spike terms are $(Z_a,\allowbreak Z_b,\allowbreak Z_c)=(7.75\cdot10^{-7},\allowbreak 0.0572,\allowbreak 0.000961)$, and the margin is $0.0790$. For $\iota=48$ in band $26$, class $\mathcal C_5$ applies with $\theta=1$ and $\mu_0=50$. The spike terms are $(Z_a,\allowbreak Z_b,\allowbreak Z_c)=(9.04\cdot10^{-7},\allowbreak 0.0792,\allowbreak 0.0012)$, and the margin is $0.0568$. For $\iota=49$ in band $26$, class $\mathcal C_5$ applies with $\theta=1$ and $\mu_0=50$. The spike terms are $(Z_a,\allowbreak Z_b,\allowbreak Z_c)=(1.03\cdot10^{-6},\allowbreak 0.106,\allowbreak 0.00146)$, and the margin is $0.0294$.

For $\iota=50$ in band $27$, class $\mathcal C_6$ applies with $\theta=1$ and $\mu_0=50$. The spike terms are $(Z_a,\allowbreak Z_b,\allowbreak Z_c)=(1.16\cdot10^{-6},\allowbreak 0.0152,\allowbreak 0.00177)$, and the margin is $0.120$. For $\iota=51$ in band $27$, class $\mathcal C_6$ applies with $\theta=1$ and $\mu_0=50$. The spike terms are $(Z_a,\allowbreak Z_b,\allowbreak Z_c)=(4.68\cdot10^{-6},\allowbreak 0.0695,\allowbreak 0.00762)$, and the margin is $0.0660$. For $\iota=52$ in band $27$, class $\mathcal C_6$ applies with $\theta=1$ and $\mu_0=50$. The spike terms are $(Z_a,\allowbreak Z_b,\allowbreak Z_c)=(5.29\cdot10^{-6},\allowbreak 0.089,\allowbreak 0.00917)$, and the margin is $0.0192$.

For $\iota=53$ in band $28$, class $\mathcal C_7$ applies with $\theta=1$ and $\mu_0=50$. The spike terms are $(Z_a,\allowbreak Z_b,\allowbreak Z_c)=(1.71\cdot10^{-5},\allowbreak 0.106,\allowbreak 0.0382)$, and the margin is $0.00144$.

For $\iota=54$ in band $29$, class $\mathcal C_8$ applies with $\theta=1$ and $\mu_0=50$. The spike terms are $(Z_a,\allowbreak Z_b,\allowbreak Z_c)=(7.16\cdot10^{-6},\allowbreak 0.0308,\allowbreak 0.0176)$, and the margin is $0.0763$.

\paragraph{The boxes of family F6 that are used.} Of the $229$ boxes of Lemma~\ref{p2:lem:strips} on $[m_\ast(\iota),50]$, a box is
not used when its whole range of means lies in $[\mu_0(\iota),50]$, that is, when its lower end is at least $\mu_0(\iota)$;
boxes that straddle $\mu_0(\iota)$ are used. So $86$ boxes are not used and $143$ are. The boxes not used are, per subinterval:
$1$ on $\iota=6,7,9$; $2$ on $\iota=8$, on $\iota=10$ to $16$ and on $\iota=31$ to $34$; $3$ on $\iota=17,18$; $4$ on $\iota=19$ to
$28$; $5$ on $\iota=29,30$; $1$ on $\iota=35$ to $37$; none on $\iota=0$ to $5$ and $\iota=38$ to $54$, where every box straddles
$\mu_0(\iota)$ or lies below it. The $550$ boxes below $m_\ast(\iota)$ are all used: $m_\ast(\iota)<\mu_0(\iota)$ on every
subinterval (the largest $m_\ast$ is $\frac{68}3$, at $\iota=0$, against $\mu_0=45$; over $\iota\ge6$ it is $\frac{198}{13}<19$;
the least gap $\mu_0-m_\ast$ is $\frac{125}{23}$, at $\iota=16$), so each of them meets $[m_{25},\mu_0)$, where case~(a) of the
proof of Theorem~\ref{p2:thm:O4} uses it. The script \texttt{fc\_ledger\_f6.py} reads the $229$ boxes from the box files of family
F6 (Section~\ref{sec:computer:data}) and compares their lower ends with $\mu_0(\iota)$ in exact rational arithmetic; its log
\texttt{fc\_ledger\_f6.log} gives these counts (the least box end on every subinterval is $m_\ast(\iota)$).

\begin{remark}[numerical observations]\label{fib:rem:diag}
The following are floating-point or independent observations, not used in any proof. The exact weighted fibers sit above the closed-form
targets on a sample: least excess $0.035$ for Theorem~\ref{fib:thm:env}, $0.255$ to $0.456$ for the wing classes and $1.15$ to $3.15$ for the offset-window
classes.

The three thinnest margins of Lemma~\ref{fib:chk:rows} were also recomputed with independently written rigorous code and
different, tighter spike bounds, which give $0.00267$ ($\iota=36$), $0.00616$ ($\iota=39$) and $0.0199$ ($\iota=53$); with the
spike bounds as stated, the margins in Data~\ref{fib:tab:mu0} are reproduced (exactly at $\iota=36$ and $39$). The margins are thin because the spike bounds are crude, not because the fibers
are: on a sample, the exact spike terms are about $0.005$ ($\iota=36$) and $0.0004$ ($\iota=39$), against the certified $0.15$ and $0.12$.
\end{remark}

\end{document}